\theoremstyle{plain}
\newtheorem{thm}{Theorem}[section]
\newtheorem{prop}[thm]{Proposition}
\newtheorem{lemma}[thm]{Lemma}
\newtheorem{cor}[thm]{Corollary}
\newtheorem{question}[thm]{Question}
\newtheorem{hyp}[thm]{Hypothesis}
\theoremstyle{definition}
\newtheorem{dfn}[thm]{Definition}
\newtheorem{example}[thm]{Example}
\newtheorem{notn}[thm]{Notation}
\theoremstyle{remark}
\newtheorem{rem}[thm]{Remark}
\newcommand{\beql}[1]{\begin{equation}\label{#1}}
\newcommand{\eeq} {\end{equation}}
\def\C{{\rm{C}}}
\def\N{{\rm{N}}}
\def\aut{\mathrm{Aut}}
\begin{document}

\title{The local lifting problem for actions of finite groups on curves}
\author{Ted Chinburg}
\address{T.C.: Department of Mathematics\\University of
Pennsylvania\\Philadelphia, PA
19104-6395}
\email{ted@math.upenn.edu}
\author{Robert Guralnick}
\address{R.G.: Department of Mathematics\\University of
Southern California\\3620 South Vermont Ave., KAP 108\\
Los Angeles, California 90089-2532
}
\email{guralnic@usc.edu}
\author{David Harbater}
\address{D.H..: Department of Mathematics\\University of
Pennsylvania\\Philadelphia, PA
19104-6395}
\email{harbater@math.upenn.edu}

\subjclass[2000]{Primary 12F10, 14H37, 20B25; Secondary 13B05, 11S15, 14H30}

\keywords{ Galois groups, curves, automorphisms,  characteristic $p$, lifting, Oort Conjecture.}

\thanks{{\it French title:} Probl\`eme local de rel\`evement de l'action d'un groupe fini sur une courbe. }
\thanks{ The authors  were supported in part by NSF Grants DMS-0801030, DMS-0653873, DMS-0901164}

\begin{abstract} 

Let $k$ be an 
algebraically closed field of characteristic $p > 0$. We study obstructions to lifting to characteristic $0$ the faithful 
continuous action $\phi$ of a finite group $G$ on $k[[t]]$.  To each such $\phi$ a theorem
of Katz and Gabber associates an action of $G$ on a smooth projective curve $Y$ over $k$.  We say that the KGB obstruction
of $\phi$ vanishes if $G$ acts on a smooth projective curve $X$ in characteristic $0$ in such a way
that $X/H$ and $Y/H$ have the same genus for all subgroups $H \subset G$.  We determine for
which $G$ the KGB obstruction of every $\phi$ vanishes. 
We also consider
analogous problems in which one requires only that an obstruction to lifting $\phi$ due to Bertin vanishes for some $\phi$, or 
for  all sufficiently ramified $\phi$.   These results provide evidence for the strengthening of Oort's lifting conjecture
which is discussed in \cite[Conj.~1.2]{CGH}.
\vskip 0.2in{}
\noindent{\bf R\'esum\'e.} 
Soit $k$ un corps alg\'ebriquement clos de caract\'eristique $p>0$. Nous \'etudions les obstructions au rel\`evement en caract\'eristique 0 d'une action fid\`ele et continue $\phi$ d'un groupe fini $G$ sur $k[[t]]$. Le th\'eor\`eme de Katz-Gabber associe \`a $\phi$, une action du groupe  $G$ sur une courbe projective $Y$ lisse sur $k$. La KGB-obstruction de $\phi$ est dite nulle si $G$ agit sur une courbe projective lisse $X$ de caract\'eristique 0 avec \'egalit\'e des genres de $X/H$ et $Y/H$ pour tout sous-groupe $H\subset G$. Nous d\'eterminons les groupes $G$ pour lesquels la KGB-obstruction s'annule pour toute action  $\phi$. Nous consid\'erons \'egalement des situations analogues pour lesquelles il suffit d'annuler l'obstruction de Bertin \`a relever une action $\phi$ ou toutes actions $\phi$ suffisamment ramifi\'ees. Ces r\'esultats renforcent les convictions en faveur de la conjecture de Oort g\'en\'eralis\'ee aux rel\`evements
d'une action fid\`ele sur une courbe projective lisse ([8], Conj. 1.2).

\end{abstract}

\date{Oct. 4, 2009}
\maketitle

\tableofcontents

\section{Introduction.}
\label{s:intro}
\setcounter{equation}{0}

This paper concerns the problem of
lifting actions of finite groups on curves from positive characteristic to characteristic $0$.
Let $k$ be an algebraically closed field of characteristic $p > 0$, and let $\Gamma$ be a
finite group acting faithfully on a smooth projective curve $Y$ over $k$.  We will say this
action {\it lifts to characteristic} $0$ if there is a complete discrete valuation ring $R$ having characteristic 
$0$ and residue field $k$ for which the following is true.  There is an action of $\Gamma$ a smooth projective curve $\tilde Y$
over  $R$ for which there is a $\Gamma$-equivariant isomorphism between $k \otimes_R \tilde Y$ and $Y$.

We focus in this paper on the following local version of this problem.
Let $\phi:G \to \mathrm{Aut}_k(k[[t]])$ be an injective homomorphism 
from a finite group $G$ into the group of continuous automorphisms of the power series ring $k[[t]]$ over $k$.  The existence such a $\phi$ implies  $G$ is the semi-direct product of a cyclic group of order prime to $p$ (the maximal tamely ramified quotient) by a 
normal $p$-subgroup (the wild inertia group).  One says $\phi$ {\it lifts to characteristic} $0$ if there an $R$ as above such that $\phi$ can be lifted to an embedding $\Phi:G \to \mathrm{Aut}_R(R[[t]])$ in the sense that $k \otimes_R \Phi = \phi$.  

The local and global lifting problems are connected in the following way by a result of Bertin and Mezard
\cite{BM}.  For each wildly ramified closed point $y$ of $Y$,  fix an identification of the completion of the local ring of $Y$
at $y$ with $k[[t]]$, and let $\phi_y:\Gamma(y) \to \mathrm{Aut}_k(k[[t]])$ be the resulting action of the inertia group $\Gamma(y)$
of $y$ on this completion. Then $\phi$ lifts to characteristic $0$ if and only if each $\phi_y$ does.  

In \cite{CGH} we studied the global lifting problem.  We defined $\Gamma$ to be a (global) Oort group for $k$
if  every action of $\Gamma$ on a smooth projective curve over $k$ lifts to characteristic $0$. This terminology arises
from Oort's conjecture in \cite[\S I.7]{OO} that all cyclic groups $\Gamma$ have this property, or equivalently that every connected cyclic cover lifts to characteristic $0$.   We
showed in \cite{CGH} that all groups $\Gamma$ which are Oort groups for $k$ must be on a certain list of finite groups that is
recalled in Remark \ref{rem:globalOort} below, and we conjectured this list was complete.  Some results
by various authors concerning Oort's conjecture and the generalization proposed in \cite{CGH} are discussed after Remark \ref{rem:globalOort} below.

In this paper we will focus on three local versions of the results in \cite{CGH}.  We will consider which finite groups $G$ that are semi-direct products
of a cyclic prime to $p$-groups with a normal  $p$-subgroup  have
the following properties for the field $k$.  
\begin{enumerate}
\item[1.]  If every local action $\phi:G \to \mathrm{Aut}_k(k[[t]])$ lift to characteristic $0$ we 
call $G$ a {\it local Oort group for} $k$ (as in \cite{CGH}).
\smallskip
\item[2.]  If every local action $\phi:G \to \mathrm{Aut}_k(k[[t]])$ that is sufficiently ramified
lifts to characteristic $0$, we will call $G$ an {\it almost local Oort group for} $k$.   More precisely,
$G$ is an almost local Oort group if there an integer $N(G,k)\ge 0$ such that a local action $\phi$ lifts to characteristic $0$ provided
$t^{N(G,k)}$ divides $\phi(\sigma)(t) - t$
in $k[[t]]$ for all elements $\sigma \in G$ of $p$-power order. 
\smallskip
\item[3.]  If there is at least one local action $\phi:G \to \mathrm{Aut}_k(k[[t]])$ which
lifts to characteristic $0$ we will call $G$ a {\it weak local Oort group for} $k$.
\end{enumerate}

Our goal is to show that any $G$ which has
one of the three above properties must be on  a certain list of groups associated to 
that  property.  In view of Oort's conjecture concerning cyclic groups we will ask  to what extent these lists are complete.

The lists that we obtain will result from studying an obstruction to lifting $\phi$ that is due to Bertin \cite{Bertin}, 
as well as from a refinement of this obstruction that we will call the KGB obstruction.

The {\it Bertin obstruction of $\phi$ vanishes} if there is a finite $G$-set $S$ with non-trivial cyclic stabilizers
such that the character $\chi_S$ of the action of $G$ on $S$ equals $-a_\phi$ on the non-trivial
elements of $G$, where $a_\phi$ is the Artin character associated to $\phi$.  (For the definition
of $a_\phi$ see \cite[Chap. VI]{corps}.) The condition on $\chi_S$ is thus that 
\begin{equation}
\label{eq:chiS}
\chi_S = m\cdot  {\rm reg}_G - a_\phi
\end{equation}
for some integer $m$, where ${\rm reg}_G$ is the character of the regular
representation of $G$.

We will say that {\it Katz-Gabber-Bertin obstruction of $\phi$ vanishes}, or simply that the {\it KGB obstruction of $\phi$ vanishes},
if the following is true.   There is a field $K$ 
of characteristic $0$ and a $G$ cover $X \to X/G = \mathbb{P}^1_K$ of smooth geometrically irreducible projective curves
over $K$ such that $$\mathrm{genus}(X/H) = \mathrm{genus}(Y/H)$$ for all subgroups $H$ of $G$,
where $Y \to Y/G = \mathbb{P}^1_k$ is the $G$-cover of smooth projective curves associated
to $\phi$ by a theorem of Katz and Gabber (see \cite{KG}).   Up to isomorphism, the Katz-Gabber cover 
$Y \to Y/G = \mathbb{P}^1_k$ is characterized by the fact that this $G$-cover is 
totally ramified over one point $\infty \in \mathbb{P}^1_k$, at most tamely ramified over
another point $0 \in \mathbb{P}^1_k$,  unramified off of $\{\infty,0\}$, and the action of $G$ on the completion
$\hat {\mathcal O}_{Y,x}$ of the local ring of $Y$ at the unique point $x$ over $\infty$ corresponds
to $\phi$ via a continuous $k$-algebra isomorphism between $\hat {\mathcal O}_{Y,x}$ and $k[[t]]$. 

We prove in Theorem \ref{thm:KGBtwo} that the Bertin obstruction vanishes if the KGB obstruction vanishes.
In Appendix 2 we show that the KGB obstruction for $\phi$ need not vanish
when the Bertin obstruction of $\phi$ does.

\begin{dfn}
\label{dfn:termdef} Let $G$ be a finite group which is the semi-direct product of a cyclic prime to $p$ group by a normal $p$-subgroup.
If  the 
Bertin obstruction (resp.\ the KGB obstruction) vanishes for all $\phi$ then $G$ will be called a {\it  Bertin group} for $k$ (resp.\ 
a {\it  KGB group} for $k$).   If this is true for all sufficiently ramified $\phi$
we call $G$ an {\it almost Bertin group} for $k$ (resp.\  an {\it almost 
KGB group} for $k$). Finally, if there is at least one $\phi$ for which the Bertin obstruction (resp.\ the KGB obstruction)
vanishes, we will call $G$ a {\it weak Bertin group} for $k$ (resp.\ a {\it weak KGB group} for $k$).
\end{dfn}

Thus a local Oort group for $k$ must be a KGB group for $k$, which must in turn be a Bertin group for $k$.  One has a similar
statement concerning almost local Oort groups and weak local Oort groups for $k$.

We can now state our main result concerning Bertin and KGB groups for $k$. 

\begin{thm}
\label{thm:nonexs}Suppose $G$ is a finite group which  is a semi-direct product of a normal $p$-subgroup
with a cyclic group  of order prime to $p$.  Let $k$ be an algebraically closed field of characteristic $p$. Then $G$ is a KGB group for $k$ 
 if and only if it is a Bertin group for $k$, and this is true exactly when $G$ is isomorphic to a group of one of the following kinds:
\begin{enumerate}
\item[1.] A cyclic group.
\item[2.] The dihedral group $D_{2p^n}$ of order $2p^n$ for some $n \ge 1$.
\item[3.] $A_4$ when $p = 2$.
\item[4.] A generalized quaternion group $Q_{2^m}$ of order $2^m$
for some $m \ge 4$ when $p = 2$. 
\end{enumerate}
\end{thm}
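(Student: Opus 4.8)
The plan is to prove the two implications separately. The easy direction is to show that each group on the list (1)–(4) is a Bertin group for $k$, and in fact (using Theorem~\ref{thm:KGBtwo} in reverse, or direct constructions) a KGB group; the hard direction is to show that any $G$ which is a Bertin group must appear on the list. Since a KGB group is automatically a Bertin group, it suffices for the forward direction to prove that the four families are KGB groups, and for the reverse direction to prove that a Bertin group lies on the list. This already collapses the stated equivalence ``KGB $\iff$ Bertin'' into the two containments.

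\smallskip

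\textbf{Showing the four families are KGB groups.} For cyclic $G$ this should follow because cyclic covers lift (Oort's conjecture is known in enough cases, or more directly one can exhibit a characteristic-$0$ cyclic cover of $\mathbb{P}^1$ with the right genus by choosing branch points and ramification appropriately), so even the strong local Oort property holds and a fortiori the KGB obstruction vanishes. For $D_{2p^n}$, $A_4$ ($p=2$), and $Q_{2^m}$ ($p=2$) one does not need actual lifts, only the vanishing of the KGB obstruction; the plan is, for each local action $\phi$, to write down an explicit $G$-cover $X \to \mathbb{P}^1_K$ in characteristic $0$ realizing the genus sequence $\{\mathrm{genus}(Y/H)\}_H$ of the Katz--Gabber cover attached to $\phi$. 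Concretely: use the fact that in characteristic $0$ one has complete freedom in choosing inertia groups and branch loci (Riemann existence), so the task reduces to a combinatorial matching of Euler characteristics $2 - 2g(Y/H) = [G:H]\,(2) - \sum (\text{local contributions})$ across all $H$. For the dihedral and quaternion cases the subgroup lattice is small and the Katz--Gabber cover is ramified over just $\{0,\infty\}$ with prescribed (cyclic, resp.\ the given wild) inertia, so one can often take $X \to \mathbb{P}^1_K$ ramified over two or three points with compatible inertia. For $A_4$ with $p=2$ one uses that the $p$-part is $(\mathbb{Z}/2)^2$ and checks the finitely many possibilities for $\phi$.

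\smallskip

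\textbf{Showing a Bertin group lies on the list — the main obstacle.} This is the heart of the theorem. Here the plan is: given that the Bertin obstruction vanishes for \emph{every} $\phi$, derive strong structural constraints on $G$. First, take $\phi$ to be as ramified as possible (e.g.\ with large lower numbering jumps): then $a_\phi$ has large degree and the identity~\eqref{eq:chiS}, $\chi_S = m\cdot \mathrm{reg}_G - a_\phi$, forces $\chi_S$ to be a genuine (non-negative-integer-combination) permutation character of a $G$-set $S$ with \emph{non-trivial cyclic} stabilizers. Evaluating~\eqref{eq:chiS} at various elements and restricting to subgroups gives inequalities; in particular $\chi_S(1) = m|G| - a_\phi(1)$ must be non-negative and the multiplicities $\langle \chi_S, \mathbf{1}_C^G\rangle$ over cyclic $C$ must be non-negative integers summing correctly. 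The key arithmetic input is the formula for $a_\phi$ in terms of the ramification filtration: $a_\phi(\sigma) = -i_G(\sigma)$ for $\sigma \neq 1$, where $i_G(\sigma) = \mathrm{ord}_t(\phi(\sigma)(t) - t)$, together with the Hilbert/Hasse--Arf constraints these integers satisfy. One then argues that if $G$ has a non-cyclic $p$-subgroup other than those allowed (i.e.\ other than $(\mathbb{Z}/p)$, elementary abelian of rank $2$ only inside $A_4$ at $p=2$, or generalized quaternion at $p=2$), one can choose $\phi$ making~\eqref{eq:chiS} unsolvable over $\mathbb{Z}_{\ge 0}$ on the relevant cyclic subgroups — essentially a counting contradiction between the wild ramification demanded by $a_\phi$ and the cyclic-stabilizer budget of $S$. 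Combined with the classification of finite groups all of whose $p$-local structure is so restricted (here one invokes that a $p$-group with all subgroups cyclic is cyclic or generalized quaternion, and that the prime-to-$p$ cyclic quotient limits the extensions), one whittles $G$ down to the four families. The expected main difficulty is precisely this last reduction: producing, for each ``bad'' $G$, an explicit $\phi$ whose Artin character defeats~\eqref{eq:chiS}, which requires careful control of the ramification filtration (and, for the ``almost'' variants later in the paper, of how the jumps grow) and a clean character-theoretic non-vanishing criterion — likely an inequality of the form $\sum_{C} (\text{something}) < 0$ that one makes rigorous using Frobenius reciprocity and the structure of induced characters from cyclic subgroups.
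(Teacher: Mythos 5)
Your outline gets the two-containment structure right (every Bertin group is on the list; every listed group is KGB), but it is missing the machinery that makes either containment provable, and it glosses over the two genuinely hard points of the theorem.

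First, the proposal never mentions the key reformulation that the paper lives on: by Artin's theorem on rational characters (the paper's Proposition~\ref{prop:special}), $-a_\phi = \sum_{T\in\mathcal C} b_T\,1_T^G$ with \emph{unique} rational $b_T$, and the Bertin obstruction vanishes iff $b_T\in\mathbb Z_{\ge0}$ for all non-trivial cyclic $T$. Without this, ``a counting contradiction between wild ramification and the cyclic-stabilizer budget of $S$'' is not an argument; the $b_T$ are what turn the obstruction into a verifiable congruence and inequality for each $T$ (see Theorem~\ref{thm:nontrivcase} for the explicit formula involving $\iota(\Gamma)$ and M\"obius sums). Similarly, the KGB side is not done by bare Riemann existence and Euler-characteristic bookkeeping over all $H$: the paper's Theorem~\ref{thm:KGBtwo} converts KGB-vanishing into the same $G$-set $S$ \emph{together with} a generating-system condition $\prod_{t\in\Omega} g_t$ of order $[G:G_1]$; proving the four families are KGB groups is then a question of rigging that product (Corollaries~\ref{cor:closertwo}, \ref{cor:enditall}, Lemma~\ref{lem:a4}), not of building a cover ex nihilo.

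Second, your ``whittling down'' step is where the real work is, and the proposal does not see its shape. The paper does not argue directly on $G$; it invokes the purely group-theoretic classifications (Theorems~\ref{thm:oddcasered}, \ref{thm:evenclass}, imported from \cite{CGH}) which say that any $G$ not on a short list has a \emph{quotient} belonging to a small explicit set of ``bad'' groups. One then shows each bad group fails to be a Bertin group by exhibiting a $\phi$ with some $b_T\notin\mathbb Z$ (Sections~\ref{s:notalmostBertin}--\ref{s:sl23}), and this requires explicit constructions via local class field theory over quasi-finite residue fields (hence the reduction in \S\ref{s:quasired}). Your sketch of ``take $\phi$ as ramified as possible'' is in the right spirit for some of these, but without the quotient-reduction you would be stuck on arbitrary cyclic-by-$p$ groups.

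Third and most seriously, the proposal does not notice that $Q_8=Q_{2^3}$ is \emph{excluded} from item (4) while $Q_{2^m}$ with $m\ge4$ is included, nor that semi-dihedral groups must be ruled out (they were left open in \cite{CGH}). The $Q_8$ exclusion is not a routine check: the paper needs Proposition~\ref{prop:evenqsemi} and a delicate class-field-theoretic argument (Lemmas~\ref{lem:identity}--\ref{lem:quatcase}), culminating in the identification of a single bad ramification filtration $G=G_0=G_1\supset\C(G)=G_2=G_3\supset G_4=\{e\}$ for which $b_{\C(G)}=-1/2$, realized geometrically via the automorphism group of a supersingular elliptic curve (Lemma~\ref{lem:trumpsit}). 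A generalized quaternion group of order $\ge16$ is a KGB group precisely because that pathology cannot propagate; this is why Remark~\ref{rem:subobs} is in the paper — $Q_8$ is a \emph{subgroup} but not a quotient of $Q_{16}$, so the subgroup functoriality alone cannot transfer the obstruction. Any plan that treats ``generalized quaternion'' as uniformly allowed will produce a wrong theorem.
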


Note that if $p = 2$ and $n = 1$ in item (2), $D_{4}$ is simply
$\mathbb{Z}/2 \times \mathbb{Z}/2$. 

By considering particular covers we showed in \cite[Thm.~3.3, 4.4]{CGH} that if $G$ is a local
Oort group for $k$ then it must either be on the list given in Theorem \ref{thm:nonexs} or else $p = 2$ and $G$ is a semi-dihedral group of order at least $16$.   Theorem
\ref{thm:nonexs} shows that in fact, semi-dihedral groups are not local Oort groups in
characteristic $2$.  Note also that Theorem \ref{thm:nonexs} is an if and only if statement concerning
KGB and Bertin groups. 
Theorem 3.3 of \cite{CGH} concerns only necessary conditions which must be satisfied by local Oort groups.

There are examples of particular actions $\phi$ for which the Bertin obstruction
to lifting vanishes but the KGB obstruction does not (see Example \ref{ex:explicit}).  Thus 
the fact that the
Bertin and KGB groups turn out to be the same has to do with the requirement
that the associated obstructions vanish for {\it all} such $\phi$.  

Pagot
 has shown in \cite[Thm. 3]{P2} (see also \cite{Mat}) that  there are $\phi$ which have vanishing
Bertin and KGB obstructions but which nonetheless do not lift to characteristic
$0$.  Thus the latter obstructions are not sufficient
to determine whether $\phi$ has a lift.  

In view of Theorem \ref{thm:nonexs}, we asked the following question:

\begin{question}
\label{eq:quest} Is the set of groups listed in items [1] - [4] of  Theorem \ref{thm:nonexs}
the set of all local Oort groups for algebraically closed fields $k$ of characteristic $p$?
\end{question}

\noindent Brewis and Wewers \cite{BWe} have announced a proof that the answer to this question
is negative because the generalized quaternion group of order $16$ is not a local
Oort group in characteristic $2$.  

\begin{rem}
\label{rem:globalOort}
Suppose the groups of type (1), (2)
and (3) in  Theorem \ref{thm:nonexs} are all local Oort groups.  It would then follow from  \cite[Thm.~2.4, Cor.~3.4, Thm.~4.5]{CGH} that 
a cyclic by $p$ group $\Gamma$ is a global Oort group for $k$ if and only if $\Gamma$ is
either cyclic, dihedral of order $2p^n$ for some $n$ or (if $p = 2$) the alternating
group $A_4$.  This implication is not dependent on determining which generalized quaternion
groups are local Oort groups in characteristic $2$.
\end{rem}

Oort's conjecture
in \cite{OSS} that cyclic groups are local and global Oort groups  was shown for cyclic groups having a $p$-Sylow subgroup of order $p$ (resp.\ $p^2$)  by Oort, Sekiguchi and Suwa \cite{OSS} (resp.\
by Green and Matignon \cite{GM}).  Pagot showed (see \cite{P2} and \cite{Mat})
that when $p = 2$, the Klein four group $D_{4}$ is a local and global
Oort group.  Bouw and Wewers have shown \cite{BW} that for all
odd $p$, the dihedral group $D_{2p}$ is a local and global Oort group,
and they have announced a proof that when $p = 2$, $A_4$ is
a local and global Oort group.  In \cite{BWZ}, Bouw, Wewers and Zapponi establish necessary
and sufficient conditions for a given $\phi$ to lift to characteristic $0$ whenever the $p$-Sylow
subgroup of $G$ has order $p$, regardless of whether $G$ is a local Oort group.  

The following is our main result concerning almost KGB groups and almost Bertin groups for $k$.

\begin{thm}
\label{thm:nonexsalmost}Suppose $G$ is a finite group which  is the semi-direct product of a cyclic group  of order prime to $p$ by a normal $p$-subgroup.  Then $G$ is an almost KGB group for $k$  if and only if it is an almost Bertin group for $k$.  The list of these groups
consists of those appearing in Theorem \ref{thm:nonexs} together with the groups
$\mathrm{SL}_2(\mathbb{Z}/3)$ and $Q_8$ when $p = 2$. 
\end{thm}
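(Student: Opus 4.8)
The plan is to prove the theorem in three steps, running parallel to the proof of Theorem \ref{thm:nonexs}. First, the implication ``almost KGB $\Rightarrow$ almost Bertin'' is automatic from Theorem \ref{thm:KGBtwo}: vanishing of the KGB obstruction of a single $\phi$ forces vanishing of its Bertin obstruction, so if this holds for every sufficiently ramified $\phi$ then $G$ is an almost Bertin group. It then remains to prove (a) every almost Bertin group lies in the stated list, and (b) every group in the stated list is an almost KGB group. Together with the first step these yield the inclusions ``almost KGB'' $\subseteq$ ``almost Bertin'' $\subseteq$ (the list) $\subseteq$ ``almost KGB'', so the two classes coincide and equal the list.

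For step (b), every group occurring in Theorem \ref{thm:nonexs} is already a KGB group, hence a fortiori an almost KGB group, so the only new cases are $G = Q_8$ and $G = \mathrm{SL}_2(\mathbb{Z}/3)$ with $p = 2$. For these I would first pin down the possible ramification filtrations: when $G = Q_8$ the group is a $2$-group, so $G_0 = G_1 = Q_8$, the lower-numbering filtration $Q_8 \supseteq G_2 \supseteq \cdots$ passes only through subgroups normal in $Q_8$, each $G_i/G_{i+1}$ is elementary abelian, and $[G_i, G_j] \subseteq G_{i+j}$ forces the centre $Z = [Q_8, Q_8]$ into $G_2$; when $G = \mathrm{SL}_2(\mathbb{Z}/3)$ the wild inertia is $Q_8$, the tame quotient is $\mathbb{Z}/3$, and one reduces the data to that of the $Q_8$-part together with the rigid tame behaviour. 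From this one writes each $\mathrm{genus}(Y/H)$ as an explicit function of the ramification breaks of $\phi$ via Riemann--Hurwitz, and checks that once the top break is large enough these genera satisfy the inequalities and congruences needed to be realized by a $G$-cover in characteristic $0$. The cover $X \to \mathbb{P}^1_K$ is then produced by the Riemann existence theorem over a characteristic-$0$ field: one picks a branch cycle description --- a tuple of elements of $G$ multiplying to $1$ and generating $G$ --- whose local monodromies give exactly the genera $\mathrm{genus}(X/H) = \mathrm{genus}(Y/H)$. The decisive simplification is that the only non-cyclic subgroups of $Q_8$ are $Q_8$ itself, and of $\mathrm{SL}_2(\mathbb{Z}/3)$ are $Q_8$ and the whole group, so the list of genus conditions to be matched is short and the search for an admissible branch cycle description is finite.

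For step (a), Theorem \ref{thm:nonexs} already shows that every group $G$ as in the theorem that does not appear on its list admits some $\phi$ with non-vanishing Bertin obstruction; what must be added is that for every such $G$ \emph{other than} $Q_8$ and $\mathrm{SL}_2(\mathbb{Z}/3)$ (with $p = 2$) one can choose a sequence $\phi_n$ with ramification tending to infinity whose Bertin obstructions all remain non-zero. I would organize this by the structure of $G$, following the case division used for Theorem \ref{thm:nonexs} --- for instance when the $p$-Sylow subgroup fails to be cyclic, dihedral, or generalized quaternion, when the tame quotient is too large, or in the remaining borderline families such as the semi-dihedral groups --- and in each case produce an explicit family of obstructing actions of unbounded ramification, re-running the computation that the permutation character $\chi_S$ of a $G$-set with non-trivial cyclic stabilizers is forced, through $\chi_S = m\cdot \mathrm{reg}_G - a_{\phi_n}$, to take fixed-point values which cannot be met, or where convenient recording a structural obstruction valid for all sufficiently ramified $\phi$. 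The only configurations in which no such family exists are $Q_8$ and $\mathrm{SL}_2(\mathbb{Z}/3)$: there the break computation of step (b) shows the matching $\chi_S = m\cdot \mathrm{reg}_G - a_\phi$ with cyclic stabilizers is blocked only by an inequality on the breaks that fails precisely when the top break is small, which is exactly why these two groups must be adjoined to the list.

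The main obstacle is step (b) for $\mathrm{SL}_2(\mathbb{Z}/3)$ and $Q_8$: constructing, for \emph{every} sufficiently ramified $\phi$, an honest characteristic-$0$ $G$-cover whose quotients realize all the genera $\mathrm{genus}(Y/H)$ at once. Making the local monodromy data of the characteristic-$0$ cover track the Artin character of $\phi$ closely enough that all these equalities hold simultaneously --- not merely $\mathrm{genus}(X) = \mathrm{genus}(Y)$ --- is the delicate part; the remainder is Riemann--Hurwitz bookkeeping over the short list of subgroups, together with the more laborious but shallower verification in step (a) that the obstructing actions for the non-listed groups can be taken arbitrarily ramified.
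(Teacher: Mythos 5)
Your outline has the right logical shape — the implication chain (almost KGB $\subseteq$ almost Bertin $\subseteq$ list $\subseteq$ almost KGB), the reduction to showing the listed groups almost KGB and the non-listed ones not almost Bertin, the invocation of Theorem~\ref{thm:KGBtwo} for the easy direction, and the use of quotient descent plus the group-theory of Theorems~\ref{thm:oddcasered} and~\ref{thm:evenclass} for step~(a) all match the paper. But step~(b), which you correctly identify as the hard part, is mislocated in character. You frame it as a finite \emph{search} for an admissible branch cycle description once the breaks are large, treating the ramification breaks as free parameters; the facts you cite to constrain them ($G_i$ normal, $[G_i,G_j]\subseteq G_{i+j}$, elementary abelian quotients) are the purely algebraic constraints, and they are nowhere near sufficient. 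The decisive input — and where the paper spends \S\ref{s:quasired}--\S\ref{s:cft} — is \emph{arithmetic}: that a quaternionic $G$-extension of a local field in characteristic $2$ is forced by class field theory to have $i_{n-1}$ even with $i_{n-1}\ge d_0$, except in one bounded exceptional case for $Q_8$ with $i_0=i_1=1$ (Proposition~\ref{prop:evenqsemi}, proved via the reduction to quasi-finite residue fields, the norm and discriminant computations of Lemmas~\ref{lem:zapper}--\ref{lem:boundquat}, and the $(1+x)^{2^M-2}$ filtration trick of Lemma~\ref{lem:identity}). Without proving that constraint, ``once the top break is large enough the inequalities and congruences are satisfied'' has no argument behind it: you must show that the break data coming from an actual $\phi$ necessarily satisfies the parity and size conditions, not that some choice of data does. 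Once that constraint is in hand, the branch-cycle existence for characteristic $0$ is handled by the congruence $b_H\equiv b_{D_1}\equiv b_{D_2}\bmod 2$ in Corollary~\ref{cor:closertwo}, which is also something you would need to supply.

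For $\mathrm{SL}_2(\mathbb{Z}/3)$, the paper does not in fact run through genera of all subgroup quotients directly; it reduces to the $Q_8$ Sylow subgroup via Theorem~\ref{thm:reducetop} (the structural reduction to $p$-groups) and checks conditions (b)--(d) there, which requires computing $b'_{T,G}$ and $b''_{T,G}$ for the nontrivial cyclic $2$-subgroups, verifying $b'_{\C(G),G}=-4\equiv 0\bmod [\N_P(\C(G)):\C(G)]$ and $b''_{\C(G),G}=-6\equiv 0\bmod 3$, and then deriving the inequality $b_{\C(G),P}\ge 4/[\N_P:\C(G)]$ from the Hasse--Arf constraint $i_1\equiv i_0+3\bmod 6$ (Proposition~\ref{prop:qual1}). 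This is a genuinely more structural route than subgroup-by-subgroup genus matching, and it is what makes the problem tractable; simply ``reducing to the $Q_8$ part plus rigid tame data'' is not a substitute for establishing that Theorem~\ref{thm:reducetop}'s conditions hold. Your step~(a) is essentially the paper's; the obstructing families with unbounded conductor are built by class field theory in \S\ref{s:notalmostBertin}, with the semi-dihedral case deferred to the same \S\ref{s:cft} machinery (Propositions~\ref{prop:evenqsemi}(iii) and~\ref{prop:trumps}(iii)) and to Appendix~1's embedding-problem construction, and you should be prepared for that half of the work to be nontrivial as well.
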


In a similar vein to Question \ref{eq:quest}, we  ask:

\begin{question}
\label{eq:quest2} Is the set of  groups described in Theorem \ref{thm:nonexsalmost}
the set of  almost local Oort groups for $k$?
\end{question}

We now consider $G$ which
are weak Bertin groups, i.e.\  for which there is at least one injection $\phi:G \to \mathrm{Aut}_k(k[[t]])$
that has vanishing Bertin obstruction. We will give a purely group theoretic characterization of such $G$ which requires no quantification over  embeddings of $G$ 
into $\mathrm{Aut}_k(k[[t]])$.

\begin{dfn}
\label{def:groovy} Let $G$ be the semi-direct product of a normal $p$-group $P$
by cyclic subgroup $Y$ of order prime to $p$.  Let $B$ be the maximal subgroup
of $Y$ of order dividing $p-1$.  We will call $G$ a {\it Green-Matignon group for} $k$, 
or more briefly a {\it GM group for}
$k$, if
there is a faithful character $\Theta: B \to \mathbb{Z}_p^*$ for which the following
is true:
\begin{enumerate}
\item[a.]  If $1 \ne c \in Y$, then $\C_P(c)=\C_P(C)$ and this group is cyclic.
\item[b.] Suppose $T$ is a cyclic subgroup of $P$ and that $\C_{C}(T)$ is trivial.  Then 
$$xyx^{-1} = y^{\Theta(x)}$$
for $y \in T$ and $x \in \N_B(T)$.
\end{enumerate}
\end{dfn}

Note that  if $|B| \le 2$, $\Theta$ is unique and so condition (b) is
vacuous.
Clearly, cyclic groups and $p$-groups are GM-groups.

\begin{thm}
\label{thm:antiB} Let $G$ be the semi-direct product of a normal $p$-group $G$
by cyclic subgroup $C$ of order prime to $p$. 
There is an injection $\phi:G \to \mathrm{Aut}_k(k[[t]])$
which has vanishing Bertin obstruction if and only if $G$ is
a  GM group for $k$.  Thus $G$ is a weak Bertin group for $k$ if and only if it is 
a  GM group for $k$.  
\end{thm}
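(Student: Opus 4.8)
The plan is to prove the two directions separately, the harder being that a GM group admits an action $\phi$ with vanishing Bertin obstruction. First I would reduce to understanding the Artin character $a_\phi$ of a local action. Recall that for $\phi:G\to\aut_k(k[[t]])$, if $G_0\supseteq G_1\supseteq\cdots$ is the sequence of (lower-numbered) ramification groups, then $a_\phi=\sum_{i\ge 0}\frac{1}{[G_0:G_i]}(\mathrm{reg}_{G}-\mathrm{ind}_{G_i}^{G}\mathbf 1)$ evaluated suitably; more to the point, on a nontrivial element $g\in G$ one has $a_\phi(g)=-i_G(g)$ where $i_G(g)=\min\{\,\mathrm{ord}_t(\phi(g)(t)-t)\,:\,\}$ is the "depth" of $g$, and $a_\phi(1)$ is chosen so that $a_\phi$ is (up to a multiple of the regular character) orthogonal to $\mathbf 1$. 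So condition \eqref{eq:chiS}, that the Bertin obstruction vanishes, says precisely: there is a $G$-set $S$ with all point-stabilizers nontrivial cyclic such that $\chi_S(g)=i_G(g)$ for every $1\ne g\in G$. The key structural observation is that $i_G$ is a class function supported on the nontrivial elements of $G$, with $i_G(g)\ge 1$ always and $i_G(g)\ge 2$ for $g$ of $p$-power order, and that its values along the wild part are governed by the ramification filtration. Since the wild inertia $P$ is normal with $G/P$ cyclic, the filtration $G=G_0\supset G_1=P\supset\cdots$ has each $G_i/G_{i+1}$ elementary abelian; one can choose $\phi$ with prescribed jumps subject only to the Hasse--Arf-type integrality, so one has substantial freedom in the function $i_G$ on $P\setminus\{1\}$, while on $G\setminus P$ the value is forced to be $1$.

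For the forward direction (vanishing Bertin $\Rightarrow$ GM), I would argue as follows. Given $\phi$ with vanishing Bertin obstruction, pick the $G$-set $S$. Decompose $S$ into $G$-orbits $G/T_1,\dots,G/T_r$ with each $T_j$ nontrivial cyclic, so $\chi_S=\sum_j\mathrm{ind}_{T_j}^{G}\mathbf 1$. Now exploit the fact that $\chi_S(g)=i_G(g)=1$ for every $1\ne g\in G\setminus P$ (an element of order prime to $p$, or of mixed order, that is not in $P$ — actually every $g\notin$ any conjugate of $P$ is tame hence $i_G(g)=1$, and more generally $i_G(g)=1$ whenever $g$ has order not a power of $p$). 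The condition $\mathrm{ind}_{T_j}^G\mathbf 1(g)=\#\{xT_j : x^{-1}gx\in T_j\}$ summed over $j$ equals $1$ for such $g$ forces very rigid containment patterns among the $T_j$ and the cyclic subgroups of $G$; pushing this through for $g$ ranging over generators of the various cyclic subgroups meeting $P$ trivially yields exactly condition (a) (each centralizer $\C_P(c)$ for $1\ne c\in C$ is cyclic and independent of $c$) and condition (b) (the normalizer action on a cyclic $T\le P$ with $\C_C(T)=1$ is by a fixed faithful character $\Theta$ valued in $\Z_p^*$, the $p$-adic root of unity constraint coming from the fact that the $T_j$ must be cyclic of order dividing $p^a(p-1)$-type orders and the conjugation must be compatible across the filtration). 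The restriction $\Theta:B\to\Z_p^*$ and its faithfulness is extracted from the requirement that the relevant $T_j$ supporting the wild ramification have order prime-to-$p$ part dividing $p-1$.

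For the reverse direction (GM $\Rightarrow$ there is a $\phi$ with vanishing Bertin obstruction), which I expect to be the main obstacle, the idea is to build $\phi$ essentially from the Green--Matignon construction (hence the name): using the character $\Theta$ one writes down a concrete local extension — or equivalently a Katz--Gabber cover — whose ramification filtration is short, say $G_1=P$, $G_2=1$, or with one further jump controlled by $\Theta$, so that $i_G$ takes the minimal admissible values: $i_G(g)=1$ on $G\setminus P$ and $i_G(g)=2$ (or a controlled small constant on each piece of the filtration) on $P\setminus\{1\}$. One then produces the certifying $G$-set $S$ directly: take $S=G/C\ \sqcup\ (\text{copies of }G/T)$ where $T$ runs over a chosen family of cyclic subgroups — $C$ itself contributes $\chi_{G/C}(g)=1$ for $g$ conjugate into $C$, and one uses conditions (a),(b) precisely to check that adding the orbits $G/T$ for $T$ cyclic (the centralizers $\C_P(c)$ and suitable cyclic subgroups of $P$) makes $\sum\mathrm{ind}_{T}^G\mathbf 1$ agree with $i_G$ on all of $G\setminus\{1\}$; the bookkeeping is a character computation orbit-by-orbit. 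The genuinely hard point is verifying that the combinatorial identity $\sum_T\mathrm{ind}_T^G\mathbf 1 = i_G$ on $G\setminus\{1\}$ has a solution in nonnegative integers exactly under hypotheses (a)–(b): one direction of this is the structure theory above, and for the other I would proceed by induction on $|P|$ using the filtration — restricting to $G_i$, applying the inductive hypothesis, and inducing back up, with condition (b) guaranteeing that the character $\Theta$ governing the normalizer action stays consistent so that the induced orbits remain cyclic-stabilizer orbits. Finally, "weak Bertin group $\iff$ GM group" is immediate from Definition \ref{dfn:termdef} once the displayed equivalence is established, and the parenthetical remark that cyclic groups and $p$-groups are GM groups serves as the base cases of the induction.
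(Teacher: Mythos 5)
The most serious problem is in your construction for the reverse direction (GM $\Rightarrow$ existence of $\phi$ with vanishing Bertin obstruction). You propose to take $\phi$ with the \emph{shortest} possible ramification filtration, say $G_1=P$, $G_2=\{e\}$, so that $i_G(g)=2$ for $g\in P\setminus\{1\}$. That choice typically makes the Bertin obstruction \emph{fail} to vanish. Already for $G=P=(\mathbb Z/p)^2$ (which is a GM group, being a $p$-group): with that filtration one has $\iota(T)=2$ for each subgroup $T$ of order $p$, and using the formula of Theorem~\ref{thm:nontrivcase} one gets $b_T=\iota(T)/[\N_G(T):T]=2/p$, which is not an integer for $p>2$. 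More generally, the reduction in Theorem~\ref{thm:reducetop} records a genuine \emph{lower bound} on the wild ramification (condition~c(ii): $[\N_P(T):T]\,b_{T,P}\ge -b'_{T,G}$), and the purely group-theoretic constant $-b'_{T,G}$ can be positive. The paper therefore works in exactly the opposite regime: Appendix~1 (Proposition~\ref{prop:subquots}) is used, via an induction on a composition series, to manufacture a $\phi$ whose ramification jumps are \emph{arbitrarily large and carefully congruent} (condition $\iota(T)\equiv 0\bmod p^M$ and $\iota(T)\ge\iota(\Gamma)+M$ for $T\subsetneq\Gamma$), precisely so that the $b_{T,P}$ become large positive integers that swamp $-b'_{T,G}$. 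Nothing like this appears in your proposal, and the constraint is not a minor bookkeeping issue — it is the reason the Appendix exists.

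A secondary gap: you plan to read the GM conditions (a) and (b) off \enquote{rigid containment patterns} coming from the identity $\chi_S(g)=i_G(g)=1$ on $g\notin P$, but you do not say how. The paper makes this precise by first passing to the unique rational coefficients $b_T$ of $-a_\phi$ in the basis $\{\mathbf 1_T^G\}$ (Proposition~\ref{prop:special}, Theorem~\ref{thm:nontrivcase}), then proving for $T$ not a $p$-group the dichotomy of Proposition~\ref{prop:subtle} ($\N_G(T)=T$ or $\psi(T,\C_G(T))=0$), and then analyzing subquotients (Corollary~\ref{cor:notquot}, Lemma~\ref{quotients}, Lemma~\ref{lem:zap}) to get condition (a). Condition (b), about the action of $\N_C(T)$ on $T$ being via the inverse Teichm\"uller lift of the tame character, is read off from the ramification-theoretic analysis of $p$-cyclic $T$ in Section~\ref{s:pgroups} (Lemma~\ref{lem:abstractit} and Lemma~\ref{lem:nontrivial}), not from the outer tame contributions alone. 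Your sketch conflates these two sources, and in particular the role of the $p$-adic character $\Theta$ only emerges from the interplay between the filtration $G_i$ and the conjugation action, not from which orders of cyclic groups appear. So while your forward direction is in the right spirit, it needs the full machinery of the numerical reduction and the reduction-to-$p$-groups theorem to close; the reverse direction, as written, would not work.
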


This result generalizes a result of Green and Matignon in \cite{GM} which states that no $\phi$ can 
lift to characteristic $0$ if $G$ contains an abelian subgroup that is neither cyclic nor a $p$-group.
  In \S \ref{s:exGM} we give some
further examples and characterizations of GM groups.  In particular, in 
Theorem \ref{thm:GMexamples}(c)(ii-iv)  we describe some groups 
which are not GM groups   even though all of their abelian subgroups are either cyclic
or $p$-groups.

Following Question \ref{eq:quest}, we ask:

\begin{question}
\label{eq:quest3} Is the set of groups described in Theorem \ref{thm:antiB}
the set of groups $G$ for which some injection $\phi:G \to \mathrm{Aut}_k(k[[t]])$
lifts to characteristic $0$, i.e.\ the set of weak local Oort groups for $k$?
\end{question}
If the answer to this question is affirmative, then every $p$-group would be
a weak local Oort group for $k$.  Matignon has shown in \cite{Ma1}
that every elementary abelian $p$-group is a weak local Oort group for $k$.
Brewis has shown in \cite{Bre} that when $p = 2$ the dihedral group
of order $8$ is a weak local Oort group for $k$.  As one final instance
of Question \ref{eq:quest}, it follows from  work of Oort, Sekiuchi and Suwa and of Bouw, Wewers
and Zapponi that the answer
is affirmative if $\# G$ is exactly divisible by $p$;  see Example \ref{ex:cycpsub}.

We now discuss the organization of the paper.

In Proposition 
\ref{prop:special} of \S \ref{s:imp} we give a numerical reformulation of
the Bertin obstruction of $\phi$.  We show this obstruction vanishes if and only
if a constant $b_T(\phi) \in \mathbb{Q}$ associated to each non-trivial cyclic subgroup $T$
of $G$ is non-negative and integral.  
The basis
for this reformulation is Artin's Theorem that every character of $G$
with rational values is a unique rational linear combination of
the characters of $G$-sets of the form $G/T$ as $T$ ranges
over a set of representatives for the conjugacy classes of cyclic subgroups of $G$. 
In \S \ref{s:numresults} we compute the constants $b_T(\phi)$ when $T$ contains
a non-trivial element of order prime to $p$. 

In  \S \ref{s:KGBsect} we give an alternate characterization of the vanishing
of the KGB obstruction which shows that if it vanishes, then the Bertin obstruction
vanishes.

In  \S \ref{s:functor} we consider the functorial properties
of the Bertin and KGB obstructions on passage to subgroups
and quotient groups.  We show that the vanishing of the
Bertin (resp.\ KGB) obstruction for $\phi:G \to \mathrm{Aut}_k(k[[t]])$
implies that the corresponding obstruction vanishes for the
injection $\phi^\Gamma:\Gamma \to \mathrm{Aut}_k(k[[t]]^N)$
associated to the quotient $\Gamma$ of $G$ by a normal subgroup
$N$.  The vanishing of the Bertin obstruction of $\phi$ implies
that the Bertin obstruction of the restriction $\phi|_H$ of $\phi$ to any
subgroup $H$ of $G$ also vanishes.  

One consequence of \S \ref{s:functor} is that if the Bertin
obstruction of $\phi$ vanishes, then that of the restriction $\phi|_P$
vanishes when $P$ is the (normal) $p$-Sylow subgroup of $G$.  
In \S \ref{s:reduce} we sharpen this statement 
by showing that the Bertin obstruction of $\phi$ vanishes
if and only that of $\phi|_P$ vanishes and $G$ and $a_\phi$
satisfy some further conditions (see Theorem \ref{thm:reducetop}).
The extra conditions are purely group theoretic except for
one (condition c(ii) of Theorem \ref{thm:reducetop}) on the numerical size of the
wild ramification associated to $\phi$.  This reduction to $p$-groups
is central to the proof of Theorem \ref{thm:antiB}.  The proof 
of Theorem \ref{thm:reducetop} is carried out in \S \ref{s:pgroups}, using results from  \S \ref{s:numresults},
\S \ref{s:nonpgroups}
and \S \ref{s:pgroups}.  
We prove Theorem 
\ref{thm:antiB}  in 
\S \ref{s:therealend}.  In \S \ref{s:exGM} we give some examples and alternate group theoretic characterizations of 
GM groups.  

To prove Theorems \ref{thm:nonexs}
and  \ref{thm:nonexsalmost} we must introduce some 
further ideas.   Our strategy  is to exploit
the fact that any quotient of a Bertin group must be a Bertin group
(and similarly for almost Bertin groups).  
One can thus eliminate $G$ from the list of Bertin groups by  showing
it has a quotient that is not Bertin.  
In \S \ref{s:reduction} we recall from \cite{CGH} some purely group theoretic
results which show that if $G$ is not on a small list of groups
then it must have a quotient which is on a second list of groups.
We then work to show that every element of the second list
is not a Bertin group, while every element on the first list is
a KGB group (and thus also a Bertin group).  

The above strategy for proving Theorems \ref{thm:nonexs} and 
\ref{thm:nonexsalmost} is carried out
in the following way.  In 
\S \ref{s:notalmostBertin} various groups are shown not to
be almost Bertin.   To use local class field
theory we show in \S \ref{s:quasired} that we
can allow $k$ to be quasi-finite in the sense of \cite[\S XIII.2]{corps}
rather than algebraically closed.  
 In  \S \ref{s:exts2}, \S \ref{s:cft} and \S \ref{s:sl23} 
we analyze the case
of dihedral groups for all $p$, quaternionic and semi-dihedral groups when $p = 2$ and the
group $\mathrm{SL}_2(3)$ when $p = 2$.   These results provide a new proof
in Corollary \ref{cor:Serrecor} of a result of J P. Serre \cite[\S 5]{serre2}  and J. M. Fontaine \cite{JMF} concerning  local Artin
representations associated to generalized quaternion groups which are not realizable over $\mathbb{Q}$.  
The proofs of Theorems \ref{thm:nonexs} and 
\ref{thm:nonexsalmost} are completed in \S \ref{s:yipes} and 
\S \ref{s:nonexall} using results from Appendix 1. 
In Appendix 1 we prove a technical result which constructs solutions to certain embedding problems with $p$-group kernels
such that the Artin character of the solution has large values on
non-trivial elements of the kernel as well as further congruence properties.    To keep the details of the construction from obscuring the
arguments in the main theorems we put them in Appendix 1.

This is the second in a series of three papers concerning lifting problems.  
In the third paper of  this series we will study the implications of Theorem \ref{thm:nonexs}
to the structure of the global Oort groups considered in \cite{CGH}.

\medbreak
\noindent {\bf Acknowledgements:}  We would like to thank J. Bertin,  I. Bouw, O. Gabber, M. Matignon, A. Mezard, F. Oort, F. Pop and S. Wewers for useful conversations.

\section{The Bertin obstruction.}
\label{s:imp}
\setcounter{equation}{0}

Let $k$ be an algebraically closed field of characteristic $p$.  Suppose  $G$ is
a finite group, and let $\phi:G \to \aut_k(k[[t]])$ be an embedding.   
Let $\mathcal{C}$ be a set of representatives for the conjugacy classes of cyclic subgroups of $G$.  For each subgroup $H$ of $G$, let $1_H$ be the one-dimensional trivial character of $H$, and let $1_H^G = \mathrm{Ind}_H^G 1_H$ be the induction of $1_H$ from $H$ to $G$.

\begin{prop}
\label{prop:special}  Let $a_\phi$ be the Artin character of $\phi$. 
\begin{enumerate}
\item[i.] There are unique rational numbers $b_T = b_T(\phi)$ for $T \in \mathcal{C}$ such that  
\begin{equation}
\label{eq:aphiequal}
-a_\phi = \sum_{T \in \mathcal{C}} b_T \ 1_T^G.
\end{equation}
\item[ii.]
The following conditions are equivalent:
\begin{enumerate}  
\item[a.]  The Bertin obstruction of $\phi$ vanishes.
\item[b.] One has
$0 \le b_T \in \mathbb{Z}$ for all $T\in \mathcal{C}$ such that $T \ne \{e\}$.    
\end{enumerate} 
\item[iii] Suppose the conditions in part (ii) hold.  Let $S$ be the $G$-set whose character appears 
in the definition of the vanishing of the Bertin obstruction in (\ref{eq:chiS}).   Then $m = -b_{\{e\}} \ge 0$ in (\ref{eq:chiS}), and there is an isomorphism of $G$-sets
\begin{equation}
\label{eq:easystuff}
S \cong \coprod_{\{e\} \ne T \in \mathcal{C}} \ \ \coprod_{i = 1}^{b_T} \ (G/T).
\end{equation}
\end{enumerate}
\end{prop}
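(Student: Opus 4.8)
The plan is to derive all three parts from Artin's theorem on characters together with the defining condition (\ref{eq:chiS}) for the vanishing of the Bertin obstruction. First I would establish part (i). The Artin character $a_\phi$ takes rational (indeed integer) values on $G$, so $-a_\phi$ is a rational-valued character. By Artin's induction theorem, the classes $1_T^G = \mathrm{Ind}_T^G 1_T$, as $T$ ranges over $\mathcal{C}$, form a $\mathbb{Q}$-basis for the space of rational-valued virtual characters of $G$; in particular there exist unique rational numbers $b_T$ with $-a_\phi = \sum_{T \in \mathcal{C}} b_T \, 1_T^G$. (Uniqueness is exactly the linear independence half of Artin's theorem.) This gives (\ref{eq:aphiequal}).

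Next, part (ii). Recall that the Bertin obstruction of $\phi$ vanishes iff there is a finite $G$-set $S$ with nontrivial cyclic stabilizers whose character $\chi_S$ satisfies $\chi_S = m \cdot \mathrm{reg}_G - a_\phi$ for some integer $m$. Any $G$-set with nontrivial cyclic point-stabilizers decomposes as a disjoint union of orbits $G/T$ with $\{e\} \ne T$ cyclic, so after conjugating each $T$ into $\mathcal{C}$ we may write $\chi_S = \sum_{\{e\} \ne T \in \mathcal{C}} c_T \, 1_T^G$ with nonnegative integers $c_T$. Since $\mathrm{reg}_G = 1_{\{e\}}^G$, the equation $\chi_S = m \cdot \mathrm{reg}_G - a_\phi$ becomes
\begin{equation}
\label{eq:proofsketch}
\sum_{\{e\} \ne T \in \mathcal{C}} c_T \, 1_T^G = m \cdot 1_{\{e\}}^G + \sum_{T \in \mathcal{C}} b_T \, 1_T^G.
\end{equation}
By the uniqueness in part (i) we may equate coefficients of each $1_T^G$: for $T = \{e\}$ this gives $m = -b_{\{e\}}$, and for $\{e\} \ne T \in \mathcal{C}$ it gives $c_T = b_T$. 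Thus the existence of such an $S$ forces $b_T = c_T \in \mathbb{Z}_{\ge 0}$ for all nontrivial $T \in \mathcal{C}$, proving (a)$\Rightarrow$(b). Conversely, if $0 \le b_T \in \mathbb{Z}$ for all nontrivial $T$, set $m = -b_{\{e\}}$ (an integer by part (i), since $b_{\{e\}} \in \mathbb{Q}$ and we will see in part (iii) it is automatically a nonpositive integer — in fact $m \in \mathbb{Z}$ follows already from $m \cdot \mathrm{reg}_G = a_\phi + \sum_{T \ne \{e\}} b_T 1_T^G$ being an integral character whose value at $e$ is $m \cdot |G|$) and let $S = \coprod_{\{e\} \ne T \in \mathcal{C}} \coprod_{i=1}^{b_T} G/T$; then $\chi_S = \sum_{\{e\} \ne T} b_T 1_T^G = m \cdot \mathrm{reg}_G - a_\phi$, so the Bertin obstruction vanishes. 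This proves (b)$\Rightarrow$(a) and simultaneously yields the $G$-set in (\ref{eq:easystuff}).

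Finally part (iii): assuming the conditions of (ii), equation (\ref{eq:proofsketch}) and the uniqueness of part (i) give $m = -b_{\{e\}}$ and $c_T = b_T$ for the $G$-set $S$ realizing the vanishing; evaluating $\chi_S = m \cdot \mathrm{reg}_G - a_\phi$ at the identity gives $|S| = m|G| - a_\phi(e) = m|G|$, so $m = |S|/|G| \ge 0$, and the orbit decomposition of $S$ is precisely (\ref{eq:easystuff}). The main obstacle is really bookkeeping rather than depth: one must be careful that "the" $G$-set $S$ in (\ref{eq:chiS}) is only determined up to $G$-isomorphism, and that Artin's theorem is being applied on the correct lattice (rational-valued virtual characters), so that equating coefficients in (\ref{eq:proofsketch}) is legitimate; checking that $m$ comes out an integer and that $m \ge 0$ also uses $a_\phi(e) = 0$, which should be recalled explicitly.
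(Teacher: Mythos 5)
Your argument for parts (i) and (ii) is essentially the paper's own: Artin's theorem gives the unique rational expansion in the $1_T^G$, and matching multiplicities in the orbit decomposition of $S$ against that expansion yields the equivalence. Part (iii) is also the same strategy, but it contains a factual error: you assert that $a_\phi(e)=0$, which is false. Indeed $a_\phi(e)=\sum_{\sigma\ne e} i_G(\sigma)>0$; it is the valuation of the different of the associated extension, not zero. The fact that is actually used (and which the paper invokes from \cite[\S VI.2]{corps}) is that the \emph{inner product} $\langle a_\phi,\chi_0\rangle$ with the trivial character vanishes, which is an orthogonality statement and not an evaluation. Consequently your displayed identity $|S|=m|G|-a_\phi(e)=m|G|$ is wrong; the correct relation is $m|G|=|S|+a_\phi(e)$. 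The intended conclusion $m\ge 0$ nevertheless survives, since both $|S|\ge 0$ and $a_\phi(e)\ge 0$, or more cleanly by taking the inner product of (\ref{eq:chiS}) with $\chi_0$ to get $m=\langle\chi_S,\chi_0\rangle\ge 0$. You should replace the false claim with one of these arguments; as written, a referee would flag the step.
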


\begin{proof}  We first prove (i). By Artin's
Theorem \cite[Thm. 13.30, Cor. 13.1]{SerreRep}, every character of $G$ with rational
values is a $\mathbb{Q}$-linear combination of the characters $\{1_T^G: T \in \mathcal{C}\}$,
and the dimension over $\mathbb{Q}$ of the space of all $\mathbb{Q}$-valued characters is $\# \mathcal{C}$.  
Since the character $-a_\phi$  takes rational values, we conclude that (\ref{eq:aphiequal}) holds 
for a unique function $T \mapsto b_T$ from $ \mathcal{C}$ to $\mathbb{Q}$. 

We now prove (ii).  Suppose statement (a) in part (ii) holds.    By considering the $G$-orbits of elements of $S$,
we see that  there is a $G$-set isomorphism 
 \begin{equation}
 \label{eq:Gsets}
 S \cong \coprod_{\{e\} \ne T \in \mathcal{C}} \ \ \coprod_{i = 1}^{n_T} \ G/T
 \end{equation}
 for some integers $n_T \ge 0$.  By (\ref{eq:chiS}) one has 
 \begin{equation}
 \label{eq:Schar}
m\ \mathrm{reg}_G - a_\phi = \chi_S =  \sum_{\{e\} \ne T \in \mathcal{C}} n_T \ 1_T^G.
 \end{equation}
Hence
\begin{equation}
\label{eq:aequation}
-a_\phi = -m \ \mathrm{reg}_G + \sum_{\{e\} \ne T \in \mathcal{C}} n_T\   1_T^G
\end{equation}
where $\mathrm{reg}_G =  1_{\{e\}}^G$.  So by the uniqueness of the rational numbers $b_T$ in (\ref{eq:aphiequal}), we conclude
that $m = - b_{\{e\}}$ and $n_T = b_T$ for $\{e\} \ne T \in \mathcal{C}$.  Thus statement (b) in
part (ii) holds since $0 \le n_T \in \mathbb{Z}$
if $T \ne \{e\}$. 

Suppose now that condition (b) of part (ii)  holds.  Define
\begin{equation}
\label{eq:easystuffed}
S = \coprod_{\{e\} \ne T \in \mathcal{C}} \ \ \coprod_{i = 1}^{b_T} \ (G/T)
\end{equation}
where $b_T \ge 0$ for the $T$ appearing in this coproduct.  The stabilizers of 
elements of $S$ are then conjugates of those $T$  for
which $b_T > 0$, so they are non-trivial cyclic subgroups.  By (\ref{eq:aphiequal}) we have
\begin{equation}
\chi_S = \sum_{\{e\} \ne T \in \mathcal{C}} b_T \  1_T^G = -b_{\{e\}} \mathrm{reg}_G - a_\phi.
\end{equation}
Therefore condition (a) of part (ii) holds.

It remains to show part (iii) of Proposition \ref{prop:special}, so we assume that the conditions in part (ii) hold.  The inner product $\langle a_\phi,\chi_0\rangle$ of $a_\phi$ with 
the one-dimensional trivial representation $\chi_0$ of $G$ is $0$ by \cite[\S VI.2]{corps}.  Hence 
(\ref{eq:chiS}) gives $m = \langle \chi_S,\chi_0\rangle / \langle {\rm reg}_G, \chi_0 \rangle \ge 0$. It  will now suffice to show that  (\ref{eq:easystuffed}) defines up to isomorphism  the unique
$G$-set $S$ with non-trivial cyclic stabilizers for which condition (a) of part (ii) holds.  Since condition (a) of part (ii) determines the character of $S$
up to an integral multiple of $1_{\{e\}}^G$, this is a consequence of the fact that the characters 
$\{1_{T}^G: T \in \mathcal{C}\}$
are linearly independent over $\mathbb{Q}$ by Artin's Theorem.
\end{proof}

We now develop some formulas for the constant $b_T$ appearing in (\ref{eq:aphiequal}).

\begin{notn}
\label{def:nontrivT}
Since $k$ is algebraically closed, $G = G_0$ is the inertia group of $G$ as an automorphism
group of $k[[t]]$.  Let $G_i$ be the $i^{th}$ ramification subgroup of $G$ in the lower numbering.
For all non-trivial subgroups $\Gamma$ of $G$,  let $\iota(\Gamma) = i+1$ when $i$ is the largest positive integer for which $\Gamma \subset G_i$.  Let $\mu$ be the Mobius function, and let $\N_G(T)$ be the normalizer of a subgroup 
$T$ in $G$.  Let $S(T) = S_G(T)$ be the set of all non-trivial cyclic subgroups $\Gamma$ of $G$ which contain $T$.
If $T'$ is also a subgroup of $G$, let $\delta(T,T') = 1$ if $T = T'$ and let $\delta(T,T') = 0$ if $T \ne T'$.
\end{notn}

\begin{thm}
\label{thm:nontrivcase}
For $T \in \mathcal{C}$ the constant $b_T$ 
appearing in (\ref{eq:aphiequal}) is given by
\begin{equation}
\label{eq:theform}
b_T = \frac{1}{[\N_G(T):T]} \left( -\delta(T,\{e\}) a_\phi(e) + \sum_{\Gamma \in S(T)} \mu([\Gamma:T]) \iota(\Gamma) \right ).
\end{equation}
\end{thm}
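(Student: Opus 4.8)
The plan is to write down the value $\hat b_T$ defined by the right-hand side of \eqn{eq:theform}, to check that $-a_\phi = \sum_{T \in \mathcal C}\hat b_T\,1_T^G$ as an identity of class functions on $G$, and then to conclude $\hat b_T = b_T$ from the uniqueness in Proposition~\ref{prop:special}(i). Since every element of $G$ generates a cyclic subgroup and a class function is determined by its values, it suffices to verify this identity after evaluating at a generator $g_{\Gamma_0}$ of an arbitrary cyclic subgroup $\Gamma_0 \le G$.

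Two elementary inputs set the stage. First, for a non-trivial cyclic subgroup $\Gamma$ and any generator $\sigma$ of $\Gamma$ one has $-a_\phi(\sigma) = \iota(\Gamma)$: both quantities equal $\#\{i \ge 0 : \Gamma \subseteq G_i\}$ --- for $\iota(\Gamma)$ this is immediate from Notation~\ref{def:nontrivT}, and for $-a_\phi(\sigma)$ it is the description of the Artin character through the ramification filtration (\cite[Ch.~VI]{corps}), using that $\sigma \in G_i \iff \langle\sigma\rangle \subseteq G_i$ as each $G_i$ is a subgroup; in particular $a_\phi(g_\Gamma)$ is independent of the chosen generator $g_\Gamma$. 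Putting $v(\Gamma) := -a_\phi(g_\Gamma)$ for every cyclic $\Gamma \le G$ --- so $v(\Gamma) = \iota(\Gamma)$ for $\Gamma \ne \{e\}$ and $v(\{e\}) = -a_\phi(e)$ --- formula \eqn{eq:theform} becomes
\[
\hat b_T \;=\; \frac{1}{[\N_G(T):T]}\ \sum_{\Gamma}\ \mu([\Gamma:T])\,v(\Gamma),
\]
the sum running over all cyclic subgroups $\Gamma$ of $G$ with $\Gamma \supseteq T$; here the term $-\delta(T,\{e\})a_\phi(e)$ in \eqn{eq:theform} is exactly the contribution of $\Gamma = \{e\}$, which appears only when $T = \{e\}$. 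Second, for cyclic $\Gamma_0$ the number $1_T^G(g_{\Gamma_0})$ equals the number of fixed points of $\Gamma_0$ acting on $G/T$, namely $\#\{gT : g^{-1}\Gamma_0 g \subseteq T\} = |T|^{-1}\,\#\{g \in G : g^{-1}\Gamma_0 g \subseteq T\}$.

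Next I would move from conjugacy-class representatives to all cyclic subgroups. Because $v$ and the indices $[\Gamma:T]$ are conjugation-invariant, $\hat b_T$ depends only on the conjugacy class of $T$; writing $\tilde b_T := \tfrac{|T|}{|G|}\sum_{\Gamma \supseteq T}\mu([\Gamma:T])\,v(\Gamma)$ one has $\hat b_T = [G:\N_G(T)]\,\tilde b_T$, and since $1_{T'}^G = 1_T^G$ for $T'$ conjugate to $T$ this gives $\sum_{T \in \mathcal C}\hat b_T\,1_T^G = \sum_T \tilde b_T\,1_T^G$ with $T$ now ranging over all cyclic subgroups of $G$. Substituting both displayed formulas and interchanging the order of summation yields
\[
\sum_{T \text{ cyclic}}\tilde b_T\,1_T^G(g_{\Gamma_0})
\;=\; \frac{1}{|G|}\sum_{g \in G}\ \sum_{\substack{T,\,\Gamma\ \text{cyclic}\\ g^{-1}\Gamma_0 g\,\subseteq\,T\,\subseteq\,\Gamma}}\ \mu([\Gamma:T])\,v(\Gamma).
\]
For fixed $g$ and fixed cyclic $\Gamma \supseteq g^{-1}\Gamma_0 g$ of order $n$, the subgroups $T$ with $g^{-1}\Gamma_0 g \subseteq T \subseteq \Gamma$ are exactly the subgroups of $\Gamma$ of order $d$ with $|\Gamma_0| \mid d \mid n$, each satisfying $[\Gamma:T] = n/d$; hence $\sum_T\mu([\Gamma:T]) = \sum_{|\Gamma_0|\mid d\mid n}\mu(n/d) = \sum_{f\mid (n/|\Gamma_0|)}\mu(f)$, which is $1$ if $n = |\Gamma_0|$ and $0$ otherwise. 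So only $\Gamma = g^{-1}\Gamma_0 g$ survives, the double sum collapses, and the right-hand side becomes $\tfrac{1}{|G|}\sum_{g \in G}v(g^{-1}\Gamma_0 g) = v(\Gamma_0) = -a_\phi(g_{\Gamma_0})$ by conjugation-invariance of $v$. This establishes the identity, and hence the theorem.

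The computation is essentially bookkeeping: the points needing care are the passage between $\mathcal C$ and the set of all cyclic subgroups (and the normalizing factor $[\N_G(T):T]$), and the remark that $a_\phi(g_\Gamma)$ is genuinely well defined. Alternatively, one could start from Serre's expansion $a_\phi = \sum_{i \ge 0}[G_0:G_i]^{-1}\bigl(\mathrm{reg}_G - 1_{G_i}^G\bigr)$ and apply an explicit form of Artin's induction theorem to each $1_{G_i}^G$ --- this is where the normality of the $G_i$ would be used --- but the direct verification above is shorter and self-contained.
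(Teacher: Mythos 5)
Your proof is correct, but it takes a genuinely different route from the paper's. The paper derives the formula constructively: it starts from Serre's expansion $a_\phi = \sum_{i\ge 0}\frac{1}{[G_0:G_i]}\bigl(1_{\{e\}}^G - 1_{G_i}^G\bigr)$ (Proposition~\ref{prop:serrethm}), rewrites each $1_{G_i}^G$ through the explicit Artin induction of Lemma~\ref{lem:serrelem} together with the M\"obius inversion of Lemma~\ref{lem:triv}, and then groups terms by the $G$-conjugacy class of the inducing cyclic subgroup. This is precisely the ``alternative route'' you mention in your final paragraph and chose not to take. Your approach instead posits the formula, verifies the class-function identity $-a_\phi = \sum_T \hat b_T\, 1_T^G$ pointwise by evaluating at a generator of each cyclic subgroup, and then invokes the uniqueness of the $b_T$ from Proposition~\ref{prop:special}(i). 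The key collapse $\sum_{e\,\mid\,n/m}\mu(e) = [n=m]$ is a clean M\"obius argument that makes the double sum telescope; I have checked it, including the $T=\{e\}$ case where your convention $v(\{e\})=-a_\phi(e)$ correctly reproduces the $-\delta(T,\{e\})\,a_\phi(e)$ term, and the passage between $\mathcal C$ and the set of all cyclic subgroups via $\hat b_T = [G:\N_G(T)]\,\tilde b_T$. What each approach buys: the paper's derivation produces the formula from first principles without needing to guess it, at the cost of more bookkeeping with induced characters; your verification is shorter and bypasses Serre's Proposition~9.27 entirely, but presupposes the answer. Since the formula is already given in the statement being proved, a verification is entirely legitimate here, and your argument reads well as a self-contained alternative. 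One small presentational point: in the line where you count fixed points of $\Gamma_0$ on $G/T$ you reuse the letter $g$ both for the generic coset representative and for $g_{\Gamma_0}$; a different letter there would avoid a momentary collision.
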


The proof is based on the following formula from  Proposition VI.2 of \cite{corps}.

\begin{prop}
\label{prop:serrethm} One has
\begin{equation}
\label{eq:formulaartin}
a_\phi = \sum_{i = 0}^\infty \frac{1}{[G_0:G_i]} \left ( 1_{\{e\}}^G - 
 1_{G_i}^G \right )
\end{equation}
\end{prop}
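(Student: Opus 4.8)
The plan is to verify \eqref{eq:formulaartin} directly as an identity of $\mathbb{Q}$-valued class functions on $G$, by evaluating both sides at an arbitrary $\sigma \in G$. First I would assemble the two standard inputs. Because $k$ is algebraically closed the residue degree equals $1$, so by the definition of the Artin character \cite[Chap.~VI]{corps} one has $a_\phi(\sigma) = -i_G(\sigma)$ for $\sigma \ne e$, where $i_G(\sigma) = v(\phi(\sigma)(t) - t)$ with $v$ the $t$-adic valuation on $k[[t]]$, and $a_\phi(e) = \sum_{\sigma \ne e} i_G(\sigma)$. The second input is that each lower-numbering ramification group $G_i$ is a normal subgroup of $G = G_0$; consequently the permutation character $1_{G_i}^G = \mathrm{Ind}_{G_i}^G 1_{G_i}$ takes the value $[G_0:G_i]$ on every element of $G_i$ and the value $0$ on every element outside $G_i$. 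I would also record at the outset that $G_i = \{e\}$ for $i$ sufficiently large, so the right-hand side of \eqref{eq:formulaartin} is a finite sum and hence a genuine $\mathbb{Q}$-valued class function.

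Next I would evaluate the right-hand side at a fixed $\sigma \ne e$. Using $1_{\{e\}}^G(\sigma) = 0$ together with the values of $1_{G_i}^G(\sigma)$ just recorded, the $i$-th summand contributes $-1$ when $\sigma \in G_i$ and $0$ otherwise, so the right-hand side evaluates to $-\#\{\,i \ge 0 : \sigma \in G_i\,\}$. The key combinatorial point is that $\sigma \in G_i$ if and only if $i_G(\sigma) \ge i+1$ — this is precisely the definition of the lower numbering — so that cardinality equals $i_G(\sigma)$, and the right-hand side equals $-i_G(\sigma) = a_\phi(\sigma)$, as wanted.

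Finally I would match the values at $e$. There $1_{\{e\}}^G(e) = \#G$ and $1_{G_i}^G(e) = [G_0:G_i]$, so the $i$-th summand equals $\#G_i - 1$ and the right-hand side is $\sum_{i \ge 0}(\#G_i - 1)$. Interchanging the order of summation in $a_\phi(e) = \sum_{\sigma \ne e} i_G(\sigma)$ and reusing the identity $i_G(\sigma) = \#\{\,i \ge 0 : \sigma \in G_i\,\}$ from the previous step gives $a_\phi(e) = \sum_{i \ge 0} \#\{\,\sigma \ne e : \sigma \in G_i\,\} = \sum_{i \ge 0}(\#G_i - 1)$, which agrees. Since the two sides of \eqref{eq:formulaartin} then agree at every element of $G$, the identity follows. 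I do not expect a substantive obstacle: the argument is a bookkeeping computation, and the only points that need care are the normalization $f = 1$ — which is exactly where algebraic closedness of $k$ enters — and consistently using the conventions for the lower numbering and for $i_G$. Indeed the statement is \cite[Prop.~VI.2]{corps}, and the above merely records that proof in the present notation.
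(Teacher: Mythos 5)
Your verification is correct: with residue field algebraically closed (so $f=1$ and $a_\phi(\sigma)=-i_G(\sigma)$ for $\sigma\ne e$), with each $G_i$ normal in $G_0=G$ so that $1_{G_i}^G$ takes the value $[G_0:G_i]$ on $G_i$ and $0$ elsewhere, and with the equivalence $\sigma\in G_i \Leftrightarrow i_G(\sigma)\ge i+1$, the pointwise evaluation at $\sigma\ne e$ and at $e$ matches both sides, and the sum is genuinely finite since the terms with $G_i=\{e\}$ vanish. The paper gives no proof of this proposition at all --- it is quoted directly from \cite[Prop.~VI.2]{corps} --- and your computation is exactly Serre's argument rewritten in the present notation, so it is consistent with (indeed, a self-contained substitute for) the paper's citation.
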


We also need two lemmas. The first is Proposition 9.27 of \cite{SerreRep} while 
the second follows easily from the Mobius inversion formula.

\begin{lemma}
\label{lem:serrelem}
For any cyclic group $A$ let $\theta_A$ be the character defined
for $\sigma \in A$ by $\theta_A(\sigma) = \# A$ if $\sigma$ is a generator of $A$ and $\theta_A(\sigma) = 0$ otherwise.  Let $J$ be an arbitrary  finite group.  Then 
\begin{equation}
\label{eq:dummy}
\# J \cdot 1_J = \sum_{A \subset J} \mathrm{Ind}_A^J(\theta_A)
\end{equation}
where the sum is over the cyclic subgroups $A$ of $J$.
\end{lemma}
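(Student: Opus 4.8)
The plan is to verify the identity $(\ref{eq:dummy})$ pointwise: fix an element $\sigma \in J$ and show that the two class functions take the same value at $\sigma$. The left-hand side evaluates to $\#J$ independently of $\sigma$, so it suffices to show the right-hand side does too.

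For a fixed cyclic subgroup $A \subseteq J$ I would compute $\mathrm{Ind}_A^J(\theta_A)(\sigma)$ directly from the standard formula for induced class functions (\cite[\S7.2]{SerreRep}):
\[
\mathrm{Ind}_A^J(\theta_A)(\sigma) = \frac{1}{\#A}\sum_{\substack{x \in J\\ x^{-1}\sigma x \in A}} \theta_A(x^{-1}\sigma x).
\]
By the definition of $\theta_A$, each summand vanishes unless $x^{-1}\sigma x$ is a generator of $A$, in which case it equals $\#A$; the normalization factor then cancels and one gets
\[
\mathrm{Ind}_A^J(\theta_A)(\sigma) = \#\{x \in J : \langle x^{-1}\sigma x\rangle = A\}.
\]

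Next I would sum over all cyclic subgroups $A$ of $J$ and use the simple observation that for each $x \in J$ the element $x^{-1}\sigma x$ generates exactly one cyclic subgroup of $J$, namely $\langle x^{-1}\sigma x\rangle$, which is one of the groups in the index set. Hence, as $A$ ranges over the cyclic subgroups, every $x \in J$ is counted once and only once, so
\[
\sum_{A \subseteq J} \mathrm{Ind}_A^J(\theta_A)(\sigma) = \#\{x \in J\} = \#J = (\#J)\cdot 1_J(\sigma).
\]
Since $\sigma$ was arbitrary, this proves $(\ref{eq:dummy})$.

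I do not expect any genuine obstacle here; the argument is a short bookkeeping exercise with the induction formula. The one point that requires a moment's attention is the interaction between the support of $\theta_A$ (exactly the set of generators of $A$), its value $\#A$ there, and the factor $1/\#A$ in the induction formula, which together arrange that each conjugate $x^{-1}\sigma x$ lying in $A$ contributes weight exactly $1$ to the count. (If one preferred, the same computation inside a single cyclic group yields $(\#A)\cdot 1_A = \sum_{B \subseteq A}\mathrm{Ind}_B^A(\theta_B)$, but the one-step count above is shortest.)
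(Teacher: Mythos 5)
Your proof is correct. The paper itself does not prove this lemma; it simply cites Proposition~9.27 of \cite{SerreRep}. Your pointwise verification via the induction formula is the standard argument found there: for fixed $\sigma$, the factor $1/\#A$ cancels the value $\#A$ on generators, so $\mathrm{Ind}_A^J(\theta_A)(\sigma)$ counts $\{x \in J : \langle x^{-1}\sigma x\rangle = A\}$, and summing over cyclic $A$ partitions $J$ according to the subgroup generated by $x^{-1}\sigma x$, giving $\#J$.
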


\begin{lemma}
\label{lem:triv}
With the notations of Lemma \ref{lem:serrelem}, one has
\begin{equation}
\label{eq:thetaform}
\theta_A = \sum_{H \subset A} \#H \cdot \mu([A:H]) \cdot 1_H^A.
\end{equation}
where the sum is over all subgroups $H$ of the cyclic group $A$.
\end{lemma}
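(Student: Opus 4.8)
The plan is to invoke Möbius inversion on the lattice of subgroups of the cyclic group $A$. First I would recall the defining property of $\theta_A$: for $\sigma \in A$, $\theta_A(\sigma) = \#A$ if $\sigma$ generates $A$ and $\theta_A(\sigma) = 0$ otherwise. The key observation is that, for each subgroup $H \subset A$, the induced character $1_H^A$ evaluated at $\sigma \in A$ counts (with the factor $[A:H]$) how often $\sigma$ lands in a conjugate of $H$; since $A$ is abelian, $1_H^A(\sigma) = [A:H]$ if $\sigma \in H$ and $0$ otherwise. Hence for fixed $\sigma$, writing $B = \langle \sigma \rangle$, one has $\#H \cdot \mu([A:H]) \cdot 1_H^A(\sigma) = \#H \cdot \mu([A:H]) \cdot [A:H] = \#A \cdot \mu([A:H])$ precisely when $B \subset H \subset A$, and $0$ otherwise.

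Thus the right-hand side of \eqref{eq:thetaform}, evaluated at $\sigma$, equals $\#A \sum_{B \subset H \subset A} \mu([A:H])$. The subgroups $H$ with $B \subset H \subset A$ correspond bijectively to the subgroups of the cyclic quotient $A/B$, and under this correspondence $[A:H]$ runs over exactly the orders of subgroups of $A/B$; equivalently $[A:H]$ runs over the divisors of $[A:B]$ as $H$ varies. So $\sum_{B \subset H \subset A} \mu([A:H]) = \sum_{d \mid [A:B]} \mu(d)$, which is $1$ if $[A:B] = 1$ (that is, $B = A$, i.e. $\sigma$ generates $A$) and $0$ otherwise, by the standard identity $\sum_{d \mid n} \mu(d) = \delta_{n,1}$. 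Therefore the right-hand side of \eqref{eq:thetaform} takes the value $\#A$ when $\sigma$ generates $A$ and $0$ otherwise, which is exactly $\theta_A(\sigma)$. Since the two class functions agree on every element of $A$, they are equal.

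There is essentially no obstacle here: the only points requiring a line of care are the identification of $1_H^A$ on an abelian group (immediate from the induction formula, using that all conjugates of $H$ equal $H$) and the reindexing of the inner sum via the lattice isomorphism between $\{H : B \subset H \subset A\}$ and the subgroup lattice of $A/B$. Both are routine, so the proof is short.
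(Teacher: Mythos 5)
Your proof is correct, and it agrees with the approach the paper intends: the paper gives no explicit argument but remarks that the lemma ``follows easily from the Mobius inversion formula,'' and your pointwise evaluation using $1_H^A(\sigma)=[A\!:\!H]$ for $\sigma\in H$ (and $0$ otherwise) together with $\sum_{d\mid n}\mu(d)=\delta_{n,1}$ is exactly what one gets by unwinding that inversion over the divisor lattice of $\#A$. The only slight stylistic difference is that a formal Möbius-inversion argument would instead apply Lemma \ref{lem:serrelem} to each subgroup $H\subset A$ to get $\#H\cdot 1_H^A=\sum_{K\subset H}\theta_K^A$ and then invert on the lattice of subgroups; your direct verification bypasses Lemma \ref{lem:serrelem} entirely and is, if anything, cleaner for this purpose.
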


\medbreak
\noindent {\bf Proof of Theorem \ref{thm:nontrivcase}.}
\medbreak
Choose an integer $N$ large enough so that $G_i = \{e\}$ if $i \ge N$.  Then
(\ref{eq:formulaartin}) becomes
\begin{equation}
\label{eq:nicartin}
a_\phi = \left (\sum_{i = 0}^N \frac{1}{[G_0:G_i]}\right ) \cdot   1_{\{e\}}^G - 
\sum_{i = 0}^N \left (\frac{1}{[G_0:G_i]} \cdot 1_{G_i}^G\right )
\end{equation}
Apply Lemmas \ref{lem:serrelem} and \ref{lem:triv} to $J = G_i$. This gives
\begin{equation}
\label{eq:thebigone}
 1_{G_i}= \frac{1}{\#G_i} \cdot \sum_{\mathrm{cyclic} \ A \subset G_i} \mathrm{Ind}_A^{G_i} \left (
\sum_{H \subset A} \#H \cdot \mu([A:H]) \cdot 1_H^A\right ).\\
\end{equation}
 We now induce (\ref{eq:thebigone}) up from $G_i$ to $G$,
multiply by $\frac{1}{[G_0:G_i]}$ and sum over $i$.  This leads to 
\begin{equation}
\label{eq:gettingthere}
\sum_{i = 0}^N \left (\frac{1}{[G_0:G_i]} \cdot 1_{G_i}^G\right )
= \frac{1}{\# G_0} \sum_{i = 0}^N  \ \ \ \sum_{H\subset A  \subset G_i, \ A  \    \mathrm{cyclic}} \# H \cdot \mu([A:H]) \cdot 1_H^G.
\end{equation}
For this proof only, extend Notation \ref{def:nontrivT} by setting $\iota(\{e\}) = N+1$.  We can now rewrite (\ref{eq:gettingthere}) as
\begin{equation}
\label{eq:gettingbetter}
\sum_{i = 0}^N \left (\frac{1}{[G_0:G_i]} \cdot 1_{G_i}^G\right )
=  \  \sum_{H\subset A \subset G,\ A \ \mathrm{cyclic}} \frac{\mu([A:H])}{ [G_0:H]}  \cdot \iota(A) \cdot 1_H^G
\end{equation}

Group the terms on the right side of (\ref{eq:gettingbetter}) according to which $T \in \mathcal{C}$
is conjugate to $H$.   Since $G = G_0$, and $\iota(A)$ (resp. $1_H^G$) depends only on the conjugacy class
of $A$ (resp. $H$), this  leads to 
\begin{eqnarray}
\label{eq:gettingbest}
\sum_{i = 0}^N \left (\frac{1}{[G_0:G_i]} \cdot 1_{G_i}^G\right )
&=& 
   \sum_{ \Gamma \in  \{\{e\}\} \cup S(\{e\})} \frac{[G_0:\N_G(\{e\})] \cdot \mu([\Gamma:\{e\}])}{ [G_0:\{e\}]}  \cdot \iota(\Gamma) \cdot 1_{\{e\}}^G\nonumber\\
&+& \sum_{\{e\} \ne T \in \mathcal{C}} \  \sum_{ \Gamma \in  S(T)} \frac{[G_0:\N_G(T)]\cdot \mu([\Gamma:T]) }{ [G_0:T]} \cdot \iota(\Gamma) \cdot 1_T^G
\end{eqnarray}
where $S(T)$ is as in Notation \ref{def:nontrivT}.  
Substituting this back into (\ref{eq:nicartin}) results in 
\begin{equation}
\label{eq:yup}
-a_\phi = \sum_{T \in \mathcal{C}} b_T 1_T^G
\end{equation}
in which $b_T$ has the form in (\ref{eq:theform}) if $T \ne \{e\}$.  Suppose now that $T = \{e\}$.
We get 
\begin{equation}
\label{eq:btriv}
b_{\{e\}} = - \left (\sum_{i = 0}^N \frac{1}{[G_0:G_i]}\right ) + \frac{1}{\# G_0} (N+1) + 
 \sum_{ \Gamma \in   S(\{e\})} \frac{\mu([\Gamma:\{e\}])}{ [G_0:\{e\}]}  \cdot \iota(\Gamma). 
 \end{equation}
Evaluating (\ref{eq:nicartin}) on the identity element $e$ of $G$ leads to 
\begin{equation}
a_\phi(e)  = \left (\sum_{i = 0}^N \frac{1}{[G_0:G_i]}\right ) \cdot   \# G_0 - 
(N+1)
\end{equation}
Substituting this into (\ref{eq:btriv}) leads to the formula in (\ref{eq:theform}) when $T = \{e\}$.\hfill $\square$

 \section{Constants associated to cyclic subgroups which are not $p$-groups.}
\label{s:numresults}
\setcounter{equation}{0}

 In this section we analyze the constants $b_T = b_T(\phi)$ when $T$ 
 is not a $p$-group.  This is needed to relate the KGB obstruction to the Bertin
 obstruction in the next section.  
 
 \begin{dfn}
\label{def:invariantgroup}
If $H$ is a cyclic subgroup of a finite group $J$, define 
$$\psi(H,J) = \sum_{ H \subset \overline{\Gamma} \subset J, \overline{\Gamma} \mathrm{cyclic}} \mu([\overline{\Gamma}:H]).$$  
\end{dfn}

\begin{prop}
\label{prop:subtle}
Suppose $T$ is a cyclic subgroup of $G$ that contains a non-trivial element of order prime to $p$.  Then
$b_T$ is integral if and only if one of the following alternatives occurs:
\begin{enumerate}
\item[a.] $\N_G(T) = T$.  Then $b_T = 1$.
\item[b.] One has $\psi(T,\C_G(T)) = 0$. Then  $b_T = 0$.
\end{enumerate}
If $T$ has order prime to $p$, then (b) is equivalent to
\begin{enumerate}
\item[b$'$.] $\psi(\{e\},\C_G(T)/T) = 0$. 
\end{enumerate}
\end{prop}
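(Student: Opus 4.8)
The plan is to apply Theorem \ref{thm:nontrivcase} to compute $b_T$ explicitly and then extract the arithmetic condition for integrality. Since $T$ contains a non-trivial element of order prime to $p$, the group $T$ is not contained in any ramification subgroup $G_i$ with $i \ge 1$ (those are $p$-groups), so $\iota(\Gamma) = 1$ for every $\Gamma \in S(T)$: indeed any cyclic $\Gamma \supset T$ also contains that prime-to-$p$ element and hence lies in $G_0$ but not $G_1$. Moreover $T \ne \{e\}$, so the $\delta$-term in \eqref{eq:theform} drops out. Thus
$$
b_T = \frac{1}{[\N_G(T):T]}\sum_{\Gamma \in S(T)} \mu([\Gamma:T]).
$$
First I would observe that the numerator is almost $\psi(T,G)$ in the sense of Definition \ref{def:invariantgroup}, except that $\psi(T,G)$ includes the term $\Gamma = T$ (contributing $\mu(1)=1$) while $S(T)$ by Notation \ref{def:nontrivT} consists of the \emph{cyclic} subgroups containing $T$, which also includes $T$ itself; so in fact $\sum_{\Gamma \in S(T)}\mu([\Gamma:T]) = \psi(T,G)$. (I should double-check the convention in $S(T)$: it is "all non-trivial cyclic subgroups $\Gamma$ of $G$ which contain $T$," and since $T\ne\{e\}$ this includes $\Gamma=T$.) Hence $b_T = \psi(T,G)/[\N_G(T):T]$.

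Next I would reduce $\psi(T,G)$ to $\psi(T,\C_G(T))$: a cyclic group $\Gamma$ containing $T$ must centralize $T$ (any two elements of a cyclic group commute), so $T \subset \Gamma \subset \C_G(T)$ automatically, and the sum defining $\psi(T,G)$ only ever ranges over subgroups of $\C_G(T)$. Thus $\psi(T,G) = \psi(T,\C_G(T))$, giving
$$
b_T = \frac{\psi(T,\C_G(T))}{[\N_G(T):T]}.
$$
Now the integrality analysis. If $\N_G(T) = T$, the denominator is $1$, so $b_T = \psi(T,\C_G(T))$, which is automatically an integer; and here $\C_G(T) = T$ as well (since $\C_G(T) \subset \N_G(T) = T$), so the only cyclic $\Gamma$ with $T\subset\Gamma\subset\C_G(T)$ is $T$ itself, giving $\psi(T,T) = 1$ and $b_T = 1$. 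This is alternative (a). Conversely, suppose $\N_G(T) \supsetneq T$. I claim $b_T$ is integral iff $\psi(T,\C_G(T)) = 0$, in which case $b_T = 0$, which is alternative (b). The "if" direction is immediate. For the "only if" direction — and this is the part I expect to require the most care — I need to show that when $\psi(T,\C_G(T)) \ne 0$ the fraction $\psi(T,\C_G(T))/[\N_G(T):T]$ is not an integer; equivalently, that $\psi(T,\C_G(T))$ is not divisible by $[\N_G(T):T]$. The key structural point is that $\N_G(T)/\C_G(T)$ embeds in $\aut(T)$ and more usefully that $T$ contains an element of order prime to $p$: one studies the action of $\N_G(T)$ on the poset of cyclic overgroups of $T$ in $\C_G(T)$, showing that $\psi(T,\C_G(T))$ is "small" relative to the index, or uses a $p$-adic valuation argument exploiting that $T$, hence every $\Gamma \in S(T)$, has order divisible by some prime $\ell \ne p$ while the relevant extensions of $T$ inside $\C_G(T)$ are controlled. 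Here I would look to the companion results in this section (and possibly bound $|\psi(T,\C_G(T))|$ in terms of the number of minimal cyclic overgroups, comparing with $[\N_G(T):T] \ge 2$) to force non-integrality unless the sum vanishes outright.

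Finally, for the last clause: if $T$ itself has order prime to $p$, I would establish the bijection $\Gamma \mapsto \Gamma/T$ between cyclic subgroups of $\C_G(T)$ containing $T$ and cyclic subgroups of $\C_G(T)/T$ — this uses that $T$ has a complement issue which is handled because $|T|$ is prime to $p$, though more simply one notes $[\Gamma:T] = [\Gamma/T : \{e\}]$ and $\mu$ depends only on this index, so $\psi(T,\C_G(T)) = \psi(\{e\}, \C_G(T)/T)$ as a formal identity of Möbius sums over the (isomorphic) subgroup posets. This immediately gives the equivalence of (b) with (b$'$). The one subtlety to verify is that $\Gamma \mapsto \Gamma/T$ really is a bijection onto \emph{all} cyclic subgroups of $\C_G(T)/T$, i.e. that every cyclic subgroup of the quotient lifts to a cyclic subgroup of $\C_G(T)$ containing $T$ — this holds because the preimage of a cyclic group is abelian with cyclic quotient by $T$, and an abelian group that is an extension of cyclic by the prime-to-$p$ cyclic group $T$ need not be cyclic in general, so I would instead argue directly that each cyclic subgroup $\bar\Gamma$ of $\C_G(T)/T$ has a cyclic preimage-intermediate group, namely any cyclic subgroup of the preimage surjecting onto $\bar\Gamma$ — and check such exists using that the preimage is abelian, hence a product of its Sylow subgroups, reducing to the prime-power case where it is routine. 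I expect this lifting verification, together with the non-integrality argument in the converse of (b), to be the main obstacles; the rest is bookkeeping with Theorem \ref{thm:nontrivcase}.
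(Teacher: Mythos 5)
Your reduction to the formula
$$
b_T = \frac{1}{[\N_G(T):T]}\sum_{\Gamma \in S(T)} \mu([\Gamma:T]) = \frac{\psi(T,\C_G(T))}{[\N_G(T):T]}
$$
is exactly what the paper does, and your handling of case (a) and of the equivalence (b)$\Leftrightarrow$(b$'$) follows the same lines (the paper also checks that $\Gamma \mapsto \Gamma/T$ is a bijection onto cyclic subgroups of $\C_G(T)/T$ by looking at Sylow subgroups of the preimage, using that $|T|$ is prime to $p$ and that $G$ has cyclic Sylow $\ell$-subgroups for $\ell \ne p$).

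The genuine gap is exactly where you flag it: you never actually prove that $\N_G(T) \supsetneq T$ and $\psi(T,\C_G(T)) \ne 0$ force $b_T \notin \Z$. Your hint that one should ``bound $|\psi(T,\C_G(T))|$ \ldots{}, comparing with $[\N_G(T):T] \ge 2$'' is not the right comparison: knowing $[\N_G(T):T] \ge 2$ is far too weak, since $\psi(T,\C_G(T))$ could \emph{a priori} be a large nonzero even integer. What is needed, and what the paper proves, is the bound $|\psi(T,\C_G(T))| < [\N_G(T):T]$ whenever $\N_G(T)\ne T$. The paper factors $[\N_G(T):T] = [\N_G(T):\C_G(T)]\cdot \#J$ with $J = \C_G(T)/T$, transports the Möbius sum along $\Gamma\mapsto\Gamma/T$ to a sum of $\mu(\#\overline\Gamma)$ over a subset of cyclic subgroups of $J$, and then bounds its absolute value by the total number of cyclic subgroups of $J$, which equals $\sum_{g\in J}\frac{1}{\phi(\mathrm{ord}(g))}$ (each cyclic subgroup of order $n$ is generated by $\phi(n)$ of its elements) and is at most $\#J$, with strict inequality unless $J$ consists of elements of order $\le 2$; an easy further check (only one subgroup, the trivial one, contributes $\mu = +1$) handles that residual case and gives strict inequality $< [\N_G(T):T]$ whenever $J\ne\{e\}$ or $\N_G(T)\ne\C_G(T)$, i.e.\ whenever $\N_G(T)\ne T$. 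Nothing you write actually produces this strict inequality, so this step must be supplied before the proof is complete.
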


\begin{proof} Let $\pi:\C_G(T) \to J = \C_G(T)/T$ be the quotient homomorphism.  Recall from Notation \ref{def:nontrivT} that $S_G(T)$ is the set of all non-trivial cyclic
subgroups of $G$ which contain $T$. We will first show that there is an injection
\begin{equation}
\label{eq:fnow}
f:S_G(T) \to S_J(\{e\}) \cup \{\{e\}\} \quad \mathrm{defined \ by}\quad f(\Gamma) = \pi(\Gamma).
\end{equation}
which is a bijection if $T$ has order prime to $p$.  
Clearly $f$ is well defined, and since  $\Gamma = \pi^{-1}(\pi(\Gamma))$,  $f$ is injective.  Suppose $T$ has order prime to $p$ and $\overline \Gamma$ is a cyclic subgroup of $J$.  It will suffice to show  $\pi^{-1}(\overline \Gamma)$ is cyclic, since then $\pi^{-1}(\overline \Gamma) \in S(T)$ because $T$ is non-trivial.
Since $\pi^{-1}(\overline \Gamma)$ is an extension of the cyclic group $\overline \Gamma$ by the cyclic subgroup group $T$,  $\pi^{-1}(\overline \Gamma)$ is nilpotent.  So it will suffice to show
$\pi^{-1}(\overline \Gamma)$ has cyclic Sylow subgroups.   The Sylow subgroups of $\pi^{-1}(\overline \Gamma)$ associated to primes $\ell \ne p$
are cyclic since  $G$ has cyclic Sylow subgroups at such $\ell$. When $\ell = p$, the $p$-Sylow subgroup of  
 $\pi^{-1}(\overline \Gamma)$ maps isomorphically to that of $\overline \Gamma$ since $T$ has order prime to $p$,
 so this group is cyclic because $\overline \Gamma$ is cyclic.  This completes the proof that (\ref{eq:fnow}) is a bijection if $T$
has order prime to $p$.  

We now return to arbitrary cyclic $T$ which contain a non-trivial element of order prime to $p$.  
Each $\Gamma \in S_G(T) $ contains a non-trivial element of order prime to $p$, so $\iota(\Gamma) = 1$.  Theorem \ref{thm:nontrivcase} gives 
\begin{equation}
\label{eq:zipper}
b_T = \frac{1}{[\N_G(T):T]} \sum_{\Gamma \in S_G(T)} \mu([\Gamma:T]) \iota(\Gamma) = 
\frac{1}{[\N_G(T):T]} \sum_{\Gamma \in S_G(T)} \mu([\Gamma:T])
\end{equation}
Using the injection (\ref{eq:fnow}) we have
\begin{equation}
\label{eq:okey}
b_T = \frac{1}{[\N_G(T):\C_G(T)]}\cdot  \frac{1}{\# J}\sum_{\overline {\Gamma} \in f(S_G(T)) } \mu(\# \overline{\Gamma}) 
\end{equation}
where $J = \C_G(T)/T$.  Thus if $b_T \in \mathbb{Z}$, we have to have 
\begin{equation}
\label{eq:yup2}
\sum_{\overline{\Gamma} \in f(S_G(T)) } \mu(\# \overline {\Gamma})  \equiv 0\quad \mathrm{
mod}\quad  [\N_G(T):\C_G(T)]\cdot \# J.
\end{equation}

Let $\phi(z)$ be the value of the Euler phi function on an integer $z$.  If $\overline{\Gamma}$ is a 
cyclic subgroup of $J$, there are exactly $\phi(\#\overline{\Gamma})$ generators for $\overline{\Gamma}$, each of which
has order $\# \overline{\Gamma}$.  This leads to the inequality  
\begin{equation}
\label{eq:congruence}
\sum_{\overline{\Gamma} \in f(S(T)) } \mu(\# \overline{\Gamma}) 
\le \sum_{\overline{\Gamma} \subset J, \ \overline{\Gamma} \ \mathrm{cyclic}} 1  = \sum_{g \in J} \frac{1}{\phi(\mathrm{ord}(g))}.
\end{equation}
The sum on the right is bounded by $\# J$, and is less than  $\# J$ unless $J$ is the trivial group.
 We conclude that the congruence (\ref{eq:yup2}) holds if and only if either $\N_G(T) = \C_G(T) = T$
 and $b_T = 1$ or the sum on the left in (\ref{eq:yup2}) is $0$, in which case $b_T = 0$. In the latter
 case, the elements $\Gamma$ of $S_G(T)$ are exactly the cyclic subgroups $\Gamma \subset \C_G(T)$
 containing $T$, which leads to condition (b) in Proposition \ref{prop:subtle} via (\ref{eq:zipper}).
 Conversely, if either condition (a) or (b) hold, then $b_T$ is integral by (\ref{eq:zipper}).  Finally,
 if $T$ has order prime to $p$, then (b) is equivalent to (b') because we have shown
 that the map  $f$ in (\ref{eq:fnow}) is bijective. 
\end{proof}

In view of Corollary \ref{cor:notquot} we have:

\begin{cor}
\label{cor:flipit}
Suppose  $J$ is a subquotient of $G$ with the following property.
There is a cyclic subgroup
$T$ of $J$ which contains a non-trivial element of order prime to $p$ such that $\N_J(T) \ne T$ and $\psi(T,\C_J(T)) \ne 0$. (If $T$ has order prime to $p$, the second condition is equivalent to $\psi(\{e\},\C_J(T)/T) \ne 0$.) Then $ b_{T,J}$ is not
integral.  In particular, 
$G$ is not a weak Bertin group in characteristic $p$, i.e.\  no local $G$-cover in characteristic $p$ has vanishing  Bertin obstruction.  As a result, no such cover can be lifted to characteristic $0$.
\end{cor}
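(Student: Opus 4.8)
The plan is to read the corollary off from Proposition \ref{prop:subtle}, combined with the functoriality of the Bertin obstruction under passage to subquotients established in \S\ref{s:functor}.  The conceptual point, already packaged into Proposition \ref{prop:subtle}, is that for a cyclic subgroup $T$ containing a non-trivial element of order prime to $p$, the \emph{integrality} of the constant $b_T$ is a purely group-theoretic condition on the pair $(J,T)$ and does not depend on the chosen faithful local action of $J$: it holds exactly when $\N_J(T)=T$ (and then $b_T=1$) or when $\psi(T,\C_J(T))=0$ — equivalently $\psi(\{e\},\C_J(T)/T)=0$ if $T$ has order prime to $p$ — and then $b_T=0$, and in no other case.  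Since by hypothesis $\N_J(T)\neq T$ and $\psi(T,\C_J(T))\neq 0$, neither alternative of Proposition \ref{prop:subtle} occurs, so $b_T$ is not integral for any faithful local action of $J$; this is the assertion that $b_{T,J}$ is not integral.  Here one first notes that $J$, being a subquotient of $G$, is again the semidirect product of a cyclic group of order prime to $p$ by a normal $p$-subgroup — in particular it has cyclic Sylow $\ell$-subgroups for every $\ell\neq p$ and admits faithful local actions — so Proposition \ref{prop:subtle} genuinely applies to it; and since $b_T$, $\N_J(T)$, $\C_J(T)$ depend only on the conjugacy class of $T$, one may assume $T$ lies in a chosen set of conjugacy-class representatives for the cyclic subgroups of $J$.

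Next I would feed this into Proposition \ref{prop:special}(ii).  Because $T\neq\{e\}$ (it contains a non-trivial element) and $b_T\notin\mathbb Z$, condition (b) of Proposition \ref{prop:special}(ii) fails for every faithful local action of $J$; hence no faithful local action of $J$ has vanishing Bertin obstruction, i.e.\ $J$ is not a weak Bertin group for $k$.  To transfer this to $G$, suppose for contradiction that some embedding $\phi:G\to\aut_k(k[[t]])$ had vanishing Bertin obstruction.  Writing $J$ as a quotient $H/N$ of a subgroup $H\subset G$ by a normal subgroup $N$, the functoriality results of \S\ref{s:functor} supply an induced embedding of $J$ into the automorphism group of the power series ring $k[[t]]^N$, and Corollary \ref{cor:notquot} shows that the vanishing of the Bertin obstruction of $\phi$ forces the vanishing of the Bertin obstruction of this induced action of $J$ — contradicting the previous sentence.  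Therefore no local $G$-cover in characteristic $p$ has vanishing Bertin obstruction; and since any local action that lifts to characteristic $0$ has vanishing Bertin obstruction \cite{Bertin}, no such cover can be lifted to characteristic $0$.

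The substantive content here is entirely in Proposition \ref{prop:subtle}; the only steps that need checking in the present argument are that subquotients of $G$ stay inside the class of groups to which Propositions \ref{prop:special} and \ref{prop:subtle} apply, and that an embedding of $G$ induces an embedding of each subquotient into the automorphism group of the corresponding invariant power series ring — both of which are furnished by the functoriality machinery of \S\ref{s:functor} summarized in Corollary \ref{cor:notquot}.  Given those, the corollary follows formally, so I expect no genuine obstacle beyond bookkeeping.
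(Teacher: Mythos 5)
Your proposal is correct and follows essentially the same route the paper takes (which simply introduces the corollary with the remark ``In view of Corollary \ref{cor:notquot} we have:'', leaving the details implicit): Proposition \ref{prop:subtle} shows that the hypotheses on $(J,T)$ force $b_{T,J}$ to be non-integral for every faithful local action of $J$, and Corollary \ref{cor:notquot} (i.e.\ the functoriality machinery of \S\ref{s:functor}, via Notation \ref{notn:bprimedef}) then transfers the non-vanishing of the Bertin obstruction from $J$-actions to $G$-actions. You have merely made explicit the bookkeeping the paper suppresses, and the conclusion about liftability is as you say, since lifting to characteristic $0$ forces the Bertin obstruction to vanish (e.g.\ via Theorem \ref{thm:KGBtwo}).
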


\begin{cor}
\label{cor:greenMat} (Green \cite[Prop. 3.3 and 3.4]{Green}, Green - Matignon \cite{GM}) Suppose that the center $\C(G)$ of $G$ is neither cyclic nor a $p$-group.  Then there is a subquotient $J$ of $G$ and a non-trivial cyclic
subgroup $T$ of $J$ of order prime to $p$ such that $b_{T,J}$ is not integral.
Thus $G$ is not a weak Bertin group.
\end{cor}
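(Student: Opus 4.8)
The plan is to reduce to Corollary \ref{cor:flipit} by producing, inside a suitable subquotient $J$ of $G$, a cyclic subgroup $T$ of order prime to $p$ whose normalizer strictly contains $T$ and for which $\psi(\{e\}, \C_J(T)/T) \ne 0$. Since $\C(G)$ is neither cyclic nor a $p$-group, there is a prime $\ell$ such that the $\ell$-part of $\C(G)$ is not cyclic; equivalently $\C(G)$ contains a subgroup $A \cong \Z/\ell \times \Z/\ell$. Because $G$ has cyclic Sylow subgroups at all primes other than $p$ (this comes from $G$ being cyclic-by-$p$, so its Sylow $\ell$-subgroups for $\ell \ne p$ are cyclic), and $\Z/\ell \times \Z/\ell$ is not cyclic, we must have $\ell = p$ — wait, but then $A$ is a $p$-group. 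Let me instead argue directly: $\C(G)$ is a finite abelian group that is neither cyclic nor a $p$-group, so it has at least two distinct prime divisors, and in particular it is not of prime power order; hence $\C(G)$ has a subgroup isomorphic to $\Z/\ell \times \Z/q$ for distinct primes $\ell, q$, at least one of which — say $\ell$ — is different from $p$.

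With such an $\ell \ne p$ in hand, the next step is to choose the subquotient and the cyclic subgroup. Let $T \subset \C(G)$ be the (unique) subgroup of order $\ell$ coming from the $\ell$-part of $\C(G)$; since $T$ is central, $\C_G(T) = G$ and $\N_G(T) = G \supsetneq T$ (strict because $\# G$ is divisible by $q$ as well, or at least by something outside $T$). Now I take $J = G$ itself and must check $\psi(\{e\}, \C_G(T)/T) = \psi(\{e\}, G/T) \ne 0$. If this already fails to vanish we are done by Corollary \ref{cor:flipit}. If $\psi(\{e\}, G/T) = 0$, the idea is to pass to a smaller subquotient: replace $G$ by the subgroup $H$ generated by $T$ together with an element realizing the other prime $q$ inside $\C(G)$, so that $H$ contains $\Z/\ell \times \Z/q$; since $H$ is abelian here, $\C_H(T) = H$, and now $\psi(\{e\}, H/T)$ is computed in a group $H/T$ that still contains an element of order $q > 1$. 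The key computation is that for a nontrivial cyclic group $C$ one has $\psi(\{e\}, C) = \sum_{\overline\Gamma \subset C} \mu(\#\overline\Gamma)$, and more generally for any finite group $C$ this sum equals $\sum_{g \in C} 1/\phi(\mathrm{ord}(g))$ only after reweighting — the honest statement is $\psi(\{e\},C) = \sum_{\overline\Gamma \subset C, \text{ cyclic}} \mu(\#\overline\Gamma)$, and one checks this is nonzero whenever $C$ is a nontrivial group all of whose elements of squarefree order contribute; concretely, for $C$ abelian of order divisible by a prime $q$ but not equal to $\{e\}$, a direct Möbius count shows $\psi(\{e\},C) \ne 0$.

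The main obstacle I anticipate is precisely this last point: establishing that $\psi(\{e\}, C) \ne 0$ for the relevant nontrivial groups $C = \C_J(T)/T$, since $\mu$ takes both signs and cancellation is conceivable. The clean way around it is to observe that $\psi(H,J)$ is, up to the factor $[\N_J(H):\C_J(H)] \cdot \#(\C_J(H)/H)$, exactly the sum appearing in (\ref{eq:yup2})–(\ref{eq:congruence}) in the proof of Proposition \ref{prop:subtle}: that argument shows the sum $\sum_{\overline\Gamma} \mu(\#\overline\Gamma)$ over cyclic subgroups of a group $C$ is strictly less than $\#C$ unless $C$ is trivial, and by the integrality analysis the only way $b_{T,J}$ can be integral is if this sum is $0$ or if $\N_J(T)=\C_J(T)=T$. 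So the strategy is: having arranged $\N_J(T) \supsetneq T$ (automatic since $T$ is central in an abelian $J$ of larger order), it suffices to show the sum is not $0$, and for that I will exhibit one subquotient — take $J$ abelian containing $\Z/\ell\times\Z/q$ with $T = \Z/\ell$, so $C = J/T \supseteq \Z/q$ — and compute $\psi(\{e\}, J/T)$ explicitly for this small case using multiplicativity of the Möbius sum over a direct product, getting a nonzero value because each cyclic factor contributes a factor like $1 - (\ell-1)$ or similar that is manifestly nonzero. Then Corollary \ref{cor:flipit} applies verbatim to give non-integrality of $b_{T,J}$, hence $G$ is not a weak Bertin group.
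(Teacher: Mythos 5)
Your approach has a genuine gap at the key step, and it comes from discarding the right observation too early. You noticed correctly that the Sylow $\ell$-subgroups of $G$ are cyclic for every $\ell \ne p$ (since $G = P.C$ with $C$ cyclic of order prime to $p$), so any non-cyclic abelian subgroup of $\C(G)$ must have a non-cyclic \emph{$p$-part}. But you then backtracked (``wait, but then $A$ is a $p$-group'') and switched to looking for $\mathbb{Z}/\ell \times \mathbb{Z}/q$ with $\ell \ne p$ inside $\C(G)$. That substitute can never work, because a group of squarefree order $\ell q$ ($\ell, q$ distinct primes) is cyclic, and for \emph{any} non-trivial cyclic group $C$ one has
$$\psi(\{e\},C) \;=\; \sum_{\overline{\Gamma}\subset C\ \mathrm{cyclic}} \mu(\#\overline{\Gamma}) \;=\; \sum_{d \mid \#C}\mu(d) \;=\; 0.$$
So taking $J$ abelian with $J \supset \mathbb{Z}/\ell \times \mathbb{Z}/q$ and $T \cong \mathbb{Z}/\ell$ gives $\psi(\{e\},\C_J(T)/T) = 0$ whenever $\C_J(T)/T$ is cyclic --- and it will be cyclic as long as $J$ has cyclic Sylow subgroups at all primes, in particular whenever $J$ has cyclic $p$-Sylow. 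The ``multiplicativity of the M\"obius sum'' you invoke only produces a non-zero answer when one of the factors is a non-cyclic group at a single prime; it never rescues the cyclic case.

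The paper's actual proof uses precisely the fact you abandoned. Since the prime-to-$p$ part of $\C(G)$ is a subgroup of (a conjugate of) the cyclic group $C$, it is cyclic; hence if $\C(G)$ is neither cyclic nor a $p$-group, its $p$-part is non-cyclic and thus contains $E \cong C_p \times C_p$, while its prime-to-$p$ part contains a non-trivial cyclic group $T$. Take $J = T \times E \subset \C(G)$. Then $J$ is abelian, so $J = \N_J(T) = \C_J(T) \ne T$, and
$$\psi(T,\C_J(T)) \;=\; \psi(\{e\},\C_J(T)/T) \;=\; \psi(\{e\},E) \;=\; \mu(1) + (p+1)\mu(p) \;=\; -p \;\ne\; 0.$$
Corollary~\ref{cor:flipit} then gives the non-integrality of $b_{T,J}$. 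So the fix to your argument is structural, not computational: keep $T$ of order prime to $p$ (as you do), but force the quotient $\C_J(T)/T$ to contain the \emph{non-cyclic} $p$-group $C_p^2$ rather than another cyclic prime-to-$p$ factor.
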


\begin{proof}If $\C(G)$ is neither cyclic nor a $p$-group, then there is a subquotient $J$ of $G$ which is the product of a non-trivial cyclic group $T$ of order prime to $p$ with an elementary abelian $p$-group  $E = C_p^2$ of rank $2$.  Then
$J = \N_J(T) = \C_J(T) \ne T$, and $\psi(T,\C_J(T)) = \psi(\{e\},\C_J(T)/T) = \psi(\{e\},E) = 1 + (p+1)\mu(p) = -p \ne 0$.  Thus Corollary \ref{cor:flipit} shows that 
$b_{T,J}$ is not integral, which  completes the proof.
\end{proof}

\section{The Katz-Gabber-Bertin obstruction}
\label{s:KGBsect}
\setcounter{equation}{0}

As in \S \ref{s:imp} we suppose in this section that 
$k$ is an algebraically closed field of characteristic $p$,  that $G$ is
a finite group, and that $\phi:G \to \aut_k(k[[t]])$ is an embedding.  
 Let $\tilde \phi:G \to \aut_k(Y)$
be the action of $G$ on the Katz-Gabber cover $Y \to Y/G = \mathbb{P}^1_k$ associated
to $\phi$, whose properties were recalled in the Introduction.   Let $a_\phi$ be the
Artin character of $G$ associated to $\phi$. 
 
Recall from the introdution that the KGB obstruction of $\phi$ vanishes if there is a field $K$ 
of characteristic $0$ and a $G$ cover $X \to X/G = \mathbb{P}^1_K$ of smooth geometrically
irreducible projective curves
over $K$ such that $\mathrm{genus}(X/H) = \mathrm{genus}(Y/H)$ for all subgroups $H$ of $G$.

\begin{thm}
\label{thm:KGB}If the embedding $\phi:G \to \aut_k(k[[t]])$ lifts to characteristic $0$,
then the KGB obstruction vanishes.
\end{thm}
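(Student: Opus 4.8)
The plan is to deduce this from the local–global principle and the Katz–Gabber construction. Suppose $\phi:G\to\aut_k(k[[t]])$ lifts to characteristic $0$; that is, there is a complete discrete valuation ring $R$ of characteristic $0$ with residue field $k$ and an embedding $\Phi:G\to\aut_R(R[[t]])$ reducing to $\phi$. First I would take the Katz–Gabber cover $Y\to Y/G=\mathbb{P}^1_k$ attached to $\phi$, which is totally ramified over $\infty$ with the action on $\hat{\mathcal O}_{Y,x}\cong k[[t]]$ at the point $x$ above $\infty$ given by $\phi$, tamely ramified over $0$, and unramified elsewhere. The idea is to lift this whole cover, not just the local action: I would build a $G$-cover $\tilde X\to\mathbb{P}^1_R$ of smooth $R$-curves whose special fiber is $Y\to\mathbb{P}^1_k$. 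This is done by patching: over a formal (or rigid) neighborhood of $\infty$ one uses the local lift $\Phi$ — the $G$-action on $R[[t]]$ gives a $G$-cover of a formal disc lifting the wild ramification at $x$ — and over the complement one lifts the tame part, which lifts with the same ramification data because tame covers lift uniquely (Grothendieck). Patching these along the boundary annulus (the tame cover restricted there is $G$-equivariantly the same as the boundary of the local lift, after possibly enlarging $R$) produces $\tilde X\to\mathbb{P}^1_R$. Then $X$ is the generic fiber $\mathrm{Frac}(R)\otimes_R\tilde X$, with $X/G=\mathbb{P}^1_{\mathrm{Frac}(R)}$, and $K=\mathrm{Frac}(R)$.

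Next I would check the genus condition $\mathrm{genus}(X/H)=\mathrm{genus}(Y/H)$ for every subgroup $H\subseteq G$. Because $\tilde X\to\operatorname{Spec}R$ is a smooth proper family of curves and $G$ acts on it, for each $H$ the quotient $\tilde X/H\to\operatorname{Spec}R$ is again a smooth proper family of curves (quotients of smooth curves over a base by a finite group acting with the quotient again smooth over the base — here guaranteed because the cover was constructed so that ramification is "equisingular," the inertia at each point being constant along the family). Hence the arithmetic genus is constant in the family, giving $\mathrm{genus}(X/H)=\mathrm{genus}(\tilde X_k/H)=\mathrm{genus}(Y/H)$. Thus the KGB obstruction of $\phi$ vanishes.

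The main obstacle I expect is the patching/deformation step: producing a genuine smooth projective $R$-curve $\tilde X$ lifting $Y$ with its $G$-action, starting only from the abstract local lift $\Phi$ of $\phi$. One must show that the boundary data of the formal lift at $\infty$ can be glued to a lift of the (unobstructed) tame cover over $\mathbb{P}^1_k\setminus\{\infty\}$; this requires matching the two $G$-covers on the overlapping annulus, which is where an argument in the style of Bertin–Mézard \cite{BM} or formal/rigid patching (as in Harbater's work) is needed, and possibly a finite extension of $R$. A secondary point requiring care is that the resulting family is equiramified, so that forming quotients by subgroups $H$ commutes with base change and genus is preserved — this uses that the special and generic fibers have the same ramification invariants, which is built into the Katz–Gabber normalization and the fact that $\Phi$ reduces to $\phi$. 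Once these are in place, the genus comparison and hence the conclusion are formal.
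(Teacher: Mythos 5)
Your proposal is correct in outline and ultimately arrives at the same result the same way the paper does, but where the paper simply invokes the Bertin--M\'ezard local--global principle \cite[Cor.\ 3.3.5]{BM} as a black box, you effectively re-derive that result via formal patching. In the paper's version, the citation produces an abstract smooth projective $R$-curve $\mathcal{X}$ with $G$-action lifting $Y$, from which the genus equalities follow by flatness of $\mathcal{X}\to\operatorname{Spec}R$; the paper then needs a short extra argument (formal smoothness, residue field $K$) to see that $X/G\cong\mathbb{P}^1_K$ and that $X$ is geometrically irreducible. You instead build the lift directly as a $G$-cover $\tilde X\to\mathbb{P}^1_R$ by gluing the local lift $\Phi$ at the formal disc around $\infty$ to the (trivially liftable) tame part over the complement, which makes $X/G\cong\mathbb{P}^1_K$ automatic; the price is that the patching step you correctly flag as the main obstacle is exactly the content of \cite[Cor.\ 3.3.5]{BM}, so your route is more self-contained but redoes work the paper is entitled to cite, and you should be careful in the genus step: it is the constancy of $\chi(\mathcal{O})$ in the flat proper family $\tilde X/H \to \operatorname{Spec}R$ (together with the fact that both fibers $Y/H$ and $X/H$ are smooth, so that arithmetic genus equals geometric genus) that is really being used, and the claim that $\tilde X/H\to\operatorname{Spec}R$ is itself \emph{smooth} needs the equiramification input you mention, or can be avoided entirely by arguing with Euler characteristics or Riemann--Hurwitz on the two fibers.
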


\begin{proof} 
By the local-global principle for lifting
$G$-covers proved by Bertin and Mezard in 
\cite[Cor. 3.3.5]{BM} (see also \cite[Cor. 2.3]{CGH}), there is a lifting of $\phi$ to characteristic $0$ if and only if there is a lifting of $\tilde \phi:G \to \aut_k(Y)$ to characteristic $0$.  Suppose such a lift exists.  Thus there is a complete discrete valuation ring $R$ having characteristic $0$
and residue field $k$ and an action of $G$ on a 
smooth projective curve $\mathcal{X}$ over $R$ with the following property.  There is an isomorphism
of the special fiber $k \otimes_R \mathcal{X}$ of $\mathcal{X}$ with $Y$ which carries the action of $G$ on $k \otimes_R \mathcal{X}$
to the action of $G$ on $Y$ specified by $\tilde \phi$. 

Let $K$ be the
fraction field of $R$, and let  $X = K \otimes _R \mathcal{X}$.  Since $\mathcal{X}$ is smooth over $R$, flat base change implies $\mathrm{genus}(X/H) = \mathrm{genus}(Y/H)$ for all subgroups $H$ of $G$.  By formal smoothness (cf.\ \cite[Remark I.3.22]{milne} and \cite[17.1.1, 17.5.1]{EGAIV}), each of $X$ and $X/G$ have a point with residue field $K$ because $k$ is algebraically
closed and $\mathcal{X}$ lifts $Y$.  Therefore  $X$ is geometrically irreducible and $X/G$ is isomorphic to $\mathbb{P}^1_K$, so the KGB obstruction of $\phi$ vanishes.
\end{proof}

\begin{thm}
\label{thm:KGBtwo} 
The KGB obstruction vanishes if and only if there is a finite $G$-set $S$ for which the following is true:
\begin{enumerate}
\item[a.]  The stabilizer of each element of $S$ is a non-trivial cyclic subgroup of $G$,
and the character of the action of $G$ on $S$ is
\begin{equation}
\label{eq:chiStwo}
\chi_S = m\cdot  {\rm reg}_G - a_\phi
\end{equation}
for some integer $m$.
\item[b.]  There is a set of representatives $\Omega$ for the $G$-orbits
in $S$ and a subset
 $\{g_t\}_{t\in \Omega}\subset G$ such that $g_t$ generates
the stabilizer $G_t \ne \{e\}$ of $t$ in $G$, $\{g_t\}_{t \in \Omega}$ generates $G$, and the order of $\prod_{t \in \Omega} g_t$ is the index $[G:G_1]$ in $G$ of the wild inertia subgroup $G_1$.
\end{enumerate}
In particular, the vanishing of the KGB obstruction implies the vanishing of the Bertin obstruction,
and both of these obstructions vanish if $\phi$ lifts to characteristic $0$.
\end{thm}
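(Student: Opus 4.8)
The plan is to convert both sides of the asserted equivalence into statements about branch data of covers of $\mathbb{P}^1$, using two classical inputs. The first is the formula expressing the genus of an intermediate cover via Artin characters: for a $G$-Galois cover $Z\to Z/G=\mathbb{P}^1$ of smooth projective curves and any $H\le G$,
\[
2\,\mathrm{genus}(Z/H)-2 \;=\; -2[G:H] \;+\; \sum_{P}\langle a_P,\,1_H^G\rangle ,
\]
the sum over the branch points $P$, where $a_P$ is the Artin character of the $G$-action at $P$ (this is Riemann--Hurwitz together with the behaviour of the Artin conductor in subextensions, \cite[\S VI]{corps}; the case $H=\{e\}$ is the usual formula, the case $H=G$ the identity $\langle a_P,1_G\rangle=0$), and $a_P=1_{\{e\}}^G-1_I^G$ when $P$ is tamely ramified with cyclic inertia $I$. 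Applied to the Katz--Gabber cover $Y$ — totally ramified over $\infty$ with local Artin character $a_\phi$, and, when $G$ is not a $p$-group, tamely and totally ramified over one further point with cyclic inertia $C_0$ of order $[G:G_1]$ (and unramified over that point when $G$ is a $p$-group, in which case set $C_0=\{e\}$) — this reads
\[
2\,\mathrm{genus}(Y/H)-2 \;=\; -2[G:H] \;+\; \langle a_\phi+1_{\{e\}}^G-1_{C_0}^G,\,1_H^G\rangle .
\]
I will also use that $[G:G_1]$ is prime to $p$, so $C_0$ is a complement of the normal $p$-Sylow subgroup $G_1$ and hence, by Schur--Zassenhaus, \emph{every} cyclic subgroup of $G$ of order $[G:G_1]$ is conjugate to $C_0$. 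The second input is Riemann's existence theorem: over an algebraically closed field of characteristic $0$, a geometrically connected $G$-cover of $\mathbb{P}^1$ branched at $r$ prescribed points is the same datum as a tuple $(g_1,\dots,g_r)$ of non-trivial elements of $G$ with $\langle g_1,\dots,g_r\rangle=G$ and $g_1\cdots g_r=e$, the inertia at the $j$-th point being $\langle g_j\rangle$.

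To prove ``(a) and (b) $\Rightarrow$ KGB vanishes'', start from $S=\coprod_{t\in\Omega}G/\langle g_t\rangle$ as in (a), (b). Pairing $\chi_S=m\,\mathrm{reg}_G-a_\phi$ with the trivial character and using $\langle a_\phi,1_G\rangle=0$ gives $m=\langle\chi_S,1_G\rangle=|\Omega|$. Put $g_0=(\prod_{t\in\Omega}g_t)^{-1}$; by (b) it has order $[G:G_1]$, so $\langle g_0\rangle$ is conjugate to $C_0$ and $1_{\langle g_0\rangle}^G=1_{C_0}^G$. Over $\mathbb{C}$, apply Riemann's existence theorem to the tuple $(g_t)_{t\in\Omega}$ augmented by $g_0$ (omit $g_0$ if $[G:G_1]=1$) to get a geometrically connected $G$-cover $X\to\mathbb{P}^1_{\mathbb{C}}$, with $X/G=\mathbb{P}^1_{\mathbb{C}}$ and branch inertia groups the $\langle g_t\rangle$ and $\langle g_0\rangle$. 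Substituting $\sum_t 1_{\langle g_t\rangle}^G=\chi_S=|\Omega|\,1_{\{e\}}^G-a_\phi$ and $1_{\langle g_0\rangle}^G=1_{C_0}^G$ into the genus formula, the contribution collapses exactly to the one in the formula for $\mathrm{genus}(Y/H)$; hence $\mathrm{genus}(X/H)=\mathrm{genus}(Y/H)$ for all $H$, so the KGB obstruction vanishes.

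For the converse, take $X\to X/G=\mathbb{P}^1_K$ realizing the genus equalities; base change to $\overline K$ (this affects neither the genera nor the conjugacy classes of inertia groups) and extract, via Riemann's existence theorem, branch data $(h_1,\dots,h_N)$ with $\prod_j h_j=e$, $\langle h_1,\dots,h_N\rangle=G$, and inertia $\langle h_j\rangle$ non-trivial cyclic at the $j$-th point. Equating the two genus formulas over all $H$ and using that $a_\phi$, the $1_{\langle h_j\rangle}^G$, $1_{C_0}^G$ and $\mathrm{reg}_G$ lie in the span of the linearly independent set $\{1_T^G:T\in\mathcal C\}$, on which the character pairing is non-degenerate (Artin's theorem, as in the proof of Proposition \ref{prop:special}), yields the character identity $\sum_{j=1}^N 1_{\langle h_j\rangle}^G=(N-1)\,\mathrm{reg}_G+1_{C_0}^G-a_\phi$. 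Writing $-a_\phi=\sum_T b_T\,1_T^G$ as in $(\ref{eq:aphiequal})$ and letting $a_T$ count the $j$ with $\langle h_j\rangle$ conjugate to $T$, comparison of coefficients in the basis $\{1_T^G\}$ gives $b_{\{e\}}=-(N-1)$, $b_{[C_0]}=a_{[C_0]}-1$, and $b_T=a_T\ge 0$ for the remaining classes. The main obstacle — the only genuinely non-formal point — is to rule out $a_{[C_0]}=0$: here I would invoke Proposition \ref{prop:subtle}, which says that when $G$ is not a $p$-group (so $C_0$ contains a non-trivial element of order prime to $p$) the number $b_{C_0}$ lies in $\{0,1\}$ whenever it is an integer; since $b_{[C_0]}=a_{[C_0]}-1$ is manifestly an integer, it cannot equal $-1$, so $a_{[C_0]}\ge 1$. (A direct geometric argument using $\mathrm{genus}(X/G_1)=\mathrm{genus}(Y/G_1)=0$ only shows that some inertia group surjects onto $G/G_1$, which is weaker.) Granting this, pick $j_0$ with $\langle h_{j_0}\rangle$ conjugate to $C_0$, cyclically permute the tuple so this becomes the last entry and conjugate the whole tuple so that $\langle h_N\rangle=C_0$, and set $S=\coprod_{j<N}G/\langle h_j\rangle$, $\Omega=\{1,\dots,N-1\}$, $g_j=h_j$. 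Then $\chi_S=\sum_{j<N}1_{\langle h_j\rangle}^G=(N-1)\,\mathrm{reg}_G-a_\phi$ (so (a) holds with $m=N-1$), the $g_j$ generate $G$ because $h_N=(h_1\cdots h_{N-1})^{-1}$, and $\prod_{j<N}g_j=h_N^{-1}$ has order $|C_0|=[G:G_1]$ (so (b) holds). When $G$ is a $p$-group, $C_0=\{e\}$, the character identity reads $\sum_j 1_{\langle h_j\rangle}^G=N\,\mathrm{reg}_G-a_\phi$, and the analogous recipe using all $N$ branch points works directly, with $\prod_j h_j=e$ of order $1=[G:G_1]$.

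Finally, since condition (a) is exactly $(\ref{eq:chiS})$, the defining condition for vanishing of the Bertin obstruction, the equivalence proved above shows that vanishing of the KGB obstruction forces vanishing of the Bertin obstruction; and if $\phi$ lifts to characteristic $0$ then the KGB obstruction vanishes by Theorem \ref{thm:KGB}, hence so does the Bertin obstruction.
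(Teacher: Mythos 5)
Your proof is correct and follows essentially the same route as the paper's: both directions are governed by the same conductor--discriminant/Riemann--Hurwitz identity linearizing the genus conditions into the character identity $\sum_j 1_{\langle h_j\rangle}^G = (N-1)\,\mathrm{reg}_G + 1_{C_0}^G - a_\phi$, and the single non-formal point --- existence of a branch point with inertia conjugate to $C_0$ --- is handled in both by invoking Proposition~\ref{prop:subtle} to force $b_{C_0}\ge 0$. The only presentational difference is that you spell out the forward direction via Riemann's existence theorem, whereas the paper dispatches it with ``reversing the above steps''; that is a welcome clarification but not a different method.
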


\begin{proof}   Suppose
first  that the KGB obstruction vanishes, so that there is a $G$-cover $X \to X/G = \mathbb{P}^1_K$
of smooth geometrically irreducible curves over  a field $K$ of characteristic $0$ such that 
\begin{equation}
\label{eq:genusH}
\mathrm{genus}(X/H) = \mathrm{genus}(Y/H)
\end{equation}
for all subgroups $H$ of $G$.  By making a base change from $K$ to an algebraic closure of $K$,
we can assume that $K$ is algebraically closed.   For each point $q$ of $X/G = \mathbb{P}^1_K$
let $x(q)$ be a point of $X$ over $q$.  The inertia group $I_{x(q)}\subset G$ is cyclic and
equal to the decomposition group of $q$ since $K$ is algebraically closed of characteristic $0$.
Consider the character function 
\begin{equation}
\label{eq:fX}
f_X = \sum_{q \in X/G} \mathrm{Ind}_{I_{x(q)}}^G a_{x(q)} = \sum_{q \in X/G} (1_{\{e\}}^G - 1_{I_{x(q)}}^G )
\end{equation}
where $a_{x(q)} = 1_{\{e\}}^{I_{x(q)}} - 
 1_{I_{x(q)}}$ is the Artin character of the action of $I_{x(q)}$ on $\hat{O}_{X,x(q)}$.  
 Let $H$ be a subgroup of $G$.  Then by the calculation of relative discriminants in \cite[\S 3]{corps}
 we have
 \begin{equation}
 \label{eq:Xgen}
 2\cdot \mathrm{genus}(X) - 2 - \# H \cdot (2\cdot  \mathrm{genus}(X/H) -2) = \langle f_X,1_H^G \rangle
 \end{equation}
 where $\langle \ , \ \rangle$ is the usual inner product on characters.  We now apply the
 same reasoning to the Katz-Gabber cover $Y \to Y/G = \mathbb{P}^1_k$ associated to
 $\phi$, which is totally ramified over $\infty \in \mathbb{P}^1_k$, tamely ramified with
 cyclic inertia group isomorphic to $Y$ over $0 \in \mathbb{P}^1_k$ and unramified over
 all other points of $\mathbb{P}^1_k$.  Define
 \begin{equation}
 \label{eq:fC}
 f_Y = a_\phi + \delta_C \cdot (1_{\{e\}}^G -  1_C^G)
 \end{equation}
 where the Artin character $a_\phi$ is the one associated to the action of $G$ on the unique point
 of $Y$ over $\infty$ and $\delta_C = 0$ (resp. $\delta = 1$) if $C$ is trivial (resp. non-trivial).  Then
 \begin{equation}
 \label{eq:Cgen}
 2\cdot \mathrm{genus}(Y) - 2 - \# H \cdot (2 \cdot \mathrm{genus}(X/C) -2) = \langle f_Y,1_H^G \rangle
 \end{equation}
 for all subgroups $H$ of $G$.  
 
 We conclude from (\ref{eq:genusH}), (\ref{eq:Xgen}) and (\ref{eq:Cgen})
 that $\langle f_X,1_H^G \rangle = \langle f_Y,1_H^G\rangle$ for all $H$.  Since $f_X$ and $f_Y$
 take rational values and the characters of the form $1_H^G$ generate the $\mathbb{Q}$-vector
 space of rational valued characters, this implies that 
 \begin{equation}
 \label{eq:XCcomp}
 \sum_{q \in X/G} (1_{\{e\}}^G - 1_{I_{x(q)}}^G) = f_X = f_Y = a_\phi + \delta_C \cdot (1_{\{e\}}^G -  1_C^G)
 \end{equation}
We now rewrite this equation using the formula for $-a_\phi$ in part (i) of Proposition \ref{prop:special}.
We can assume $C$ is included in the set $\mathcal{C}$ of representatives for the cyclic subgroups of $G$.  From
(\ref{eq:XCcomp}) we have  
 \begin{equation}
 \label{eq:XCcomp2}
 \delta_C \cdot 1_C^G + \sum_{T \in \mathcal{C}} b_T 1_T^G = \delta_C \cdot 1_C^G -a_\phi  =  \sum_{q \in X/G} 1_{I_{x(q)}}^G - m \cdot 1_{\{e\}}^G = \chi_{S'} - m \cdot 1_{\{e\}}^G
 \end{equation}
 for some integer $m$, where $S'$ is the set of points of $X$ which ramify over $X/G$, and the stabilizer
 in $G$ of each element of $S'$ is cyclic and non-trivial.  
 
 The uniqueness of the values of the $b_T$ was proved in Proposition \ref{prop:special}(i).  Hence 
 (\ref{eq:XCcomp2}) shows that $\delta_C  + b_C$ is the number of $q \in X/G$ such that $I_{x(q)}$ is a conjugate
 of $C$.  This implies $b_C$ is integral.  Thus if $C$ is non-trivial, Proposition \ref{prop:subtle}
 of \S \ref{s:numresults} 
 shows that $b_C \ge 0$.  Since $\delta_C = 1$ if $C$ is non-trivial, we conclude
 in this case that there is a point $q_0 \in X/G$ such that $I_{x(q_0)}$ is a conjugate
 of $C$.  We now define $S$ to be the complement of the $G$-orbit of $x(q_0)$ in $S$
 if $C$ is non-trivial, and we let $S = S'$ if $C$ is trivial.  The uniqueness of the $b_T$
 in (\ref{eq:XCcomp2}) now implies
 $$-a_\phi = \chi_S - m \cdot r_G$$
 where $r_G = 1_{\{e\}}^G$ is the regular representation of $G$.  Since the elements of $S$
 have non-trivial cyclic stabilizers in $G$, $S$ has the properties in part (a) of Theorem \ref{thm:KGBtwo}.
 
 By the classical description the fundamental group of $\mathbb{P}^1_{K} - Z$ (see \cite[Chap.~XIII, Thm. 2.12]{SGAI}),
we can find a set $\Omega'$ of representatives for the $G$-orbits in $S'$
and a generator $g_{t}$ of the inertia group of each $t \in \Omega'$ such that
$\prod_{t \in \Omega'} g_t = e$ is the identity of $G$ and $\{g_t\}_{t \in \Omega'}$ generates $G$.  Letting $\Omega = \Omega' \cap S$ shows
$\prod_{t \in \Omega} g_t$ is either trivial if $C$ is trivial or is a generator of a conjugate of the cyclic group $I_{x(q_0)}$
of order $\# C$ if $C$ is non-trivial.  This proves that $S$ has all properties stated in Theorem
\ref{thm:KGBtwo}.

Conversely, suppose that we have an $S$ with the properties stated in Proposition \ref{prop:special}.
Let $K$ be an algebraically closed field of characteristic $0$. By reversing the above
steps, we can construct a $G$-cover $X \to \mathbb{P}^1_K$ of smooth irreducible 
curves for which (\ref{eq:XCcomp}) holds.  Now (\ref{eq:fX}) and (\ref{eq:fC}) establish
(\ref{eq:genusH}) for all $H$.

The vanishing of the Bertin obstruction is equivalent, by definition, to condition (a) of Theorem \ref{thm:KGBtwo}. 
So the Bertin obstruction vanishes if the KGB obstruction does.  Both obstructions vanish if $\phi$ lifts to characteristic $0$ by Theorem \ref{thm:KGB}.
\end{proof}

\begin{rem}
\label{rem:tatel} Using \cite[\S 3]{corps}, the vanishing KGB obstruction can also be formulated in the following way. There
is a $G$-cover of smooth geometrically irreducible projective curves $X \to X/G$
such that for all primes $\ell$ different from the characteristic of $k$, the $\ell$-adic
Tate modules of $X$ and of $Y$ are $G$-isomorphic, where $Y \to Y/G$ is
as before the Katz-Gabber cover associated to $\phi$.  We will not need this
interpretation in what follows.
\end{rem}

\section{Functorality.}
\label{s:functor}
\setcounter{equation}{0}

Suppose $\phi:G \to \aut_k(k[[t]])$ as in \S \ref{s:KGBsect} is given.  Let
$N$ be a normal subgroup of $G$, and let $\Gamma = G/N$.   Define $\phi^\Gamma: \Gamma \to
\aut_k(k[[t]]^N)$ by $\phi^\Gamma(\gamma)= \phi(g)$ if $g \in G$ has image $\gamma \in \Gamma$.  Since
$k[[t]]^J$ is a complete discrete valuation ring with residue field $k$, it is isomorphic
to $k[[z]]$ for some $z \in k[[t]]^J$ by \cite[Prop. II.5]{corps}.  If $H$ is an arbitrary subgroup of $G$,
define $\phi_H:H \to \aut_k(k[[t]])$ to be the restriction of $\phi$ from $G$ to $H$.

\begin{thm}
\label{thm:functorial}    If the Bertin obstruction (resp.\ the KGB obstruction) vanishes for $\phi$,
then then same is true of $\phi^\Gamma$.  If the Bertin obstruction of $\phi$ vanishes,
then it does for $\phi_H$.  
\end{thm}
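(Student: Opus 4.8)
The plan is to establish the three assertions separately: the two concerning the Bertin obstruction by manipulating a witnessing $G$-set via the formula for $-a_\phi$ in Proposition~\ref{prop:special}, and the one concerning the KGB obstruction geometrically, using the genus characterization.

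\emph{The Bertin obstruction under passage to $\Gamma = G/N$.} Let $\chi \mapsto \chi^N$ denote the additive operation on virtual characters carrying a class function $\chi$ of $G$ to the class function of $\Gamma$ with $(\chi^N)(gN) = \frac{1}{\#N}\sum_{n \in N}\chi(gn)$; it satisfies the adjunction $\langle \chi^N, \rho\rangle_\Gamma = \langle \chi, \mathrm{Infl}_\Gamma^G\rho\rangle_G$ for every character $\rho$ of $\Gamma$. I will use two facts. First, for a cyclic subgroup $T \le G$ one has $(1_T^G)^N = 1_{\bar T}^\Gamma$ with $\bar T = TN/N$, since the $N$-invariants of the permutation module $k[G/T]$ are spanned by the sums over the $N$-orbits and form the permutation module $k[\Gamma/\bar T]$; in particular $(\mathrm{reg}_G)^N = \mathrm{reg}_\Gamma$. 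Second, $(a_\phi)^N = a_{\phi^\Gamma}$: since the Artin conductor of an inflated representation equals its conductor over the subextension (\cite[Ch.~VI]{corps}), one has $\langle a_\phi, \mathrm{Infl}\,\rho\rangle_G = \langle a_{\phi^\Gamma}, \rho\rangle_\Gamma$ for all $\rho$, and the adjunction then identifies $(a_\phi)^N$ with $a_{\phi^\Gamma}$. Now take, as in Proposition~\ref{prop:special}(iii), a $G$-set $S = \coprod_{\{e\}\ne T\in\mathcal C}\coprod_{i=1}^{b_T} G/T$ with $\chi_S = m\,\mathrm{reg}_G - a_\phi$. Applying $(-)^N$ gives $\sum_{\{e\}\ne T}b_T\,1_{\bar T}^\Gamma = m\,\mathrm{reg}_\Gamma - a_{\phi^\Gamma}$; collecting the terms with $\bar T = \{e\}$ (equivalently $T \subseteq N$) into $m''\,\mathrm{reg}_\Gamma$ with $m'' \ge 0$ yields $\chi_{S'} = (m - m'')\,\mathrm{reg}_\Gamma - a_{\phi^\Gamma}$ for the $\Gamma$-set $S' = \coprod_{\bar T \ne \{e\}}\coprod_{i=1}^{b_T}\Gamma/\bar T$, which has non-trivial cyclic stabilizers. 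Hence the Bertin obstruction of $\phi^\Gamma$ vanishes.

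\emph{The KGB obstruction under passage to $\Gamma = G/N$.} Suppose $X \to X/G = \mathbb P^1_K$ witnesses the vanishing of the KGB obstruction of $\phi$. Then $X/N$ is a smooth projective geometrically irreducible curve over $K$ on which $\Gamma$ acts faithfully with $(X/N)/\Gamma = X/G = \mathbb P^1_K$. The key observation is that $Y/N \to \mathbb P^1_k$ is the Katz-Gabber cover of $\phi^\Gamma$: it is a $\Gamma$-cover, totally ramified over $\infty$ with the completed local ring of its point over $\infty$ equal to $(\hat{\mathcal O}_{Y,x})^N = k[[t]]^N \cong k[[z]]$ carrying the action $\phi^\Gamma$, at most tamely ramified over $0$ (its inertia groups there are images of the cyclic, prime-to-$p$ inertia groups of $Y$), and unramified elsewhere; so by the uniqueness clause of \cite{KG} it is the Katz-Gabber cover of $\phi^\Gamma$. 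For $\bar H \le \Gamma$ with preimage $H = \pi^{-1}(\bar H) \le G$ one has $(X/N)/\bar H = X/H$ and $(Y/N)/\bar H = Y/H$, so $\mathrm{genus}((X/N)/\bar H) = \mathrm{genus}(X/H) = \mathrm{genus}(Y/H) = \mathrm{genus}((Y/N)/\bar H)$, the middle equality being the hypothesis for $\phi$ applied to $H$. Thus $X/N \to \mathbb P^1_K$ witnesses the vanishing of the KGB obstruction of $\phi^\Gamma$.

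\emph{The Bertin obstruction under restriction to $H \le G$.} Because the uniformizer $t$ of $k[[t]]$ also generates $k[[t]]$ over $k[[t]]^H$, one has $a_\phi(\sigma) = -v(\sigma(t) - t) = a_{\phi_H}(\sigma)$ for $\sigma \in H\setminus\{e\}$; hence $\mathrm{Res}^G_H a_\phi$ and $a_{\phi_H}$ agree off the identity, and their difference, being a virtual character of $H$ supported at $e$, equals $c\,\mathrm{reg}_H$ for the non-negative integer $c = \frac{1}{\#H}\sum_{\sigma \in G\setminus H}v(\sigma(t)-t)$. Also $\mathrm{Res}^G_H\mathrm{reg}_G = [G:H]\,\mathrm{reg}_H$. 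Given a $G$-set $S$ with non-trivial cyclic stabilizers and $\chi_S = m\,\mathrm{reg}_G - a_\phi$, regard $S$ as an $H$-set; the stabilizer of $s \in S$ in $H$ is $\mathrm{Stab}_G(s)\cap H$, which is cyclic but possibly trivial. Writing $S = S_0 \amalg S_1$ with $S_0$ the union of the free $H$-orbits, say $\chi_{S_0} = k\,\mathrm{reg}_H$ with $k \ge 0$, we get $\chi_{S_1} = \mathrm{Res}^G_H\chi_S - k\,\mathrm{reg}_H = (m[G:H] - c - k)\,\mathrm{reg}_H - a_{\phi_H}$, and $S_1$ has non-trivial cyclic stabilizers; hence the Bertin obstruction of $\phi_H$ vanishes.

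The main obstacle I expect is pinning down the two compatibilities of Artin characters --- $(a_\phi)^N = a_{\phi^\Gamma}$ for the quotient and $\mathrm{Res}^G_H a_\phi = a_{\phi_H} + c\,\mathrm{reg}_H$ for the subgroup --- both of which reduce to tracking ramification filtrations under quotients and subgroups via the conductor formulas of \cite[Ch.~VI]{corps}; the rest is bookkeeping with permutation characters. A secondary point needing care is the verification that $Y/N$ is genuinely the Katz-Gabber cover of $\phi^\Gamma$, rather than merely some $\Gamma$-cover with the right local behavior over $\infty$, which is exactly where the uniqueness in the Katz-Gabber theorem is used.
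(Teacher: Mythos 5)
Your proposal is correct.  The first and third assertions are handled essentially as in the paper: your $\chi \mapsto \chi^N$ is the paper's $\chi \mapsto \chi^\sharp$, your use of $(a_\phi)^N = a_{\phi^\Gamma}$ and of $\mathrm{Res}^G_H a_\phi = a_{\phi_H} + c\,\mathrm{reg}_H$ are the two Serre compatibilities the paper cites as \cite[Prop.~VI.3]{corps} and \cite[Prop.~VI.4]{corps}, and your manipulation of witnessing $G$-sets (discarding the orbits whose stabilizers become trivial after passage to the quotient or subgroup) is the same bookkeeping.

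For the KGB assertion your route is genuinely different and, I think, cleaner.  The paper works through the combinatorial reformulation of Theorem~\ref{thm:KGBtwo}: it takes the finite $G$-set $S$ with distinguished generators $\{g_t\}_{t\in\Omega}$, passes to $N\backslash S$, discards the free orbits, and then verifies that the surviving generators still generate $G/N$ and that the image of $\prod_t g_t$ has the right (prime-to-$p$) order in $G/N$ --- a point that requires a short argument about prime-to-$p$ cyclic subgroups of $G$.  You instead argue directly from the geometric definition: if $X \to \mathbb{P}^1_K$ witnesses the vanishing of the KGB obstruction of $\phi$, then $X/N \to \mathbb{P}^1_K$ witnesses it for $\phi^\Gamma$, because $(X/N)/\bar{H} = X/H$ for each $\bar{H} = H/N$, and $Y/N$ is the Katz--Gabber cover of $\phi^\Gamma$ by the uniqueness clause in Katz--Gabber (it is totally ramified over $\infty$, at most tamely ramified over $0$, unramified elsewhere, and the $\Gamma$-action on the completed local ring over $\infty$ is $\phi^\Gamma$).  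This sidesteps the order-of-the-product bookkeeping entirely, at the modest cost of having to certify the Katz--Gabber identification of $Y/N$ --- which, as you note, is exactly where the uniqueness in \cite{KG} is invoked.  The two approaches prove the same thing, but yours also shows, with no extra work, that the genus equalities descend, which is arguably closer to the spirit of the definition.

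One small caution: your argument for $\mathrm{Res}^G_H a_\phi - a_{\phi_H} = c\,\mathrm{reg}_H$ only establishes a priori that the difference is a class function supported at $e$; that it is an integer multiple of $\mathrm{reg}_H$ (equivalently that $c\in\mathbb{Z}_{\ge 0}$) is not automatic from that observation alone, and is best taken from \cite[Prop.~VI.4]{corps} directly, as the paper does, rather than re-derived.  (Your identification of $c$ with the discriminant valuation is correct; the point is just that the integrality should be credited to Serre rather than to the support argument.)
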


To prove this result we need a lemma concerning characters $\chi$ of $G$.  Define a character $\chi^\sharp$
of $\Gamma = G/N$ by
\begin{equation}
\label{eq:sharpdef}
\chi^\sharp(\gamma) = \frac{1}{\# N} \sum_{g \in G, \  q(g) = \gamma} \chi(g)
\end{equation}
where $q:G \to G/N$ is the quotient map.  By \cite[Prop.~VI.3]{corps}, 
\begin{equation}
\label{eq:quotientrel}
a_{\phi^\Gamma} = (a_\phi)^\sharp.
\end{equation}

The next result follows directly from Frobenius reciprocity \cite[Thm.~13, Chap.~7]{SerreRep}.  

\begin{lemma}
\label{lem:backsl}If $S$ is a left $G$-set, let $N\backslash S$ be the $G/N$ set formed by the orbits $Ns$ of
elements $s \in S$ under the left action of $N$.   Let $\chi_S$ (resp.\ $\chi_{N \backslash S}$)
be the character of the permutation representation of $G$ (resp.\ $G/N$) defined by $S$ (resp.\ $N \backslash S$).  Then
\begin{equation}
\label{eq:easy}
\chi_{N\backslash S} = \chi_S^\sharp
\end{equation}
\end{lemma}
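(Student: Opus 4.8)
The plan is to prove the identity by reading both sides as fixed-point-counting functions and matching them $N$-orbit by $N$-orbit. Recall that for any finite left $G$-set $S$ one has $\chi_S(g)=\#\{s\in S:gs=s\}$, and likewise $\chi_{N\backslash S}(\gamma)=\#\{Ns:\gamma\cdot Ns=Ns\}$, where the $G/N$-action $\gamma\cdot Ns:=N(gs)$ for any lift $g$ of $\gamma$ is well defined precisely because $N$ is normal. So the claim is purely a counting statement about lifts of $\gamma$.

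The first step is to fix $\gamma\in G/N$, choose a lift $g_0\in G$ so that $q^{-1}(\gamma)=g_0N$, and rewrite
\[
\sum_{g\in g_0N}\chi_S(g)=\sum_{n\in N}\#\{s\in S:g_0ns=s\}=\#\{(n,s)\in N\times S:ns=g_0^{-1}s\}.
\]
For fixed $s$, the number of $n\in N$ with $ns=g_0^{-1}s$ is $\#\mathrm{Stab}_N(s)$ when $g_0^{-1}s$ lies in the $N$-orbit $Ns$, and is $0$ otherwise; and (after applying $g_0$ and using that left $N$-orbits partition $S$) the condition $g_0^{-1}s\in Ns$ is equivalent to $\gamma$ fixing the orbit $Ns$. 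Hence the double count equals $\sum_{s:\gamma\cdot Ns=Ns}\#\mathrm{Stab}_N(s)$.

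The second step is to collect this last sum over $N$-orbits: along a single orbit $O$ the point-stabilizers $\mathrm{Stab}_N(s)$, $s\in O$, are all $N$-conjugate, hence of common order $\#N/\#O$, so $\sum_{s\in O}\#\mathrm{Stab}_N(s)=\#N$. Summing over the $\gamma$-fixed orbits gives $\sum_{g\in g_0N}\chi_S(g)=\#N\cdot\chi_{N\backslash S}(\gamma)$, and dividing by $\#N$ yields $\chi_S^\sharp(\gamma)=\chi_{N\backslash S}(\gamma)$ by the definition (\ref{eq:sharpdef}) of $\chi_S^\sharp$.

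I do not anticipate a genuine obstacle here; the only delicate point is the bookkeeping that identifies ``$g_0^{-1}s\in Ns$'' with ``$\gamma$ fixes $Ns$'' together with the remark that all point-stabilizers along an $N$-orbit have the same order. Alternatively — presumably the ``Frobenius reciprocity'' route alluded to above — one can note that $\chi_S^\sharp$ is the character of the $G/N$-module $\mathbb{Q}[S]^N$ (the operator $\tfrac1{\#N}\sum_{n\in N}n$ projects $\mathbb{Q}[S]$ onto it), that $\mathbb{Q}[S]^N$ is canonically the permutation module $\mathbb{Q}[N\backslash S]$ with basis $\{\sum_{s\in O}[s]\}_O$, and then, writing $S=\coprod_i G/H_i$ and pairing against an arbitrary irreducible $\rho$ of $G/N$, reduce by Frobenius reciprocity to the identity $\dim(\mathrm{Inf}\,\rho)^{H_i}=\dim\rho^{H_iN/N}$, which is immediate since $N$ acts trivially on $\mathrm{Inf}\,\rho$.
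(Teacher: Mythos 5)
Your main argument is correct, but it takes a different and more elementary route than the paper. The paper dispatches the lemma in one line by citing Frobenius reciprocity (\cite[Thm.~13, Chap.~7]{SerreRep}) and leaves the bookkeeping implicit; you instead prove it directly by interpreting both $\chi_S$ and $\chi_{N\backslash S}$ as fixed-point counts and establishing the identity $\sum_{g\in g_0N}\chi_S(g)=\#N\cdot\chi_{N\backslash S}(\gamma)$ by a double count over pairs $(n,s)$ with $ns=g_0^{-1}s$, collected along $N$-orbits via orbit--stabilizer. This computation is self-contained and character-theory-free; the one point worth spelling out (which you do correctly) is that normality of $N$ is what makes ``$g_0^{-1}s\in Ns$'' equivalent to ``$\gamma$ fixes the orbit $Ns$.'' Your closing sketch --- identifying $\chi_S^\sharp$ with the character of $\mathbb{Q}[S]^N\cong\mathbb{Q}[N\backslash S]$, or equivalently decomposing $S=\coprod_i G/H_i$ and checking $\langle\mathrm{Inf}\,\rho,1_{H_i}^G\rangle_G=\langle\rho,1_{H_iN/N}^{G/N}\rangle_{G/N}$ for each irreducible $\rho$ of $G/N$ --- is essentially the argument the paper has in mind when it appeals to Frobenius reciprocity. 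The counting proof buys independence from representation theory; the Frobenius reciprocity proof is shorter once the standard machinery is on the table, which is why the paper uses it.
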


\noindent {\bf Proof of Theorem \ref{thm:functorial}:}\hfill  \medbreak Suppose that the Bertin obstruction vanishes for $\phi$, and that $S$ is
as in Proposition \ref{prop:special}(ii)(a).  Let $S''$ be the $G/N$ set
$N \backslash S$.  Let $S''_0$ be the set of the elements of $S''$ which
have trivial stabilizer, and let $S'$ be the complement $S'' - S''_0$. The
stabilizers in $G/N$ of the elements of $S'$ are then non-trivial, and these
are cyclic because they are the images in $G/N$ of the cyclic stabilizers
of  elements of $S$.  
By hypothesis,
\begin{equation}
\label{eq:permrep}
\chi_S = m\cdot  {\rm reg}_G - a_\phi
\end{equation}
for some integer $m$.  By (\ref{eq:easy}) and (\ref{eq:quotientrel}), one has
$$\chi_{S''} = \chi_S^\sharp = m \cdot ({\rm reg}_G)^\sharp - a_{\phi}^\sharp
= m \cdot ({\rm reg}_{G/N}) - a_{\phi^\Gamma}$$
Since $\chi_{S''}$ and $\chi_{S'}$ differ by an integer multiple of the character
of the regular representation of $G/N$, we conclude that 
\begin{eqnarray}
\label{eq:permrepsound}
\chi_{S'} = m' \cdot ({\rm reg}_{G/N}) - a_{\phi^\Gamma}
\end{eqnarray}
for some integer $m'$, so $S'$ satisfies condition (ii)(a) of Proposition \ref{prop:special} for $\phi^\Gamma$.

Suppose now that the KGB obstruction vanishes for $\phi$.  Let $S$, $\Omega$ 
and $\{g_t : t \in \Omega\}$ be as in Theorem \ref{thm:KGBtwo}. 
By hypothesis, $\Omega$ is a set of representatives for the $G$ orbits in $S$,
 $\{g_t\}_{t\in \Omega}$ is a subset of $G$ such that  $g_t$ generates
the stabilizer $G_t \ne \{e\}$ of $t$ in $G$, $\{g_t\}_{t \in \Omega}$ generates $G$, and $\prod_{t \in \Omega} g_t$
has order $[G:G_1]$.  

Let $t' = Nt$ be the image of $t \in \Omega$ in $S'' = N \backslash S$.
One has $G_t \subset N$ if and only if the stabilizer $G_{t'}$ is trivial;
this is true if and only if $t' \in S''_0$.  Define $$\Omega' = \{t' = Nt:  t \in \Omega, t' \not \in S''_0\}.$$  Now
$\Omega'$ is a set of representatives for the $G/N$ orbits in $S'$, and for $t' \in \Omega'$ the image $g_{t'}$
of $g_t$ in $G/N$ is non-trivial and generates the stabilizer of $t'$ in $G/N$.  
By hypothesis, $\{g_t: t \in \Omega\}$ generates $G$ and $\prod\{g_t:t \in \Omega\}$ has
order $[G:G_1]$.  The image of $g_t$ in $G/N$ is non-trivial if and only if
$t' = Nt \not \in S''_0$, so we conclude that $\{g_{t'}:t' \in \Omega\}$ generates $G/N$,
and the image of $\gamma = \prod \{g_t: t \in \Omega\}$ in $G/N$ is $\gamma' = \prod \{g_{t'}:t' \in \Omega'\}$.
Thus $\gamma'$ has order $[G/N:G_1N/N]$.   Here $G_1N/N$ is the $p$-Sylow subgroup
of $G/N$, so  $\Omega'$ and $\{g_{t'}:t' \in \Omega'\}$ satisfy condition (b) of
Theorem \ref{thm:KGBtwo}.

We now have to show that if the Bertin obstruction of $\phi$ vanishes,
then it does for $\phi_H$ for all subgroups $H$ of $G$.  Since the residue
field $k$ is algebraically closed, \cite[Prop. VI.4]{corps} shows
\begin{equation}
\label{eq:balance}
\mathrm{res}_G^H a_\phi = \lambda \cdot \mathrm{reg}_H + a_{\phi_H}
\end{equation}
where $\lambda$ is the valuation in $k((t))^H$ of the discriminant
of $k((t))$ over $k((t))^H$.  This implies that if $S$ is a $G$-set satisfying
condition (ii)(a) of Proposition \ref{prop:special} for $\phi$, then the restriction of $S$
to $H$ satisfies this condition for $\phi_H$.  This completes the proof 
of Theorem  \ref{thm:functorial}.\hskip 0.1in $\square$

\begin{rem} In a later paper we will show that the KGB obstruction for $\phi_H$
vanishes if that of $\phi$ vanishes;  we will not need this result in this paper.
\end{rem}

\begin{notn}
\label{notn:bprimedef}  Let $J$ be a subquotient of $G$, i.e.\  a quotient
of a subgroup $H$ of $G$ by a normal subgroup $D$ of $H$. Suppose $T$ is
a cyclic subgroup of $J$.  Define $b_{T,J} = b_{T,J}(\phi)$ to be the constant
$b_T$ appearing in Proposition \ref{prop:special}  when $G$ is replaced
by $J$ and $\phi$ is replaced by the induced injection $\phi_J:J \to \mathrm{Aut}_k(k[[t]]^D)$. 
\end{notn}

Combining Theorem \ref{thm:functorial} with Proposition \ref{prop:special} we obtain:

\begin{cor}
\label{cor:notquot}  Suppose the Bertin obstruction of $\phi$ vanishes.  With the notations of
Notation \ref{notn:bprimedef}, one has $0 \le b_{T,J} \in \mathbb{Z}$ for all non-trivial cyclic subgroups $T$ of $J$.
\end{cor}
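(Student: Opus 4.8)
The plan is to deduce the corollary by applying Theorem \ref{thm:functorial} twice and then Proposition \ref{prop:special}(ii). Write $J = H/D$, where $H$ is a subgroup of $G$ and $D$ is a normal subgroup of $H$, as in Notation \ref{notn:bprimedef}, so that $\phi_J\colon J \to \mathrm{Aut}_k(k[[t]]^D)$ is the associated induced injection.

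First I would invoke the restriction part of Theorem \ref{thm:functorial}: since the Bertin obstruction of $\phi$ vanishes, so does that of $\phi_H\colon H \to \aut_k(k[[t]])$. Next I would apply the quotient part of Theorem \ref{thm:functorial}, now with the pair $(G,N)$ replaced by $(H,D)$ --- this is legitimate because that theorem is stated for an arbitrary embedding of a finite group, and $\phi_H$ is such an embedding while $D$ is normal in $H$. This forces the vanishing of the Bertin obstruction of $(\phi_H)^{H/D}\colon H/D \to \aut_k(k[[t]]^D)$. Unwinding the definition of $\phi^\Gamma$ (the map through which $\phi$ factors) shows that $(\phi_H)^{H/D}$ is precisely the injection $\phi_J$ of Notation \ref{notn:bprimedef}; hence the Bertin obstruction of $\phi_J$ vanishes.

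Finally I would apply Proposition \ref{prop:special}(ii) with $(G,\phi)$ replaced by $(J,\phi_J)$. The equivalence (a) $\Leftrightarrow$ (b) there says that the vanishing of the Bertin obstruction of $\phi_J$ is the same as $0 \le b_{T,J} \in \mathbb{Z}$ for all $T \ne \{e\}$ in a chosen set $\mathcal{C}_J$ of representatives for the conjugacy classes of cyclic subgroups of $J$. Every non-trivial cyclic subgroup of $J$ is $J$-conjugate to one of these representatives, and $b_{T,J}$ depends only on the conjugacy class of $T$ --- it is the coefficient of $1_T^J$ in the expansion (\ref{eq:aphiequal}) of $-a_{\phi_J}$, and the characters $1_T^J$ are conjugacy-class invariants --- so $0 \le b_{T,J} \in \mathbb{Z}$ holds for every non-trivial cyclic subgroup $T$ of $J$, as claimed. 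There is no real obstacle here; the only points needing (trivial) care are the identification $(\phi_H)^{H/D} = \phi_J$ and the conjugacy-invariance of $b_{T,J}$, and all the substance already lies in Theorem \ref{thm:functorial} and Proposition \ref{prop:special}.
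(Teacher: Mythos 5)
Your proposal is correct and matches the paper's intent exactly: the paper gives no explicit proof of this corollary beyond the single line ``Combining Theorem \ref{thm:functorial} with Proposition \ref{prop:special} we obtain,'' and your write-up is precisely the routine unwinding of that remark. You correctly order the two applications of Theorem \ref{thm:functorial} (restrict to $H$ first, then pass to the quotient $H/D$; the other order would not make sense since $D$ need not be normal in $G$), correctly identify $(\phi_H)^{H/D}$ with $\phi_J$ of Notation \ref{notn:bprimedef}, and correctly note the conjugacy-invariance of $b_{T,J}$ needed to extend from a set $\mathcal{C}_J$ of representatives to all non-trivial cyclic subgroups.
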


 \begin{cor}  
\label{cor:subquots}  Let $G$ be a finite group, and suppose $H$ is a quotient group
of $G$. 
\begin{enumerate}
\item[a.]  If $H$ is not a Bertin group (resp.\ almost Bertin group), then
$G$ is not a Bertin group (resp.\ almost Bertin group).
\item[b.]  If $G$ has a subquotient $J$ which is an not a weak Bertin group, then $G$
is not a weak Bertin group.
\end{enumerate}
\end{cor}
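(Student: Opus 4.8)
My plan is to deduce both parts from Theorem~\ref{thm:functorial}, Corollary~\ref{cor:notquot}, and the equivalence in Proposition~\ref{prop:special}(ii). Part~(b) is essentially immediate; part~(a) needs one further input, namely that every local action of a quotient of $G$ is induced by a local action of $G$.

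For part~(b), suppose $G$ is a weak Bertin group, so there is an embedding $\phi:G\to\aut_k(k[[s]])$ whose Bertin obstruction vanishes, and let $J$ be an arbitrary subquotient of $G$, with induced injection $\phi_J:J\to\aut_k(k[[s]]^D)$ as in Notation~\ref{notn:bprimedef}. By Corollary~\ref{cor:notquot} one has $0\le b_T(\phi_J)=b_{T,J}(\phi)\in\mathbb{Z}$ for every non-trivial cyclic subgroup $T$ of $J$, so Proposition~\ref{prop:special}(ii) shows the Bertin obstruction of $\phi_J$ vanishes; thus $J$ is a weak Bertin group. Taking the contrapositive gives part~(b), and note that no new covers are constructed --- one merely passes from $\phi$ to the induced action on the subquotient.

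For part~(a), set $\Gamma=G/N$. I would first establish the lifting statement: for every embedding $\psi:\Gamma\to\aut_k(k[[t]])$ there is an embedding $\phi:G\to\aut_k(k[[s]])$ with $k[[s]]^N=k[[t]]$ and $\phi^\Gamma$ equivalent to $\psi$, and $\phi$ may be chosen arbitrarily ramified whenever $\psi$ is sufficiently ramified. This is the solvability, with control on the ramification, of the embedding problem $1\to N\to G\to\Gamma\to1$ over the complete discrete valuation field $F=k((t))^\Gamma$, whose residue field is algebraically closed of characteristic $p$; equivalently, the Katz--Gabber $\Gamma$-cover attached to $\psi$ can be enlarged to a $G$-cover with the same $N$-quotient. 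Granting this, if $\Gamma$ is not a Bertin group we pick $\psi$ with non-vanishing Bertin obstruction and lift it to $\phi$; by Theorem~\ref{thm:functorial} the Bertin obstruction of $\phi$ cannot vanish, for otherwise that of $\phi^\Gamma$, hence of $\psi$, would vanish. So $G$ is not a Bertin group. The ``almost Bertin'' case is the same: if $\Gamma$ is not almost Bertin then there are arbitrarily ramified $\psi$ with non-vanishing obstruction, and by the lifting statement these have arbitrarily ramified lifts $\phi$, which again have non-vanishing obstruction by Theorem~\ref{thm:functorial}; hence $G$ is not almost Bertin.

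The main obstacle is the lifting statement used in part~(a), and in particular the ramification bookkeeping needed for the ``almost Bertin'' version. That the local embedding problem over $F$ is solvable follows from the structure of the absolute Galois group of a complete local field with algebraically closed residue field of characteristic $p$, whose finite quotients are exactly the cyclic-by-$p$ groups; what requires care is choosing the solution so that the wild ramification of $k[[s]]/k[[t]]$ --- and hence, through the ramification index $|N|$, the ramification of every element of $G$ of $p$-power order --- surpasses a bound determined by the ramification of $\psi$. Once this is in place, part~(a) follows formally from functoriality, while part~(b) needs nothing beyond Corollary~\ref{cor:notquot} and Proposition~\ref{prop:special}.
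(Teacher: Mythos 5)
Your argument matches the paper's: part (b) via Corollary~\ref{cor:notquot} and Proposition~\ref{prop:special}(ii), and part (a) by lifting a bad $\psi$ on the quotient to a $\phi$ on $G$ and invoking Theorem~\ref{thm:functorial}, with the ``almost'' variant requiring ramification control on the lift. The lifting-with-ramification statement you correctly flag as the main obstacle is exactly the paper's Proposition~\ref{prop:subquots} in Appendix~1, which the paper's own proof of this corollary cites at the same juncture.
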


\begin{proof} In view of Theorem \ref{thm:functorial}, part (a) is clear for Bertin groups,
and to show part (a) for almost Bertin groups  it will suffice
to prove the following.  Suppose that $\phi_H:H \to \mathrm{Aut}_k(k[[t]])$ is an injection
with the property that the Artin character $a_{\phi_H}$ has the property that 
$-a_{\phi_H}(\tau)  \ge M$ for some integer $M$ and all non-trivial elements $\tau \in H$
of $p$-power order.  It will be enough to show that there is an injection $\phi_G:G \to \mathrm{Aut}_k(k[[z]])$ inducing $\phi_H$ when we identify $k[[z]]^T$ with $k[[t]]$ when $T$ is the kernel
of the surjection $G \to H$, and for which $-\phi_G(\tau') \ge M$  for all non-trivial
elements $\tau'$ of $G$ of $p$-power order.  This statement is a consequence of
the parts (i) and (ii) of Proposition 
 \ref{prop:subquots}(i) of Appendix 1.
Part (b) of Corollary \ref{cor:subquots} follows directly from Theorem \ref{thm:functorial},
which shows that if the Bertin obstruction of some injection $\phi:G \to \mathrm{Aut}_k(k[[t]])$
vanishes, then Bertin obstruction of the induced local $J$-cover would also have to vanish.
\end{proof}

 \begin{rem}
 \label{rem:subobs}
 Suppose $H$ is a subgroup of $G$, and that $H$ is not a Bertin group.  In general
 this need not imply $G$ is not a Bertin group, since it may not be possible to 
 realize a local $H$-cover with non-zero Bertin obstruction as the restriction of a
 local $G$-cover.  We will show later in Proposition \ref{prop:evenqsemi} that this occurs, for example, when $H$
 is the quaternion group of order $8$ and $G$ is a generalized quaternion group
 of order at least $16$.
 \end{rem}

 \section{The reduction to $p$-groups.}
 \label{s:reduce}
 \setcounter{equation}{0}
 
 Throughout this section we suppose given an injection $\phi:G \to \aut_k(k[[t]])$ as in \S \ref{s:KGBsect}.  
 
 By Theorem \ref{thm:functorial}, if the Bertin obstruction of $\phi$
vanishes, then so does the Bertin obstruction of the restriction
$\phi_P$ of $\phi$ to a $p$-Sylow subgroup $P$ of $G$.  In this section we state our main result concerning
exactly which further conditions $G$ and $\phi$ must satisfy in
order for the Bertin obstruction of $\phi$ to vanish provided
that of $\phi_P$ vanishes.  The proof of this result is given in
\S \ref{s:numresults} - \S \ref{s:pgroups}.  

We begin with a well-known result about the structure of $G$ which follows
from a theorem of P. Hall \cite[Thm. 18.5]{Asch}.

\begin{lemma}
\label{lem:Gstruct}
The group $G$ is the semi-direct product of a normal $p$-group $P$ and a
cyclic group $C$ of order prime to $p$.  
Let $z \in G$ be an element which is not of $p$-power order.
 Let  $m$ be the smallest positive integer such that
 $w = z^m$ has order prime to $p$.  Let $t$ be the unique element of $C$ having the same image
as $w$ in $G/P$.  Then $w$ is conjugate to $t$ in $G$.  In particular,
all subgroups of $G$ having a given prime-to-$p$ order are conjugate.
\end{lemma}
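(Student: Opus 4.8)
The plan is to extract everything from the structure of $G$ as a semidirect product $P \rtimes C$ with $P$ a normal $p$-group and $C$ cyclic of order prime to $p$. This splitting itself is the content of the cited theorem of P.~Hall (the Schur--Zassenhaus theorem together with the fact that a group with a normal $p$-subgroup and cyclic prime-to-$p$ quotient splits); I would simply invoke \cite[Thm.~18.5]{Asch} for the existence of the complement $C$, noting that $|C| = [G:P]$ is prime to $p$ and $C \cong G/P$ is cyclic. The remaining assertions are about conjugacy of prime-to-$p$ elements and subgroups, and the key tool will be the conjugacy part of the Schur--Zassenhaus theorem applied inside suitable subgroups.

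For the statement about $z$: let $m$ be minimal with $w = z^m$ of order prime to $p$ (so $m$ is a power of $p$, namely the $p$-part of $\mathrm{ord}(z)$), and let $t \in C$ be the unique element of $C$ mapping to the same element of $G/P$ as $w$ does. First I would check $t$ is well-defined: $C \to G/P$ is an isomorphism, so there is exactly one such $t$, and $\mathrm{ord}(t)$ divides $|C|$, hence is prime to $p$. Now $w$ and $t$ have the same image in $G/P$, so $w t^{-1} \in P$; more usefully, both $\langle w \rangle$ and $\langle t \rangle$ are prime-to-$p$ subgroups of the subgroup $H := P\langle w\rangle = P \rtimes \langle t' \rangle$ for an appropriate complement, and they map isomorphically onto the same cyclic subgroup $\overline{\langle w\rangle} = \overline{\langle t\rangle}$ of $G/P$. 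Consider the subgroup $M = P \cdot \langle w \rangle \le G$: it is normal-by-cyclic, $P \trianglelefteq M$, $M/P$ is cyclic of order prime to $p$, and both $\langle w \rangle$ and $\langle t \rangle \cap M$ (which equals $\langle t\rangle$ once one checks $\langle t \rangle \subseteq M$ — true since $t$ and $w$ generate the same image mod $P$, so $\langle t \rangle P = \langle w \rangle P = M$, giving $\langle t \rangle \subseteq M$) are complements to $P$ in $M$. By the conjugacy clause of Schur--Zassenhaus, all complements to $P$ in $M$ are conjugate in $M$, so $\langle w \rangle$ and $\langle t \rangle$ are conjugate, say by $x \in M \le G$. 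Since $w$ generates $\langle w\rangle$ and conjugation is an isomorphism, $x w x^{-1}$ is a generator of $\langle t \rangle$; matching images in $G/P$ (conjugation by $x \in M$ fixes the image of $w$ in the abelian group $M/P$, since $M/P$ is cyclic hence central in $M/P$), one gets $xwx^{-1}$ has the same image in $G/P$ as $w$, i.e.\ as $t$, and is a generator of $\langle t\rangle$; uniqueness of the element of $\langle t \rangle$ with a given image in $G/P$ forces $xwx^{-1} = t$.

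For the last sentence: if $T_1, T_2$ are two subgroups of $G$ of the same prime-to-$p$ order $d$, then $P T_i = $ the unique subgroup of $G$ containing $P$ with $[PT_i : P] = d$ (it is the preimage of the unique subgroup of order $d$ of the cyclic group $G/P$), so $P T_1 = P T_2 =: M$, and $T_1, T_2$ are both complements to $P$ in $M$ (as $|T_i| = [M:P]$ and $T_i \cap P = \{e\}$ by order coprimality); conjugacy of complements via Schur--Zassenhaus finishes it. I expect the only subtlety — the ``main obstacle'' such as it is — to be the careful bookkeeping of images in $G/P$ to promote conjugacy of the cyclic subgroups $\langle w\rangle, \langle t\rangle$ to conjugacy of the specific generators $w$ and $t$; this is where one must use that $M/P$ is cyclic, hence abelian, so that inner automorphisms of $M$ act trivially on $M/P$ and therefore cannot permute the finitely many generators of $\langle t\rangle$ in a way that changes their image mod $P$.
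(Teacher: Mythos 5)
Your proof is correct, and it fills in a detail the paper elides: the paper simply cites Hall's theorem \cite[Thm.~18.5]{Asch} and gives no argument for the conjugacy claims. Your strategy---reduce to $M = P\langle w\rangle$, observe $\langle w\rangle$ and $\langle t\rangle$ are both complements to $P$ in $M$, apply the conjugacy part of Schur--Zassenhaus, and then use that $M/P$ is abelian to see that conjugation by $x\in M$ preserves the image of $w$ in $G/P$, so $xwx^{-1}$ and $t$ are elements of $\langle t\rangle$ with the same image under the injective map $\langle t\rangle\hookrightarrow G/P$---is exactly the expected route and is sound. Two tiny points of bookkeeping, both already implicit in what you wrote: (1) any subgroup $T$ of $G$ of prime-to-$p$ order satisfies $T\cap P=\{e\}$ and therefore embeds in the cyclic group $G/P$, hence is itself cyclic, which is needed before invoking the ``in particular'' clause; and (2) you do not need that $xwx^{-1}$ is a generator of $\langle t\rangle$, only that it lies in $\langle t\rangle$, since the restriction of $G\to G/P$ to $\langle t\rangle$ is already injective. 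Neither affects correctness.
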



\begin{notn}
\label{notn:Bprime}
Let $T $ be a non-trivial cyclic
subgroup of $G$.  Define
\begin{equation}
\label{eq:Btprime}
b'_T = b'_{T,G} = \sum_{P \not \supset \Gamma \in S(T)} \mu([\Gamma:T])\quad \mathrm{and}\quad b''_T = b''_{T,G} = \sum_{\Gamma \in S(T)} \mu([\Gamma:T])
\end{equation}
where as before $S(T) = S_G(T)$ is the set of cyclic subgroups $\Gamma$ of $G$
which contain $T$.
\end{notn}

\begin{dfn} 
\label{def:Teich} Let $u$ be a uniformizer in the discrete valuation ring $k[[t]]^{\phi(P)}$.
Since $G = P.C$, the field $k((t))^{\phi(P)}$ is a cyclic $C = G/P$ extension 
of $k((t))^{\phi(G)}$ via $\phi$.  Let $\theta:C \to k^*$ be the faithful character
defined by $\phi(\sigma)(u)/u \equiv \theta(\sigma)$ mod $uk[[u]]$ for $\sigma \in C$.
Suppose $C_1$ is a subgroup of $C$ and that $j$ is an integer such that
the restriction $\theta^j|_{C_1}$ of $\theta^j$ to $C_1$ takes values
in $(\mathbb{Z}/p)^*$.  We define the \textit{Teichm\"uller} lift of $\theta^j|_{C_1}$
to be the unique character $\psi:C_1 \to \mathbb{Z}_p^*$ whose reduction
mod $p$ is equal to $\theta^j|_{C_1}$.
\end{dfn}

\begin{rem}
\label{rem:clarify}  The definition of $\theta$ does not depend on
the choice of uniformizer $u$.  In Lemma \ref{lem:urk} below we consider the character $\theta_0:C \to k^*$ defined by $\theta_0(\sigma) \equiv \sigma(t)/t$ mod
$tk[[t]]$.  This also does not depend on the choice of the uniformizing parameter
$t$.  Hence on letting $u = \prod_{\gamma \in P} \gamma(t) = \mathrm{Norm}_P(t)$,
we see that $\theta(\sigma) = \theta_0(\sigma)^{\# P}$.  Since $\# P$
is a power of $p$, we conclude that $\theta(\sigma) = \theta_0(\sigma)$
if $\theta(\sigma) \in (\mathbb{Z}/p)^*$.
\end{rem}

\begin{notn}
\label{notn:charlift}
Suppose $T$ is a cyclic subgroup of $P$.  The normalizer $\N_{C}(T)$
of $T$ in $C$ acts on $T$ by conjugation.  Since $T$ is cyclic
of order a power of $p$, and $\N_{C}(T)$ is cyclic of order prime
to $p$, we write $\chi_T:\N_{C}(T) \to \mathbb{Z}_p^*$ for the
unique character 
such that $xyx^{-1} = y^{\chi_T(x)}$ for $x \in \N_{C}(T)$ and $y \in T$.
\end{notn}

\begin{thm}
\label{thm:reducetop}
The Bertin obstruction of $\phi$ vanishes if and only if all the following
conditions hold:
\begin{enumerate}
\item[a.] The Bertin obstruction of the restriction $\phi_P$ of $\phi$ to
$P$ vanishes.
\item[b.] If $t$ is a non-trivial element of $C$, the centralizer $\C_G(t)$
of $t$ in $G$ is cyclic and equal to the product group $\C_P(t) \times C = 
\C_P(C) \times C$.
\item[c.] For each non-trivial cyclic subgroup $T$ of $P$, both of the following
statements are true:
\begin{enumerate}
\item[i.] $b'_{T,G} \equiv 0$ mod $[\N_P(T):T]\mathbb{Z}$.
\item[ii.] $[\N_P(T):T] b_{T,P} \ge -b'_{T,G}$.
\end{enumerate}
\item[d.] For each non-trivial cyclic subgroup $T$ of $P$,
one of following is true:
\begin{enumerate}
\item[i.] The centralizer $\C_{C}(T)$ of $T$ in $C$ is non-trivial, and 
$$b''_{T,G} \equiv 0\quad \mathrm{mod} \quad \#\N_{C}(T)\mathbb{Z}.$$
\item[ii.] The centralizer $\C_{C}(T)$ is trivial.  Then the restriction of $\theta$
to $\N_{C}(T)$ is faithful, takes values in $(\mathbb{Z}/p)^*$ and has Teichm\"uller
lift $\chi_T^{-1}$, and 
$$b'_{T,G} \equiv 0 \quad \mathrm{mod} \quad \#\N_{C}(T)\mathbb{Z}.$$
\end{enumerate}
\end{enumerate}
\end{thm}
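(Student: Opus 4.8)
The strategy is to directly analyze the integrality and non-negativity conditions from Proposition \ref{prop:special}(ii) — namely $0\le b_{T,G}\in\mathbb Z$ for all non-trivial cyclic $T\subset G$ — by splitting the set $\mathcal C$ of conjugacy class representatives of cyclic subgroups into three types: (1) those $T$ of order prime to $p$; (2) those $T$ that are non-trivial $p$-groups; (3) those $T$ containing a non-trivial element of order prime to $p$ but not of prime-to-$p$ order. By Lemma \ref{lem:Gstruct}, all prime-to-$p$ subgroups of a given order are conjugate, so type (1) contributes exactly one representative per divisor of $\#C$, and types (2)–(3) can be handled using Notation \ref{notn:Bprime}. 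The key formula throughout is Theorem \ref{thm:nontrivcase} for $b_{T,G}$, together with the analogous formula for $b_{T,P}$ inside $P$, and the normal splitting $G=P.C$ which lets us factor normalizers as $\N_G(T)=\N_P(T).\N_C(T)$ when $T\subset P$.

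First I would dispose of the cyclic prime-to-$p$ subgroups and, more generally, type (3): by Proposition \ref{prop:subtle}, for such $T$ the constant $b_{T,G}$ is integral iff either $\N_G(T)=T$ (giving $b_T=1\ge0$) or $\psi(T,\C_G(T))=0$ (giving $b_T=0$), and in either case automatically $b_{T,G}\ge0$. I would show that the conjunction of conditions (b) and (d) of the theorem is exactly equivalent to requiring $b_{T,G}$ integral for all type-(1) and type-(3) cyclic subgroups. Condition (b) handles the subgroups of $C$ themselves: using Corollary \ref{cor:greenMat}-style reasoning one sees $\C_G(t)$ must be cyclic, and since it contains $C$ and a $p$-part it must split as $\C_P(t)\times C$; the condition $\C_P(t)=\C_P(C)$ is forced because a cyclic $p$-group on which $C$ acts non-trivially cannot lie in a cyclic group together with $C$ unless the action is by the claimed character. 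Type (3) subgroups are handled by applying Proposition \ref{prop:subtle} with $T$ replaced by its prime-to-$p$ part; this is where condition (d)(ii), the Teichmüller-lift/faithfulness statement on $\theta|_{\N_C(T)}$, enters, via Lemma \ref{lem:urk} and Remark \ref{rem:clarify} identifying the conjugation character $\chi_T$ with (the lift of) $\theta^{-1}$.

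Next, for $T$ a non-trivial $p$-subgroup, I would compare $b_{T,G}$ with $b_{T,P}$. Writing $S_G(T)$ as the disjoint union of $S_P(T)$ and the cyclic $\Gamma\supset T$ not contained in $P$, Theorem \ref{thm:nontrivcase} gives, since $\iota(\Gamma)=1$ for any $\Gamma\not\subset P$ (such $\Gamma$ contains an element of prime-to-$p$ order),
\[
[\N_G(T):T]\,b_{T,G} \;=\; [\N_P(T):T]\,b_{T,P}\cdot\frac{[\N_G(T):T]}{[\N_P(T):T]\cdot[\,?\,]} \;+\; b'_{T,G},
\]
so after untangling the normalizer index $[\N_G(T):T]=[\N_P(T):T]\cdot\#\N_C(T)$ one gets the clean relation
\[
\#\N_C(T)\cdot[\N_P(T):T]\,b_{T,G}\;=\;[\N_P(T):T]\,b_{T,P}\;+\;b'_{T,G}.
\]
From this, $b_{T,G}\in\mathbb Z$ with $b_{T,G}\ge0$ is equivalent to: $[\N_P(T):T]b_{T,P}+b'_{T,G}\equiv0\bmod\#\N_C(T)[\N_P(T):T]\mathbb Z$ and $[\N_P(T):T]b_{T,P}+b'_{T,G}\ge0$. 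Splitting the modulus and using condition (a) (which already gives $b_{T,P}\in\mathbb Z$, $b_{T,P}\ge0$), the non-negativity is exactly (c)(ii), and the congruence splits into the $[\N_P(T):T]$-part, which is (c)(i) $b'_{T,G}\equiv0\bmod[\N_P(T):T]$, and a residual $\#\N_C(T)$-part. This residual part is where I must separate the two cases of condition (d): when $\C_C(T)$ is non-trivial one rewrites $b'_{T,G}$ in terms of $b''_{T,G}$ and checks the congruence reduces to (d)(i); when $\C_C(T)$ is trivial, $S_G(T)$ has no $\Gamma$ contained in $\C_G(T)$ other than... and a careful Möbius count over $\N_C(T)$-orbits of overgroups, combined with the character condition on $\theta$, yields (d)(ii).

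The main obstacle I expect is the bookkeeping in the residual $\#\N_C(T)$-congruence for $p$-subgroups $T$: one must correctly enumerate, modulo conjugacy, the cyclic overgroups $\Gamma$ of $T$ not contained in $P$, organize the Möbius sum $b'_{T,G}=\sum\mu([\Gamma:T])$ into $\N_C(T)$-orbits, and recognize when divisibility by $\#\N_C(T)$ fails — this is precisely the content of conditions (d)(i) versus (d)(ii), and showing (d)(ii)'s Teichmüller condition is forced (not merely sufficient) requires the structure theory linking the conjugation action $\chi_T$ of $\N_C(T)$ on $T$ to the ramification character $\theta$ via a cyclic overgroup. I would isolate this into a separate lemma analogous to Proposition \ref{prop:subtle}, proved in \S\ref{s:nonpgroups}, and then assemble all cases — type (1)/(3) $\Leftrightarrow$ (b)$\wedge$(d on prime-to-$p$ parts), and type (2) $\Leftrightarrow$ (a)$\wedge$(c)$\wedge$(d) — to conclude.
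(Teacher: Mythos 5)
Your plan follows the paper's route fairly closely in outline: you partition cyclic subgroups by whether they are $p$-groups, and for $p$-subgroups $T$ your key identity
$\#\N_C(T)\cdot[\N_P(T):T]\,b_{T,G}= [\N_P(T):T]\,b_{T,P}+b'_{T,G}$
is exactly Corollary \ref{cor:numconseq}, and the CRT-split into a prime-to-$p$ part (condition (c)(i)) and a $\#\N_C(T)$-part is the content of Corollary \ref{cor:keepitup}. However, there is a genuine misassignment that would break the argument as written.

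You claim that the conjunction of conditions (b) \emph{and (d)} is equivalent to the integrality of $b_{T,G}$ for your types (1) and (3) (cyclic $T$ that are not $p$-groups), and in particular that the Teichm\"uller-lift statement (d)(ii) ``enters'' in the analysis of mixed-order cyclic $T$ via Proposition \ref{prop:subtle} applied to the prime-to-$p$ part. This is not correct. Condition (b) \emph{alone} is equivalent to $0\le b_{T,J}\in\mathbb{Z}$ for every cyclic $T$ that is not a $p$-group and every subquotient $J$ (this is Proposition \ref{prop:infoprime}, with the key mechanism being Lemma \ref{lem:zap} showing that $\C_G(z)$ is cyclic and equal to a conjugate of $\C_P(C)\times C$ for any $z$ not of $p$-power order, hence the M\"obius sum over cyclic overgroups vanishes automatically). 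No character or ramification input appears at all in that case. Conditions (a), (c), (d) are \emph{all} about non-trivial cyclic $p$-subgroups $T\subset P$; in particular (d) is the translation of your ``residual $\#\N_C(T)$-part'' of the congruence into $\phi$-independent data. That residual congruence $\sum_{\Gamma\in S(T)}\mu([\Gamma:T])\iota(\Gamma)\equiv 0\bmod\#\N_C(T)$ mixes the $\phi$-dependent quantities $\iota(\Gamma)$ for $\Gamma\subset P$ with the constants $\mu([\Gamma:T])$; the Teichm\"uller lift arises precisely here, via the $\N_C(T)$-orbit analysis (Lemma \ref{lem:abstractit}) and the identification of $\iota(\Gamma)\bmod\#\N_C(\Gamma)$ with $1+j_T$ from Lemma \ref{lem:urk} and Corollary \ref{cor:umph} — all for $\Gamma$ a cyclic $p$-group. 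The case split on whether $\C_C(T)$ is trivial is then Lemmas \ref{lem:thetrivialcase} and \ref{lem:nontrivial}. Your proposal leaves this central step largely uncharted (the sentence about ``$S_G(T)$ has no $\Gamma$ contained in $\C_G(T)$ other than\ldots'' trails off), and inserting the Teichm\"uller analysis into the wrong half of the case division would lead you to try to prove a false equivalence. Reassign (d) to the $p$-subgroup case and develop the orbit-sum/Teichm\"uller lemma there; the rest of your outline will then line up with the paper.
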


The proof of this result is completed at the end of \S \ref{s:pgroups}
using the results in \S \ref{s:numresults}, \S \ref{s:nonpgroups} and
\S \ref{s:pgroups}. 

We should point out that conditions (b), (c)(i) and (d)(i) of the above
theorem are purely group theoretic, in the sense that they
do not depend on $\phi$.  Condition (c)(ii)
should be interpreted as saying that the wild ramification groups
of $G$ are sufficiently large relative to the constant $-b_{T',G}$
which does not depend on the ramification filtration of $G$ determined by $\phi$.  Condition
(d)(ii) (when it applies) should be viewed as saying that the
conjugation action of $\N_{C}(T)$ on $T$ is the inverse of
the Teichm\"uller lift of the restriction of the tame character $\theta:C \to k^*$
to $\N_{C}(T)$.  Note that $\theta$ does not depend on $T$ and is
also independent of $\phi$.  Thus condition (d)(ii) does involve the arithmetic information
contained in $\theta$, but this information is connected only
with tame ramification.  The higher ramification filtration of $G$ therefore
enters into only condition (a) and condition (c)(ii) of Theorem \ref{thm:reducetop}.

\section{Obstructions associated to cyclic subquotients which are not $p$-groups.}
\label{s:nonpgroups}
\setcounter{equation}{0}

Throughout this section we suppose that the finite group $G = P.C$ is a semi-direct product of a 
normal $p$-group $P$ with a cyclic prime to $p$ group $C$.
Our goal is to prove the following proposition concerning the constants $b_{T,J} = b_{T,J}(\phi)$ in Corollary \ref{cor:notquot}:

\begin{prop}
\label{prop:infoprime}
The following conditions are equivalent.
\begin{enumerate}
\item[a.] For each cyclic subgroup $T$ of a subquotient $J$ of $G$ such
that $T$ is not a $p$-group, then the constant $b_{T,J} = b_{T,J}(\phi)$ in Corollary \ref{cor:notquot}  has the property that $0 \le b_{T,J} \in \mathbb{Z}$.
\item[b.]  If $t$ is a non-trivial element of a cyclic subgroup $C_0$ of $G$ of maximal prime to $p$ order,
the centralizer $\C_G(t)$ is cyclic and equal to the
product group $\C_P(t) \times C_0 = \C_P(C_0) \times C_0$.
\end{enumerate}
If one (and hence both) of these conditions hold, then all of the constants
$b_{T,J}$ in part (a) are either $0$ or $1$.  Both conditions hold
if the Bertin obstruction of $\phi$ vanishes.  
\end{prop}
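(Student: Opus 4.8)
The plan is to reduce everything to the group-theoretic criterion of Proposition~\ref{prop:subtle}. Two assertions are immediate. If the Bertin obstruction of $\phi$ vanishes, Corollary~\ref{cor:notquot} gives $0\le b_{T,J}\in\mathbb Z$ for \emph{every} non-trivial cyclic subgroup $T$ of every subquotient $J$, in particular those which are not $p$-groups; so (a) holds. And since a cyclic group that is not a $p$-group contains a non-trivial element of order prime to $p$, Proposition~\ref{prop:subtle} applies to the pair $(J,T)$ (with $(G,\phi)$ there replaced by $(J,\phi_J)$): if $b_{T,J}$ is integral then we are in alternative (a) or (b) of that proposition, so $b_{T,J}=1$ or $b_{T,J}=0$, which gives the ``$0$ or $1$'' clause once (a) is known. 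So the real content is (a)$\Leftrightarrow$(b). By Proposition~\ref{prop:subtle} again, (a) is equivalent to the purely group-theoretic statement (a$'$): \emph{for every subquotient $J$ of $G$ and every cyclic $T\le J$ which is not a $p$-group, one has $\N_J(T)=T$ or $\psi(T,\C_J(T))=0$.} It is also convenient to note that, since all prime-to-$p$ subgroups of subquotients of $G$ are cyclic and a maximal prime-to-$p$ cyclic subgroup may be taken inside the cyclic part, condition (b) is equivalent to: \emph{every non-trivial element of $G$ of order prime to $p$ has cyclic centralizer.}

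For (b)$\Rightarrow$(a$'$) I would first check that this last condition is inherited by every subquotient---clear for subgroups, and for a quotient $\overline G=G/N$ by lifting a non-trivial prime-to-$p$ element $\overline t$ to a prime-to-$p$ element $t\in G$, identifying $\C_{\overline P}(\overline t)$ with the image of $\C_P(t)$ via coprime-action theory, and using a lifting argument to see that the cyclic group $\C_G(t)$ surjects onto $\C_{\overline G}(\overline t)$. Granting (b) for $J$, fix a cyclic $T\le J$ which is not a $p$-group and write $T=T_0\times T_p$ with $T_0\ne\{e\}$ the prime-to-$p$ part; conjugating in $J$ by Lemma~\ref{lem:Gstruct} we may assume $T_0$ lies in the cyclic complement of $J$, so $\C_J(T_0)$ is cyclic by (b). Since $T_p\le\C_J(T_0)$ and $\C_J(T_0)$ is abelian, $\C_J(T)=\C_J(T_0)$ is cyclic, and for a cyclic group $A\supseteq T$ one computes $\psi(T,A)=\sum_{j\mid[A:T]}\mu(j)$, which is $0$ unless $A=T$. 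Hence either $\psi(T,\C_J(T))=0$, or $\C_J(T)=T$; in the latter case $\N_J(T)$ normalizes $T$ and hence its prime-to-$p$ part $T_0$, its normal Sylow $p$-subgroup must centralize $T_0$ (otherwise a $p$-element not centralizing $T_0$ would generate with $T_0$ a subgroup of a subquotient of $G$ with non-normal Sylow $p$-subgroup, which cannot occur), and its cyclic prime-to-$p$ complement contains the normal prime-to-$p$ subgroup $T_0$ and therefore centralizes it; thus $\N_J(T)\le\C_J(T_0)=\C_J(T)=T$. In every case (a$'$) holds.

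For (a$'$)$\Rightarrow$(b) I would prove the contrapositive, extracting from the failure of (b) a subquotient $J$ of $G$ and a cyclic non-$p$-group $T\le J$ with $\N_J(T)\ne T$ and $\psi(T,\C_J(T))\ne 0$, hence $b_{T,J}\notin\mathbb Z$ by Proposition~\ref{prop:subtle}. If (b) fails, some non-trivial prime-to-$p$ element $t$ of $G$ has non-cyclic centralizer; writing $\C_G(t)=\C_P(t).C_H$ with $t\in C_H$, this means either $\C_P(t)$ is non-cyclic or $\C_P(t)$ is cyclic but $C_H$ does not centralize it. In the first case, since $t$ centralizes $\C_P(t)$, the subgroup $\langle t\rangle\times\C_P(t)$ of $G$, together with the fact that a non-cyclic $p$-group always has $C_p\times C_p$ as a subquotient, shows that $G$ has a subquotient isomorphic to $C_\ell\times(C_p\times C_p)$ for a prime $\ell\ne p$; its centre is neither cyclic nor a $p$-group, so Corollary~\ref{cor:greenMat} applied to it yields the required bad subquotient. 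In the second case $C_H$ acts non-trivially on the cyclic $p$-group $\C_P(t)$, forcing $p$ odd; let $P_1$ be the unique subgroup of order $p$ in $\C_P(t)$, let $K=\ker(C_H\to\aut(P_1))$ (a non-trivial cyclic prime-to-$p$ group, since $t\in K$), and put $J=P_1.C_H\le G$ and $T=K$. Then $T$ is a non-trivial prime-to-$p$ subgroup central in $J$, so $\C_J(T)=J$ and $\N_J(T)=J\ne T$, while $J/T\cong C_p.C_d$ with $C_d$ acting faithfully, where $d=[C_H:K]>1$; this is a Frobenius group for which a direct count of cyclic subgroups gives $\psi(\{e\},C_p.C_d)=-p$, so $\psi(T,\C_J(T))=\psi(\{e\},J/T)=-p\ne 0$. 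This contradicts (a$'$) and completes the proof.

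The hard part is this last step---organizing the various ways condition (b) can fail into a uniform construction of a forbidden subquotient, and in particular the case in which $\C_P(t)$ is cyclic, where the natural candidates $\C_G(t)$ and $\C_P(t).C_H$ all have cyclic centre, so that Corollary~\ref{cor:greenMat} does not apply and one must instead single out the subquotient $P_1.C_H$ and evaluate $\psi$ on the resulting Frobenius quotient. A secondary technical point is the quotient case of ``(b) is inherited by subquotients'', which needs the coprime-action and lifting argument indicated above.
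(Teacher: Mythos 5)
Your proof is correct and follows essentially the same route as the paper: the same dichotomy from Proposition~\ref{prop:subtle}, the same appeal to Corollary~\ref{cor:greenMat}, the same inheritance-to-subquotients step (the paper's Lemma~\ref{lem:passage}), and the same Frobenius-type computation $\psi(\{e\},C_p.C_d)=-p$, which is the paper's $\psi=-\#P$ specialized to $\#P=p$. The only differences are organizational---you prove (a)$\Rightarrow$(b) by contrapositive, exhibiting an explicit bad subquotient, where the paper runs a forward reduction chain (Frattini quotient to get $\C_P(t)$ cyclic, then induction on $\#C$ down to a prime-index extension), and you fold the content of the paper's Lemma~\ref{lem:zap} and Corollary~\ref{cor:whoop} into inline normalizer/centralizer arguments.
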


\medbreak

\noindent {\bf Proof that Condition (a) of Proposition \ref{prop:infoprime}
implies condition (b).}
\medbreak

We assume that condition (a) of the proposition holds.  Condition (b) holds trivially if $P$ is trivial, so we assume  $P$ is not trivial.  All of the cyclic subgroups of $G$ of maximal prime-to-$p$ order are conjugate to $C$ by
Lemma \ref{lem:Gstruct}.  Hence to prove (b), we can reduce to the case $C_0 = C$.  The fact that $\C_G(C) = \C_P(C) \times C$ is clear because $G$ is the 
semi-direct product of the abelian group $C$ with $P$.  

We first show that $\C_P(t)$ is cyclic.  Since condition (a) applies to all subquotients of $G$, to show $\C_P(t)$ is cyclic, 
we can replace $G$ by $\C_P(t) \times \langle t \rangle$.  We may thus temporarily assume that 
$G$ is the product group $P \times \langle t \rangle$,
with $\C_P(t) = P$.  Let $P_1$ be the Frattini subgroup of $G$, so that $P_1 = [P,P] \cdot P^p$ is the normal subgroup of $G$ generated
the commutator subgroup $[P,P]$ of $P$ and the $p^{th}$ powers of elements of $P$.  Thus $P/P_1$ 
is an elementary abelian group of rank equal to that of $P$.  
By Corollary \ref{cor:greenMat} and the fact that condition (a) applies to all 
subquotients, we may conclude that if $H$ is a 
subquotient of $G $, then $\C(H)$
must be either a $p$-group or a cyclic group.  But $G/P_1 = (P/P_1) \times \langle t \rangle$
is abelian and not a $p$-group, so this group must be cyclic.  Hence the $p$-Frattini quotient
$P/P_1$ is cyclic, so $P = \C_P(t)$ itself must be cyclic, as asserted.

We now drop the assumption that $G = P \times \langle t \rangle$.  To show the last
equality in part (b) of Proposition \ref{prop:infoprime}, it will suffice to prove 
 \begin{equation}
\label{eq:todo}
\C_P(t) = \C_P(C).
\end{equation}

To prove (\ref{eq:todo}), we can use
 induction on $\# C$ to reduce to  the case in which the  index of
$\langle t \rangle $ in $C$ is a prime number by replacing $G$ by $P.\C_P(t)$.  
We have already shown that $\C_P(t)$
is cyclic, and clearly $\C_P(C) \subset \C_P(t)$.  We now check that $C$
normalizes $\C_P(t)$.  Suppose $c \in C$ and $g \in \C_P(t)$.  Then $cgc^{-1}  = g' \in P$ since $P$ is normal, and $$g' t g'^{-1} = 
cgc^{-1} t cg^{-1} c^{-1} = c g t g^{-1} c^{-1} = ctc^{-1} = t$$ since
$c$ and $t$ are in the abelian group $C$ and $g \in \C_P(t)$.  Thus the
subgroup $\C_P(t). C$ generated by $\C_P(t)$ and $C$
is a semi-direct product of these two groups.   We now replace $G$ by $\C_P(t). C$ to
be able to assume that $P = \C_P(t)$ is cyclic
and that $[C:\langle t \rangle] = \ell$ is prime (and prime to $p$).  If $C$ centralizes
$P$, then (\ref{eq:todo}) holds.  So we will now assume that $C$ does not centralize
$P$ and derive a contradiction.

Since $t$ commutes
with $P = \C_P(t)$ and with all of $C$, we find that the centralizer
$\C_G(\langle t \rangle)$ is equal to all of $G$.  We now apply 
Proposition \ref{prop:subtle} to the subgroup $T = \langle t \rangle$ of $G$.  Since
$\N_G(T) = \C_G(T) = G$ is not $T$,  this proposition implies that 
\begin{equation}
\label{eq:yuppie}
\psi(\{e\},\C_G(T)/T) = \sum_{{\rm cyclic\ }H \subset \C_G(T)/T} \mu(\# H) = 0
\end{equation}
where the sum is over the cyclic subgroups $H$ of $\C_G(T)/T$, including the trivial subgroup, 
and $\mu$ is the Mobius function.  

In our situation, $\C_G(T)/T = G/\langle t \rangle =
P . (C/\langle t \rangle)$ is a non-trivial semi-direct product of the cyclic $p$-group
$P$ with the cyclic group $C/\langle t \rangle$ of prime order $\ell \ne p$.  Then
$C/\langle t \rangle$ acts non-trivially by conjugation on every element of $P$.
It follows that any element $g \in \C_G(T)/T$ which does not lie in $P$ must
have order exactly $\ell$, since otherwise conjugation by $g$ would fix
the non-trivial element $g^{\ell}$ of $P$.  The number of elements of $\C_G(T)/T$ which do not lie in $P$ is
$(\ell -1)(\# P)$, and these generate the $\# P$ subgroups
$H$ of order $\ell$ in $\C_G(T)/T$.  The other groups $H$ appearing on the right
hand side of (\ref{eq:yuppie}) are subgroups of $P$, and the only groups $H
$ of this kind for which $\mu(\#H) \ne 0$ are the trivial group $\{e\}$ and
the unique subgroup $P_0$ of order $p$ in $\C_G(T)/T$.  Thus 
\begin{equation}
\label{eq:computeit}
\psi(\{e\},\C_G(T)/T) = \sum_{H \subset \C_G(T)/T} \mu(\# H)  = \mu(1) + \mu(p) + \# P  \cdot \mu(\ell) = - \# P \ne 0
\end{equation}
This contradicts (\ref{eq:yuppie}), which completes the proof that part (a) of Proposition
\ref{prop:infoprime} implies part (b).
\medbreak

\noindent {\bf Conclusion of the proof of Proposition \ref{prop:infoprime}.}
\medbreak

We first prove two lemmas.

\begin{lemma}
\label{lem:passage}
Every subquotient $J$ of $G$ is the semi-direct product of a
$p$-group with a cyclic prime to $p$-group.  If condition (b) of Proposition \ref{prop:infoprime}
holds for $G$, then it also holds when $G$ is replaced by $J$.
\end{lemma}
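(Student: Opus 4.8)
The first assertion is essentially structural. A subquotient $J = H/D$ of $G = P.C$ inherits a normal $p$-subgroup, namely the image $\bar P$ of $H \cap P$ in $J$; the quotient $J/\bar P$ is then a subquotient of $G/P \cong C$, hence cyclic of order prime to $p$, and since $\#\bar P$ is a power of $p$ the extension $1 \to \bar P \to J \to J/\bar P \to 1$ splits by the Schur--Zassenhaus theorem. Thus $J = \bar P . C_J$ with $C_J$ cyclic of order prime to $p$. So the plan for the first sentence is just to invoke the coprime-order splitting; no real work here beyond writing it cleanly.

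For the second assertion the plan is to reduce to the two basic operations out of which every subquotient is built: passage to a subgroup $H \le G$, and passage to a quotient $G/N$ by a normal subgroup $N$. It suffices to check that condition (b) is preserved under each, since any subquotient is obtained by composing these. In both cases I would fix a cyclic subgroup $C_0$ of maximal prime-to-$p$ order in the subquotient, pick a nontrivial $t \in C_0$, and analyze the centralizer. For a subgroup $H$: by Lemma \ref{lem:Gstruct} all subgroups of $G$ of a given prime-to-$p$ order are conjugate, and one checks that a maximal prime-to-$p$ cyclic subgroup $C_0$ of $H$ lies inside a conjugate of $C$; replacing $H$ by a conjugate we may assume $C_0 \subseteq C$. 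Then $\C_H(t) = \C_G(t) \cap H = (\C_P(t) \times C) \cap H$ by hypothesis (b) for $G$, and intersecting a direct product $\C_P(t) \times C$ with the subgroup $H$ (which contains $C_0 \subseteq C$) one reads off that $\C_H(t) = \C_{P \cap H}(t) \times C_0$; this is cyclic because $\C_P(t)$ is cyclic (Proposition \ref{prop:infoprime}, already proved: condition (b) for $G$ gives $\C_P(t)$ cyclic, hence so is its subgroup $\C_{P\cap H}(t)$), so (b) holds for $H$. For a quotient $\Gamma = G/N$: lift $t$ to $G$; the preimage in $G$ of a maximal prime-to-$p$ cyclic subgroup of $\Gamma$ has a prime-to-$p$ part which, again by Lemma \ref{lem:Gstruct}, is conjugate into $C$, so after conjugating we may assume the lift $\tilde t$ lies in $C$. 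Then $\C_\Gamma(\bar t) \supseteq$ image of $\C_G(\tilde t) = \C_P(\tilde t) \times C$, and conversely any $\bar g$ centralizing $\bar t$ has a lift $g$ with $[g,\tilde t] \in N$; one pushes this into $N \cap P$ (the only obstruction lying in $P$ since $\tilde t \in C$ and $G/P$ abelian) and concludes $\C_\Gamma(\bar t) = \overline{\C_P(\tilde t)} \cdot \bar C_0$, which is a quotient of the cyclic group $\C_P(\tilde t) \times C$ hence cyclic, and has the required product form.

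The expected main obstacle is the quotient case, and specifically making rigorous the passage "$\bar g$ centralizes $\bar t$ $\Rightarrow$ the commutator can be taken inside $N \cap P$." The point is that $[g,\tilde t] \in N$ for some lift $g$, and since $\tilde t \in C$ with $G/P$ abelian, modifying $g$ by an element of $P$ (allowed, as it does not change $\bar g$ only up to $\bar P$, and $\bar P$ is handled separately) one forces $[g,\tilde t] \in N \cap P$; combined with $\C_P(\tilde t) = \C_P(C)$ from part (b) for $G$ this pins down $\C_\Gamma(\bar t)$ exactly. Once condition (b) is verified under these two operations, composing them yields the statement for an arbitrary subquotient $J$, completing the proof. $\square$
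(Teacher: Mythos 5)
Your overall decomposition --- handle subgroups, handle quotients by a normal subgroup, then compose --- is the same strategy as the paper, and your subgroup case is essentially right once tidied: $\C_H(t)=\C_G(t)\cap H$ is a subgroup of the cyclic group $\C_P(t)\times C$, hence is the direct product of its $p$-part (which is $\C_{P\cap H}(t)$) and its $p'$-part; the latter contains $C_0$ and injects into $H/(P\cap H)\cong C_0$, so it equals $C_0$.

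The quotient step, however, has a genuine gap at the one place where the paper makes a substantive appeal. You need the natural map $\C_G(\tilde t)\to\C_{G/N}(\bar t)$ to be \emph{surjective}, and your justification does not establish this. Two specific problems with the sentence about ``modifying $g$ by an element of $P$'': first, $[g,\tilde t]$ lies in $P$ for \emph{every} $g$ (because $G/P\cong C$ is abelian and $\tilde t\in C$), so $[g,\tilde t]\in N\cap P$ holds automatically and no modification is needed to achieve it; second, any modification of $g$ that preserves the coset $\bar g$ must be by an element of $N$, not by an arbitrary element of $P$, and the parenthetical about $\bar P$ being ``handled separately'' does not make such a modification legal. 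What is actually required is the following: given $[g,\tilde t]\in N\cap P$, find $n\in N\cap P$ with $[ng,\tilde t]=e$. This is exactly the vanishing of $H^1(\langle\tilde t\rangle,\,N\cap P)$ for a coprime action, equivalently conjugacy of complements (Schur--Zassenhaus) in the split extension $(N\cap P).\langle\tilde t\rangle$ of $\langle\tilde t\rangle$ by $N\cap P$, using that both $\tilde t$ and $g\tilde t g^{-1}$ generate complements of $N\cap P$ in that subgroup. The paper invokes precisely this fact, via the non-abelian cohomology of $1\to H\to P\to P/H\to 1$ with respect to $\langle t\rangle$ \cite[Chap.~VII, Annexe]{corps}; your proposal never states it, and the phrase ``pins down $\C_\Gamma(\bar t)$ exactly'' is carrying all the weight without an argument. (The identity $\C_P(\tilde t)=\C_P(C)$ is not what is needed at this juncture; it concerns varying $\tilde t$ within $C$, not descending to the quotient.)

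One small point in your favor: once the coprime-cohomology step is supplied, your single-pass treatment of a general normal $N$ is a mild simplification of the paper's two-step reduction ($N$ a $p$-group, then $N$ cyclic of order prime to $p$), precisely because $[g,\tilde t]\in P$ always, so only $N\cap P$ intervenes and the coprime-action argument applies at once.
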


\begin{proof}  The first statement is a consequence of Hall's Theorem \cite[Thm. 18.5]{Asch} together
with the fact that $J$ is an extension of a cyclic group of order prime to $p$ by
a normal $p$-subgroup.  Suppose now that $G$ satisfies condition (b)
of Proposition \ref{prop:infoprime}.  It is clear that every subgroup of $G$
then satisfies this condition.  We are thus reduced to showing that $J = G/H$
satisfies this condition for all normal subgroups $H$ of $G$.  It is enough to consider
prove this when $H$ is either a $p$-group or a cyclic prime to $p$ group.

   Suppose first that $H$ is a $p$-group.   Then $H \subset P$
and $J = (P/H).C$ is the semi-direct product
of the $p$-group $P/H$ with $C$.  Let $t$ be a non-trivial
element of $C$.  By hypothesis, $\C_G(t) = \C_P(t) \times C$
is cyclic.  Hence to show $\C_J(t) = \C_{P/H}(t).C$
is cyclic, it will suffice to show that the quotient
homomorphism $P \to P/H$ gives a surjection $\C_P(t) \to \C_{P/H}(t)$.
Here $t$ acts on $P$ and $P/H$ by conjugation, so we are to show
that  the invariants $P^{\langle t \rangle}$
surject onto $(P/H)^{\langle t \rangle}$.  Since $P$ is a $p$-group
and $t$ has order prime to $p$, this follows from the taking the
non-abelian
cohomology with respect to $\langle t \rangle$ of the sequence of $1 \to H \to P \to P/H \to 1$ (see \cite[Chap.~VII, Annexe]{corps}).  

Finally, suppose that $H$ is cyclic of order prime to $p$.    By Lemma \ref{lem:Gstruct},
$H$ is a subgroup of $C$ since it is
conjugate to such a subgroup and is normal.  Then $P.H$ contains
the normal subgroups $P$ and $H$, so $P.H$ is isomorphic
to $P \times H$, and $H$ and $P$ commute.  Since $H \subset C$ commutes
with $C$, we conclude $H$ is in the center of $G$.  Suppose $t' \in J = G/H = P.(C/H)$ has order prime to $p$.
Since $H$ is central, it is clear that condition (b) of Proposition \ref{prop:infoprime}
implies 
$\C_{J}(t') = \C_P(t') \times (C/H)$ is cyclic, so Condition (b) holds for $J$.   
\end{proof}

For $z \in G$, we will write $\N_G(z)$ for $\N_G(\langle z \rangle)$, where $\langle z \rangle$ is the subgroup
generated by $z$.

\begin{lemma}
\label{lem:zap}
Suppose that condition (b) of Proposition \ref{prop:infoprime} holds.  
Let $z \in G$ be an element which is not of $p$-power order.  Let $m$ be the  smallest positive such that $w = z^m$ is a (non-trivial) element
of order prime to $p$.
\begin{enumerate}
\item[a.]  The group $\C_G(z) = \C_G(w)$ is cyclic and conjugate to
$\C_P(C) \times C$.
\item[b.]
If $\N_G(z)$ properly contains $\langle z \rangle$, then so does $\C_G(z)$.
 \end{enumerate}
 \end{lemma}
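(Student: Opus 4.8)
\textbf{Proof plan for Lemma \ref{lem:zap}.}

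The plan is to reduce everything to the structure statement in condition (b) of Proposition \ref{prop:infoprime}, which says that the centralizer of a non-trivial prime-to-$p$ element of $G$ is cyclic and, up to conjugacy, equals $\C_P(C) \times C$. First I would prove part (a). Write $z = w z'$ where $w = z^m$ has prime-to-$p$ order and $z'$ is the $p$-part of $z$ (these are powers of $z$, hence commute, and $z'$ is non-trivial since $z$ is not of prime-to-$p$ order by hypothesis $m \ge 2$). Since $w$ and $z'$ are both powers of $z$, one has $\C_G(z) = \C_G(w) \cap \C_G(z')$, so in particular $\C_G(z) \subseteq \C_G(w)$. By condition (b), after conjugating we may assume $w \in C$ and $\C_G(w) = \C_P(C) \times C$, which is cyclic. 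A subgroup of a cyclic group is cyclic, so $\C_G(z)$ is cyclic; and since $z \in \C_G(z) \subseteq \C_G(w)$, the element $z$ itself lies in the cyclic group $\C_P(C) \times C$, which forces $\C_G(z)$ to \emph{equal} $\C_G(w)$ (an element of a cyclic group centralizes every element of that group). This gives $\C_G(z) = \C_G(w) = \C_P(C) \times C$ up to conjugacy, proving (a); note also that Lemma \ref{lem:Gstruct} guarantees that all prime-to-$p$ subgroups of a given order are conjugate, which is what lets us normalize $w$ into $C$.

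For part (b), suppose $\N_G(z)$ properly contains $\langle z \rangle$; I must produce an element of $\C_G(z)$ outside $\langle z \rangle$. By part (a) we may work inside $M := \C_G(w) = \C_P(C) \times C$, a cyclic group containing $z$. Pick $x \in \N_G(z) \setminus \langle z \rangle$. Conjugation by $x$ preserves $\langle z \rangle$, hence preserves its characteristic subgroups $\langle w \rangle$ (the prime-to-$p$ part) and $\langle z' \rangle$ (the $p$-part). In particular $x$ normalizes $\langle w \rangle$, so $x \in \N_G(\langle w \rangle)$. Here I would invoke the structure of $G = P.C$ with $C$ acting on $P$: since $w$ is conjugate into $C$ and $C$ is abelian, $C \subseteq \C_G(w) = \N_G(\langle w \rangle)$'s relevant part, and one analyzes how $\N_G(\langle w \rangle)$ sits relative to $\C_G(w) = M$. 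The key point is that $\N_G(\langle w \rangle)/\C_G(\langle w \rangle)$ embeds into $(\mathbb{Z}/\#\langle w \rangle)^*$, an abelian group of order prime to $p$; combined with condition (b) applied in $G$ and the fact that $M$ is already a maximal cyclic subgroup containing $C$, I expect to conclude $\N_G(\langle w \rangle) = M \cdot (\text{$p$-part contributions})$, so that $x$ lies in a group only mildly larger than $M$, and in fact $x$ normalizing $\langle z \rangle$ with $M$ cyclic forces $x \in M = \C_G(z)$, giving $x \in \C_G(z) \setminus \langle z \rangle$ as required.

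The main obstacle I anticipate is part (b): carefully controlling $\N_G(z)$ when $z$ has mixed order, because a priori an element could normalize $\langle z \rangle$ by acting non-trivially on the $p$-part while the prime-to-$p$ part forces it into the cyclic $M$. The resolution should be that any such normalizing element, by acting on the prime-to-$p$ part $\langle w \rangle$, must already centralize $w$ (since the only way to normalize a prime-to-$p$ cyclic group inside this $G$ while being compatible with the $p$-group action is to centralize it — this is where condition (b), ruling out extra automorphisms, does the work), hence lands in $M$; and inside the cyclic group $M$, normalizing $\langle z \rangle$ is the same as centralizing $z$. I would be prepared to split into the cases according to whether $[\N_G(z):\C_G(z)]$ is trivial or not, and in the non-trivial case derive a contradiction with the cyclicity/structure forced by condition (b), much as in the argument following equation \eqref{eq:yuppie} above.
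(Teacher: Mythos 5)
Your argument for part (a) is correct and is essentially the paper's: normalize $w$ into $C$ via Lemma \ref{lem:Gstruct}, observe $\C_G(z) \subseteq \C_G(w) = \C_P(C) \times C$ by condition (b), then use that $z$ lies in the abelian group $\C_G(w)$ to get the reverse inclusion $\C_G(w) \subseteq \C_G(z)$.

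For part (b), your forward approach --- show that every $x \in \N_G(z)$ must centralize $w$, hence lie in the cyclic group $M = \C_G(w)$, hence centralize $z$ --- is a legitimate alternative to the paper's argument, but your justification of the crucial step is wrong and much of what follows is hedged rather than proved. Specifically, you claim that $(\mathbb{Z}/\#\langle w\rangle)^*$ is ``an abelian group of order prime to $p$''; that is false in general (e.g.\ $\#(\mathbb{Z}/3)^* = 2$, which is the characteristic when $p=2$), so the argument that $\N_G(\langle w\rangle)/\C_G(\langle w\rangle)$ is automatically a $p'$-group collapses, and the remaining steps (``I expect to conclude\dots'', ``the resolution should be\dots'') do not constitute a proof. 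The fact you actually need --- that $\N_G(\langle w\rangle) = \C_G(\langle w\rangle)$ --- is true, but for a purely structural reason that does not involve $\mathrm{Aut}(\langle w\rangle)$ at all and does not even use condition (b): after conjugating so that $w \in C$, write any $x \in \N_G(\langle w\rangle)$ as $x = \rho\sigma$ with $\rho \in P$ and $\sigma \in C$; since $\sigma$ centralizes $\langle w\rangle \subseteq C$, the element $\rho$ also normalizes $\langle w\rangle$, and for each $c' \in \langle w\rangle$ the commutator $\rho c'\rho^{-1}c'^{-1}$ lies in $P$ (because $P \trianglelefteq G$) and simultaneously in $C$ (because $\rho c'\rho^{-1} \in \langle w\rangle \subseteq C$), hence equals $e$. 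So $\rho$, and therefore $x$, centralizes $\langle w\rangle$. With this correction your argument does go through, and in fact proves the cleaner statement $\N_G(z) = \C_G(z)$. The paper's own proof is organized differently: it argues by contradiction, assuming $\C_G(z) = \langle z\rangle$, which forces $w$ to generate $C$ and gives a decomposition $\N_G(z) \cong (\N_G(z)\cap P) \times C$, whose two factors are then each shown to lie in $\C_G(z)$ using the cyclicity of $\C_P(C)\times C$ from condition (b). Both routes are valid; yours is more direct once the erroneous step is repaired, while the paper's uses the contradiction hypothesis to simplify the bookkeeping.
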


\begin{proof} 
By Lemma \ref{lem:Gstruct}, $w$ is conjugate to an element $t$ of $C$.  By replacing $z$ by a conjugate of itself, we can assume that $w = t \in C$.  Since we assume that condition (b) of Proposition \ref{prop:infoprime}
holds, $\C_G(w) = \C_G(t) = \C_P(C) \times C$ is cyclic.  We have $z \in \C_G(w)\supset \C_G(z) $ since $w$ is a power of $z$.  Because $\C_G(w)$ is abelian,
this implies $\C_G(w) \subset \C_G(z)$, so $\C_G(z) = \C_G(w) = \C_P(C) \times C$.
This proves part (a).  

To show part (b), we assume to the contrary that 
\begin{equation}
\label{eq:cgz}
\C_G(z) = \C_P(C) \times C = \langle z \rangle.
\end{equation}  Note that this forces forces $w = t$ above to be a generator of $C$. 
It will suffice to show
\begin{equation}
\label{eq:noteq}
\N_G(z)  \subset   \C_G(z)
\end{equation}  
since then $\C_G(z) = \N_G(z)$ will equal $\langle z \rangle$.

The group $\langle z \rangle$ is obviously normal
in $\N_G(z)$, and $\langle w \rangle = \langle t \rangle  =  C \subset  \langle z \rangle$ is characteristic in $\langle z \rangle$.  Hence $C$ 
is a normal subgroup of $\N_G(z)$.  The group $\N_G(z) \cap P$ 
is also normal in $\N_G(z)$ since $P$ is normal in $G$.  Since
$\N_G(z) \cap P$ and $C$ have coprime orders, and the product of these
orders is $\# \N_G(z)$, we conclude that $\N_G(z)$ is isomorphic to
the product group $(\N_G(z) \cap P) \times C$.  This means that $C$ commutes
with $\N_G(z) \cap P$.  

Thus $\N_G(z) \cap P$ is contained in the cyclic group
$\C_G(C) = \C_P(C) \times C$.  Hence $\N_G(z) \cap P$ is abelian.  Since 
$\langle z \rangle \cap P$  is contained in $\N_G(z) \cap P$, this means
that $\N_G(z) \cap P$ centralizes $\langle z \rangle \cap P$.  However,
we have already shown that $C$ commutes with $\N_G(z) \cap P$.  Thus
$\C_G(\N_G(z) \cap P) $ contains both $\langle z \rangle \cap P$ and $C = \langle w \rangle$,
and the latter two groups generate $\langle z \rangle$.  Hence 
$\C_G(\N_G(z) \cap P) $ contains $\langle z \rangle$, so $\N_G(z) \cap P$
is contained in $\C_G(z)$.  

We now use the fact that  $\N_G(z)$ is generated by 
$\N_G(z) \cap P \subset \C_G(z)$ and $C = \langle w \rangle \subset \C_G(w)$, where we
have shown $\C_G(w) = \C_G(z)$ already. This
implies $\N_G(z) \subset \C_G(z)$ and proves (\ref{eq:noteq}).
 \end{proof}

 \begin{cor}
 \label{cor:whoop}Condition (b) of Proposition \ref{prop:infoprime}
implies that if $T$ is a cyclic subgroup of a subquotient $J$ of $G$ such
that $T$ is not a $p$-group, then $b_{T,J}$ is equal to $0$ or $1$.  
 \end{cor}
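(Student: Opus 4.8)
The plan is to read Corollary~\ref{cor:whoop} straight off Proposition~\ref{prop:subtle}, using the two lemmas just established. Fix a cyclic subgroup $T$ of a subquotient $J$ of $G$ with $T$ not a $p$-group. The first step is to invoke Lemma~\ref{lem:passage}: $J$ is again the semi-direct product of a $p$-group with a cyclic prime-to-$p$ group, and it still satisfies condition~(b) of Proposition~\ref{prop:infoprime}, so Proposition~\ref{prop:subtle} and Lemma~\ref{lem:zap} may be applied with $J$ in place of $G$. Since $T$ is cyclic and not a $p$-group, its order is divisible by some prime $\ell \ne p$, so $T$ contains a non-trivial element of order prime to $p$; thus Proposition~\ref{prop:subtle} applies to $T \subset J$ and says that $b_{T,J}$ is integral precisely when either $\N_J(T) = T$, in which case $b_{T,J} = 1$, or $\psi(T,\C_J(T)) = 0$, in which case $b_{T,J} = 0$.

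It therefore suffices to show that, under condition~(b), one of these two alternatives always holds. Write $T = \langle z \rangle$; as $T$ is not a $p$-group, $z$ is not of $p$-power order. If $\N_J(T) = T$ we are done, with $b_{T,J} = 1$. Otherwise $\N_J(z) = \N_J(T)$ properly contains $\langle z \rangle$, so Lemma~\ref{lem:zap}(b) (applied to $J$) forces $A := \C_J(z) = \C_J(T)$ to properly contain $T$, and Lemma~\ref{lem:zap}(a) tells us $A$ is cyclic. For a cyclic group $A$, the subgroups lying between $T$ and $A$ are in bijection with the divisors of $n := [A:T]$, the subgroup at level $e \mid n$ having index $e$ over $T$; hence $\psi(T,\C_J(T)) = \sum_{e \mid n} \mu(e)$, which is $1$ if $n = 1$ and $0$ if $n > 1$. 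As $n > 1$ here, alternative (b) holds and $b_{T,J} = 0$. In all cases $b_{T,J} \in \{0,1\}$.

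I do not anticipate a genuine obstacle. The substantive input is already packaged in Lemma~\ref{lem:zap}, which converts the hypothesis ``$\N_J(T)$ is strictly larger than $T$'' into ``$\C_J(T)$ is cyclic and strictly larger than $T$''; after that the argument is just the Möbius identity $\sum_{e\mid n}\mu(e) = 0$ for $n > 1$ together with the subgroup lattice of a cyclic group. The one point that needs care is the very first step: Proposition~\ref{prop:subtle} and Lemma~\ref{lem:zap} are phrased for $G$ and its subgroups, so Lemma~\ref{lem:passage} must be cited at the outset to license their use inside the subquotient $J$.
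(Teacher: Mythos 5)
Your proof is correct and follows essentially the same route as the paper: reduce to a subquotient via Lemma~\ref{lem:passage}, apply Proposition~\ref{prop:subtle} to split into the two cases $\N_J(T)=T$ and $\N_J(T)\ne T$, use Lemma~\ref{lem:zap} in the second case to get that $\C_J(T)$ is cyclic and properly contains $T$, and finish with $\sum_{d\mid n}\mu(d)=0$ for $n>1$. The only cosmetic difference is that the paper states the reduction as ``it suffices to take $J=G$'' whereas you carry $J$ along and cite Lemma~\ref{lem:passage} to transport the hypotheses; these amount to the same thing.
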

 
 \begin{proof} By Lemma \ref{lem:passage}, it will be enough to consider
 the case in which the 
 subquotient $J$ is $G$ itself.  By Proposition \ref{prop:subtle},
 $b_T = 1$ if $\N_G(T) = T$, and $b_T = 0$ provided
 \begin{equation}
 \label{eq:yippe}
 \psi(T,\C_G(T)) = \sum_{T \subset W \subset \C_G(T), W \ \mathrm{cyclic}} \mu([W:T]) = 0.
 \end{equation}
 To prove that one or the other of these alternatives applies, let $z$ be a generator
 of $T$, and suppose that $\N_G(T) \ne T$.  By Lemma \ref{lem:zap},
 $\C_G(T)$ is a cyclic group which strictly contains $T$. We conclude
 that $\psi(T,\C_G(T)) = \psi(\{e\},\C_G(T)/T) = \sum_{d|[\C_G(T):T]} \mu(d) = 0$ in (\ref{eq:yippe}), so
 the corollary holds.
 \end{proof}  

 The last two assertions in 
 Proposition \ref{prop:infoprime} now follow from Corollaries \ref{cor:whoop} and \ref{cor:notquot},
and this completes the proof.\hskip .1in $\square$

\section{Obstructions associated to cyclic  $p$-subgroups.}
\label{s:pgroups}
\setcounter{equation}{0}

We will fix the following hypotheses and notation throughout this
section.

\begin{hyp}
\label{hyp:setup}
Let $\phi:G \to \mathrm{Aut}_k(k[[t]])$ be an injection, and write
$G$ as the semi-direct product $P.C$ of a normal $p$-group $P$
and a cyclic group $C$ of order prime to $p$.  Let $T $ be a non-trivial cyclic
subgroup of $G$ of $p$-power order.  Define $b'_T = b'_{T,G}$ as in 
(\ref{eq:Btprime}).
We assume finally that if $t$ is a non-trivial element of $C$, then $\C_G(t) = \C_P(t) \times C$ is cyclic.
\end{hyp}

Note that by Proposition \ref{prop:infoprime}, the final assumption
in this hypothesis holds if $\phi$ has vanishing Bertin obstruction. 

The goal of this section is to compare the constants $b_{T} = b_{T,G}$
and $b_{T,P}$.  

\begin{lemma}
\label{lem:blahT}After replacing $T$ by  a conjugate subgroup, 
the centralizer $\C_G(T)$ is the semi-direct product $\C_P(T).\C_{C}(T)$
and the normalizer $\N_G(T)$ is $\N_P(T).\N_{C}(T)$.
\end{lemma}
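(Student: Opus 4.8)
The plan is to produce a single conjugate of $T$ that works for both assertions simultaneously, by using the conjugacy of prime-to-$p$ subgroups of a fixed order (Lemma~\ref{lem:Gstruct}) to push the ``tame part'' of $\N_G(T)$ inside $C$, and then exploiting the coprimality of $|P|$ and $|C|$ to split off the centralizer. To begin, note that $T$ lies in $P$ automatically, since $P$ is a normal $p$-subgroup of index prime to $p$ and hence the unique Sylow $p$-subgroup of $G$; in particular $\N_P(T) = \N_G(T)\cap P$ and $\C_P(T) = \C_G(T)\cap P$ are normal Sylow $p$-subgroups of $\N_G(T)$ and $\C_G(T)$ respectively. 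Since $\N_G(T)$ is solvable, Hall's theorem gives a Hall $p'$-subgroup $E$ of $\N_G(T)$, so that $\N_G(T) = \N_P(T)\,E$ with $\N_P(T)\cap E = \{e\}$; moreover $E \cong \N_G(T)/\N_P(T)$ embeds into $G/P \cong C$ and so is cyclic of order prime to $p$, and $|E|$ divides $|C|$.

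Now I would choose the conjugate. Let $C_0$ be the unique subgroup of $C$ of order $|E|$. By Lemma~\ref{lem:Gstruct}, $E$ is conjugate in $G$ to $C_0$; pick $x \in G$ with $xEx^{-1} = C_0$ and set $T_1 = xTx^{-1}$. Because $P$ is normal, conjugation by $x$ carries $\N_G(T) = \N_P(T)\,E$ to $\N_G(T_1) = \N_P(T_1)\,C_0$, where $\N_P(T_1) = x\N_P(T)x^{-1} = \N_G(T_1)\cap P$ and $\N_P(T_1)\cap C_0 = \{e\}$. Then $C_0 \subseteq C \cap \N_G(T_1) = \N_C(T_1)$; conversely, any $c \in \N_C(T_1) \subseteq \N_G(T_1)$ can be written uniquely as $c = u c_0$ with $u \in \N_P(T_1) \subseteq P$ and $c_0 \in C_0 \subseteq C$, so $u = c c_0^{-1} \in P\cap C = \{e\}$ and $c = c_0 \in C_0$. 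Hence $\N_C(T_1) = C_0$ and $\N_G(T_1) = \N_P(T_1).\N_C(T_1)$; rename $T_1$ as $T$.

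It remains to handle the centralizer, and here the coprimality does the work. Given $g \in \C_G(T) \subseteq \N_G(T) = \N_P(T).\N_C(T)$, write $g = uc$ with $u \in \N_P(T)$ and $c \in \N_C(T)$; then $gyg^{-1} = y$ for $y \in T$ rearranges to $c y c^{-1} = u^{-1} y u$, so $u$ and $c$ induce the same automorphism $\alpha$ of $T$. The image of $\N_P(T)$ in $\mathrm{Aut}(T)$ is a quotient of a $p$-group, so $\alpha$ has $p$-power order; but $\alpha$ also has order dividing that of $c$, which is prime to $p$; therefore $\alpha = \mathrm{id}$, i.e.\ $u \in \C_P(T)$ and $c \in \C_C(T)$. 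A short computation shows $\C_C(T)$ normalizes $\C_P(T)$ with trivial intersection, so $\C_G(T) = \C_P(T).\C_C(T)$, completing the proof. The one genuinely delicate step is the choice of conjugate in the second paragraph: for an arbitrary $T$ there is no reason for the tame part of $\N_G(T)$ to sit inside $C$, and it is precisely Lemma~\ref{lem:Gstruct} (ultimately Hall's theorem) that repairs this; everything afterward is routine coprime-order bookkeeping, and, notably, no use is made of the final clause of Hypothesis~\ref{hyp:setup}.
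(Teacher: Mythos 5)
Your proof is correct and takes essentially the same route as the paper's: decompose $\N_G(T)$ as $\N_P(T).E$ via a Hall (equivalently, Schur--Zassenhaus) complement of the normal Sylow $p$-subgroup $\N_P(T)$, invoke Lemma~\ref{lem:Gstruct} to conjugate $T$ so that this complement lies in $C$, verify it equals $\N_{C}(T)$, and then extract the centralizer decomposition by a coprime-order argument on the induced automorphisms of $T$. One small slip: from $gyg^{-1}=y$ with $g=uc$ one gets that $c$ and $u^{-1}$ (not $u$) induce the same automorphism of $T$ --- the paper says ``inverse'' actions --- but since $u$ and $u^{-1}$ induce automorphisms of the same $p$-power order this does not affect the coprimality argument.
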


\begin{proof} The group $\N_P(T) = \N_G(T) \cap P$ is normal in $\N_G(T)$ since
$P$ is normal in $G$.  The quotient group $\N_G(T)/\N_P(T)$ injects
into the cyclic prime to $p$-group $G/P$, so $\N_G(T)$ is the semi-direct
product of $\N_P(T)$ with a subgroup $C'$ of order prime to $p$.  By Lemma \ref{lem:Gstruct} we can replace $T$ by a conjugate of itself to be able to
assume that $C' \subset C$.  After this replacement we have $C' = \N_{C}(T)$.  Since
$\C_G(T) \subset  \N_G(T)$, we can write each element of $\C_G(T)$
in a unique way in the form $\alpha  \beta$ with  $\alpha \in \N_P(T)$
and $\beta \in C' = \N_{C}(T)$.  Then the conjugation action of $\beta$
on $T$ must be the inverse of the conjugation action of $\alpha$ on $T$.
Since $\beta$ and $\alpha$ have co-prime orders, this implies
that each of these actions are trivial, so $\alpha \in \C_P(T)$
and $\beta \in \C_{C}(T)$.  Thus $\C_G(T) = \C_P(T).\C_{C}(T)$.
\end{proof}

\begin{cor}
\label{cor:numconseq}
One has
\begin{eqnarray}
\label{eq:dopey}
b_T &=& \frac{b_{T,\C_G(T)}}{[\N_G(T):\C_G(T)]}\nonumber \\
&=& \frac{b_{T,P}}{\# \N_{C}(T)} + \frac{b'_{T,G}}{\# \N_{C}(T) \cdot [\N_P(T):T]}
\end{eqnarray}
\end{cor}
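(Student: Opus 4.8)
The plan is to expand both expressions for $b_T$ using the closed formula of Theorem \ref{thm:nontrivcase}, relying on two elementary observations. First, if $\Gamma$ is a cyclic subgroup of $G$ with $T \subseteq \Gamma$, then $\Gamma$ is abelian and hence $\Gamma \subseteq \C_G(T)$; consequently $S_G(T) = S_{\C_G(T)}(T)$, and this set decomposes as the disjoint union of $S_P(T) = \{\Gamma \in S_G(T): \Gamma \subseteq P\}$ and the set of $\Gamma \in S_G(T)$ with $\Gamma \not\subseteq P$. Second, the ramification index $\iota(\Gamma)$ does not change if it is computed inside a subgroup $H$ containing $\Gamma$, since the lower-numbering filtration satisfies $H_i = H \cap G_i$ for the induced action of $H$ on $k[[t]]$ (\cite[Ch.~IV]{corps}), so $\iota_H(\Gamma) = \iota_G(\Gamma)$. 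I also use that $b_T$ depends only on the conjugacy class of $T$, so I am free to replace $T$ by any conjugate.

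For the first equality, I would apply Theorem \ref{thm:nontrivcase} to $\C_G(T)$ in place of $G$ (choosing a set of conjugacy-class representatives that contains $T$, and replacing $\phi$ by its restriction). Since $T$ is central in $\C_G(T)$ it is normal there, so $\N_{\C_G(T)}(T) = \C_G(T)$, and since $T \neq \{e\}$ the Artin-character term drops out. Using $S_{\C_G(T)}(T) = S_G(T)$ and the invariance of $\iota$, this gives
\[
b_{T,\C_G(T)} = \frac{1}{[\C_G(T):T]} \sum_{\Gamma \in S_G(T)} \mu([\Gamma:T])\,\iota(\Gamma),
\]
whereas the same theorem applied to $G$ gives the identical sum divided by $[\N_G(T):T]$. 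Dividing the two identities yields $b_T = b_{T,\C_G(T)}/[\N_G(T):\C_G(T)]$.

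For the second equality I would split the sum $\sum_{\Gamma \in S_G(T)} \mu([\Gamma:T])\,\iota(\Gamma)$ along the decomposition above. Over $\Gamma \subseteq P$ one recognises, via Theorem \ref{thm:nontrivcase} applied to $P$ together with the invariance of $\iota$, the quantity $[\N_P(T):T]\cdot b_{T,P}$. Over $\Gamma \not\subseteq P = G_1$, the group $\Gamma$ contains a non-trivial element of order prime to $p$, so $\iota(\Gamma) = 1$ and the remaining sub-sum is exactly $b'_{T,G}$ by Notation \ref{notn:Bprime}. Finally, after replacing $T$ by a suitable conjugate, Lemma \ref{lem:blahT} gives $\N_G(T) = \N_P(T).\N_C(T)$, a semidirect product of groups of coprime order, so $[\N_G(T):T] = \#\N_C(T)\cdot[\N_P(T):T]$. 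Substituting these into $b_T = \frac{1}{[\N_G(T):T]} \sum_{\Gamma \in S_G(T)} \mu([\Gamma:T])\,\iota(\Gamma)$ and simplifying produces the stated formula.

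I expect no genuine obstacle: the corollary is essentially a reorganisation of the formula in Theorem \ref{thm:nontrivcase} combined with Lemma \ref{lem:blahT}, and it does not even use the standing cyclicity hypothesis on $\C_G(t)$. The only point deserving a careful statement rather than a wave of the hand is the invariance $\iota_H(\Gamma) = \iota_G(\Gamma)$ under passage to subgroups, which I would either cite or deduce immediately from $H_i = H \cap G_i$.
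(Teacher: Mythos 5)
Your proof is correct and follows essentially the same route as the paper: apply Theorem \ref{thm:nontrivcase} to $G$, to $\C_G(T)$, and to $P$ (using $S_G(T)=S_{\C_G(T)}(T)$ and the compatibility of lower-numbering ramification with subgroups), split $S_G(T)$ by whether $\Gamma\subseteq P$, note $\iota(\Gamma)=1$ otherwise, and invoke Lemma \ref{lem:blahT} for the index factorization $[\N_G(T):T]=\#\N_C(T)\cdot[\N_P(T):T]$. The only difference is that you make a couple of implicit points explicit (conjugacy-invariance of $b_T$ and the unused centralizer hypothesis), which is fine but does not change the argument.
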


\begin{proof}
By Theorem \ref{thm:nontrivcase}, since $T$ is non-trivial, 
\begin{equation}
\label{eq:theform1}
b_T = \frac{1}{[\N_G(T):T]} \left(  \sum_{\Gamma \in S(T)} \mu([\Gamma:T]) \iota(\Gamma) \right ).
\end{equation}
where  $\iota(\Gamma) = \iota_G(\Gamma)$ is defined in Notation \ref{def:nontrivT}.
Clearly if $\Gamma \in S(T)$ then $\Gamma \subset \C_G(T)$ since $\Gamma$
is abelian and contains $T$.  Thus $S_G(T) = S_{\C_G(T)}(T)$, and the compatibility
of the lower numbering of ramification groups with passing from $G$ to
subgroups leads to the first equality in (\ref{eq:dopey}).  To prove the second 
equality, note that the $\Gamma \in S_G(T)$ which are $p$-groups
are exactly the elements of $S_P(T)$;  the other $\Gamma \in S_G(T)$
have $\iota(\Gamma) = 1$ since no higher ramification group can contain
a group which is not a $p$-group.  This leads to 
$$[\N_G(T):T] b_T =  \sum_{\Gamma \in S(T)} \mu([\Gamma:T]) \iota(\Gamma)  = [\N_P(T):T] b_{T,P} + b'_{T,G}$$
The second equality
in (\ref{eq:dopey}) follows from this and Lemma \ref{lem:blahT}.  
\end{proof}

\begin{cor}
\label{cor:keepitup}
Suppose that $0 \le b_{T,P} \in \mathbb{Z}$.  Then $0 \le b_T \in \mathbb{Z}$
if and only if all of the following are true:
\begin{enumerate}
\item[(a.)] $b'_{T,G} \equiv 0$ mod $[\N_P(T):T] \mathbb{Z}$.
\item[(b.)]  $\sum_{\Gamma \in S(T)} \mu([\Gamma:T]) \iota(\Gamma) \equiv 0$ mod $\#\N_{C}(T)\mathbb{Z}$.
\item[(c.)]  $b_{T,P} \ge \frac{-b'_{T,G}}{[\N_P(T):T]}$.
\end{enumerate}
\end{cor}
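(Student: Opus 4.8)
The plan is to reduce the statement to elementary divisibility facts about the single rational number $b_T$, using the formula for $b_T$ already established. First I would set $n_P = [\N_P(T):T]$ and $n_C = \#\N_{C}(T)$ and record, from Corollary \ref{cor:numconseq} together with the intermediate identity displayed in its proof, that
\[
b_T = \frac{n_P\, b_{T,P} + b'_{T,G}}{n_P\, n_C}, \qquad n_P\, b_{T,P} + b'_{T,G} = \sum_{\Gamma \in S(T)} \mu([\Gamma:T])\,\iota(\Gamma),
\]
with $n_P\, n_C = [\N_G(T):T] > 0$ by Lemma \ref{lem:blahT}. I would then note two background facts: the numerator is an integer (it equals $n_P\, b_{T,P} + b'_{T,G}$ with $b_{T,P}\in\mathbb{Z}$ by hypothesis and $b'_{T,G}\in\mathbb{Z}$, being a sum of values of the M\"obius function), and $\gcd(n_P,n_C)=1$, since $n_P$ divides $\#P$, a power of $p$, while $n_C$ divides $\#C$, which is prime to $p$.

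Next I would treat the inequality on its own. Because $n_P\, n_C > 0$, the condition $b_T\ge 0$ is equivalent to $n_P\, b_{T,P}+b'_{T,G}\ge 0$, i.e.\ to $b_{T,P}\ge -b'_{T,G}/n_P$, which is exactly condition (c); this step uses nothing about integrality. For integrality, $b_T\in\mathbb{Z}$ amounts to $n_P\, n_C \mid n_P\, b_{T,P}+b'_{T,G}$. For the forward direction I would reduce this divisibility modulo $n_P$, obtaining $n_P\mid b'_{T,G}$, which is (a), and observe that $n_C\mid n_P\, n_C$ immediately yields condition (b) (recall the numerator is the M\"obius sum in (b)). For the converse, using (a) I would write $b'_{T,G}=n_P\, m$ with $m\in\mathbb{Z}$, so the numerator becomes $n_P(b_{T,P}+m)$; condition (b) then says $n_C\mid n_P(b_{T,P}+m)$, and the coprimality $\gcd(n_P,n_C)=1$ upgrades this to $n_C\mid b_{T,P}+m$, whence $n_P\, n_C\mid n_P(b_{T,P}+m)= n_P\, b_{T,P}+b'_{T,G}$, i.e.\ $b_T\in\mathbb{Z}$. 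Combining with the inequality paragraph gives (a), (b), (c) together as a criterion for $0\le b_T\in\mathbb{Z}$.

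There is no genuine obstacle in this argument; it is a short chain of divisibility manipulations built on the formula of Corollary \ref{cor:numconseq}. The only points that need care are invoking the coprimality $\gcd([\N_P(T):T],\#\N_{C}(T))=1$ at precisely the step where $n_P$ is cancelled, and passing through the identity $n_P\, b_{T,P}+b'_{T,G}=\sum_{\Gamma\in S(T)}\mu([\Gamma:T])\iota(\Gamma)$ so that condition (b), as literally stated in terms of the M\"obius sum, matches the numerator of $b_T$.
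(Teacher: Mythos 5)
Your proposal is correct and follows essentially the same route as the paper: both reduce to the observation that with $n_P=[\N_P(T):T]$ a power of $p$ and $n_C=\#\N_C(T)$ prime to $p$, the integrality of $b_T=\dfrac{n_P b_{T,P}+b'_{T,G}}{n_P n_C}$ splits via coprimality into the two conditions (a) and (b) (the paper phrases this as $n_C b_T\in\mathbb{Z}$ and $n_P b_T\in\mathbb{Z}$, which is equivalent to your $n_P\mid N$ and $n_C\mid N$), and condition (c) is the positivity via Corollary \ref{cor:numconseq}. The only cosmetic difference is that you work with the integer numerator throughout rather than with the two scaled copies of $b_T$.
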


\begin{proof}    We have $b_T \in \mathbb{Z}$
if and only if $\# \N_{C}(T) b_T \in \mathbb{Z}$
and $[\N_P(T):T] b_T \in  \mathbb{Z}$, since $[\N_P(T):T]$
is a power of $p$ while $\# \N_{C}(T)$ is prime to $p$.  {From}
(\ref{eq:dopey}) we have
$$\# \N_{C}(T) b_T =  b_{T,P} + \frac{b'_{T,G}}{[\N_P(T):T]}.$$
So since we suppose $b_{T,P} \in \mathbb{Z}$, we see that
this is in $\mathbb{Z}$ if and only if condition (a) of Corollary
\ref{cor:keepitup} holds.  {From} (\ref{eq:theform1}) we have
$$[\N_P(T):T] b_T = \frac{[\N_P(T):T]}{[\N_G(T):T]} \left(  \sum_{\Gamma \in S(T)} \mu([\Gamma:T]) \iota(\Gamma) \right )$$
Since $[\N_G(T):T]/[\N_P(T):T] =  \# \N_{C}(T)$
by Lemma \ref{lem:blahT}, condition (b) of Corollary \ref{cor:keepitup} 
is equivalent to $[\N_P(T):T] b_T \in \mathbb{Z}$.  Finally,
Corollary \ref{cor:numconseq} shows condition (c)
is equivalent to $b_T   \ge 0$.
\end{proof}  

\begin{rem}
\label{rem:meaning}  The hypothesis that $0 \le b_{T,P} \in \mathbb{Z}$ 
holds if the Bertin obstruction of the restriction $\phi|_{P}$ of $\phi$ to $P$ vanishes
by Proposition \ref{prop:special}.
Corollary \ref{cor:keepitup} has
to do with the further conditions which must hold if the Bertin
obstruction of $\phi$ is to vanish.  (Recall that 
if the Bertin obstruction of $\phi$ vanishes then so does that of $\phi_{P}$ by Theorem \ref{thm:functorial}.)
In condition (a) of the Corollary, the constant $b'_{T,G}$ is a purely
group theoretic invariant which does not depend on $\phi$.  Condition (c) can be thought of as a lower bound
on the size of the wild ramification groups of $G$.  The object of
the rest of this section is to quantify the arithmetic information
contained in condition (b).
\end{rem}

\begin{lemma}
\label{lem:gammalem}There is a unique character $\chi:\N_{C}(T) \to \mathbb{Z}_p^*$
such that 
\begin{equation}
\label{eq:conjaction}
ghg^{-1} = h^{\chi(g)} \quad \mathrm{for}\quad g \in \N_{C}(T)\quad \mathrm{ and}\quad h \in T
\end{equation} 
where $h^{\chi(g)}$ is well-defined because $h$ has $p$-power order. For each $\Gamma \in S_P(T)$, one has $\N_{C}(\Gamma) \subset \N_{C}(T)$, and (\ref{eq:conjaction}) holds for $g \in \N_{C}(\Gamma) $ and
$h \in \Gamma$.  
\end{lemma}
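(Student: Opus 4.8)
The plan is to deduce the lemma from the elementary description of prime-to-$p$ automorphisms of a finite cyclic $p$-group, by way of the Teichm\"uller splitting of $\mathbb{Z}_p^{*}$. To begin, I would note that since $T$ is a $p$-subgroup of $G$ and $G/P$ has order prime to $p$, one has $T\subseteq P$, and likewise each $\Gamma\in S_P(T)$ is a cyclic $p$-subgroup of $P$ containing $T$. Write $|T|=p^{a}$ with $a\ge 1$. The one fact I would use repeatedly is that reduction modulo $p^{n}$ carries the prime-to-$p$ torsion subgroup $\mu_{p-1}$ of $\mathbb{Z}_p^{*}$ isomorphically onto the maximal prime-to-$p$ subgroup $W_{n}$ of $(\mathbb{Z}/p^{n})^{*}=\mathrm{Aut}(\mathbb{Z}/p^{n})$: for $p$ odd this is the classical decomposition of $(\mathbb{Z}/p^{n})^{*}$ into its prime-to-$p$ part $W_{n}\cong\mathbb{F}_p^{*}$ and its Sylow $p$-subgroup, while for $p=2$ the group $(\mathbb{Z}/2^{n})^{*}$ is a $2$-group, so $W_{n}=\{1\}=\mu_{1}$.

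For the existence and uniqueness of $\chi$, I would first observe that conjugation gives a homomorphism $\rho\colon\N_{C}(T)\to\mathrm{Aut}(T)=(\mathbb{Z}/p^{a})^{*}$, and since $\#\N_{C}(T)$ is prime to $p$ its image lies in $W_{a}$. Composing $\rho$ with the inverse of the isomorphism $\mu_{p-1}\cong W_{a}$ yields a homomorphism $\chi\colon\N_{C}(T)\to\mu_{p-1}\subseteq\mathbb{Z}_p^{*}$ whose reduction modulo $p^{a}$ is $\rho$, which is precisely (\ref{eq:conjaction}) for $h\in T$ (note $h$ has order dividing $p^{a}$, so $h^{\chi(g)}$ depends only on $\chi(g)$ modulo $p^{a}$). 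Uniqueness I would get by noting that if $\chi'$ also satisfies (\ref{eq:conjaction}), then evaluating on a generator of $T$ forces $\chi'(g)\equiv\chi(g)\pmod{p^{a}}$ for all $g$, so $\chi'\chi^{-1}$ is a homomorphism from the prime-to-$p$ group $\N_{C}(T)$ into $1+p^{a}\mathbb{Z}_p$; the latter contains no nontrivial element of finite order prime to $p$, so $\chi'=\chi$.

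For the assertion about $\Gamma\in S_P(T)$, the key point is that $T$ is the unique subgroup of the cyclic $p$-group $\Gamma$ of order $p^{a}$ and is therefore characteristic in $\Gamma$; hence any $g\in\N_{C}(\Gamma)$ normalizes $T$, which gives $\N_{C}(\Gamma)\subseteq\N_{C}(T)$. Fixing $g\in\N_{C}(\Gamma)$, conjugation by $g$ is an automorphism of $\Gamma$ of order dividing $\mathrm{ord}(g)$, hence of order prime to $p$, so the same Teichm\"uller argument shows it equals $h\mapsto h^{c}$ on $\Gamma$ for a unique $c\in\mu_{p-1}$. Restricting to $T$ and comparing with the defining property of $\chi$ gives $c\equiv\chi(g)\pmod{p^{a}}$; since $\mu_{p-1}$ injects into $(\mathbb{Z}/p^{a})^{*}$ (indeed already into $(\mathbb{Z}/p)^{*}$ when $p$ is odd, and is trivial when $p=2$), I conclude $c=\chi(g)$, so $ghg^{-1}=h^{\chi(g)}$ holds for every $h\in\Gamma$.

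The only point requiring any care, and essentially the only place a case distinction enters, is the prime $p=2$, where $\mu_{p-1}$ collapses to the trivial group while $\mathbb{Z}_2^{*}$ acquires the extra $2$-torsion $\{\pm1\}$; but since $\N_{C}(T)$ has odd order, $\rho$ and $\chi$ are forced to be trivial there, so the uniqueness and comparison steps become easier rather than harder. I therefore do not expect a genuine obstacle beyond this bookkeeping.
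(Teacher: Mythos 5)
Your proof is correct and follows essentially the same route as the paper: both rest on the Teichm\"uller splitting of $(\mathbb{Z}/p^{n})^{*}$ (equivalently the uniqueness of Teichm\"uller lifts for prime-to-$p$ characters), the fact that $T$ is characteristic in $\Gamma$, and the observation that a prime-to-$p$ automorphism of $\Gamma$ is determined by its restriction to $T$ (the paper phrases this as the kernel of $\mathrm{Aut}(\Gamma)\to\mathrm{Aut}(T)$ being a $p$-group, while you verify it directly via the injection $\mu_{p-1}\hookrightarrow(\mathbb{Z}/p^{a})^{*}$). Your write-up supplies the bookkeeping the paper leaves implicit, including the harmless $p=2$ degeneration, but introduces no genuinely different idea.
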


\begin{proof}   The first statement is clear from the fact that $T$ is a cyclic $p$ group and
$\N_{C}(T) \subset C$ is cyclic of order prime to $p$.  Recall that $\Gamma \in S_P(T)$ must be a cyclic $p$-group
containing $T$.  Hence $T$ is characteristic in $\Gamma$, so
$\N_{C}(\Gamma) \subset \N_{C}(T)$.  By the existence of Teichm\"uller lifts,  there is a unique
character $\psi:\N_{C}(\Gamma) \to \mathbb{Z}_p^*$ giving the conjugation
action of $\N_{C}(\Gamma)$ on $\Gamma$.  Since the kernel of the 
restriction homomorphism $\mathrm{Aut}(\Gamma) \to \mathrm{Aut}(T)$
is a $p$-group, this $\psi$ must be the restriction of $\chi$ to $\N_{C}(\Gamma)$.
\end{proof}

The following result is Proposition 9 of \S IV.2 of \cite{corps}.
 
\begin{lemma}
\label{lem:urk}
({\rm Serre}) Let $\mathbf{p}$ be the maximal ideal $tk[[t]]$ of $k[[t]]$, and recall that $G_j$
is the $j^{th}$ ramfication subgroup of $G$ in the lower numbering.  Let $\theta_0:C = G_0/G_1 \to k^*$ be the faithful character defined by $$\theta_0(\sigma) \equiv \frac{\phi(\sigma)(t)}{t}\quad\mathrm{mod} \quad \mathbf{p}\quad \mathrm{for}\quad \sigma \in C.$$ For each $j \ge 1$ we have an
injective group homomorphism $\theta_j:G_j/G_{j+1} \to \mathbf{p}^j/\mathbf{p}^{j+1}$ defined  by
$\phi(\sigma)(t)/t \equiv 1 + \theta_j(\sigma) $ mod $\mathbf{p}^{j+1}$.  Then
\begin{equation}
\label{eq:yowie}
\theta_j(sxs^{-1}) = \theta_0(s)^j \cdot \theta_j(x)
\end{equation}
for $x \in G_j/G_{j+1}$ and $s \in C$.  
\end{lemma}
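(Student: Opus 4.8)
The lemma is Proposition~9 of \S IV.2 of \cite{corps}, so one option is simply to invoke it; since the argument is short I sketch the direct verification. The plan is to unwind the definitions of the lower ramification filtration and of the maps $\theta_j$, and then to compute $(sxs^{-1})(t)$ modulo $\mathbf{p}^{j+2}$ for $s\in C$ and $x\in G_j$. Since $k$ is algebraically closed, $G=G_0$, and $\sigma\in G_i$ precisely when $\sigma(t)-t\in\mathbf{p}^{i+1}$, equivalently $\sigma(t)/t-1\in\mathbf{p}^i$; hence $\theta_j(\sigma):=(\sigma(t)/t-1)\bmod\mathbf{p}^{j+1}$ is a well-defined element of $\mathbf{p}^j/\mathbf{p}^{j+1}$ that vanishes exactly on $G_{j+1}$.

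The single computational input I would isolate first is a Taylor estimate: for $\sigma\in G_j$, writing $\beta=\sigma(t)-t\in\mathbf{p}^{j+1}$, one has $\sigma(f)=f(\sigma(t))=f(t)+f'(t)\beta+(\text{terms of }\beta\text{-order}\ge 2)$ for every $f\in k[[t]]$, and since $2(j+1)\ge j+2$ this gives $\sigma(f)\equiv f+f'(0)\,\beta\pmod{\mathbf{p}^{j+2}}$, and in particular $\sigma(f)\equiv f\pmod{\mathbf{p}^{j+1}}$ when $f\in\mathbf{p}^j$. Feeding the latter form into $(\sigma\tau)(t)=\sigma(t)\cdot\sigma\!\left(\tau(t)/t\right)\equiv t\,(1+\theta_j(\sigma))(1+\theta_j(\tau))\pmod{t\,\mathbf{p}^{j+1}}$ shows $\theta_j$ is a homomorphism with kernel $G_{j+1}$; the same computation for $j=0$ shows $\theta_0$ is a faithful character of $C=G_0/G_1$, which takes care of the assertions of the lemma preceding (\ref{eq:yowie}).

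For the conjugation identity (\ref{eq:yowie}) put $c=\theta_0(s)$, so $s(t)\equiv ct$ and $u:=s^{-1}(t)\equiv c^{-1}t\pmod{\mathbf{p}^2}$ (using $\theta_0(s^{-1})=\theta_0(s)^{-1}$), and set $\beta=x(t)-t\in\mathbf{p}^{j+1}$, which represents $t\,\theta_j(x)$ modulo $\mathbf{p}^{j+2}$. Three uses of the estimate finish it: (i) since $u$ has derivative $c^{-1}$ at $0$, $x(u)\equiv u+c^{-1}\beta\pmod{\mathbf{p}^{j+2}}$; (ii) applying $s$ and using $s(u)=t$ and $s(\mathbf{p}^{j+2})=\mathbf{p}^{j+2}$, $(sxs^{-1})(t)=s(x(u))\equiv t+c^{-1}s(\beta)\pmod{\mathbf{p}^{j+2}}$; (iii) writing $\beta\equiv a\,t^{j+1}\pmod{\mathbf{p}^{j+2}}$ and using $s(t)^{j+1}\equiv c^{j+1}t^{j+1}\pmod{\mathbf{p}^{j+2}}$, one gets $s(\beta)\equiv c^{j+1}\beta\pmod{\mathbf{p}^{j+2}}$. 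Combining, $(sxs^{-1})(t)-t\equiv c^{j}\beta=\theta_0(s)^j\,(x(t)-t)\pmod{\mathbf{p}^{j+2}}$; dividing by $t$ and reducing mod $\mathbf{p}^{j+1}$ gives (\ref{eq:yowie}). I do not expect a real obstacle here: the only care needed is the bookkeeping of $\mathbf{p}$-adic orders of the error terms, plus noting that replacing $s$ by another element of $sG_1$ is harmless because (\ref{eq:yowie}) lives in $\mathbf{p}^j/\mathbf{p}^{j+1}$ and $\theta_j$ already factors through $G_j/G_{j+1}$.
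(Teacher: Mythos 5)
The paper does not prove this lemma; it is treated as a known result, attributed to Serre, with the sentence ``The following result is Proposition 9 of \S IV.2 of \cite{corps}'' and nothing more. So there is no in-paper argument to compare against; your direct verification is correct, and it is essentially the argument Serre gives. The order bookkeeping is right: before dividing by $t$ one works modulo $\mathbf{p}^{j+2}$, and since $\beta=x(t)-t\in\mathbf{p}^{j+1}$ the higher Taylor terms lie in $\mathbf{p}^{2(j+1)}\subseteq\mathbf{p}^{j+2}$, where one uses the purely algebraic expansion $f(t+\beta)=f(t)+f'(t)\beta+g(t,\beta)\beta^2$ (valid over any commutative ring, no factorials) rather than the analytic Taylor series. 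Replacing $f'(t)$ by $f'(0)$ in your estimate is harmless because $(f'(t)-f'(0))\beta\in\mathbf{p}\cdot\mathbf{p}^{j+1}\subseteq\mathbf{p}^{j+2}$. A nice feature of the way you set it up is that well-definedness of the statement over $s\in C=G_0/G_1$ comes for free: you establish $\theta_j(sxs^{-1})=\theta_0(s)^{j}\theta_j(x)$ for every representative $s\in G_0$, and the right-hand side depends only on $s\bmod G_1$, so one need not separately invoke the commutator containment $[G_1,G_j]\subseteq G_{j+1}$ to know that conjugation by a class in $C$ is well defined on $G_j/G_{j+1}$.
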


\begin{cor}
\label{cor:connectit}
Let $\theta_0:C \to k^*$ and $\theta:C \to k^*$ be the characters defined in Lemma \ref{lem:urk}
and Definition \ref{def:Teich}.  Then $\theta = \theta_0^{\# P}$ where $\# P$ is a power of $p$.
If $g \in C$ then $\theta(g) \in (\mathbb{Z}/p)^*$ if and only if $\theta_0(g) \in (\mathbb{Z}/p)^*$,
and in this case $\theta(g) = \theta_0(g)$.
\end{cor}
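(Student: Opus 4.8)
The plan is to deduce both statements from an explicit choice of uniformizer together with an elementary fact about $p$-power maps on $k$.

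First I would establish $\theta = \theta_0^{\#P}$, noting that $\#P$ is a power of $p$ simply because $P$ is a $p$-group. The idea (already sketched in Remark~\ref{rem:clarify}) is to compute $\theta$ with respect to the specific uniformizer $u = \prod_{\gamma \in \phi(P)} \gamma(t) = \mathrm{Norm}_{\phi(P)}(t)$ of $k[[t]]^{\phi(P)}$; this is legitimate because $\theta$ does not depend on the choice of uniformizer. One checks that $u$ really is a uniformizer (its valuation in $k[[t]]$ equals the ramification index $\#P$ of $k[[t]]/k[[t]]^{\phi(P)}$, so $u$ generates the maximal ideal of $k[[t]]^{\phi(P)}$) and that it is $\phi(P)$-invariant (left translation permutes $\phi(P)$). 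For $\sigma \in C$, since $\phi(C)$ normalizes the normal subgroup $\phi(P)$ of $\phi(G)$, one may reindex
\[
\phi(\sigma)(u) \;=\; \prod_{\gamma \in \phi(P)} \bigl(\phi(\sigma)\gamma\bigr)(t) \;=\; \prod_{\gamma \in \phi(P)} \bigl(\gamma\,\phi(\sigma)\bigr)(t) \;=\; \prod_{\gamma \in \phi(P)} \gamma\bigl(\phi(\sigma)(t)\bigr).
\]
Writing $\phi(\sigma)(t) = t\,v_\sigma$ with $v_\sigma \in k[[t]]^*$ and $v_\sigma \equiv \theta_0(\sigma) \pmod{tk[[t]]}$, and using that $\gamma(t)k[[t]] = tk[[t]]$ for $\gamma \in \phi(P)$, each factor becomes $\gamma(t)\cdot\gamma(v_\sigma)$ with $\gamma(v_\sigma) \equiv \theta_0(\sigma) \pmod{tk[[t]]}$; hence $\phi(\sigma)(u) = u\cdot\prod_{\gamma}\gamma(v_\sigma)$ and the cofactor is $\equiv \theta_0(\sigma)^{\#P} \pmod{tk[[t]]}$. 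Since $\phi(\sigma)(u)/u$ lies in $k[[t]]^{\phi(P)} = k[[u]]$ and $tk[[t]]\cap k[[u]] = uk[[u]]$, reading off the constant term gives $\theta(\sigma) = \theta_0(\sigma)^{\#P}$.

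Next I would treat the remaining assertions. Write $\#P = p^a$. The content is the field-theoretic observation that for $x \in k$ one has $x^{p^a} \in \mathbb{Z}/p$ if and only if $x \in \mathbb{Z}/p$, and in that case $x^{p^a} = x$: if $x \in \mathbb{Z}/p$ then $x^p = x$, so $x^{p^a} = x$; conversely if $x^{p^a} = c$ with $c \in \mathbb{Z}/p$, then $c^{p^a} = c$, so $(x-c)^{p^a} = x^{p^a} - c^{p^a} = 0$ in characteristic $p$, forcing $x = c$. Applying this with $x = \theta_0(g)$ — which is nonzero, so membership in $\mathbb{Z}/p$ is the same as membership in $(\mathbb{Z}/p)^*$ — and invoking $\theta(g) = \theta_0(g)^{p^a}$ from the first part, I conclude that $\theta(g) \in (\mathbb{Z}/p)^*$ if and only if $\theta_0(g) \in (\mathbb{Z}/p)^*$, and in that case $\theta(g) = \theta_0(g)$.

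The only step requiring genuine care is the bookkeeping around the uniformizer $u$: verifying that $u = \mathrm{Norm}_{\phi(P)}(t)$ is indeed a uniformizer of $k[[t]]^{\phi(P)}$, that $\phi(\sigma)$ carries this subring to itself, and that the reduction $\bmod\,uk[[u]]$ defining $\theta(\sigma)$ can be extracted from the reduction $\bmod\,tk[[t]]$ of the norm computation via the identification $k[[t]]^{\phi(P)} = k[[u]]$. Beyond that everything is routine; in particular Lemma~\ref{lem:urk} is used only through its definition of $\theta_0$.
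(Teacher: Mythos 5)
Your proof is correct and follows essentially the same route as the paper: the paper observes that any uniformizer $u$ of $k[[t]]^{\phi(P)}$ has the form $u = t^{\#P}v$ with $v$ a unit, while you realize this concretely by taking $u = \mathrm{Norm}_{\phi(P)}(t)$ (the construction the paper itself already invokes in Remark~\ref{rem:clarify}) and exploit $C$-equivariance of the norm; both reduce to reading off the constant term and then to the elementary Frobenius-fixed-point argument for the second assertion.
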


\begin{proof}  The equality $\theta = \theta_0^{\# P}$ is clear from the fact that if $u$ is the uniformizer in $k[[t]]^{\phi(P)}$ used in Definition \ref{def:Teich} then $u = t^{\# P} v$ for some unit $v$ in $k[[t]]$.  The 
second statement in the corollary follows from the fact that $\# P$ is a power of $p$.
\end{proof}

\begin{cor}
\label{cor:umph}
Suppose $\Gamma \in S_P(T)$.  Let $i$ be the largest integer such
that $\Gamma \subset G_i$, so that $\iota(\Gamma) = i + 1$.  Suppose
$\N_{C}(\Gamma)$ is not trivial.  The character $\theta^i_0:\N_{C}(\Gamma) \to k^*$
takes values in $(\mathbb{Z}/p)^*$ and is trivial if $p = 2$.  The resulting 
Teichm\"uller lift of this character is the restriction of $\chi:C \to \mathbb{Z}_p^*$
to $\N_{C}(\Gamma)$.
\end{cor}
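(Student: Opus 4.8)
The plan is to pick a generator $x$ of the cyclic $p$-group $\Gamma$, track its image in the graded piece $G_i/G_{i+1}$, and combine Serre's transformation law from Lemma \ref{lem:urk} with the conjugation formula of Lemma \ref{lem:gammalem}. First I would note that since $\Gamma \in S_P(T)$ is a non-trivial $p$-group, it is contained in the wild inertia group $G_1 = P$ (this group is normal of index prime to $p$), so the integer $i$ with $\iota(\Gamma) = i+1$ satisfies $i \ge 1$; in particular the injective homomorphism $\theta_i : G_i/G_{i+1} \to \mathbf{p}^i/\mathbf{p}^{i+1}$ of Lemma \ref{lem:urk} is defined. Because $\mathbf{p}^i/\mathbf{p}^{i+1}$ is an additive group killed by $p$, the group $G_i/G_{i+1}$ is elementary abelian; and since $\Gamma \subset G_i$ but $\Gamma \not\subset G_{i+1}$ with $\Gamma = \langle x\rangle$ cyclic, the image $\bar x$ of $x$ in $G_i/G_{i+1}$ is non-zero, hence of order exactly $p$, so that $\theta_i(\bar x) \neq 0$.

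Next, for $g \in \N_{C}(\Gamma)$ — which lies in $\N_{C}(T) \subset C$ by Lemma \ref{lem:gammalem} — the same lemma gives $g x g^{-1} = x^{\chi(g)}$ inside $\Gamma$. Reducing modulo $G_{i+1}$ and using that $\bar x$ has order $p$, the image of $x^{\chi(g)}$ equals $\bar x^{\,n}$ for any integer $n$ with $n \equiv \chi(g) \bmod p$. Applying $\theta_i$, whose target I write additively, gives $\theta_i(\overline{g x g^{-1}}) = n\,\theta_i(\bar x) = \overline{n}\cdot\theta_i(\bar x)$ where $\overline n$ is the image of $n$ in $\mathbb{Z}/p \subset k$; that is, $\theta_i(\overline{gxg^{-1}}) = (\chi(g)\bmod p)\cdot\theta_i(\bar x)$ as an element of $k$. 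On the other hand, formula (\ref{eq:yowie}) of Lemma \ref{lem:urk}, applied with $j = i$ and $s = g$, gives $\theta_i(\overline{g x g^{-1}}) = \theta_0(g)^i\,\theta_i(\bar x)$. Comparing the two expressions and cancelling the non-zero element $\theta_i(\bar x)$ of $k$ yields the key identity $\theta_0(g)^i = \chi(g) \bmod p$ in $k^*$, valid for all $g \in \N_{C}(\Gamma)$.

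From this identity the corollary follows at once. Since $\chi(g) \in \mathbb{Z}_p^*$, its reduction mod $p$ lies in $(\mathbb{Z}/p)^*$, so $\theta_0^i$ restricted to $\N_{C}(\Gamma)$ takes values in $(\mathbb{Z}/p)^* \subset k^*$; when $p = 2$ this group is trivial, hence $\theta_0^i|_{\N_{C}(\Gamma)}$ is the trivial character. Moreover the identity says precisely that $\chi|_{\N_{C}(\Gamma)} : \N_{C}(\Gamma) \to \mathbb{Z}_p^*$ reduces modulo $p$ to $\theta_0^i|_{\N_{C}(\Gamma)}$; since $\N_{C}(\Gamma) \subset C$ has order prime to $p$, a character valued in $\mathbb{Z}_p^*$ is determined by its reduction mod $p$, so $\chi|_{\N_{C}(\Gamma)}$ is, by definition, the Teichm\"uller lift of $\theta_0^i|_{\N_{C}(\Gamma)}$.

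I expect the only delicate point to be the passage between the multiplicative description of the conjugation action on $\Gamma$, with exponent $\chi(g) \in \mathbb{Z}_p^*$, and the $\mathbb{Z}/p$-vector-space structure on $G_i/G_{i+1}$: one has to use that $\bar x$ is killed by $p$ to replace $\chi(g)$ by its residue mod $p$, and to keep straight that $\theta_i$ is additive while the $k^*$-action in (\ref{eq:yowie}) is by scalar multiplication. Once that bookkeeping is arranged, injectivity of $\theta_i$ makes the cancellation of $\theta_i(\bar x)$ immediate and the rest is formal.
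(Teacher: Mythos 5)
Your proof is correct and follows essentially the same route as the paper's: both identify the image of $\Gamma$ in $G_i/G_{i+1}$ as a line over $\mathbb{Z}/p$, use the transformation law \eqref{eq:yowie} of Lemma~\ref{lem:urk} for the action of $\N_{C}(\Gamma)$ on $\theta_i$, and match it against the conjugation character $\chi$ from Lemma~\ref{lem:gammalem} before invoking uniqueness of Teichm\"uller lifts. The only difference is cosmetic — you track a generator $x$ and derive the pointwise identity $\theta_0(g)^i \equiv \chi(g)\bmod p$ directly, whereas the paper phrases the same calculation as stability of a one-dimensional subspace under scalar multiplication — but the ingredients and logic coincide.
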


\begin{proof}  Since $\Gamma$ is cyclic $p$-group and $\Gamma_i = \Gamma$ properly contains
$\Gamma_{i+1}$, we have $i \ge 1$, and the group $\Gamma_i/\Gamma_{i+1}$ is a non-trivial
cyclic $p$-group. This group must in fact be of order $p$, since
$\theta_i$ is an embedding of it into $\mathbf{p}^i/\mathbf{p}^{i+1}$.  Thus $\theta_i(\Gamma_i/\Gamma_{i+1})$
is a one dimensional $\mathbb{Z}/p$ vector space inside $\mathbf{p}^i/\mathbf{p}^{i+1}$ which by Lemma \ref{lem:urk} is stable by multiplication by the elements
of $\theta^i_0(\N_{C}(\Gamma)) \subset k^*$. This implies $\theta^i_0(\N_{C}(\Gamma)) \subset (\mathbb{Z}/p)^*$.  If $p = 2$, this shows $\theta^i_0$ restricts to the
trivial character on $\N_{C}(\Gamma)$.  In general, Lemma \ref{lem:urk} shows
that $\theta^i_0|_{\N_{C}(\Gamma)}$ gives the conjugation action of $\N_{C}(\Gamma)$
on $\Gamma$.  By Lemma \ref{lem:gammalem}, this action is also given
by the restriction of $\chi$ to $\N_{C}(\Gamma)$, so the final statement
of Corollary \ref{cor:umph} follows from the uniqueness of 
Teichm\"uller
lifts for characters of cyclic groups of order prime to $p$.  
\end{proof}

\begin{lemma}
\label{lem:abstractit}   There is a unique residue class $j_T \in \mathbb{Z}/(\# \N_{C}(T) \mathbb{Z})$ such that the restriction $\theta_0^{j_T}|_{\N_{C}(T)}$ of $\theta_0^{j_T}:C \to k^*$ to $\N_{C}(T)$ takes values in $(\mathbb{Z}/p)^*$ and has Teichm\"uller lift the character $\chi_T$ of Lemma \ref{lem:gammalem}.  Suppose that $\Gamma \in S_P(T)$, so that $\Gamma$ is a
cyclic $p$-group which contains $T$.  The group $\N_{C}(T)$ acts by conjugation
on $S_P(T)$.  Let $S_P^T(\Gamma)$ be the orbit of $\Gamma$ under this action.
One has 
\begin{equation}
\label{eq:zounds}
\sum_{\Gamma_1 \in S_P^T(\Gamma)} \mu([\Gamma_1:T]) \iota(\Gamma_1) \equiv  
\sum_{\Gamma_1 \in S_P^T(\Gamma)} \mu([\Gamma_1:T])(1 + j_T) \quad \mathrm{mod}\quad \# \N_{C}(T) \mathbb{Z}
\end{equation}
\end{lemma}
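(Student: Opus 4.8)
The plan is to prove both halves of Lemma~\ref{lem:abstractit}: first the existence and uniqueness of the residue class $j_T$, and then the congruence (\ref{eq:zounds}). Write $n=\#\N_{C}(T)$. For the residue class: the image of $\chi_T\colon\N_{C}(T)\to\mathbb{Z}_p^*$ is cyclic of order dividing $n$, hence of order prime to $p$, so it lies in the torsion subgroup $\mu_{p-1}\subset\mathbb{Z}_p^*$ on which reduction mod $p$ restricts to an isomorphism onto $(\mathbb{Z}/p)^*$; thus $\chi_T$ is the Teichm\"uller lift of its own reduction $\bar\chi_T\colon\N_{C}(T)\to(\mathbb{Z}/p)^*$. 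Since $\N_{C}(T)$ is cyclic of order $n$ prime to $p$ and $k$ is algebraically closed, the group of $k^*$-valued characters of $\N_{C}(T)$ is cyclic of order $n$ and is generated by the faithful character $\theta_0|_{\N_{C}(T)}$; hence $\bar\chi_T=\bigl(\theta_0|_{\N_{C}(T)}\bigr)^{j_T}$ for a unique residue class $j_T$ mod $n$. This $j_T$ has the two asserted properties, and any $j$ with those properties makes $\theta_0^{\,j}|_{\N_{C}(T)}$ a character into $(\mathbb{Z}/p)^*$ with Teichm\"uller lift $\chi_T$; injectivity of the Teichm\"uller lift and faithfulness of $\theta_0|_{\N_{C}(T)}$ then force $j\equiv j_T$ mod $n$.

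For (\ref{eq:zounds}), I would first reduce to a statement about a single $\Gamma$. Every $\Gamma_1\in S_P^T(\Gamma)$ equals $g\Gamma g^{-1}$ for some $g\in\N_{C}(T)$; since $g$ normalizes $T$ we have $T\subset\Gamma_1$ and $[\Gamma_1:T]=[\Gamma:T]$, so $\mu([\Gamma_1:T])$ is constant on the orbit. Moreover $g\in\N_{C}(T)\subset C\subset G_0$ and each ramification group $G_i$ is normal in $G_0$, so $\Gamma_1\subset G_i$ if and only if $\Gamma\subset G_i$; hence $\iota(\Gamma_1)=\iota(\Gamma)$ for every $\Gamma_1$ in the orbit. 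Consequently the left side of (\ref{eq:zounds}) equals $\mu([\Gamma:T])\cdot\#S_P^T(\Gamma)\cdot\iota(\Gamma)$ and the right side equals $\mu([\Gamma:T])\cdot\#S_P^T(\Gamma)\cdot(1+j_T)$, so it suffices to establish
\begin{equation*}
\#S_P^T(\Gamma)\cdot\bigl(\iota(\Gamma)-1-j_T\bigr)\equiv 0\pmod{n}.
\end{equation*}

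Next I would compute $\#S_P^T(\Gamma)$ by orbit--stabilizer. Since $\Gamma\in S_P(T)$ is a cyclic $p$-group containing $T$, the subgroup $T$ is characteristic in $\Gamma$, so $\N_{C}(\Gamma)\subset\N_{C}(T)$ and the $\N_{C}(T)$-stabilizer of $\Gamma$ is exactly $\N_{C}(\Gamma)$; thus $\#S_P^T(\Gamma)=[\N_{C}(T):\N_{C}(\Gamma)]$, and the displayed congruence is equivalent to $\iota(\Gamma)-1\equiv j_T\pmod{\#\N_{C}(\Gamma)}$. If $\N_{C}(\Gamma)$ is trivial this holds vacuously. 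If not, set $i=\iota(\Gamma)-1$, the largest integer with $\Gamma\subset G_i$. By Corollary~\ref{cor:umph}, $\theta_0^{\,i}|_{\N_{C}(\Gamma)}$ takes values in $(\mathbb{Z}/p)^*$ and has Teichm\"uller lift the restriction to $\N_{C}(\Gamma)$ of the character $\chi_T$ of Lemma~\ref{lem:gammalem}; by the defining property of $j_T$, restricted from $\N_{C}(T)$ to $\N_{C}(\Gamma)$, the character $\theta_0^{\,j_T}|_{\N_{C}(\Gamma)}$ has the same Teichm\"uller lift. Injectivity of the Teichm\"uller lift gives $\theta_0^{\,i-j_T}|_{\N_{C}(\Gamma)}=1$, and faithfulness of $\theta_0|_{\N_{C}(\Gamma)}$ forces $i\equiv j_T\pmod{\#\N_{C}(\Gamma)}$, completing the proof.

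The one delicate point --- the step I expect to be the real crux --- is the identification in the last paragraph of the two characters of $\N_{C}(\Gamma)$ whose common Teichm\"uller lift is $\chi_T|_{\N_{C}(\Gamma)}$: one must check that the character $\chi$ appearing in Corollary~\ref{cor:umph} (the one describing the conjugation action of $\N_{C}(\Gamma)$ on $\Gamma$) is literally the restriction to $\N_{C}(\Gamma)$ of the $\chi_T$ used to pin down $j_T$. This is exactly the content of the final sentence of Lemma~\ref{lem:gammalem}. Everything else is bookkeeping with the orders of cyclic character groups, the normality of the ramification filtration inside $G_0$, and the orbit--stabilizer count.
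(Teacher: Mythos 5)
Your proof is correct and follows essentially the same route as the paper: existence and uniqueness of $j_T$ from faithfulness of $\theta_0|_{\N_C(T)}$, constancy of $\mu([\Gamma_1:T])$ and $\iota(\Gamma_1)$ on $\N_C(T)$-orbits, orbit--stabilizer giving $\#S_P^T(\Gamma)=[\N_C(T):\N_C(\Gamma)]$, and then Corollary~\ref{cor:umph} together with the last sentence of Lemma~\ref{lem:gammalem} to deduce $\iota(\Gamma)-1\equiv j_T\pmod{\#\N_C(\Gamma)}$, which multiplied through by $[\N_C(T):\N_C(\Gamma)]$ yields (\ref{eq:zounds}). You spell out a couple of steps the paper leaves implicit (the normality of $G_i$ in $G_0$ behind the constancy of $\iota$ on orbits, and the torsion argument for $\chi_T$ being its own Teichm\"uller lift), but the structure and the key ingredients are identical.
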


\begin{proof}  The first statement, about the existence of $j_T$, is a consequence
of the fact that $\theta_0$ is a faithful character of $C$
and $\chi$ is a character of $\N_{C}(T)$ with values in $\mathbb{Z}_p^*$.
Since $\N_{C}(T)$ conjugates $T$ to itself, it acts on $S_P(T)$,
and elements in the each orbit have the same order and value for
$\iota$.  The stabilizer of $\Gamma$ under this action is $\N_{C}(\Gamma)$,
so
\begin{equation}
\label{eq:sumitup}
\sum_{\Gamma_1 \in S_P^T(\Gamma)} \mu([\Gamma_1:T]) \iota(\Gamma_1) = 
[\N_{C}(T):\N_{C}(\Gamma)] \mu([\Gamma:T]) \iota(\Gamma)
\end{equation}
By Lemma \ref{lem:gammalem}, the action of $\N_{C}(\Gamma)$ on $\Gamma$ by conjugation  is given by the restriction of $\chi$ from $C$ to $\N_{C}(\Gamma)$.
As in Corollary \ref{cor:umph}, let $i$ be the largest integer such
that $\Gamma \subset G_i$, so that $\iota(\Gamma) = i + 1$. 
By Corollary \ref{cor:umph}, $\theta_0^i|_{\N_{C}(\Gamma)}$ has 
Teichm\"uller
lift $\chi|_{\N_{C}(\Gamma)}$.  However, $\theta_0^{j_T}|_{\N_{C}(\Gamma)}$ also has Teichm\"uller
lift $\chi|_{\N_{C}(\Gamma)}$.  Since $\theta_0$ is a faithful character of $C$,
this forces $i \equiv j_T$ mod $\# \N_{C}(\Gamma)$.  Therefore $\iota(\Gamma) \equiv 1 + j_T$ mod $\# \N_{C}(\Gamma)$.  Substituting this
into (\ref{eq:sumitup}) proves (\ref{eq:zounds}) since $[\N_{C}(T):\N_{C}(\Gamma)] \cdot \# \N_{C}(\Gamma) = \# \N_{C}(T)$.
\end{proof}
 
\begin{lemma}
\label{lem:thetrivialcase} Suppose that $\chi$ in Lemma \ref{lem:gammalem}
has a non-trivial kernel.  Then $\N_{C}(T) = \C_{C}(T) = C$ and $\chi$ is trivial.
Condition (b) of Corollary \ref{cor:keepitup}
is equivalent to
\begin{equation}
\label{eq:tada}
\sum_{\Gamma \in S(T)} \mu([\Gamma:T]) \equiv 0\quad \mathrm{mod} \quad \#\N_{C}(T)\mathbb{Z}
\end{equation}
which is independent of $\phi$.
\end{lemma}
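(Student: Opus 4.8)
The plan is to establish the two assertions of the lemma in turn: first the structural statement that $\N_{C}(T) = \C_{C}(T) = C$ and $\chi$ is trivial, and then the reformulation of condition~(b) of Corollary~\ref{cor:keepitup}, which will follow by feeding the triviality of $\chi$ into Lemma~\ref{lem:abstractit}. The only place with real content is the first step; the rest is bookkeeping.

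For the first assertion, note that $T$, having $p$-power order, lies in the normal $p$-Sylow subgroup $P$ of $G$. Assume $\chi$ has a non-trivial kernel and choose a non-trivial element $c$ in that kernel. Then $c \in \N_{C}(T) \subset C$, and $\chi(c) = 1$ means $c$ centralizes $T$, so $T \subset \C_G(c) \cap P = \C_P(c)$. Now apply the final assumption of Hypothesis~\ref{hyp:setup} to the non-trivial element $c$ of $C$: we get $\C_G(c) = \C_P(c) \times C$. In this direct product the factor $C$ commutes with $\C_P(c)$, hence with $T \subset \C_P(c)$, so $C$ centralizes $T$. This forces $\N_{C}(T) = \C_{C}(T) = C$, and the character $\chi$, which records the conjugation action of $\N_{C}(T) = C$ on $T$, is trivial. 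I expect this to be the crux of the argument, and the only place where the arithmetic hypothesis on centralizers of elements of $C$ is used.

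For the second assertion, once $\chi = \chi_T$ is trivial the residue class $j_T$ of Lemma~\ref{lem:abstractit} must be $0$: the character $\theta_0^{0}|_{\N_{C}(T)}$ is trivial, so it takes values in $(\mathbb{Z}/p)^*$ and has trivial Teichm\"uller lift, namely $\chi_T$; by the uniqueness of $j_T$ this gives $j_T \equiv 0 \bmod \#\N_{C}(T)\mathbb{Z}$. Substituting $j_T \equiv 0$ into (\ref{eq:zounds}) yields, for each $\Gamma \in S_P(T)$, the congruence $\sum_{\Gamma_1 \in S_P^T(\Gamma)} \mu([\Gamma_1:T])\,\iota(\Gamma_1) \equiv \sum_{\Gamma_1 \in S_P^T(\Gamma)} \mu([\Gamma_1:T]) \bmod \#\N_{C}(T)\mathbb{Z}$. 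Summing over a set of representatives for the $\N_{C}(T)$-conjugacy orbits in $S_P(T)$ --- legitimate since $\mu([\Gamma:T])$ and $\iota(\Gamma)$ are constant on such orbits, the lower ramification filtration being normal in $G$ --- produces $\sum_{\Gamma \in S_P(T)} \mu([\Gamma:T])\,\iota(\Gamma) \equiv \sum_{\Gamma \in S_P(T)} \mu([\Gamma:T]) \bmod \#\N_{C}(T)\mathbb{Z}$.

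Finally I would add in the remaining terms, those $\Gamma \in S(T)$ that are not $p$-groups: each such $\Gamma$ contains a non-trivial element of order prime to $p$, so $\iota(\Gamma) = 1$ and its term is identical on the two sides. Hence $\sum_{\Gamma \in S(T)} \mu([\Gamma:T])\,\iota(\Gamma) \equiv \sum_{\Gamma \in S(T)} \mu([\Gamma:T]) \bmod \#\N_{C}(T)\mathbb{Z}$, so condition~(b) of Corollary~\ref{cor:keepitup} --- the vanishing of the left-hand sum modulo $\#\N_{C}(T)\mathbb{Z}$ --- is equivalent to (\ref{eq:tada}). Since that last congruence refers only to the subgroup lattice of $G$ and to the integer $\#\N_{C}(T)$, it is independent of $\phi$, which completes the plan.
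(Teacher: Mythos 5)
Your proposal is correct and follows essentially the same path as the paper: use the hypothesis $\C_G(c)=\C_P(c)\times C$ applied to a non-trivial $c\in\ker\chi$ to deduce that $C$ centralizes $T$ (you argue via the factors of the direct product commuting, the paper via $\C_G(c)$ being cyclic hence abelian --- same content), then observe $j_T\equiv 0$, sum (\ref{eq:zounds}) over $\N_C(T)$-orbits in $S_P(T)$, and absorb the non-$p$-group members of $S(T)$ using $\iota(\Gamma)=1$.
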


\begin{proof} Suppose that $t \in \N_{C}(T)$ is a non-trivial element of
the kernel of $\chi$.  Then $t$ acts trivially on the cyclic $p$-group $T$
by conjugation.  The final assumption of Hypothesis \ref{hyp:setup}
now says that $\C_G(t) = \C_P(t) \times C$ is cyclic.  Thus $T$ 
is contained in $\C_G(t)$, and every element of $\C_G(t)$ commutes
with $T$.   In particular, $C$ is contained in $\C_G(T)$, so
we get that $\N_{C}(T) = \C_{C}(T) = C$ and that $\chi$ is trivial.  Hence
the residue class $j_T \in \mathbb{Z}/\# \N_{C}(T) \mathbb{Z}$ defined
in Lemma \ref{lem:abstractit} is trivial.  Summing (\ref{eq:zounds}) over
the $\N_{C}(T)$ orbits in $S_P(T)$ now gives 
\begin{equation}
\label{eq:allrighty}
\sum_{\Gamma_1 \in S_P(T)} \mu([\Gamma_1:T]) \iota(\Gamma_1) \equiv  \sum_{\Gamma_1 \in S_P(\Gamma)} \mu([\Gamma_1:T])\ \mathrm{mod} \ \#\N_{C}(T) \mathbb{Z}.
\end{equation}
If  $\Gamma \in  S(T)$ is not in $S_P(T)$ then $\iota(T) = 1$ since $\Gamma$
is not a $p$-group.  Hence summing $\mu([\Gamma:T]) \iota(\Gamma) = 
\mu([\Gamma:T])$ as $\Gamma$ runs over these groups to both
sides of (\ref{eq:allrighty}) leads to the reformulation of
condition (b) stated in Lemma \ref{lem:thetrivialcase}.
\end{proof}

\begin{lemma}
\label{lem:nontrivial}
Suppose that $\chi$ in Lemma \ref{lem:gammalem}
has trivial kernel, which is equivalent to $\C_{C}(T) = \{e\}$.  Let $D$ be the subgroup of $G$
generated by $T$ and by $\N_{C}(T)$.  Then the constant
$b_{T,D}$ equals $\iota(T)/ \# \N_{C}(T)$,
where $\iota(T) = i_T + 1$ when $i_T$
is the largest integer $i$ such that $T \subset G_{i}$.
\begin{enumerate}
\item[a.] One has $b_{T,D} \in \mathbb{Z}$ if
and only if  the residue class $j_T$ in Lemma \ref{lem:abstractit} is
$-1$ mod $\# \N_{C}(T) \mathbb{Z}$.
\item[b.]
Suppose  $b_{T,D} \in \mathbb{Z}$.  Then condition (b) of Corollary \ref{cor:keepitup}
is equivalent to the congruence 
\begin{equation}
\label{eq:urkurk}
b'_T = b'_{T,G} = \sum_{P \not \supset \Gamma \in S(T)} \mu([\Gamma:T]) \equiv 0 \quad \mathrm{mod}\quad \# \N_{C}(T) \mathbb{Z}.
\end{equation}
\end{enumerate}
\end{lemma}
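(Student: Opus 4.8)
The plan is to compute $b_{T,D}$ directly from Theorem~\ref{thm:nontrivcase} once the structure of $D$ is understood, and then to read off (a) and (b) using Corollary~\ref{cor:umph} and Lemma~\ref{lem:abstractit}. Since $\chi$ has trivial kernel we have $\C_{C}(T)=\{e\}$, so $\N_{C}(T)$ acts faithfully on the cyclic $p$-group $T$; because $T$ is a $p$-group and $\N_{C}(T)\subset C$ has order prime to $p$, the group $D=T.\N_{C}(T)$ is a semi-direct product with $T$ as normal $p$-Sylow subgroup. The first key step is to show $S_D(T)=\{T\}$. If $\Gamma$ is a cyclic subgroup of $D$ containing $T$, then $\Gamma$ is abelian, so $T$ is central in $\Gamma$ and $\Gamma\subset\C_D(T)$; writing an arbitrary element of $D$ as $\alpha\beta$ with $\alpha\in T$ and $\beta\in\N_{C}(T)$, one computes $(\alpha\beta)x(\alpha\beta)^{-1}=x^{\chi(\beta)}$ for $x\in T$, so membership in $\C_D(T)$ forces $\beta\in\ker\chi=\{e\}$, whence $\C_D(T)=T$ and $\Gamma=T$. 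Since $T$ is normal in $D$ we have $\N_D(T)=D$ and $[\N_D(T):T]=\#\N_{C}(T)$, and the compatibility of the lower numbering of ramification groups with passage to the subgroup $D$ gives $\iota_D(T)=\iota(T)$. Applying Theorem~\ref{thm:nontrivcase} to $D$ and the non-trivial subgroup $T$ then yields
\[
b_{T,D}=\frac{1}{[\N_D(T):T]}\,\mu([T:T])\,\iota_D(T)=\frac{\iota(T)}{\#\N_{C}(T)}.
\]

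For part (a): if $\N_{C}(T)=\{e\}$ the assertion is immediate (then $D=T$, $b_{T,D}=\iota(T)\in\mathbb{Z}$, and the residue class $j_T$ modulo $1$ is both $0$ and $-1$), so assume $\N_{C}(T)\ne\{e\}$. Then $b_{T,D}=\iota(T)/\#\N_{C}(T)$ lies in $\mathbb{Z}$ if and only if $\#\N_{C}(T)$ divides $\iota(T)=i_T+1$, i.e.\ if and only if $i_T\equiv -1\pmod{\#\N_{C}(T)}$. To finish I would identify $i_T$ with $j_T$: Corollary~\ref{cor:umph}, applied with $\Gamma=T$ (so $\N_{C}(\Gamma)=\N_{C}(T)$ is non-trivial and $i=i_T$), shows that $\theta_0^{\,i_T}|_{\N_{C}(T)}$ takes values in $(\mathbb{Z}/p)^*$ and has Teichm\"uller lift $\chi_T$; by the uniqueness of $j_T$ in Lemma~\ref{lem:abstractit} this forces $i_T\equiv j_T\pmod{\#\N_{C}(T)}$. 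Hence $b_{T,D}\in\mathbb{Z}$ if and only if $j_T\equiv -1\pmod{\#\N_{C}(T)}$, which is (a).

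For part (b): assume $b_{T,D}\in\mathbb{Z}$, equivalently $j_T\equiv -1$ and $1+j_T\equiv 0\pmod{\#\N_{C}(T)}$ by (a). Split the sum in condition~(b) of Corollary~\ref{cor:keepitup} according to whether $\Gamma\in S(T)$ is a $p$-group: the $p$-group members of $S(T)$ are exactly the elements of $S_P(T)$, while every $\Gamma\in S(T)$ that is not a $p$-group satisfies $\iota(\Gamma)=1$ and $P\not\supset\Gamma$. Thus
\[
\sum_{\Gamma\in S(T)}\mu([\Gamma:T])\iota(\Gamma)=\sum_{\Gamma\in S_P(T)}\mu([\Gamma:T])\iota(\Gamma)+b'_{T,G}.
\]
Summing the congruence (\ref{eq:zounds}) of Lemma~\ref{lem:abstractit} over the $\N_{C}(T)$-orbits in $S_P(T)$ gives $\sum_{\Gamma\in S_P(T)}\mu([\Gamma:T])\iota(\Gamma)\equiv (1+j_T)\sum_{\Gamma\in S_P(T)}\mu([\Gamma:T])\equiv 0\pmod{\#\N_{C}(T)}$. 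Therefore $\sum_{\Gamma\in S(T)}\mu([\Gamma:T])\iota(\Gamma)\equiv b'_{T,G}\pmod{\#\N_{C}(T)}$, and condition~(b) of Corollary~\ref{cor:keepitup} becomes exactly the congruence (\ref{eq:urkurk}).

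The substantive point is the identification $S_D(T)=\{T\}$, which collapses $b_{T,D}$ to the single ramification datum $\iota(T)$; the remaining work is congruence bookkeeping modulo $\#\N_{C}(T)$ together with the comparison of $i_T$ and $j_T$ furnished by Corollary~\ref{cor:umph}. I do not anticipate a serious obstacle beyond keeping the normalization of $T$ (so that $\N_G(T)=\N_P(T).\N_{C}(T)$, as arranged in Lemma~\ref{lem:blahT}) and noting that $\iota$ is unchanged under passage among $G$, $P$ and $D$.
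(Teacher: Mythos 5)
Your proof is correct and follows essentially the same route as the paper's: establish $S_D(T)=\{T\}$, apply Theorem~\ref{thm:nontrivcase} to get $b_{T,D}=\iota(T)/\#\N_{C}(T)$, identify $\iota(T)\equiv 1+j_T$ to settle (a), and split $S(T)$ into $S_P(T)$ and its complement while invoking (\ref{eq:zounds}) and $\iota(\Gamma)=1$ for $\Gamma\not\subset P$ to settle (b). The only cosmetic difference is that you derive $i_T\equiv j_T$ directly from Corollary~\ref{cor:umph} rather than by specializing Lemma~\ref{lem:abstractit} to the group $D$, which amounts to the same thing.
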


\begin{proof} Since $\chi$ has trivial kernel, $D$ is the semi-direct
product of the normal $p$-group $T$ with the cyclic prime to $p$ group $\N_{C}(T)$, and the
action of $\N_{C}(T)$ on $T$ is  faithful.  If $\Gamma$ is a cyclic subgroup
of $D$ containing $T$, then $\Gamma/T \subset D/T \cong \N_{C}(T)$
acts faithfully by conjugation on $T$.  This forces
$\Gamma = T$, so the set $S_D(T)$ of such $\Gamma$ is simply
$\{T\}$.  Therefore $b_{T,D} = \iota(T)/\# \N_{C}(T)$ by Theorem \ref{thm:nontrivcase}.
Replacing $G$ by $D$ in Lemma \ref{lem:abstractit} shows $b_{T,D}$ is
integral if and only $j_T \equiv  -1$ mod $\#\N_{C}(T)$, which we will suppose is
the case for the rest of the proof.  We now return to $G$ as before, so that $G$
need not be $D$.  Summing the formula in Lemma \ref{lem:abstractit} over
the $\N_{C}(T)$ orbits in $S_P(T)$ now gives
$$\sum_{\Gamma \in S_P(T)} \mu([\Gamma_1:T]) \iota(\Gamma_1) \equiv
 0 \quad \mathrm{mod}\quad \# \N_{C}(T) \mathbb{Z}$$
 since $j_T + 1 \equiv 0 $ mod $\# \N_{C}(T) \mathbb{Z}$.  Hence condition (b)
 of Corollary \ref{cor:keepitup} becomes the congruence
 \begin{eqnarray}
 \label{eq:doitnow}
 0 &\equiv& \sum_{\Gamma \in S(T)} \mu([\Gamma:T]) \iota(\Gamma)\ \mathrm{ mod} \  \#\N_{C}(T)\mathbb{Z}\nonumber\\
 &\equiv& \sum_{P \not \supset \Gamma \in S(T)} \mu([\Gamma:T]) \iota(\Gamma)\ \mathrm{ mod} \  \#\N_{C}(T)\mathbb{Z}\nonumber\\
 &= & b'_{T,G} \ \mathrm{ mod} \  \#\N_{C}(T)\mathbb{Z}
 \end{eqnarray}
 since $\iota(\Gamma) = 1$ if $\Gamma \not \subset P$.
\end{proof}

\begin{rem}
\label{rem:interpnontriv}  In view of Lemma \ref{lem:abstractit}, the arithmetic condition in part (a) of Lemma \ref{lem:nontrivial} is that the conjugation action of $\N_{C}(T)$ on $T$ is
via the inverse of the Teichm\"uller lift of the character $\theta_0|_{\N_{C}(T)}$.
Note that $\theta_0|_{\N_{C}(T)}$ takes values in $(\mathbb{Z}/p)^*$,
so it agrees with the restriction $\theta|_{\N_{C}(T)}$ of the character
$\theta:C \to k^*$ defined in Definition \ref{def:Teich} because of 
Remark \ref{rem:clarify}.  
\end{rem}
\medbreak
\noindent {\bf Completion of the proof of Theorem \ref{thm:reducetop}}
\medbreak
We split the proof into two parts:
\medbreak
\noindent {\bf Part 1:}  {\bf Suppose the Bertin obstruction of $\phi$ vanishes.}  
\medbreak
The Bertin obstruction of $\phi_P$ then vanishes by Theorem \ref{thm:functorial},
so condition (a) of Theorem \ref{thm:reducetop} holds.  Condition (b) of 
the theorem follows from Corollary \ref{cor:notquot} and Proposition  \ref{prop:infoprime}. Condition (c)
of Theorem \ref{thm:reducetop} is a consequence of  Corollary \ref{cor:notquot}  and conditions (a) and (c)
of Corollary \ref{cor:keepitup} together with Proposition \ref{prop:special}  and Theorem \ref{thm:functorial} .   

We now 
suppose as in condition (d)
of Theorem \ref{thm:reducetop} that $T$ is a non-trivial cyclic subgroup of $P$.  Since we have
supposed the Bertin obstruction of $\phi$ vanishes, we have $b_T \ge 0$ by Proposition \ref{prop:special}(ii).
Therefore conditions (a), (b) and (c) of Corollary \ref{cor:keepitup} hold.  

Suppose first that
$\C_{C}(T)$ is non-trivial. By the definition of $\chi$ in Lemma \ref{lem:gammalem},
$\chi$ is trivial on $\C_{C}(T)$.   Hence the hypothesis of Lemma \ref{lem:thetrivialcase}  holds.
This Lemma \ref{lem:thetrivialcase} now shows that $\C_{C}(T) = C$.  This lemma also shows that since 
 (b) of Corollary \ref{cor:keepitup} holds, the
congruence claimed in condition (d)(i) of Theorem \ref{thm:reducetop} is true, since 
$b''_{T,G}$ is the constant on the left side of (\ref{eq:tada}) by Notation \ref{notn:Bprime}.  
This completes the proof of condition (d) of Theorem \ref{thm:reducetop} if $\C_{C}(T)$ is not trivial.

Suppose now that  $\C_{C}(T)$ is trivial.    The hypotheses of Lemma \ref{lem:nontrivial} now hold, and 
the character $\chi$ in this lemma has trivial kernel.
As in this lemma, let $D$ be the subgroup generated by $T$ and $\N_{C}(T)$.  Since 
we supposed that the Bertin obstruction of $\phi$ vanishes,  $b_{T,D}$ is an integer by
Corollary \ref{cor:notquot}.   The character $\theta$ in 
Definition~\ref{def:Teich} and Theorem \ref{thm:reducetop}(d)(ii) now
has the properties claimed because of Lemma \ref{lem:nontrivial}(a), Lemma \ref{lem:abstractit}  and Remark \ref{rem:interpnontriv}.  Finally,
since we have already proved that  (b) of Corollary \ref{cor:keepitup} is true under the above hypotheses, the 
remaining congruence to be proved in Theorem \ref{thm:reducetop}(d)(ii) follows from Corollary  \ref{cor:notquot} and Lemma \ref{lem:nontrivial}(b).
This completes the proof of condition (d) of Theorem \ref{thm:reducetop}.
We have now shown that if the Bertin obstruction of $\phi$ vanishes, then (a) - (d) of Theorem \ref{thm:reducetop}
hold.  
\medbreak
\noindent {\bf Part 2:}  {\bf Suppose conditions (a) - (d) of Theorem \ref{thm:reducetop}
hold. } 
\medbreak
By Proposition \ref{prop:special}(ii) it will suffice to show that $0 \le b_T \in \mathbb{Z}$
for all non-trivial cyclic subgroups $T$ of $G$, since then the Bertin obstruction of $\phi$
vanishes.

Suppose first that $T$ is not a $p$-group.  We have supposed that condition (b) of
Theorem \ref{thm:reducetop} holds. By Lemma \ref{lem:Gstruct}, all cyclic subgroups
$C_0$ of $G$ of maximal prime-to-$p$ order are conjugate to $C$.  If now follows
from condition (b) of Theorem \ref{thm:reducetop} that condition (b) of Proposition 
\ref{prop:infoprime} holds.  Therefore by letting $J = G$ in Proposition \ref{prop:infoprime}(a)
we see that 
$0 \le b_T = b_{T,J} \in \mathbb{Z}$, as required.

Now suppose that $T$ is a non-trivial cyclic $p$-subgroup of $G$.   We have supposed
in condition (a) of Theorem \ref{thm:reducetop} that the Bertin obstruction of the restriction of $\phi$ to $P$ vanishes.  Thus $ 0 \le b_{T,P} \in \mathbb{Z}$ by Notation \ref{notn:bprimedef} 
and Proposition \ref{prop:special}.  Thus to complete the proof, it will suffice to 
show that conditions (a), (b) and (c) of Corollary \ref{cor:keepitup} hold.  Hypothesis 
(c) of Theorem \ref{thm:reducetop} is that (a) and (c) of Corollary \ref{cor:keepitup}
hold, so we are reduced to checking (b) of that corollary.  

Suppose first that the centralizer $\C_{C}(T)$ of $T$ in $C$ is non-trivial.  This is equivalent
to supposing that the character $\chi$ in Lemma \ref{lem:gammalem} has a non-trivial
kernel.  Lemma \ref{lem:thetrivialcase}  and Notation \ref{notn:Bprime} now show that Hypothesis d(i) of Theorem \ref{thm:reducetop} is equivalent to condition (b) of Corollary \ref{cor:keepitup},
so we are done in this case.

Finally, suppose that $\C_{C}(T)$ is trivial.  In view of Remark \ref{rem:interpnontriv},
the hypothesis in part (d)(ii) of Theorem \ref{thm:reducetop} concerning the character $\theta$ is equivalent to condition (a) of Lemma \ref{lem:nontrivial}.  Therefore part (b) of Lemma \ref{lem:nontrivial} shows that condition (b) of Corollary \ref{cor:keepitup} is equivalent to 
the hypothesis on $b'_{T,G}$ in Theorem \ref{thm:reducetop}.  Therefore
condition (b) of Corollary \ref{cor:keepitup}  holds in all cases and the proof is complete.\ $\square$

\section{Proof of Theorem \ref{thm:antiB}.}
\label{s:therealend}
\setcounter{equation}{0}
We begin by proving that GM-groups have certain properties that we require.

\begin{thm} 
 \label{groovythm}   Let $G$ be a GM group with respect to
the character $\Theta$.  Let $P,B$ and $C$ be as in the Definition \ref{def:groovy}.  
Set $D:=\C_P(C)$.  Suppose $T$ is a non-trivial cyclic subgroup of $P$. Recall that $S_G(T)$ is the set of cyclic subgroups of $G$ which
contain $T$, and $\mu(x)$ is the Mobius $\mu$ function.  Let $b'_{T,G}$ and $b''_{T,G}$ be as in Notation \ref{notn:Bprime}.
\begin{enumerate}
\item[a.]  One has
\begin{equation}
\label{bprimeGM}
b'_{T,G} = \sum_{P \not \supset \Gamma \in S_G(T)} \mu([\Gamma:T])  \equiv 0\quad \mathrm{mod}\quad [\N_P(T):T] \mathbb{Z}.
\end{equation}
\item[b.] Suppose $\C_{C}(T)$ is not trivial.  Then $T \subset D$ and 
\begin{equation}
\label{eq:sumhome}
b''_{T,G} = \sum_{ \Gamma \in S_G(T)} \mu([\Gamma:T]) \equiv 0 \quad \mathrm{mod} 
\quad \# C\mathbb{Z}
\end{equation}
\item[c.]  Suppose $\C_{C}(T)=1$.  Then either $\N_{C}(T) = 1$
or every cyclic overgroup of $T$ is contained in $P$.
\end{enumerate}
\end{thm}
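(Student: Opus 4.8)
All three assertions are group–theoretic congruences for M\"obius sums over the set $S_G(T)$ of cyclic overgroups of $T$, and the plan is to analyze $S_G(T)$ through the semidirect structure $G=P.C$ and the two conditions (a), (b) defining a GM group. First I would record two consequences of Lemma~\ref{lem:Gstruct}: every cyclic $p'$-subgroup of $G$ is $G$-conjugate into $C$, and every $p'$-Hall subgroup of a subgroup of $G$ is cyclic. The decisive use of GM-condition (a) is the \emph{dichotomy} for a non-trivial cyclic $p$-subgroup $T\le P$: if some $1\ne c\in C$ centralizes $T$ then $T\le \C_P(c)=D$, so $\C_C(T)\ne 1$ forces $T\le D$, and then $C$ centralizes $T$ and $\C_C(T)=C$; otherwise $\C_C(T)=1$. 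I would also use two elementary facts about a cyclic $p$-group $\Gamma$: a $p'$-automorphism of $\Gamma$ fixing a non-trivial subgroup pointwise is trivial, and a $p'$-group acting faithfully on $\Gamma$ has order dividing $p-1$ (hence, if contained in $C$, lies in $B$ and, by GM-condition (b), acts via $\Theta$).

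For part (c), assume $\C_C(T)=1$ and $\N_C(T)\ne 1$, and suppose toward a contradiction that some $\Gamma\in S_G(T)$ is not contained in $P$. Writing $\Gamma=\Gamma_p\times\Gamma_{p'}$ with $\Gamma_{p'}\ne 1$, one has $T\le\Gamma_p$ and a generator $g$ of $\Gamma_{p'}$ centralizes $T$. Conjugating $g$ into $C$, say $hgh^{-1}=d\in C\setminus\{1\}$, the element $d$ centralizes $hTh^{-1}\le P$, so $hTh^{-1}\le\C_P(d)=D$ by GM-condition (a), whence $C$ centralizes $hTh^{-1}$ and $h^{-1}Ch\le\C_G(T)$. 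Now pick $1\ne c\in\N_C(T)$: then $c$ normalizes $\C_G(T)$, lies outside it, and meets it trivially (as $\langle c\rangle\cap\C_G(T)\le C\cap\C_G(T)=\C_C(T)=1$), so $\C_G(T).\langle c\rangle$ is a subgroup of $G$ whose order has $p'$-part at least $\#C\cdot\#\langle c\rangle>\#C$ --- impossible, since $\#C$ is the $p'$-part of $\#G$. This proves (c), and shows along the way that whenever $\Gamma\in S_G(T)$ is not contained in $P$, its $p$-part $\Gamma_p$ is $G$-conjugate into the cyclic group $D$.

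For parts (a) and (b) I would decompose each $\Gamma\in S_G(T)$ as $\Gamma_p\times\Gamma_{p'}$ and sum over the $p'$-part. For a non-trivial cyclic $p'$-subgroup $A\le\C_G(T)$, GM-condition (a) gives that $\C_P(A)$ is a $G$-conjugate of the cyclic group $D$; the admissible $p$-parts $\Gamma_p$ are then the cyclic overgroups of $T$ inside the \emph{cyclic} group $\C_P(A)\cap\C_P(T)$, so $\sum_{T\le\Gamma_p}\mu([\Gamma_p:T])$ vanishes unless that cyclic group equals $T$. Running this out, $b'_{T,G}=0$ unless $T$ is $G$-conjugate to $D$; in that case --- conjugating, which changes neither the class of $T$ nor $[\N_P(T):T]$, so that $T=D$ --- one gets $b'_{D,G}=\Sigma-1$, where $\Sigma:=\sum_{A\le\C_G(D),\ A\ \mathrm{cyclic}\ p'}\mu(\#A)$, and (a) becomes $\Sigma\equiv 1\pmod{[\N_P(D):D]}$. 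For (b) we are in the case $T\le D$; here $b''_{T,G}=b'_{T,G}+\psi(T,\C_P(T))$, and I would group $\psi(T,\C_P(T))$ by the conjugation action of $C$ (which centralizes $T$ and normalizes $\C_P(T)$): the free orbits contribute multiples of $\#C$, while a $C$-fixed overgroup $\Gamma$ has $\N_C(\Gamma)=C$, which by the dichotomy together with the ``a faithful $p'$-action fixing $T$ pointwise is trivial'' fact forces $\Gamma\le D$. Hence $\psi(T,\C_P(T))\equiv\sum_{T\le\Gamma\le D}\mu([\Gamma:T])=\delta(T,D)\pmod{\#C}$, so (b) is proved when $T\ne D$ and reduces to $\Sigma\equiv 0\pmod{\#C}$ when $T=D$. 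Both remaining congruences for $\Sigma$ I would obtain by applying the same mechanism once more inside $\C_G(D)=\C_P(D).C$: the cyclic $p'$-subgroups of a given order $m\mid\#C$ are precisely the $\C_G(D)$-conjugates of the unique $C_m\le C$, and GM-condition (a) --- which gives $\C_{\C_P(D)}(c)=D$ for every $1\ne c\in C$, i.e.\ a fixed-point-free coprime action on $\C_P(D)$ modulo $D$ --- pins down those conjugacy counts and yields $\Sigma-1\in[\N_P(D):D]\mathbb{Z}$ and $\Sigma\in\#C\,\mathbb{Z}$.

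The conceptual engine is GM-condition (a) throughout: it produces the dichotomy, forces the $p$-parts of the ``$\Gamma\not\le P$'' overgroups into $D$, and makes the relevant coprime actions fixed-point-free --- GM-condition (b) is used only to identify faithful actions with $\Theta$ and is not actually needed for (a)--(c). The step I expect to be the main obstacle is the last one: the orbit- and conjugacy-counting needed to evaluate $\Sigma$ (and the sums $\psi(T,\C_P(T))$) modulo $[\N_P(D):D]$ and modulo $\#C$. I would organize it by reducing, via the ``conjugate into $D$'' principle, everything to cyclic overgroups whose $p$-part lies in the single totally ordered cyclic group $D$ --- where the M\"obius sums telescope --- and treating the $p'$-direction by the fixed-point-free counting argument for the action of $C$.
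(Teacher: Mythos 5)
Your plan is structurally sound and reaches the correct conclusions, but it is organized rather differently from the paper's proof, and it has one concrete hidden step that must be filled in for part~(a).

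The paper's proof of (a) and (b) works inside the single cyclic group $H=D\times C$. After replacing $T$ by a conjugate so that $T\le D$, it shows that every $\Gamma\in S_G(T)$ not contained in $P$ lies in $\C_G(D)$ and is conjugate, by an element of $\N_G(T)$, to a unique overgroup of $T$ in $H$; this gives $b'_{T,G}=[\N_G(T):H]\,b'_{T,H}$, and since $b'_{T,H}$ is $0$ or $-1$ one is done. For (b), the paper observes that $C$ acts freely on $S_G(T)\setminus S_H(T)$, so $b''_{T,G}\equiv\sum_{\Gamma\in S_H(T)}\mu([\Gamma:T])=0\pmod{\#C}$. You, by contrast, decompose each $\Gamma$ into $p$- and $p'$-parts, collapse the $p$-part sum by a M\"obius telescope (GM condition (a) forces the $p$-part to be a $G$-conjugate of $D$, which then equals $D$ once it contains $T=D$), and reduce everything to a count $\Sigma$ of cyclic $p'$-subgroups of $\C_G(D)$. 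These are genuinely different organizations: the paper packages one transport-to-$H$ step and one $C$-orbit argument; you carry out a fibered M\"obius sum over the $p'$-parts and then invoke a fixed-point-free counting argument twice (once on $\C_P(D)/D$ for the modular statement in (b), and implicitly again for the count of conjugacy classes of $p'$-subgroups of $\C_G(D)$). Your argument for (c) is also different from, but equivalent to, the paper's: you derive a contradiction from the subgroup $\C_G(T)\langle c\rangle$ having $p'$-order exceeding $\#C$, whereas the paper argues that $[G:\C_G(T)]$ is a power of $p$, so $\N_C(T)$ maps into the $p$-group $\N_G(T)/\C_G(T)$ and must vanish.

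The gap in your plan is in part (a). Your computation naturally yields $b'_{D,G}=\Sigma-1=-[\C_P(D):D]$: the number of $\C_G(D)$-conjugates of the order-$m$ subgroup $C_m\le C$ (for $m>1$) is $[\C_G(D):DC]=[\C_P(D):D]$, not $[\N_P(D):D]$. To conclude $b'_{D,G}\equiv 0\pmod{[\N_P(D):D]}$ you need the auxiliary fact that $\N_P(D)=\C_P(D)$ in a GM group with $C\ne 1$. This is true, but it must be proved: $C$ acts trivially on $\N_P(D)/\C_P(D)\hookrightarrow\mathrm{Aut}(D)$ because $C$ centralizes $D$, while coprimality and $\C_{\N_P(D)}(C)=D\le\C_P(D)$ force $\C_{\N_P(D)/\C_P(D)}(C)=1$; together these give $\N_P(D)=\C_P(D)$. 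The paper's formula implicitly absorbs this identity (its stated orbit size $[\N_G(D):H]$ equals the orbit size $[\C_G(D):DC]$ one gets directly from Hall conjugacy). You should make this point explicit, since without it the claimed congruence modulo $[\N_P(T):T]$ does not follow from the count $[\C_P(D):D]$ that you actually produce. Once that is added, your fixed-point-free count $[\C_P(D):D]\equiv 1\pmod{\#C}$ closes (a) and (b) simultaneously.
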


\begin{proof}   The results are obvious if $C \ne 1$.  So
assume this is not the case.   Set $H=D \times C$.
Let $1 \ne W \le C$.  Then 
$\C_G(W)=DC = H$ (since $D=\C_P(W)$ and $C$ is a complement
to any Sylow $p$-subgroup of $\C_G(W)$). 
Similarly, $\N_G(W) =\N_P(W)C=\C_P(W)C=\C_G(W)$.
 It follows
that $C \cap C' = 1$ for any conjugate $C'$ of $C$ other than $C$.

We first prove (c).
Suppose that $T$ is contained in some cyclic subgroup
 not contained in $P$.  It follows that $T$ centralizes some
element $c' \ne 1$ of order prime to $p$.  By Lemma \ref{lem:Gstruct},
  $c'$ is conjugate to
some element $c \in C$.  Thus,  $T$ centralizes a conjugate $C'$
of $C$.  It follows that $[G:\C_G(T)]$ is a power of $p$ and
so $\N_G(T)/\C_G(T)$ is a $p$-group.  Thus, $\N_{C}(T) \le \C_G(T) \cap C =1$,
and (c) follows. 

We next prove (a) and (b).   By Lemma \ref{lem:blahT} we can replace $T$ by  
a conjugate subgroup
in order to have  $\C_G(T) = \C_P(T).\C_{C}(T)$ and $\N_G(T) = \N_P(T).\N_{C}(T)$.
We may assume that $\C_{C}(T) \ne 1$  (this is the assumption in (b);
and in (a) if this is not the case, then we are summing over the empty set).
Thus,  $1 \ne T \le D$ by Definition \ref{def:groovy}(a).      So   $\N_G(T)=\N_P(T)C$. 

Since $H$ is cyclic, it is clear that $b'_{T,H} = -1$ if $T =D$
and $b'_{T,H}=0$ if $T < D$.    Also, $b''_{T.H}=0$. 

We claim that every cyclic subgroup $E$ of $G$ that contains $T$ and that is
not contained in $P$
is conjugate to a unique subgroup of $H$ containing $T$
via an element of $\N_G(T)$.  Here uniqueness is clear because $H$ is cyclic and so has
a unique subgroup of each order.  Existence follows by Lemma \ref{lem:zap} since  $E$ contains a nontrivial
$p'$-subgroup which is conjugate (in $\N_G(T)$) to a subgroup of $C$.
So we may assume that $E \cap C \ne 1$, whence $E \le H$.

Now let $E \le H$  be such a subgroup.   Then $\N_G(E)$ must
normalize every subgroup of $E$, and so $\N_G(E)=H$.
Thus,  $b'_{T,G}= [\N_G(T):H]b'_{T,H}$.  If $T < D$, this implies
that $b'_{T,G}=0$ and (a) follows. 
If $T=D$,  then $[\N_G(T):H]=[\N_P(D):D]$, and again (a) follows.

 We now prove (b).   Note that $C$ acts on $S_G(T)$.
 Let $E  \in S_G(T)$.  Then $C$ centralizes $E$ if
 $E \subset H$.  Suppose $E$ is not contained in $H$ and $1 \ne c \in C$
 normalizes $E$.   Since $T \subset D$, $c$ centralizes $T$, so $c$ centralizes the
 cyclic Sylow $p$-subgroup of $E$.  Thus, the Sylow $p$-subgroup
 of $E$ is contained in $D$.  Thus, $E$ must contain a nontrivial
$p'$-subgroup $C'$ not contained in $C$ since we've assumed $E \not \subset H$.   Then $c$ normalizes
 $C'$ and so $c$ centralizes $C'$ because it does so mod $P$.  Therefore $C' \subset H$, so $C' \subset C$, which is 
 a contradiction.  Thus, $C$ acts freely by conjugation on
 the elements of $S_G(T)$ not contained in $H$.  Since
 $\mu([\Gamma:T])$ is constant on $C$-conjugates, it follows that:
 \begin{equation}
 \label{eq:SHT}
  \sum_{ \Gamma \in S_H(T)} \mu([\Gamma:T]) \equiv b''_{T,G}  \quad \mathrm{mod} 
\quad \# C\mathbb{Z}.
\end{equation}
Since $H$ is a cyclic group which properly contains $T$, the sum on the left in (\ref{eq:SHT}) is $0$, 
which  completes the proof.
\end{proof}

\begin{lemma}
\label{lem:liftx}  Let $Q$ be a $p$-group and $x$ an automorphism of $Q$
of order dividing $p-1$.  Let $\phi:Q \rightarrow R$ be an $x$-equivariant
surjection.  If $r \in R$ with $x(r)=r^e$, then there exists $s \in Q$
with $\phi(s)=r$ and $x(s) =s^e$.
\end{lemma}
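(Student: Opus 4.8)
I would prove this by induction on $|Q|$, arranging the induction so that the base case is $Q$ abelian, where an explicit averaging (``projection onto the $e$‑eigenspace'') produces $s$ directly. Set $d=\mathrm{ord}(x)$. Since $x$ restricts to an automorphism of the cyclic $p$‑group $\langle r\rangle$ (note $\langle r\rangle$ is $x$‑stable because $\gcd(e,|r|)=1$), and this automorphism has order dividing $d\mid p-1$, the residue of $e$ modulo $|r|$ is a $(p-1)$‑st root of unity. I would record at the outset the normalization that $e$ may be taken to satisfy $e^{d}\equiv 1\pmod{\exp(Q)}$ — this is automatic modulo $|r|$, is how the lemma is used in the applications (where $e=\Theta(x)$ is a Teichm\"uller representative), and is in fact implicit in the statement, since the conclusion only constrains $e$ modulo the order of the element produced. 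The cases $r=1$ (take $s=1$) and $\ker\phi=1$ (take $s=\phi^{-1}(r)$) are immediate, so assume neither holds; in particular $p\nmid e$.

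\textbf{Base case: $Q$ abelian.} Writing $Q$ additively as a $\mathbb{Z}/\exp(Q)$‑module on which $\langle x\rangle$ acts, pick any $s_0\in Q$ with $\phi(s_0)=r$ and set
\[
s=d^{-1}\sum_{i=0}^{d-1} e^{-i}\,x^{i}(s_0),
\]
where $d^{-1}$ and $e^{-1}$ are taken modulo $\exp(Q)$ (legitimate as $\gcd(de,p)=1$). Since $\phi$ is $x$‑equivariant, $x^{i}(r)=e^{i}r$, so $\phi(s)=d^{-1}\sum_i e^{-i}e^{i}r=r$; and reindexing, together with $x^{d}=\mathrm{id}$ and $e^{d}\equiv 1$, gives $x(s)=e\,s$, i.e.\ $x(s)=s^{e}$ multiplicatively.

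\textbf{Inductive step: $Q$ non‑abelian.} Then $E:=Z(Q)[p]\neq 1$ is a non‑trivial $x$‑stable elementary abelian $p$‑group; as an $\mathbb{F}_p[\langle x\rangle]$‑module it is semisimple with $1$‑dimensional constituents, since $d\mid p-1$. Choose an $x$‑stable $Z_0\le E$ of order $p$. If $E\not\subseteq\ker\phi$, choose $Z_0$ with moreover $Z_0\cap\ker\phi=1$; then $\phi$ induces an $x$‑equivariant surjection $\overline\phi\colon Q/Z_0\to R/\phi(Z_0)$ (note $\phi(Z_0)$ is a central, $x$‑stable subgroup of $R$), to which induction applies. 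Lifting the resulting element and correcting by the unique element of $Z_0$ making the image equal $r$ yields $s_0\in Q$ with $\phi(s_0)=r$ whose image in $Q/Z_0$ still satisfies the twisted relation; hence $g_0:=x(s_0)s_0^{-e}$ lies in $Z_0$, and $\phi(g_0)=x(r)r^{-e}=1$, so $g_0\in Z_0\cap\ker\phi=1$ and $s_0$ is the required element. If instead $E\subseteq\ker\phi$, take any $x$‑stable $Z_0\le E$; now $\phi$ factors through $Q/Z_0\twoheadrightarrow R$, induction produces $s_0$ with $\phi(s_0)=r$, and $g_0:=x(s_0)s_0^{-e}\in Z_0$. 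Let $c\in\{1,\dots,p-1\}$ satisfy $x(z)=z^{c}$ on $Z_0$. If $c\not\equiv e\pmod p$, then for $z\in Z_0$ one computes $x(s_0z)(s_0z)^{-e}=g_0\,z^{\,c-e}$, and choosing $z$ with $z^{\,c-e}=g_0^{-1}$ finishes the step. If $c\equiv e\pmod p$, then $g_0$ is central with $x(g_0)=g_0^{e}$, so induction on $k$ gives $x^{k}(s_0)=s_0^{\,e^{k}}\,g_0^{\,k e^{k-1}}$; taking $k=d$ and using $x^{d}=\mathrm{id}$, $e^{d}\equiv 1$ forces $g_0^{\,d e^{d-1}}=1$, whence $g_0=1$ since $\gcd(d e^{d-1},p)=1$ and $g_0$ has $p$‑power order. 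Either way $s_0$ (suitably corrected) works, completing the induction.

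\textbf{Main obstacle.} The delicate point is exactly the case $c\equiv e\pmod p$: there $\bar e$ is an eigenvalue of $x$ on $Z_0$, correcting $s_0$ inside $Z_0$ cannot change $g_0$ at all, and one must instead exploit the global constraint $x^{d}=\mathrm{id}$ through the accumulation identity $x^{k}(s_0)=s_0^{e^{k}}g_0^{k e^{k-1}}$ to force the obstruction $g_0$ to vanish on its own. Making the normalization $e^{d}\equiv 1\pmod{\exp(Q)}$ explicit is what lets both this identity and the averaging in the base case close up; I expect the only real care needed beyond routine bookkeeping is in verifying that this normalization is available (and in checking the choices of $x$‑stable $Z_0$ in the inductive step, using semisimplicity of $E$ over $\mathbb{F}_p[\langle x\rangle]$).
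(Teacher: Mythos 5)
Your proof is correct, but it takes a genuinely different route from the paper's.  The paper begins with the reduction to $R=\langle r\rangle$ cyclic (replacing $Q$ by $\phi^{-1}(\langle r\rangle)$) and to $K=\ker\phi$ minimal among normal $x$-invariant subgroups; minimality forces $K$ to be central elementary abelian with $x$ acting irreducibly, hence $|K|=p$ since $\mathrm{ord}(x)\mid p-1$, and then $Q=\langle K,\,\text{lift of }r\rangle$ is automatically a rank-$\le 2$ abelian $p$-group, where the two remaining cases are disposed of in a line each.  You skip the reduction to cyclic $R$ and instead induct on $|Q|$ by stripping off a central $x$-stable subgroup $Z_0$ of order $p$, which obliges you to treat genuinely non-abelian $Q$ and leads to the eigenvalue dichotomy ($c\not\equiv e$ versus $c\equiv e$) and the accumulation identity $x^k(s_0)=s_0^{e^k}g_0^{ke^{k-1}}$; the paper's shortcut sidesteps all of that.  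What your version buys is a clean closed-form solution in the abelian case by Reynolds-type projection onto the $e$-eigenspace, and, more importantly, an explicit flagging of the normalization $e^{d}\equiv 1\pmod{\exp Q}$, which the paper leaves tacit.  That flag is warranted: read literally with an arbitrary integer $e$, the statement fails already for $Q=\mathbb{Z}/p^2\twoheadrightarrow R=\mathbb{Z}/p$ with $x$ acting by a nontrivial Teichm\"uller scalar $\alpha$ and $e\in\{1,\dots,p-1\}$ the residue of $\alpha$; the intended reading (as in the application to $\Theta(x)\in\mathbb{Z}_p^{*}$) is the Teichm\"uller one, exactly as you normalize.
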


\begin{proof}  There is no loss in assuming that $R$ is generated by $r$
(replace $R$ by this subgroup and $Q$ by the inverse image).  If $\phi$
factors through an intermediate group, the result follows by induction.
So we may assume that $K:=\mathrm{ker}(\phi)$ is a minimal normal $x$-invariant
subgroup of $Q$.  Thus, we may assume that $K$ is a central elementary
abelian $p$-subgroup of $Q$ with $x$ irreducible on $Q$.  Since $x$
has order dividing $p-1$, this forces $K$ to have order $p$. So either
$Q$ is cyclic of order $p^2$, in which case the result is clear, or else 
$Q$ is elementary abelian.  In that case, $Q$ is a completely reducible
$x$-module and so the result is also clear.
\end{proof}

\begin{lemma} \label{quotients}
If $G$ is a GM-group with respect to a character $\Theta$, then so is every subquotient.
\end{lemma}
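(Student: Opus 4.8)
The plan is to establish the statement in two steps---first for subgroups of a GM-group, then for quotients---since any subquotient $H/N$ (with $N$ normal in a subgroup $H\le G$) is a quotient of the subgroup $H$, and being a GM-group is an isomorphism invariant. Throughout write $G=P.C$ as in Definition~\ref{def:groovy}, with $B\le C$ the maximal subgroup of order dividing $p-1$ and $\Theta:B\to\mathbb{Z}_p^*$ a faithful character satisfying (a) and (b).

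\emph{Subgroups.} Let $H\le G$. A $p'$-subgroup of $G$ meets $P$ trivially, hence embeds in $G/P\cong C$ and is cyclic; by Lemma~\ref{lem:Gstruct} it is conjugate to a subgroup of $C$, so after replacing $H$ by a conjugate we may write $H=(H\cap P).C_H$ with $C_H\le C$ (Schur--Zassenhaus). Set $B_H=C_H\cap B$ and $\Theta_H=\Theta|_{B_H}$, which is still faithful. For condition (a): if $C_H\ne1$ then $\C_P(C_H)=\C_P(C)$---it lies between $\C_P(C)$ and $\C_P(c)=\C_P(C)$ for any $1\ne c\in C_H$, by (a) for $G$---so $\C_{H\cap P}(c)=\C_{H\cap P}(C_H)$ is a subgroup of the cyclic group $\C_P(C)$. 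For condition (b): given cyclic $T\le H\cap P$ with $\C_{C_H}(T)=1$ and $x\in\N_{B_H}(T)\subseteq\N_B(T)$, if $\C_C(T)=1$ this is exactly condition (b) for $G$; otherwise some $1\ne c\in\C_C(T)$ yields, by (a) for $G$, that $T\subseteq\C_P(c)=\C_P(C)$, so $C_H$ centralizes $T$, forcing $C_H=1$, hence $B_H=1$ and condition (b) vacuous.

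\emph{Quotients.} For $N$ normal in $G$, the third isomorphism theorem gives $G/N\cong\bigl(G/(N\cap P)\bigr)/\bar N$ with $\bar N=N/(N\cap P)$ a normal cyclic $p'$-subgroup of $G/(N\cap P)$, so it suffices to treat (IIa) $N$ a normal $p$-subgroup and (IIb) $N$ a normal cyclic $p'$-subgroup. For (IIb): such an $N$ is conjugate into $C$, hence (being normal) contained in $C$, so it centralizes $C$, while $[P,N]\subseteq P\cap N=1$ shows it centralizes $P$ too; thus $N$ is central in $G$, and if $N\ne1$, taking $1\ne c\in N$ and applying (a) gives $\C_P(C)=\C_P(c)=P$, so $G=P\times C$ is cyclic, hence $G/N$ is cyclic and thus a GM-group. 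For (IIa): write $\bar G=G/N=\bar P.\bar C$ with $\bar P=P/N$ and $\bar C\cong C$ (as $N\le P$), and, with $q:G\to\bar G$ the quotient, put $\bar B=q(B)$ (the maximal subgroup of $\bar C$ of order dividing $p-1$) and $\bar\Theta=\Theta\circ(q|_B)^{-1}$, which is faithful. Condition (a) for $\bar G$ follows from (a) for $G$ together with the coprime-action fact---used already in the proof of Lemma~\ref{lem:passage}---that $\C_P(c)$ surjects onto $\C_{\bar P}(\bar c)$ and $\C_P(C)$ onto $\C_{\bar P}(\bar C)$. Condition (b) for $\bar G$ is where Lemma~\ref{lem:liftx} enters: given cyclic $\bar T=\langle g\rangle\le\bar P$ with $\C_{\bar C}(\bar T)=1$ and $\bar x\in\N_{\bar B}(\bar T)$ with $\bar x g\bar x^{-1}=g^e$, lift $\bar x$ to $x\in B$; applying Lemma~\ref{lem:liftx} to the $x$-equivariant surjection $q|_P:P\to\bar P$ yields $s\in P$ with $q(s)=g$ and $xsx^{-1}=s^e$, so $T=\langle s\rangle$ is cyclic, $x\in\N_B(T)$, and $\C_C(T)=1$ (if $y\in C$ centralizes $s$ then $q(y)$ centralizes $g$, so $q(y)=1$, so $y=1$). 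Hence condition (b) for $G$ applies to $T$ and $x$, and pushing the identity $xwx^{-1}=w^{\Theta(x)}$ ($w\in T$) forward along $q$ gives $\bar x\bar z\bar x^{-1}=\bar z^{\bar\Theta(\bar x)}$ for all $\bar z\in q(T)=\bar T$.

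I expect the main obstacle to be case (IIa): the hypothesis for $\bar G$ supplies only $\C_{\bar C}(\bar T)=1$, and a naive preimage of $\bar T$ in $P$ need be neither cyclic nor normalized by a lift of $\bar x$, so condition (b) for $G$ is not directly applicable---Lemma~\ref{lem:liftx} is exactly what furnishes a cyclic lift $T$ of $\bar T$ with $x\in\N_B(T)$, after which checking $\C_C(T)=1$ and descending to $\bar T$ are routine. A recurring secondary subtlety, already visible for subgroups, is that the relevant centralizer-triviality hypothesis for a subquotient is weaker than $\C_C(T)=1$ for $G$; each time, one disposes of the remaining case by invoking (a): if any nontrivial element centralizes $T$, then all of $C$ does, so the relevant part of condition (b) holds vacuously.
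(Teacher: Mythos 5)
Your proof is correct and follows essentially the same route as the paper: subgroups are handled directly from Lemma~\ref{lem:Gstruct} (to conjugate a $p'$-complement into $C$) together with condition (a), and the quotient case splits into a $p'$-kernel (where, as in the paper, centrality of $N$ plus condition (a) force $G$ to be cyclic) and a normal $p$-subgroup $N\le P$, where Lemma~\ref{lem:liftx} furnishes the equivariant lift of a cyclic $\bar T\le\bar P$ needed to transfer condition (b). The one noticeable difference is organizational rather than conceptual: the paper inducts on $|G|$ to reduce to a minimal normal $N$, then uses the lower central series of $P$ to place $N$ inside $\C(P)$ before applying Lemma~\ref{lem:liftx}, whereas you use the third isomorphism theorem decomposition $G/N\cong\bigl(G/(N\cap P)\bigr)/\bar N$ and apply Lemma~\ref{lem:liftx} to an arbitrary normal $p$-subgroup $N\le P$ directly; this works because the lemma only needs an $x$-equivariant surjection, not a central kernel, so your version saves the induction and the lower-central-series step. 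You also write out the subgroup half, which the paper dismisses as ``clear''; your argument there (including the observation that if $\C_{C}(T)\ne 1$ then $C_H\le\C_{C_H}(T)=1$, making condition (b) vacuous) is the right one.
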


\begin{proof}  Keep the usual notation.  We induct on the order
of $G$.   It is clear that subgroups of GM groups are GM groups.
So it suffices to consider quotients by minimal normal non-trivial
subgroups $N$. Since $P$ is normal in $G$, either $N \cap P$ is
trivial and $N$ is of order prime to $p$ or $N \subset P$.  If $N$
has order prime to $P$, then $N.P$ must be the product group $N \times P$,
so $N$ and $P$ commute.  Hence $N$ is a subgroup of the cyclic group $C$
since all subgroups of $G$ of order prime to $p$ are conjugate by an element of 
$P$ to a subgroup of $C$.  Definition \ref{def:groovy}(a) now implies $P$
must be cyclic by choosing $c$ to be a non-trivial element of $N$, and $C$
must commute with $P$.  Hence $G$ is cyclic, and it follows that $G/N$
is GM.

Suppose now that $N$ is a minimal normal subgroup of $G$
contained in $P$.  On taking the intersection of $N$ with the lower
central series of $P$, we see that there is a non-trivial subgroup $\N_0$
of $N$ which is normal in $G$ such that the commutator group $[P,\N_0]$ is trivial. 
Since $N$ is a minimal normal subgroup of $G$, this implies $N = \N_0 \subset \C(P)$.
Because $C$ has order prime to $p$,
we see that if $1 \ne c \in C$, then Lemma \ref{lem:liftx} implies that 
$\C_{P/N}(c)=\C_P(c)N/N=\C_P(C)N/N=\C_{P/N}(C)$ is cyclic. 
So condition (a) of Definition~\ref{def:groovy} holds in $G/N$.  In a similar way,
Lemma \ref{lem:liftx} implies that condition (b) of 
Definition~\ref{def:groovy} holds for $G/N$ because it holds for $G$.
\end{proof}

\medbreak
\noindent {\bf Completion of the proof of Theorem \ref{thm:antiB}}
\medbreak

Suppose first that there is an injection $\phi_G: \to \mathrm{Aut}_k(k[[t]]$
having vanishing Bertin obstruction.  Let $\Theta_C$ be the inverse
of the Teichm\"uller lift of the character $\theta:C \to k^*$ appearing in Theorem \ref{thm:reducetop}.
Parts (b) and (d) of Theorem \ref{thm:reducetop} then show that $G$ is a GM group for $k$ 
with respect to the restriction $\Theta$ of $\Theta_C$ to the maximal subgroup $B$ of order dividing $p-1$.  

Suppose now that $G$ is  GM for $k$ with respect to $\Theta$.  Pick a faithful extension
$\Theta_C:C \to W(k)^*$ of $\Theta$ from $B$ to $C$.  
Let $M$ be a  positive integer.  By induction on the length of a composition series for $G$, we
can use Lemma \ref{def:dumdumdum}
 and Proposition \ref{prop:subquots} of Appendix 1 to construct
an injection $\phi_G: G \to \mathrm{Aut}_k(k[[z]])$ which is GM
with respect to $\Theta_C$ and such that  
\begin{equation}
\label{eq:iotanice}
\iota(T) \ge \iota(\Gamma) + M\quad \mathrm{and}\quad 
\iota(T) \equiv 0\ \mathrm{mod} \ p^M
\end{equation}
if $T$ is a non-trivial proper subgroup of the cyclic $p$-subgroup $\Gamma$
of $G$.  We will show that if $M$ is chosen to be sufficiently large, then
$\phi_G$ will satisfy all the conditions of Theorem \ref{thm:reducetop}.
This theorem will then imply that $\phi_G$ has vanishing Bertin obstruction,
and this will complete the proof of Theorem \ref{thm:antiB}.

Consider first condition (a) of Theorem  \ref{thm:reducetop}.  
By Theorem \ref{thm:nontrivcase},
\begin{eqnarray}
\label{eq:lower}
b_{T,P} &=& \frac{1}{[\N_P(T):T]} \sum_{\Gamma \in S_P(T)} \mu([\Gamma:T]) \iota(\Gamma)\nonumber\\
&=& \frac{1}{[\N_P(T):T]} \left ( \iota(T) + \sum_{T \ne \Gamma \in S_P(T)} \mu([\Gamma:T]) \iota(\Gamma)\right )
\end{eqnarray}
where $S_P(T)$ is the set of cyclic subgroups $\Gamma \subset P$
which contain $T$.  So by making $M$  sufficiently large, (\ref{eq:iotanice})
will 
insure that each such $b_{T,P}$ will be larger than any specified
integer and will be integral.  Thus the Bertin obstruction of the restriction of $\phi$ to $P$
vanishes by Proposition \ref{prop:special}(ii)(b), so hypothesis (a) of Theorem \ref{thm:reducetop}
holds.  We see also from this that since the constant $b'_{T,G}$ in Notation \ref{notn:Bprime} depends only on $G$, we can insure that the inequality in condition (c)(ii) of 
Theorem \ref{thm:reducetop} holds by making $M$ sufficiently large.

It remains to check conditions (b), (c)(i) and (d) of Theorem \ref{thm:reducetop}.

Concerning condition (b), let $t$ be a non-trivial element of $C$.  Then $C$ commutes
with $t$; so since $G = P.C$ we conclude that $\C_G(t) = \C_P(t).C$.  However,
$\C_P(c) = \C_P(C)$ is a cyclic $p$-group since $G$ is a GM group (see 
Definition \ref{def:groovy}).  Hence $\C_G(t) = \C_P(C) \times C$  is cyclic 
and condition (b) of Theorem \ref{thm:reducetop} holds.

Suppose now that $T$ is a non-trivial cyclic subgroup of $P$ as in conditions
(c) and (d) of Theorem \ref{thm:reducetop}.  Condition (c)(i) of this theorem holds
by part (a) of Theorem \ref{groovythm}.  If $\C_{C}(T)$ is not trivial, condition d(i) 
of Theorem \ref{thm:reducetop} holds by part (b) of Theorem \ref{groovythm}.  
Suppose now that $\C_{C}(T)$ is trivial.  The statements about $\theta$ in  condition d(ii) of Theorem \ref{thm:reducetop}
hold because we constructed  $\phi_G: G \to \mathrm{Aut}_k(k[[z]])$ to be GM
with respect to $\Theta$ in the sense of Proposition \ref{prop:subquots} of Appendix 1.
It remains to prove the congruence
$$b'_{T,G} = \sum_{P \not \supset \Gamma \in S(T)} \mu([\Gamma:T])  \equiv 0 \quad \mathrm{mod}\quad \# \N_{C}(T) \mathbb{Z}$$
required in part d(ii) of Theorem \ref{thm:reducetop}.  Part (c) of Theorem \ref{groovythm} shows
that either $\# \N_{C}(T) = 1$ or the sum defining $b'_{T,G}$ is empty; so this congruence holds
and the proof is complete.

\section{Examples and characterizations of GM groups.}
\label{s:exGM}
We begin with some examples.

\begin{thm}
\label{thm:GMexamples}  Let $G$ be  
the semi-direct product of a normal $p$-group $P$
by cyclic subgroup $C$ of order prime to $p$.  Let $B$ be the maximal subgroup
of $C$ of order dividing $p-1$.
\begin{enumerate}
\item[a.]  If $G$ is cyclic or a $p$-group then $G$ is a GM-group.
\item[b.] If $\#B \le 2$, and $C$ acts freely on the nontrivial
elements of $P$, then $G$ is a GM-group.
\item[c.]  $G$ is not a $GM$ group if has any of the following properties:
\begin{enumerate}
\item[i.] {\rm (Green-Matignon)} $G$ contains an abelian subgroup that is neither
cyclic nor a $p$-group;
\item[ii.]$P$ is elementary abelian of order
$p^2$, $C$ has order dividing $p-1$ and $C$ acts with two distinct
nontrivial eigenvalues on $P$.  
\item[iii.] $P$ is cyclic of order $p$, and  $C$ neither acts faithfully or trivially on $P$.
\item[iv.] $P$ is extraspecial of order $p^3$ and exponent $p$,
$C$ is cyclic, $\#C$ does not divide $p+1$, and $\C_P(C) = \C(P)$.
\end{enumerate}
\end{enumerate}
\end{thm}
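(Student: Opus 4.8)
For parts (a) and (b) I would simply unwind Definition~\ref{def:groovy}. If $G$ is a $p$-group then $C=\{e\}$, so $B=\{e\}$ and conditions (a) and (b) of Definition~\ref{def:groovy} are vacuous. If $G$ is cyclic then $G=P\times C$, so $\C_P(c)=P=\C_P(C)$ is cyclic for every $c\in C$ (condition (a)), while condition (b) concerns only cyclic $T\subseteq P$ with $\C_C(T)=\{e\}$, which (as $C$ centralizes $P$) forces $C=\{e\}$ and is vacuous. For (b): $\#B\le 2$ makes $\Theta$ unique, so condition (b) of Definition~\ref{def:groovy} is vacuous as noted there, and freeness of the $C$-action on the nontrivial elements of $P$ gives $\C_P(c)=\{e\}=\C_P(C)$ for every $1\ne c\in C$, establishing condition (a).

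For part (c), in each case the plan is to assume $G$ is GM with respect to a faithful character $\Theta\colon B\to\mathbb{Z}_p^{*}$ and to contradict Definition~\ref{def:groovy}. I would use two recurring facts: since $\#C$ is prime to $p$, every short exact sequence of $\mathbb{F}_p[C]$-modules splits (Maschke), so a $C$-stable subspace of an abelian $C$-invariant section of $P$ admits a $C$-stable complement; and a faithful $\Theta$ reduces modulo $p$ to an injective character $B\to\mathbb{F}_p^{*}$, so characters of $B$ agreeing mod $p$ coincide. The mechanism is: a nontrivial $c\in C$ with $\C_P(c)$ non-cyclic, or with $\C_P(c)$ properly containing $\C_P(C)$, violates Definition~\ref{def:groovy}(a), while two distinct conjugation-characters on $C$-stable cyclic subgroups of $P$, each forced by Definition~\ref{def:groovy}(b) to equal $\Theta\bmod p$, give a contradiction.

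I would dispatch the first three parts of (c) as follows. For (iii): the kernel $K$ of $C\to\aut(C_p)=\mathbb{F}_p^{*}$ is nontrivial and proper, so a generator of $C$ outside $K$ shows $\C_P(C)=\{e\}$, while any $1\ne c\in K$ has $\C_P(c)=P\ne\{e\}$, contradicting (a). For (i): since $G$ has cyclic Sylow $\ell$-subgroups for all primes $\ell\ne p$, the prime-to-$p$ part $A_{p'}$ of the abelian subgroup $A$ is cyclic, and it is nontrivial while the $p$-part $A_p$ is non-cyclic (as $A$ is not cyclic); thus $A=A_p\times A_{p'}$ is itself a semidirect product of a normal $p$-group by a cyclic prime-to-$p$ group in which $A_{p'}$ centralizes the non-cyclic $A_p$, so $A$ is not GM, and hence neither is $G$ since subgroups of GM groups are GM (Lemma~\ref{quotients}); alternatively one may invoke Corollary~\ref{cor:greenMat} together with Theorem~\ref{thm:antiB}, or Corollary~\ref{cor:flipit}. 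For (ii): here $B=C$, and $C$ acts on $P=\mathbb{F}_p^{2}$ diagonally through distinct nontrivial characters $\chi_1,\chi_2\colon C\to\mathbb{F}_p^{*}$ with eigenlines $L_1,L_2$, so $\C_P(C)=0$; if some nontrivial element lies in $\ker\chi_1\cap\ker\chi_2$, or (when that intersection is trivial) in a single $\ker\chi_i$, then its centralizer in $P$ is $P$ or $L_j$ and properly contains $\C_P(C)$, contradicting (a); otherwise both $\chi_i$ are faithful, each $L_i$ is a cyclic subgroup of $P$ with $\N_C(L_i)=C$ and $\C_C(L_i)=\ker\chi_i=\{e\}$, so Definition~\ref{def:groovy}(b) forces $\chi_1\equiv\Theta\equiv\chi_2\pmod p$ and hence $\chi_1=\chi_2$, a contradiction.

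The principal obstacle is (c)(iv). Write $Z=\C(P)\cong C_p$ for the center of the extraspecial group $P$; exponent $p$ forces $p$ odd, and $P/Z\cong\mathbb{F}_p^{2}$ carries the nondegenerate alternating commutator pairing valued in $Z$. Since $Z\subseteq\C_P(C)$, the group $C$ acts trivially on $Z$ and hence preserves this pairing, giving $\rho\colon C\to\SL_2(\mathbb{F}_p)$ (a $2\times2$ matrix preserving a nondegenerate alternating form has determinant $1$). A cocycle computation shows every element of $K=\ker\rho$ induces a homomorphism $P/Z\to Z$, which must vanish because $\#K$ is prime to $p$, so $K$ acts trivially on $P$; if $K\ne\{e\}$, a nontrivial $c\in K$ has $\C_P(c)=P$, not cyclic, contradicting Definition~\ref{def:groovy}(a). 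If $K=\{e\}$, then $C$ is a cyclic subgroup of $\SL_2(\mathbb{F}_p)$ of order prime to $p$, so $\#C$ divides $p-1$ or $p+1$; since $\#C$ does not divide $p+1$, we get $\#C\mid p-1$, hence $B=C$, and $C$ acts on $P/Z$ diagonalizably as $\mathrm{diag}(\chi,\chi^{-1})$ with $\chi\colon C\to\mathbb{F}_p^{*}$ faithful. Splitting the two rank-two abelian sections $\pi^{-1}(L_1),\pi^{-1}(L_2)$ of $P$ over $C$ (the lines $L_i$ being isotropic, being one-dimensional) yields cyclic subgroups $T_1,T_2\subseteq P$ on which $C$ acts through $\chi$ and $\chi^{-1}$, with $\N_C(T_i)=C$ and $\C_C(T_i)=\ker\chi=\{e\}$; Definition~\ref{def:groovy}(b) then forces $\chi\equiv\Theta\equiv\chi^{-1}\pmod p$, so $\chi^2=1$ and $\#C\mid 2$, hence $\#C\mid p+1$ since $p$ is odd, contradicting the hypothesis. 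The delicate points will be the classification of cyclic prime-to-$p$ subgroups of $\SL_2(\mathbb{F}_p)$ (orders dividing $p-1$ or $p+1$), the vanishing of the cocycle that makes $K$ act trivially on $P$, and the application of Maschke's theorem to obtain the $C$-stable lifts $T_1,T_2$.
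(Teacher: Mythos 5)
Your proof is correct, and the broad strategy coincides with the paper's for (a), (b), and (c)(i)--(iii); you are somewhat more careful than the paper in (c)(ii), where the paper tersely asserts that (c)(ii) contradicts condition (b) of Definition~\ref{def:groovy}, while you correctly note that when a nontrivial element of $C$ acts with a fixed vector on $P$ it is condition (a) that fails, and condition (b) is only contradicted in the subcase where both eigencharacters are faithful. The real divergence is in (c)(iv). The paper, after establishing (essentially as you do, by coprime action) that $C$ embeds faithfully into $\SL_2(\mathbb{F}_p)$ acting on $P/\C(P)$ and that $\#C$ divides $p-1$, passes to the quotient $G/\C(P) \cong E.C$ and applies Lemma~\ref{quotients} together with part (c)(ii) already proved; the contradiction is then extracted from $\phi_1\phi_2 = 1$. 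You instead stay inside $G$: you use Maschke's theorem to split $Z = \C(P)$ off the $C$-stable abelian preimages $\pi^{-1}(L_i)$, producing $C$-stable cyclic subgroups $T_1, T_2 \subset P$ of order $p$ with $\N_C(T_i) = C$, $\C_C(T_i) = \{e\}$, and conjugation characters $\chi$ and $\chi^{-1}$, and then apply Definition~\ref{def:groovy}(b) to each $T_i$ directly to force $\chi = \chi^{-1}$. Both routes reach $\chi^2 = 1$, whence $\#C \mid 2 \mid p+1$, and both are valid; yours is marginally more self-contained (no detour through the quotient or through part (c)(ii)), while the paper's has the economy of reusing the already-proved (c)(ii) as a black box after reducing to the elementary abelian situation.
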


\begin{proof} Parts (a) and (b) follow directly from  \ref{def:groovy}.  If conditions (i) (resp. (ii), resp. (iii))
of part (c) hold, then condition (a) (resp. (b), resp. (a)) of Definition \ref{def:groovy} 
does not hold.  Suppose now that condition (iv) of part (c) holds but that $G$ is a GM group.   

Let us first show that $C$ must act faithfully on $P/\C(P)$.  Suppose to the contrary that
$c \in C$ is non-trivial and acts trivially on $P/\C(P)$.
Because $c$ commutes with $\C_P(C) = \C(P)$ and has  order prime to $p$,
$c$ must act trivially on $P$.  Then $P = \C_P(c) = \C_P(C)$ by part (a) of Definition \ref{def:groovy},
which contradicts the assumption that $\C_P(C) = \C(P)$ in part (iv).  Therefore $C$ must act faithfully on $P/\C(P)$.  

We have
$\mathrm{dim}_{\mathbb{Z}/p} P/\C(P) = 2$, and $\C(P) \cong \wedge^2(P/\C(P))$ as a $C$-module.
We have assumed in (iv) of part (c) that the action of $C$ on $\C(P)$ is trivial, so
the determinant of the action of $C$ on $P/\C(P)$ is trivial.  Let $c_0$ be a generator of $C$.  The characteristic polynomial
of the action of $c_0$  on the two-dimensional  $\mathbb{Z}/p$-vector space $P/\C(P)$
thus has the form $X^2 - aX + 1$ for some $a \in \mathbb{Z}/p$.  If this polynomial does not
split over $\mathbb{Z}/p$, its roots have multiplicative order dividing $p+1$.  Since the action of $C$ on 
$P/\C(P)$ is semi-simple, this would force the order of $C$ to divide  $p+1$, contradicting one of the assumptions in (iv).  
Therefore $X^2 - aX + 1$ splits over $\mathbb{Z}/p$. We conclude that 
as a representation of $C$ over $\mathbb{Z}/p$, 
$P/\C(P)$ must be isomorphic to the direct sum of two characters $\phi_1$ and $\phi_2$
over $\mathbb{Z}/p$.
Thus
$\# C$ divides $p-1$.  Now  Lemma \ref{quotients} and part (ii) imply that either $\phi_2 = \phi_1$ or
we can order $\phi_1$ and $\phi_2$ so that 
$\phi_2$ is trivial.    
The action of
$C$ on $\C(P) \cong \wedge^2(P/\C(P))$ is given by the character $\phi_1 \cdot \phi_2$, and we
assumed this action is trivial in part (iv).  Thus $\phi_1 = \phi_2^{-1}$.
If $\phi_1 = \phi_2$ then $\phi_1$ and $C$ have order $2$. However,
we assumed that $\# C$ does not divide $p+1$, so $\# C = 2$ would
force $p =2$, which is impossible since $\# C$ is prime to $p$.  
Thus $\phi_1 = \phi_2^{-1}$ and $\phi_2$ are distinct 
characters of $C$.  Part  (ii) now shows that $G/\C(P)$ is not a  GM group,
so $G$ is not a GM group by Lemma \ref{quotients}.
\end{proof}

In fact, we now show that GM groups can be characterized
as those groups of the form $PC$ which do not contain subgroups
of the form in Theorem \ref{thm:GMexamples}(c).

\begin{thm} \label{GM by subs}
Let $G=PC$ be a group with $P$ the normal Sylow $p$-subgroup
of $G$ with $C$ cyclic of order prime to $p$.  Then $G$ is
a GM group if and only if it has no subgroup of the following types:
\begin{enumerate}
\item $\mathbb{Z}/p \times \mathbb{Z}/p \times \mathbb{Z}/r$, with
$r$ a prime distinct from $p$;
\item $QE$ where $Q$ is of order $p$, $E$ is cyclic of order prime
to $p$ and $E$ acts neither faithfully nor trivially on $Q$; 
\item  $QE$ where $Q$ is elementary abelian of order $p^2$,
$E$ is cyclic of order dividing $p-1$, $\C_E(Q)=1$ and $E$ does not
act like a scalar on $Q$; or
\item $QE$ where $Q$ is extraspecial of exponent $p$ and order 
$p^3$, $E$ is cyclic of   order $e$  with $e$ not dividing 
$p+1$ and $\C_Q(E)=\C(Q)$.
\end{enumerate}
\end{thm}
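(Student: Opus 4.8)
The plan is to prove the two implications separately. The forward direction is immediate: each of the four types fails to be a GM group — type (1) by Theorem \ref{thm:GMexamples}(c)(i), type (2) by (c)(iii), type (3) by (c)(ii), type (4) by (c)(iv) — while subgroups of GM groups are again GM groups by Lemma \ref{quotients}; hence a GM group has no subgroup of these types. For the converse, assume $G=PC$ has no subgroup of types (1)--(4) and write $D=\C_P(C)$; I must exhibit a faithful character $\Theta\colon B\to\mathbb{Z}_p^{*}$ satisfying conditions (a) and (b) of Definition \ref{def:groovy}. Condition (a) is checked first: for $1\ne c\in C$ the group $\C_P(c)$ is cyclic (a non-cyclic $p$-group contains $(\mathbb{Z}/p)^2$, which with a prime-order power of $c$ gives a type (1) subgroup), and $\C_P(c)=D$ because a generator $c_0$ of $C$ normalizes the cyclic group $\C_P(c)$ and acts on $\Omega_1(\C_P(c))$; a nontrivial such action would be either nonfaithful — a type (2) subgroup — or faithful, contradicting that $c$ (a power of $c_0$) centralizes $\C_P(c)$. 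A consequence is that for any $T\le P$ one has $\C_C(T)=C$ if $T\subseteq D$ and $\C_C(T)=1$ otherwise.

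Next I would dispose of the easy cases: if $P=1$, $C=1$, or $C$ acts trivially on $P$, then $G$ is cyclic or a $p$-group and we are done by Theorem \ref{thm:GMexamples}(a); and if $\#B\le 2$ then $\Theta$ is forced and condition (b) is vacuous. So one may assume $p\ge 5$, $\#B\ge 3$, and $C$ acting nontrivially on $P$, so $D\subsetneq P$. For condition (b): since $\aut(T)\to\aut(\Omega_1(T))$ has a $p$-group kernel, it suffices to treat $T=\langle g\rangle$ of order $p$ with $g\notin D$; for such $g$, $\C_C(\langle g\rangle)=1$, so $\N_C(\langle g\rangle)$ acts faithfully on $\langle g\rangle$, whence $\N_C(\langle g\rangle)\le B$ and the conjugation action is a faithful character $\chi_{\langle g\rangle}\colon \N_C(\langle g\rangle)\hookrightarrow\mathbb{Z}_p^{*}$. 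I must produce a single faithful $\Theta\colon B\to\mathbb{Z}_p^{*}$ restricting to every $\chi_{\langle g\rangle}$.

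The main case is $V:=\Omega_1(Z(P))\not\subseteq D$. Here $B$ acts on the elementary abelian group $V$, with fixed space $V\cap D$; by condition (a) every nontrivial $c\in B$ fixes exactly $V\cap D$, so if two $B$-eigenvectors in $V$ had distinct nontrivial eigenvalues the plane they span would give a type (3) subgroup. Since a cyclic group is never a union of proper subgroups, this forces a unique nontrivial character $\Theta_0\colon B\to\mathbb{F}_p^{*}$ to occur on $V$, and $\Theta_0$ is faithful (a nontrivial element of its kernel would centralize all of $V$). Take $\Theta$ to be the Teichm\"uller lift of $\Theta_0$. To verify condition (b) for $g$ of order $p$ with $g\notin D$ and $1\ne c\in\N_C(\langle g\rangle)\le B$, choose a $c$-eigenvector $v\in V$ with eigenvalue $\Theta_0(c)\ne 1$; either $\langle g\rangle=\langle v\rangle$, or else $v$ is central in $P$, $\langle g,v\rangle\cong(\mathbb{Z}/p)^2$ is $\langle c\rangle$-invariant with both eigenvalues nontrivial, and type (3)-avoidance forces $\chi_{\langle g\rangle}(c)=\Theta_0(c)$. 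This settles the main case using only types (1)--(3).

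The hard part is the remaining case $\Omega_1(Z(P))\subseteq D$, i.e. $Z(P)$ cyclic, where the device above is unavailable. Here I would pass to a minimal $C$-invariant subgroup $N\le P$ with $N\not\subseteq D$; then $N/\Phi(N)$ is $C$-uniserial with simple top $N/(D\cap N)$, on which $C$ acts nontrivially and semiregularly. The real work is to rule out the non-abelian possibilities for $N$ or extract a forbidden subgroup: the key subcase is $N$ extraspecial of exponent $p$ and order $p^3$ with $\C_N(C)=\C(N)$, which either yields a type (4) subgroup (when the acting cyclic group has order not dividing $p+1$) or forces $\#B\le 2$ (the acting group embeds in a nonsplit torus and $\gcd(p-1,p+1)=2$), returning to an already-handled case; more general non-abelian $N$ reduce to this via $\Omega_1(Z(N))$ and commutator sections. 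Once $C$ is shown to act on the relevant section through a single character, $\Theta$ is recovered as before. Finally, having made the family $\{\chi_{\langle g\rangle}\}$ pairwise compatible with all domains inside the cyclic group $B$, I would glue it on the (cyclic) subgroup they generate — faithful there since each prime's Sylow subgroup lies in some $\N_C(\langle g\rangle)$ — and extend to a faithful $\Theta\colon B\to\mathbb{Z}_p^{*}$ using surjectivity of $(\mathbb{Z}/\#B)^{\times}\to(\mathbb{Z}/\nu)^{\times}$ for $\nu\mid\#B$; by Definition \ref{def:groovy}, $G$ is then a GM group.
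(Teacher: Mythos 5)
Your forward implication and your derivation of condition (a) of Definition~\ref{def:groovy} from the absence of types (1)--(2) are both correct and essentially match the paper (Lemma~\ref{quotients}, Theorem~\ref{thm:GMexamples}(c)).  Your ``main case'' $V := \Omega_1(\C(P)) \not\subseteq D$ is a genuine alternative to the paper's argument, which never singles out the center: extracting a unique faithful character $\Theta_0$ of $B$ from the $B$-action on $V$ via type-(3) avoidance, and then comparing an arbitrary order-$p$ element $g\notin D$ to a central eigenvector $v$ (again using type (3)) to force $\chi_{\langle g\rangle}(c)=\Theta_0(c)$, is correct and uses only types (1)--(3).

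The hard case $\Omega_1(\C(P)) \subseteq D$, however, has a genuine gap, and the sketched repair cannot work as stated.  You propose passing to a \emph{minimal} $C$-invariant subgroup $N\le P$ with $N\not\subseteq D$ and reducing to $N$ extraspecial of exponent $p$.  But such a minimal $N$ is automatically cyclic: since $\#C$ is prime to $p$, the $\mathbb{F}_p[C]$-module $N/\Phi(N)$ is completely reducible, say $N/\Phi(N)=\bigoplus_i V_i$ with each $V_i$ one-dimensional; if there were at least two summands, the preimages $M_i$ in $N$ of $\bigoplus_{j\ne i}V_j$ would be proper $C$-invariant subgroups of $N$, hence contained in $D$ by minimality, and then $N=M_1M_2\subseteq D$ since $D=\C_P(C)$ is a subgroup --- a contradiction.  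So minimality forces $N/\Phi(N)$ one-dimensional, i.e.\ $N$ cyclic, and the extraspecial structure you gesture at never materializes from this $N$.  (Relatedly, ``$N/\Phi(N)$ is $C$-uniserial'' is, in this coprime setting, just a restatement of one-dimensionality and does not supply the needed compatibility of the various $\chi_{\langle g\rangle}$.)  The closing gluing step --- that the $\chi_{\langle g\rangle}$ agree on overlaps and that every Sylow subgroup of $B$ lies in some $\N_C(\langle g\rangle)$ --- is asserted but not argued in this case; that is exactly the content the hard case must establish, and it is where the proof currently stops.

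The paper avoids the whole center dichotomy by working directly with the Frattini quotient $P/\Phi(P)$.  Starting from a minimal counterexample and a $B$-invariant filtration with cyclic order-$p$ sections (Lemma~\ref{filtration}), it shows at least two distinct nontrivial eigencharacters must occur (claim~(\ref{eq:ohbaby})), pulls back two eigenlines of $P/\Phi(P)$ to $C$-stable normal subgroups $P_1,P_2$, and then splits into cases on $P_1\cap P_2$.  When $P_1\cap P_2\ne 1$, the commutator identity $z^e=[x^e,y]$ of Lemma~\ref{commutator} is used to manufacture elements $x_1,x_2$ of order $p$ with central commutator of order $p$, producing an extraspecial exponent-$p$ subgroup of order $p^3$ on which $C$ acts as in type (4).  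That commutator construction is the missing ingredient in your hard case, and it is applied to the two pulled-back eigenlines rather than to any single minimal $N$.
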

We require the following lemmas, the first of which is an exercise beginning
with the definition $[x,y]=x^{-1}y^{-1}xy$. 

\begin{lemma} \label{commutator}  If $H$ is a group 
and $z=[x,y]$ commutes with $x$ for some $x, y, z \in H$, then $z^e=[x^e,y]$.
If $z$ commutes with both $x$ and $y$, then
$[x^e,y^f]=z^{ef}$.
\end{lemma}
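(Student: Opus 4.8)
The plan is a direct manipulation of the commutator identity, exploiting the hypothesis that $z$ slides past powers of $x$. First I would record the basic rearrangement: from $z=[x,y]=x^{-1}y^{-1}xy$ one gets $y^{-1}xy=xz$. Raising this to the $e$-th power and using that $z$ commutes with $x$ gives
\begin{equation*}
y^{-1}x^e y=(y^{-1}xy)^e=(xz)^e=x^e z^e,
\end{equation*}
and multiplying on the left by $x^{-e}$ yields $[x^e,y]=x^{-e}y^{-1}x^e y=z^e$. This proves the first assertion.

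For the second assertion, I would iterate the first one in the remaining variable, using the elementary identity $[a,b]^{-1}=[b,a]$. Apply the first part to the triple $(y,x^e,[y,x^e])$: here $[y,x^e]=[x^e,y]^{-1}=z^{-e}$ by the computation just made, and $z^{-e}$ commutes with $y$ because $z$ does, so the hypothesis of the first part is met with $y$ playing the role of the first commutator slot. The conclusion is $(z^{-e})^f=[y^f,x^e]$, i.e.\ $[y^f,x^e]=z^{-ef}$. Taking inverses gives $[x^e,y^f]=z^{ef}$, as claimed.

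There is no real obstacle here — everything is a finite sequence of rewritings in the group — and the only points that genuinely use the stated hypotheses are (i) the step $(xz)^e=x^e z^e$, which needs $z$ to commute with $x$, and (ii) the remark that $z^{-e}$ commutes with $y$, which needs $z$ to commute with $y$. The one thing to be careful about is bookkeeping: the first part is stated with the commuting element in the first commutator slot, so when re-applying it for the second variable one must pass through $[y,x^e]=[x^e,y]^{-1}$ to line the roles up correctly.
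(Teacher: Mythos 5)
Your proof is correct, and since the paper simply declares this lemma "an exercise beginning with the definition $[x,y]=x^{-1}y^{-1}xy$," your direct computation is exactly the intended argument: the identity $y^{-1}xy=xz$ raised to the $e$-th power gives the first claim, and re-applying it in the second slot (via $[y,x^e]=[x^e,y]^{-1}=z^{-e}$) gives the second. The bookkeeping through the inverse-commutator identity is handled correctly, so nothing further is needed.
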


\begin{lemma}  \label{filtration}
Let $Q$ be a $p$-group with $B = \langle b \rangle$ 
a  group of order
dividing $p-1$ acting on $Q$.  There is a filtration:
$$
1 = Q_0 < Q_1 < \cdots < Q_m = Q,
$$
such that:
\begin{enumerate}
\item[a.] each $Q_i$ is normal in $Q$ and $B$-invariant; 
\item[b.] each quotient $Q_i/Q_{i-1}$ is cyclic of order $p$;
\item[c.] There is a unique root of unity $e_i$ of order dividing $p-1$
in $\mathbb{Z}_p$ such that there is an element $x_i \in Q_i$ 
for which $x_i Q_{i-1}$ generates $Q_i/Q_{i-1}$ and  $bx_ib^{-1}=x_i^{e_i}$,
where $x_i^{e_i}$ is well defined because $x_i$ has non-trivial 
$p$-power order.  
\end{enumerate}
\end{lemma}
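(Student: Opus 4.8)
The plan is to induct on $|Q|$, the case $Q=1$ being vacuous (take $m=0$); so assume $Q \neq 1$. The first step is to split off a suitable bottom layer $Q_1$. Since $Q$ is a nontrivial $p$-group, its center $\C(Q)$ is nontrivial, so the subgroup $V$ of $\C(Q)$ consisting of the elements of order dividing $p$ (that is, $\Omega_1(\C(Q))$) is a nontrivial elementary abelian subgroup that is central in $Q$ and characteristic, hence stable under the action of $B$. Viewing $V$ as a module over the group algebra $\mathbb{F}_p[B]$, and using that $|B|$ divides $p-1$ so that $\mathbb{F}_p$ contains a primitive $|B|$-th root of unity, $\mathbb{F}_p[B]$ is a product of copies of $\mathbb{F}_p$ and every nonzero $\mathbb{F}_p[B]$-module has a one-dimensional submodule. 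I would choose such a submodule $Q_1 \subseteq V$. Then $Q_1$ is central in $Q$ (in particular normal), $B$-invariant, and cyclic of order $p$; conjugation by $b$ acts on $Q_1$ as multiplication by some $\bar\varepsilon \in (\mathbb{Z}/p)^{*}$, and I take $e_1 \in \mathbb{Z}_p$ to be the Teichm\"uller lift of $\bar\varepsilon$, i.e.\ the unique root of unity of order dividing $p-1$ reducing to $\bar\varepsilon$ mod $p$. For any generator $x_1$ of $Q_1$ one then has $bx_1b^{-1}=x_1^{e_1}$, the right side being unambiguous since $x_1$ has order $p$.

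Next I would pass to $\overline{Q}=Q/Q_1$, which inherits a $B$-action because $Q_1$ is $B$-invariant and has smaller order. By the inductive hypothesis there is a filtration $1=\overline{Q}_0<\overline{Q}_1<\cdots<\overline{Q}_{m-1}=\overline{Q}$ with each $\overline{Q}_j$ normal in $\overline{Q}$ and $B$-invariant, each $\overline{Q}_j/\overline{Q}_{j-1}$ cyclic of order $p$, and, for each $j$, a root of unity $f_j \in \mathbb{Z}_p$ of order dividing $p-1$ together with $\overline{y}_j \in \overline{Q}_j$ such that $\overline{y}_j\overline{Q}_{j-1}$ generates $\overline{Q}_j/\overline{Q}_{j-1}$ and $b\overline{y}_jb^{-1}=\overline{y}_j^{\,f_j}$. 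Let $\pi\colon Q \to \overline{Q}$ be the quotient map; put $Q_0=1$, keep $Q_1$ as above, and set $Q_{j+1}=\pi^{-1}(\overline{Q}_j)$ for $1\le j\le m-1$, so that $Q_m=Q$. Since preimages under the $B$-equivariant surjection $\pi$ of normal (resp.\ $B$-invariant) subgroups are again normal (resp.\ $B$-invariant), property (a) holds; and $Q_{j+1}/Q_j \cong \overline{Q}_j/\overline{Q}_{j-1}$ by the third isomorphism theorem (note $Q_j=\pi^{-1}(\overline{Q}_{j-1})$), so property (b) holds.

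The one genuinely delicate point is property (c) for $i\ge 2$: it is not enough to lift $\overline{y}_j$ to an element of $Q$ conjugating correctly modulo $Q_{j-1}$; one needs an honest relation $bx_{j+1}b^{-1}=x_{j+1}^{\,e_{j+1}}$ in $Q$. This is exactly what Lemma~\ref{lem:liftx} delivers, applied to the $p$-group $Q_{j+1}$, the automorphism $x$ given by conjugation by $b$ (whose order divides $|b|$, hence divides $p-1$), the $x$-equivariant surjection $Q_{j+1}\to Q_{j+1}/Q_1=\overline{Q}_j$, and the element $\overline{y}_j$ with $x(\overline{y}_j)=\overline{y}_j^{\,f_j}$: it produces $x_{j+1}\in Q_{j+1}$ with $\pi(x_{j+1})=\overline{y}_j$ and $bx_{j+1}b^{-1}=x_{j+1}^{\,f_j}$. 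Setting $e_{j+1}:=f_j$, the image of $x_{j+1}Q_j$ under the isomorphism $Q_{j+1}/Q_j\cong\overline{Q}_j/\overline{Q}_{j-1}$ is $\overline{y}_j\overline{Q}_{j-1}$, a generator, so (c) holds for $i=j+1$; the case $i=1$ was handled in the first step. Uniqueness of each $e_i$ follows by reducing $bx_ib^{-1}=x_i^{e_i}$ modulo $Q_{i-1}$: there $b$ acts on $Q_i/Q_{i-1}\cong\mathbb{Z}/p$ as multiplication by a scalar in $(\mathbb{Z}/p)^{*}$ independent of all choices, and this scalar determines $e_i$ among roots of unity of order dividing $p-1$ by uniqueness of Teichm\"uller lifts. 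I expect the only real care to be needed in tracking the index shift between the filtrations of $Q$ and of $\overline{Q}$, which is routine; the substantive input is the previously established Lemma~\ref{lem:liftx}.
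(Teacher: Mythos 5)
Your proof is correct and follows essentially the same route as the paper's: find a $B$-invariant central subgroup $Q_1$ of order $p$ by decomposing $\Omega_1(\mathrm{C}(Q))$ into one-dimensional $\mathbb{F}_p[B]$-modules, induct on $Q/Q_1$, and lift generators with the correct conjugation relation via Lemma~\ref{lem:liftx}. You have simply spelled out in detail the steps the paper compresses into a few lines, including the index bookkeeping and the uniqueness of $e_i$ via Teichm\"uller lifts.
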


\begin{proof}  The representation of $B$ on the subgroup of $\C(Q)$ of elements of order $1$ or $p$ splits as a product of one-dimensional representations because the order of $B$ divides $p-1$.  So there is a central subgroup
$Q_1$ of $Q$ of order $p$, invariant under
$B$.  By induction, the result holds for $Q/Q_1$.  Now apply Lemma \ref{lem:liftx}.
\end{proof}

Note that one can modify the proof so that the filtration will pass
through any given normal subgroup of $Q$ that is $B$-invariant.

\medskip

\noindent {\bf Proof of Theorem \ref{GM by subs}:}  Having no
subgroup of the form (1) or (2) is equivalent to the condition 
that if $1 \ne c \in C$, then $\C_P(C)=\C_P(c)$ is cyclic.  
This is the condition (a) in the definition of GM groups (see Definition \ref{def:groovy}).

So it suffices to show that if $G$ satisfies condition (a) of Definition \ref{def:groovy}, then it is a 
GM group if and only if it does not contain a subgroup as in (3)  or (4).  
By condition (b) of Definition \ref{def:groovy}, a GM group
has no subgroups as in (3) or (4).  (For (4), see Theorem~\ref{thm:GMexamples}.c(iv) and Lemma \ref{quotients}.)
Thus it remains to show that if $G$ is not
a GM group, it contains such a subgroup.  

So assume that $G$ is not
a GM group but satisfies condition (a) of Definition \ref{def:groovy}.  Therefore condition (b) of Definition \ref{def:groovy} does not hold.  By passing to  counterexample of minimal order, 
we may assume that  $C$ has order at least $3$ and dividing $p-1$.  We will
use in what follows that fact that since $G$ is a minimal order counterexample, every subquotient
of $G$ which is not $G$ itself must be a GM group because of Lemma \ref{quotients}.
In particular condition (b) of Definition \ref{def:groovy} holds for all proper subquotients of $G$
but not for $G$ itself.

Let $Q$ be a $C$-stable subquotient of $P$, which may equal $P$.  
Let $B = C$ in Lemma \ref{filtration}, and let $1 = Q_0 < Q_1 < \cdots < Q_m = Q
$,  the $x_i$
and the $e_i$ be as in this lemma. If there are indices $i \ne j$ such that $e_i$,
$e_j$ and $1$ are distinct, then $C$ acts on $x_i$ and $x_j$ via distinct
non-trivial characters, so that $C.Q$ cannot be a GM-group because
this violates condition (b) of Definition \ref{def:groovy}.  Thus if $Q$ is
a proper subquotient of $P$, there is at most one $e_i$ different from $1$;
we let $e(Q)$ be this $e_i$ if it exists, and we let $e(Q) = 1$ otherwise. 
We claim:
\begin{equation}
\label{eq:ohbaby}
\mathrm{If}\quad Q = P\quad \mathrm{then \ at \ least \ two \ distinct}\quad e_i\quad \mathrm{are \ different \ from} \quad 1.
\end{equation}
 Suppose to the contrary that $Q = P$ and 
that there is at most one $e_i$ which is different
from $1$.  It will suffice to show that condition (b) of Definition
\ref{def:groovy} holds, since this will be a contradiction.  If all the $e_i$ equal $1$, then $C$ commutes
with all the $x_i$ and thus with $Q = P$, so $\C_P(C) = P$ and condition
(b) holds automatically.  If all the $e_i$ which are different from $1$ are equal and
there is at least one such $e_i$, we 
define $\Theta: B \to \mathbb{Z}_p^*$
by $\Theta(b) = b^{e_i}$ for any such $e_i$, where we have set $B = C$.  If $T$ is  cyclic subgroup of $P$ such
that $\C_{C}(T)$ is trivial, then by considering the smallest $i$ such that $T \subset Q_i$
and the image of $T$ in $Q_i/Q_{i-1}$ we see that condition (b) of Definition \ref{def:groovy}
holds for $T$.  This contradiction proves  the claim (\ref{eq:ohbaby}).

Let $P^p [P,P]$ be the Frattini subgroup of $P$, and let $P(p) = P/(P^p [P,P])$ be the $p$-Frattini
quotient.  If $P(p)$ is cyclic, then $P$ is cyclic, and the action of $C$ on $P$ must be through
a single character, contrary to the fact that we have shown there must be at least two distinct
$e_i$ different from $1$ when $Q = P$.  Thus $P(p)$ is a $\mathbb{Z}/p$-vector space
of dimension at least $2$, and the action of $C = B$ on $P(p)$ can be diagonalized
over $\mathbb{Z}/p$ since $\# C$ divides $p-1$. By pulling back two $C$-eigenspaces, we
conclude that there are $C$-stable normal subgroups $P_1$ and $P_2$ in $P$
such that $P/(P_1 \cap P_2)$ is elementary abelian of order $p^2$ and isomorphic
as a $C$-module to a sum of two characters $\phi_1$ and $\phi_2$ of $P$. 

 If
$P_1 \cap P_2$ is trivial, so that $P = P/(P_1 \cap P_2)$, we have seen that
$\phi_1$ and $\phi_2$ must be distinct and non-trivial (since there are at least two
distinct $e_i$ which are different from $1$).  In this case $G = PC$ satisfies
the conditions in part (3) of Theorem \ref{GM by subs}.  

Suppose now that
$P_1 \cap P_2$ is non-trivial.  If $C$ acts non-trivially on $P_1 \cap P_2$, then
we conclude that $e(P_1) = e(P_2) = e(P_1 \cap P_2)$ in the above notation,
since $P_1$, $P_2$ and $P_1 \cap P_2$ are proper subquotients of $P$.
We have a $C$-isomorphism $P_1/(P_1 \cap P_2) \to P/P_2$.  Since
$P_1 \cap P_2$ contains $P^p [P,P]$ we can now find a filtration
$$1 = Q_0 < Q_1 < \cdots < Q_{m-2} = P_1 \cap P_2 < Q_{m-1} = P_2 < Q_m = P$$
as in Lemma \ref{filtration}  such that at most one non-trivial character of $C$
arises from a $C$-module quotient $Q_i/Q_{i-1}$.  This contradicts that at least
two distinct $e_i$ different from $1$ must arise when we set $Q = P$.
We conclude that $C$ acts trivially on $P_1 \cap P_2$, so 
$P_1 \cap P_2 \subset \C_P(C)$, where $\C_P(C)$
is cyclic because we have assumed condition (a) of Definition
\ref{def:groovy} holds.  Thus $P/P_1$ and $P/P_2$ must define
distinct non-trivial characters $\phi_1$ and $\phi_2$ of $C$
in order for there to be two distinct non-trivial $e_1$ and $e_2$ associated
to setting $Q = P$.  We now use Lemma \ref{filtration} to find 
$x_i \in P_i$ such that $b x_i b^{-1} = x_i ^{e_i}$  if $b$ is a generator
of $B = C$ and for which $x_i(P_1 \cap P_2)$ generates the cyclic
group $P_i/(P_1 \cap P_2)$ of order $p$.  Then $x_i^p \in P_1 \cap P_2$
so $C$ acts trivially on $x_i^p$. Thus
$$x_i ^{p e_i} = (b x_i b^{-1})^p = b x_i^p b^{-1} = x_i^p$$
so $x_i^p = 1$ because $e_i$ is a non-trivial $(p-1)^{st}$ root of $1$ in $\mathbb{Z}_p$.
Let us check that $x_i$ must centralize each $\gamma \in P_1 \cap P_2$.
Since $x_i \gamma x_i^{-1} \in P_1 \cap P_2 \subset \C_P(C)$ we have 
$$x_i^{e_i} \gamma x_i^{-e_i} = (bx_i b^{-1})(b \gamma b^{-1}) (b x_i b^{-1})^{-1} = b(x_i \gamma x_i^{-1}) b^{-1} = x_i \gamma x_i^{-1}$$
so $x_i^{e_i - 1}$ centralizes $\gamma$, from which it follows that $x_i$ centralizes $\gamma$.
This implies that $P_1 \cap P_2$ is central in $G$, since we have shown that $C$ commutes with $P_1\cap P_2$
and because $G$ is generated by $C$,
$P_1 \cap P_2$, $x_1$ and $x_2$.
Thus $z = [x_1,x_2] \in P_1 \cap P_2$ is central in $G$, so Lemma 
\ref{commutator}  shows $z^p = [x_1^p,x_2] = 1$ because $x_1^p = 1$.
The group generated by $C$, $x_1$, $x_2$ and $z$ is now a subgroup of $G$
of the kind in part (4) of Theorem \ref{GM by subs}, which completes the proof.
$\square$

\medskip

If the subgroup $B$ (in the notation above) has order
bigger than $2$, the structure of GM groups is quite limited, as we show in the next theorem.

\begin{thm}  Let $G=PC$ be a GM-group with $\#B > 2$, where $B$ is the maximal subgroup of order dividing $p-1$.  Let $D=\C_P(C)$.
Then the derived subgroup $H = [G,G]$ of $G$ is abelian,  
$C$ acts freely on the nonidentity elements of $H$, $G$ is the
semi-direct product 
$H.(C \times D)$ and $B$ acts as a scalar
on $H$.   Conversely, any such group is a GM group. 
\end{thm}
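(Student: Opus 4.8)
I would prove both implications by isolating a purely group‑theoretic statement about a single automorphism $b$ of coprime order acting on a $p$‑group. Fix a generator $b$ of the cyclic group $B$, and set $\zeta=\Theta(b)\in\Z_p^{*}$, a primitive $(\#B)$‑th root of unity; the assumption $\#B>2$ says precisely that its reduction $\bar\zeta\in\F_p^{*}$ satisfies $\bar\zeta\ne\pm1$, i.e.\ that $\zeta-1$ \emph{and} $\zeta^{2}-1$ are units in $\Z_p$. The first step, for the forward direction, is to extract from conditions (a),(b) of Definition~\ref{def:groovy} the following \emph{dichotomy}: if $x\in P$ is nontrivial and $\langle x\rangle$ is $B$‑invariant, then $b$ acts on $\langle x\rangle$ either trivially (in which case $x\in\C_P(b)=\C_P(C)=D$ by (a)), or by the exponent $\zeta$ (in which case $\C_C(x)=1$ and $x\notin D$: if some $1\ne c\in C$ centralized $x$ then $x\in\C_P(c)=D$ by (a), forcing $b$ to centralize $x$; ruling this out, condition (b) applied to $T=\langle x\rangle$ with $b\in\N_B(T)$ gives $bxb^{-1}=x^{\Theta(b)}=x^{\zeta}$). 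In particular $\C_P(b)=\C_P(B)=D$; lifting eigenvectors via Lemma~\ref{lem:liftx} then shows $b$ acts on \emph{every} $B$‑invariant elementary abelian section of $P$ with all eigenvalues in $\{1,\bar\zeta\}$. A short coprimality computation also identifies $[G,G]=[P,P][P,C]=[P,C]=[P,b]$: the second equality holds because $P/[P,C]\cong D/(D\cap[P,C])$ is cyclic (hence abelian), and $[P,C]=[P,b]$ because $C$ centralizes $D$ and so acts trivially on $P/[P,b]\cong D/(D\cap[P,b])$.

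\textbf{The key lemma (the main obstacle).} I would then prove: \emph{if $b$ is an automorphism of order dividing $p-1$ of a finite $p$‑group $P$, $\bar\zeta\in\F_p^{*}$ has order $>2$ with Teichm\"uller lift $\zeta$, and $b$ acts on every $b$‑invariant elementary abelian section of $P$ with eigenvalues in $\{1,\bar\zeta\}$, then $[P,b]$ is abelian, $b$ acts on $[P,b]$ as the scalar $\zeta$, and $P$ is the internal semi‑direct product of $[P,b]$ by $\C_P(b)$.} This is where essentially all the difficulty lies, and where $\#B>2$ is genuinely used. The proof goes by induction on $|P|$. Pick a $b$‑invariant subgroup $N$ of order $p$ inside $\Omega_1(\Z(P))$; by the eigenvalue hypothesis $b$ acts on $N$ by $1$ or by $\zeta$. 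If $b$ acts by $\zeta$, then $N\le[P,b]$ and $N\cap\C_P(b)=1$, and the induction hypothesis for $P/N$ pulls back (using $[P/N,\bar b]=[P,b]/N$ and $\C_{P/N}(\bar b)=\C_P(b)N/N$, the latter by the standard coprime surjectivity). If $b$ acts trivially on $N$, one first shows $N\cap[P,b]=1$: otherwise $N\le M:=[P,b]$, and for $x,y\in M$ the class‑$2$ identity $[x^{\zeta},y^{\zeta}]=[x,y]^{\zeta^{2}}$ together with $b[x,y]b^{-1}=[x,y]$ (since $[x,y]\in N$) forces $[x,y]^{\zeta^{2}-1}=1$, hence $M$ abelian as $\zeta^{2}-1$ is a unit; but then $M=\C_M(b)\times[M,b]$ with $\C_M(b)\supseteq N\ne1$, contradicting $[M,b]=[P,b,b]=[P,b]=M$. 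Once $N\cap[P,b]=1$, induction on $P/N$ again yields the conclusion. In either case, abelianness of $[P,b]$ and $\C_{[P,b]}(b)=[P,b]\cap\C_P(b)=1$ show $b$ has no nonzero fixed vector on $[P,b]/\Phi([P,b])$, hence acts there as the scalar $\bar\zeta$; then $b-\zeta$ maps $[P,b]$ into $p[P,b]$ and is therefore nilpotent, while $b$ is annihilated by the separable polynomial $X^{\#B}-1$, so $b$ acts as the scalar $\zeta$ on all of $[P,b]$.

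\textbf{Conclusion of both directions.} Granting the lemma, the forward direction is immediate: $\C_P(b)=D$ gives $P=H\cdot D$ with $H=[P,b]=[G,G]$ abelian, $H\cap D=1$, and $B$ acting on $H$ by the scalar $\zeta$; $C$ acts freely on $H\setminus\{e\}$ since a nontrivial fixed point lies in $\C_P(c)=D$ by (a), hence in $H\cap D=1$; and $G=P\cdot C=H\cdot(D\times C)$ because $C$ centralizes $D=\C_P(C)$. For the converse, given $G=H.(C\times D)$ of the stated form, set $P=HD$ (the normal Sylow $p$‑subgroup) and let $\Theta\colon B\to\Z_p^{*}$ record the scalar by which each element acts on $H$; this is faithful because $C$ acts faithfully on $H$. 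Condition (a) follows from a direct computation in $H\rtimes D$: for $1\ne c\in C$, $\C_P(c)=D=\C_P(C)$ since $c$ centralizes $D$ and is fixed‑point‑free on $H$. For (b), one checks $\C_C(T)=1$ for cyclic $T\le P$ exactly when $T\not\subseteq D$; given such $T$ and $g\in\N_B(T)$, if $T\cap H\ne1$ then comparing the action of $g$ on a nontrivial element of $T\cap H$ (scalar $\Theta(g)$) with its action on the cyclic group $T$ (an exponent $e$) forces $e=\Theta(g)$; and if $T\cap H=1$ then $g$ acts trivially on $T\cong TH/H$, which together with $T\not\subseteq D$ and faithfulness of $\Theta$ forces $g=e$, so $\N_B(T)=\{e\}$ and (b) is vacuous. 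Hence $G$ is a GM group. The auxiliary facts invoked along the way --- Lemma~\ref{lem:liftx}, and the standard coprime identities $P=[P,b]\C_P(b)$, $[P,b,b]=[P,b]$, and surjectivity of $\C_P(b)\to\C_{P/N}(\bar b)$ --- are either available in the excerpt or entirely routine.
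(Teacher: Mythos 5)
Your proposal is correct in substance, and it reaches the same structural conclusion by a route that differs from the paper's mainly in how the abelianness of $H=[G,G]$ is proved. The paper forms $H=[C,P]$ directly by coprime action, reduces to the case where $H$ has class at most two by passing to the GM quotient $G/[H,[H,H]]$ (legitimate by Lemma~\ref{quotients}), diagonalizes a generator $b$ of $B$ on $H/\Phi(H)$, lifts eigenvectors via Lemma~\ref{lem:liftx}, applies Lemma~\ref{commutator} to get $b[x_i,x_j]b^{-1}=[x_i,x_j]^{\Theta(b)^2}$, and then invokes Definition~\ref{def:groovy} together with $\Theta(b)^2\ne\Theta(b)$ and $\Theta(b)^2\ne 1$ to kill the commutators. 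You instead package the core computation as a stand-alone lemma about a single coprime-order automorphism $b$ of a $p$-group whose action on every $b$-invariant elementary abelian section has eigenvalues in $\{1,\bar\zeta\}$, and prove that lemma by induction on $|P|$ through a central $b$-invariant subgroup $N$ of order $p$. Both proofs ultimately rest on the same class-two identity of Lemma~\ref{commutator} and on the fact that $\zeta-1$, $\zeta+1$ and $\zeta$ are units of $\mathbb{Z}_p$; your version is somewhat more general and more modular (the lemma never mentions GM groups), at the cost of an induction with a case split that the paper avoids by a single passage to a class-two quotient. Your reduction $[G,G]=[P,P][P,C]=[P,C]=[P,b]$, the dichotomy on $B$-invariant cyclic subgroups of $P$, the identification $\C_P(b)=D$, and both directions of the final assembly are all sound.

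There is, however, one place in your key lemma where the argument is left incomplete. In the first branch of the induction (``$b$ acts on $N$ by $\zeta$'') you write that the induction hypothesis for $P/N$ ``pulls back,'' citing only the identifications $[P/N,\bar b]=[P,b]/N$ and $\C_{P/N}(\bar b)=\C_P(b)N/N$. But abelianness of $[P,b]/N$ does not by itself give abelianness of $[P,b]$; the commutator step must still be carried out here, just as you do spell it out in the other branch. Concretely: by induction $[[P,b],[P,b]]\le N\le\C(P)$, so $[P,b]$ has class at most $2$; for $x,y\in[P,b]$ one has $bxb^{-1}=x^{\zeta}n_x$ and $byb^{-1}=y^{\zeta}n_y$ with $n_x,n_y\in N$ central, so by Lemma~\ref{commutator} $b[x,y]b^{-1}=[x,y]^{\zeta^2}$; on the other hand $b[x,y]b^{-1}=[x,y]^{\zeta}$ since $[x,y]\in N$; therefore $[x,y]^{\zeta(\zeta-1)}=1$, and $\zeta(\zeta-1)$ is a unit of $\mathbb{Z}_p$, so $[x,y]=1$. (Note this branch uses only $\bar\zeta\ne 0,1$; it is the other branch, where $\zeta^2-1$ must be invertible, that genuinely uses $\#B>2$.) With this small insertion the induction closes and the rest of your argument goes through.
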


\begin{proof}  We prove the first statement. 
By coprime action, we
have that  $P = D[C,P]$ and $[C,H]=[C,P]$ (see \cite[5.3.5]{gor}).
 Since $D$ normalizes both $C$ and $P$ and since $G=[C,P]DC$, 
it follows that $[C,P]$ is normal in $G$.  Clearly, $[C,P]$
is contained in the derived subgroup of $G$. On the other
hand $C$ and $P$ commute modulo $[C,P]$, whence $G/[C,P]$ is abelian.
Thus $[C,P]=H$.

We next claim that $H$ is abelian.  Suppose not.  Then
we can pass to the quotient $H/[H,[H,H]]$ and so assume
that $H$ is nilpotent of class $2$.  Let $T = H/\Phi(H)$, where $\Phi(H)$ is the Frattini subgroup of $X$.
Let $b$ be a generator for $B$.  Then $b$ is diagonalizable on
$T$ and we may choose a basis $y_1, \ldots, y_r$ of $T$
with each $y_i$ an eigenvector for $b$.  We may lift
$y_i$ to an element $x_i \in H$ with $b$ normalizing each
$\langle x_i \rangle$.  Since $H = [C,H]$, $T=[C,T]$ and
so $b$ centralizes none of the $y_i$.  Since $G$ is a GM-group,
this implies that $bx_ib^{-1}=x_i^{\Theta(b)}$ for each $i$.
Thus, by Lemma \ref{commutator},
 $b[x_i,x_j]b^{-1} = [x_i,x_j]^{\Theta(b)^2}$for each
$i,j$.   Since the order of $b$ is greater than $2$ and $\Theta$
is faithful, $\Theta(b)^2 \ne \Theta(b)$ and $\Theta(b) \ne 1$.
By definition, this implies that $[x_i,x_j] =1$, whence $H$ is abelian.

Again by coprime action on abelian groups, we have (see \cite[5.2.3]{gor})
that $H = \C_H(C) \times [C,H]$ and so $\C_H(C)=1$, using $H = [C,H]$.  Thus
$C$ acts fixed point freely on the nontrivial elements of
$H$.  We have already noted that $G=HDC=H.(C \times D)$.

Now assume that $G$ is as described. If $1 \ne c \in C$, 
then $\C_P(c)=D=\C_P(C)$ is cyclic.  Moreover, we see that
$c$ normalizes a cyclic subgroup if and only if it centralizes
it or acts via the character given by its action on $H$.  
Thus $G$ is a GM-group. 
\end{proof} 

\begin{example}
\label{ex:cycpsub} Suppose $P$ is cyclic.   Since $C$ is
cyclic of order prime to $p$, it acts faithfully on $P$ if
and only if it acts faithfully on the cyclic subgroup $Q$
of order $p$ in $P$.  We conclude from  Theorem
\ref{GM by subs} that when $P$ is cyclic, $G$ is a
 GM group if and only if it is cyclic or $C$ acts
 faithfully on $Q$;  the latter condition is equivalent
 to the statement that the center of $G$ is trivial.  
 If $P = Q$ has order $p$, it follows from \cite{OSS}
 (for cyclic $G$) and from  
 \cite[Theorem 2.1]{BWZ} (for non-cyclic $G$) that 
 if $G$ is a GM group then it is in fact a 
 a weak local Oort group.   
 \end{example}

 \section{Reducing the proofs of Theorems \ref{thm:nonexs} and 
\ref{thm:nonexsalmost} to particular groups.}
\label{s:reduction}
\setcounter{equation}{0}
 In this section we recall Propositions 3.1 and 4.2 of \cite{CGH}, which limit the
 possible cyclic by $p$-groups which has no quotients of certain kinds.
This will be used to limit the possible  isomorphism classes of Bertin and KGB groups.
 
\begin{thm} 
\label{thm:oddcasered} Let $p$ be an odd prime and let $G$ be a finite group 
with a normal Sylow $p$-subgroup $S$ such that $G/S=C$ is cyclic of order prime to $p$.   
Assume that $G$ has no homomorphic image of the following types:
\begin{enumerate}
\item   $C_p \times C_p$;
\item $E.C_m$, where $E$ is an elementary abelian $p$ group, $p {\not |} m \ge 3$, and 
$C_m$ acts faithfully and irreducibly on $E$;
\item $E.C_2$ where $E = C_p \times C_p$,  and $C_2$ acts on $E$
by inversion;
\item $D_{2p} \times C_\ell$ for some prime number $\ell  > 2$ 
 (including the possibility that $\ell = p$).
\item $E.C_4$ where $E = C_p$, and a generator of $C_4$ acts on $E$ by inversion.
\end{enumerate}
Then either $G$ is cyclic or is dihedral of order $2p^a$ for some 
$a$.
\end{thm}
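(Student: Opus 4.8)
The plan is a purely group-theoretic argument; indeed Theorem~\ref{thm:oddcasered} is \cite[Prop.~3.1]{CGH}. By Schur--Zassenhaus write $G = S \rtimes C$ with $C$ a complement; if $S = 1$ then $G = C$ is cyclic, so assume $S \ne 1$. I would first show that $S$ is cyclic, and then analyze the action of $C$ on $S$.

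\textbf{Step 1: $S$ is cyclic.} Consider $V = S/\Phi(S)$ as a module over $\mathbb{F}_p[C]$; it is semisimple by Maschke since $\gcd(\#C,p)=1$, and every quotient of $V \rtimes C$ is a quotient of $G$. If $V$ had an irreducible summand of $\mathbb{F}_p$-dimension $\ge 2$, then, after dividing $C$ by the kernel of its action on that summand, $C$ would act faithfully and irreducibly through a cyclic group of order $m \ge 3$, yielding a quotient of the type in item~(2). Hence every irreducible summand of $V$ is one-dimensional, so $V$ is a direct sum of one-dimensional characters $\chi_i \colon C \to \mathbb{F}_p^{\times}$. If some $\chi_i$ had order $m \ge 3$ we would again get a quotient $C_p \rtimes C_m$ of the type in item~(2); thus every nontrivial $\chi_i$ has order $2$, and as $C$ is cyclic there is at most one such, the sign character $\epsilon$. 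Writing $V$ as $a$ copies of the trivial character plus $b$ copies of $\epsilon$: the case $a \ge 2$ gives a quotient $C_p \times C_p$ (item~(1)); the case $b \ge 2$ gives a quotient $(C_p \times C_p) \rtimes C_2$ with $C_2$ acting by inversion (item~(3)); and $a = b = 1$ gives a quotient $C_p \times D_{2p}$ (item~(4), with $\ell = p$). So $\dim_{\mathbb{F}_p} V = 1$ and $S \cong C_{p^a}$ with $a \ge 1$.

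\textbf{Step 2: identifying $G$.} Let $C_0$ be the kernel of $C \to \aut(S) = \aut(C_{p^a})$; then $C_0$ is central in $G$, and $C/C_0$ is cyclic of order $m'$ dividing $p-1$, acting faithfully on $S$ and hence also on $C_p = S/\Phi(S)$. If $m' = 1$ then $G = S \times C$ is cyclic. If $m' \ge 3$, reducing modulo $\Phi(S)$ gives a faithful $C_p \rtimes C_{m'}$, of the type in item~(2) --- impossible. So $m' = 2$. To finish I would show $C_0 = 1$: if not, choose a prime $\ell \mid \#C_0$ and a subgroup $D \le C_0$ with $\#(C_0/D) = \ell$; then $\Phi(S)\cdot D$ is normal in $G$ and $G/(\Phi(S)\cdot D) \cong C_p \rtimes (C/D)$ with $C/D$ cyclic of order $2\ell$ acting on $C_p$ through $C/C_0 \cong C_2$ by inversion. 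For $\ell$ odd this is $D_{2p} \times C_\ell$ (item~(4), $\ell > 2$ prime, $\ell \ne p$); for $\ell = 2$ it is $C_p \rtimes C_4$ with a generator of $C_4$ acting by inversion (item~(5)). Both are excluded, so $C_0 = 1$, $C \cong C_2$ acts faithfully and therefore by inversion on $C_{p^a}$, and $G \cong D_{2p^a}$.

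The only delicate point is the bookkeeping at the end of Step~2: one must check that each displayed group is genuinely a homomorphic image of $G$ (the relevant subgroups are normal because $\Phi(S)$ is characteristic in $S$ and $D \le C_0$ is central, and the induced action factors through $C/C_0$), and one must match $C_p \rtimes C_{2\ell}$ to item~(4) or item~(5) according to the parity of $\ell$ --- the case $\ell = 2$ being precisely why item~(5) appears in the list.
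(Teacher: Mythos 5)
The paper contains no proof of this theorem: \S~\ref{s:reduction} explicitly only \emph{recalls} it as Proposition~3.1 of \cite{CGH}, so there is nothing here to compare against. Your argument is nevertheless correct and self-contained, and it is the natural route: passing to the Frattini quotient $V=S/\Phi(S)$, using Maschke to split $V$ into characters of $C$, and matching the non-cyclic possibilities $(a\ge 2,\ b\ge 2,\ a=b=1,\ \dim\ge 2$ irreducible$)$ to excluded quotients (1), (3), (4)($\ell=p$), (2) respectively forces $S$ cyclic; then $C_0=\ker(C\to\mathrm{Aut}\,S)$ is central, $C/C_0$ has order $m'\mid p-1$ with $m'\ge 3$ excluded by (2), and the two flavors of (4) and (5) according to the parity of $\ell\mid\#C_0$ are exactly what is needed to kill $C_0$ and leave $G\cong D_{2p^a}$. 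I checked the subtle points you flag at the end (normality of $\Phi(S)\cdot D$, faithfulness on $S/\Phi(S)$ inherited from faithfulness on $S$ because $\ker(\mathrm{Aut}\,C_{p^a}\to\mathrm{Aut}\,C_p)$ is a $p$-group, and $\ell\ne p$ in Step 2 since $\ell\mid\#C$) and they are all sound.
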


We recall some notation and facts about $2$-groups.
A generalized quaternion group of order $2^a, a \ge 3$ is given by
$Q_a = \langle x, y | x^{2^{a-1}}=1, yxy^{-1}=x^{-1}, y^2=x^{2^{a-2}} \rangle$.
These are the only noncyclic $2$-groups that contain a unique involution.

The semidihedral group of order $2^a, a > 3$ is denoted $\mathrm{SD}_a$
and has presentation 
$\langle x, y | x^{2^{a-1}}=1, y^2=1, yxy= x^{-1 + 2^{a-2}} \rangle$.
Note that if $G$ is dihedral, semidihedral or generalized quaternion
then $G/[G,G]$ is elementary abelian of order $4$.

\begin{thm}
\label{thm:evenclass}  Let $G$ be a finite group with a normal Sylow 
$2$-subgroup  
$S$ such that
$G/S=C$ is cyclic (of odd order).   Assume that $G$ has no
homomorphic image of the following types:
\begin{enumerate}
\item $E.D$, where $E$ is a non-trivial elementary abelian $2$-group, $D$ is cyclic of odd order at
least $5$ and
$D$ acts irreducibly on $E$;
\item $E.D$, where $E$ is elementary abelian of order $16$, $D$ has 
order $3$ and acts
without fixed points on $E$;
\item $E.D$ where $E = \mathbb{Z}/4 \times \mathbb{Z}/4$, $D$ has order 
$3$ and acts
faithfully on $E$;
\item $E.D$, where $E$ is elementary abelian of order $8$ and $D$ acts 
faithfully on $E$
with $D$ of order $1$ or $3$
(note this is isomorphic to $A_4 \times \mathbb{Z}/2$ or $E$);
\item $E \times C$ where $E$ is elementary abelian of order $4$ and $C$ has 
prime order;
\item  $E.C$ where $C$ is cyclic of order $3p$ with $p$ an odd prime and $E$ 
is elementary
abelian of order $4$ with $C$ acting nontrivially on $E$; or
\item $\mathbb{Z}/4 \times \mathbb{Z}/2$.
\end{enumerate}
Then $G$ is cyclic, $A_4$, or $SL_2(3)$, or $G=S$ is a dihedral, semidihedral
or generalized quaternion $2$-group.
\end{thm}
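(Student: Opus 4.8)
The plan is to run an induction, exploiting the obvious fact that a homomorphic image of a homomorphic image is a homomorphic image: it therefore suffices to show that a $G$ not on the conclusion list must have a (usually small) quotient on the excluded list~(1)--(7). I would split according to whether $C=1$.

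\textbf{The case $C=1$}, so $G=S$ is a $2$-group. Type~(4) with $D=1$ (equivalently type~(5) with the prime $=2$) rules out $(\mathbb{Z}/2)^3$ as a quotient, so $S/\Phi(S)$ has rank $\le 2$ and $S$ is $2$-generated, while type~(7) rules out $\mathbb{Z}/4\times\mathbb{Z}/2$ as a quotient. Hence $S/[S,S]$ is a $2$-generated abelian $2$-group with no $\mathbb{Z}/4\times\mathbb{Z}/2$ quotient, so it is cyclic or $(\mathbb{Z}/2)^2$; in the cyclic case $S$ is cyclic, so assume $S/[S,S]\cong(\mathbb{Z}/2)^2$. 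If $|S|\le 4$ then $S\cong(\mathbb{Z}/2)^2=D_4$ and we are done. If $|S|>4$, choose a central subgroup $Z$ of order $2$ inside $[S,S]$ (possible since the nontrivial normal subgroup $[S,S]$ of the $2$-group $S$ meets $Z(S)$); then $(S/Z)/[S/Z,S/Z]=S/[S,S]\cong(\mathbb{Z}/2)^2$, the quotient $S/Z$ again has no $\mathbb{Z}/4\times\mathbb{Z}/2$ quotient, and by induction $S/Z$ is dihedral, semidihedral, or generalized quaternion. A computation with the central extensions of these groups by $\mathbb{Z}/2$ (or a reference to the classification of $2$-groups with a cyclic subgroup of index $2$) shows that the only such extensions with abelianization $(\mathbb{Z}/2)^2$ and no $\mathbb{Z}/4\times\mathbb{Z}/2$ quotient are again dihedral, semidihedral, or generalized quaternion.

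\textbf{The case $C\ne 1$}. View $V:=S/\Phi(S)$ as an $\mathbb{F}_2[C]$-module; as $|C|$ is odd, $V$ is semisimple, say $V=V_0\oplus W$ with $V_0=V^C$ and $W$ the sum of the nontrivial constituents. Each nontrivial irreducible summand $U$ gives a quotient $U\rtimes(C/\C_C(U))$ of $G$ with $C/\C_C(U)$ cyclic of odd order $\ge 3$ acting faithfully and irreducibly, so by type~(1) this order is exactly $3$; since the nontrivial irreducible $\mathbb{F}_2[\mathbb{Z}/3]$-module is $2$-dimensional, every summand of $W$ is a copy of it, all with $C$ acting through one common surjection $C\twoheadrightarrow\mathbb{Z}/3$, and type~(2) forbids two copies, so $\dim_{\mathbb{F}_2}W\in\{0,2\}$. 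If $W=0$ then $C$ centralizes $V$, hence (coprime action) centralizes $S$, so $G=S\times C$; by the case $C=1$, $S$ is cyclic, $(\mathbb{Z}/2)^2$, dihedral, semidihedral, or generalized quaternion, and if $S$ is noncyclic it has a $(\mathbb{Z}/2)^2$ quotient, whence $G$ has a type~(5) quotient — so $S$, and hence $G$, is cyclic. If $\dim W=2$, set $K=\C_C(V)$; then $K$ centralizes $S$ and $G=K\times(S\rtimes(C/K))$ with $C/K\cong\mathbb{Z}/3$. If $V_0\ne 0$, cutting $V_0$ down to one dimension produces a $(\mathbb{Z}/2)^3\rtimes\mathbb{Z}/3=A_4\times\mathbb{Z}/2$ quotient (type~(4)); and if $K\ne 1$ then for a prime $r\mid|K|$ one gets an $A_4\times\mathbb{Z}/r$ quotient (type~(6)). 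Hence $V_0=0$, $K=1$, $C\cong\mathbb{Z}/3$, and $S/\Phi(S)$ is the $2$-dimensional irreducible.

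\textbf{The residual case and the main obstacle.} It remains to classify the $2$-groups $S$ carrying a $\mathbb{Z}/3$-action with $S/\Phi(S)$ the $2$-dimensional irreducible and — forced by type~(3), which excludes $(\mathbb{Z}/4)^2\rtimes\mathbb{Z}/3$ — no $(\mathbb{Z}/4)^2$ quotient; the claim is $S\cong(\mathbb{Z}/2)^2$ or $Q_8$, giving $G\cong A_4$ or $\mathrm{SL}_2(3)$. The absence of a $(\mathbb{Z}/4)^2$ quotient already forces $S/[S,S]\cong(\mathbb{Z}/2)^2$; since a cyclic $2$-group has a $2$-group of automorphisms, $\mathbb{Z}/3$ acts trivially on $[S,S]$ whenever $[S,S]$ is cyclic, and feeding this into the commutator calculus of $S/\gamma_3(S)$ pins down that quotient as $(\mathbb{Z}/2)^2$ or $Q_8$, with the abelian subcase giving $(\mathbb{Z}/2)^2$ directly. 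The genuinely delicate point — and the main obstacle — is bounding the nilpotency class: because the order-$3$ automorphism here is \emph{not} fixed-point-free (it already fixes a $\mathbb{Z}/2$ on $Q_8$), one cannot invoke Higman's theorem directly, and the class bound $\gamma_3(S)=1$ must instead be extracted from a sharpened fixed-point estimate or a careful descent through $S/\gamma_4(S),S/\gamma_5(S),\dots$, keeping the $(\mathbb{Z}/4)^2$-free and type~(3),(4) constraints in play at each stage. (A similar, if more classical, care is needed for the central-extension bookkeeping in the case $C=1$.)
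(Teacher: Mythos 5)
This theorem is stated in the paper as a recollection of Proposition~4.2 of \cite{CGH}; the present paper contains no proof of it, so there is no in-paper argument to compare against. Assessed on its own terms, your inductive structure (reduce to showing a group not in the conclusion list admits a quotient on the excluded list, split on $C=1$ versus $C\ne 1$, and study the semisimple $\mathbb{F}_2[C]$-module $V=S/\Phi(S)$) is sound, and the reductions via types~(1), (2), (4), (5), (6), (7) are carried out correctly. One small slip: the decomposition $G = K\times\bigl(S\rtimes(C/K)\bigr)$ fails when $3\mid |K|$ (a cyclic $C$ need not split over its index-$3$ subgroup $K$), so the quotient you produce is $(\mathbb{Z}/2)^2.(C/K')$ with $C/K'$ cyclic of order $3r$, not $A_4\times\mathbb{Z}/r$ on the nose; but since type~(6) only asks for a nontrivial $(\mathbb{Z}/2)^2.(\mathbb{Z}/3p)$ quotient and allows $p=3$, the conclusion you draw survives the error.

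The genuine gap is the one you flag yourself, both at the tail of the $C=1$ case (the unspecified ``computation with the central extensions'') and as the ``main obstacle'' in the residual case $C\cong\mathbb{Z}/3$: you do not actually establish that a $2$-group $S$ with $S/[S,S]\cong(\mathbb{Z}/2)^2$ (and the relevant quotient restrictions) is dihedral, semidihedral, generalized quaternion, or $(\mathbb{Z}/2)^2$, and the nilpotency-class descent you sketch through $\gamma_3,\gamma_4,\dots$ is not carried out. You do not need a fixed-point-free argument or a Higman-style bound here; the missing step is precisely Taussky's theorem (see e.g.\ Gorenstein, \emph{Finite Groups}, Thm.~5.4.5): a finite $2$-group with commutator quotient of order $4$ is dihedral, semidihedral, or generalized quaternion (and the abelian case of order $4$ gives $\mathbb{Z}/4$ or $(\mathbb{Z}/2)^2$). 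With that, the $C=1$ case is finished as soon as you know $S/[S,S]$ is cyclic or $(\mathbb{Z}/2)^2$, and the residual case follows because among $(\mathbb{Z}/2)^2$, $D_{2^n}$, $SD_{2^n}$, $Q_{2^n}$ only $(\mathbb{Z}/2)^2$ and $Q_8$ admit an automorphism of order~$3$ (for $n\ge 4$, and for $D_8$, the automorphism group is a $2$-group). Along the way you implicitly use, and should state, that an odd-order group acting nontrivially on a $2$-generated abelian $2$-group $\mathbb{Z}/2^a\times\mathbb{Z}/2^b$ forces $a=b$, since $\operatorname{Aut}(\mathbb{Z}/2^a\times\mathbb{Z}/2^b)$ is a $2$-group when $a\ne b$; this is what makes ``no $(\mathbb{Z}/4)^2$ quotient'' (type~(3)) actually force $S/[S,S]\cong(\mathbb{Z}/2)^2$ rather than some mixed type. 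Once Taussky's theorem is supplied, the proposal closes to a complete proof.
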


\section{Some groups which are not almost Bertin groups.}
\label{s:notalmostBertin}
\setcounter{equation}{0}

We assume as before that $k$ is an algebraically 
closed field of characteristic $p > 0$.
\begin{prop}
\label{prop:thepoint}   Let $G$ be the semidirect product of an elementary abelian $p$-group $E$
of order $q  > 1$ with a cyclic group $C_m$ of order $m$ prime to $p$. 
\begin{enumerate}
\item[a.] Suppose $q > p$.   
Then $G$ is not an almost Bertin group for $k$ 
unless $p = 2$ and $q = 4$. If $(p,q) = (2,4)$, then $G$ is not a weak Bertin group for $k$  and not an
almost Bertin group for $k$  unless $m \in \{1,3\}$ and $C_m$ acts faithfully on $E$, in which case $G$ is isomorphic to either
$C_2^2$ or $A_4$.  
\item[b.]  Suppose $(p,q) = (2,4)$ and that $G$ is isomorphic to $A_4$.  Then for each 
integer $M \ge 0$, there is an integer $j \ge M-1$ such that $j \equiv 1$ mod $4$
and there is an injection $\phi:G \to \mathrm{Aut}_k(k[[t]])$ with  the property that $G_1 = E = G_j \ne G_{j+1}$.
\item[c.]  Suppose $q = p$, $m \ge 3$ and that $C_m$ acts faithfully on $E$.  Then
$G$ is not an almost Bertin group for $k$.
\end{enumerate}
\end{prop}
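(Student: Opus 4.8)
The plan is to use the reduction to $p$-groups (Theorem \ref{thm:reducetop}) together with the combinatorial formula for $b_T$ in Theorem \ref{thm:nontrivcase}, exploiting that $E$ elementary abelian makes every cyclic $p$-subgroup $T$ have order exactly $p$, so that $\N_P(T)=P$ and $[\N_P(T):T]=q/p$. First I would handle part (a). If $G$ has vanishing Bertin obstruction for some $\phi$, then by Corollary \ref{cor:notquot} every subquotient $J$ has $0\le b_{T,J}\in\mathbb Z$; in particular, taking a suitable subquotient, one may reduce to the cases enumerated in Theorem \ref{thm:oddcasered} (for odd $p$) and Theorem \ref{thm:evenclass} (for $p=2$). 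The point is that the hypothesis $q>p$ forces $G$ to have a quotient of one of the "bad" types listed there — namely a quotient of the form $E_0.C_d$ with $E_0$ elementary abelian of rank $\ge 2$ and $d\ge 3$ acting irreducibly, or $E_0.C_2$ with inversion, or $C_p\times C_p$ — and for each such quotient I would show directly, using Theorem \ref{thm:reducetop} condition (c)(i) or condition (d), that the required congruences fail. Concretely, for $T$ of order $p$ inside $E_0$, $b'_{T,G}=\sum_{P\not\supset\Gamma\in S(T)}\mu([\Gamma:T])$ is a sum over cyclic overgroups of $T$ that hit $C$; condition (d)(ii) demands $b'_{T,G}\equiv 0$ mod $\#\N_C(T)$, and an explicit count (as in the proof of Corollary \ref{cor:greenMat}, where one gets $-p$ or similar small values) shows this fails whenever the tame part is too large, i.e.\ $m\ge 3$ or $q>4$ when $p=2$. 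The exceptional case $(p,q)=(2,4)$ with $m\in\{1,3\}$ acting faithfully is exactly $C_2^2$ (trivial $C$) or $A_4$; the remaining $m$ dividing $3$ but acting with a fixed point, or $m\notin\{1,3\}$, are excluded by checking condition (b) of Theorem \ref{thm:reducetop} (the centralizer of a nontrivial element of $C$ must be cyclic) — if $C_m$ does not act freely on $E\setminus\{1\}$ then some $1\ne c\in C$ has $\C_E(c)=E$ non-cyclic, contradiction.

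For part (b), I would invoke Appendix 1 directly: since $A_4$ in characteristic $2$ is on the list of KGB groups (by Theorem \ref{thm:nonexs}, proved using Appendix 1), the construction there — specifically the result quoted as Proposition \ref{prop:subquots} and Lemma \ref{def:dumdumdum} — produces, for any $M$, an embedding $\phi:A_4\to\aut_k(k[[t]])$ whose ramification jump for the Klein four subgroup $E$ can be made $\ge M-1$ and $\equiv 1$ mod $4$, i.e.\ $G_1=E=G_j\ne G_{j+1}$ with $j\equiv 1\pmod 4$. The congruence $j\equiv 1\pmod4$ is forced/arranged because $\N_C(T)$ here has order $3$ (a generator of $C_3$ permutes the three involutions cyclically), and Corollary \ref{cor:umph} / Lemma \ref{lem:abstractit} tie the jump $i$ to the residue class $j_T$ mod $\#\N_C(T)=3$; combined with $j$ odd (automatic for a jump of an order-$2$ subgroup in characteristic $2$, since wild jumps for $p=2$ at an elementary abelian quotient are odd), one gets $j\equiv 1\pmod4$ after possibly adjusting by the construction's freedom to add multiples of a high power of $2$. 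I would phrase this as: apply Appendix 1 with the target jump taken in the arithmetic progression $1+4\mathbb Z$ intersected with $[M-1,\infty)$, which is nonempty.

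For part (c), where $q=p$ so $E=C_p$ is cyclic of order $p$, $m\ge 3$, and $C_m$ acts faithfully: here $G$ is a GM group by Theorem \ref{GM by subs} / Example \ref{ex:cycpsub} (faithful action, trivial center), so it is a \emph{weak} Bertin group — but I claim it is not an \emph{almost} Bertin group. The strategy is to show that for sufficiently ramified $\phi$ the integrality or non-negativity in Proposition \ref{prop:special}(ii) fails for $T=E$ itself, or for a cyclic subgroup of a subquotient. The key computation: $b_{E,G}=\frac{1}{[\N_G(E):E]}\sum_{\Gamma\in S(E)}\mu([\Gamma:E])\iota(\Gamma)$; since $E$ has order $p$ every $\Gamma\supsetneq E$ has $\mu([\Gamma:E])$-contributions only from $\Gamma$ of order $p\ell$ for primes $\ell$, which are cyclic hence not $p$-groups, so $\iota(\Gamma)=1$ there, and $\iota(E)=j+1$ is large. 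Then $[\N_G(E):E]=m\cdot[\N_P(E):E]=m$ (as $P=E$), and $\#\N_C(E)=m$ since $C$ acts faithfully so normalizes (indeed $\N_C(E)=C$). Condition (d)(ii) of Theorem \ref{thm:reducetop} then requires $b'_{E,G}\equiv 0\pmod m$ \emph{and} the conjugation character $\chi_E$ to be the inverse Teichmüller lift of $\theta|_{\N_C(E)}$; but $b'_{E,G}=\sum_{\Gamma\supsetneq E}\mu(\ell)$ over the primes $\ell\ne p$ dividing $m$ with $C_\ell\le C$, giving $b'_{E,G}=-(\text{number of prime divisors of }m)$ up to sign, which is not $\equiv 0$ mod $m$ for $m\ge 3$ in general — however the cleanest route, since this is about \emph{all sufficiently ramified} $\phi$, is: the condition that fails is (d)(ii)'s requirement on $\theta$, which is \emph{independent of ramification}, OR alternatively pass to the subquotient $J=G$ and note $b_{E,G}\in\mathbb Z$ forces a fixed congruence on the jump $j$ (namely $j\equiv j_E\pmod m$ with $j_E$ determined by whether $\chi_E$ is the right Teichmüller lift) that cannot hold for \emph{all} large $j$; since being almost Bertin requires vanishing for all $\phi$ with all jumps $\ge N$, and the jumps form an unbounded set not confined to a single residue class mod $m$, we get a contradiction.

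The main obstacle I expect is part (b): pinning down that Appendix 1's construction genuinely delivers a jump in the prescribed residue class $1+4\mathbb Z$ and not merely a large jump. This requires carefully reading off from Corollary \ref{cor:umph} and Lemma \ref{lem:abstractit} that the jump $i$ satisfies $i\equiv j_E\pmod{\#\N_C(E)}$ with $\#\N_C(E)=3$ for $A_4$, checking $j_E\equiv 1\pmod 3$ from the explicit GM-character $\Theta$ (the action of $C_3$ on $C_2^2$ by cyclic permutation of involutions has the requisite $\Theta$), and separately verifying oddness of wild jumps when $p=2$ — together these give $i\in 1+4\mathbb Z$. Once the appendix machinery is invoked as a black box with the correct arithmetic side-conditions, the rest is bookkeeping; the genuinely delicate point is matching the residue-class constraint to what Appendix 1 can produce, which is why the statement of part (b) is phrased with "there is an integer $j\ge M-1$ with $j\equiv 1\pmod 4$" rather than prescribing $j$ exactly.
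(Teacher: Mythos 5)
Your overall framing — that everything hinges on a congruence constraint on $\iota(T)$ modulo $[\N_G(T):T]$, and that one must exhibit $\phi$ with arbitrarily large first jump in a ``bad'' residue class — is correct, but the proposal is missing the piece of the argument that actually does the work, and a couple of the bridges it does lay are unsound.

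First, the route through Theorem \ref{thm:reducetop} and Theorems \ref{thm:oddcasered}/\ref{thm:evenclass} is circular in spirit and incomplete in fact. Those two classification theorems are used in the paper to \emph{reduce} the study of Bertin groups to the finite list of ``bad'' quotient types; Proposition \ref{prop:thepoint} is precisely the base case in which one must show, by hand, that those bad types really are not almost Bertin. Saying ``$q>p$ forces $G$ to have a quotient of one of the bad types'' does not advance the argument, because $G$ in the proposition \emph{is} one of the bad types. Worse, when $m=1$ (so $G=E$ is a pure elementary abelian $p$-group), every condition in Theorem \ref{thm:reducetop} other than (a) is vacuous — there is no prime-to-$p$ part, $b'_{T,G}=b''_{T,G}=0$, and $\N_C(T)=\{e\}$ — so the reduction gives you nothing, yet that is a case that must be handled (and is the content of Example \ref{ex:sep}). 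In this case one must directly compute $b_T=\bigl(\iota(T)+c(T)\bigr)/[\N_G(T):T]$ and exhibit $\phi$ making $\iota(T)$ land in a forbidden residue class modulo $q/p$; there is no group-theoretic congruence on $b'_{T,G}$ to exploit.

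Second, the proposal never actually \emph{constructs} an injection $\phi$ with the first wild jump both arbitrarily large and in a bad residue class, which is the real content of ``not almost Bertin.'' The paper does this via local class field theory: pass to a quasi-finite base, realize the sought $E$-extension as a sub of $N^*/(N^*)^p$ over a tame $C_m$-extension $N/L$, observe that the graded pieces $W_i=(1+z^iO_N)/(1+z^{i+1}O_N)$ afford the $(\mathbb{Z}/p)[C_m]$-module $V_i$ with $V_i\cong V_{i+m}$, and pick the jump $j$ congruent to a prescribed class mod $p$ and mod $m$ and incongruent to $-c(T)$ mod $q/p$. This construction is exactly what makes the claim that ``the jumps form an unbounded set not confined to a single residue class mod $m$'' (your part (c)) true, and it is not something you can get for free; asserting it is a gap.

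Third, in part (b) your reasoning about the congruence $j\equiv 1\pmod 4$ is backwards. That congruence is not forced by Corollary \ref{cor:umph} or Lemma \ref{lem:abstractit} — those are analysis tools for when the Bertin obstruction vanishes, not constraints on constructions. In the paper's proof of part (b), $j\equiv 1\pmod 4$ is \emph{chosen} (together with $j\not\equiv 0\pmod 3$ so that $V_j$ is a faithful $C_3$-module), precisely because this residue is what Proposition \ref{prop:no4} later needs. Moreover the paper's proof of part (b) does not route through Appendix 1 at all; it uses the same direct class field theory construction. Your Appendix 1 route could in principle be made to work (Lemma \ref{lem:okey} lets one pick $n$ freely modulo $m$ and modulo any power of $p$ by CRT), but you would have to renegotiate Lemma \ref{lem:anticip}(v), which as stated fixes $n\equiv -1\pmod{p^{M-1}}$, i.e.\ $j=n\equiv 3\pmod 4$ for $p=2$ — the opposite of what you want. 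So as written the Appendix 1 invocation would produce the wrong residue class.

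Finally, your analysis of the exceptional case in (a) when $C_m$ does not act faithfully uses condition (b) of Theorem \ref{thm:reducetop} (a centralizer being non-cyclic), which is fine in spirit, but the paper gets this more directly from Corollary \ref{cor:flipit} applied to a central element $t$ of $C_{m'}$, computing $\psi(\{e\},\C_G(\langle t\rangle)/\langle t\rangle)$ explicitly as $-2$ or $-6$; either is fine, but you would still need the sufficiently-ramified-$\phi$ construction to conclude ``not almost Bertin'' rather than merely ``not Bertin.''
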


\begin{proof}Let $T$ be a subgroup of order $p$ in $E$, and recall that $S(T)$ is the set of cyclic subgroups of $G$ containing $T$.   Since $E$ is the unique $p$-Sylow subgroup of $G$, and $E$ is elementary abelian,
each $\Gamma \in S(T)$  which is different from $T$ has order divisible by some
prime different from $p$.  Thus $\iota(\Gamma) = 1$ for such $\Gamma$.  Therefore Theorem \ref{thm:nontrivcase} gives
\begin{equation}
\label{eq:killit2}
b_T =  \frac{1}{[\N_G(T):T]}  \sum_{ \Gamma \in S(T)} \mu([\Gamma:T]) \iota(\Gamma) = \frac{\iota(T) + c(T)}{[\N_G(T):T]} 
\end{equation}
where 
\begin{equation}
\label{eq:cdef}
c(T) = \sum_{ T \ne \Gamma \in S(T)} \mu([\Gamma:T]) 
\end{equation}
is independent of the ramification filtration of $G$.  

We now suppose $q > p$, so that $p^2|q$.  
Here $E \subset \N_G(T)$ since $T \subset E$ and $E$ is elementary abelian
of order $q$.  Thus $q/p$ is a positive power of $p$ dividing $[\N_G(T):T]$.
It follows that to show $G$ is not a  Bertin group for $k$, it will suffice to show that for each
integer $M \ge 1$, there is an embedding $\phi:G \to \mathrm{Aut}_k(k[[t]])$
such that $-a_\phi(\tau) \ge M$ for all non-trivial elements $\tau \in G$
of $p$-power order, and such that $\iota(T) \not \equiv -c(T)$ mod $q/p$ for 
some subgroup $T$ of order $p$ in $E$.  

The condition on $-a_\phi(\tau)$ is equivalent to requiring that if $j$ is the first jump
in the wild ramification filtration of $G$, so that $G_1 = G_j \ne G_{j+1}$,
then $j \ge M-1$.  Suppose that in addition we arrange that $T$ is
not contained in $G_{j+1}$.  Then $\iota(T) = j+1$, so we will be done
if we can also arrange that $j + 1 \not \equiv -c(T)$ mod $q/p$.
We may  assume that $k$ is the algebraic closure of $\mathbb{Z}/p$, since
if we can construct an extension of the required kind in this case we can
simply take its base change to an arbitrary algebraically closed field
of characteristic $p$.  

To construct a $\phi$ of the required kind, choose a power $q'$ of
$p$ such that $\mathbb{F}_{q'}$ contains a primitive $m^{th}$ root of unity, and let
$L= \mathbb{F}_{q'}((y))$ for an indeterminate $y$.  Letting $z = y^{1/m}$ we see
that $N = L(z)$ is a cyclic totally and tamely ramified extension of $L$, and the integral
closure of $O_L = \mathbb{F}_{q'}[[y]]$ in $N$ is $O_N = \mathbb{F}_{q'}[[z]]$.  
We fix an identification of $H = \mathrm{Gal}(N/L)$ with $C_m$.  

The group
ring $(\mathbb{Z}/p)[C_m]$ is semi-simple and acts on $N^*/(N^*)^p$.
For each integer $i \ge 1$ the natural map
\begin{equation}
\label{eq:wi}
W_i = \frac{1 + z^i O_N}{1 + z^{i+1}O_N} \to \frac{N^*}{(N^*)^p(1 + z^{i+1}O_N)}
\end{equation}
is injective if $i \not \equiv 0$ mod $p$ and is the trivial homomorphism
otherwise.  The group $H = \mathrm{Gal}(N/L) \cong C_m$
acts on the one-dimensional $\mathbb{F}_{q'}$ vector space $\mathbb{F}_{q'}\cdot z$ via a faithful character $\chi:H \to \mathbb{F}_{q'}^*$.  Thus $H$
acts on the one-dimensional $\mathbb{F}_{q'}$ vector space 
$$W_i = \frac{1 + z^i O_N}{1 + z^{i+1}O_N} \cong \frac{z^i O_N}{z^{i+1} O_N}$$
via the character $\chi^i$.  As a $(\mathbb{Z}/p)[C_m]$-module, $W_i$ is the direct sum
of finitely many copies of the unique simple $(\mathbb{Z}/p)[C_m]$-module $V_i$ whose character is the sum
of the conjugates of $\chi^i$ over $\mathbb{Z}/p$. Each simple $(\mathbb{Z}/p)[C_m]$-module
is isomorphic to $V_i$ for some $i \not \equiv 0$ mod $p$.  Finally $W_i$ and $W_{i+m}$
are isomorphic, so $V_i$ and $V_{i+m}$ are isomorphic.   
 
 Suppose first that $q/p > 2$, and recall that we have assumed $p^2|q$.  There is a direct sum decomposition 
 $E = T_0 \oplus T_1$ of $E$ as a $(\mathbb{Z}/p)[C_m]$-module in which $T_0$ is a simple $(\mathbb{Z}/p)[C_m]$-module.
 Let $T$ be an order $p$ subgroup of $T_0$.  We claim there is an integer $j$ such that $ j \not \equiv 0 $ mod $p$,
 $1+j \not \equiv -c(T)$ mod $q/p$, $T_0$ is isomorphic to $V_j$ and $j \ge M-1$.
The condition
 that $j \not \equiv 0$ mod $p$ removes $q/p^2$ residue classes mod $q/p$, while $1 + j \not \equiv -c(T)$ mod $q/p$
 removes at most one more residue class  mod $q/p$.  Since $q/p > 2$ by assumption, we have
 $q/p - q/p^2 - 1 = (q/p)(1 - 1/p) - 1  > 0 $ so both
 of these congruences may be satisfied.   The condition that $T_0$ is isomorphic to $V_j$ is a condition on $j$ mod $m$.  Since
 $m$ is prime to $p$ we can find arbitrarily large $j$ satisfying all  three congruences, as claimed.
 
The group $N^*/(N^*)^p$ is a semi-simple $(\mathbb{Z}/p)[C_m]$-module with a descending
filtration whose terms are given by the image of $1 + z^i O_N$ for $i \ge 1$ prime to $p$. The 
successive quotients in this filtration are the $W_i$ above.  Since $V_j$ is by construction isomorphic to $T_0$, it follows from the semi-simplicity of $N^*/(N^*)^p$
and of $E$ that we can find an $H$-stable subgroup $U$ of $N^*$ containing $1 + z^h O_N$
for some $h \ge 1$ with the following properties.
There is an $H$-equivariant isomorphism $ N^*/(U \cdot (N^*)^p) \to E$
which gives rise to surjections $1 + z^j O_N \to E = T_0 \oplus T_1$ and  $1 + z^{j+1} O_N \to T_1$.
Let $F$ be the extension of $N$ corresponding
to $U \cdot (N^*)^p$ by local class field theory.  Then $F/L$ is
a Galois extension, and there is an isomorphism $\mathrm{Gal}(F/L) = G$
such that $G_1 = G_j$ and $G_{j+1} \ne G_j$ does not contain $T_0 \supset T$.
The existence of this $G$-extension shows that $G$ is not an
almost Bertin group for $k$ if $q/p > 2$.  

Suppose now that $q/p = 2$, so that $p = 2$ and $q = 4$.
Let $C_{m'} \subset C_m$ be the kernel of the action of $C_m$
on $E = (C_2)^2$.  Then $C_{m'}$ is in the center of $G$, and $C_m/C_m'$
is a cyclic group of odd order acting faithfully on $E = (C_2)^2$.  It follows that $G/C_{m'}$
is either isomorphic to $E = (C_2)^2$ or to $A_4$.  

Suppose first that $m' = 1$ and $(p,q) = (2,4)$.  In this case, $G$ is isomorphic to either $E = (C_2)^2$
or to $A_4$.   All that we must prove for such $G$ is that part (b) of Proposition \ref{prop:thepoint}
holds when $G$ is isomorphic to $A_4$. Thus we now assume $m =3$.  With the above notation, we can find an integer $j \ge M-1$ such that
$j \equiv 1$ mod $4$,  and $V_j$ is faithful as a module for $C_m =C_3 = H$.  (The 
last condition is equivalent to $j \not \equiv 0$ mod $3$.)  The above
construction now produces an example in which $E = G_1 = G_j \ne G_{j+1}$,
which is all that is required when $m' = 1$.    

We now suppose $m' > 1$, $(p,q) = (2,4)$ and that  $k$ is an arbitrary algebraically closed field of characteristic $p$.  Then $\C_G(C_{m'}) = G \ne C_{m'}$.  If $G/C_{m'}$ is isomorphic to $E = (C_2)^2$, then with the notation of Definition \ref{def:invariantgroup}, 
$$\psi(\{e\},\C_G(C_{m'})/C_{m'}) = 1 + 3 \mu(2) = -2.$$   
Otherwise $G/C_{m'}$ is isomorphic to $A_4$ and $$\psi(\{e\},\C_G(C_{m'})/C_{m'}) = 1 + 3 \mu(2) + 4 \mu(3) = -6.$$  It now follows from Corollary \ref{cor:flipit} that there is no local $G$-cover
for which the Bertin obstruction vanishes, so that $G$ is not a weak Bertin group for $k$.  We can construct 
examples of such covers in which the first jump in the wild ramification
is arbitrarily large by the same arguments used earlier,
so this completes the proof of case (a) of Proposition \ref{prop:thepoint}.

We now suppose that $q = p$, $m \ge 3$ and that $C_m$ acts faithfully
on $E = C_p$.  Let $T = E$.  Then $S(T) = \{T\}$, since the image of $T$
in $G/E \cong C_m$ has to act trivially on $T = E$.  We have $\N_G(T) = G$.
So Theorem \ref{thm:nontrivcase} gives
\begin{equation}
\label{eq:killitnow}
b_T = \frac{\iota(T)}{m}.
\end{equation}
Therefore we just have to produce a $\phi:G \to \mathrm{Aut}_k(k[[t]])$ such
that when $j$ is the first (and only) jump in the wild ramification 
of $G$, $j$ is arbitrarily large and $j+1 = \iota(T)$ is not congruent to $0$
mod $m$.  The action of $C_m$ on $E = C_p$ is via some faithful
character of $C_m$, and $\mathrm{Aut}(C_m)$ acts transitively on these
faithful characters.  Thus by varying the identification of  $\mathrm{Gal}(N/L) = \mathrm{Gal}(\mathbb{F}((y^{1/m}))/\mathbb{F}(y))$ with $C_m$ in our earlier construction of $G$ extensions,
we can produce an example in which $j$ is any positive integer such
that $j \not \equiv 0$ mod $p$ and $j$ is relatively prime to $m$.
Since $m$ is prime to $p$, we can find arbitrarily large $j$
such that $j \not \equiv 0$ mod $p$ and $j \equiv 1$ mod $m$.
Since $m \ge 3$, such $j$ will have $j + 1 \not \equiv 0$ mod $m$,
so we are done.
\end{proof}

\begin{example}
\label{ex:sep}
Suppose $p > 2$ and that $G = C_p \times C_p$, so that $q = p^2$ and
$m = 1$ in Proposition \ref{prop:thepoint}.  Thus $G$ is not an almost Bertin group.  Nevertheless, there exists a $\phi$ for which the Bertin obstruction vanishes.  Namely, each non-trivial $T \in \mathcal{C}$ 
has order $p$, and (\ref{eq:killit2}) shows $b_T = (1 + \iota(T))/p$.
For all positive integers $a \equiv -1$ mod~$p$, we can construct
an injection $\phi:G \to \mathrm{Aut}_k(k[[t]])$ such that $\iota(T) = a$
for all non-trivial subgroups $T$ of $G$.  Thus the Bertin obstruction
such a $\phi$ vanishes.  Moreover, Pagot proves in \cite{P} that when $a = p-1$,
one cannot lift $\phi$ to characteristic $0$.  
\end{example}

\begin{lemma}
 \label{lem:duhduh}
 Suppose that $p$ is odd and that $G$ is the semidirect product of a normal cyclic subgroup
 $E$ of order $p$ with a cyclic group $C_{2\ell}$ of order $2\ell$, where $\ell$ is a prime different
 from $p$, with a generator of $C_{2\ell}$ acting on $E$ by inversion.  
 Then there is a non-trivial cyclic subgroup $T$ of $G$ such that the constant $b_T$ in
 Proposition \ref{prop:special}  is not integral.  Therefore $G$ is an not a weak Bertin group for $k$, not  
an almost Bertin group for $k$, and not a local Oort group for $k$.
 \end{lemma}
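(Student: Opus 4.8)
The plan is to exhibit a subquotient $J$ of $G$ and a cyclic subgroup $T \subset J$ satisfying the hypotheses of Corollary \ref{cor:flipit}, so that $b_{T,J}$ fails to be integral; by that corollary $G$ is then not a weak Bertin group for $k$, hence not an almost Bertin group and not a local Oort group. In fact, since here $G$ itself will do the job, I would work directly with $G$ and apply Proposition \ref{prop:subtle} rather than passing to a proper subquotient.

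First I would identify the relevant cyclic subgroup. Write $C_{2\ell} = \langle c \rangle$ with $c$ of order $2\ell$, acting on $E = \langle e \rangle \cong C_p$ by $cec^{-1} = e^{-1}$. The element $c^2$ has order $\ell$ and acts trivially on $E$ (since $\ell$ is odd, inversion has order $2$, so $c^2$ centralizes $E$); thus $T := \langle c^2 \rangle$ is a cyclic subgroup of order $\ell$, prime to $p$, which is central-ish in the sense that $\C_G(T)$ contains both $E$ and all of $C_{2\ell}$. Indeed $\C_G(T) = E \times C_{2\ell}$: the generator $c$ commutes with $c^2$, and $E$ commutes with $c^2$, so $\C_G(T) \supseteq E C_{2\ell} = G$, i.e. $\C_G(T) = G$, which strictly contains $T$. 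Hence alternative (a) of Proposition \ref{prop:subtle} fails, and $b_T$ is integral only if alternative (b) holds, i.e. only if $\psi(T, \C_G(T)) = \psi(\{e\}, \C_G(T)/T) = 0$ (using the prime-to-$p$ reformulation (b$'$), since $T$ has order prime to $p$).

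Next I would compute $\psi(\{e\}, \C_G(T)/T)$ and show it is nonzero, exactly as in the proof of Proposition \ref{prop:infoprime}. We have $\C_G(T)/T = G/\langle c^2 \rangle$, which is the semidirect product of the cyclic $p$-group $E \cong C_p$ with the cyclic group $C_{2\ell}/\langle c^2 \rangle \cong C_2$ acting by inversion — that is, $\C_G(T)/T \cong D_{2p}$, the dihedral group of order $2p$. Its cyclic subgroups are: the trivial group, the unique subgroup of order $p$, and the $p$ subgroups of order $2$ generated by the $p$ involutions. Therefore
\begin{equation}
\label{eq:duhcompute}
\psi(\{e\}, D_{2p}) = \mu(1) + \mu(p) + p\cdot\mu(2) = 1 - 1 - p = -p \neq 0 .
\end{equation}
So neither alternative of Proposition \ref{prop:subtle} holds, and $b_T$ is not integral. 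By Proposition \ref{prop:special}(ii), no injection $\phi : G \to \mathrm{Aut}_k(k[[t]])$ can have vanishing Bertin obstruction, so $G$ is not a weak Bertin group for $k$. Since an almost Bertin group is in particular a weak Bertin group, and a local Oort group is an almost (indeed genuine) Bertin group by the discussion following Definition \ref{dfn:termdef}, $G$ is neither an almost Bertin group nor a local Oort group for $k$.

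I do not expect a serious obstacle here: the whole argument is the $\ell$-analogue of the computation \eqref{eq:computeit} appearing in the proof of Proposition \ref{prop:infoprime}, and the only point to verify carefully is that $T = \langle c^2\rangle$ is genuinely centralized by all of $G$ and that $G/T$ is dihedral of order $2p$ rather than something larger — both of which are immediate from $\ell$ being odd and prime to $p$ and from the inversion action. One could alternatively phrase the conclusion through Corollary \ref{cor:flipit} verbatim with $J = G$, $T = \langle c^2 \rangle$: the conditions $\N_G(T) \neq T$ and $\psi(\{e\}, \C_G(T)/T) \neq 0$ are exactly what \eqref{eq:duhcompute} and the centralizer computation establish.
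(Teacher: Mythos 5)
Your core computation is correct and, in fact, gives a cleaner argument than the paper's. The paper splits into two cases: for $\ell = 2$ it takes $T = \langle c^2\rangle$ (exactly as you do), but for $\ell > 2$ it instead takes $T = E \times C_\ell$, a cyclic group of order $p\ell$ of index $2$, and applies alternative (b) of Proposition \ref{prop:subtle} directly to get $\psi(T,\C_G(T)) = \psi(T,T) = 1 \neq 0$. Your observation that $T = \langle c^2\rangle$ is central and of order $\ell$, with $\C_G(T)/T \cong D_{2p}$ and $\psi(\{e\},D_{2p}) = -p$, works uniformly for both $\ell = 2$ and $\ell > 2$, avoiding the case split entirely. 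This is a genuine simplification and the computation is right.

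The one place where you are gliding over a point that the paper handles explicitly: to deduce "not almost Bertin" from "not weak Bertin" you invoke the implication that an almost Bertin group is a weak Bertin group. That implication holds only because, for any group $G$ embedding into $\mathrm{Aut}_k(k[[t]])$, there always exist embeddings $\phi$ whose $p$-power ramification exceeds any prescribed bound $N$ — without such a $\phi$, "all sufficiently ramified $\phi$ have vanishing Bertin obstruction" could hold vacuously while "some $\phi$ has vanishing Bertin obstruction" fails. The paper makes this explicit: after showing via Corollary \ref{cor:flipit} that no $\phi$ has vanishing Bertin obstruction, it then constructs, by local class field theory (in the $\ell=2$ case via the cyclic quartic $\mathbb{F}_q((z))/\mathbb{F}_q((t))$, and similarly for $\ell > 2$), injections $\phi$ with $-a_\phi(\tau)$ arbitrarily large on nontrivial $p$-power elements, and only then concludes. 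You should either cite the construction machinery (e.g.\ the class-field-theoretic arguments as in Proposition \ref{prop:thepoint}, or Proposition \ref{prop:subquots} of Appendix 1) or note explicitly that arbitrarily ramified embeddings exist, so the implication is non-vacuous. With that addendum, your proof is complete.
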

 
 \begin{proof}Suppose first that $\ell = 2$ and that $\sigma$ is a generator for $C_{2\ell} = C_4$.  Let $T = \{e,\sigma^2\}$, so that $T$ is in the center of $G$ and $\N_G(T) = \C_G(T) = G$.  The group
 $\C_G(T)/T$ is isomorphic to the dihedral group $D_{2p}$, and $\psi(\{e\},\C_G(T)/T) = 1 + \mu(p) + p\mu(2) = 1 - 1 - p = -p$.  Therefore \ref{cor:flipit} implies no local $G$ cover has vanishing Bertin obstruction.
 To show that $G$ is not a local almost Bertin group for $k$, it will now
be enough to prove that for each integer $M \ge 0$, there is an injection $\phi:G \to \mathrm{Aut}_k(k[[t]]))$ such that $-a_\phi(\tau) \ge M$ for all non-trivial elements $\tau \in G$ of $p$-power order. 
Let $q = p^2$ and let $L/K$ be the cyclic quartic extension $\mathbb{F}_q((z))/\mathbb{F}_q((t))$
for which $z^4 = t$.  One can construct a $\phi$ with the above properties by considering 
$L^*/(L^*)^p$ as a module for $(\mathbb{Z}/p)[\mathrm{Gal}(L/K)]$ and by applying
the class field theory arguments used in the proof of Proposition \ref{prop:thepoint};  we will leave the details to the reader.

In the other case of Lemma \ref{lem:duhduh}, $G = (E.C_2) \times C_\ell$ where $\ell > 2$ is prime, $p \ne \ell$ and $C_2$ acts on $E = C_p$ by inversion.   Let $T$ be the cyclic sugroup
$E \times C_\ell \cong C_{p\ell}$.  Then $T$ has index $2$ in $G$, so $\N_G(T) = G$ while $\C_G(T) = T$.
Hence $\psi(T,\C_G(T)) = 1$, so Corollary \ref{cor:flipit} shows that no local $G$ cover has vanishing
Bertin obstruction.  We can construct injections $\phi:G \to \mathrm{Aut}_k(k[[t]])$ leading
to such covers such that $-a_\phi(\tau)$ is arbitrarily large for all non-trivial elements $\tau \in G$
of $p$-power order by the same local class field theory arguments used in previous cases.  
This completes the proof.
\end{proof}

\begin{cor}
 \label{cor:theanswerhere}
Suppose that $p > 2$, that $H$ is a semi-direct product of a non-trivial $p$-group with a cyclic prime-to-$p$ group, and that $H$ is an almost Bertin group for $k$.  Then $H$ must be either
a cyclic $p$-group or a dihedral group $D_{2p^z}
$ of order $2p^z$ for some $z \ge 1$.
 \end{cor}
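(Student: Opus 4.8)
The strategy is to feed $H$ into the purely group-theoretic classification of Theorem~\ref{thm:oddcasered}, after first checking that each of the five excluded quotient types appearing there fails to be an almost Bertin group for $k$. By hypothesis $H=P.C$ with $P$ a non-trivial normal $p$-subgroup and $C$ cyclic of order prime to $p$, so $P$ is the Sylow $p$-subgroup of $H$, $H/P=C$ is cyclic, and $p$ is odd; thus Theorem~\ref{thm:oddcasered} applies to $H$ once we know $H$ has no homomorphic image of types (1)--(5) there. Since $H$ is almost Bertin, Corollary~\ref{cor:subquots}(a) shows that every quotient of $H$ is almost Bertin, so it suffices to prove that none of the five types is an almost Bertin group for $k$. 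Granting this, $H$ has no such quotient and Theorem~\ref{thm:oddcasered} forces $H$ to be cyclic or dihedral of order $2p^{z}$ for some $z\ge 1$ (here $z\ge 1$ since $P\ne\{e\}$), which is the assertion.

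Four of the five types are immediate. Types (1) and (3), namely $C_p\times C_p$ and $(C_p\times C_p).C_2$, together with type (2) when $\#E>p$, all have Sylow $p$-subgroup of order $q>p$ with $(p,q)\ne(2,4)$ because $p>2$, so Proposition~\ref{prop:thepoint}(a) shows they are not almost Bertin. When $\#E=p$ in type (2) we have instead $q=p$, $m\ge 3$ and $C_m$ acting faithfully, so Proposition~\ref{prop:thepoint}(c) applies. Type (5), the semidirect product $C_p.C_4$ with a generator of $C_4$ acting on $C_p$ by inversion, is exactly the $\ell=2$ case of Lemma~\ref{lem:duhduh} (and $2\ne p$). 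And for a prime $\ell>2$ with $\ell\ne p$, the group $D_{2p}\times C_\ell$ of type (4) is the semidirect product of $C_p$ by $C_{2\ell}=C_2\times C_\ell$ with a generator of $C_{2\ell}$ acting by inversion, so Lemma~\ref{lem:duhduh} again shows it is not almost Bertin.

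The remaining type, (4) with $\ell=p$, is the crux. Write $G=D_{2p}\times C_p=P.C$ with $P=\langle a\rangle\times\langle c\rangle\cong C_p\times C_p$, $C=\langle\sigma\rangle\cong C_2$, $\sigma a\sigma^{-1}=a^{-1}$ and $\sigma c\sigma^{-1}=c$. Unlike the previous types, $G$ is a weak Bertin group, in fact a GM group in the sense of Definition~\ref{def:groovy}, so no earlier non-example applies and one must exhibit badly-behaved actions directly. Consider the off-diagonal order-$p$ subgroup $T=\langle ac\rangle$. For $p>2$ no element of $G$ outside $P$ normalizes $T$: conjugation by such an element agrees on $P$ with conjugation by $\sigma$, which carries $ac$ to $a^{-1}c$, a generator of $\langle a^{-1}c\rangle\ne T$; hence $\N_G(T)=P$ and $[\N_G(T):T]=p$. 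Moreover the only cyclic subgroup of $G$ containing $T$ is $T$ itself, since $P$ is elementary abelian while any cyclic subgroup of $G$ not contained in $P$ meets $P$ in $\langle c\rangle$ or $\{e\}$. Therefore Theorem~\ref{thm:nontrivcase} gives
\[
b_T \;=\; \frac{\mu(1)\,\iota(T)}{[\N_G(T):T]} \;=\; \frac{\iota(T)}{p}\;=\;\frac{j+1}{p},
\]
where $j$ is the first jump of the lower-numbering ramification filtration of $\phi$: indeed $T\subset P=G_1=\cdots=G_{j}$, while $G_{j+1}$, being a $C$-stable proper subgroup of $P$, is $\{e\}$, $\langle a\rangle$, or $\langle c\rangle$, so never equal to $T$, giving $\iota(T)=j+1$. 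By the local class field theory construction used in the proof of Proposition~\ref{prop:thepoint} --- working over $L=\mathbb{F}_p((y))$, its quadratic extension $N$ with group $\langle\sigma\rangle$, and a suitable $\langle\sigma\rangle$-stable quotient of $N^{*}/(N^{*})^{p}$ --- one shows that for every $M$ the group $G$ occurs as a local Galois group whose first jump $j$ is any prescribed integer $\ge M$ that is prime to $p$ and $\not\equiv-1\pmod{p}$, the parity of $j$ being matched to the appropriate $\sigma$-eigenspace of $P$. For such a $j$ one has $b_T=(j+1)/p\notin\mathbb{Z}$, so by Proposition~\ref{prop:special}(ii) the Bertin obstruction of $\phi$ does not vanish; as $j\ge M$ was arbitrary, $D_{2p}\times C_p$ is not an almost Bertin group.

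Having eliminated all five excluded types, Theorem~\ref{thm:oddcasered} applies to $H$ and gives the corollary. The real obstacle is the last case: types (1), (2), (3) and (5) are handled just by citing Proposition~\ref{prop:thepoint} and Lemma~\ref{lem:duhduh}, but $D_{2p}\times C_p$ is itself a weak Bertin group, so its failure to be almost Bertin cannot be read off from an earlier result and has to be witnessed by producing, at arbitrarily high ramification, an action whose constant $b_T$ for the off-diagonal $p$-subgroup $T=\langle ac\rangle$ is non-integral; the rest is bookkeeping, matching each excluded quotient type to the appropriate earlier statement.
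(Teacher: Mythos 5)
Your argument is correct and is, in outline, exactly the paper's proof: almost Bertin passes to quotients by Corollary \ref{cor:subquots}(a), so it suffices to show that none of the five quotient types of Theorem \ref{thm:oddcasered} is almost Bertin, and your allocation of types (1), (3) and (2) to Proposition \ref{prop:thepoint}(a),(c), and of type (5) and type (4) with $\ell\ne p$ to Lemma \ref{lem:duhduh}, is the paper's allocation (stated there a bit more coarsely). The only real divergence is your treatment of $D_{2p}\times C_p$, and there your framing is off even though your mathematics is fine: it is not true that ``no earlier non-example applies''. Indeed $D_{2p}\times C_p=(C_p\times C_p).C_2$ is the semi-direct product of an elementary abelian group of order $q=p^2>p$ with a cyclic group of order $2$ prime to $p$, so Proposition \ref{prop:thepoint}(a) applies verbatim and already shows it is not almost Bertin; that proposition makes no exception for groups that happen to be weak Bertin (GM) groups, and ``weak Bertin but not almost Bertin'' is precisely the situation of Example \ref{ex:sep}. (Your instinct that this subcase deserves attention is not unreasonable: the paper's one-line proof cites Lemma \ref{lem:duhduh} for type (4), and that lemma is stated only for $\ell\ne p$, so the $\ell=p$ subcase really rests on Proposition \ref{prop:thepoint}(a).) Your substitute argument is itself sound: $\N_G(T)=P$ and $S(T)=\{T\}$ for $T=\langle ac\rangle$; the group $G_{j+1}$, being a proper subgroup of $P$ normal in $G$, can only be $\{e\}$, $\langle a\rangle$ or $\langle c\rangle$, so $\iota(T)=j+1$ and $b_T=(j+1)/p$; and the class field theory construction in the proof of Proposition \ref{prop:thepoint} does realize first jumps $j\ge M$ with $j$ prime to $p$ and $j\not\equiv -1 \pmod p$ (first break in one $\sigma$-eigenspace, the other eigenspace breaking later), whence $b_T\notin\mathbb{Z}$ and Proposition \ref{prop:special}(ii) gives a nonvanishing Bertin obstruction at arbitrarily deep ramification. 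So in effect you have re-proved the special case of Proposition \ref{prop:thepoint}(a) that you needed; the net proof is valid, just redundant there. One last small point: what your proof (and the paper's) delivers is that $H$ is cyclic or dihedral of order $2p^z$; the phrase ``cyclic $p$-group'' in the statement should be read simply as ``cyclic'', since by Lemma \ref{lem:cyclic} a cyclic group of order $p^a m$ with $a\ge 1$ and $m>1$ prime to $p$ also satisfies the hypotheses and is an almost Bertin group.
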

 
 \begin{proof}   Suppose the corollary is false for some group $H$.
 By Theorem \ref{thm:oddcasered}, $H$ has a quotient $G$ having one of the forms (1)-(5) there.  Forms (1)-(3) are not almost Bertin groups by 
Proposition \ref{prop:thepoint}, and similarly for (4) and (5) by
Lemma \ref{lem:duhduh}.  So there is a quotient
 $G$ of $H$ that is not an almost  Bertin group for $k$.  Thus $H$ is not an almost Bertin
 group by Corollary \ref{cor:subquots}, and this is a contradiction.
 \end{proof}
 
 \begin{cor}
 \label{cor:twoanswers}
 Suppose that $p = 2$.  Let $G$ be a group which is not cyclic, and which is one
 of the groups described in items (1), (2), (4), (5) or (6) of
 Theorem \ref{thm:evenclass}.  Then $G$ is not an almost-Bertin group for $k$.
 \end{cor}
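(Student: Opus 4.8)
The plan is to reduce each of the five items to Proposition~\ref{prop:thepoint}(a). Fix $p=2$. In every one of the items (1), (2), (4), (5), (6) of Theorem~\ref{thm:evenclass} the group $G$ is an extension with normal elementary abelian $2$-subgroup $E$ and cyclic quotient of odd order; since the orders of $E$ and the quotient are coprime the extension splits, so $G\cong E\rtimes C_m$ with $E$ elementary abelian of order $q=|E|>1$ and $C_m$ cyclic of odd order $m$ prime to $p$. Thus Proposition~\ref{prop:thepoint} applies verbatim, and in each case it suffices to read off $q$ and $m$. Whenever $q>2$ and $(p,q)\neq(2,4)$ the first assertion of Proposition~\ref{prop:thepoint}(a) already shows $G$ is not an almost Bertin group, so the only cases needing extra attention are those with $q=4$, where we must check that the exceptional clause of Proposition~\ref{prop:thepoint}(a) (namely $m\in\{1,3\}$ \emph{and} $C_m$ acting faithfully on $E$) does not hold.

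First I would dispose of item (1): here $E=C_2^{\,d}$ and $m\ge 5$ is odd. If $d=1$ then $\mathrm{Aut}(E)=1$, the action is trivial, and $G=C_2\times C_m$ is cyclic since $m$ is odd, contrary to hypothesis; hence $d\ge 2$ and $q\ge 4$. If $q\ge 8$ then $(p,q)\neq(2,4)$ and Proposition~\ref{prop:thepoint}(a) finishes it; if $q=4$ then $m\ge 5$ forces $m\notin\{1,3\}$, so the exceptional clause fails and Proposition~\ref{prop:thepoint}(a) again applies. Item (2) has $q=16$ and item (4) has $q=8$ (with $m\in\{1,3\}$); in both $(p,q)\neq(2,4)$, so Proposition~\ref{prop:thepoint}(a) immediately gives that $G$ is not an almost Bertin group.

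Next come items (5) and (6), which land on $q=4$. In item (5), if the prime-order factor has order $2$ then $G=C_2^3$, which is the case $q=8$, $m=1$ of Proposition~\ref{prop:thepoint}(a); if it has odd prime order $\ell$ then $G=C_2^2\rtimes C_\ell$ with $C_\ell$ acting trivially, so $q=4$, $m=\ell$, and $C_m$ does \emph{not} act faithfully on $E$, whence Proposition~\ref{prop:thepoint}(a) shows $G$ is not even a weak Bertin group, in particular not an almost Bertin group. (Alternatively, in this subcase $\C(G)=G$ is neither cyclic nor a $2$-group, so Corollary~\ref{cor:greenMat} applies.) In item (6), $E=C_2^2$ gives $q=4$ while $m=3\ell$ with $\ell$ an odd prime, so $m\ge 9$ and $m\notin\{1,3\}$; Proposition~\ref{prop:thepoint}(a) then shows $G$ is not an almost Bertin group. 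This exhausts the five items.

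The whole argument is bookkeeping once Proposition~\ref{prop:thepoint} is available. The two points that require actual thought rather than mechanical substitution are: recognizing that item (1) with $\dim E=1$ degenerates into a cyclic group (which is exactly why the hypothesis ``$G$ not cyclic'' is needed and used), and checking in the $q=4$ families that the exceptional clause of Proposition~\ref{prop:thepoint}(a) is never met --- in item (5) because the cyclic factor acts trivially rather than faithfully, and in item (6) because $m=3\ell\ge 9$ is too large to lie in $\{1,3\}$.
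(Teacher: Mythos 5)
Your proposal is correct and follows essentially the same route as the paper: both reduce everything to Proposition~\ref{prop:thepoint}(a) and observe that the only groups in items (1), (2), (4), (5), (6) escaping that proposition are those from item (1) with $|E|=2$, which are cyclic. Your write-up is simply more explicit in spelling out the $q$ and $m$ for each item and in checking that the exceptional clause $(p,q)=(2,4)$, $m\in\{1,3\}$, $C_m$ faithful never occurs, which the paper leaves implicit.
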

 
 \begin{proof}  The only $G$ described in items (1), (2), (4), (5) or (6) of Theorem \ref{thm:evenclass}
 which are not covered by Proposition \ref{prop:thepoint}(a) are those described in item
 (1) of Theorem \ref{thm:evenclass} for which the elementary abelian $p =2$ group $E$
 is of order $2$.  However, these $G$ are cyclic, so Corollary \ref{cor:twoanswers} follows.
 \end{proof}

 \begin{prop}
 \label{prop:no4}
 Suppose $p = 2$ and that as in item (3) of Theorem \ref{thm:evenclass}, $G$ is isomorphic to the semi-direct product $E.C_3$ where
the normal subgroup $E $ is isomorphic to $(\mathbb{Z}/4)^2$ and the cyclic group
$C_3$ of order $3$ acts faithfully on $E$.     Then $G$ is not an
 almost Bertin group for $k$.
 \end{prop}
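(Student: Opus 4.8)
The plan is to show that for every integer $N\ge 0$ there is an injection $\phi:G\to\aut_k(k[[z]])$ which is $N$-ramified — meaning $z^N$ divides $\phi(\sigma)(z)-z$ for every $\sigma\in G$ of $2$-power order — but whose Bertin obstruction does not vanish; by Definition~\ref{dfn:termdef} this says $G$ is not an almost Bertin group. The non-vanishing will be detected by the single constant $b_\Gamma(\phi)$ attached to a fixed cyclic subgroup $\Gamma\le E$ of order $4$.

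First I would record the group theory. Since the kernel of $\aut(E)\to\aut(E/2E)$ is a $2$-group, the faithful action of $C_3$ on $E=(\mathbb Z/4)^2$ remains faithful, hence by coprimality fixed-point-free, on $E$ and on $E/2E\cong(\mathbb Z/2)^2$. Thus $C_3$ cyclically permutes the three order-$2$ subgroups of $E$, the three maximal subgroups of $E$, and no cyclic subgroup of $E$ of order $4$ is $C_3$-stable (its automorphism group has no element of order $3$). The consequences I will use are: the only normal subgroups of $G$ contained in $E$ are $1$, $2E$, $E$; one has $G/2E\cong A_4$; and for the fixed $\Gamma$ one gets $\N_G(\Gamma)=E$ and $S_G(\Gamma)=\{\Gamma\}$ — no cyclic subgroup of $G$ lying outside $E$ can contain $\Gamma$, since by Lemma~\ref{lem:Gstruct} its order-$3$ part would be conjugate into $C_3$ and centralize $\Gamma$. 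Hence Theorem~\ref{thm:nontrivcase} gives $b_\Gamma(\phi)=\iota(\Gamma)/4$ for every injection $\phi:G\to\aut_k(k[[t]])$.

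Next I would analyze the ramification filtration of an arbitrary $\phi$. The wild inertia $G_1$ is the normal (unique) Sylow $2$-subgroup $E$; each $G_i$ ($i\ge 1$) is normal in $G$, hence lies in $\{1,2E,E\}$, and since $E$ is not elementary abelian while $G_i/G_{i+1}$ embeds in the elementary abelian group $\mathbf p^i/\mathbf p^{i+1}$ (Lemma~\ref{lem:urk}), the filtration has the shape $E=G_1=\cdots=G_a\supset 2E=G_{a+1}=\cdots=G_b\supset 1$ for some $1\le a<b$ (it cannot pass directly from $E$ to $1$). Because $\Gamma\not\subseteq 2E$, the largest $i$ with $\Gamma\subseteq G_i$ is $a$, so $\iota(\Gamma)=a+1$ and $b_\Gamma(\phi)=(a+1)/4$; this is a non-negative integer precisely when $a\equiv 3\pmod 4$.

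Finally I would construct, for each $N$, a sufficiently ramified $\phi$ with $a\equiv 1\pmod 4$. By Proposition~\ref{prop:thepoint}(b) there is an injection $\phi_{A_4}:A_4\to\aut_k(k[[t]])$ whose wild ramification filtration has a single break, at some integer $j\ge N-1$ with $j\equiv 1\pmod 4$. Viewing this as an action of $G/2E$, Proposition~\ref{prop:subquots} of Appendix~1 (as used in the proof of Corollary~\ref{cor:subquots}) produces an injection $\phi_G:G\to\aut_k(k[[z]])$ inducing $\phi_{A_4}$ after identifying $k[[z]]^{2E}$ with $k[[t]]$. The step I expect to be the real work is to verify that the first lower-numbering break $a$ of $\phi_G$ equals $j$: using the lower-numbering filtration of the subgroup $E\le G$ and then the quotient formula $(E/2E)^w=E^w(2E)/2E$ for the upper numbering (\cite[Ch.~IV]{corps}), the filtration induced on $E/2E\le A_4$ is the given one, with upper break $j$; since $E^w$ runs through $E,2E,1$ in that order, $E^w(2E)/2E$ equals $E/2E$ exactly when $E^w=E$, so the first upper break of $E$ is $j$, and as the Herbrand function of $k((z))/k((z))^E$ is the identity up to the first break, $a=j$. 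Granting this, $b_\Gamma(\phi_G)=(j+1)/4\notin\mathbb Z$, so the Bertin obstruction of $\phi_G$ does not vanish by Proposition~\ref{prop:special}(ii); and $\phi_G$ is $N$-ramified because $v_z(\phi_G(\sigma)(z)-z)\ge a+1=j+1\ge N$ for every nontrivial $\sigma$ of $2$-power order. Since $N$ was arbitrary, $G$ is not an almost Bertin group for $k$, which is the assertion of Proposition~\ref{prop:no4}.
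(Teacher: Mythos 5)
Your proposal is correct and is essentially the argument the paper gives: both reduce to the constant $b_T = \iota(T)/4$ for a fixed order-$4$ cyclic subgroup $T \subset E$ (using $\N_G(T)=E$ and $S_G(T)=\{T\}$), then use Proposition~\ref{prop:thepoint}(b) together with Proposition~\ref{prop:subquots} to construct a sufficiently ramified $\phi$ whose first wild break $r$ satisfies $r+1\equiv 2\pmod 4$, and verify via the compatibility of upper numbering with quotients that this break is inherited by $G$ from $A_4=G/2E$. The only cosmetic difference is that you track the ramification break through the subgroup $E$ and its quotient $E/2E$, whereas the paper passes directly from $G$ to $G/E^2\cong A_4$ via Herbrand; these are interchangeable computations.
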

 
 \begin{proof}  Since the ramification groups $G_i$ are normal in $G$, there is an integer $r \ge 1$
such that $G = G_0 \supset G_1 = E = \cdots = G_{r} \ne G_{r+1}$ and $G_{r+1} \subset E^2 = (2\mathbb{Z}/4\mathbb{Z})^2$.
Let $T$ be a cyclic subgroup of order $4$ in $E$.  Then $\N_G(T) = E$ and there are no cyclic subgroups
of $G$ which properly contain $T$.  Now   Theorem \ref{thm:nontrivcase} gives
\begin{equation}
\label{eq:btnice}
 b_T = \frac{1}{[\N_G(T):T]}  \iota(T) = \frac{r+1}{4}.
 \end{equation}

Following \cite[\S IV.3]{corps} we let $G_u$ for $u \ge 0$ be $G_i$ when $i$
is the smallest integer $\ge u$, and we define
$$\varphi(u) = \int_{0}^u \frac{dt}{[G_0:G_t]}.$$
The upper ramification group $G^{\varphi(u)}$ then equals $G_u$.  Since $G_0$ contains $E$ with
index $3$, we find that $\varphi(u) = u/3$ for $0 \le u \le r$.  Thus $G^{r/3} = E$ and $G^{r/3+\epsilon}$ is contained in $E^2$ if $\epsilon > 0$. The group $E^2$ is normal in $G$, and $H = G/E^2$
is isomorphic to $A_4$.  By \cite[Prop. IV.14]{corps}, the image of $G^\nu$ in $H$ is
$H^\nu$ for all $\nu \ge 0$.  Thus $H^{r/3} = E/E^2$ and $H^{r/3 + \epsilon} = \{e\}$ for $\epsilon > 0$.
By comparing the lower and upper ramification groups of $H$, we find that $H_r = E/E^2$
while $H_{r + 1} = \{e\}$.  

Suppose now that $M \ge 0$ is given.  To show that $G$ is not an almost Bertin group for $k$,
it will suffice to show that there is $\phi:G \to \mathrm{Aut}_k(k[[t]])$ such that when 
$r$ is defined as above, $r \ge M- 1$ and $r+1 \not \equiv 0$ mod $4$.  This
is because (\ref{eq:btnice}) will then show $b_T$ is not integral, so the Bertin
obstruction of $\phi$ does not vanish by Proposition \ref{prop:special}.  

To construct
such a $\phi$, we apply Proposition \ref{prop:subquots} of Appendix 1 to the surjection
$G \to H = A_4$.  This produces an integer $M'$ depending on $M$ for which we may use the
following argument.  Replace $M$ by $M'$ in Proposition \ref{prop:thepoint} and  let $r$ be the integer $j$ in part (b) of Proposition \ref{prop:thepoint}.  Proposition \ref{prop:thepoint}
then produces an injection $\psi:H = A_4 \to \mathrm{Aut}_k(k[[z]])$ such that  $H_1 = H_r \ne H_{r+1}$
for some integer $r \ge M'-1$  such that $r + 1 \equiv 2$ mod $4$.  
Proposition \ref{prop:subquots}  now produces an injection $\phi:G \to \mathrm{Aut}_k(k[[t]])$
as follows.  The $H$-cover associated to $\psi$ is the quotient of the local $G$-cover associated to $\phi$, and  $-a_\phi(\tau) \ge M$ if $\tau$ is
a $p = 2$-torsion element of $G$.  We now see from the above computation of
upper and lower ramification groups that $r$ is the first jump in the wild lower
ramification filtration of $G$, so we are done.
\end{proof}

\begin{prop}
 \label{prop:no9}
 Suppose $p = 2$ and that as in item (7) of Theorem \ref{thm:evenclass}, $G$ is isomorphic to
 $(\mathbb{Z}/4) \times (\mathbb{Z}/2)$.     Then $G$ is not an
 almost Bertin group for $k$.
 \end{prop}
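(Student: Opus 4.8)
The plan is to show $G$ fails to be almost Bertin by producing, for each integer $M\ge 0$, a faithful action $\phi\colon G\to\mathrm{Aut}_k(k[[t]])$ which is "$M$-ramified" (i.e.\ $t^M$ divides $\phi(\sigma)(t)-t$ for every $\sigma$ of $2$-power order; since $p=2$ and $G$ is a $2$-group this means every $\sigma\ne e$) but whose Bertin obstruction does not vanish, because one of the rational numbers $b_T$ of Proposition~\ref{prop:special} is not an integer. Here $p=2$, so $G$ is a $2$-group and $P=G$, $C=\{e\}$. Since an $M$-ramified example exists for every $M$, no bound $N(G,k)$ can witness that $G$ is almost Bertin.

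First I would record the relevant group theory. Write $G=\langle x\rangle\times\langle y\rangle$ with $x^4=y^2=e$, and take $T=\langle y\rangle$. As $G$ is abelian, $\N_G(T)=G$ and $[\N_G(T):T]=4$; moreover the only cyclic subgroup of $G$ containing $y$ is $\langle y\rangle$ itself (the two cyclic subgroups of order $4$, namely $\langle x\rangle$ and $\langle xy\rangle$, both have $\langle x^2\rangle$ as their unique subgroup of order $2$), so $S_G(T)=\{T\}$. Theorem~\ref{thm:nontrivcase} therefore gives
\[
b_T=\frac{\iota(T)}{4}.
\]
Recalling that $\iota(T)-1$ is the largest index $i$ with $T\subseteq G_i$, and that for $G$ a $2$-group with $G_0=G_1=G$ the formula of Proposition~\ref{prop:serrethm} specializes to $-a_\phi(\tau)=\iota(\langle\tau\rangle)$ for $\tau\ne e$, the whole problem reduces to exhibiting actions $\phi$ for which $\iota(\langle y\rangle)$ is a large integer not divisible by $4$.

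Next I would construct the extensions over $K=k((t))$ (everything below is unaffected by enlarging $k$, so one may take $k=\overline{\mathbb{F}}_2$). Given $M$, fix $\lambda\ge M$ with $\lambda\equiv 1\pmod{4}$, in particular $\lambda$ odd. Using Artin--Schreier--Witt theory I would take a totally ramified $\mathbb{Z}/2$-extension $M_2/K$ with ramification break exactly $\lambda$, and a totally ramified $\mathbb{Z}/4$-extension $N/K$, linearly disjoint from $M_2$, whose ramification breaks are all strictly larger than $\lambda$; set $L=M_2N$, so $\mathrm{Gal}(L/K)\cong G$, with $\langle x\rangle=\mathrm{Gal}(L/M_2)$ and $\langle y\rangle=\mathrm{Gal}(L/N)$, and let $\phi$ be the resulting faithful action of $G$ on the completed local ring of $L$ (with $t$ a uniformizer of $L$). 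The key ramification claim is that the first break $\lambda_1$ of $G$ equals $\lambda$ and that $G_{\lambda+1}=\langle x\rangle$: among the three quadratic subextensions of $L/K$ only $M_2=L^{\langle x\rangle}$ has break as small as $\lambda$ (the other two are, respectively, the quadratic subextension of $N$ and the extension built from $N$'s defining data together with $M_2$, both of break $>\lambda$), the $\mathbb{Z}/4$-subextension $N/K$ has breaks $>\lambda$, and $G_{\lambda_1}/G_{\lambda_1+1}$ is elementary abelian so $\langle x^2\rangle\subseteq G_{\lambda_1+1}$; comparing the breaks of the various quotients of $G$ (equivalently, passing between upper and lower numbering) then forces $\lambda_1=\lambda$ and $G_{\lambda+1}=\langle x\rangle$. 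Since $y\notin\langle x\rangle$ and $G_i\subseteq\langle x\rangle$ for all $i\ge\lambda+1$, this yields $\iota(\langle y\rangle)=\lambda+1$, whence $b_T=(\lambda+1)/4\notin\mathbb{Z}$ because $\lambda\equiv 1\pmod{4}$.

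Finally I would assemble the argument: because $G_{\lambda+1}=\langle x\rangle$, every $\tau\ne e$ satisfies $\iota(\langle\tau\rangle)\ge\lambda+1$ (with strict inequality exactly when $\tau\in\langle x\rangle$), so $-a_\phi(\tau)=\iota(\langle\tau\rangle)\ge\lambda+1\ge M$ for all $\tau\ne e$, i.e.\ $\phi$ is $M$-ramified; on the other hand $b_T\notin\mathbb{Z}$, so by Proposition~\ref{prop:special}(ii) the Bertin obstruction of $\phi$ does not vanish. As $M$ was arbitrary, $G$ is not an almost Bertin group for $k$. The main obstacle is the ramification computation in the third paragraph: one must check, using the behaviour of Artin--Schreier(--Witt) breaks under the totally ramified base changes $K\subset M_2$ and $K\subset N$ (i.e.\ the Herbrand functions of these extensions), that the first descent of the lower ramification filtration of the compositum lands on the cyclic subgroup $\langle x\rangle$ and not on the Klein four-subgroup $\langle x^2,y\rangle$, and that this first break is exactly $\lambda$; choosing the breaks of $N/K$ very large compared with $\lambda$ is precisely what makes this come out.
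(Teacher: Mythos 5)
Your argument is correct and uses the same key computation as the paper: with $T=\langle y\rangle$ one has $\N_G(T)=G$ of index $4$ and $S_G(T)=\{T\}$, so $b_T=\iota(T)/4$, and then it suffices to produce, for every $M$, a sufficiently ramified $\phi$ with $\iota(T)\equiv 2\pmod 4$. The paper reaches exactly this (it takes the quotient $\pi\colon G\to H=\mathbb{Z}/2$ and uses Proposition~\ref{prop:subquots} to lift a wildly ramified $H$-extension with break $r\equiv 1\pmod 4$ while forcing $\ker\pi\subset G_r$, then compares upper and lower filtrations to get $G_r=G$, $G_{r+1}\subset\ker\pi$, $\iota(T)=r+1$). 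Your variant replaces the Appendix~1 machinery by an explicit compositum $L=M_2N$ of a $\mathbb{Z}/2$-extension with break $\lambda\equiv 1\pmod 4$ and a $\mathbb{Z}/4$-extension whose breaks all exceed $\lambda$; the argument that $G^{\lambda+\epsilon}=\langle x\rangle$ — and hence $G_{\lambda+1}=\langle x\rangle$ after applying the Herbrand function, so $\iota(\langle y\rangle)=\lambda+1$ — is sound, once one notes (as you sketch) that the third quadratic subextension $L^{\langle xy\rangle}$ must have break $u_1>\lambda$ (the sum of two Artin--Schreier representatives of distinct breaks has the larger break), so $G^{\lambda+\epsilon}$ cannot equal $\langle xy\rangle$. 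This is a more elementary, self-contained construction that avoids invoking the embedding-problem result of Appendix~1, at the cost of carrying out the break bookkeeping by hand; the paper's route is less explicit but reuses machinery needed elsewhere (e.g.\ in Proposition~\ref{prop:no4}).
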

 
 \begin{proof}  Suppose $M \ge 1$.  Let $H = \mathbb{Z}/2$ be the second factor in $G$, so that there
 is a split surjection $\pi:G \to H$.  Let $M'$ be an integer having the properties for $M$ and $G \to H$
 described in Proposition \ref{prop:subquots} of Appendix 1.  We can construct an $H$-extension $N/L$ of $L = k((y))$
 such that the first (and only) jump in the lower numbering ramification filtration of $H$
 occurs at an integer $r \ge M'-1$ such that $r \equiv 1$ mod $4$.  By Proposition 
 \ref{prop:subquots}(i), there is an injection $\phi:G \to \mathrm{Aut}_k(k[[t]])$ which defines
 a local $G$-cover of $L = k((y))$ having $N/L$ as the quotient cover associated to 
 $\pi:G \to H$.  By Proposition 
 \ref{prop:subquots}(ii), we can furthermore require that $a_\phi(\tau) \ge r+1$ for all
 non-trivial elements $\tau \in \mathrm{ker}(\pi)$, since all such $\tau$ have order a power of $p = 2$.  
 This means that $\mathrm{ker}(\pi) \subset G_r$.  
 Since $\pi(G^\nu) = H^\nu$ for all $\nu$, we conclude that $\pi(G^r) = H$
 while $\pi(G^{r+\epsilon}) = \{e\}$ for $\epsilon > 0$.  Now $\mathrm{ker}(\pi) \subset G_r \subset G^r$,
 so we conclude that $G^r = G$, while $G^{r+\epsilon} \subset \mathrm{ker}(\pi)$ for $\epsilon > 0$.
 The first jumps in the lower and upper ramification filtrations of $G$ are equal, so we deduce
 from this that $G_r = G$ while $G_{r+1} \subset \mathrm{ker}(\pi)$.  When we now view $H = \mathbb{Z}/2$
 as a subgroup of $G = (\mathbb{Z}/4) \times H$, we see that $\iota(H) = r+1$.  Furthermore,
 $\N_G(H) = G$, while there are no cyclic subgroups of $G$ which property contain $H$.
 Thus
 $$b_H = \frac{1}{[\N_G(H):H]} \iota(H) = \frac{r+1}{4}.$$
 Since we arranged that $r \equiv 1$ mod $4$, this proves $b_H$ is not integral, 
 so the Bertin obstruction of $\phi:G \to \mathrm{Aut}_k(k[[t]])$ is non-trivial by
Proposition \ref{prop:special}.  Because $G_r = G$ and $r \ge M-1$, this completes
the proof that $G$ is not an almost Bertin group for $k$.
\end{proof}

 \begin{cor}
 \label{cor:summary}
 To complete the proof of Theorem \ref{thm:nonexs}, it will suffice to show the following:
 \begin{enumerate}
 \item[a.]  The groups listed in items (1) - (4) of Theorem \ref{thm:nonexs} are KGB groups for $k$.
 \item[b.]  When $p = 2$, neither the quaternion group $Q_8$ nor the group $\mathrm{SL}_2(3)$
is a Bertin group for $k$.
 \item[c.]  When $p = 2$, no semi-dihedral group of order at least $16$ is a Bertin group for $k$.
 \end{enumerate}
 \end{cor}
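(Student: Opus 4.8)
Granting (a), (b) and (c), Theorem \ref{thm:nonexs} follows from its two halves: that every group on the list (1)--(4) is a KGB group for $k$, and that conversely every Bertin group for $k$ lies on that list. Combined with the implication ``KGB $\Rightarrow$ Bertin'' --- the last sentence of Theorem \ref{thm:KGBtwo}, applied to every $\phi$ --- these give the cycle ``on the list $\Rightarrow$ KGB $\Rightarrow$ Bertin $\Rightarrow$ on the list'', hence the equivalence of the three conditions asserted in Theorem \ref{thm:nonexs}. The implication ``on the list $\Rightarrow$ KGB'' is precisely assertion (a), so the only thing left to prove is the converse: if a cyclic-by-$p$ group $G$ is a Bertin group for $k$ then $G$ is on the list. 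I would prove the contrapositive, using throughout that a Bertin group is an almost Bertin group and that, by Corollary \ref{cor:subquots}, any quotient of an almost Bertin group is again one; thus for $G$ not on the list it suffices either to exhibit a quotient of $G$ that is not an almost (or even a weak) Bertin group, or to invoke (b) or (c) to see that $G$ itself is not a Bertin group.

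For $p$ odd this is immediate from results already in hand. A cyclic-by-$p$ group not on the list is in particular not cyclic, hence has non-trivial Sylow $p$-subgroup; and Corollary \ref{cor:theanswerhere} (which itself combines Theorem \ref{thm:oddcasered}, Proposition \ref{prop:thepoint} and Lemma \ref{lem:duhduh}) states that a cyclic-by-$p$ group with non-trivial $p$-part which is an almost Bertin group must be a cyclic $p$-group or a dihedral group $D_{2p^z}$, both on the list. The contrapositive shows such a $G$ is not an almost Bertin group, hence not a Bertin group.

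For $p = 2$ I would invoke Theorem \ref{thm:evenclass}. If $G$ has no homomorphic image of one of the types (1)--(7) recorded there, then $G$ is cyclic, $A_4$, $\mathrm{SL}_2(3)$, or a dihedral, semidihedral or generalized quaternion $2$-group; among these the cyclic groups, $A_4$, the dihedral $2$-groups (each either $\mathbb{Z}/2$ or $D_{2p^n}$ with $p = 2$ and $n \ge 1$) and the $Q_{2^m}$ with $m \ge 4$ lie on the list, whereas $\mathrm{SL}_2(3)$ and $Q_8$ are not Bertin groups for $k$ by (b), and the semidihedral groups --- all of order at least $16$ --- are not Bertin groups for $k$ by (c). If instead $G$ has a homomorphic image of one of the types (1)--(7), that image is a quotient of $G$; the non-cyclic groups of types (1), (2), (4), (5), (6) are not almost Bertin by Corollary \ref{cor:twoanswers}, those of type (3) by Proposition \ref{prop:no4}, and the group $\mathbb{Z}/4 \times \mathbb{Z}/2$ of type (7) by Proposition \ref{prop:no9}, so Corollary \ref{cor:subquots} shows $G$ is not an almost Bertin group --- once one notes that the quotient may be chosen non-cyclic. (The only cyclic members of the families (1)--(7) are the type-(1) groups with $|E| = 2$; but a non-cyclic cyclic-by-$2$ group admitting such a quotient has non-cyclic Sylow $2$-subgroup $S$, so its quotient $G/\Phi(S)$ already furnishes a non-cyclic quotient of one of the types (1)--(7).) In every case $G$ either lies on the list or is not a Bertin group, which proves the converse.

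The main obstacle is only the bookkeeping: there is no geometric or arithmetic input beyond (a), (b), (c), and the work lies in pairing each group produced by Theorems \ref{thm:oddcasered} and \ref{thm:evenclass} with the appropriate non-Bertin statement of \S\ref{s:notalmostBertin} and in disposing of the degenerate cyclic members of the forbidden families as indicated.
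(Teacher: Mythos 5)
Your reduction follows the paper's proof of Corollary \ref{cor:summary} in its essential structure: for $p$ odd the corollary follows from Corollary \ref{cor:theanswerhere}; for $p = 2$ one combines Theorem \ref{thm:evenclass} with the closure of Bertin groups under quotients (Corollary \ref{cor:subquots}) and the non-Bertin statements of Section \ref{s:notalmostBertin} together with (b) and (c); and (a) closes the loop by way of the implication KGB $\Rightarrow$ Bertin from Theorem \ref{thm:KGBtwo}.

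What you add that the paper does not is the explicit observation that a quotient of $G$ of one of types (1)--(7) might be \emph{cyclic}: in family (1), $|E| = 2$ forces $D$ to act trivially (hence, vacuously, ``irreducibly'') and the group $E.D$ is cyclic of order $2|D|$, which is a Bertin group and to which Corollary \ref{cor:twoanswers} deliberately does not apply. This is a genuine wrinkle worth flagging. However, your patch does not hold up as stated: it is not true that $G/\Phi(S)$ ``already furnishes a non-cyclic quotient of one of the types (1)--(7).'' Take $G = (\mathbb{Z}/2)^2 \times \mathbb{Z}/15$: here $\Phi(S) = 1$ so $G/\Phi(S) = G$, and $G$ is not of any type (1)--(7) --- the odd cyclic factor has order $15$, which is not prime (so not type (5)), not of the form $3p$ with $p$ prime (so not (6)), and acts trivially on $E = (\mathbb{Z}/2)^2$ (so not (1)). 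The correct repair bypasses the family list and uses Proposition \ref{prop:thepoint}(a) directly: $G/\Phi(S)$ has the form $E.C_m$ with $|E| = q \ge 4$, so unless $G/\Phi(S)$ is isomorphic to $(\mathbb{Z}/2)^2$ or $A_4$ it is not even weakly Bertin, and hence $G$ is not Bertin by Corollary \ref{cor:subquots}. In the remaining two cases $G/S$ has order $1$ or $3$, so $G$ can have no quotient $\mathbb{Z}/2 \times D$ with $D$ of odd order $\ge 5$, contradicting the hypothesis that such a cyclic type-(1) quotient exists. With this small modification your argument is complete and agrees with the paper.
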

 \begin{proof}  If $p$ is odd, this follows from Corollary \ref{cor:theanswerhere}, since KGB groups are Bertin and hence almost Bertin. Suppose
 now that $p = 2$. If we grant the results stated in parts (b) and (c) of Corollary \ref{cor:summary},
 then Corollary \ref{cor:twoanswers} together with Propositions \ref{prop:no4} and \ref{prop:no9}
 show that none of the groups listed in items (1)  - (7) of Theorem \ref{thm:evenclass}   are
 Bertin groups for $k$, and that $Q_8$, $\mathrm{SL}_2(3)$ and semi-dihedral groups of order $\ge 16$ are not Bertin groups for $k$.  By Corollary \ref{cor:subquots}, no group $G$ that has one of these
 groups as a quotient can be a Bertin group for $k$.  Thus Theorem \ref{thm:evenclass} shows that
 if $G$ is a cyclic-by-$p$ group which is a Bertin group for $k$ for $p = 2$, it must be cyclic, dihedral, generalized quaternion of order at least $16$, or $A_4$.  Thus a proof 
 that all of these groups are in fact KGB groups for $k$ will complete the proof of Theorem \ref{thm:nonexs}.
 \end{proof}
 
   \section{Reduction to quasi-finite residue fields.}
   \label{s:quasired}
   \setcounter{equation}{0}

To further apply classfield theory to study  Artin characters, it is useful to be able to replace the algebraically closed field $k$ by a quasi-finite field.  We first recall the definition of such fields from  \cite[\S XIII.2]{corps}.

\begin{dfn}
A field $L$ of characteristic $p > 0$ is \textit{quasi-finite} if it has the following properties:
\begin{enumerate}
\item[a.]  $L$ is perfect;
\item[b.]  There is an automorphism
$F \in \mathrm{Gal}(L^{\mathrm{sep}}/L)$ of the separable closure $L^{\mathrm{sep}}$ of $L$
such that the map $\hat {\mathbb{Z}} \to \mathrm{Gal}(L^{\mathrm{sep}}/L)$ defined by $\nu \mapsto F^\nu$ is an isomorphism of profinite groups.
\end{enumerate}
\end{dfn}

\begin{prop}
\label{prop:reducetoquasi}
Suppose that $G$ is a finite group, $k$ is an algebraically closed field of characteristic $p$
and that $\phi:G \to \mathrm{Aut}_k(k[[t]])$ is an injection.  There is a subfield $k'$ of $k$
of finite type over the prime field $\mathbb{F}_p$ such that $\phi$ is the base change from $k'$
to $k$ of a unique injection $\phi':G \to \mathrm{Aut}_{k'}(k'[[t]])$.  There is a quasi-finite field $L$
containing $k'$ such that $\phi'$ induces an injection $\phi'_L:G \to \mathrm{Aut}_{L}(L[[t]])$
with the following properties.  Let $\overline{L}$ be an algebraic closure
of $L$.  Then $\phi'_L$ induces an injection $\phi'_{\overline{L}}:G \to \mathrm{Aut}_{\overline{L}}(\overline{L}[[t]])$, and the Artin characters of $\phi$, $\phi'$, $\phi'_L$ and $\phi'_{\overline{L}}$ are equal.
\end{prop}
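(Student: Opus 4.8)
The plan is to build the statement in three stages: first descend $\phi$ to a finitely generated subfield $k'$ of $k$; then enlarge $k'$ (inside $k$, and then by a transcendental/algebraic extension) to a quasi-finite field $L$; and finally verify that all four Artin characters coincide, using that the Artin character is insensitive to such base changes. For the first stage, recall that $\phi$ is determined by the finitely many power series $\phi(\sigma)(t) \in k[[t]]$ for $\sigma$ in a generating set of $G$, together with the finitely many algebraic identities among their coefficients expressing that $\phi$ is a group homomorphism and that each $\phi(\sigma)$ is a continuous $k$-algebra automorphism of $k[[t]]$. Only countably many coefficients occur, but in fact only finitely many are needed: since $G$ is finite, $\phi(\sigma)(t) - t$ is divisible by a bounded power of $t$ determined by the ramification filtration, and the composition relations $\phi(\sigma\tau) = \phi(\sigma)\circ\phi(\tau)$ are determined modulo $t^N$ for $N$ large by the truncations of the $\phi(\sigma)$ modulo $t^N$; beyond that the higher coefficients are free. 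Hence one can choose $k'$ generated over $\mathbb{F}_p$ by the (finitely many relevant) coefficients, and the remaining coefficients can be taken in $\mathbb{F}_p$ or left as chosen, so that there is a $\phi':G \to \mathrm{Aut}_{k'}(k'[[t]])$ base-changing to $\phi$. Uniqueness is clear since $k'[[t]] \to k[[t]]$ is injective.

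For the second stage, I would first note that $k'$ is a finitely generated field extension of $\mathbb{F}_p$, hence embeds into a field of the form $\overline{\mathbb{F}_p(x_1,\dots,x_r)}$ after adjoining a transcendence basis; more efficiently, one can use the standard fact (cf. \cite[\S XIII.2]{corps}) that every field of characteristic $p$ that is finitely generated over $\mathbb{F}_p$ is contained in a quasi-finite field. The cleanest route is: take a transcendence basis $x_1,\dots,x_r$ of $k'$ over $\mathbb{F}_p$, so $k'$ is a finite extension of $\mathbb{F}_p(x_1,\dots,x_r)$; the field $\mathbb{F}_p((x_1))$ is quasi-finite (its residue field is $\mathbb{F}_p$ and its Galois theory is that of a complete discretely valued field with finite residue field — here one invokes the Cohen/Artin theory that local fields with quasi-finite residue fields are themselves "quasi-finite-like"); iterating, or simply appealing to the cited result in Serre, one obtains a quasi-finite $L \supseteq k'$. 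Then $\phi'$ base-changes to $\phi'_L: G \to \mathrm{Aut}_L(L[[t]])$ since $L[[t]] \supseteq k'[[t]]$ is faithfully flat, and further to $\phi'_{\overline L}$ over an algebraic closure $\overline L \supseteq L \supseteq k' \subseteq k$; note $\overline L$ and $k$ both contain $k'$ but need not be comparable, which is fine because we only compare Artin characters, not the fields themselves.

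For the third stage — the equality of the four Artin characters — I would argue that the Artin character $a_\phi$ depends only on the ramification filtration $(G_i)$ in the lower numbering, which is computed from the orders of vanishing $\mathrm{ord}_t\big(\phi(\sigma)(t) - t\big)$ for $\sigma \in G$. These valuations are purely a statement about the power series $\phi'(\sigma)(t) - t \in k'[[t]]$ and are unchanged under any field extension of $k'$, since $k'[[t]] \hookrightarrow k''[[t]]$ is a faithful, continuous, injective $t$-adic map for any $k'' \supseteq k'$. Hence $G_i(\phi) = G_i(\phi') = G_i(\phi'_L) = G_i(\phi'_{\overline L})$ for all $i$, and then Proposition \ref{prop:serrethm} (the formula $a_\phi = \sum_i [G_0:G_i]^{-1}(1_{\{e\}}^G - 1_{G_i}^G)$) shows all four Artin characters are literally the same element of the character ring of $G$. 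The main obstacle is the second stage: producing the quasi-finite field $L$ containing the given finitely generated $k'$. I expect this to rest on the standard structure theory in \cite[\S XIII.2]{corps} (a perfect field whose absolute Galois group is $\hat{\mathbb{Z}}$, e.g. obtained by taking the perfection of a suitable power-series or function field and passing to a maximal unramified-type extension), and the only care needed is to keep $k'$ embedded in $L$ so that $\phi'$ genuinely extends; the descent in stage one and the character equality in stage three are routine by comparison.
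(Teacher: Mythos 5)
Your stages 1 and 3 are essentially fine (stage 3 is exactly right: the ramification filtration, and hence the Artin character via the formula in Proposition \ref{prop:serrethm}, is read off from the valuations $\mathrm{ord}_t(\phi(\sigma)(t)-t)$, which are invariant under any coefficient field extension). Stage 1 as written is a bit hand--wavy: the relation $\phi(\sigma\tau)=\phi(\sigma)\circ\phi(\tau)$ imposes infinitely many coefficient identities, so ``only finitely many coefficients matter'' needs justification; the paper sidesteps this cleanly by invoking the Katz--Gabber cover, which is a $G$-cover of finite type over $k$ and therefore descends to a finitely generated subfield together with the $G$-action. This is a repairable defect.

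The real gap is stage 2, and you flag it yourself without closing it. Producing a quasi-finite field $L$ \emph{containing} the given finitely generated $k'$ is not a ``standard fact'' from \cite[\S XIII.2]{corps}; Serre defines quasi-finite fields and gives examples, but does not show that every finitely generated field of characteristic $p$ embeds in one, and your candidate example $\mathbb{F}_p((x_1))$ is not quasi-finite: it is not perfect, and its absolute Galois group is the (much larger, non-abelian) Galois group of a local field of characteristic $p$, not $\widehat{\mathbb{Z}}$. The paper's construction is substantively different and uses a deep input you did not identify: after Noether normalization write $k'$ as a finite extension of $\mathbb{F}_p(t_1,\dots,t_n)$, set $k_1=\overline{\mathbb{F}_p(t_1,\dots,t_{n-1})}$ and $N=k'\cdot k_1$, so that $N$ is the function field of a curve $V$ over the \emph{countable algebraically closed} field $k_1$. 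By the theorem of Harbater and Pop \cite{Harbater,Pop}, $\mathrm{Gal}(\overline{N}/N)$ is a free profinite group of countable rank; taking the fixed field $L=\overline{N}^{\langle F\rangle}$ of a single topological generator $F$ yields a perfect field with $\mathrm{Gal}(\overline{L}/L)\cong\widehat{\mathbb{Z}}$, i.e.\ quasi-finite, and $k'\subset N\subset L$. Without some such argument your second stage does not go through, so the proposal as it stands does not prove the proposition.
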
 

\begin{proof}
The existence of $k'$ and $\phi'$ is clear from the fact that a Katz-Gabber $G$-cover
associated to $\phi$, together with the action of $G$ on this cover, is defined over a field of finite type
over the prime field $\mathbb{F}_p$.  Since $\phi$ defines a totally ramified action of $G$,
so does $\phi'$.  If $k'$ is finite, we can therefore take $L$ to be  $k'$.
Suppose now that $k'$ has positive transcendence degree over $\mathbb{F}_p$.  By the 
Noether normalization theorem, $k'$ is a finite extension of a rational subfield $\mathbb{F}_p(t_1,\dots,t_n)$ for some algebraically independent indeterminates $t_1,\ldots,t_n$, where $n \ge 1$.  
Let $k_1 = \overline{\mathbb{F}_p(t_1,\dots,t_{n - 1})}$ be an algebraic closure of the
subfield $\mathbb{F}_p(t_1,\dots,t_{n-1})$, and let $N$ be the compositum of $k'$ and
$k_1$ in an extension field of $k$.  Then $N$ has transcendence degree $1$ over $k_1$.
Since $k_1$ is algebraically closed, $N$ is the  function field a smooth projective curve $V$ over $k_1$,
and $k_1$ is the field of constants of $V$.  By \cite{Harbater} and \cite{Pop},  $\mathrm{Gal}(\overline{N}/N) = \mathrm{Gal}(\overline{k_1(V)}/k_1(V))$ is a free profinite group of countable rank since $k_1$ is countable.  Let $F$ be one element of a set of topological
generators for $\mathrm{Gal}(\overline{N}/N)$, and let $L = \overline{N}^{\langle F \rangle}$
be the fixed field of $F$ acting on $\overline{N}$.  Then $L$ is a quasi-finite
field, with algebraic closure $\overline{L} = \overline{N}$ and an isomorphism $\hat{\mathbb{Z}} \to \mathrm{Gal}(\overline{L}/L)$ defined by $\nu \mapsto F^\nu$.  Since $k' \subset N \subset L$, we can let 
$\phi_L:G \to \mathrm{Aut}_{L}(L[[t]])$ be the base change of $\phi'$ from $k'$ to $L$.
Since $\phi$, $\phi_L$ and $\phi_{\overline{L}}$ are base changes of $\phi'$, all of the associated
Artin characters are equal.
\end{proof}

\begin{cor}
\label{cor:quasidone}
Fix an algebraically closed field $k$ of characteristic $p > 0$, and let $a $ be a complex
character of $G$.  There is an injection
$\phi:G \to \mathrm{Aut}_k(k[[t]])$ for which $a_\phi = a$ if and only if there is a
quasi-finite field $L$ of characteristic $p$ together with an injection $\phi_L: G\to \mathrm{Aut}_L(L[[t]])$
such that  $a_{\phi_L} = a$. 
\end{cor}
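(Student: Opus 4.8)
The plan is to prove the two implications separately. The forward direction is immediate from Proposition \ref{prop:reducetoquasi}: if $\phi:G\to\mathrm{Aut}_k(k[[t]])$ is an injection with $a_\phi=a$, that proposition produces a quasi-finite field $L$ of characteristic $p$ and an injection $\phi'_L:G\to\mathrm{Aut}_L(L[[t]])$ with $a_{\phi'_L}=a_\phi=a$, which is exactly the right-hand condition.

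For the converse, suppose $L$ is a quasi-finite field of characteristic $p$ and $\phi_L:G\to\mathrm{Aut}_L(L[[t]])$ is an injection with $a_{\phi_L}=a$. First I would descend to a field of finite type over $\mathbb{F}_p$. Since $L$ is perfect (being quasi-finite), the theorem of Katz and Gabber \cite{KG} associates to $\phi_L$ a $G$-cover $Y_L\to Y_L/G=\mathbb{P}^1_L$ that is totally ramified over $\infty$, with $G$ acting on the completed local ring at the point over $\infty$ via $\phi_L$; if one prefers to apply Katz--Gabber only over algebraically closed fields, one may first base change $\phi_L$ to $\overline L$, which leaves the Artin character unchanged, and descend from there. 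This cover together with its $G$-action, a chosen uniformizer at the point over $\infty$, and the residue field of that point are all given by finitely many algebraic data, so they are defined over a subfield $k''$ of $L$ that is finitely generated over $\mathbb{F}_p$; enlarging $k''$ if necessary, I may assume the point over $\infty$ is $k''$-rational, so that the associated local action $\phi'':G\to\mathrm{Aut}_{k''}(k''[[t]])$ is defined and $a_{\phi''}=a_{\phi_L}=a$ (the lower-numbering ramification filtration, and hence the Artin character by the formula of Proposition \ref{prop:serrethm}, is insensitive to base change of local fields with perfect residue fields).

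Next I would spread out and specialize. Realize $k''$ as the function field of an integral affine $\mathbb{F}_p$-scheme $W$ of finite type. Standard spreading-out arguments show that, after replacing $W$ by a dense open, the Katz--Gabber cover extends to a $G$-cover $\mathcal Y\to\mathbb{P}^1_W$ of smooth projective $W$-curves with geometrically connected fibres, the point over $\infty$ extends to a section $\tilde\sigma:W\to\mathcal Y$, and there is a function $t$ near $\tilde\sigma$ restricting to a uniformizer in every fibre, so that the completion of $\mathcal Y$ along $\tilde\sigma$ is $\mathcal O(W)[[t]]$. Writing $g\cdot t-t=\sum_{i\ge 1}c^{(g)}_i\,t^i$ with $c^{(g)}_i\in\mathcal O(W)$, and setting $m_g=\min\{i:c^{(g)}_i\ne 0\}$ (finite for $g\ne e$ because $\phi''$ is faithful), the locus $W^\circ=W\setminus\bigcup_{e\ne g\in G}V(c^{(g)}_{m_g})$ is a dense open on which, for every closed point $w$, one has $v_w(g\cdot t-t)=m_g$ for all $g\ne e$. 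Hence for $w\in W^\circ$ the lower-numbering ramification filtration of the induced local action $\phi_w:G\to\mathrm{Aut}_{\kappa(w)}(\kappa(w)[[t]])$ on $\hat{\mathcal O}_{\mathcal Y_w,\tilde\sigma_w}\cong\kappa(w)[[t]]$ coincides with that of $\phi''$, so $\phi_w$ is injective with $a_{\phi_w}=a$. A closed point $w\in W^\circ$ has finite residue field $\kappa(w)$, and base changing $\phi_w$ along $\kappa(w)\hookrightarrow\overline{\mathbb{F}_p}\hookrightarrow k$ gives an injection $\phi:G\to\mathrm{Aut}_k(k[[t]])$; since $g\cdot t-t$ has coefficients in $\kappa(w)$, its $t$-adic valuations are unchanged over $k$, so $a_\phi=a$, completing the proof.

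The main obstacle is the spreading-out step, and in particular the assertion that the lower-numbering ramification filtration is generically constant in the family $\mathcal Y\to W$ --- equivalently that each leading coefficient $c^{(g)}_{m_g}$ is a nonzero element of the domain $\mathcal O(W)$, so that its zero locus is a proper closed subset. This is what makes the Artin character locally constant near the generic point and lets us specialize to a finite-field point without changing $a$. The remaining ingredients --- that $\mathcal Y_w\to\mathbb{P}^1_{\kappa(w)}$ is again a $G$-cover of smooth geometrically connected curves with $\mathcal Y_w/G=\mathbb{P}^1_{\kappa(w)}$, that $\tilde\sigma_w$ is a $\kappa(w)$-rational point with completed local ring $\kappa(w)[[t]]$, and that all relevant structure survives the final base change to $k$ --- follow from generic flatness and smoothness and the compatibility of quotients and completions with flat base change.
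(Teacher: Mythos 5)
Your forward direction matches the paper's: it is an immediate application of Proposition \ref{prop:reducetoquasi}. Your converse, however, is genuinely different and, I think, more careful than what the paper actually writes. The paper's proof of the converse is a single sentence: ``Given $L$ and $\phi_L$, we take $k = \overline{L}$, and we let $\phi$ be the base change of $\phi_L$ from $L$ to $k$.'' That produces an injection over $\overline{L}$, not over the algebraically closed field $k$ that was \emph{fixed at the start of the statement}; for instance if $k = \overline{\mathbb{F}_p}$ while $L$ (hence $\overline{L}$) has positive transcendence degree, base change alone does not land you in $\mathrm{Aut}_k(k[[t]])$. The paper is evidently treating the choice of algebraically closed $k$ as immaterial, but it does not justify that.

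Your argument supplies exactly the missing independence-of-$k$ step: after descending the Katz--Gabber cover to a finitely generated subfield $k''$ (using that the Artin character is computed from the $t$-adic valuations of $\sigma(t)-t$, which are unaffected by extension of perfect residue field), you spread out over an affine model $W$, observe that the leading coefficients $c^{(g)}_{m_g}$ are nonzero in the domain $\mathcal O(W)$ so that the ramification breaks are constant on a dense open $W^\circ$, and then specialize at a closed point, whose residue field is finite and therefore embeds into \emph{any} algebraically closed field of characteristic $p$, including the fixed $k$. This is correct, and it is precisely what makes the statement hold verbatim with $k$ fixed. So the two proofs agree in the easy direction and diverge in the converse: the paper compresses the converse into an implicit (and not fully justified) claim, while your spreading-out-and-specialization argument proves the sharper fact that the existence of a $\phi$ with prescribed Artin character is independent of the choice of algebraically closed base field. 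The only thing I would flag is that you should make explicit that tameness/irreducibility of the Katz--Gabber cover and the identification $\mathcal Y_w/G \cong \mathbb{P}^1_{\kappa(w)}$ survive to a dense open; you gesture at this at the end, and it is standard, but it is where the heavy lifting in the spreading-out lives.
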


\begin{proof} Given $\phi$, we can take $L$ and $a_{\phi_L}$ to be as in 
Proposition \ref{prop:reducetoquasi}.  Given $L$ and $\phi_L$, we take $k = \overline{L}$,
and we let $\phi$ be the base change of $\phi_L$ from $L$ to $k$.
\end{proof}

\section{Dihedral, Quaternion and Semi-dihedral groups: Ramification filtrations.}  
\label{s:exts2}
\setcounter{equation}{0}

The object of this section is to begin the analysis of the Bertin obstruction for certain dihedral,
generalized quaternion and semi-dihedral groups.  

The following lemma is an example from the end of \S IV.3 of \cite{corps}.  Recall that a real number $\mu$ is a \textit{jump} in the upper (resp.\ lower) ramification filtration of a 
subgroup $J \subset G$ if $J^{\nu} \ne J^{\nu+\epsilon}$ (resp.\ $J_{\nu} \ne J_{\nu + \epsilon}$)
for all $\epsilon > 0$.  
 
\begin{lemma}
\label{lem:jumps} \cite{corps}
Let $k$ be a field of characteristic $p>0$, let $H$ be a cyclic group of order $p^n$, and assume we are given a Galois extension of $k((t))$ with group $H$. Then there are positive integers $i_0,i_1,\ldots,i_{n-1}$ such that
the jumps in the upper numbering of the ramification filtration of $H$ occur
at $i_0, i_0+i_1,\ldots,i_0 + i_1 + \cdots + i_{n-1}$.  We have ramification groups
\begin{eqnarray}
\label{eq:jumplist}
H_0 &=& \cdots = H_{i_0} = H = H^0 = \cdots = H^{i_0}\nonumber\\
H_{i_0 + 1}  &=& \cdots = H_{i_0+pi_1} = pH = H^{i_0 + 1} = \cdots = H^{i_0+i_1}\nonumber\\
H_{i_0 + pi_1 + 1}  &=& \cdots = H_{i_0+pi_1 + p^2 i_2} = p^2 H = H^{i_0 + i_1 + 1} = \cdots = H^{i_0+i_1+ i_2}\\
&\cdots&\nonumber\\
H_{i_0+p i_2 + \cdots + p^{n-1} i_{n-1} + 1} &=& p^{n} H = \{e\} = H^{i_0 + \cdots i_{n-1} + 1}.\nonumber
\end{eqnarray}
Thus the jumps in the lower ramification filtration are at 
$\sum_{j = 0}^\ell p^j i_j$ for $0\le \ell  \le n-1$.
\end{lemma}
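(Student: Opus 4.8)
Write $K=k((t))$ and let $L/K$ be the given extension, $H=\mathrm{Gal}(L/K)\cong\mathbb{Z}/p^n$. We may assume $L/K$ is totally ramified --- this is automatic when $k$ is algebraically closed (the case used in the paper), and in any case it is precisely the first equality asserted in \eqn{eq:jumplist} --- so $H_0=H$; and since $H$ is a $p$-group, its wild inertia subgroup is all of $H$, hence $H_1=H$ too. The subgroups of $H$ form the single chain $H\supset pH\supset\cdots\supset p^nH=\{e\}$, where $p^{\ell}H$ denotes the subgroup of index $p^{\ell}$. Because the lower-numbering groups $H_i$ form a decreasing sequence of subgroups \cite[\S IV.1]{corps}, each is some $p^{\ell}H$, and the lemma reduces to two assertions: (1) as $i$ increases, $(H_i)$ descends through \emph{every} term of the chain, with \emph{strictly} increasing jumps $0\le a_0<a_1<\cdots<a_{n-1}$ (so that $H_i=H$ for $0\le i\le a_0$, $H_i=p^{\ell}H$ for $a_{\ell-1}<i\le a_{\ell}$, and $H_i=\{e\}$ for $i>a_{n-1}$) and $a_0\ge1$; and (2) the jumps of $H$ in the upper numbering, a priori merely rational, are the prescribed integers. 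The plan is to deduce (1) from a short local computation and (2) from the Hasse--Arf theorem.

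For the local input, fix a generator $\sigma$ of $H$ and a uniformizer $\pi_L$ of $L$, and set $j(\tau)=v_L(\tau(\pi_L)-\pi_L)$ for $\tau\ne1$, so $\tau\in H_i\iff j(\tau)\ge i+1$. The key claim is: \emph{if $\rho\in H$ has order $p^s$ with $s\ge2$, then $j(\rho^p)>j(\rho)$.} Write $\rho(\pi_L)/\pi_L=1+w$; then $v_L(w)=j(\rho)-1=:m\ge1$ (recall $\rho\in H=H_1\subset G_1$). The telescoping identity $\rho^p(\pi_L)/\pi_L=\prod_{i=0}^{p-1}\rho^{i}(\rho(\pi_L)/\pi_L)=\prod_{i=0}^{p-1}(1+\rho^{i}(w))$ shows, since each $\rho^{i}(w)$ has valuation $m$ and $2m\ge m+1$, that $\rho^p(\pi_L)/\pi_L-1\equiv\sum_{i=0}^{p-1}\rho^{i}(w)\pmod{\pi_L^{m+1}\mathcal{O}_L}$. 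As $\rho\in G_m$ acts trivially on $\mathcal{O}_L/\pi_L^{m+1}\mathcal{O}_L$ we have $\rho^{i}(w)\equiv w$ for all $i$, so $\sum_{i=0}^{p-1}\rho^{i}(w)\equiv p\,w\equiv0\pmod{\pi_L^{m+1}\mathcal{O}_L}$ because $p=0$ in $k$. Hence $v_L(\rho^p(\pi_L)/\pi_L-1)\ge m+1$, i.e.\ $j(\rho^p)\ge j(\rho)+1$. I expect this to be the only genuine subtlety: the vanishing $p\,w=0$ is exactly what separates consecutive breaks and is special to characteristic $p$; Hasse--Arf alone would not rule out a lower filtration that drops past intermediate subgroups of the chain.

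Granting the claim: since each $H_i$ is a subgroup, $j(\tau)$ depends only on $\langle\tau\rangle$, so $j(\sigma^{m'})=j(\sigma^{p^{v_p(m')}})$ for $1\le m'<p^n$; consequently $H_i=\{\tau:j(\tau)\ge i+1\}$ equals $p^{\ell}H$ precisely for $a_{\ell-1}<i\le a_{\ell}$ (and equals $H$ for $i\le a_0$), where $a_{\ell}:=j(\sigma^{p^{\ell}})-1$. The claim applied to $\rho=\sigma^{p^{\ell}}$ (which has order $p^{n-\ell}\ge p^2$ for $\ell\le n-2$) gives $a_0<a_1<\cdots<a_{n-1}$, and $a_0\ge1$ since $\sigma\in H_1$; this is assertion (1). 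For (2), the Herbrand function $\varphi_H$ has slope $1$ on $[0,a_0]$ and slope $p^{-\ell}$ on $[a_{\ell-1},a_{\ell}]$ for $1\le\ell\le n-1$, so the jumps of $H$ in the upper numbering are $u_{\ell}:=\varphi_H(a_{\ell})=a_0+\sum_{j=1}^{\ell}(a_j-a_{j-1})p^{-j}$. By the Hasse--Arf theorem \cite[Ch.~V]{corps} (applicable since $H$ is abelian), each $u_{\ell}$ is an integer. Put $i_0:=u_0=a_0$ and $i_{\ell}:=u_{\ell}-u_{\ell-1}$ for $1\le\ell\le n-1$; these are positive integers (positivity from $a_{\ell}>a_{\ell-1}$), and unwinding the definitions gives $a_{\ell}-a_{\ell-1}=p^{\ell}i_{\ell}$, hence $a_{\ell}=\sum_{j=0}^{\ell}p^{j}i_j$ and $u_{\ell}=\sum_{j=0}^{\ell}i_j$. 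Reading these off of $(H_i)$ and $(H^{\nu})$ is exactly \eqn{eq:jumplist}.

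As an alternative to the explicit bookkeeping in the last step, one could induct on $n$ via the subextension $M=L^{p^{n-1}H}$ (with group $\mathbb{Z}/p^{n-1}$, handled by the inductive hypothesis) together with the compatibility of the upper numbering with quotients \cite[Prop.~IV.14]{corps} to recover $i_0,\dots,i_{n-2}$; but excluding the possibility that $p^{n-1}H$ is missing from the filtration of $H$ still requires the characteristic-$p$ computation above --- applied now to a $\mathbb{Z}/p^2$-subextension --- so this route is no shorter.
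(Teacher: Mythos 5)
The paper does not prove this lemma; it simply cites the worked example at the end of \S IV.3 of \cite{corps}. Your proof is correct and is, in essence, Serre's argument: you reduce to a characteristic-$p$ computation showing $j(\rho^p) > j(\rho)$ (which forces the lower filtration to pass through every subgroup of the cyclic chain with strictly increasing breaks), and then invoke Hasse--Arf to make the upper-numbering jumps integral, from which the bookkeeping $a_\ell = \sum_{j\le\ell} p^j i_j$ and $u_\ell = \sum_{j\le\ell} i_j$ falls out. The one place your argument is slightly less sharp than Serre's is the key claim: \cite[\S IV.2, Prop.~11]{corps} actually gives $i_H(\rho^p) \ge p\,i_H(\rho)$ in residue characteristic $p$, not merely $i_H(\rho^p) > i_H(\rho)$; but the strict inequality is all you use, and your telescoping proof of it (expanding $\rho^p(\pi_L)/\pi_L = \prod_i(1+\rho^i(w))$ and killing $\sum\rho^i(w) \equiv pw = 0$ mod $\mathfrak{m}_L^{m+1}$) is fine. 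No gaps.
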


For the remainder of this section we make the following standing hypothesis:

\begin{hyp}
\label{hyp:good}
Let $k$ be an algebraically closed field of characteristic $p>0$ and let $n \ge 1$ be an integer.  The group $G$ is of order $2p^n$,
is generated by a cyclic subgroup $H = \langle \tau \rangle$ of order $p^n$ and an 
element $\sigma$.  In addition to the relation $\tau^{p^n} = e$, $G$ is specified by the following relations:
\begin{enumerate}
\item[a.] (Dihedral case) $\sigma^2 = e$ and $\sigma \tau \sigma^{-1} = \tau^{-1}$.
\item[b.] (Generalized quaternion case) $p = 2$, $n \ge 2$, $\sigma^2 = \tau^{p^{n-1}}$, $\sigma \tau \sigma^{-1} = \tau^{-1}$.
\item[c.](Semi-dihedral case) $p = 2$, $n \ge 3$, $\sigma^2 = e$, $\sigma \tau \sigma^{-1} = \tau^{-1 + p^{n-1}}$.
\end{enumerate}
Let $\phi:G \to \mathrm{Aut}_k(k[[t]])$ be an injection. For $\Gamma$ a subgroup of $G$,
let $\Gamma_\nu$ and $\Gamma^\nu$ be the lower and upper ramification subgroups of $\Gamma$
associated to $\nu \in \mathbb{R}$.
\end{hyp}

Under Hypothesis~\ref{hyp:good}, Lemma~\ref{lem:jumps} yields:

\begin{cor}
\label{cor:computeiota}
Suppose that $\Gamma = p^j H$ is a non-trivial subgroup of $H$, so that $0 \le j \le n-1$.  Then
\begin{equation}
\label{eq:iotacal1}
\iota(\Gamma) = 1+ i_0 + p i_1 + \cdots + p^j i_j
\end{equation}
\end{cor}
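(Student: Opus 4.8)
The plan is to deduce Corollary~\ref{cor:computeiota} directly from Lemma~\ref{lem:jumps} applied to the cyclic group $H = \langle \tau\rangle$ of order $p^n$, together with the definition of $\iota$ in Notation~\ref{def:nontrivT}. Recall that for a non-trivial subgroup $\Gamma$ of $G$, $\iota(\Gamma) = i+1$ where $i$ is the largest integer with $\Gamma \subset G_i$. Since the lower-numbering ramification filtration is compatible with passage to subgroups (i.e.\ $H_\nu = G_\nu \cap H$), for $\Gamma = p^j H \subset H$ the largest $i$ with $\Gamma \subset G_i$ is the same as the largest $i$ with $\Gamma \subset H_i$. So the whole computation takes place inside $H$, and I only need to read off from the explicit description in \eqref{eq:jumplist} the largest index $i$ for which $H_i$ still contains $p^j H$.

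First I would invoke Lemma~\ref{lem:jumps} to fix positive integers $i_0, i_1, \dots, i_{n-1}$ governing the filtration of $H$. From the displayed equations \eqref{eq:jumplist}, the subgroup $p^j H$ appears as $H_\nu$ precisely for $\nu$ in the range from $i_0 + p i_1 + \cdots + p^{j-1} i_{j-1} + 1$ up to $i_0 + p i_1 + \cdots + p^{j-1} i_{j-1} + p^j i_j$ (with the convention that the empty sum is $0$, so for $j = 0$ the range is from $1$, and indeed $H_0 = \cdots = H_{i_0} = H = p^0 H$). Hence the largest $i$ with $p^j H \subset H_i$ is $i = i_0 + p i_1 + \cdots + p^j i_j$: for this value $H_i = p^j H \supseteq p^j H$, while for $i+1$ one has $H_{i+1} = p^{j+1} H \not\supseteq p^j H$ (this last step uses that $p^{j+1} H$ is a proper subgroup of $p^j H$, which holds since $p^j H$ is non-trivial, i.e.\ $j \le n-1$). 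Therefore $\iota(p^j H) = 1 + i_0 + p i_1 + \cdots + p^j i_j$, which is exactly \eqref{eq:iotacal1}.

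There is essentially no serious obstacle here; the only point requiring a little care is the bookkeeping of which range of lower-numbering indices $\nu$ corresponds to the group $p^j H$, and in particular handling the boundary case $j = 0$ and the termination case $j = n-1$ (where $p^{j+1} H = \{e\}$, so the statement ``$H_{i+1} \not\supseteq p^j H$'' still holds because $p^j H$ is non-trivial). I would also note explicitly that Hypothesis~\ref{hyp:good} guarantees $\phi$ restricts to a faithful action of $H$ on $k((t))$, so Lemma~\ref{lem:jumps} genuinely applies. With these remarks the corollary follows in a couple of lines.
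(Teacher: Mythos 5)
Your proof is correct and follows exactly the route the paper intends: the paper's ``proof'' is just the remark ``Under Hypothesis~\ref{hyp:good}, Lemma~\ref{lem:jumps} yields,'' and you have simply filled in the bookkeeping, including the (needed but unstated) observation that $H_\nu = G_\nu \cap H$ so that the largest $i$ with $p^j H \subset G_i$ can be read off inside $H$. Nothing further is required.
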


It is straightforward to verify the following lemma and corollary.

\begin{lemma}
\label{lem:subgrps}
A set $\mathcal{C}$ of representatives for the cyclic subgroups $T$ of $G$
may be given as follows.  For $0 \le j \le n$ let $p^j H= \langle \tau^{p^j}\rangle $ be the subgroup of index $p^j$ in $H$, and let $\mathcal{H} = \{p^j H: 0 \le j \le n\}$.    One has $\N_G(T) = T$ for $T \in \mathcal{H}$.
\begin{enumerate}
\item[a.] (Dihedral case when $p > 2$) $\mathcal{C} = \mathcal{H} \cup \{D_1\}$,
where $D_1 = \langle \sigma \rangle$ has order $2$ and  
 $\N_G(D_1)  = D_1$. 
 \item[a$'$.] (Dihedral case when $p = 2$) $\mathcal{C} = \mathcal{H} \cup \{D_1, D_2\}$
 where $D_1 = \langle \sigma \rangle$ and $D_2 = \langle \tau \sigma \rangle$ have
 order $2$.  The group $\N_G(D_i) = \langle D_i, 2^{n-1}H \rangle$ contains $D_i$
with index $2$. 
\item[b.] (Generalized quaternion case) $\mathcal{C} = \mathcal{H} \cup \{D_1,D_2\}$
where $D_1 = \langle \sigma \rangle$ and $D_2 = \langle \tau \sigma \rangle$
have order $4$.  The group $\N_G(D_i) = \{2^{n-2}H,D_i\}$ contains $D_i$ with index $2$. 
\item[c.] (Semi-dihedral case)  $\mathcal{C} = \mathcal{H} \cup \{D_1,D_2\}$
where $D_1 = \langle \sigma \rangle$ has order $2$ and $D_2 = \langle \sigma \tau \rangle$
has order $4$.  One has $\N_G(D_1) = \langle 2^{n-1}H,D_1\rangle$ and $\N_G(D_2) = \langle 2^{n-2}H,D_2\rangle$.  For $i = 1, 2$, the index $[\N_G(D_i):D_i]$ equals $2$.
\end{enumerate}
\end{lemma}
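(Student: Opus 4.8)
The plan is to argue directly from the defining relations in Hypothesis \ref{hyp:good}: no input beyond a careful bookkeeping of the group law is needed. First I would fix a normal form for the elements of $G$. Using $\sigma\tau\sigma^{-1} = \tau^{\pm 1}$ (respectively $\tau^{-1+p^{n-1}}$) to move every occurrence of $\sigma$ to the right past powers of $\tau$, together with the relation expressing $\sigma^2$ as an element of $H$, one sees that every element of $G$ has the form $\tau^i$ or $\tau^i\sigma$ with $0 \le i < p^n$; since $\#G = 2p^n$ these $2p^n$ expressions are exactly the elements of $G$.

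Next I would list the cyclic subgroups. Those contained in $H$ are precisely the $p^jH = \langle\tau^{p^j}\rangle$ for $0 \le j \le n$, since $H$ is cyclic. Any other cyclic subgroup meets $G\setminus H$, hence is generated by some $\tau^i\sigma$, and I would read off its order by computing $(\tau^i\sigma)^2$ from the presentation: in the dihedral case this is $e$, so the subgroup has order $2$; in the generalized quaternion and semi-dihedral cases $(\tau^i\sigma)^2 \in \{e,\tau^{p^{n-1}}\}$, forcing order $2$ or $4$, and one checks that the generators $\sigma$ and $\tau\sigma$ (or $\sigma$ and $\sigma\tau$) realize the orders asserted in items (a$'$), (b), (c). To count conjugacy classes of the subgroups outside $H$, I would use $\tau(\tau^i\sigma)\tau^{-1} = \tau^{i+2}\sigma$ and its analogue in the semi-dihedral case: when $p$ is odd, $2$ is invertible modulo $p^n$, so all such subgroups form a single class with representative $D_1 = \langle\sigma\rangle$; when $p = 2$ the index $i$ is determined only modulo $2$ along a $\langle\tau\rangle$-orbit, giving exactly two classes, $D_1$ and $D_2$. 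This shows $\mathcal{C}$ is as described.

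Finally I would compute the normalizers. For $T \in \mathcal{H}$ this is immediate, since every subgroup of the cyclic group $H$ is characteristic in $H$ and $H$ is normal in $G$. For the remaining $T \in \mathcal{C}$ I would first determine $\N_G(T)\cap H$ by solving $\tau^a T \tau^{-a} = T$: here $\tau^a\sigma\tau^{-a} = \tau^{2a+c(a)}\sigma$, where the correction $c(a)$ is $0$ in the dihedral and quaternion cases and a multiple of $p^{n-1}$ in the semi-dihedral case, and I would combine this with the value of $T\cap H$, which is $\{e\}$ or $2^{n-1}H$. This yields $\N_G(T)\cap H = \{e\}$ in the $p$-odd dihedral case, $2^{n-1}H$ in the order-$2$ cases, and $2^{n-2}H$ in the order-$4$ cases. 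Since each $D_i$ already contains an element of $G\setminus H$, the full normalizer is $\langle D_i, \N_G(D_i)\cap H\rangle$, and comparing orders gives $\N_G(D_1) = D_1$ in the $p$-odd dihedral case and $[\N_G(D_i):D_i] = 2$ in all the others. The one step that is more than pure bookkeeping is keeping track of the correction term $c(a)$ modulo $p^n$ in the semi-dihedral case when identifying $\N_G(D_2)\cap H$; everything else is direct substitution in the presentation.
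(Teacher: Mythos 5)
Your argument is sound and follows essentially the same line the paper has in mind when it calls the lemma ``straightforward to verify'': fix the normal form $\tau^i$ or $\tau^i\sigma$, compute $(\tau^i\sigma)^2$ in each case to read off the order, use conjugation by $\tau$ to count classes (one class for $p$ odd since $2$ is invertible mod $p^n$, two classes for $p=2$ indexed by the parity of $i$), and then determine each normalizer by intersecting with $H$.

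There is, however, one point you should handle explicitly rather than silently, because it conflicts with the lemma as printed. You write that the normalizer claim for $T\in\mathcal{H}$ ``is immediate, since every subgroup of the cyclic group $H$ is characteristic in $H$ and $H$ is normal in $G$.'' That argument proves that $T$ is \emph{normal} in $G$, i.e.\ $\N_G(T)=G$. The lemma as stated says ``$\N_G(T)=T$ for $T\in\mathcal{H}$,'' which is false (take $T=H$ in $D_{2p}$, say). This is a misprint in the source: Corollary~\ref{cor:nicesubgroups} explicitly begins its computation with ``Since $T$ is normal in $G$, $[\N_G(T):T]=\#G/\#T=2p^j$,'' which is precisely your conclusion $\N_G(T)=G$. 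So your reasoning is the correct one, but you should state the conclusion $\N_G(T)=G$ rather than leaving it as ``immediate,'' and flag the discrepancy with the printed statement so the reader does not think you have overlooked the normalizer condition. The rest of your normalizer computations (pinning down $\N_G(D_i)\cap H$ from the congruence $a(2-c)\equiv 0$ or $\equiv 2^{n-1}\pmod{2^n}$, with the correction term $c$ vanishing in the dihedral and quaternion cases and equal to a unit times $2^{n-1}$ in the semi-dihedral case) are correct and reproduce the indices asserted in items (a), (a$'$), (b), and (c).
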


\begin{cor}
\label{cor:Sdescrip}
Suppose $T \in \mathcal{C}$ is non-trivial.  Recall that $S(T) $ is the set of non-trivial cyclic subgroups
$\Gamma \subset G$ which contain $T$.  Define $S'(T)$ to be the set of $\Gamma \in S(T)$
such that $\mu([\Gamma:T])$ is non-zero, i.e.\  for which $[\Gamma:T]$ is square-free.  Then
$S'(T)$ has the following description.
\begin{enumerate}
\item[a.]  If $T = D_i$ for some $i$ as in Lemma \ref{lem:subgrps}, then $S'(T) = \{T\}$.
\item[b.] Suppose $T = p^j H$ for some $0 \le j \le n-1$ and that either $G$ is dihedral or $j \ne n-1$.  Then
$S'(T) = \{T\}$ if $j = 0$ and $S'(T) =  \{T,p^{j-1}T\}$ if $0 < j$.
\item[c.] Suppose $G$ is quaternionic and $T = 2^{n-1}H$. Then $S'(T)$
is the union of $\{T,2^{n-2}H\}$ with the set of $\# (G/\N_G(D_1)) = 2^{n-2}$ distinct conjugates
of $D_1$ and the set of $\# (G/\N_G(D_2)) = 2^{n-2}$ distinct conjugates of $D_2$.
\item[d.]  Suppose $G$ is semi-dihedral and $T = 2^{n-1}H$.  Then $S'(T)$
is the union of $\{T,2^{n-2}H\}$ with the set of $\# (G/\N_G(D_2)) = 2^{n-2}$ distinct conjugates of $D_2$.
\end{enumerate}
\end{cor}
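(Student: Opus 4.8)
The plan is to prove Corollary~\ref{cor:Sdescrip} by a direct case analysis resting on the enumeration of cyclic subgroups and their normalizers in Lemma~\ref{lem:subgrps}. For a non-trivial $T\in\mathcal{C}$, the set $S'(T)$ consists of the cyclic $\Gamma\supseteq T$ with $[\Gamma:T]$ square-free; since $\#G=2p^n$ (with $p=2$ in the quaternionic and semi-dihedral cases), any such index lies in $\{1,p\}$, because a square factor would force divisibility by $p^2$. Hence $S'(T)$ is $\{T\}$ together with the cyclic overgroups of $T$ of order $p\cdot\#T$, and the task reduces, for each $T$, to listing the cyclic subgroups of $G$ that contain $T$. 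For part (a), $T=D_i$ has order $2$ or $4$; by Lemma~\ref{lem:subgrps} every cyclic subgroup of $G$ not contained in $H$ is a conjugate of some $D_\ell$, hence of order at most $4$, while every cyclic subgroup inside $H$ has $p$-power order and so (when $p$ is odd) cannot contain the even-order group $D_i$. In every case $D_i$ has no cyclic overgroup of order $p\cdot\#D_i$ --- for $p=2$ such a group would have order $8$, forcing an element of order $8$, which in $G$ must lie in $H$, contradicting $D_i\not\subseteq H$ --- so $S'(D_i)=\{D_i\}$.

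Next I would treat part (b), where $T=p^jH$ with $0\le j\le n-1$ and either $G$ is dihedral or $j\ne n-1$; then $T$ is a non-trivial subgroup of $H$ of order $p^{n-j}$. Its intersection with any conjugate of a $D_\ell$ is trivial when $p$ is odd and of order at most $2$ when $p=2$ (where the excluded case $j=n-1$ guarantees $\#T\ge 4$ in the quaternionic and semi-dihedral settings), so no such conjugate can contain $T$. Therefore every cyclic overgroup of $T$ is of the form $p^iH$ with $0\le i\le j$, of index $p^{j-i}$ over $T$, and this index is square-free exactly for $i=j$ and for $i=j-1$ (the latter only when $j\ge 1$). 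This gives $S'(T)=\{T\}$ when $j=0$ and $S'(T)=\{T,\,p^{j-1}H\}$ when $j\ge 1$.

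Finally I would handle parts (c) and (d), where $T=2^{n-1}H$. In the quaternionic case $T=\langle\sigma^2\rangle$ is the unique subgroup of $G$ of order $2$, so every subgroup of even order contains $T$; the cyclic index-$2$ overgroups of $T$ are thus the order-$4$ subgroup $2^{n-2}H$ of $H$ together with the order-$4$ subgroups outside $H$, namely the conjugates of $D_1$ and of $D_2$ (each contains the central involution). In the semi-dihedral case $T=\langle\tau^{2^{n-1}}\rangle$ is central, so every conjugate of $D_2$ --- whose generator squares to a conjugate of $(\sigma\tau)^2=\tau^{2^{n-1}}$ --- contains $T$, whereas $D_1$ and its conjugates, of order $2$ and distinct from $T$, do not; so here the cyclic index-$2$ overgroups of $T$ are $2^{n-2}H$ together with the conjugates of $D_2$. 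In either case the number of conjugates of $D_\ell$ is $[G:\N_G(D_\ell)]=2^{n-2}$ by the normalizer computations in Lemma~\ref{lem:subgrps}, and $D_1,D_2$ lie in distinct conjugacy classes because conjugation by powers of $\tau$ carries $\sigma$ only to the translates $\tau^{2a}\sigma$; hence the listed conjugates are pairwise distinct, and one obtains the descriptions in (c) and (d).

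The argument is entirely elementary; the only place calling for real care is the bookkeeping in parts (c) and (d) --- checking that the order-$4$ reflection subgroups all meet $H$ in the involution $T$, that $D_1$ and $D_2$ are non-conjugate, and that the conjugate count is exactly $[G:\N_G(D_\ell)]$ without double-counting --- so that is where I expect the main (though modest) obstacle to lie.
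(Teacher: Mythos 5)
Your overall strategy --- a direct case-by-case check of the cyclic overgroups using the enumeration and normalizer computations of Lemma~\ref{lem:subgrps} --- is the only natural one and matches what the paper leaves to the reader. However, the argument has a couple of slips in the sub-cases where the relevant cyclic subgroups have order $2$, which is exactly where you flagged the care is needed but then didn't fully close.

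In part (a), you argue that a cyclic overgroup of $D_i$ "would have order $8$, forcing an element of order $8$." This is only true when $\#D_i = 4$, i.e.\ for $D_1, D_2$ in the quaternionic case and for $D_2$ in the semi-dihedral case. When $\#D_i = 2$ (that is, $D_1, D_2$ for dihedral $p=2$, and $D_1$ for semi-dihedral) a putative overgroup $\Gamma \supsetneq D_i$ would have order $4$, and there genuinely are order-$4$ cyclic subgroups in the semi-dihedral group. The correct argument is: since $D_i \not\subset H$, any cyclic $\Gamma \supsetneq D_i$ is not contained in $H$, so every generator of $\Gamma$ lies outside $H$; in the dihedral case such elements have order $2$, so no such $\Gamma$ exists; in the semi-dihedral case such elements have order at most $4$ with square equal to $\tau^{2^{n-1}}$ or $e$, and in particular $\sigma$ is not a square in $G$, so $D_1 = \langle\sigma\rangle$ is contained in no order-$4$ cyclic subgroup. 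A similar issue occurs in part (b): your parenthetical invokes ``$\#T\ge 4$ in the quaternionic and semi-dihedral settings'' to rule out containments in conjugates of $D_\ell$, but this leaves untreated the dihedral $p=2$ case with $j=n-1$, where $\#T=2$ and the ``intersection of order at most $2$'' bound is not enough on its own. The needed supplement is that $T = 2^{n-1}H \subset H$ while every conjugate of $D_\ell$ (also of order $2$) lies outside $H$, so they cannot coincide and $T$ lies in no conjugate of $D_\ell$. With these two patches the argument is complete; parts (c) and (d), including the computation that $D_1$ and $D_2$ are not conjugate, are sound as written.
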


\begin{cor}
\label{cor:nicesubgroups}
Suppose that $T = p^j H$ is a non-trivial subgroup of $H$ satisfying the conditions
of part (b) of Corollary \ref{cor:Sdescrip}.  Thus either $G$ is dihedral or $0 \le j < n-1$.  
Then
\begin{equation}
\label{eq:answer}
b_T  = \frac{1 + i_0}{2}\quad \mathrm{ if}\quad  j = 0\quad \mathrm{ and}\quad b_T =  
\frac{i_j}{2}\quad \mathrm{ if}\quad  j > 0. 
\end{equation}
\end{cor}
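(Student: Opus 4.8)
The plan is to apply Theorem~\ref{thm:nontrivcase} to $T=p^jH$, feeding it the combinatorial description of the overgroups of $T$ from Corollary~\ref{cor:Sdescrip}(b) and the values of $\iota$ from Corollary~\ref{cor:computeiota}. Since $p^jH\in\mathcal H\subset\mathcal C$ and is non-trivial for $0\le j\le n-1$, Theorem~\ref{thm:nontrivcase} gives
\[
b_T=\frac{1}{[\N_G(T):T]}\sum_{\Gamma\in S(T)}\mu([\Gamma:T])\,\iota(\Gamma),
\]
and since $\mu([\Gamma:T])=0$ unless $[\Gamma:T]$ is square-free, the sum may be taken over $S'(T)$. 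Under the hypotheses here (which are exactly those of part~(b) of Corollary~\ref{cor:Sdescrip}), that corollary gives $S'(T)=\{T\}$ if $j=0$ and $S'(T)=\{p^jH,\,p^{j-1}H\}$ if $j>0$, with $\mu([p^{j-1}H:p^jH])=\mu(p)=-1$.

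Next I would record that $[\N_G(p^jH):p^jH]=2p^j$. The group $p^jH=\langle\tau^{p^j}\rangle$ is normalized by the abelian group $H$; and $\sigma$ carries $\tau^{p^j}$ to $\tau^{-p^j}$ in the dihedral and generalized quaternion cases, and to $\tau^{(-1+p^{n-1})p^j}$ in the semi-dihedral case, which again lies in $\langle\tau^{p^j}\rangle$ because $p^{\,n-1+j}\equiv 0\bmod p^n$ when $j\ge 1$, while $\tau^{-1+p^{n-1}}\in\langle\tau\rangle=H=p^0H$ when $j=0$. Hence $\sigma$ normalizes $p^jH$, so $\N_G(p^jH)=\langle H,\sigma\rangle=G$ has order $2p^n$ whereas $|p^jH|=p^{\,n-j}$, giving index $2p^j$; in particular $[\N_G(H):H]=2$. (This also follows from Lemma~\ref{lem:subgrps}.)

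Finally I would substitute $\iota(p^jH)=1+i_0+pi_1+\cdots+p^ji_j$ from Corollary~\ref{cor:computeiota}. For $j=0$ the sum over $S'(T)$ is just $\iota(H)=1+i_0$, so $b_H=(1+i_0)/2$. For $j>0$ it telescopes,
\[
\sum_{\Gamma\in S'(T)}\mu([\Gamma:T])\,\iota(\Gamma)=\iota(p^jH)-\iota(p^{j-1}H)=p^ji_j,
\]
so $b_{p^jH}=p^ji_j/(2p^j)=i_j/2$, which is \eqn{eq:answer}. There is no serious obstacle; the argument is bookkeeping built on the three cited results. The only point needing a little care is the normalizer index $2p^j$, and specifically checking that the semi-dihedral twist $\sigma\tau\sigma^{-1}=\tau^{-1+p^{n-1}}$ still normalizes every $p^jH$ — which it does for the reason above — so that the value of $[\N_G(p^jH):p^jH]$ is the same across all three cases.
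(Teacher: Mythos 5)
Your proof is correct and follows essentially the same route as the paper: apply Theorem~\ref{thm:nontrivcase}, reduce the sum to $S'(T)$ via Corollary~\ref{cor:Sdescrip}(b), use $[\N_G(T):T]=2p^j$ (the paper simply observes $T$ is normal in $G$, whereas you verify this explicitly case by case, which is a harmless elaboration), and then evaluate with Corollary~\ref{cor:computeiota}.
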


\begin{proof}
Recall that 
\begin{equation}
\label{eq:bigt}
b_T  = \frac{1}{[\N_G(T):T]} \sum_{\Gamma \in S(T)} \mu([\Gamma:T]) \iota(\Gamma).
\end{equation}
Since $T$ is normal in $G$, $[\N_G(T):T] = \# G/\# T = 2p^n/p^{n-j} = 2p^j$.  The only
$\Gamma$ which contribute to the sum for $b_T$ are those $\Gamma$ in $S'(T)$.
Hence Corollary \ref{cor:Sdescrip} gives $b_T = \frac{1}{2}\iota(H)$ if $j = 0$ 
while $b_T = \frac{1}{2p^j}(\iota(p^jH) - \iota(p^{j-1}H))$ if $j > 0$.  Corollary
\ref{cor:computeiota} now gives
the stated formulas for $b_T$.
\end{proof}

\begin{cor}
\label{cor:di}
Suppose $p > 2$, $G$ is dihedral, and that $T = D_1$ is as in Lemma \ref{lem:subgrps}(a).
Then $b_{T} = \iota(T) = 1$.
\end{cor}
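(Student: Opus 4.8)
The plan is to specialize Theorem \ref{thm:nontrivcase} to $T = D_1$, exactly as in the proof of Corollary \ref{cor:nicesubgroups}. First I will use Lemma \ref{lem:subgrps}(a) to record that $\N_G(D_1) = D_1$, so that the prefactor $1/[\N_G(T):T]$ appearing in (\ref{eq:theform}) equals $1$. Next I will identify the set $S(D_1)$ of non-trivial cyclic subgroups of $G$ containing $D_1$. Since $D_1 = \langle\sigma\rangle$ has order $2$ while the rotation subgroup $H$ has odd order $p^n$, every element of $G$ outside $H$ is a reflection of order $2$; hence the only cyclic subgroup of $G$ containing $\sigma$ is $D_1$ itself, and $S(D_1) = \{D_1\}$. (This is also recorded in Corollary \ref{cor:Sdescrip}(a), which gives $S'(D_1) = \{D_1\}$; any remaining elements of $S(D_1)$ would contribute $0$ to the sum in (\ref{eq:theform}) since their index in $D_1$ fails to be square-free.)

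With these two observations, (\ref{eq:theform}) collapses to
\[
b_{D_1} \;=\; \mu([D_1:D_1])\,\iota(D_1) \;=\; \iota(D_1).
\]
It then remains to compute $\iota(D_1)$. As already noted in the proof of Proposition \ref{prop:subtle}, any subgroup of $G$ that contains a non-trivial element of order prime to $p$ has $\iota$-value $1$: here $D_1 \subset G_0 = G$ but $D_1 \not\subset G_1$, because $G_1$ is a $p$-group whereas $D_1$ has order $2$ and $p$ is odd. Hence $\iota(D_1) = 1$, and therefore $b_{D_1} = \iota(D_1) = 1$, as claimed.

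There is no genuine obstacle here: the corollary is an immediate specialization of Theorem \ref{thm:nontrivcase}, and the only two points deserving a word of justification are that $S(D_1) = \{D_1\}$ (a reflection generates a maximal cyclic subgroup of $D_{2p^n}$ when $p$ is odd) and that $\iota(D_1) = 1$ (because $D_1$ is tamely ramified), both of which are elementary.
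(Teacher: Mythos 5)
Your proof is correct and follows the same route as the paper: specialize the formula (\ref{eq:theform})/(\ref{eq:bigt}) to $T = D_1$, using $\N_G(D_1) = D_1$ from Lemma \ref{lem:subgrps}(a) and $S'(D_1) = \{D_1\}$ from Corollary \ref{cor:Sdescrip}(a), and then note $\iota(D_1) = 1$ because $D_1$ has order $2$, which is prime to $p$. The paper compresses all of this into one sentence, but the content is identical.
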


\begin{proof}
This is clear from the general formula (\ref{eq:bigt}) for $b_T$ and the fact that $S'(T) = \{T\}$, $\N_G(T) = T$ and $T$ has order $2$, which is prime to $p$.
\end{proof}

\begin{lemma}
\label{lem:di2}
Suppose that $G$, $D_1$ and $D_2$ are as in parts (a$'$), (b) or (c) of Lemma \ref{lem:subgrps}.  Let
$\phi:G \to \mathrm{Aut}_k(k[[t]])$ be an injection.  Define $D_0 = H$.  Let $N = k((t))$,
$K = N^G$,   and $L_i = N^{\langle 2H,D_i\rangle }$ for $i = 0,1,2$, where $\langle 2H, D_i\rangle$
is the subgroup generated by $2H$ and $D_i$.  Then $L_i/K$ is quadratic for $i = 0, 1, 2$, with relative discriminant ideal
$(m_K)^{d_i}$ for some even integer $d_i$, where $m_K$ is the maximal ideal
of the integers $O_K$ of $K$.  
Moreover for $i = 0, 1, 2$,
\begin{equation}
\label{eq:bdeq}
b_{D_i} = \frac{\left ( \sum_{j \in \{0,1,2\}, j \ne i} d_j \right ) - d_i}{2}
\end{equation}
is a positive integer.
\end{lemma}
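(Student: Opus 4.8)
The plan is to compute both sides of \eqref{eq:bdeq} from the formula $b_{D_i} = \frac{1}{[\N_G(D_i):D_i]}\sum_{\Gamma\in S(D_i)}\mu([\Gamma:D_i])\iota(\Gamma)$ of Theorem \ref{thm:nontrivcase}, using the explicit description of $S'(D_i)$ in Corollary \ref{cor:Sdescrip}(a), together with a conductor-discriminant type computation for the three quadratic subextensions $L_i/K$. First I would record the group-theoretic data: in each of the cases (a$'$), (b), (c) of Lemma \ref{lem:subgrps} the index $[\N_G(D_i):D_i]$ equals $2$ for $i=1,2$, and $[\N_G(D_0):D_0]=[\N_G(H):H]=2$ by Lemma \ref{lem:subgrps} (here $D_0=H$ has index $2$ in $G$ when $n=1$, but for $n\ge 2$ one instead works with $\langle 2H,D_i\rangle$, which always has index $2$ in $G$, so the three fields $L_0,L_1,L_2$ are exactly the three distinct quadratic subextensions of $N/K$ corresponding to the three index-$2$ subgroups of the Klein four-group $G/[G,G]$). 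Since $K=N^G$ is a complete discretely valued field with algebraically closed residue field $k$, each $L_i/K$ is totally (wildly, since $p=2$) ramified of degree $2$, so its different has even valuation $d_i$ by \cite[\S IV.1]{corps} (the different of a $\mathbb Z/2$ extension in residue characteristic $2$ has valuation $d_i = $ (conductor) which is even because the unique jump in the ramification filtration is prime to $p=2$, hence odd, and $d_i$ is one more than that jump times... more precisely $d_i=i_0+1$ with $i_0$ the jump; one checks $d_i$ is even directly from Lemma \ref{lem:jumps} applied to $\mathbb Z/2$).

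The main computation then has two halves. First, express $\iota(\Gamma)$ for the relevant $\Gamma$ in terms of the $d_j$: the three quadratic subfields $L_0,L_1,L_2$ of $N/K$ correspond to the three subgroups $\langle 2H,D_0\rangle, \langle 2H,D_1\rangle,\langle 2H,D_2\rangle$ of $G$ of index $2$, and the valuation $d_j$ of the discriminant of $L_j/K$ equals $\iota$ of the corresponding order-$2$ subgroup $G/\langle 2H,D_j\rangle$ — wait, more carefully: $d_j$ is computed from the ramification filtration of $\mathrm{Gal}(N/K)=G$ via the formula for the discriminant of $L_j=N^{H_j}$ over $K$, namely $v_K(\mathfrak d_{L_j/K}) = \frac{1}{\#H_j}\sum_{i\ge 0}(\#G_i/\#(G_i\cap H_j))\cdot(\text{something})$; I would instead use the cleaner identity coming from \cite[\S 3]{corps} relating $\langle f_{L_j}, 1\rangle$-type quantities to genus/discriminant, exactly as in the proof of Theorem \ref{thm:KGBtwo}. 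Concretely, writing $a_\phi$ for the Artin character and using $-a_\phi = \sum_T b_T 1_T^G$ from Proposition \ref{prop:special}(i), one evaluates the inner product $\langle a_\phi, 1_{H_j}^G\rangle$, which by Frobenius reciprocity equals $\langle \mathrm{res}_{H_j} a_\phi, 1_{H_j}\rangle$, and this is the valuation of the discriminant of $N^{H_j}=L_j$ over $K$, i.e.\ $d_j$. Expanding $\langle -a_\phi, 1_{H_j}^G\rangle = \sum_T b_T\langle 1_T^G,1_{H_j}^G\rangle$ and using that $\langle 1_T^G,1_{H_j}^G\rangle$ counts double cosets, one gets a small linear system in the $b_{D_i}$ (the $b_T$ for cyclic $T$ inside $H$ being already known from Corollary \ref{cor:nicesubgroups}, and contributing symmetrically so they cancel). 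Solving this $3\times 3$ system yields $b_{D_i} = \tfrac12\big((\sum_{j\ne i}d_j) - d_i\big)$.

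Second, I must check $b_{D_i}$ is a positive integer. Integrality: $\sum_j d_j$ is even since each $d_j$ is even, so $(\sum_{j\ne i}d_j)-d_i = \sum_j d_j - 2d_i$ is even, giving $b_{D_i}\in\mathbb Z$. Positivity is the delicate point and I expect it to be \textbf{the main obstacle}: it amounts to the inequality $d_{i'}+d_{i''} > d_i$ where $\{i,i',i''\}=\{0,1,2\}$, a kind of triangle inequality among the three discriminant exponents. This should follow from the structure of the ramification filtration of $G$: since $p=2$ and $G$ is generated by $D_i$ together with $2H$, the jumps in the filtration force $d_i < d_{i'}+d_{i''}$; the cleanest way is to note $d_i$ is determined by the largest $m$ with $G^m \not\subset H_i$, and since $H_i\cap H_{i'}\cap H_{i''}$ has index $4$ while each $H_j$ has index $2$, at each positive ramification jump the image of $G^m$ in $G/(H_i\cap H_{i'}\cap H_{i''})\cong (\mathbb Z/2)^2$ is a nonzero subgroup, hence not contained in at least two of the three $H_j/(H_i\cap H_{i'}\cap H_{i''})$ — forcing the desired strict inequality unless the filtration is degenerate, which is excluded because $\phi$ is an injection so $G_1\ne\{e\}$ and in fact $G^m$ surjects onto $(\mathbb Z/2)^2$ for $m$ up to some positive value. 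I would carry out this last step carefully using Herbrand's theorem (\cite[Prop.\ IV.14]{corps}) to pass between upper-numbering filtrations of $G$ and of its quotients $G/H_j$, matching $d_j$ to the unique jump of the $\mathbb Z/2$ quotient $G/H_j$, and then the triangle inequality becomes the statement that the three jumps of the three $\mathbb Z/2$-quotients of a filtered $(\mathbb Z/2)^2$ cannot have one strictly exceeding the sum of the other two — which is immediate once one writes the filtration of $(\mathbb Z/2)^2$ explicitly.
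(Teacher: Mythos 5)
Your overall strategy coincides with the paper's: pair $-a_\phi = \sum_T b_T\, 1_T^G$ against characters attached to the three quadratic subfields $L_0,L_1,L_2$ of the biquadratic piece of $N/K$, solve the resulting linear system to get $b_{D_i} = \tfrac12\bigl(\sum_{j\ne i}d_j - d_i\bigr)$, and then read off positivity from the ramification structure of a biquadratic extension. Your justification of the evenness of $d_i$ (the unique lower break of a $\mathbb{Z}/2$-extension in residue characteristic $2$ is odd, so $d_i = 1 + (\text{break})$ is even) is also what is needed. However, there is a real gap in the middle step, and it is the one you glossed over rather than the one you flagged. You pair against $1_{H_j}^G = \chi_0 + \psi_j$ and assert that the terms with $T\subset 2H$ ``contribute symmetrically so they cancel.'' They are symmetric (for $T\subset 2H$ one has $\langle 1_T^G, 1_{H_j}^G\rangle = 2$ for every $j$), but they do not cancel: they contribute an unknown constant $C_0 = 2b_{\{e\}} + 2\sum_{\{e\}\ne T\subset 2H} b_T$ to each of the three equations, which then read $-d_j = C_0 + \sum_i b_{D_i} + b_{D_j}$. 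That is three equations in the four unknowns $b_{D_0},b_{D_1},b_{D_2},C_0$; in particular $b_{\{e\}}$ (hence $C_0$) is not supplied by Corollary \ref{cor:nicesubgroups}, so the ``$3\times 3$ system'' you describe is underdetermined. You can close it by additionally pairing against $\chi_0 = 1_G^G$ (giving $0 = \tfrac12 C_0 + \sum_i b_{D_i}$ and then the stated formula), but the paper's choice is tighter: it pairs against the \emph{virtual} character $\chi_0 - \psi_j$ rather than $\chi_0 + \psi_j$. Since $\psi_j$ is trivial on $2H\subset H_j$, the class function $\chi_0 - \psi_j$ vanishes on $2H$, so $\langle 1_T^G,\chi_0-\psi_j\rangle = 0$ for every $T\subset 2H$ and the unwanted terms disappear at the source; one immediately gets $d_j = \sum_{i\ne j} b_{D_i}$, which solves by inspection.

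As for positivity, it is not in fact the ``main obstacle'': once the formula is in hand it follows directly from exactly the fact you noted, that among the three discriminant exponents of the quadratic subfields of a biquadratic extension either all three are equal or two are equal and strictly larger than the third; in both cases each $\tfrac12\bigl(\sum_{j\ne i}d_j - d_i\bigr)$ is a positive (and, since the $d_j$ are all even, an integral) number. You do not need Herbrand's theorem or a careful analysis of the filtration on $(\mathbb{Z}/2)^2$ beyond this. One small side correction: $[G:H]=2$ for every $n$, not only $n=1$, so the parenthetical distinguishing those cases is unnecessary.
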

\begin{proof}
Let $\psi_i$ be the quadratic one-dimensional character of
$G= \mathrm{Gal}(N/K)$ which is the inflation of the non-trivial one dimensional
character of $\mathrm{Gal}(L_i/K)$. Then $\psi_i$ is trivial on $D_i$.  
Write 
\begin{equation}
\label{eq:aphispecial}
-a_\phi = \sum_{T \in \mathcal{C}} b_T \ 1_T^G = \sum_{2H \supset T \in \mathcal{C}} b_T \ 1_T^G + 
\sum_{i = 0}^2 b_{D_i} \ 1_{D_i}^G .
\end{equation}
Take the inner product of this expression with $\chi_0 - \psi_i$ when $\chi_0$ is
the one-dimensional trivial character of $G$.  If $T \subset 2H$ then $\langle  1_T^G, \chi_0 - \psi_i \rangle
= 0$, and 
\begin{equation}
\label{eq:innerp}
\langle -a_\phi, \chi_0 - \psi_i \rangle = 
- \langle a_\phi, \chi_0 \rangle + \langle a_\phi, \psi_i \rangle = 0 +  \langle a_\phi,\psi_i \rangle = d_i
\end{equation} by
\cite[\S VI.2]{corps}.  For $i,j \in \{0,1,2\}$, one has
\begin{equation}
\label{eq:uppity}
\langle 1_{D_j}^G,  \chi_0 - \psi_i \rangle = \langle 1_{D_j}^G,  \chi_0\rangle  - \langle 1_{D_j}^G,  \psi_i \rangle = 1 - \delta(i,j)
\end{equation}
since
the restriction of $\psi_i$ to $D_j$ is trivial if $i = j$ and non-trivial otherwise.  Combining
(\ref{eq:aphispecial}), (\ref{eq:innerp}) and (\ref{eq:uppity}) gives
the system of equations
\begin{eqnarray}
\label{eq:deqs}
d_0 &=& b_{D_1} + b_{D_2} \nonumber\\
d_1 &=& b_{D_0} + b_{D_2} \\
d_2 &=& b_{D_0} + b_{D_1} \nonumber
\end{eqnarray}
The formula (\ref{eq:bdeq}) is clear from this.  The exponents $d_0, d_1, d_2$ are even and positive since $p = 2$, so
that all the $b_{D_i}$ are integral.  The compositum of $L_0, L_1$ and $L_2$ over $K$
is a biquadratic extension of $K$.  Thus either all the $d_i$ are equal, or two are equal 
and the third is smaller than these two.  This implies that all the $b_{D_i}$ are positive, which
completes the proof.    
\end{proof}

\begin{cor}
\label{cor:hunch}  Assume the hypotheses of Lemma~\ref{lem:di2}.  Then 
$$\iota(D_i) = 2 b_{D_i} = \left ( \sum_{j \in \{0,1,2\}, j \ne i} d_j \right ) - d_i$$
for $i = 0, 1 , 2$.
\end{cor}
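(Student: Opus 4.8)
The plan is to deduce Corollary~\ref{cor:hunch} by pairing the formula for $b_{D_i}$ in terms of the $d_j$ already obtained in Lemma~\ref{lem:di2} with a direct evaluation of $b_{D_i}$ via Theorem~\ref{thm:nontrivcase}. Note first that the second asserted equality, $2b_{D_i} = \left(\sum_{j \in \{0,1,2\},\, j\ne i} d_j\right) - d_i$, is nothing but equation~(\ref{eq:bdeq}) of Lemma~\ref{lem:di2} cleared of its factor $2$, so no work is needed there. The entire content is the identity $\iota(D_i) = 2b_{D_i}$ for $i = 0,1,2$.

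To prove that identity I would apply Theorem~\ref{thm:nontrivcase}, which gives $b_{D_i} = \frac{1}{[\N_G(D_i):D_i]} \sum_{\Gamma \in S(D_i)} \mu([\Gamma:D_i])\,\iota(\Gamma)$, and treat the cases $i \in \{1,2\}$ and $i = 0$ separately. For $i \in \{1,2\}$: Corollary~\ref{cor:Sdescrip}(a) says $S'(D_i) = \{D_i\}$, so every term with $\Gamma \ne D_i$ in the sum defining $b_{D_i}$ has $\mu([\Gamma:D_i]) = 0$ and drops out, leaving only the $\Gamma = D_i$ term; since $[\N_G(D_i):D_i] = 2$ by Lemma~\ref{lem:subgrps}(a$'$), (b) or (c) (according to which of the dihedral, quaternionic or semi-dihedral cases we are in), this yields $b_{D_i} = \tfrac12\,\iota(D_i)$. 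For $i = 0$: here $D_0 = H$ is normal in $G$ of index $2$, so $\N_G(H) = G$ and $[\N_G(H):H] = 2$; moreover the only cyclic subgroup of $G$ containing $H$ is $H$ itself, because $G$ itself is non-cyclic and any cyclic subgroup of $G$ is either contained in $H$ or generated by an element outside $H$ and hence of order $2$ or $4$ by Hypothesis~\ref{hyp:good} (equivalently, one may invoke Corollary~\ref{cor:Sdescrip}(b) with $j = 0$, whose hypothesis holds since $G$ is dihedral whenever $n = 1$ and $0 < n-1$ otherwise). Thus $S(H) = \{H\}$ and again $b_H = \tfrac12\,\iota(H)$. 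Hence $\iota(D_i) = 2b_{D_i}$ for each $i \in \{0,1,2\}$, and combining this with (\ref{eq:bdeq}) gives the stated chain of equalities.

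I do not expect a genuine obstacle here: the argument is a bookkeeping exercise once Lemma~\ref{lem:di2}, Corollary~\ref{cor:Sdescrip} and Lemma~\ref{lem:subgrps} are in hand. The only points that call for a little care are that $D_0 = H$ is not one of the subgroups $D_i$ covered by Corollary~\ref{cor:Sdescrip}(a), so the case $i = 0$ must be handled on its own as above, and that one should record explicitly the normalizer index $[\N_G(D_i):D_i] = 2$ for $i = 1,2$ from Lemma~\ref{lem:subgrps} in each of the three cases (a$'$), (b), (c).
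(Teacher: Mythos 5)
Your proof is correct and follows essentially the same route as the paper: invoke Theorem~\ref{thm:nontrivcase} (equation~(\ref{eq:bigt})), use $[\N_G(D_i):D_i]=2$ and $S'(D_i)=\{D_i\}$ from Lemma~\ref{lem:subgrps} and Corollary~\ref{cor:Sdescrip} to collapse the sum to $b_{D_i}=\iota(D_i)/2$, and then combine with (\ref{eq:bdeq}). The only difference is that you explicitly separate the case $i=0$ (using Corollary~\ref{cor:Sdescrip}(b) with $j=0$ rather than part~(a)), which the paper leaves implicit; this is a reasonable clarification, and your verification that the hypothesis of part~(b) is satisfied when $j=0$ in each of the dihedral, quaternionic and semi-dihedral cases is correct.
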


\begin{proof}  By Lemma \ref{lem:subgrps} and Corollary \ref{cor:Sdescrip}, $[\N_G(D_i):D_i] = 2$
and $S'(D_i) = \{D_i\}$, so the result follows from (\ref{eq:bigt}) and Lemma~\ref{lem:di2}.
\end{proof}

\begin{cor}
\label{cor:hardpart}
With the hypotheses of Lemma~\ref{lem:di2}, suppose $G$ is quaternionic or semi-dihedral,
and that $T = 2^{n-1}H$.  
\begin{enumerate}
\item[a.]  If $G$ is quaternionic then $$b_T = \frac{i_{n-1}}{2} - \frac{d_0}{2}.$$

\item[b.] If $G$ is semi-dihedral then $$b_T =  \frac{i_{n-1}}{2} - \frac{d_0 + d_1 - d_2}{4}.$$
\end{enumerate}
\end{cor}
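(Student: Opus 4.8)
The plan is to evaluate the formula
\[
b_T = \frac{1}{[\N_G(T):T]} \sum_{\Gamma \in S(T)} \mu([\Gamma:T])\, \iota(\Gamma)
\]
of Theorem~\ref{thm:nontrivcase} directly, feeding in the lists of cyclic overgroups and the ramification data already assembled in this section. First I would record that $T = 2^{n-1}H$ is the unique subgroup of order $2$ contained in $H$, and a short computation with the relations in Hypothesis~\ref{hyp:good}(b),(c) (in the semi-dihedral case using $2^{2n-2}\equiv 0$ mod $2^n$, which holds since $n\ge 2$) shows $T$ is central in $G$; hence $\N_G(T)=G$ and $[\N_G(T):T] = \#G/2 = 2^n$. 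Since $\mu([\Gamma:T])=0$ unless $[\Gamma:T]$ is square-free, the sum over $S(T)$ collapses to the sum over $S'(T)$, which is given explicitly by Corollary~\ref{cor:Sdescrip}(c) in the quaternionic case and by Corollary~\ref{cor:Sdescrip}(d) in the semi-dihedral case.

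Next I would read off the contributions term by term, using that $\iota$ is a conjugacy invariant: the term $\Gamma=T$ contributes $\iota(T)$, the term $\Gamma = 2^{n-2}H$ contributes $-\iota(2^{n-2}H)$, and each of the $2^{n-2}$ conjugates of $D_i$ occurring in $S'(T)$ contributes $-\iota(D_i)$. By Corollary~\ref{cor:computeiota}, $\iota(2^{n-1}H) - \iota(2^{n-2}H) = 2^{n-1} i_{n-1}$, so
\[
2^n b_T = 2^{n-1} i_{n-1} - 2^{n-2}\bigl(\iota(D_1)+\iota(D_2)\bigr)
\]
in the quaternionic case and $2^n b_T = 2^{n-1} i_{n-1} - 2^{n-2}\,\iota(D_2)$ in the semi-dihedral case. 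Finally I would invoke Corollary~\ref{cor:hunch}, which gives $\iota(D_1) = d_0+d_2-d_1$ and $\iota(D_2)=d_0+d_1-d_2$ (recall $D_0 = H$), so that $\iota(D_1)+\iota(D_2) = 2d_0$; dividing by $2^n$ then yields $b_T = i_{n-1}/2 - d_0/2$ in case (a) and $b_T = i_{n-1}/2 - (d_0+d_1-d_2)/4$ in case (b), as claimed.

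No step here is genuinely difficult: the argument is pure bookkeeping, and every ingredient — the list $S'(T)$, the value of $\iota$ on the subgroups of $H$, and the expression of $\iota(D_i)$ through the quadratic discriminant exponents $d_0,d_1,d_2$ — is already in hand from Corollaries~\ref{cor:Sdescrip}, \ref{cor:computeiota}, and \ref{cor:hunch}. The only spot that calls for care is matching the powers of $2$: one must line up $[\N_G(T):T] = 2^n$ with the normalizer indices $[\N_G(D_i):D_i]=2$ and the orbit counts $\#(G/\N_G(D_i)) = 2^{n-2}$ that enter Corollary~\ref{cor:Sdescrip}, since a stray factor of $2$ or a sign error would throw off the final denominator of $4$. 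I would therefore double-check, before combining, that the number of overgroups being summed is $1 + 1 + 2^{n-2} + 2^{n-2}$ in the quaternionic case and $1 + 1 + 2^{n-2}$ in the semi-dihedral case.
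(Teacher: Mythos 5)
Your proposal is correct and follows essentially the same approach as the paper: it evaluates the formula of Theorem~\ref{thm:nontrivcase} term by term over $S'(T)$ from Corollary~\ref{cor:Sdescrip}, extracts $\iota(2^{n-1}H)-\iota(2^{n-2}H)=2^{n-1}i_{n-1}$ from Corollary~\ref{cor:computeiota}, and uses Corollary~\ref{cor:hunch} to express $\iota(D_i)$ via the discriminant exponents. The only cosmetic difference is that you substitute $\iota(D_1)+\iota(D_2)=2d_0$ and $\iota(D_2)=d_0+d_1-d_2$ directly, whereas the paper routes through $b_{D_i}$ and the relations of Lemma~\ref{lem:di2}; these are the same identities.
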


\begin{proof}  If $G$ is quaternionic, then  Corollaries \ref{cor:Sdescrip}, 
\ref{cor:computeiota} and \ref{cor:hunch} give
\begin{eqnarray}
b_T &=& \frac{1}{[\N_G(T):T]} \sum_{\Gamma \in S(T)} \mu([\Gamma:T]) \iota(\Gamma) \nonumber\\
&=&\frac{1}{2^n}\left (\iota(2^{n-1}H) - \iota(2^{n-2}H) - 2^{n-2}(\iota(D_1) + \iota(D_2))\right )\\
&=& \frac{1}{2^n}\left ( 2^{n-1} i_{n-1} - 2^{n-1}(b_{D_1} + b_{D_2}) \right ) \nonumber\\
&=& \frac{i_{n-1}}{2} - \frac{d_0}{2}.
\end{eqnarray}
If $G$ is semi-dihedral, the same arguments show
\begin{eqnarray}
b_T &=& \frac{1}{[\N_G(T):T]} \sum_{\Gamma \in S(T)} \mu([\Gamma:T]) \iota(\Gamma) \nonumber\\
&=&\frac{1}{2^n}\left (\iota(2^{n-1}H) - \iota(2^{n-2}H) - 2^{n-2} \iota(D_2))\right )\\
&=& \frac{1}{2^n}\left ( 2^{n-1} i_{n-1} - 2^{n-1} b_{D_2} \right ) \nonumber\\
&=& \frac{i_{n-1}}{2} - \frac{d_0 + d_1 - d_2}{4}
\end{eqnarray}
\end{proof}

\begin{cor}
\label{cor:closer}The Bertin obstruction of an injection $\phi:G \to \mathrm{Aut}_k(k[[t]])$
 vanishes if and only
if the following conditions hold:
\begin{enumerate}
\item[a.] $i_0$ is odd, and $i_j$ is even for $0 < j < n-1$.
\item[b.]  If $G$ is dihedral, $i_{n-1}$ is even.
\item[c.]  If $G$ is quaternionic, $i_{n-1}$ is even and $i_{n-1} \ge d_0$. 
\item[d.] If $G$ is semidihedral, $\frac{i_{n-1}}{2} - \frac{d_0 + d_1 - d_2}{4}$
is a non-negative integer.
\end{enumerate}
\end{cor}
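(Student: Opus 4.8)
The plan is to reduce everything to Proposition~\ref{prop:special}(ii): the Bertin obstruction of $\phi$ vanishes precisely when $0 \le b_T \in \mathbb{Z}$ for every non-trivial $T$ in a set $\mathcal{C}$ of representatives for the conjugacy classes of cyclic subgroups of $G$. So the entire proof is a finite case analysis over the $T$ enumerated in Lemma~\ref{lem:subgrps}, in which one substitutes the values of $b_T$ computed earlier in this section and translates ``$b_T$ is a non-negative integer'' into the stated parity and inequality conditions on the $i_j$.

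First I would dispose of the subgroups $D_1$ and $D_2$, which turn out to impose no condition. When $p$ is odd and $G$ is dihedral, Corollary~\ref{cor:di} gives $b_{D_1} = 1$; when $p = 2$ (cases (a$'$), (b), (c) of Lemma~\ref{lem:subgrps}), Lemma~\ref{lem:di2} shows that $b_{D_1}$ and $b_{D_2}$ are positive integers. Hence all the constraints come from the subgroups $T = p^j H$ with $0 \le j \le n-1$.

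Next I would run through these $T = p^j H$. For $j = 0$, for all $0 < j < n-1$, and also for $j = n-1$ when $G$ is dihedral, the hypotheses of Corollary~\ref{cor:nicesubgroups} hold, giving $b_T = (1+i_0)/2$ if $j = 0$ and $b_T = i_j/2$ if $j > 0$; since each $i_j$ is a positive integer (Lemma~\ref{lem:jumps}), ``$b_T \in \mathbb{Z}_{\ge 0}$'' becomes exactly ``$i_0$ odd'' for $j=0$, ``$i_j$ even'' for $0 < j < n-1$, and ``$i_{n-1}$ even'' in the dihedral case, i.e.\ conditions (a) and (b). For $j = n-1$ in the quaternionic case, Corollary~\ref{cor:hardpart}(a) gives $b_T = i_{n-1}/2 - d_0/2$; as $d_0$ is even by Lemma~\ref{lem:di2}, integrality is equivalent to $i_{n-1}$ being even and non-negativity to $i_{n-1} \ge d_0$, which is condition (c). For $j = n-1$ in the semi-dihedral case, Corollary~\ref{cor:hardpart}(b) gives $b_T = i_{n-1}/2 - (d_0 + d_1 - d_2)/4$, and ``$0 \le b_T \in \mathbb{Z}$'' is literally condition (d). Collecting the constraints from all $T$ — noting that exactly one of (b), (c), (d) is relevant according as $G$ is dihedral, quaternionic, or semi-dihedral — yields the asserted equivalence in both directions simultaneously, since each listed condition is precisely the statement ``$0 \le b_T \in \mathbb{Z}$'' for the corresponding $T$.

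I do not expect a serious obstacle, as the substantive computations are already in Corollaries~\ref{cor:nicesubgroups}, \ref{cor:di}, \ref{cor:hardpart} and Lemma~\ref{lem:di2}. The only point requiring care is the bookkeeping: making sure every cyclic subgroup of Lemma~\ref{lem:subgrps} is matched to the correct $b_T$-formula (which depends both on the type of $G$ and on the position of $j$ relative to $n-1$), using evenness of the $d_i$ to see that the relevant quantities in the $p=2$ cases are automatically integral, and checking the degenerate small cases directly (for $n=1$, which occurs only in the dihedral case, the only $p^j H$ is $T = H$ with $j = 0$, contributing just condition (a); the generalized quaternion and semi-dihedral types require $n \ge 2$ and $n \ge 3$ respectively, so (b)/(c)/(d) there always refer to a genuine separate parameter $i_{n-1}$).
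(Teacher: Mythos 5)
Your argument is correct and is essentially the paper's own proof: it reduces via Proposition~\ref{prop:special}(ii) to checking $0 \le b_T \in \mathbb{Z}$ for each $T$ in the enumeration of Lemma~\ref{lem:subgrps}, and then reads off the parity/inequality conditions from the formulas in Corollaries~\ref{cor:nicesubgroups}, \ref{cor:di}, \ref{cor:hardpart} and Lemma~\ref{lem:di2}, using that the $d_i$ are even. Nothing further is needed.
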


\begin{proof}  By Proposition \ref{prop:special}, the Bertin obstruction of $\phi$ vanishes
if and only if $b_T$ is a non-negative integer for $T$ a non-trivial subgroup contained in the set $\mathcal{C}$ described in Lemma \ref{lem:subgrps}.  Those non-trivial $T \in \mathcal{C}$ contained in $H$ are treated in Corollaries \ref{cor:nicesubgroups} and \ref{cor:hardpart} since the $d_i$ in Lemma~\ref{lem:di2} are even.  The $T \in \mathcal{C}$ 
which are not contained in $H$ are treated in Corollary \ref{cor:di} and Lemma~\ref{lem:di2}.
Conditions (a) - (d) are equivalent to the statement that the $b_T$ in Corollaries \ref{cor:nicesubgroups}, \ref{cor:hardpart} and \ref{cor:di}
and in Lemma \ref{lem:di2} are non-negative integers.  
\end{proof}

\begin{cor}
\label{cor:closertwo}  Suppose that the Bertin obstruction of $\phi$
vanishes, so that (a) - (d) of Corollary \ref{cor:closer} hold.   
Then the KGB obstruction of $\phi$ vanishes. 
\end{cor}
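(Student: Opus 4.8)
The plan is to use Theorem~\ref{thm:KGBtwo}, which reduces the vanishing of the KGB obstruction to producing a finite $G$-set $S$ satisfying conditions (a) and (b) of that theorem. Since we are assuming the Bertin obstruction of $\phi$ vanishes, condition (a) of Theorem~\ref{thm:KGBtwo} is automatic: by Proposition~\ref{prop:special}(ii)--(iii), the $G$-set
\[
S = \coprod_{\{e\} \ne T \in \mathcal{C}} \ \coprod_{i=1}^{b_T} (G/T)
\]
has character $\chi_S = m\cdot \mathrm{reg}_G - a_\phi$ with $m = -b_{\{e\}} \ge 0$, and all stabilizers are nontrivial cyclic subgroups. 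So the entire content of the corollary is the verification of condition (b): one must choose a set $\Omega$ of orbit representatives, and for each $t \in \Omega$ a generator $g_t$ of the stabilizer $G_t$, so that $\{g_t\}_{t\in\Omega}$ generates $G$ and $\prod_{t\in\Omega} g_t$ has order $[G:G_1]$. Here $G_1$ is the wild inertia subgroup, which in Hypothesis~\ref{hyp:good} is $H$ in the dihedral case and $H$ in the quaternion/semi-dihedral cases as well (since $\sigma$ and $\tau\sigma$ have order prime to $p$ only when $p>2$; when $p=2$ they have $2$-power order, but $H$ is still the unique $2$-Sylow, so $[G:G_1]=2$ for $p=2$ and $[G:G_1]=2$ for $p>2$ dihedral). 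Thus we need the product of chosen generators to have order exactly $[G:G_1]$, which is $2$ in every case at hand.

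First I would read off from Lemma~\ref{lem:subgrps} and Corollaries~\ref{cor:nicesubgroups}, \ref{cor:di}, \ref{cor:hardpart}, \ref{cor:di2}-equations which $T\in\mathcal{C}$ occur in $S$ with $b_T>0$ and with what multiplicity, using conditions (a)--(d) of Corollary~\ref{cor:closer}. The key structural point is that the cyclic subgroups of $G$ come in two families: the subgroups $p^jH \subset H$ (the "wild" part), and the subgroups $D_1, D_2$ (the "tame complement" part, of order $2$ or $4$). In the dihedral case when $p>2$, we have $b_{D_1}=1$ by Corollary~\ref{cor:di}, so $S$ contains one copy of $G/D_1$; picking the representative $t_0$ with $g_{t_0}=\sigma$ already contributes an order-$2$ element. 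In the $p=2$ dihedral, quaternion, and semi-dihedral cases, by Lemma~\ref{lem:di2} the constants $b_{D_0}, b_{D_1}, b_{D_2}$ are all positive integers, so $S$ contains copies of $G/D_0=G/H$, $G/D_1$, $G/D_2$; choosing generators $\tau$ of $H$, a generator $g_1$ of $D_1$, and a generator $g_2$ of $D_2$ from three distinct orbits, I would arrange the product so that the "extra" generators (those beyond a minimal generating pair) cancel and the leftover product equals $\sigma$ (or $\tau\sigma$), an element whose image in $G/G_1 = G/H \cong \mathbb{Z}/2$ is nontrivial, hence of order $[G:G_1]=2$ as required.

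The main obstacle I expect is bookkeeping: one must simultaneously (i) ensure the chosen $g_t$'s generate $G$ — this is easy since $\tau$ and $\sigma$ already generate $G$, so it suffices that $\tau$ appears as $g_t$ for some orbit $G/p^jH$ with $j=0$ (guaranteed because $b_H = (1+i_0)/2 \ge 1$ by Corollary~\ref{cor:nicesubgroups} and condition (a) with $i_0$ odd $\ge 1$) and that $\sigma$ or $\tau\sigma$ appears; and (ii) arrange the total product $\prod g_t$ to have order exactly $2$. For (ii) the trick is that whenever a subgroup $T$ contributes with multiplicity $b_T \ge 2$, or contributes a generator lying in $H$, one can pair up or absorb those generators: a product of elements of the normal cyclic $p$-group $H$ stays in $H$, and one can always adjust the single "odd" factor (the one non-$H$ generator) by choosing which generator of $D_i$ to use so that the final product is the specific element $\sigma$. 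Concretely I would split $\Omega = \Omega_H \sqcup \Omega'$ where $\Omega_H$ indexes orbits with stabilizer inside $H$ and $\Omega'$ indexes orbits with stabilizer $D_1$ or $D_2$; order $\Omega$ so that all of $\Omega_H$ comes first with the product of those $g_t$'s being some element $h\in H$, then handle $\Omega'$. Since $|\Omega'|$ is even exactly when... — this parity analysis, using that the $d_i$ are even hence the $b_{D_i}$ have controlled parities, is where the care is needed; but because $D_1, D_2$ normalize $2^{n-1}H$ and conjugation relations let us rewrite products of reflections as elements of $H$ times $\sigma$, one can always collapse the $\Omega'$-product to $h'\sigma$ for some $h'\in H$, and then a final generator choice for one orbit with stabilizer in $H$ (using the freedom in $\Omega_H$) kills $hh'$ and leaves exactly $\sigma$. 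I would write this out case by case following the four parts of Corollary~\ref{cor:closer}, and in each case exhibit the explicit $\Omega$ and $\{g_t\}$; the verification that $\mathrm{ord}(\prod g_t)=2=[G:G_1]$ is then a direct computation in the finite group $G$.
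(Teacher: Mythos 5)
Your outline matches the paper's strategy at the top level: reduce to Theorem~\ref{thm:KGBtwo}(b), use the positivity of the $b_T$ to get generation, and then show the product of chosen generators can be made to have order $[G:G_1]$. However there are two real problems.

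First, you misidentify $[G:G_1]$ when $p=2$. In that case $G$ is a $2$-group (dihedral, quaternionic, or semi-dihedral), so $G$ \emph{is} its own $2$-Sylow and $G_1 = G$, giving $[G:G_1]=1$, not $2$. The group $H$ is only an index-$2$ cyclic subgroup, not the wild inertia group. Consequently your target for $\operatorname{ord}\bigl(\prod_t g_t\bigr)$ in the $p=2$ case must be $1$ (i.e., the product must be the identity), not $2$, and the argument where you aim to collapse everything "down to $\sigma$" is aimed at the wrong element. Only in the $p>2$ dihedral case is $G_1 = H$ and $[G:G_1]=2$.

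Second, the "parity analysis" you flag as the delicate part is left unresolved, and it is precisely where the essential input lies. The paper closes this gap with the congruence
\begin{equation*}
b_H \equiv b_{D_1} \equiv b_{D_2} \pmod{2},
\end{equation*}
which follows from the formula $b_{D_i} = \tfrac{1}{2}\bigl(\sum_{j\ne i} d_j - d_i\bigr)$ of Lemma~\ref{lem:di2} together with the fact that each $d_j$ is even (so $d_i \equiv -d_i \bmod 4$ and hence $2b_{D_i}\equiv d_0+d_1+d_2 \bmod 4$ for every $i$). This congruence guarantees that \emph{any} choice of generators for the stabilizers yields a product lying in $2H$ (the Frattini subgroup); one then uses $b_H>0$ to place a generator of $H$ first in the ordering and modifies that single generator by the appropriate element of $2H$ to force the product to be $e$. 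Without this congruence, there is no reason the $\Omega'$-product you describe should collapse to anything controllable, and the argument has a hole. You should also note that the paper does not need any case-by-case conjugation bookkeeping with reflections: once the product lands in $2H$, the single adjustment at the $H$-spot finishes the argument uniformly across the dihedral, quaternionic, and semi-dihedral $2$-group cases.
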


\begin{proof} We claim that the KGB obstruction vanishes if  we can find for each non-trivial $T \in \mathcal{C}$ a sequence of elements $\{g_{T,i}\}_{i=1}^{b_T}$ of $G$ with the following properties:
\begin{enumerate}
\item[i.] Each 
$g_{T,i}$ is in a conjugate of $T$;
\item[ii.]  There is an ordering $\{g_t\}_{t\in \Omega}$ of the doubly indexed set $\{g_{T,i}\}_{T,i}$, counting
multiplicities, such that $\prod_{t \in \Omega} g_t$ has order $[G:G_1]$.
\end{enumerate}

To prove this claim, suppose we can find $\{g_t\}_{t\in \Omega}$ as above.  
In Theorem \ref{thm:KGBtwo}(b) we can then take $S$ to be $\coprod_{t \in \Omega} G/\langle g_t \rangle$
provided we show that $\{g_t\}_{t \in \Omega}$ generates $G$.   Suppose
first that $p > 2$, so $G = D_{2p^n}$.  By Corollary \ref{cor:di}, $b_{D_1} = 1$, so there is one $g_t$
which has order $2$.  By Corollary \ref{cor:nicesubgroups}, $b_H  = \frac{1 + i_0}{2} > 0$.  Thus
the subgroup of $G$ generated by $\{g_t\}_{t \in \Omega}$ contains $H$ and $D_1$, so must
be all of $G$.  Suppose now that $p = 2$.  By Lemma~\ref{lem:di2}, $b_H$, $b_{D_1}$ and $b_{D_2}$
are positive.  Hence the subgroup of $G$ generated by $\{g_t\}_{t\in \Omega}$ surjects onto 
the Klein four quotient $G/2H$ of $G$. This implies this subgroup must be all of $G$.   

We now have to show that we can choose the $g_{T,i}$ so that (i) and (ii) hold.  

We consider first the case $p > 2$.  Then $G$ is isomorphic to $D_{2p^n}$
for some $n \ge 1$, and (\ref{eq:congruencenew}) holds vacuously.  By Corollary \ref{cor:di},
$b_{D_1} = 1$.  It follows that if we pick the $g_{T,i}$ to be any generators of
$T$, and pick any ordering $\{g_t\}_{t\in \Omega}$ of all these $g_{T,i}$, then
the product $\prod_{t\in \Omega} g_t$ projects to the non-trivial element of $G/H$.
Hence this product has order $2 = [G:G_1]$, since every element of $G = D_{2p^n}$
not in $H$ has order $2$.    Hence (i) and (ii) hold.

Suppose now that $p = 2$ and that $G$ is a $2$-group and is either dihedral, quaternionic
or semi-dihedral.  The quotient $G/(2H)$ is then isomorphic to the Klein
four group, and $G = G_1$. We claim that
\begin{equation}
\label{eq:congruencenew}
b_H \equiv b_{D_1} \equiv b_{D_2} \ \mathrm{mod} \ 2\mathbb{Z}.
\end{equation}
Here $b_H = b_{D_0}$ in the terminology of 
Lemma~\ref{lem:di2}, where it was shown that
\begin{equation}
\label{eq:bdeqmore}
b_{D_i} = \frac{\left ( \sum_{j \in \{0,1,2\}, j \ne i} d_j \right ) - d_i}{2}
\end{equation}
for $i = 0, 1, 2$. Since all the $d_i$ are even, we have $d_i \cong -d_i$ 
mod $4\mathbb{Z}$.  Thus
$$2b_{D_i} \equiv \sum_{j \in \{0,1,2\}} d_j \quad \mathrm{mod}\quad 4 \mathbb{Z}$$
from which (\ref{eq:congruencenew}) is clear.
Let the $g_{T,i}$ be any choice of generators for $T$ as $T$ ranges
over $\mathcal{C}$ and $i = 1, \ldots,b_T$.  By Corollary \ref{cor:nicesubgroups},
$b_H > 0$.  Hence we can choose an ordering  $\{g_t\}_{t\in \Omega}$ of these $g_{T,i}$ such that the first $g_t$ is a generator of $H$.  The congruence (\ref{eq:congruencenew})
implies that 
$\prod_{t \in \Omega} g_t$ lies in $2H$.  We can now multiply the first $g_t$
by an element of $2H$ to produce a new generator of $H$ such that
when we use this element as the first $g_t$ we have $\prod_{t \in \Omega} g_t = e$.
This shows that (i) and (ii) hold and completes the proof.
\end{proof}

\begin{cor}
\label{cor:closerthree}
With $G$ a finite group and $k$ a field of characteristic $p>0$, let $P(G,k)$ be the assertion that every injection $\phi:G \to \mathrm{Aut}_k(k[[t]])$ satisfies
conditions (a) - (d) of  Corollary \ref{cor:closer}.   Then fixing $G$, the assertion $P(G,k)$ holds for all algebraically closed fields $k$ of characteristic
$p$ if and only if $P(G,k)$ holds for all quasi-finite fields $k$ of characteristic $p$.  The same is true if we add condition (\ref{eq:congruencenew}) to $P(G,k)$. 
\end{cor}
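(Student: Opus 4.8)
The plan is to reduce the assertion $P(G,k)$ to a condition on the Artin character $a_\phi$ alone, and then to quote the fact, already established in Corollary \ref{cor:quasidone}, that a complex character of $G$ arises as $a_\phi$ for some injection $\phi:G\to\mathrm{Aut}_k(k[[t]])$ with $k$ algebraically closed of characteristic $p$ if and only if it arises as $a_{\phi_L}$ for some injection over a quasi-finite field $L$ of characteristic $p$.

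First I would note that, for a fixed injection $\phi$, whether conditions (a)--(d) of Corollary \ref{cor:closer} hold is determined by $a_\phi$. Indeed, by Corollary \ref{cor:closer} these four conditions together are equivalent to the vanishing of the Bertin obstruction of $\phi$, which by Proposition \ref{prop:special} is equivalent to the statement $0\le b_T(\phi)\in\mathbb{Z}$ for all non-trivial $T\in\mathcal{C}$; and the rational numbers $b_T(\phi)$ are, by Proposition \ref{prop:special}(i), precisely the coefficients in the unique expansion $-a_\phi=\sum_{T\in\mathcal{C}}b_T(\phi)\,1_T^G$, hence functions of $a_\phi$. (One can also see this directly: the jump data $i_0,\dots,i_{n-1}$ and the discriminant exponents $d_0,d_1,d_2$ entering (a)--(d) are all recovered from $a_\phi$.) The same remark applies verbatim to the extra congruence (\ref{eq:congruencenew}), since $b_H,b_{D_1},b_{D_2}$ are again among the $b_T(\phi)$.

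Granting this, the two implications are routine. Suppose $P(G,k)$ holds for every algebraically closed $k$ of characteristic $p$, and let $L$ be quasi-finite of characteristic $p$ with an injection $\phi_L:G\to\mathrm{Aut}_L(L[[t]])$. Put $k=\overline{L}$ and let $\phi$ be the base change of $\phi_L$ to $k$; then $a_\phi=a_{\phi_L}$ (the Artin character is unchanged under base change, cf.\ Proposition \ref{prop:reducetoquasi}). Since $k$ is algebraically closed, $\phi$ satisfies (a)--(d) by hypothesis, and because these conditions depend only on the Artin character, $\phi_L$ does too; as $\phi_L$ was arbitrary, $P(G,L)$ holds. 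Conversely, suppose $P(G,L)$ holds for every quasi-finite $L$, and let $k$ be algebraically closed of characteristic $p$ with an injection $\phi:G\to\mathrm{Aut}_k(k[[t]])$. By Proposition \ref{prop:reducetoquasi} there is a quasi-finite field $L$ of characteristic $p$ and an injection $\phi'_L:G\to\mathrm{Aut}_L(L[[t]])$ with $a_{\phi'_L}=a_\phi$; since $P(G,L)$ holds, $\phi'_L$ satisfies (a)--(d), hence so does $\phi$. Thus $P(G,k)$ holds. The argument is identical with (\ref{eq:congruencenew}) adjoined to $P(G,k)$.

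I do not anticipate a real obstacle: the substantive work is entirely contained in Proposition \ref{prop:reducetoquasi}/Corollary \ref{cor:quasidone} (which rest on the freeness of absolute Galois groups of function fields over algebraically closed base fields) and in the explicit criterion of Corollary \ref{cor:closer}. The only thing to be careful about is the bookkeeping point that (a)--(d) and (\ref{eq:congruencenew}) are expressible purely in terms of $a_\phi$; once that is in hand the rest is a formal transfer.
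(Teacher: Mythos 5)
Your proof is correct and takes the same route as the paper, which simply cites Corollary \ref{cor:quasidone}; you have usefully spelled out the bookkeeping point that conditions (a)--(d) of Corollary \ref{cor:closer} (and the congruence (\ref{eq:congruencenew})) are equivalent to integrality and non-negativity of the $b_T(\phi)$ and hence are functions of $a_\phi$ alone, which is exactly the observation that makes the transfer via Corollary \ref{cor:quasidone} work.
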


\begin{proof}
This a consequence of Corollary \ref{cor:quasidone}.
\end{proof}

\section{Dihedral, Quaternion and Semi-dihedral groups: Class field theory.}
\label{s:cft}
\setcounter{equation}{0}

In the section we will assume the hypotheses and notation of the previous
section, with the following modifications:

\begin{hyp}
\label{hyp:setuplocal}  The field $k$ is quasi-finite (rather than algebraically closed)
of characteristic $p$.  Fix an injection $\phi:G \to \mathrm{Aut}_k(k[[t]])$.  Let $N = k((t))$,
$L = N^H$ and $K = N^G$.   Define $\chi:L^* \to H = \mathrm{Gal}(N/L)$ to be the
Artin isomorphism.  Let $\overline{\sigma}$ be the image of $\sigma \in G$ in $G/H = \mathrm{Gal}(L/K)$,
so that $\overline{\sigma}$ has order $2$ and is a generator of $G/H$. We choose a uniformizer
$\pi_L$ in $L$ such that if $p > 2$, $\overline{\sigma}(\pi_L) = -\pi_L$.  Let $\mathrm{Norm}_{L/K}:L \to K$
be the norm.  Define $D_0 = H$, so that $L = N^{D_0}$.   If $p=2$, we also have
the quadratic extensions $L_1$ and $L_2$ of $K$ defined in Lemma~\ref{lem:di2};  in this
case, $L_0 = L, L_1$ and $L_2$ are the three quadratic subfields of $N$ containing $K$.
\end{hyp}

\begin{dfn}
\label{def:types}
Let $\chi:L^* \to \mathbb{C}^*$ be a character of order $p^n$, and let $\chi|_{K^*}$
be the restriction of $\chi$ to $K^*$.   
\begin{enumerate}
\item[a.]  Say $\chi$ is of \textit{dihedral type} if $\chi|_{K^*}$ is trivial.  
\item[b.]  Say $\chi$ is of \textit{quaternionic type} if $p = 2$ and $\chi|_{K^*}$ is the non-trivial quadratic character  $\epsilon_0:K^* \to \{\pm 1\}$ associated to $L/K$.  This is the character with kernel $\mathrm{Norm}_{L/K}(L^*)$.
\item[c.] Say $\chi$ is of \textit{semi-dihedral type} if $p = 2$, $n \ge 3$, $\chi|_{K^*}$ is the non-trivial
quadratic character $\epsilon_1:K^* \to \{\pm 1\}$ associated to $L_1/K$,
and $\chi \circ \mathrm{Norm}_{L/K} = \chi^{2^{n-1}}$.
\end{enumerate}
\end{dfn}

\begin{lemma}
\label{lem:thetypes}
If $G$ is a dihedral (resp.\ quaternion, resp.\ semi-dihedral) group then $\chi$
is of dihedral (resp.\ quaternion, resp.\ semi-dihedral) type.  Conversely,
suspend Hypothesis \ref{hyp:setuplocal} for the moment, and suppose 
$L/K$ is a specified quadratic extension of $K = k((z))$ for some indeterminate $z$.
Let $\chi$ be a character of order $p^n$ of $L^*$, and let $N$ be the 
cyclic extension of $L$ of degree $p^n$ over $L$ which corresponds to the kernel of $\chi$ by local classfield theory.
\begin{enumerate}
\item[i.] Suppose $\chi$ is of dihedral (resp.\ quaternionic) type, in the sense of parts (a) and (b)
of Definition \ref{def:types}.   Then $N$ is a Galois extension 
of $N$ and $\mathrm{Gal}(N/K)$ is a dihedral group (resp.\ generalized quaternion group)
of order $2p^n$.  
\item[ii.]  Suppose that $\chi|_{K^*}$ is an order $2$ character of $K^*$ which
corresponds to a quadratic extension $L_1/K$ different from $L/K$, and that
$\chi \circ \mathrm{N}_{L/K} = \chi^{2^{n-1}}$.  Then $N$
is a semi-dihedral extension of $K$ of degree $2p^n$, with biquadratic subfield 
over $K$ the compositum of $L$ and $L_1$ over $K$.  The character $\chi$ is of
semi-dihedral type in the sense of Definition \ref{def:types}(c).
\end{enumerate}
\end{lemma}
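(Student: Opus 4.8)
The plan is to translate the statement into local class field theory for the tower $N/L/K$, using two functoriality properties of the reciprocity maps: compatibility with the $\mathrm{Gal}(L/K)$-action (on $L^{*}$ and on $H=\mathrm{Gal}(N/L)$) and compatibility with the transfer (Verlagerung) for the inclusion $K^{*}\hookrightarrow L^{*}$. Identify $H$ with $\mu_{p^{n}}\subset\mathbb{C}^{*}$, so that the reciprocity isomorphism $\mathrm{rec}_{N/L}\colon L^{*}\to H$ is the character $\chi$ of Definition~\ref{def:types}; in particular $\chi$ is faithful on $H$, and I write $\chi\circ\bar\sigma$, $\chi^{e}$, etc.\ for the evident characters of $L^{*}$. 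Three formulas will be used. First, for a lift $\sigma\in\mathrm{Gal}(N/K)$ of $\bar\sigma$, reciprocity gives $\chi(\bar\sigma(a))=\chi\bigl(\sigma\,\mathrm{rec}_{N/L}(a)\,\sigma^{-1}\bigr)$ for $a\in L^{*}$; hence if $\bar\sigma$ conjugates $H$ by $h\mapsto h^{e}$, then $\chi\circ\bar\sigma=\chi^{e}$, and conversely (by faithfulness of $\chi$ on $H$) the relation $\chi\circ\bar\sigma=\chi^{e}$ both forces that conjugation action and shows $\ker\chi=\mathrm{Norm}_{N/L}(N^{*})$ is $\bar\sigma$-stable, i.e.\ $N/K$ is Galois. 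Second, since $\mathrm{Gal}(L/K)=\{1,\bar\sigma\}$ one has $\mathrm{Norm}_{L/K}(a)=a\,\bar\sigma(a)$, so $\chi\cdot(\chi\circ\bar\sigma)=\chi\circ\mathrm{Norm}_{L/K}$ on $L^{*}$. Third, the transfer compatibility gives $\chi|_{K^{*}}=\chi\circ\mathrm{Ver}\circ\mathrm{rec}_{N/K}^{\mathrm{ab}}$, where $\mathrm{Ver}\colon\mathrm{Gal}(N/K)^{\mathrm{ab}}\to H$ is the transfer, computed from $\mathrm{Ver}(\bar h)=h\cdot\sigma^{-1}h\sigma$ for $h\in H$ and $\mathrm{Ver}(\bar\sigma)=\sigma^{2}$; so once $\mathrm{Gal}(N/K)$ is known, $\chi|_{K^{*}}$ is computed group-theoretically.

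For the forward direction the three cases run in parallel. If $\mathrm{Gal}(N/K)$ is dihedral or generalized quaternion of order $2p^{n}$, then $\bar\sigma$ inverts $H$, so $\chi\circ\bar\sigma=\chi^{-1}$, and the second formula shows $\chi$ is trivial on $\mathrm{Norm}_{L/K}(L^{*})$, so $\chi|_{K^{*}}$ factors through $K^{*}/\mathrm{Norm}_{L/K}(L^{*})\cong\mathbb{Z}/2$; the transfer gives $\mathrm{Ver}(\bar\tau)=\tau\cdot\tau^{-1}=e$ and $\mathrm{Ver}(\bar\sigma)=\sigma^{2}$, which is $e$ in the dihedral case and the unique involution $\tau^{p^{n-1}}$ of $H$ in the quaternion case ($p=2$). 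Hence $\chi|_{K^{*}}$ is trivial (dihedral type) in the first case, and is the quadratic character with kernel $\mathrm{Norm}_{L/K}(L^{*})$, namely $\epsilon_{0}$ (quaternionic type), in the second. If $\mathrm{Gal}(N/K)$ is semi-dihedral, then $\bar\sigma$ acts by $h\mapsto h^{p^{n-1}-1}$, so $\chi\circ\bar\sigma=\chi^{p^{n-1}-1}$ and the second formula gives $\chi\circ\mathrm{Norm}_{L/K}=\chi^{p^{n-1}}=\chi^{2^{n-1}}$; moreover one may take $\sigma$ with $\sigma^{2}=e$, and then $\sigma^{-1}\tau\sigma=\tau^{p^{n-1}-1}$, so $\mathrm{Ver}(\bar\sigma)=e$ and $\mathrm{Ver}(\bar\tau)=\tau^{p^{n-1}}$; since the preimage of $\langle\bar\sigma\rangle$ in $\mathrm{Gal}(N/K)$ is the subgroup generated by $2H$ and $D_{1}=\langle\sigma\rangle$, $\chi|_{K^{*}}$ is the quadratic character with kernel $\mathrm{Norm}_{L_{1}/K}(L_{1}^{*})$, i.e.\ $\chi|_{K^{*}}=\epsilon_{1}$: this is the semi-dihedral type.

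For the converse: in case (i), $\chi$ of dihedral or quaternionic type means $\chi|_{K^{*}}$ is trivial or $\epsilon_{0}$, both of which kill $\mathrm{Norm}_{L/K}(L^{*})$, so by the second formula $\chi\circ\bar\sigma=\chi^{-1}$; hence $N/K$ is Galois with $\bar\sigma$ inverting $H$. Since inversion is an involution, $(\sigma h)^{2}=\sigma^{2}$ for every $h\in H$, so $\sigma^{2}$ is a well-defined element of the fixed group $H[2]$, and $\mathrm{Gal}(N/K)=D_{2p^{n}}$ when $\sigma^{2}=e$, while $\mathrm{Gal}(N/K)$ is the generalized quaternion group of order $2^{n+1}$ when $\sigma^{2}$ is the involution of $H$ (only possible for $p=2$); by the transfer computation of the previous paragraph these two alternatives correspond to $\chi|_{K^{*}}$ trivial and $\chi|_{K^{*}}=\epsilon_{0}$, respectively. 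In case (ii), the assumption that $\chi|_{K^{*}}$ has order $2$ forces $p=2$ (a $p^{n}$-th root of unity has order $2$ only then); combining $\chi\circ\mathrm{Norm}_{L/K}=\chi^{2^{n-1}}$ with the second formula gives $\chi\circ\bar\sigma=\chi^{2^{n-1}-1}$, so $N/K$ is Galois with $\bar\sigma$ acting on $H=\mathbb{Z}/2^{n}$ by $h\mapsto h^{2^{n-1}-1}$; this is a fixed-point-free involution precisely when $n\ge 3$ (for $n\le 2$ the hypotheses are inconsistent or make $\mathrm{Gal}(N/K)$ abelian), so we may assume $n\ge 3$. Then for any lift, $(\sigma h)^{2}=\sigma^{2}h^{2^{n-1}}$ ranges over $H[2]$ as $h$ varies, so one can choose $\sigma$ with $\sigma^{2}=e$; thus $\mathrm{Gal}(N/K)=\langle\tau\rangle\rtimes\langle\sigma\rangle$ with $\sigma^{2}=e$ and $\sigma\tau\sigma^{-1}=\tau^{2^{n-1}-1}$, which is the semi-dihedral group of order $2^{n+1}=2p^{n}$. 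Choosing $\sigma$ of order $2$ matches Hypothesis~\ref{hyp:setuplocal}; the transfer computation then identifies the given $L_{1}$ with $N^{\langle 2H,D_{1}\rangle}$ (so $L_{1}$ is indeed the $L_{1}$ of that hypothesis) and $\chi|_{K^{*}}$ with $\epsilon_{1}$, whence $\chi$ is of semi-dihedral type; finally $L\cdot L_{1}=N^{2H}$ is the maximal abelian subextension of $N/K$, which is biquadratic, as asserted.

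The main obstacle is purely computational: carrying out the transfer $\mathrm{Ver}\colon\mathrm{Gal}(N/K)^{\mathrm{ab}}\to H$ correctly in each of $D_{2p^{n}}$, the generalized quaternion group, and the semi-dihedral group — in particular tracking the exponent of $\sigma^{2}$, which is the one place the dihedral and quaternionic outcomes differ — together with, in case (ii), the identification of the abstractly-produced semi-dihedral group with the specific generators and distinguished quadratic subfields $L,L_{1},L_{2}$ of Definition~\ref{def:types} and Hypothesis~\ref{hyp:setuplocal}; here one uses that in a semi-dihedral group the reflection of order $2$ is canonically distinguished (so $L_{1}$ is well defined) and that the congruence $\chi\circ\mathrm{Norm}_{L/K}=\chi^{2^{n-1}}$ picks out precisely that subfield rather than $L_{2}$.
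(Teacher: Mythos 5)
Your proof is correct and follows the same strategy as the paper's: identify $\chi$ with the local reciprocity map $L^*\to H$, then use the two standard functoriality properties of local class field theory --- compatibility with the $\mathrm{Gal}(L/K)$-action (conjugation on $H$) and compatibility with inclusion $K^*\hookrightarrow L^*$ via the transfer $\mathrm{Ver}\colon G^{\mathrm{ab}}\to H$ --- and compute $\mathrm{Ver}$ explicitly in each group. The paper leaves these computations to the reader with a citation to Serre's Corps Locaux VII.8, whereas you carry them out in detail; the only (inessential) slip is the phrase ``fixed-point-free involution'' in case (ii), as $h\mapsto h^{2^{n-1}-1}$ fixes $H[2]$, but nothing in your argument uses fixed-point-freeness.
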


\begin{proof}
Suppose first that Hypothesis \ref{hyp:setuplocal} holds and that $G$ is dihedral, quaternionic or semi-dihedral.  By local
classfield theory, $\chi|_{K^*}:K^* \to \mathbb{C}^*$ corresponds to the
character $G^{ab} \to \mathbb{C}^*$ which is the composition of the
transfer map $\mathrm{ver}:G^{ab} \to H^{ab} = H$ with $\chi:H \to \mathbb{C}^*$.
The action of $\mathrm{Gal}(L/K) = G/H$ on $L^*$ corresponds via $\xi:L^* \to \mathrm{Gal}(N/L) = H$
with the conjugation action of $\mathrm{Gal}(L/K)$ on $H$. The assertions in the lemma
now follow from the properties of this conjugation action and of $\mathrm{ver}$ 
when $G$ is a dihedral, quaternion or semi-dihedral group (see \cite[\S VII.8]{corps}). 
The converse implications of the lemma are proved similarly.
\end{proof} 

\begin{lemma}
\label{lem:jumpform} Assume Hypothesis \ref{hyp:setuplocal} holds. If $0 \le \ell \le n-1$ then
\begin{equation}
\label{eq:cldef}
c(\ell) = i_0 + \cdots + i_\ell
\end{equation}
is a jump in the upper ramification
filtration of $H$.  Let $\chi:L^* \to \mathbb{C}^*$ be a character having the properties in the converse direction of 
Lemma~\ref{lem:thetypes}.  
Then the kernel of $\chi^{p^{n-(\ell+1)}}$ corresponds via class field theory to the extension $N^{p^{\ell+1} H}/L$, which
has Galois group $H/(p^{\ell+1} H)$.    The integer $c(\ell)$ is the largest positive integer such 
that $\chi^{p^{n-(\ell+1)}}$ is non-trivial on $1 + \pi_L^{c(\ell)} O_L$. 
\end{lemma}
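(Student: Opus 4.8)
The plan is to establish the three assertions one at a time, each by unwinding Lemma~\ref{lem:jumps} through local class field theory. The first assertion is essentially a restatement of Lemma~\ref{lem:jumps}: under Hypothesis~\ref{hyp:setuplocal} the field $L = N^H$ is a local field with (quasi-finite) residue field $k$, and $N/L$ is a totally ramified cyclic extension of degree $p^n$ with group $H$ (total ramification holds because $\phi$ acts $k$-linearly, so $k$ lies in $N^G \subset L$). Hence Lemma~\ref{lem:jumps}, applied with $L$ in the role of $k((t))$, shows that the jumps in the upper ramification filtration of $H$ occur precisely at the integers $c(\ell) = i_0 + \cdots + i_\ell$ for $0 \le \ell \le n-1$, where the $i_j$ are the positive integers produced by that lemma. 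I would then fix once and for all the faithful character $\bar\chi\colon H \to \mathbb{C}^*$ for which $\chi$ is the composite of the reciprocity isomorphism $L^* \to \mathrm{Gal}(N/L) = H$ with $\bar\chi$; such a $\bar\chi$ exists because $\chi$ has order $p^n = \#H$ and, by the defining property of $\chi$ taken from the converse part of Lemma~\ref{lem:thetypes}, $N$ is the cyclic extension of $L$ of degree $p^n$ cut out by $\ker\chi$.

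For the second assertion I would use that $H$ is cyclic of order $p^n$ and $\bar\chi$ is injective, so $\ker(\bar\chi^{p^{n-(\ell+1)}})$ is exactly the set of $h \in H$ with $h^{p^{n-(\ell+1)}} = e$, which is the subgroup $p^{\ell+1}H$ of order $p^{n-(\ell+1)}$. Consequently the kernel of $\chi^{p^{n-(\ell+1)}}$ is the preimage in $L^*$ of $p^{\ell+1}H$ under the reciprocity map, and by local class field theory this open subgroup of $L^*$ is the norm group of the abelian extension of $L$ fixed by $p^{\ell+1}H$, namely $N^{p^{\ell+1}H}$; its Galois group over $L$ is $H/(p^{\ell+1}H)$, cyclic of order $p^{\ell+1}$. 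This is the content of the second claim.

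For the third assertion I would invoke the compatibility of the reciprocity map with ramification filtrations: for a finite abelian extension $M/L$ of local fields, the reciprocity map carries $1 + \pi_L^v O_L$ onto the upper-numbering ramification group $\mathrm{Gal}(M/L)^v$ for every $v \ge 1$ \cite[\S XV.2]{corps}. Taking $M = N^{p^{\ell+1}H}$, the character $\chi^{p^{n-(\ell+1)}}$ factors through a \emph{faithful} character of $\mathrm{Gal}(M/L) = H/(p^{\ell+1}H)$, so it is nontrivial on $1 + \pi_L^v O_L$ if and only if $\mathrm{Gal}(M/L)^v \ne \{e\}$. By Herbrand's theorem on the behaviour of the upper numbering under quotients \cite[Prop.~IV.14]{corps}, $\mathrm{Gal}(M/L)^v$ is the image of the upper ramification group $H^v$ of $H = \mathrm{Gal}(N/L)$ under the projection $H \to H/(p^{\ell+1}H)$; and by Lemma~\ref{lem:jumps}, $H^v$ equals $p^m H$ when $c(m-1) < v \le c(m)$ (with the convention $c(-1) = 0$) and equals $\{e\}$ when $v > c(n-1)$. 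Since the cyclic $p$-group $p^m H$ strictly contains $p^{\ell+1}H$ exactly when $m \le \ell$, the image of $H^v$ in $H/(p^{\ell+1}H)$ is nontrivial if and only if $v \le c(\ell)$. As $c(\ell) = i_0 + \cdots + i_\ell$ is a positive integer, it follows that $c(\ell)$ is the largest positive integer $v$ for which $\chi^{p^{n-(\ell+1)}}$ is nontrivial on $1 + \pi_L^v O_L$, which is the final claim.

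I do not expect a genuine obstacle; the work is entirely a matter of keeping conventions straight. The two points that must be handled with care are (i) matching the exponent $p^{n-(\ell+1)}$ on $\chi$ with the subgroup $p^{\ell+1}H$ of $H$, and hence with the subextension $N^{p^{\ell+1}H}$; and (ii) using the precise, correctly indexed form of the local class field theory statement identifying the image of $1 + \pi_L^v O_L$ with $\mathrm{Gal}(M/L)^v$, together with Herbrand's compatibility of the upper numbering with passage to the quotient $H/(p^{\ell+1}H)$. Once these are in place, everything follows by combining them with Lemma~\ref{lem:jumps}.
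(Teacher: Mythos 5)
Your proof is correct and takes essentially the same route as the paper: the key input in both is Serre's identification (\cite[\S XV.2]{corps}) of the image of $1 + \pi_L^h O_L$ under the local Artin map with the upper ramification group $H^h$, combined with the computation of the upper jumps from Lemma~\ref{lem:jumps}. The only stylistic difference is that you pass to the quotient extension $M = N^{p^{\ell+1}H}$ and invoke Herbrand's compatibility of the upper numbering with quotients, whereas the paper stays inside $H$ and simply notes that $\chi^{p^{n-(\ell+1)}}$, viewed as a character of $H$ via the Artin map, has kernel $p^{\ell+1}H$, so it is nontrivial on $1 + \pi_L^h O_L$ exactly when $H^h \not\subset p^{\ell+1}H$; this avoids the quotient and Herbrand entirely.
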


\begin{proof} By \cite[\S XV.2]{corps}, if $h \ge 1$
is integral then the image $\xi(1 + \pi_L^h O_L)$ of the multiplicative subgroup $1 + \pi_L^j O_L$ 
under the local Artin map $\xi:L^* \to H$ equals the upper ramification subgroup $H^h$ of $H$.
Since $\chi^{p^{n-(\ell+1)}}$ has kernel $p^{\ell + 1}H$ when we view it as a character of
$H$ via the local Artin map, this leads to the interpretation $c(\ell)$ in the lemma.
\end{proof}

\begin{cor}
\label{cor:firstjump}
The jump $i_0$ is odd.  If $G$ is dihedral, then $i_j$ is even for $0 < j \le n-1$.
Suppose that $G$ is quaternionic or semi-dihedral. Then $c(\ell)$ is odd for $0 \le \ell \le n-2$, $i_j$ is even for $0 < j < n-1$,
and $i_{n-1} = c(n-1) - c(n-2) $ is even if and only if $c(n-1)$ is odd.  
\end{cor}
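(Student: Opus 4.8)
The plan is to reduce every assertion in the corollary to the single statement that the integer $c(\ell)$ is odd for a suitable range of $\ell$, and then to prove that statement by a short argument with the unit filtration of $L$ that is uniform in $p$. First I would recall from Lemma~\ref{lem:jumpform} that $c(\ell) = i_0 + \cdots + i_\ell$ and that $c(\ell)$ is the largest positive integer $m$ for which the character $\chi^{p^{n-(\ell+1)}}$ of $L^*$ is non-trivial on $1 + \pi_L^m O_L$. Granting that $c(\ell)$ is odd for all $0 \le \ell \le n-1$ when $G$ is dihedral, and for all $0 \le \ell \le n-2$ when $G$ is quaternionic or semi-dihedral, the corollary is immediate: $i_0 = c(0)$ is odd in every case; $i_j = c(j)-c(j-1)$ is a difference of two odd integers, hence even, for $0 < j \le n-1$ (dihedral) and for $0 < j < n-1$ (quaternionic/semi-dihedral); and in the latter two cases $i_{n-1} = c(n-1)-c(n-2)$ with $c(n-2)$ odd, so $i_{n-1}$ is even exactly when $c(n-1)$ is odd.

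The heart of the argument is a lemma I would state separately: if $\psi$ is a wildly ramified character of $L^*$ whose restriction to the subgroup $K^* \subseteq L^*$ is trivial, then the largest positive integer $m$ with $\psi$ non-trivial on $1 + \pi_L^m O_L$ is odd. To prove this I would argue by contradiction: if $m = 2j$ is even (note $m \ge 1$, so $j \ge 1$), then $\psi$ is trivial on $1 + \pi_L^{2j+1}O_L$ but induces a non-trivial character of the one-dimensional $k$-vector space $(1 + \pi_L^{2j}O_L)/(1 + \pi_L^{2j+1}O_L)$. However, $L/K$ is totally ramified of degree $2$ with trivial residue-field extension — here I use that $\phi$ may be assumed to define a totally ramified action by the reduction in \S\ref{s:quasired}, which also guarantees that $\chi$ and its powers are (wildly) ramified — so a uniformizer $\pi_K$ of $K$ has $v_L(\pi_K) = 2$ and $O_K \to k$ is surjective; hence the subgroup $1 + \pi_K^j O_K$ of $K^*$ surjects onto $(1 + \pi_L^{2j}O_L)/(1 + \pi_L^{2j+1}O_L)$. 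Since $\psi$ is trivial on $K^*$, the induced character would be trivial, a contradiction, so $m$ is odd.

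I would then apply the lemma with $\psi = \chi^{p^{n-(\ell+1)}}$, using $\psi|_{K^*} = (\chi|_{K^*})^{p^{n-(\ell+1)}}$. In the dihedral case $\chi|_{K^*}$ is trivial by Definition~\ref{def:types}(a) (which holds here by Lemma~\ref{lem:thetypes}), so $\psi|_{K^*}$ is trivial for all $\ell$ and $c(\ell)$ is odd throughout $0 \le \ell \le n-1$. In the quaternionic (resp.\ semi-dihedral) case $\chi|_{K^*}$ is the order-two character $\epsilon_0$ (resp.\ $\epsilon_1$) of Definition~\ref{def:types}(b) (resp.\ (c)); since $p = 2$ here, its $p^{n-(\ell+1)} = 2^{n-\ell-1}$-st power is trivial exactly when $n-\ell-1 \ge 1$, i.e.\ $\ell \le n-2$, and for those $\ell$ the lemma gives $c(\ell)$ odd. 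The case $\ell = 0$ — that is, $i_0 = c(0)$ — is always included, since $n \ge 2$ in the quaternionic case and $n \ge 3$ in the semi-dihedral case. Together with the reduction of the first paragraph, this completes the proof.

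The hard part will be only the lemma of the second paragraph, and within it the one genuinely substantive observation: that restricting $\psi$ along the \emph{inclusion} $K^* \hookrightarrow L^*$ (rather than along the norm) is precisely what isolates the even-indexed graded pieces of the unit filtration of $L$. Once this is noticed the rest is routine, and it is worth emphasizing that this approach works uniformly in the characteristic and never invokes the explicit action of $\overline{\sigma}$ on $\pi_L$ recorded in Hypothesis~\ref{hyp:setuplocal}.
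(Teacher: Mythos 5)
Your argument is correct. The lemma you isolate---that a wildly ramified character of $L^*$ trivial on $K^*$ has odd conductor jump---is precisely what the paper invokes in its $p=2$ treatment: equation (\ref{eq:easystuffpi}) is exactly your surjectivity of $1+\pi_K^{c(\ell)/2}O_K$ onto the $c(\ell)$-th graded piece of the unit filtration of $L$, followed by the same contradiction against $\chi^{p^{n-(\ell+1)}}|_{K^*}$ being trivial. Where you genuinely diverge is the odd-characteristic dihedral case: there the paper does \emph{not} argue through restriction to $K^*$ but instead uses the $\mathrm{Gal}(L/K)$-equivariance of the local Artin map together with the normalization $\overline{\sigma}(\pi_L)=-\pi_L$ and the fact that $\overline{\sigma}$ acts on $H$ by inversion, equating the sign $(-1)^{c(\ell)}$ on $(1+\pi_L^{c(\ell)}O_L)/(1+\pi_L^{c(\ell)+1}O_L)$ with $-1$. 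You observe that the dihedral hypothesis already forces $\chi|_{K^*}=1$ (Lemma~\ref{lem:thetypes} with Definition~\ref{def:types}(a)), so the $K^*$-restriction argument applies for every $\ell$ in $0\le\ell\le n-1$, subsuming the $p>2$ case without touching the Galois equivariance or the specific choice of $\pi_L$. Both routes are sound; yours eliminates a case split by extracting the one mechanism common to both, which is a modest but real tightening of the argument.
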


\begin{proof}By \cite[\S XV.2]{corps}, the local Artin map $\xi$ induces an isomorphism 
\begin{equation}
\label{eq:isotime}
\frac{1+ \pi_L^{c(\ell)} O_L}{1 + \pi_L^{c(\ell)+1} O_L} = H^{c(\ell)}/H^{c(\ell) + 1} = p^\ell H/ (p^{\ell + 1} H)\cong \mathbb{Z}/p \quad \mathrm{for} \quad 0 \le \ell \le n-1.
\end{equation}
This isomorphism is equivariant with respect to the action of $\mathrm{Gal}(L/K)$.
 Recall that we chose the uniformizer $\pi_L$ so that that $\gamma(\pi_L) = - \pi_L$
if $p > 2$, and there is an isomorphism of the left hand side of (\ref{eq:isotime}) with the
one dimensional $k$-vector space $\pi_L^{c(\ell)} k$.   Hence if $p > 2$, then $\gamma$ acts on the left hand side of (\ref{eq:isotime}) by
$(-1)^{c(\ell)}$. We conclude that $c(\ell)$ is odd because $\gamma$ acts by inversion
on the right hand side of (\ref{eq:isotime}).  In view of (\ref{eq:cldef}), this shows that $i_0$ must be odd
and $i_j$ is even for $j > 0$, so we are done in case $p > 2$. 

Suppose now that
$p = 2$.  To complete
the proof, it will suffice by Lemma \ref{lem:jumpform} to show that $c(\ell)$ is odd if either
\begin{enumerate}
\item[i.] $0 \le \ell \le n-1$ and $G$
is dihedral, or 
\item[ii.] $0 \le \ell \le n-2$ and $G$ is either quaternionic or semi-dihedral.
\end{enumerate}

By Lemma \ref{lem:jumpform},  $c(\ell)$ is the largest positive integer such 
that $\chi^{p^{n-(\ell+1)}}$ is non-trivial on $1 + \pi_L^{c(\ell)} O_L$.  
Therefore $\chi^{p^{n-(\ell+1)}}$ is trivial on $1 + \pi_L^{c(\ell)+1} O_L$.   Suppose $c(\ell)$
is even.  Then $\pi_L^{c(\ell)}$ is equal to $\pi_K^{c(\ell)/2}\cdot u$, where $\pi_K = \mathrm{Norm}_{L/K} \pi_L$ 
is a uniformizer  in $K$ and $u$ is a unit in $O_L^*$.  Since $O_L$ and $O_K$ have the same residue field $k$, we 
would then  have
\begin{equation}
\label{eq:easystuffpi}
1 + \pi_L^{c(\ell)} O_L = (1 + \pi_K ^{c(\ell)/2} O_K) \cdot (1 + \pi_L^{c(\ell)+1} O_L).
\end{equation}
It follows that  $\chi^{p^{n-(\ell+1)}}$ must be non-trivial on $1 + \pi_K ^{c(\ell)/2} O_K$
By Lemma \ref{lem:thetypes}, the restriction of $\chi$ to $K^*$ is trivial if $G$ is
dihedral, so we have a contradiction in this case.  Suppose now that $G$ is quaternionic or semi-dihedral.
By Lemma \ref{lem:thetypes}, the restriction of $\chi$ to $K^*$ then has order $2$. We have supposed
$0 \le \ell \le n-2$ if $G$ is quaternionic
or semi-dihedral, so $n - (\ell + 1) \ge 1$ and $\chi^{p^{n-(\ell+1)}}$ is an integral power of $\chi^p = \chi^2$.  
Thus $\chi^{p^{n-(\ell+1)}}$ is trivial on $K^*$ in this case, and we again have a contradition.
This shows that $c(\ell)$ must have been odd, and completes the proof.
\end{proof} 

\begin{prop}
\label{prop:evenqsemi}  Suppose $G$ is quaternionic or semi-dihedral.  
\begin{enumerate}
\item[i.] The integer $i_{n-1}$ is
even unless $G$ is the quaternion group of order $8$ and $G_4 = \{e\}$.  In this case $i_{n-1}$ is odd and the following is true:
\begin{enumerate}
\item[a.] The lower ramification filtration of $G$ is 
$G = G_0 = G_1 \ne G_2 = G_3 \ne G_4 = \{e\}$, where $G_2$ is the order $2$ center of $G$.
\item[b.] The Bertin obstruction of $\phi:G \to \mathrm{Aut}_k(k[[t]])$ does not vanish.
\end{enumerate}
\item[ii.]  Suppose $G$ is a generalized quaternion group and $i_{n-1}$ is
even.  Then the KGB obstruction (and hence the Bertin obstruction) associated to $\phi$ vanishes.
\item[iii.]  Suppose $G$ is semi-dihedral. The Bertin obstruction of $\phi$
does not vanish if $d_0+d_1+d_2$ is not divisible by $4$,
in the notation of 
Lemma~\ref{lem:di2}.
\end{enumerate}
\end{prop}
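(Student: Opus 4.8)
My plan is to reduce all three assertions to arithmetic statements about the upper jumps of the cyclic extension $N/L$ and the discriminant exponents of the quadratic subfields, and then run a numerical argument. Write $c(\ell)=i_0+\dots+i_\ell$ for the $\ell$-th upper jump of $N/L$, so $c(n-1)=i_0+\dots+i_{n-1}$; let $L_0=L,L_1,L_2$ be the three quadratic subfields of $N$ over $K$ and $d_0,d_1,d_2$ their (even, positive) discriminant exponents over $K$, as in Lemma~\ref{lem:di2}. I will use: $\iota(2^jH)=1+i_0+2i_1+\dots+2^ji_j$ (Corollary~\ref{cor:computeiota}); $\iota(D_i)=2b_{D_i}=(\sum_{j\neq i}d_j)-d_i$ (Corollary~\ref{cor:hunch}), in particular $1+i_0=\iota(H)=d_1+d_2-d_0$; $b_{2^{n-1}H}=\frac{i_{n-1}}{2}-\frac{d_0}{2}$ in the quaternionic case and $b_{2^{n-1}H}=\frac{i_{n-1}}{2}-\frac{d_0+d_1-d_2}{4}$ in the semi-dihedral case (Corollary~\ref{cor:hardpart}); the vanishing criteria of Corollaries~\ref{cor:closer} and \ref{cor:closertwo} and Proposition~\ref{prop:special}; and the class field theory of \S\ref{s:cft} (Lemma~\ref{lem:thetypes}), by which $\chi$ is of quaternionic (resp.\ semi-dihedral) type, so $\chi|_{K^*}$ is the quadratic character of $L_0/K$ (resp.\ $L_1/K$), of conductor exponent $d_0$ (resp.\ $d_1$), while by Corollary~\ref{cor:firstjump} the breaks $c(0),\dots,c(n-2)$ are odd and $i_{n-1}$ is even exactly when $c(n-1)$ is odd.

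For part (i) it remains to analyse when $c(n-1)$ is even. Pushing the argument of Corollary~\ref{cor:firstjump} to the index $\ell=n-1$: if $c(n-1)=2m$ then, since $v_L(\pi_K)=2$, one has $1+\pi_L^{c(n-1)}O_L=(1+\pi_K^{m}O_K)(1+\pi_L^{c(n-1)+1}O_L)$, and as $\chi$ is nontrivial on the left and trivial on $1+\pi_L^{c(n-1)+1}O_L$, the character $\chi|_{K^*}$ has conductor exponent exactly $m+1$; comparing with the conductor of the quadratic character above gives $c(n-1)=2d_0-2$ (quaternionic) and $c(n-1)=2d_1-2$ (semi-dihedral). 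I would feed this into the identity $1+i_0=d_1+d_2-d_0$, the positivity $b_{D_i}\ge 0$ (Lemma~\ref{lem:di2}) and $i_j\ge 1$, the Hasse--Arf constraint $\lambda_j\ge 2\lambda_{j-1}$ on the lower breaks $\lambda_j=i_0+2i_1+\dots+2^ji_j$, and, in the semi-dihedral case, the relation $\chi\circ\mathrm{Norm}_{L/K}=\chi^{2^{n-1}}$ of Definition~\ref{def:types}(c) (which, by the same pushdown, further constrains $d_1$ against $d_2$ and $i_0$). A direct computation then shows ``$c(n-1)$ even'' is impossible when $G$ is semi-dihedral, and when $G$ is generalized quaternion forces $n=2$ and $i_0=i_1=1$ (hence $d_0=d_1=d_2=2$). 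For this $Q_8$, Corollaries~\ref{cor:computeiota} and \ref{cor:hunch} give $\iota(H)=\iota(D_1)=\iota(D_2)=2$ and $\iota(2H)=4$; since $\langle H,D_1\rangle=G$ we get $G_0=G_1=G$, and since $2H$ is the only normal subgroup of $Q_8$ containing $2H$ but none of $H,D_1,D_2$ we get $G_2=G_3=2H$ and then $G_4=\{e\}$, which is (a). For (b): whenever $i_{n-1}$ is odd, $b_{2^{n-1}H}=\frac{i_{n-1}-d_0}{2}\notin\mathbb{Z}$ since $i_{n-1}$ is odd and $d_0$ even, so the Bertin obstruction does not vanish by Proposition~\ref{prop:special}(ii).

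For part (ii), assume $G=Q_{2^{n+1}}$ and $i_{n-1}$ even, so $c(n-1)$ is odd and condition (a) of Corollary~\ref{cor:closer} holds by Corollary~\ref{cor:firstjump}; it remains to verify condition (c), i.e.\ $i_{n-1}\ge d_0$, equivalently $b_{2^{n-1}H}\ge 0$. Running the class field theory argument again, $\chi|_{K^*}$ is nontrivial on $1+\pi_K^{d_0-1}O_K$, hence on $1+\pi_L^{2d_0-2}O_L$, so the conductor exponent of $\chi$ over $L$ is $\ge 2d_0-1$, i.e.\ $c(n-1)\ge 2d_0-2$; as $c(n-1)$ is odd this improves to $c(n-1)\ge 2d_0-1$, and combining $c(n-1)=c(n-2)+i_{n-1}$ with $1+i_0=d_1+d_2-d_0$, the constraints $\lambda_j\ge 2\lambda_{j-1}$ and $b_{D_i}\ge 0$ forces $i_{n-1}\ge d_0$. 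Thus conditions (a)--(c) of Corollary~\ref{cor:closer} all hold, the Bertin obstruction vanishes, and by Corollary~\ref{cor:closertwo} the KGB obstruction vanishes. For part (iii), suppose $G$ is semi-dihedral and $4\nmid d_0+d_1+d_2$; since the $d_i$ are even, $d_0+d_1-d_2=(d_0+d_1+d_2)-2d_2\equiv d_0+d_1+d_2 \pmod 4$, so $(d_0+d_1-d_2)/4\notin\mathbb{Z}$, while $i_{n-1}$ is even by part (i) (as $G$ is not $Q_8$), whence $b_{2^{n-1}H}=\frac{i_{n-1}}{2}-\frac{d_0+d_1-d_2}{4}\notin\mathbb{Z}$ by Corollary~\ref{cor:hardpart} and the Bertin obstruction does not vanish by Proposition~\ref{prop:special}(ii).

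The main obstacle is the numerical classification in part (i) --- showing that ``$c(n-1)$ even'' cannot occur for any semi-dihedral group or for $Q_{2^{n+1}}$ with $n\ge 3$, and pinning down the minimal ramification for $Q_8$. This forces one to play the ramification constraints on $(i_0,\dots,i_{n-1})$ (the parities from Corollary~\ref{cor:firstjump} and the Hasse--Arf inequalities $\lambda_j\ge 2\lambda_{j-1}$) against the class field theory equality $c(n-1)=2d_0-2$ (or $2d_1-2$), the relations among $d_0,d_1,d_2$ from Corollaries~\ref{cor:hunch} and \ref{cor:hardpart}, and the type and norm conditions that separate the quaternionic from the semi-dihedral case; the analogous but lighter estimate $i_{n-1}\ge d_0$ needed in part (ii) is the second delicate point.
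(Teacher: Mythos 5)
Your strategy for parts (ii) and (iii) is sound, and for (ii) it is arguably cleaner than the paper's. For (iii) you produce the same argument as the paper (Corollary \ref{cor:hardpart} together with part (i) and Proposition \ref{prop:special}). For (ii) you deduce $i_{n-1}\ge d_0$ from the class field theory bound $c(n-1)\ge 2d_0-1$ together with the jump constraint for cyclic $p^n$-extensions in residue characteristic $p$, namely $c(\ell)\ge p\,c(\ell-1)$ in the \emph{upper} numbering (from Artin--Schreier--Witt theory): indeed $i_{n-1}=c(n-1)-c(n-2)\ge c(n-1)/2\ge(2d_0-1)/2$, forcing $i_{n-1}\ge d_0$. (Be careful that what you wrote, $\lambda_j\ge 2\lambda_{j-1}$ for \emph{lower} breaks, is strictly weaker than $c(j)\ge 2c(j-1)$ and does not suffice; and you are invoking a Schmid--Witt type bound the paper never uses, preferring instead the explicit Lemma \ref{lem:identity}.)

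Part (i), however, has a real gap. The heart of the proposition is exactly the claim you dispose of with ``a direct computation then shows\ldots'', and that computation does not exist from the ingredients you list. Take, for instance, the candidate ramification data for $G=Q_{16}$ ($n=3$) with discriminant exponents $(d_0,d_1,d_2)=(4,4,2)$ and jumps $(i_0,i_1,i_2)=(1,2,3)$: then $i_0=d_1+d_2-d_0-1=1$, the $c(\ell)=1,3,6$ are odd, odd, even, $i_1$ is even and $i_2$ odd, $c(1)\ge 2c(0)$ and $c(2)\ge 2c(1)$ hold with equality, the positivity $b_{D_i}\ge 0$ holds, the $d_i$'s satisfy the Klein-four pattern (two equal, one smaller), and $c(n-1)=2d_0-2$. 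So every constraint you cite is satisfied, yet the proposition asserts this configuration cannot occur. Similarly, for $n=2$ with $d_0=d_1=d_2=d$ any even integer $\ge 4$, $i_0=i_1=d-1$ passes all your constraints, but the proposition says only $d=2$ (i.e.\ $i_0=1$) is possible. What rules these out in the paper is not elementary bookkeeping but two genuinely delicate points: first, the characteristic-$2$ multiplicative identity of Lemma \ref{lem:identity} (iterated squaring against $\pi_K=\pi_L\sigma(\pi_L)$) and the ensuing analysis of $\chi(z)$ in Corollary \ref{cor:almost}, which produce the much sharper lower bound $c(n-1)\ge(2i_0+d_0-1)\,2^{n-2}$; and second, for $n=2$, the norm inclusion $1+\pi_L^{2i_0}O_L\subset(\mathrm{Norm}_{L/K}L^*)\cdot(L^*)^4\cdot(1+\pi_L^{2i_0+1}O_L)$ of Lemma \ref{lem:quatcase}, which in Corollary \ref{cor:quatcasecor} forces $i_0=1$. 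Neither of these is implied by the data you assemble, and both are essential: they are what distinguishes the genuine exceptional case (the $Q_8$ action of Proposition \ref{prop:evenqsemi}(i.a)) from the spurious ones above. Your outline therefore needs to supply substitutes for Lemmas \ref{lem:identity}, \ref{lem:quatcase} and Corollaries \ref{cor:almost}, \ref{cor:quatcasecor}, or to reproduce those arguments; as written, part (i) is not proved.
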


The proof is an argument by contradiction, and requires a series of lemmas.  
Before beginning this we note that this result gives a new proof of a result of Serre
\cite[\S 5]{serre2} and Fontaine \cite{JMF} concerning 
Artin representations which cannot be realized over $\mathbb{Q}$.

\begin{cor}
\label{cor:Serrecor} {\rm (Serre, Fontaine)} Suppose $G$ is a generalized quaternion group.
Then the Artin character $-a_\phi$ is not realizable over $\mathbb{Q}$ 
if and only if $G$ has order $8$ and the lower ramification filtration
filtration of $G$ is as in Proposition \ref{prop:evenqsemi}.i.a.  
\end{cor}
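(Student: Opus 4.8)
The plan is to deduce the corollary directly from Proposition \ref{prop:evenqsemi} together with the reformulation of the Bertin obstruction given in Proposition \ref{prop:special} and the characterization in Theorem \ref{thm:antiB}. The key observation linking the two is that, for a generalized quaternion group $G$, \emph{every} faithful action $\phi:G \to \aut_k(k[[t]])$ has Artin character of dihedral--type ramification data of the kind analyzed in \S\ref{s:exts2}--\S\ref{s:cft}, so Proposition \ref{prop:evenqsemi}(i) and (ii) give a complete dichotomy: the Bertin obstruction of $\phi$ vanishes if and only if we are \emph{not} in the exceptional case where $G = Q_8$ and the lower ramification filtration is $G = G_0 = G_1 \ne G_2 = G_3 \ne G_4 = \{e\}$ with $G_2$ the center. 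Indeed, when $i_{n-1}$ is even, part (ii) says the KGB obstruction vanishes (so a fortiori the Bertin obstruction vanishes, by Theorem \ref{thm:KGBtwo}); and the only way $i_{n-1}$ can be odd, by part (i), is the stated $Q_8$ exception, in which case part (i)(b) says the Bertin obstruction does not vanish.

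The next step is to recall that $-a_\phi$ is realizable over $\mathbb{Q}$ if and only if the Bertin obstruction of $\phi$ vanishes. One direction is essentially the definition: if the Bertin obstruction vanishes then by Proposition \ref{prop:special}(i)--(ii) we have $-a_\phi = \sum_{T \in \mathcal{C}} b_T\, 1_T^G$ with all $b_T$ integers (and $b_T \ge 0$ for $T \ne \{e\}$), and each $1_T^G = \mathrm{Ind}_T^G 1_T$ is the character of a $\mathbb{Q}[G]$-module; hence $-a_\phi$, being a $\mathbb{Z}$-linear combination of such, is realizable over $\mathbb{Q}$. For the converse I would invoke Artin's theorem in the form already used in the proof of Proposition \ref{prop:special}(i): the characters $\{1_T^G : T \in \mathcal{C}\}$ form a $\mathbb{Q}$-basis of the space of rational-valued characters, and a rational-valued character $\psi$ is realizable over $\mathbb{Q}$ precisely when, writing $\psi = \sum_T c_T\, 1_T^G$, all $c_T$ lie in $\mathbb{Z}$ — this is exactly the integrality statement in Proposition \ref{prop:special}(ii)(b) applied to $\psi = -a_\phi$, the sign/nonnegativity of the $b_T$ for $T \ne \{e\}$ being automatic once $-a_\phi$ arises as an Artin character. (Concretely: $\langle -a_\phi, \chi_0 \rangle = 0$ forces $b_{\{e\}} = m$ with $m \ge 0$, and each $b_T$ for $T \ne \{e\}$ is nonnegative by Proposition \ref{prop:subtle} and Corollary \ref{cor:numconseq}–type considerations, or simply because $-a_\phi$ is effective as a virtual character on nontrivial elements.) Thus "$-a_\phi$ realizable over $\mathbb{Q}$" and "Bertin obstruction vanishes" are equivalent.

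Combining these two steps: $-a_\phi$ is \emph{not} realizable over $\mathbb{Q}$ if and only if the Bertin obstruction of $\phi$ does not vanish, which by the dichotomy above happens exactly when $G = Q_8$ and the lower ramification filtration of $G$ is the one displayed in Proposition \ref{prop:evenqsemi}(i)(a). This is precisely the statement of the corollary. The main obstacle I anticipate is purely bookkeeping: making sure that the equivalence "realizable over $\mathbb{Q}$ $\iff$ integrality of the $b_T$" is clean, i.e.\ that one may ignore the positivity constraints because an Artin character is automatically effective in the relevant sense, and that nothing in Proposition \ref{prop:evenqsemi} needs the residue field to be algebraically closed rather than quasi-finite (it does not, by Corollary \ref{cor:closerthree} and Corollary \ref{cor:quasidone}). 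Once those points are spelled out, the corollary is immediate.
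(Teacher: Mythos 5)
Your reduction of the corollary to the dichotomy in Proposition~\ref{prop:evenqsemi} is correct, and the direction ``Bertin obstruction vanishes $\Rightarrow$ $-a_\phi$ is realizable over $\mathbb{Q}$'' is exactly what the paper does: a permutation character is rational. The problem is your converse.

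You assert that a rational-valued character $\psi$ is realizable over $\mathbb{Q}$ precisely when, in the expansion $\psi = \sum_{T \in \mathcal{C}} c_T\, 1_T^G$ over cyclic subgroups $T$, all $c_T$ are integers, and you identify this with the integrality statement in Proposition~\ref{prop:special}(ii)(b). This is false in general, and false already for $G = Q_8$, which is the only case that matters here. Take $\psi = \chi_0$ the trivial character of $Q_8$. It is certainly realizable over $\mathbb{Q}$, but its coefficients in the basis $\{1_T^G : T \in \mathcal{C}\}$ (with $\mathcal{C}$ the cyclic subgroups $\{e\}, Z, H_1, H_2, H_3$) are not integers: writing the five permutation characters in the basis $\{\chi_0,\epsilon_1,\epsilon_2,\epsilon_3,\rho\}$ of $R(Q_8)$ as $(1,1,1,1,2)$, $(1,1,1,1,0)$, $(1,1,0,0,0)$, $(1,0,1,0,0)$, $(1,0,0,1,0)$ and solving for $(1,0,0,0,0)$ forces $c_{\{e\}}=0$ and then $-2c_Z = 1$. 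So rationality of a character is strictly weaker than membership in the $\mathbb{Z}$-span of the cyclic permutation characters; the gap is measured by the Artin index of $G$, and for $Q_8$ that index is $2$, not $1$. Proposition~\ref{prop:special}(ii)(b) characterizes the Bertin obstruction (a permutation-theoretic condition), not $\mathbb{Q}$-realizability; your sentence ``this is exactly the integrality statement\ldots'' conflates the two.

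What the paper actually uses for the hard direction is representation-theoretic input specific to $Q_8$: in the exceptional case of Proposition~\ref{prop:evenqsemi}(i)(a), the multiplicity of the unique $2$-dimensional irreducible $\rho$ of $Q_8$ in $-a_\phi$ is $5$ (Serre's computation, or a direct evaluation via Proposition~\ref{prop:serrethm}). Since $\rho$ has Schur index $2$ over $\mathbb{Q}$, a rational-valued virtual character is realizable over $\mathbb{Q}$ only if $\langle \psi, \rho\rangle$ is even; odd multiplicity $5$ therefore rules out realizability. That Schur-index obstruction is the missing idea in your argument: you need some concrete reason why the exceptional $-a_\phi$ fails to be rational, and your proposed general equivalence does not supply one. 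To repair the proof you could either cite Serre/Fontaine as the paper does, or compute $\langle -a_\phi, \rho\rangle$ directly from~(\ref{eq:formulaartin}) for the filtration $G = G_0 = G_1 \supsetneq \C(G) = G_2 = G_3 \supsetneq G_4 = \{e\}$ and invoke the Schur-index-$2$ fact for $\rho$.
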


\begin{proof} If the Bertin obstruction vanishes, then $-a_\phi$
is the character of a  permutation representation by 
Proposition \ref{prop:special}   so it is realizable over $\mathbb{Q}$.  Otherwise,
$G$ must have order $8$ and must have the ramification filtration in part
(i.a) of Proposition \ref{prop:evenqsemi}.
Suppose now that $G$ is as in part (i.a) of Proposition \ref{prop:evenqsemi}.  Serre proved in 
\cite[\S 5]{serre2} that  $-a_\phi$ is not realizable
over $\mathbb{Q}$ by proving that the multiplicity of the
two-dimensional irreducible representation of $G$ in $-a_\phi$
is $5$.  (This also follows from Proposition \ref{prop:serrethm}.)
\end{proof}

\begin{lemma}
\label{lem:firststep}
Suppose $G$ is quaternionic or semi-dihedral. Then $i_{n-1}$ is odd if and only 
if $c(n-1)$ in (\ref{eq:cldef}) is even.  Suppose $i_{n-1}$ is odd, and let $d_0$
and $d_1$ be as in Lemma~\ref{lem:di2}.  Then
\begin{equation}
\label{eq:yippie}
\frac{c(n-1)}{2} = d_0 -1 \ (\mathrm{resp. } \ d_1-1 )\ \quad \mathrm{if} \quad G \quad \mathrm{is\ quaternionic \ (resp.\ \ semi-dihedral)}.
\end{equation}
\end{lemma}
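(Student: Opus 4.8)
The plan is to translate both claims into statements about conductor exponents via local class field theory, exploiting the identification of the jumps $c(\ell)$ in Lemma~\ref{lem:jumpform}. The equivalence ``$i_{n-1}$ odd $\iff c(n-1)$ even'' is essentially already contained in Corollary~\ref{cor:firstjump}: by (\ref{eq:cldef}) one has $c(n-1) = c(n-2) + i_{n-1}$, and since $G$ is quaternionic or semi-dihedral we have $n\ge 2$, so $n-2\ge 0$ and Corollary~\ref{cor:firstjump} gives that $c(n-2)$ is odd; hence $c(n-1)$ is even exactly when $i_{n-1}$ is odd. So for the first statement I would just cite Corollary~\ref{cor:firstjump}.

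For the formula (\ref{eq:yippie}), assume $i_{n-1}$ is odd and set $c := c(n-1)$, which is then even. Taking $\ell = n-1$ in Lemma~\ref{lem:jumpform}, the order-$2^n$ character $\chi$ of $L^*$ is nontrivial on $1 + \pi_L^{c}O_L$ and trivial on $1 + \pi_L^{c+1}O_L$. Since $N/K$ is totally ramified, $L/K$ is totally ramified of degree $2$, so $\pi_K$ is a unit times $\pi_L^2$; hence, exactly as in (\ref{eq:easystuffpi}), $1 + \pi_L^{c}O_L = (1 + \pi_K^{c/2}O_K)(1 + \pi_L^{c+1}O_L)$, while $1 + \pi_K^{c/2+1}O_K \subseteq 1 + \pi_L^{c+2}O_L \subseteq 1 + \pi_L^{c+1}O_L$. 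Combining these inclusions with the displayed behaviour of $\chi$ shows that the restriction $\chi|_{K^*}$ is nontrivial on $1 + \pi_K^{c/2}O_K$ but trivial on $1 + \pi_K^{c/2+1}O_K$, i.e.\ its conductor exponent equals $c/2 + 1$.

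Finally I would invoke Lemma~\ref{lem:thetypes}: since $G$ is quaternionic (resp.\ semi-dihedral), $\chi$ is of the corresponding type, so $\chi|_{K^*}$ is the quadratic character $\epsilon_0$ of $K^*$ associated to $L_0 = L$ (resp.\ $\epsilon_1$ associated to $L_1$). By the conductor--discriminant formula, the conductor exponent of $\epsilon_0$ (resp.\ $\epsilon_1$) equals the discriminant exponent $d_0$ of $L_0/K$ (resp.\ $d_1$ of $L_1/K$) — this is the same identification of the $d_i$ used in the proof of Lemma~\ref{lem:di2} via \cite[\S VI.2]{corps}. Comparing with the previous paragraph gives $d_0 = c/2 + 1$ (resp.\ $d_1 = c/2+1$), which rearranges to (\ref{eq:yippie}).

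I expect no serious obstacle; the one delicate point is the parity bookkeeping, since the hypothesis that $c(n-1)$ is even is exactly what lets one push $\chi$ down from $L^*$ to $K^*$ and back through the index-$2$ extension $L/K$ without losing a factor of $2$ (for the earlier, odd jumps this step fails, which is the mechanism behind Corollary~\ref{cor:firstjump}). One must also be slightly careful that it is $\chi|_{K^*}$ — restriction along $K^*\hookrightarrow L^*$ — whose conductor is being computed, and that this character is genuinely quadratic so that ``conductor exponent'' and ``discriminant exponent of the cut-out quadratic field'' coincide.
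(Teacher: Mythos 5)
Your proposal is correct and follows essentially the same route as the paper: cite Corollary~\ref{cor:firstjump} for the parity equivalence, then use the factorization $(1 + \pi_L^{c(n-1)}O_L) = (1 + \pi_K^{c(n-1)/2}O_K)(1 + \pi_L^{c(n-1)+1}O_L)$ (valid because $c(n-1)$ is even, $\pi_K = u\pi_L^2$, and the residue fields coincide) to push the nontriviality of $\chi$ down to $K^*$, and then identify the restriction with the quadratic character cut out by $L_0/K$ (resp.\ $L_1/K$) via Lemma~\ref{lem:thetypes}. The only cosmetic difference is the final translation: the paper reads off $c(n-1)/2 = d_i - 1$ directly from the jump in the ramification filtration of $\mathrm{Gal}(L_i/K)$ (which under local class field theory is the image of $1+\pi_K^j O_K$), while you invoke the conductor--discriminant formula to identify the conductor exponent $c(n-1)/2 + 1$ with $d_i$ — these are the same computation phrased in dual languages, and both are sound.
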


\begin{proof}
The first statement is clear from (\ref{eq:cldef}) and Corollary \ref{cor:firstjump}.  
Now suppose $i_{n-1}$ is odd, so that $c(n-1)$ is even.  By Lemma \ref{lem:jumpform},
$c(n-1)$ is the largest positive integer such that $\chi|(1 + \pi_L^{c(n-1)}O_L)$
is non-trivial.  Now $c(n-1)$ is even, $\pi_K O_L = \pi_L^2 O_L$, and 
the residue fields of $L$ and $K$ are both $k$; so we have
$$(1 + \pi_L^{c(n-1)}O_L) = (1 + \pi_K^{c(n-1)/2}O_K)\cdot (1 + \pi_L^{c(n-1)+1}O_L).$$
This implies that $c(n-1)/2$ is the largest positive integer $j$ such that
$\chi|(1 + \pi_K^j O_K)$ is non-trivial.   By Lemma \ref{lem:thetypes},
the restriction $\chi|K^*$ is the non-trivial quadratic character associated
to the quadratic extension $L_i/K$, where $i = 0$ and $L_0 = L$ if $G$
is quaternionic, and $i =1$ and $L_1/K$ is described in Lemma \ref{lem:thetypes}
if $G$ is semi-dihedral.  The ramification groups $\mathrm{Gal}(L_i/K)^\nu = \mathrm{Gal}(L_i/K)_\nu$
equal $\mathrm{Gal}(L_i/K)$ if $\nu = 0,\ldots, d_i -1$ and  these groups are trivial
for $\nu > d_i  - 1$ since $[L_i:K] = 2$.  Thus $c(n-1)/2 = d_i -1$ as claimed.  
\end{proof}

\begin{lemma}
\label{lem:whatup}
Recall that $\sigma \in G = \mathrm{Gal}(N/K)$ is an element not in $H = \mathrm{Gal}(N/L)$.  One 
has 
\begin{equation}
\label{eq:sigy}
\sigma(\pi_L) = \pi_L \cdot (1 + \beta \pi_L^{d_0-1} + \pi_L^{d_0}\gamma)
\end{equation}
for some $\gamma \in O_L$ and some $\beta \in k^*$. We may
define a uniformizer $\pi_K$ in $K$ by 
\begin{equation}
\label{eq:xacts}
\pi_K = \pi_L \sigma(\pi_L) = \pi_L^2 \cdot (1 + \beta \pi_L^{d_0-1} + \pi_L^{d_0} \gamma)
\end{equation}
\end{lemma}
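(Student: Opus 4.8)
The plan is to derive~(\ref{eq:sigy}) from the fact that $\sigma$ acts on $L = N^H$ through the nontrivial element $\bar\sigma$ of $\mathrm{Gal}(L/K)$, together with the fact that the relative different of the totally ramified quadratic extension $L/K$ has valuation exactly $d_0$. First I would observe that $H$ is normal of index $2$ in $G$, so $\sigma(L) = L$ and $\sigma$ induces $\bar\sigma$, and that $\sigma^2 \in H$ in every case of Hypothesis~\ref{hyp:good} ($\sigma^2 = e$ in the dihedral and semi-dihedral cases, $\sigma^2 = \tau^{p^{n-1}}$ in the quaternionic case), so $\bar\sigma$ has order $2$ on $L$. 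Setting $u := \sigma(\pi_L)/\pi_L \in O_L^*$, applying $\sigma$ gives $\sigma(u) = u^{-1}$, hence $\mathrm{Norm}_{L/K}(u) = u\,\sigma(u) = 1$.

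Next I would pin down the residue of $u$. Since $\mathrm{Aut}_k(k[[t]])$ consists of $k$-algebra automorphisms, $N/K$ is totally ramified and $\sigma$ acts trivially on the common residue field $k$ of $O_L$ and $O_K$; reducing $\mathrm{Norm}_{L/K}(u) = 1$ modulo $m_L$ then gives $\bar u^2 = 1$ in $k$. When $p = 2$ this forces $\bar u = 1$, so $u = 1 + w$ with $w \in m_L$; when $p > 2$ the standing normalization $\bar\sigma(\pi_L) = -\pi_L$ gives $u = -1$ outright, i.e.\ $u = 1 + w$ with $w = -2 \in O_L^*$. In either case write $u = 1 + w$.

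The key step is computing $v_L(w)$. As $L/K$ is totally ramified with uniformizer $\pi_L$, one has $O_L = O_K[\pi_L]$ and the minimal polynomial of $\pi_L$ over $K$ is the Eisenstein quadratic $g(X) = (X - \pi_L)(X - \sigma(\pi_L)) \in O_K[X]$, so the relative different is $\mathcal{D}_{L/K} = \bigl(g'(\pi_L)\bigr) = \bigl(\pi_L - \sigma(\pi_L)\bigr) = (\pi_L w)$. Since the relative discriminant ideal is $(m_K)^{d_0}$ (the even integer of Lemma~\ref{lem:di2} when $p = 2$, and $1$ when $p > 2$) and $\mathrm{disc}_{L/K} = \mathrm{Norm}_{L/K}(\mathcal{D}_{L/K})$ with $\mathrm{Norm}_{L/K}(m_L) = m_K$ (because $e = 2$, $f = 1$), we get $v_L(\mathcal{D}_{L/K}) = d_0$, hence $v_L(w) = d_0 - 1$. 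Using the coefficient field inclusion $k \hookrightarrow O_L = k[[\pi_L]]$ (valid because $k$ is perfect), I then write $w = \beta\,\pi_L^{d_0-1} + \pi_L^{d_0}\gamma$ with $\beta \in k^*$ the leading coefficient of $w$ (nonzero precisely because $v_L(w) = d_0 - 1$) and $\gamma \in O_L$; for $p > 2$ this reads $\beta = -2$, $\gamma = 0$, $d_0 = 1$. Substituting into $\sigma(\pi_L) = \pi_L(1 + w)$ yields~(\ref{eq:sigy}).

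Finally, for~(\ref{eq:xacts}) I would set $\pi_K := \pi_L\sigma(\pi_L) = \mathrm{Norm}_{L/K}(\pi_L)$; it lies in $L^{\langle\bar\sigma\rangle} = K$ since $\sigma(\pi_K) = \sigma(\pi_L)\cdot\pi_L = \pi_K$, and $v_K(\pi_K) = \tfrac12 v_L(\pi_K) = 1$, so $\pi_K$ is a uniformizer of $K$; expanding with~(\ref{eq:sigy}) gives the stated formula $\pi_K = \pi_L^2(1 + \beta\pi_L^{d_0-1} + \pi_L^{d_0}\gamma)$. I do not expect a genuine obstacle here: the only delicate points are the identification $v_L(\mathcal{D}_{L/K}) = d_0$ and, when $p = 2$, the implication $\bar u^2 = 1 \Rightarrow \bar u = 1$, which genuinely uses that the residue field has characteristic $2$ (and must fail when $p > 2$, where the chosen normalization of $\pi_L$ does the work instead).
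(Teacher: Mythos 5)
Your proof is correct and essentially matches the paper's: the paper cites that the unique jump in the lower ramification filtration of $\mathrm{Gal}(L/K)$ occurs at $d_0-1$ and unwinds the definition of the lower numbering, while you derive the same valuation $v_L(\sigma(\pi_L)-\pi_L)=d_0$ from the different $\mathcal{D}_{L/K}=(g'(\pi_L))=(\pi_L-\sigma(\pi_L))$, which — for a totally ramified quadratic extension with a single jump — is the identical fact in different clothing. The preliminary step $\bar u^2=1$ modulo $m_L$ is harmless but already redundant once you know the different has exponent $d_0\geq 2$ (in the $p=2$ case), since that alone forces $u\equiv 1\pmod{m_L}$.
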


\begin{proof}The first and only jump in the lower
ramification numbering of $\mathrm{Gal}(L/K)$ occurs at $d_0 - 1$.   By the definition
of the lower numbering, this gives (\ref{eq:sigy}),
and (\ref{eq:xacts}) follows from the fact that $L/K$ is quadratic and totally
ramified.
\end{proof}

\begin{lemma}
\label{lem:zapper}
Let $G$ be quaternionic or semi-dihedral.  The integer $i_0 = c(0)$ is the largest integer for which there exists an
element $\delta \in k^*$ such that
$\chi(1 + \delta \pi_L^{i_0}) = \zeta$ is a primitive $2^{n}$-th root of unity.  The value of $i_0$ is odd and given by 
\begin{equation}
\label{eq:jequal}
i_0 = d_1 + d_2  - d_0 - 1.
\end{equation}
\end{lemma}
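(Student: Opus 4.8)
\textbf{Proof plan for Lemma \ref{lem:zapper}.}

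The plan is to work with the local Artin map $\chi : L^* \to H = \mathrm{Gal}(N/L)$ and track how the filtration $1 + \pi_L^j O_L$ maps onto the ramification filtration of $H$. By Lemma \ref{lem:jumpform}, $i_0 = c(0)$ is the largest positive integer $h$ such that $\chi^{2^{n-1}}$ is non-trivial on $1 + \pi_L^h O_L$; but $\chi^{2^{n-1}}$ is non-trivial on a subgroup of $L^*$ exactly when $\chi$ itself sends that subgroup onto a subgroup of $H$ not contained in $2H = \ker(\chi^{2^{n-1}})$, i.e.\ onto a subgroup hitting a generator of $H$. Hence the first assertion: $i_0$ is the largest $h$ for which there is $\delta \in k^*$ with $\chi(1 + \delta\pi_L^{i_0})$ a primitive $2^n$-th root of unity, is essentially a restatement of Lemma \ref{lem:jumpform} together with the fact (from \cite[\S XV.2]{corps}) that $\xi(1 + \pi_L^h O_L) = H^h$ and that the successive quotients $H^h/H^{h+1}$ are $1$-dimensional $k$-vector spaces.

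The heart of the lemma is the formula \eqref{eq:jequal}, $i_0 = d_1 + d_2 - d_0 - 1$. The approach I would take is the conductor–discriminant route. By Lemma \ref{lem:thetypes}, the three quadratic subfields of $N$ over $K$ are $L_0 = L$, $L_1$, $L_2$, and by Lemma \ref{lem:di2} they have relative discriminant exponents $d_0, d_1, d_2$, which by local class field theory are the conductors of the three quadratic characters $\epsilon_0, \epsilon_1, \epsilon_2$ of $K^*$. Now $\chi : L^* \to \mathbb{C}^*$ has order $2^n$; its conductor $\mathfrak{f}(\chi)$ is the ideal $\pi_L^{c}$ where $c$ is the largest integer with $\chi$ non-trivial on $1 + \pi_L^{c-1} O_L$, i.e.\ $c = i_0 + 1$. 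On the other hand, the conductor–discriminant formula for the extension $N/K$ (which is the composite of the contributions of all characters of $G = \mathrm{Gal}(N/K)$) relates $\mathrm{disc}(N/K)$ to the product of the conductors of the characters of $G$; extracting the piece coming from the abelian part and from the quadratic characters, together with the transitivity $\mathfrak{d}_{N/K} = \mathfrak{d}_{L/K}^{[N:L]} \cdot \mathrm{Norm}_{L/K}(\mathfrak{d}_{N/L})$ and the fact that $\mathfrak{d}_{N/L}$ is built from $\chi, \chi^2, \ldots, \chi^{2^{n-1}}$ whose conductors are controlled by the $c(\ell)$ of Lemma \ref{lem:jumpform}, gives a linear relation among $i_0$ and $d_0, d_1, d_2$. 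I expect the cleanest bookkeeping to come from computing $\langle a_\phi, \psi \rangle$ for the three quadratic characters $\psi$ of $G$ (as in the proof of Lemma \ref{lem:di2}, where this was shown to equal $d_i$) and comparing with the Artin-character formula of Proposition \ref{prop:serrethm}, or alternatively from the explicit jump data: since $c(0) = i_0$ is the first upper-numbering jump of $H$, and the $d_i - 1$ are the (unique) jumps of the quadratic quotients $\mathrm{Gal}(L_i/K)$, Herbrand's theorem on passage to quotients applied to $G \to \mathrm{Gal}(L_i/K)$ will express each $d_i - 1$ in terms of $i_0$ and the action of $\sigma$, and solving the resulting linear system yields \eqref{eq:jequal}.

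The parity statement — that $i_0$ is odd — I would not reprove here, as it is already Corollary \ref{cor:firstjump}. Given $i_0$ odd, the formula \eqref{eq:jequal} then forces a congruence $d_1 + d_2 - d_0 \equiv 0 \pmod 2$, consistent with all $d_i$ being even (Lemma \ref{lem:di2}). The main obstacle I anticipate is getting the conductor bookkeeping exactly right: one must be careful that $\chi$ is a character of $L^*$, not $K^*$, so its conductor lives in $L$ and carries a factor of $2$ relative to conductors of characters of $K^*$ (this is exactly why \eqref{eq:jequal} has the ``$-1$'' and not ``$-2$'', and it is the same even/odd subtlety exploited in Corollary \ref{cor:firstjump} and Lemma \ref{lem:firststep}). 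I would organize the argument so that the only non-formal input is the identification of the three conductors $d_0, d_1, d_2$ with the discriminant exponents of Lemma \ref{lem:di2}, everything else being the conductor–discriminant formula and Herbrand's theorem.
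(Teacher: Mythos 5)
Your handling of the first assertion is correct and matches the paper: since $p=2$, $\chi(\gamma)$ is a primitive $2^n$-th root of unity exactly when $\chi^{2^{n-1}}(\gamma)\neq 1$, and Lemma \ref{lem:jumpform} identifies $i_0 = c(0)$ as the largest integer with $\chi^{2^{n-1}}$ nontrivial on $1+\pi_L^{i_0}O_L$; the single-$\delta$ formulation comes from \cite[Cor.~3, \S XV.2]{corps}.

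For the formula \eqref{eq:jequal}, you float two routes without pinning either down. Your first route can be made to work, but the missing link has to be supplied explicitly: the linear system in the proof of Lemma \ref{lem:di2} gives $b_{D_0} = (d_1+d_2-d_0)/2$, while Corollary \ref{cor:nicesubgroups} gives $b_{D_0}=b_H=(1+i_0)/2$, and equating these is the whole content of \eqref{eq:jequal}. What you actually wrote — conductor--discriminant for all of $N/K$, or Herbrand applied to $G\to\mathrm{Gal}(L_i/K)$ — involves much more bookkeeping than that and is not carried out. You also make one genuine error: the conductor exponent of $\chi$ on $L^*$ is $c(n-1)+1$, not $i_0+1=c(0)+1$; these differ whenever $n\geq 2$, which is always the case for quaternionic or semi-dihedral $G$. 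The character with conductor exponent $i_0+1$ is $\chi^{2^{n-1}}$, not $\chi$, and that distinction is exactly where the paper's argument lives.

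The paper's proof is different from either of your routes and shorter than both. It identifies $\chi^{2^{n-1}}$ with the nontrivial character of $\mathrm{Gal}(L'/L)$, where $L'=L_0L_1=L_1L_2$ is the biquadratic field between $K$ and $N$; hence $i_0$ is the unique ramification jump of $L'/L$ and $\mathfrak{d}_{L'/L}=\pi_L^{i_0+1}O_L$. Transitivity of discriminant for $K\subset L\subset L'$ gives $\mathfrak{d}_{L'/K}=\mathfrak{d}_{L/K}^2\cdot\mathrm{Norm}_{L/K}\mathfrak{d}_{L'/L}$, while the conductor--discriminant formula for the Klein-four extension $L'/K$ gives $\mathfrak{d}_{L'/K}=\mathfrak{d}_{L_0/K}\mathfrak{d}_{L_1/K}\mathfrak{d}_{L_2/K}$. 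Comparing exponents yields $2d_0+i_0+1=d_0+d_1+d_2$ immediately, with no $b_T$ or Artin-character bookkeeping. Both your (corrected) first route and the paper's work; the paper's is tighter because it never leaves the Klein-four subquotient $L'/K$.
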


\begin{proof}  The first statement follows from \cite[Cor. 3, \S XV.2]{corps} since $\chi$ has order $2^n$.  In the quaternionic or semi-dihedral case, $p = 2$.  For
$\gamma \in L^*$, the value $\chi(\gamma)$ is a primitive $2^{n}$-th root of
unity if and only if $\chi^{p^{n-1}}(\gamma)$ is non-trivial.  Hence
Lemma \ref{lem:jumpform} shows the first statement about $i_0$,
since $i_0$ is the largest integer such that $\chi^{2^{n-1}}$ is not trivial on $1 + \pi_L^{i_0} O_L$.
 The character $\chi^{2^{n-1}}$ corresponds to the order two
 character of the Galois group $\mathrm{Gal}(L'/L)$, where $L = L_0$ and $L'$ is the compositum
 $L \cdot L_1 = L_1 \cdot L_2$ over $K$ (with notation as in Hypothesis~\ref{hyp:setuplocal}). Thus $i_0$ the first (and only) jump in the 
 upper (and lower) ramification filtration of $\mathrm{Gal}(L'/L)$.  It follows that
 the relative discriminant $d_{L'/L}$ equals $\pi_L^{i_0+1}O_L$. The relative discriminant $d_{L'/K}$
 is given by
 $$d_{L/K}^2 \cdot \mathrm{Norm}_{L/K} d_{L'/L} = d_{L'/K} = d_{L/K} \cdot d_{L_1/K} \cdot d_{L_2/K}.$$
 Since $L = L_0$ is totally and quadratically ramified over $K$, this gives 
 $$2 d_0 + i_0 + 1 = d_0 + d_1 + d_2$$
 which is equivalent to (\ref{eq:jequal}).  Since all of $d_0$, $d_1$ and $d_2$
 are even, $i_0$ is odd.
 \end{proof}

 \begin{lemma}
 \label{lem:identity}
 Suppose $\beta$, $\pi_L$ are as in Lemma \ref{lem:zapper} and that $\pi_K$ is a uniformizer in $K$.   For all $a \in k$, all odd integers
 $h \ge 1$, and all 
 sufficiently large positive integers $M > 1$, we have
\begin{equation}
\label{eq:zowie}
(1 + a \pi_L^h)^{2^M - 2} \cdot (1 + a^2 \pi_K^h) = 1 + a^2 \beta \pi_L^{2h + d_0 - 1} + \pi_L^{2h + d_0}\eta
\end{equation}
for some $\eta \in k[[{\pi_L}]] = O_L$.
\end{lemma}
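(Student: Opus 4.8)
The plan is to prove the identity \eqref{eq:zowie} by a direct $2$-adic computation in the group $1 + \pi_L O_L$, using the relation \eqref{eq:sigy} (equivalently \eqref{eq:xacts}) to expand $\pi_K$ in terms of $\pi_L$. The key observation is that we are working in characteristic $2$, so squaring is additive modulo higher powers, and $2^M - 2 = 2(2^{M-1} - 1)$ is twice an odd number for $M \geq 2$. First I would use \eqref{eq:xacts} to write $\pi_K = \pi_L^2 (1 + \beta \pi_L^{d_0-1} + \pi_L^{d_0}\gamma)$, so that for an odd integer $h$,
$$a^2 \pi_K^h = a^2 \pi_L^{2h}(1 + \beta \pi_L^{d_0-1} + \pi_L^{d_0}\gamma)^h = a^2 \pi_L^{2h} + a^2 h \beta \pi_L^{2h+d_0-1} + \pi_L^{2h+d_0}(\cdots),$$
and since $h$ is odd, $h \beta \equiv \beta$ as elements of $k$ (we are in characteristic $2$), so $1 + a^2\pi_K^h = 1 + a^2\pi_L^{2h} + a^2\beta \pi_L^{2h+d_0-1} + \pi_L^{2h+d_0}(\cdots)$.

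Next I would expand $(1 + a\pi_L^h)^{2^M-2}$. Since we are in characteristic $2$, $(1+a\pi_L^h)^2 = 1 + a^2\pi_L^{2h}$ exactly, and more generally $(1+a\pi_L^h)^{2^j} = 1 + a^{2^j}\pi_L^{2^j h}$. For $M$ sufficiently large (relative to $h$, $d_0$, and the precision $2h+d_0$ we care about), the terms $a^{2^j}\pi_L^{2^j h}$ with $j \geq 2$ lie in $\pi_L^{2h+d_0}O_L$, so modulo that ideal $(1+a\pi_L^h)^{2^M} \equiv 1$ and hence $(1+a\pi_L^h)^{2^M-2} \equiv (1+a\pi_L^h)^{-2} = (1+a^2\pi_L^{2h})^{-1} \equiv 1 + a^2\pi_L^{2h} + a^4\pi_L^{4h} + \cdots \equiv 1 + a^2\pi_L^{2h} \pmod{\pi_L^{2h+d_0}}$, again using $4h > 2h + d_0$ for $h$ large compared to $d_0$ — or more carefully, truncating the geometric series at the relevant precision. (One should be slightly careful here: the statement quantifies over all odd $h \geq 1$ and all sufficiently large $M$, but $d_0$ is fixed by $\phi$; if $h$ is small one simply needs $M$ larger. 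I would phrase the bound on $M$ as depending on $h$ and the fixed data.)

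Finally I would multiply the two expansions: $(1 + a^2\pi_L^{2h} + \pi_L^{2h+d_0}(\cdots)) \cdot (1 + a^2\pi_L^{2h} + a^2\beta\pi_L^{2h+d_0-1} + \pi_L^{2h+d_0}(\cdots))$. The product of the two leading correction terms $a^2\pi_L^{2h} \cdot a^2\pi_L^{2h} = a^4\pi_L^{4h}$ is absorbed into $\pi_L^{2h+d_0}O_L$ once $4h \geq 2h + d_0$, i.e. for $h$ large; for small $h$ one checks $2a^2\pi_L^{2h} = 0$ in characteristic $2$ handles the cross terms, and $a^4\pi_L^{4h}$ goes into the error only if $4h \ge 2h+d_0$ — so in fact one wants to observe that $a^2 + a^2 = 0$ kills the sum of the two linear terms $a^2\pi_L^{2h}$ outright, leaving $1 + a^2\beta\pi_L^{2h+d_0-1} + \pi_L^{2h+d_0}\eta$, which is exactly \eqref{eq:zowie}. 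The main obstacle I anticipate is bookkeeping the error terms cleanly — in particular making sure that the cancellation $a^2 + a^2 = 0$ is what produces the stated form (rather than needing $4h \geq 2h+d_0$), and pinning down precisely how large $M$ must be as a function of $h$ so that all the $2^j$-th power contributions for $j \geq 2$ disappear modulo $\pi_L^{2h+d_0}$. None of this is deep, but it requires care with the characteristic-$2$ arithmetic and with the interplay between the exponent $h$ and the fixed ramification invariant $d_0$.
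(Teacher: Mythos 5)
Your strategy is correct and genuinely different from the paper's proof. The paper proceeds by induction on $\mu$, establishing
$$(1 + a \pi_L^h)^{2^\mu - 2}(1 + a^2 \pi_K^h) = 1 + a^{2^\mu}\pi_L^{h 2^\mu} + a^2\beta\pi_L^{2h+d_0-1} + \pi_L^{2h+d_0}\eta_\mu$$
for every $\mu \ge 1$, explicitly carrying the extra term $a^{2^\mu}\pi_L^{h2^\mu}$ until it can be absorbed into the error once $h 2^\mu \ge 2h + d_0$. You instead factor $(1 + a\pi_L^h)^{2^M - 2} = (1+a\pi_L^h)^{2^M}(1+a\pi_L^h)^{-2}$, note $(1+a\pi_L^h)^{2^M} = 1 + a^{2^M}\pi_L^{2^M h}$ drops into the error once $2^M h \ge 2h + d_0$, and compute $(1+a\pi_L^h)^{-2}(1 + a^2\pi_K^h)$ in $O_L$ directly. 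Both are valid; yours exposes more clearly that $2^M - 2$ is really a device for realizing $-2$ by a positive exponent, while the paper's induction trades that insight for simpler bookkeeping.

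Where your write-up is soft is exactly where you suspected: asserting that ``$a^2 + a^2 = 0$ kills the sum of the two linear terms outright'' is not the whole story when $h$ is small relative to $d_0$, because the truncated geometric series $1 + a^2\pi_L^{2h} + a^4\pi_L^{4h} + \cdots$ coming from $(1 + a^2\pi_L^{2h})^{-1}$ contributes \emph{all} the terms $a^{2j}\pi_L^{2jh}$ with $2jh < 2h + d_0$. These do cancel, but in a cascading way (each $a^{2j}\pi_L^{2jh}$ appears once in $u$ and once in $a^2\pi_L^{2h}\cdot u$ inside $uv$), and your proposal as phrased doesn't demonstrate that. The clean fix is to avoid expanding at all and instead write
$$(1+a\pi_L^h)^{-2}(1 + a^2\pi_K^h) = \frac{1 + a^2\pi_K^h}{1 + a^2\pi_L^{2h}} = 1 + \frac{a^2(\pi_K^h + \pi_L^{2h})}{1 + a^2\pi_L^{2h}},$$
using that subtraction is addition in characteristic $2$. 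Now $\pi_K^h + \pi_L^{2h} = \pi_L^{2h}\bigl[(1 + \beta\pi_L^{d_0-1} + \pi_L^{d_0}\gamma)^h + 1\bigr] = \beta\pi_L^{2h+d_0-1} + \pi_L^{2h+d_0}(\cdots)$ since $h$ is odd and $d_0 \ge 2$, and the denominator $1 + a^2\pi_L^{2h}$ is a unit congruent to $1$ mod $\pi_L$, so dividing by it leaves $a^2\beta\pi_L^{2h+d_0-1}$ in place and the error still in $\pi_L^{2h+d_0}O_L$. This sidesteps the infinite-series bookkeeping entirely and also makes the bound on $M$ transparent: any $M$ with $2^M h \ge 2h + d_0$, together with all larger $M$, works, and this condition is automatically independent of $a$.
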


\begin{proof} We will show by induction on the integer $\mu \ge 1$
that 
\begin{equation}
\label{eq:muinduct}
(1 + a \pi_L^h)^{2^\mu - 2} \cdot (1 + a^2 \pi_K^h) = 1 + a^{2^\mu} \pi_L^{h2^\mu} +  a^2 \beta \pi_L^{2h + d_0 - 1} + 
\pi_L^{2h + d_0} \cdot \eta_\mu 
\end{equation}
for some $\eta_\mu \in k[[{\pi_L}]] = O_L$, using that the characteristic is $2$. We then get (\ref{eq:zowie}) by 
setting $M = \mu$ and 
$$\eta = a^{2^\mu} \pi_L^{h 2^\mu - (2h + d_0)} + \eta_\mu$$
when $\mu$ is large enough so that $h 2^\mu \ge 2h + d_0$.

To prove (\ref{eq:muinduct}), first consider the case $\mu = 1$.
We have from (\ref{eq:xacts}) that since $h$ is odd and $d_0 \ge 2$, 
\begin{eqnarray}
1 + a^2 \pi_K^h &=& 1 + a^2 ({\pi_L} \sigma({\pi_L}))^h \nonumber \\
&=& 1 + a^2 (\pi_L^2 \cdot (1 + \beta \pi_L^{d_0-1} + \pi_L^{d_0} \gamma))^h\\
&=& 1 + a^2 \pi_L^{2h} + a^2 \beta \pi_L^{2h + d_0-1} + \pi_L^{2h + d_0} \eta_1\nonumber
\end{eqnarray}
for some $\eta_1 \in k[[{\pi_L}]] = O_L$.  This is exactly
the assertion in (\ref{eq:muinduct}) when $\mu = 1$.

Now assume that (\ref{eq:muinduct}) holds for some $\mu \ge 1$.
We multiply both sides by 
\hbox{$(1 + a \pi_L^h)^{2^\mu} = 1 + a^{2^\mu} \pi_L^{h 2^\mu}$}.
The left side becomes
\begin{equation}
\label{eq:leftie}
(1 + a \pi_L^h)^{2^\mu} \cdot (1 + a \pi_L^h)^{2^\mu - 2} \cdot (1 + a^2 \pi_K^h) 
= (1 + a \pi_L^h)^{2^{\mu+1} - 2} \cdot (1 + a^2 \pi_K^h) 
\end{equation}
The right hand side becomes
\begin{eqnarray}
\label{eq:rightie}
&\left (1 + a^{2^\mu} \pi_L^{h 2^\mu}\right )
\left ( 1 + a^{2^\mu} \pi_L^{h2^\mu} +  a^2 \beta \pi_L^{2h + d_0 - 1} +  
\pi_L^{2h + d_0} \cdot \eta_\mu  \right )\nonumber \\
& = 1 + a^{2^{\mu+1}} \pi_L^{h2^{\mu+1}}  +   a^2 \beta \pi_L^{2h + d_0 - 1} 
 + \pi_L^{2h + d_0} \cdot \eta_{\mu + 1}
\end{eqnarray}
where 
$$\eta_{\mu + 1} = (1 + a^{2^\mu} \pi_L^{h 2^\mu}) \eta_\mu + a^{2^\mu + 2} \pi_L^{h2^\mu-1} \beta  \in k[[{\pi_L}]] = O_L.$$
Equating the right hand sides of (\ref{eq:leftie}) and (\ref{eq:rightie}) shows (\ref{eq:muinduct}) when $\mu$ is replaced by $\mu + 1$, 
so the induction is complete.  
\end{proof}

\begin{cor}
\label{cor:almost}
With the notations of Lemma \ref{lem:identity}, let $a = \delta$, $h = i_0$ and 
\begin{equation}
\label{eq:duh}
z = 1 + \delta^2 \beta \pi_L^{2i_0 + d_0 - 1} + \pi_L^{2i_0 + d_0}\eta
\end{equation}
Then $\chi(z)$ is a root of unity of order exactly $2^{n - 1}$
unless $n = 2$, $G$ is a quaternion group of order $8$ and
$\chi(1 + \delta^2 \pi_K^{i_0}) = -1$;  in this case, $\chi(z) = 1$.  
\end{cor}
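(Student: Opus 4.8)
The plan is to apply Lemma \ref{lem:identity} with $a = \delta$ and $h = i_0$. Since $i_0$ is odd by Lemma \ref{lem:zapper}, the lemma applies and gives, for sufficiently large $M$,
$$(1 + \delta \pi_L^{i_0})^{2^M - 2} \cdot (1 + \delta^2 \pi_K^{i_0}) = 1 + \delta^2 \beta \pi_L^{2i_0 + d_0 - 1} + \pi_L^{2i_0 + d_0}\eta = z.$$
Applying the character $\chi$ to both sides, and using that $\chi$ is multiplicative, we get
$$\chi(z) = \chi(1 + \delta \pi_L^{i_0})^{2^M - 2} \cdot \chi(1 + \delta^2 \pi_K^{i_0}).$$
By the defining property of $i_0$ in Lemma \ref{lem:zapper}, $\chi(1 + \delta\pi_L^{i_0}) = \zeta$ is a primitive $2^n$-th root of unity for a suitable choice of $\delta \in k^*$; fix such a $\delta$. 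Then $\chi(1 + \delta\pi_L^{i_0})^{2^M - 2} = \zeta^{2^M - 2} = \zeta^{-2}$ once $M \ge n$, since $\zeta^{2^M} = 1$. So $\zeta^{-2}$ is a primitive $2^{n-1}$-th root of unity.

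It remains to analyze the factor $\chi(1 + \delta^2 \pi_K^{i_0})$, which lies in the image of $K^*$ under $\chi$, hence in the group of values of $\chi|_{K^*}$. By Lemma \ref{lem:thetypes}, $\chi|_{K^*}$ has order $1$ if $G$ is dihedral (excluded here, since $G$ is quaternionic or semi-dihedral) and order $2$ otherwise, so $\chi(1 + \delta^2\pi_K^{i_0}) \in \{\pm 1\}$. If this value is $+1$, then $\chi(z) = \zeta^{-2}$ has order exactly $2^{n-1}$. If it is $-1$, then $\chi(z) = -\zeta^{-2}$; when $n \ge 3$ this still has order $2^{n-1}$ since $-1 = \zeta^{2^{n-1}}$ is already a $2^{n-1}$-th power of $\zeta$ (so multiplying by $-1$ changes $\zeta^{-2}$ by a $2^{n-1}$-th root of unity and preserves its order), while when $n = 2$ we have $\zeta^{-2} = -1$ and thus $\chi(z) = (-1)(-1) = 1$. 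The case $n=2$ with $\chi(1 + \delta^2\pi_K^{i_0}) = -1$ is precisely the exceptional case in the statement, and by Lemma \ref{lem:thetypes} the restriction $\chi|_{K^*}$ is the quadratic character associated to $L/K$ (i.e.\ of quaternionic type) in that case, forcing $G$ to be the quaternion group of order $8$; one checks that $1 + \delta^2\pi_K^{i_0} \equiv 1 + \delta^2\pi_K^{i_0}$ lands outside $\mathrm{Norm}_{L/K}(L^*)$ exactly when this value is $-1$. This establishes all assertions.

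The main obstacle I anticipate is bookkeeping the orders of roots of unity carefully enough in the case $\chi(1 + \delta^2\pi_K^{i_0}) = -1$: one must be sure that multiplying $\zeta^{-2}$ by $-1$ only collapses the order when $n = 2$ and not when $n \ge 3$, which hinges on the elementary but easy-to-misstate fact that in the cyclic group $\langle \zeta \rangle$ of order $2^n$, the element $-1$ equals $\zeta^{2^{n-1}}$ and hence lies in the subgroup $\langle \zeta^2 \rangle$ precisely when $n \ge 2$, so that $-\zeta^{-2}$ generates the same subgroup $\langle \zeta^{-2}\rangle$ of order $2^{n-1}$ when $n \ge 3$ but degenerates to $1$ when $n = 2$ since then $\zeta^{-2} = \zeta^2 = -1$. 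The identification of the exceptional configuration with the quaternion group of order $8$ then follows from Lemma \ref{lem:thetypes} together with the fact that for $G$ semi-dihedral we have $n \ge 3$ by Hypothesis \ref{hyp:good}(c), so the degenerate case cannot occur in the semi-dihedral setting.
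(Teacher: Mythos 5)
Your proposal is correct and follows essentially the same route as the paper: apply Lemma \ref{lem:identity} with $a=\delta$, $h=i_0$, pass to $\chi$, invoke Lemma \ref{lem:zapper} for the primitive $2^n$-th root of unity and Lemma \ref{lem:thetypes} for the $\pm 1$ value on $K^*$, and split on $n\ge 3$ versus $n=2$, with the semi-dihedral case excluded by $n\ge 3$ from Hypothesis \ref{hyp:good}(c). The additional bookkeeping about $-1=\zeta^{2^{n-1}}$ is correct but is exactly the point the paper makes more tersely.
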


\begin{proof}
{From} (\ref{eq:zowie}), we have
\begin{equation}
\label{eq:zero}
\chi(z) = \chi(1 + \delta \pi_L^{i_0})^{2^M - 2} \cdot \chi(1 + \delta^2 \pi_K^{i_0})
\end{equation}
Now from Lemma \ref{lem:zapper}, $\chi(1 + \delta \pi_L^{i_0}) = \zeta$ is
a primitive root of unity of order $2^{n}$, while
$\chi(1 + \delta^2 \pi_K^{i_0}) = \pm 1$ by
Lemma \ref{lem:thetypes} since $1 + \delta^2 \pi_K^{i_0} \in K^*$.  Thus
$\chi(1 + \delta \pi_L^{i_0})^{2^M - 2} = \zeta^{2^M - 2}$
is a root of unity of order $2^{n-1}$ since $M > 1$.
If $n \ge 3$, the product of a root of unity of order $
2^{n-1}$ with $\pm 1$ is a root of unity of order $2^{n-1}$,
so (\ref{eq:zero})
shows $\chi(z)$ has order $2^{n-1}$.  Since
$G$ is quaternionic or semi-dihedral of order $2^{n+1}$, the only 
way in which one can have $n < 3$ is for $n = 2$ 
and for $G$ to be a quaternion group of order $8$.
In this case, $\chi(1 + \delta \pi_L^{i_0})^{2^M - 2} = -1$, so $\chi(z) = - \chi(1 + \delta^2 \pi_K^{i_0})$
is a root of unity of order $2^{n-1} = 2$ if and only if $\chi(1 + \delta^2 \pi_K^{i_0}) = 1$.\end{proof}

\begin{lemma}
\label{lem:evenqsemi2}  Suppose $G$ is quaternionic or semi-dihedral.  Then $i_{n-1}$ is
even unless all of  the following hypotheses hold:
\begin{enumerate}
\item[i.] $n = 2$ and $G$ is the quaternion group of order $8$;
\item[ii.]  The constants $d_0, d_1$ and $d_2$ are all equal to $i_0        +1$
in the notation of  Lemma \ref{lem:zapper};
\item[iii.]  The largest integer $c(1) = c(n-1)$ such that $\chi$ is 
non-trivial on $1 + \pi_L^{c(1)} O_L$ is $c(1) = 2i_0       $.
\end{enumerate}
\end{lemma}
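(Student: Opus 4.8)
\textbf{Proof plan for Lemma \ref{lem:evenqsemi2}.}

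The strategy is an argument by contradiction: I will assume that $i_{n-1}$ is odd and deduce that hypotheses (i), (ii) and (iii) must all hold. The starting point is Lemma \ref{lem:firststep}, which tells us that $i_{n-1}$ is odd if and only if $c(n-1)$ is even, and in that case $c(n-1)/2 = d_0 - 1$ (resp.\ $d_1 - 1$) according as $G$ is quaternionic or semi-dihedral. Combining this with Lemma \ref{lem:zapper}, which gives $i_0 = d_1 + d_2 - d_0 - 1$ and asserts $i_0$ is odd, I expect to pin down the relations among $c(n-1)$, $i_0$ and the $d_j$. The key computational input will be Corollary \ref{cor:almost}: for the element $z = 1 + \delta^2\beta\pi_L^{2i_0 + d_0 - 1} + \pi_L^{2i_0+d_0}\eta$ one has that $\chi(z)$ has order exactly $2^{n-1}$ \emph{unless} $n = 2$, $G$ is quaternion of order $8$, and $\chi(1 + \delta^2\pi_K^{i_0}) = -1$. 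The plan is to show that if $i_{n-1}$ is odd, the first alternative is impossible, forcing us into the exceptional case, which is precisely (i).

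The heart of the argument is the following. Since $\chi$ has order $2^n$, Lemma \ref{lem:jumpform} identifies $c(n-1)$ as the largest integer $h$ with $\chi$ non-trivial on $1 + \pi_L^h O_L$, equivalently (since $\chi^{2^{n-1}}$ is the order-two character cutting out $L'/L$) the largest $h$ with $\chi^{2^{n-1}}$ non-trivial on $1 + \pi_L^h O_L$. Now $z \in 1 + \pi_L^{2i_0 + d_0 - 1}O_L$ by its definition, and $\beta \in k^*$, so $z$ is a genuine element of this layer and not the next one. If $\chi(z)$ had order exactly $2^{n-1}$ — i.e.\ $\chi^{2^{n-1}}(z)$ is a primitive second root of unity, hence $\chi^{2^{n-1}}(z) \ne 1$ — then $2i_0 + d_0 - 1 \le c(n-1) = 2(d_0 - 1)$ in the quaternionic case (resp.\ $\le 2(d_1-1)$ in the semi-dihedral case). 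Plugging in $i_0 = d_1 + d_2 - d_0 - 1$ and the positivity/parity of the $d_j$ should force $d_1 = d_2 = d_0 = i_0 + 1$ and $n = 2$ with $G = Q_8$; this is exactly where (ii) and (iii) come from, since $c(n-1) = c(1) = 2(d_0-1) = 2i_0$. Conversely, I must check that in the remaining (exceptional) branch of Corollary \ref{cor:almost} the inequality among $c(n-1)$, $i_0$, $d_0$ is consistent, so that no further contradiction arises and the conclusion (i)--(iii) is exactly the locus where $i_{n-1}$ can be odd.

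The main obstacle I anticipate is the bookkeeping of ramification exponents: getting the inequalities between $2i_0 + d_0 - 1$ and $c(n-1) \in \{2(d_0-1), 2(d_1-1)\}$ to interact correctly with $i_0 = d_1 + d_2 - d_0 - 1$ and with the constraint (from the biquadratic subextension, as in Lemma \ref{lem:di2} and Lemma \ref{lem:zapper}) that at most two of $d_0, d_1, d_2$ can exceed the third. I will need to handle the quaternionic and semi-dihedral cases separately at this point, since the restriction $\chi|_{K^*}$ is attached to $L_0 = L$ in the first case and to $L_1$ in the second, changing which $d_i$ appears in $c(n-1)/2 = d_i - 1$. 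Once the arithmetic is set up, the parity observations ($d_0, d_1, d_2$ all even since $p = 2$, and $i_0$ odd) should close the argument; the conclusion that $\chi(1 + \delta^2\pi_K^{i_0}) = -1$ in the exceptional case will be recorded for use in the sequel, though it is not needed for the statement of this lemma.
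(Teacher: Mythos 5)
Your overall strategy — contradict $i_{n-1}$ odd using the valuation $2i_0 + d_0 - 1$ of $z - 1$ from Corollary~\ref{cor:almost} together with the identity $c(n-1)/2 = d_i - 1$ from Lemma~\ref{lem:firststep} and $i_0 = d_1 + d_2 - d_0 - 1$ from Lemma~\ref{lem:zapper} — is the right one, and it is the paper's approach. But there is a concrete gap that prevents the argument from closing when $n > 2$.

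You infer from $\chi(z)$ being a primitive $2^{n-1}$-th root of unity merely that $\chi(z) \ne 1$, and hence that $2i_0 + d_0 - 1 \le c(n-1)$. (Incidentally, the intermediate claim ``$\chi^{2^{n-1}}(z)$ is a primitive second root of unity'' is off by one: it should be $\chi^{2^{n-2}}(z) = -1$ while $\chi^{2^{n-1}}(z) = 1$.) This inequality is too weak. For instance, in the quaternionic case it gives $2i_0 + d_0 - 1 \le 2(d_0 - 1)$, i.e.\ $2i_0 + 2 \le d_0$; taking $d_1 = 2$, $d_0 = d_2 = 4$, $i_0 = 1$ (a legitimate pattern of discriminant exponents for a biquadratic extension: two equal and larger, one smaller) satisfies your inequality with any $n$, yet the $d_i$ are not all equal — so you cannot force the conclusion that $n = 2$ or that (ii) holds. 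What the paper actually uses is that, since $\operatorname{char} = 2$, the Frobenius is additive on $1$-units, so $z^{2^{n-2}} - 1$ has valuation exactly $(2i_0 + d_0 - 1)\cdot 2^{n-2}$; and since $\chi(z^{2^{n-2}}) = -1$, one obtains the amplified bound $c(n-1) \ge (2i_0 + d_0 - 1)\cdot 2^{n-2}$. It is this extra factor of $2^{n-2}$ that, for $n \ge 3$, produces an inequality with no solutions and hence forces $n = 2$. Without it the argument genuinely fails, not merely becomes awkward.

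A secondary point: even once $n = 2$ and $G = Q_8$ are established, (ii) and (iii) require a further, separate argument — the paper rules out $d_0 < d_1 = d_2$ by noting that $\chi(1 + \delta^2 \pi_L^{2i_0}) = -1$ forces $2i_0 \le c(1) = 2(d_0 - 1)$ while $i_0 = 2d_1 - d_0 - 1 \ge d_0 + 3$. Your outline says only that this case ``should be checked for consistency,'' which elides this second contradiction. You would need to write it out explicitly to conclude $d_0 = d_1 = d_2 = i_0 + 1$ and $c(1) = 2i_0$.
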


\begin{proof}We assume throughout the proof that $i_{n-1}$ is odd, so that 
$c(n-1)$ is even by  Lemma \ref{lem:firststep}.  Suppose first that $n > 2$.
By  Corollary \ref{cor:almost}, $\chi(z)$ is a root of unity
of order exactly $2^{n - 1}$.  Then since $L$ has characteristic $2$, 
\begin{eqnarray}
\label{eq:zpower}
\chi(z^{2^{n-2}}) 
&=& \chi(1 + \delta^{2^{n-1}} \beta^{2^{n-2}} \pi_L^{(2i_0         + d_0 -1)2^{n-2}} + \pi_L^{(2i_0        + d_0)2^{n-2}}\eta^{2^{n-2}})
\end{eqnarray}
is a primitive root of unity of order $2$, so it is equal to $-1$.  Since
$$\delta^{2^{n-1}} \beta^{2^{n-2}}  \ne 0$$
in $k$, this shows
that
\begin{equation}
\label{eq:finish}
c(n-1) \ge (2i_0 + d_0 -1) 2^{n-2}. 
\end{equation}
On the other hand, Lemma \ref{lem:firststep} shows 
\begin{equation}
\label{eq:money}
d_i -1 = \frac{c(n-1)}{2},
\end{equation} 
where $i = 0$ if $G$ is quaternionic and $i = 1$ if $G$ is semi-dihedral.
We conclude from (\ref{eq:money}) and (\ref{eq:finish})  that 
\begin{equation}
\label{eq:tada2}
(2i_0        + d_0 -1) 2^{n-2}   = 2^{n-2} (d_0  - 1) + 2^{n-1}i_0         \le c(n-1) = 2 (d_i - 1).
\end{equation}
Now $i_0        \ge 1$ and $d_0 - 1 \ge 1$ because $d_0$ is an even positive integer.  Since
we have assumed $n \ge 3$ in deducing (\ref{eq:tada2}), we conclude
from (\ref{eq:tada2}) that $d_i > d_0$. Then $i = 1$ and $G$ must be semi-dihedral.  In this
case, $d_1 = d_2 > d_0$, since $\pi_K^{d_i} O_K$ is the relative discriminant
of the quadratic extension $L_i/K$, and the compositum of $L_0$, $L_1$
and $L_2$ is the biquadratic extension $L'/K$.  Here 
\begin{equation}
\label{eq:oof}
d_2  \ge d_0 + 2
\end{equation} since $d_2 = d_1 > d_0$ and each of $d_0, d_1$ and
$d_2$ are even. 
By Lemma \ref{lem:zapper}, 
$\chi(1 + \delta \pi_L^{i_0})$ is a root of unity of order $2^n$ for  some $\delta \in k^*$
and $i_0        = d_1 + d_2 - d_0 - 1 $ and some $\delta \in k^*$.    Hence (\ref{eq:oof}) gives
\begin{equation}
\label{eq:jlower}
i_0        = d_1 + d_2 - d_0 - 1\ge d_1 + 1 
\end{equation} 
Thus (\ref{eq:money}), (\ref{eq:tada2}) and (\ref{eq:jlower}) give 
\begin{equation}
\label{eq:tadada}
2^{n-1} (d_1 + 1)\le 2^{n-1} i_0        < 2^{n-2} (d_0  - 1) + 2^{n-1}i_0         \le c(n-1) = 2(d_1 - 1).
\end{equation}
Since $n \ge 3$, this would imply $d_1 < 0$,
which is impossible.  Thus $d_i > d_0$ is impossible, and we conclude 
our assumption that $n \ge 3$ is also impossible.  

What we have shown thus far is that if $i_{n-1}$ is odd
then $n < 3$.  So $n = 2$ and 
$G$ must be the quaternion group of order $8$, which
we will assume for the rest of the proof.  In view of 
Lemma \ref{lem:firststep} we have $c(n-1) = c(1) = d_0 - 1$.
Suppose that the disciminant exponents $d_0, d_1$ and $d_2$
are not all equal.  Since $G$ is the quaternion group of order $8$,
we can switch the roles of $L_0$, $L_1$ and $L_2$ to be
able to assume that $d_0 < d_1 = d_2$.  We now have the lower bound
\begin{equation}
\label{eq:jlowernew}
i_0        = d_1 + d_2 - d_0 - 1 = 2 d_1 - d_0 - 1 \ge 2(d_0 + 2) - d_0 - 1 = d_0 + 3
\end{equation} provided that $i_{n-1}$ is
odd.  Since $\chi(1 + \delta \pi_L^{i_0})$ is a root of unity of order $2^n = 4$,
 $\chi(1 + \delta^2 \pi_L^{2i_0       }) = \chi(1 + \delta \pi_L)^2  =-1$, so 
$\chi$ is non-trivial on $1 + \pi^{2(d_0 + 3)} O_L$ by (\ref{eq:jlowernew}).  This implies 
$2(d_0 + 3) \le c(1) = 2(d_0 - 1)$ which is impossible. 
Hence all of $d_0, d_1$ and $d_2$ must  be equal, and we find
from Lemma \ref{lem:zapper} that they equal $i_0        + 1$. This 
and (\ref{eq:money}) (in which $i = 0$, since $G$ is a quaternion group)
complete the proof.
\end{proof}

\begin{lemma}
\label{lem:boundquat}
Suppose $G$ is quaternionic.   Then $i_{n-1} \ge d_0$ unless
conditions (i), (ii) and (iii) of Lemma \ref{lem:evenqsemi2}  hold.
\end{lemma}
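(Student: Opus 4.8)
\textbf{Proof proposal for Lemma \ref{lem:boundquat}.}

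The plan is to argue by contradiction: assume $G$ is quaternionic, that $i_{n-1} < d_0$, and that conditions (i), (ii) and (iii) of Lemma \ref{lem:evenqsemi2} do \emph{not} all hold. I would like to derive a contradiction, which would establish the inequality $i_{n-1} \ge d_0$. First, I would invoke Corollary \ref{cor:firstjump}, which tells us $i_{n-1}$ is even unless $c(n-1)$ is odd, together with Lemma \ref{lem:evenqsemi2}, which asserts that $i_{n-1}$ is even \emph{unless} (i), (ii), (iii) of that lemma hold. Since we are assuming those three conditions fail, I conclude $i_{n-1}$ is even. The strategy is then to use the relation between $i_{n-1}$ and the constant $c(n-1)$ coming from Lemma \ref{lem:jumpform} and Lemma \ref{lem:firststep}: when $i_{n-1}$ is even, $c(n-1) = c(n-2) + i_{n-1}$ is odd by Corollary \ref{cor:firstjump}, and I can compare this with the lower bound on $c(n-1)$ coming from the quaternionic structure.

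The key arithmetic input will be the discriminant identity appearing in the proof of Lemma \ref{lem:zapper}, namely $2d_0 + i_0 + 1 = d_0 + d_1 + d_2$, equivalently $i_0 = d_1 + d_2 - d_0 - 1$, combined with Lemma \ref{lem:di2}, where the $d_i$ are all even and the compositum of $L_0, L_1, L_2$ is biquadratic over $K$ (so two of the $d_i$ are equal and the third is $\le$ them). In the quaternionic case I would exploit the fact that $L_0 = L$ is itself one of the three quadratic subfields, and use Corollary \ref{cor:hardpart}(a): $b_T = i_{n-1}/2 - d_0/2$ for $T = 2^{n-1}H$. Since the Bertin obstruction vanishing forces $b_T \ge 0$ (Proposition \ref{prop:special}), the inequality $i_{n-1} \ge d_0$ is exactly the statement that this particular $b_T$ is non-negative — so what I really must show is that the only way to violate $i_{n-1} \ge d_0$ is for the exceptional configuration in Lemma \ref{lem:evenqsemi2} to occur. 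I would trace through the jump-filtration computation: by Lemma \ref{lem:firststep}, if $i_{n-1}$ were odd then $c(n-1)/2 = d_0 - 1$; since here $i_{n-1}$ is even, I instead get that $\chi^{2^{n-1}}$ restricted to $1 + \pi_L^{c(n-1)}O_L$ is non-trivial, and the evenness of $c(n-1)$ would (by the argument in Corollary \ref{cor:firstjump}, pushing $1 + \pi_L^{c(n-1)}O_L$ down to $1 + \pi_K^{c(n-1)/2}O_K$) force $\chi|_{K^*}$ to be non-trivial on a suitable unit subgroup — which is consistent in the quaternionic case, so I need the finer estimate using Lemma \ref{lem:identity} and Corollary \ref{cor:almost} to squeeze the inequality.

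Concretely, I expect the argument to run: from Corollary \ref{cor:almost} applied with $a = \delta$, $h = i_0$, the element $z = 1 + \delta^2\beta\pi_L^{2i_0 + d_0 - 1} + \pi_L^{2i_0+d_0}\eta$ has $\chi(z)$ of order exactly $2^{n-1}$ (in the quaternionic case with $n \ge 3$, or $n = 2$ with the extra condition on $\chi(1+\delta^2\pi_K^{i_0})$). Raising to the $2^{n-2}$ power, $\chi(z^{2^{n-2}}) = -1$, and since $\delta^{2^{n-1}}\beta^{2^{n-2}} \ne 0$ in $k$, this yields $c(n-1) \ge (2i_0 + d_0 - 1)2^{n-2}$, as in the proof of Lemma \ref{lem:evenqsemi2}. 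Combining this with $c(n-1) = c(n-2) + i_{n-1}$ and with the telescoping formula $c(n-1) = i_0 + i_1 + \cdots + i_{n-1}$, together with $i_{n-1} < d_0$ and all the earlier $i_j$ being even with $i_0$ odd, I would extract a numerical contradiction — the right-hand side grows like $2^{n-2}d_0$ while $c(n-1)$ is pinned by the discriminant relations to something smaller, forcing $n = 2$; and then the $n = 2$ quaternion case is handled by hand, showing it reduces exactly to conditions (i), (ii), (iii) of Lemma \ref{lem:evenqsemi2}.

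\textbf{Main obstacle.} The delicate point will be the bookkeeping in the $n = 2$ quaternion-of-order-$8$ case, where the inequalities are tight and the exceptional filtration $G = G_0 = G_1 \ne G_2 = G_3 \ne G_4 = \{e\}$ is genuinely realized: I must show that failing $i_{n-1} \ge d_0$ there is \emph{equivalent} to being in that configuration, not merely implied by it. This requires carefully matching $c(1) = d_0 - 1$ (from Lemma \ref{lem:firststep} when $i_{n-1} = i_1$ is odd) against $i_0 = 2d_1 - d_0 - 1$ and the constraint $d_0 \le d_1 = d_2$, and checking that any deviation (e.g.\ $d_0 < d_1$) pushes $i_0$ large enough that $\chi$ would have to be non-trivial on $1 + \pi_L^{2i_0}O_L$ with $2i_0 > c(1)$, a contradiction. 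I anticipate this is essentially a rerun of the last paragraph of the proof of Lemma \ref{lem:evenqsemi2}, so the real work is organizing the case split cleanly rather than discovering anything new.
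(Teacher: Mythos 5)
Your overall plan is close in spirit, but the central tool you've chosen is the wrong one, and the step that would close the argument is missing.

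The key move in the paper's proof is \emph{not} an application of Corollary \ref{cor:almost} (which takes $h = i_0$), but rather a fresh application of Lemma \ref{lem:identity} with $h = c(n-2)$. Taking $a$ so that $\chi(1 + a\pi_L^{c(n-2)})$ is a primitive fourth root of unity, raising to the $2^M-2$ power produces $-1$, and (\ref{eq:zowie}) then shows that if $\chi(1 + a^2\pi_K^{c(n-2)}) = 1$ then $\chi$ is non-trivial on $1 + \pi_L^{2c(n-2)+d_0-1}O_L$, hence
$$i_{n-1} = c(n-1) - c(n-2) \ge (2c(n-2) + d_0 - 1) - c(n-2) = c(n-2) + d_0 - 1 \ge d_0.$$
The remaining case $\chi(1 + a^2\pi_K^{c(n-2)}) \ne 1$ then forces $c(n-2) \le d_0 - 1$, and the discriminant inequality $c(n-2) \ge i_0 = d_1 + d_2 - d_0 - 1 \ge d_0 - 1$ pins everything down to the tight configuration $n = 2$, $d_0 = d_1 = d_2$, $c(0) = i_0 = d_0 - 1$, $c(1) = 2i_0$ --- exactly Lemma \ref{lem:evenqsemi2}(i)-(iii). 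Notice how the whole proof rests on a bound that compares $c(n-1)$ directly against $c(n-2)$: that linear comparison is what converts into a lower bound on the \emph{difference} $i_{n-1}$.

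Your proposed route, by contrast, reuses the inequality $c(n-1) \ge (2i_0 + d_0 - 1)2^{n-2}$ from Corollary \ref{cor:almost} (with $h = i_0$). This inequality alone bounds $c(n-1)$, not the difference $c(n-1) - c(n-2)$; since $c(n-2) = i_0 + i_1 + \cdots + i_{n-2}$ may itself be large, this gives no control on $i_{n-1}$. In the proof of Lemma \ref{lem:evenqsemi2} this inequality \emph{did} yield a contradiction, but only because Lemma \ref{lem:firststep} supplied the matching upper bound $c(n-1) = 2(d_0 - 1)$ --- and that upper bound is available precisely when $i_{n-1}$ is odd ($c(n-1)$ even). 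In the situation of Lemma \ref{lem:boundquat} you have already reduced (as you note) to $i_{n-1}$ even, so $c(n-1)$ is \emph{odd} and no such pin exists. The passage in your sketch where you speak of ``the evenness of $c(n-1)$'' forcing $\chi|_{K^*}$ non-trivial is a sign of this slip, and the claim that ``$\chi^{2^{n-1}}$ restricted to $1 + \pi_L^{c(n-1)}O_L$ is non-trivial'' is also wrong: $\chi^{2^{n-1}}$ is trivial already on $1 + \pi_L^{i_0+1}O_L$ and $c(n-1) > i_0$, so it is $\chi$ (not $\chi^{2^{n-1}}$) that is non-trivial at level $c(n-1)$. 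So this is not merely a matter of ``organizing the case split cleanly'': one needs the new application of Lemma \ref{lem:identity} at $h = c(n-2)$ to get a $c(n-1)$-versus-$c(n-2)$ comparison, and without that the argument does not close.
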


\begin{proof}
By Lemma \ref{lem:jumpform},
\begin{equation}
\label{eq:okeyi}
i_{n-1} = c(n-1) - c(n-2)
\end{equation}
where $j = c(n-1)$ (resp.\ $j = c(n-2)$) is the largest positive
integer such that $\chi$ (resp.\ $\chi^2$) is non-trivial on
$1 + \pi_L^j O_L$.  Thus $c(n-2)$ is the largest positive 
integer such that there is a constant $a \in k^*$ such that
$\chi(1 + a \pi_L^{c(n-2)}) = \zeta$ is a primitive fourth root of
unity.  It follows from Corollary \ref{cor:firstjump} and (\ref{eq:cldef}) that $c(n-2)$
is odd, so we can let $h = c(n-2)$ in Lemma \ref{lem:identity}.
With the notations of Lemma \ref{lem:identity},
\begin{equation}
\label{eq:zowie2}
(1 + a \pi_L^{c(n-2)})^{2^M - 2} \cdot (1 + a^2 \pi_K^{c(n-2)}) = 1 + a^2 \beta \pi_L^{2 c(n-2) + d_0 - 1} + \pi_L^{2 c(n-2) + d_0}\eta
\end{equation}
for some $\eta \in k[[{\pi_L}]] = O_L$.  Since $M$ is very large,
$\chi(1 + a \pi_L^{c(n-2)})^{2^M - 2} = \zeta^{2^M -2} = -1$
because $\zeta$ is a root of unity of order $4$.  Hence
(\ref{eq:zowie2}) shows 
$$\chi(1 + a^2 \beta \pi_L^{2 c(n-2) + d_0 - 1} + \pi_L^{2 c(n-2) + d_0}\eta) \ne 1 \quad \mathrm{if} \quad \chi(1 + a^2 \pi_K^{c(n-2)}) = 1.$$
This shows that if $\chi(1 + a^2 \pi_K^{c(n-2)}) = 1$ then
$\chi$ is non-trivial on $1 + \pi_L^{2 c(n-2) + d_0 -1}O_L$, so
(\ref{eq:okeyi}) gives
$$i_{n-1} = c(n-1) - c(n-2) \ge 2c(n-2) + d_0 - 1 - c(n-2) \ge d_0$$
as required.  

We now must consider the case in which
$\chi(1 + a^2 \pi_K^{c(n-2)}) \ne 1$.  For quaternionic $G$,
$\chi|_{K^*}$ is the character associated to $L/K$. Hence 
if $\chi(1 + a^2 \pi_K^{c(n-2)}) \ne 1$ then $c(n-2) \le d_0 -1$
since the first jump in the ramification filtration of $\mathrm{Gal}(L/K)$
occurs at $d_0 -1$.  However, Lemmas \ref{lem:jumpform} and
\ref{lem:zapper} show that
\begin{equation}
\label{eq:strict}
c(n-2) = i_0 + i_1 + \cdots + i_{n-2} \ge i_0 = d_1 + d_2  - d_0 - 1 
\end{equation}
where all of the $i_j$ are positive.  Since $d_0, d_1$ and $d_2$
are the exponents of the discriminants of quadratic subextensions
of a Klein four extension of $K$, either all of these integers
are the same or two of them are equal and larger than the third.
Hence we see from (\ref{eq:strict}) that $c(n-2) \ge d_0 -1$,
with strict inequality unless $n = 2$, $d_0 = d_1 = d_2$ and 
$c(n-2) = i_0 = d_0 - 1$.  Suppose now that all of these conditions
hold.  Then $\chi(1 + \pi_L^{c(n-2)}O_L) = \chi(1 + \pi_L^{i_0}O_L)$ contains a primitive fourth root of unity, so
$\chi(1 + \pi_L^{2c(n-2)}O_L) \ne \{1\}$.  If follows
that $c(1) = c(n-1) \ge 2c(n-2) = 2 i_0$, and if $c(1) > 2i_0$
then  (\ref{eq:okeyi}) implies
$$i_{n-1} = c(n-1) - c(n-2) \ge 2i_0 + 1 - i_0 = i_0 + 1 = d_0.$$
Thus the only way in which we could have $i_{n-1} < d_0$
is for $c(1) = 2 i_0$, which shows that all of the conditions
of Lemma \ref{lem:evenqsemi2} hold.
\end{proof}

\begin{lemma}
\label{lem:quatcase}  Suppose hypotheses (i), (ii) and (iii) of Lemma
\ref{lem:evenqsemi2} hold and that $i_0 > 1$.  There is an inclusion of multiplicative groups
\begin{equation}
\label{eq:sortitout}
1 + \pi_L^{2i_0       } O_L \subset ( \mathrm{Norm}_{L/K} L^*) \cdot (L^*)^4 \cdot(1 + \pi_L^{2i_0 +1} O_L)
\end{equation}
\end{lemma}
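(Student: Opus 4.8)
The plan is to prove the inclusion
\[
1 + \pi_L^{2i_0} O_L \subset (\mathrm{Norm}_{L/K} L^*)\cdot (L^*)^4 \cdot (1 + \pi_L^{2i_0+1} O_L)
\]
under hypotheses (i)--(iii) of Lemma~\ref{lem:evenqsemi2}, so $n=2$, $G$ is the quaternion group of order $8$, $d_0 = d_1 = d_2 = i_0+1$, and $c(1)=c(n-1)=2i_0$; recall also $i_0$ is odd and $i_0>1$ is assumed here. Since $1+\pi_L^{2i_0}O_L$ is topologically generated modulo $1+\pi_L^{2i_0+1}O_L$ by the single element $1+a\pi_L^{2i_0}$ for $a$ running over $k$, it suffices to show that for each $a\in k^*$ there exist $w\in L^*$ and $v\in L^*$ with
\[
1 + a\pi_L^{2i_0} \equiv \mathrm{Norm}_{L/K}(w)\cdot v^4 \pmod{1+\pi_L^{2i_0+1}O_L}.
\]
The idea is to produce the right-hand side from a norm of a unit congruent to $1 + b\pi_L^{i_0}$ for suitable $b$, using the explicit form of $\sigma$ acting on $\pi_L$ from Lemma~\ref{lem:whatup}.

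**The key computation.** First I would compute $\mathrm{Norm}_{L/K}(1 + b\pi_L^{i_0}) = (1+b\pi_L^{i_0})\cdot \sigma(1+b\pi_L^{i_0})$ for $b\in k$. Using $\sigma(\pi_L) = \pi_L(1 + \beta\pi_L^{d_0-1} + \pi_L^{d_0}\gamma)$ with $d_0 = i_0+1$, one gets
\[
\sigma(1+b\pi_L^{i_0}) = 1 + b\pi_L^{i_0}(1+\beta\pi_L^{i_0}+\cdots)^{i_0} = 1 + b\pi_L^{i_0} + b\,i_0\,\beta\,\pi_L^{2i_0} + \cdots,
\]
and since $i_0$ is odd, in characteristic $2$ the factor $i_0$ is a unit, so the product is
\[
\mathrm{Norm}_{L/K}(1+b\pi_L^{i_0}) = 1 + b^2\pi_L^{2i_0} + b\beta\,\pi_L^{2i_0} + (\text{higher order}),
\]
where I have used $(1+b\pi_L^{i_0})^2 = 1+b^2\pi_L^{2i_0}$. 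So modulo $1+\pi_L^{2i_0+1}O_L$ this norm is $1 + (b^2 + b\beta)\pi_L^{2i_0}$. On the other hand, $(1+c\pi_L^{i_0/2})^4$ makes no sense since $i_0$ is odd, so instead I would use fourth powers of units of the form $1 + c\pi_L^{i_0}$ in $K^*$ — but $\pi_L^{i_0}\notin K$; rather the relevant fourth powers come from $(1 + c\pi_L^{2i_0})$ being a square times something, which is automatically in $1+\pi_L^{2i_0}O_L$... Here I would more carefully track which residue classes in $k$ are hit: the map $b\mapsto b^2 + b\beta$ is an additive polynomial (Artin--Schreier type, since squaring is additive in char $2$) from $k$ to $k$, and its image is a hyperplane. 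The contribution of $(L^*)^4$: note $(1+c\pi_L^h)^4 = 1 + c^4\pi_L^{4h}$, which only helps when $4h \le 2i_0$, i.e. $h\le i_0/2$; combined with norms of such, one enlarges the reachable set. The claim is that norms alone give an index-$2$ subgroup of $k$ (via $b\mapsto b^2+b\beta$), and the fourth powers supply the missing coset. I would verify the missing coset is covered using $c(1) = 2i_0$: this says $\chi$ is non-trivial exactly on $1+\pi_L^{2i_0}O_L$ but trivial on $1+\pi_L^{2i_0+1}O_L$; by local class field theory (\cite[\S XV.2]{corps}), $1+\pi_L^{2i_0}O_L$ maps onto $H^{2i_0} \cong \mathbb{Z}/2$, and $\ker(\chi)\cap(1+\pi_L^{2i_0}O_L)$ has index $2$, forcing the norm-plus-fourth-power subgroup — which lies in $\ker\chi$ by Lemma~\ref{lem:thetypes} applied to the quaternionic character (whose restriction to $K^*$ has kernel $\mathrm{Norm}_{L/K}L^*$, and which kills fourth powers) — to be exactly all of $1+\pi_L^{2i_0}O_L$ modulo the next layer, or to be index $2$ and then the discrepancy is absorbed.

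**The main obstacle.** The delicate point will be combining the norm image with $(L^*)^4$ correctly when $i_0$ is odd: $(L^*)^4 \cdot (1+\pi_L^{2i_0+1}O_L)$ contributes $1+\pi_L^{4h}O_L$-type elements, and one needs $4h$ to land in the right degree; since $i_0$ is odd, $4\mid 2i_0$ fails, so fourth powers of degree-$\lceil i_0/2\rceil$ elements overshoot or undershoot. The resolution I anticipate is that we don't need $(L^*)^4$ to contribute at degree $2i_0$ at all — rather, the norm map $b\mapsto b^2+b\beta$ with $\beta\ne 0$ has image of index $2$ in $(k,+)$, and the \emph{one} missing class is handled by observing that the full group $1+\pi_L^{2i_0}O_L$ maps under $\chi$ to $\mathbb{Z}/2$ and that the subgroup on the right-hand side of the asserted inclusion is contained in $\ker\chi$ of index $\le 2$ in $1+\pi_L^{2i_0}O_L$; then a dimension/index count using $c(1)=2i_0$ exactly (which pins down that $\ker\chi \cap (1+\pi_L^{2i_0}O_L)$ has index $2$, not $1$) forces equality of $\ker\chi\cap(1+\pi_L^{2i_0}O_L)$ with $(\mathrm{Norm}_{L/K}L^*)\cdot(L^*)^4 \cdot(1+\pi_L^{2i_0+1}O_L)$ intersected appropriately — but since $1+a\pi_L^{2i_0}$ for the ``bad'' $a$ lies \emph{outside} $\ker\chi$, while the asserted inclusion only claims containment of the full group $1+\pi_L^{2i_0}O_L$, I will need to show the right-hand side is genuinely larger than $\ker\chi$. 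This means $(\mathrm{Norm}_{L/K}L^*)\cdot(L^*)^4$ is \emph{not} contained in $\ker\chi$, i.e. some fourth power or norm pairs non-trivially with $\chi$ — and that is exactly where the quaternion relation $\chi\circ\mathrm{Norm}_{L/K} = \chi^{2^{n-1}} = \chi^2$ (rather than being trivial) enters: for $n=2$, $\chi^2$ has order $2$, so $\chi$ restricted to $\mathrm{Norm}_{L/K}L^*$ is non-trivial, supplying the missing coset. I would close by checking that $i_0>1$ guarantees the relevant higher-order ``$\cdots$'' terms above genuinely live in $1+\pi_L^{2i_0+1}O_L$ and don't interfere.
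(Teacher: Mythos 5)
Your opening computation is correct and is, in essence, the right idea --- in fact it is a cleaner version of what the paper does. With $d_0-1=i_0$ (hypothesis (ii)) and $\sigma(\pi_L)=\pi_L(1+\beta\pi_L^{i_0}+\cdots)$ from Lemma~\ref{lem:whatup}, one has in characteristic $2$ (using that $i_0$ is odd)
\[
\mathrm{Norm}_{L/K}(1+b\pi_L^{i_0})\;\equiv\;1+(b^2+b\beta)\pi_L^{2i_0}\pmod{1+\pi_L^{2i_0+1}O_L},
\]
and $b\mapsto b^2+b\beta=\beta^2\big((b/\beta)^2+(b/\beta)\big)$ ranges over $\beta^2\cdot\wp(k)$, an Artin--Schreier image. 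This already yields $1+\pi_L^{2i_0}O_L\subset(\mathrm{Norm}_{L/K}L^*)\cdot(1+\pi_L^{2i_0+1}O_L)$, a statement that is even stronger than the lemma (no $(L^*)^4$ needed). The paper reaches the same type of conclusion with a more elaborate ansatz: it normalizes $\pi_L$ via Mezard so that effectively $\beta=1$, takes norms of $1+\pi_L+\zeta^2\pi_L^{i_0-1}$, and multiplies by fourth powers of $(1+\zeta\pi_L^{(i_0-1)/2})^{-1}$ to arrive at coefficient $\zeta^2-\zeta^4=\wp(\zeta^2)$ --- again an Artin--Schreier image. Your worry that $(L^*)^4$ must contribute at degree $2i_0$ (impossible since $2i_0\not\equiv 0\bmod 4$ for $i_0$ odd) is a red herring: the lemma does not require each factor on the right to be active at that degree, and in the simplest version the norms alone suffice. (Both arguments implicitly invoke surjectivity of $\wp$ on the residue field; that issue, such as it is, is shared by the paper and is not a defect introduced by you.)

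The second half of your sketch, however, cannot work, and it rests on a concrete factual error. You assert that $\chi$ restricted to $\mathrm{Norm}_{L/K}(L^*)$ is nontrivial. In the quaternionic case Definition~\ref{def:types}(b) (see also Lemma~\ref{lem:thetypes}) says the opposite: $\chi|_{K^*}$ is precisely the quadratic character $\epsilon_0$ whose kernel \emph{is} $\mathrm{Norm}_{L/K}(L^*)$, so $\chi$ is trivial on norms --- as Corollary~\ref{cor:quatcasecor} states explicitly. Since $\chi$ has order $4$, it also kills $(L^*)^4$, and by hypothesis (iii) it is trivial on $1+\pi_L^{2i_0+1}O_L$; so $\chi$ vanishes on the entire right-hand side of~(\ref{eq:sortitout}) while, again by (iii), it is nontrivial on the left-hand side. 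This means no $\chi$-based index argument can \emph{establish} the inclusion; using $\chi$ in the proof of Lemma~\ref{lem:quatcase} is circular, because the whole purpose of the lemma is to be an explicit algebraic fact which, once combined with the $\chi$-vanishing on the right-hand side, yields the contradiction carried out in Corollary~\ref{cor:quatcasecor} (forcing $i_0=1$). You should have stayed with your norm computation and pushed it through, rather than pivoting to a class-field-theoretic count built on an incorrect input.
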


\begin{proof}  By Lemma \ref{lem:di2}, $d_0$ is even.  We have
$$i_0 = d_1 + d_2 - d_0 - 1 = d_0 - 1$$
by Lemma \ref{lem:zapper} and hypothesis (ii) of Lemma \ref{lem:evenqsemi2}.
By \cite[Lemme 5.1.1]{MezardThesis} and the paragraph following that lemma, we can choose the uniformizer $\pi_L$ of $L$ such
that 
\begin{equation}
\label{eq:sigmaact}
\sigma(\pi_L) = \frac{\pi_L}{(1 + \pi_L^{i_0})^{1/i_0       }}.
\end{equation}
where $\sigma \in G$ projects to the non-trivial element of $\mathrm{Gal}(L/K)$.

For $i \ge 1$ the binomial theorem for fractional exponents now gives
\begin{equation}
\label{eq:urp2}
\sigma(\pi_L^i) \equiv \pi_L^i \ (\mathrm{resp.}\ \pi_L^i (1 + \pi_L^{i_0})\,)\ \mathrm{mod} \ \pi_L^{2i_0   +1} O_L \quad 
\mathrm{if}\quad 2\, |\, i \ (\mathrm{resp.} \ 2 {\not |}\, i\, ).
\end{equation}

Suppose now that $\zeta \in k$.  
Define
\begin{equation}
\label{eq:hdef}
h(\zeta) =  (1 + \pi_L + \zeta^2 \pi_L^{i_0   -1}) \cdot \sigma(1 + \pi_L + \zeta^2 \pi_L^{i_0   -1}) = \mathrm{Norm}_{L/K}(1 + \pi_L + \zeta^2 \pi_L^{i_0   -1}). 
\end{equation}
Using  $i_0    > 1$  and the fact that $i_0    - 1$ is even, we have from (\ref{eq:urp2}) the following congruences mod $\pi_L^{2i_0   +1} O_L$:
\begin{eqnarray}
\label{eq:lineup}
h(\zeta)  &\equiv&  (1 + \pi_L + \zeta^2 \pi_L^{i_0   -1}) (1 + \pi_L + \pi_L^{i_0   +1} + \zeta^2 \pi_L^{i_0   -1})\nonumber \\
&\equiv & 1 + \pi_L^2 + \zeta^4 \pi_L^{2(i_0   -1)} + \pi_L^{i_0   +1} + \pi_L^{i_0   +2} + \zeta^2  \pi_L^{2i_0   }\\
&\equiv& h(0) + \zeta^4 \pi_L^{2(i_0   -1)} + \zeta^2 \pi_L^{2i_0   }\nonumber 
\end{eqnarray}
Here 
\begin{equation}
\label{eq:inversecong}
h(0)^{-1} = \left ( 1 + \pi_L^2 +  \pi_L^{i_0   +1} + \pi_L^{i_0   +2} \right )^{-1} \equiv 1 - \pi_L^2 \quad \mathrm{mod} \quad 
\pi_L^3 O_L.
\end{equation}
since $i_0    \ge 3$.  Thus (\ref{eq:lineup}) gives congruences
\begin{eqnarray}
\label{eq:whatthef}
h(0)^{-1} h(\zeta) &\equiv& 1 + h(0)^{-1} (\zeta^4 \pi_L^{2(i_0   -1)} + \zeta^2 \pi_L^{2i_0   })\quad \mathrm{mod} \quad 
\pi_L^{2i_0   +1} O_L\nonumber\\
&\equiv& 1 +  (1 - \pi_L^2) (\zeta^4 \pi_L^{2(i_0   -1)} + \zeta^2 \pi_L^{2i_0   })\quad \mathrm{mod} \quad 
\pi_L^{2i_0   +1} O_L\\
&\equiv& (1 + \zeta^4 \pi_L^{2(i_0   -1)}) \cdot (1 + (\zeta^2 - \zeta^4) \pi_L^{2i_0   })\quad \mathrm{mod} \quad 
\pi_L^{2i_0   +1} O_L\nonumber
\end{eqnarray}
 where the last congruence holds because $$2(i_0   -1) + 2i_0    = 4i_0    - 2 \ge 2i_0    + 1$$
since $i_0    \ge 3$.  Because $i_0     \ge 3$ is odd, $(i_0   -1)/2 \ge 1$ is an integer.  Hence (\ref{eq:whatthef}) gives
\begin{equation}
\label{eq:zipup}
(1 + \zeta \pi_L^{(i_0   -1)/2})^{-4} \cdot h(0)^{-1} \cdot h(\zeta) \equiv 1 + (\zeta^2 - \zeta^4) \pi_L^{2i_0   } \quad \mathrm{mod} \quad \pi_L^{2i_0    + 1} O_L
\end{equation} 
Now for each $\lambda \in k$, there is a $\zeta \in K$ such that $\zeta^2 - \zeta^4 = \lambda$.   By (\ref{eq:hdef}), $h(0)^{-1} \in \mathrm{Norm}_{L/K}(L^*)$.  We
conclude from (\ref{eq:zipup}) that 
$$1 + \pi_L^{2i_0   }O_L \subset (L^*)^4 \cdot \mathrm{Norm}_{L/K}(L^*) \cdot (1 + \pi_L^{2i_0   +1} O_L)$$
which proves Lemma \ref{lem:quatcase}.  
\end{proof}

\begin{cor}
\label{cor:quatcasecor}  Hypotheses (i), (ii) and (iii) of Lemma
\ref{lem:evenqsemi2} imply $i_0 = 1$.  
\end{cor}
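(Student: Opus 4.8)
The plan is to derive a contradiction from assuming $i_0 > 1$ together with hypotheses (i)--(iii) of Lemma~\ref{lem:evenqsemi2}, thereby forcing $i_0 = 1$. Since we are in the setting of Corollary~\ref{cor:quatcasecor}, hypotheses (i)--(iii) of Lemma~\ref{lem:evenqsemi2} say that $G$ is the quaternion group of order $8$ (so $n = 2$), that $d_0 = d_1 = d_2 = i_0 + 1$, and that $c(1) = c(n-1) = 2 i_0$ is the largest integer for which $\chi$ is non-trivial on $1 + \pi_L^{c(1)} O_L$. Assuming $i_0 > 1$ puts us exactly in the hypotheses of Lemma~\ref{lem:quatcase}, which then gives the inclusion
\begin{equation}
\label{eq:reusenorm}
1 + \pi_L^{2 i_0} O_L \subset (\mathrm{Norm}_{L/K} L^*) \cdot (L^*)^4 \cdot (1 + \pi_L^{2 i_0 + 1} O_L).
\end{equation}

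First I would translate \eqref{eq:reusenorm} into a statement about $\chi$. The character $\chi: L^* \to \mathbb{C}^*$ has order $2^n = 4$, so $(L^*)^4 \subset \ker \chi$. Moreover $\chi|_{K^*}$ is the quadratic character $\epsilon_0$ associated to $L/K$, whose kernel is precisely $\mathrm{Norm}_{L/K}(L^*)$; hence $\chi$ is trivial on $\mathrm{Norm}_{L/K}(L^*)$. Therefore \eqref{eq:reusenorm} implies that $\chi$ is trivial on $1 + \pi_L^{2 i_0} O_L$ modulo its values on $1 + \pi_L^{2 i_0 + 1} O_L$; more precisely, for any $v \in 1 + \pi_L^{2 i_0} O_L$ we may write $v = w \cdot (L^*)^4\text{-term} \cdot v'$ with $w \in \mathrm{Norm}_{L/K}(L^*)$ and $v' \in 1 + \pi_L^{2 i_0 + 1} O_L$, so $\chi(v) = \chi(v')$. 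This says exactly that $\chi$ restricted to $1 + \pi_L^{2 i_0} O_L$ factors through $1 + \pi_L^{2 i_0 + 1} O_L$, i.e. $\chi$ is trivial on the quotient $(1 + \pi_L^{2 i_0} O_L)/(1 + \pi_L^{2 i_0 + 1} O_L)$.

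But hypothesis (iii) of Lemma~\ref{lem:evenqsemi2} says that $c(1) = 2 i_0$ is the \emph{largest} integer $j$ such that $\chi$ is non-trivial on $1 + \pi_L^j O_L$; equivalently, using the description via the local Artin map in Lemma~\ref{lem:jumpform} and the isomorphism in Corollary~\ref{cor:firstjump}, the induced map $(1 + \pi_L^{c(1)} O_L)/(1 + \pi_L^{c(1) + 1} O_L) \to H^{c(1)}/H^{c(1)+1}$ is an isomorphism onto a non-trivial group, and $\chi^{p^{n-1}} = \chi^2$ is non-trivial on the source. This directly contradicts the conclusion of the previous paragraph that $\chi$ (and hence also its power $\chi^2$, but already $\chi$ itself suffices) is trivial on $(1 + \pi_L^{2 i_0} O_L)/(1 + \pi_L^{2 i_0 + 1} O_L)$. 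Since the assumption $i_0 > 1$ led to a contradiction, we conclude $i_0 = 1$, which is the statement of Corollary~\ref{cor:quatcasecor}.

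The main obstacle I anticipate is bookkeeping about which character is being evaluated where: one must be careful that the relevant non-vanishing at level $c(1)$ is for $\chi$ itself (and $\chi^{2^{n-1}}$) as a character of $L^*$ via the Artin map, while the vanishing coming from \eqref{eq:reusenorm} uses that $(L^*)^4 \subset \ker\chi$ (which needs $n = 2$, guaranteed by hypothesis (i)) and that $\mathrm{Norm}_{L/K}(L^*) \subset \ker\chi$ (which needs $\chi|_{K^*} = \epsilon_0$, guaranteed by quaternionic type via Lemma~\ref{lem:thetypes}). Once these identifications are lined up, the contradiction is immediate; there is essentially no further computation, since Lemma~\ref{lem:quatcase} has already done the hard multiplicative-group manipulation. $\square$
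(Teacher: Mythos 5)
Your proof is correct and follows essentially the same route as the paper: assuming $i_0>1$, invoke Lemma~\ref{lem:quatcase}, then observe that $\chi$ is trivial on $(L^*)^4$ (since $\chi$ has order $4$) and on $\mathrm{Norm}_{L/K}(L^*)$ (since $\chi|_{K^*}=\epsilon_0$ by Lemma~\ref{lem:thetypes}), so $\chi$ must be trivial on $1+\pi_L^{2i_0}O_L$ modulo $1+\pi_L^{2i_0+1}O_L$, contradicting hypothesis~(iii) that $c(1)=2i_0$ is the largest $j$ with $\chi$ non-trivial on $1+\pi_L^{j}O_L$. One small slip worth fixing: in your ``equivalently\ldots'' sentence you assert that $\chi^{p^{n-1}}=\chi^2$ is non-trivial on $(1+\pi_L^{c(1)}O_L)/(1+\pi_L^{c(1)+1}O_L)$, but by Lemma~\ref{lem:jumpform} with $\ell=n-1$ the power that remains non-trivial at level $c(n-1)$ is $\chi^{p^{n-n}}=\chi$ itself, not $\chi^2$ ($\chi^2$ already dies past level $c(0)=i_0$); since your next clause correctly notes that ``$\chi$ itself suffices,'' the argument is unaffected, but the parenthetical should be corrected.
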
 

\begin{proof}Suppose $i_0 > 1$.  
By part (iii) of Lemma \ref{lem:evenqsemi2},
$\chi$ is not trivial on $1 + \pi^{2i_0   }O_L$  but trivial on $1 + \pi^{2i_0   +1}O_L$.  Since
$G$ is the quaternion group of order $8$, the character $\chi$ has order $2^n = 4$.  By 
Lemma \ref{lem:thetypes}, $\chi$ is trivial on $\mathrm{Norm}_{L/K} (L^*)$.  Hence
$\chi$ is trivial on $( \mathrm{Norm}_{L/K} L^*) \cdot (L^*)^4 \cdot(1 + \pi_L^{2i_0   +1} O_L)$
so $\chi$ is trivial on $1 + \pi^{2i_0   }O_L$ by  Lemma \ref{lem:quatcase}, which is a contradiction.  This proves that we must have
$i_0    = 1$.   
\end{proof}

\medbreak
\noindent {\bf Completion of the proof of Proposition \ref{prop:evenqsemi}.}
\medbreak
We begin with statement (i) of the proposition.  

Suppose first that $i_{n-1}$ is odd.  By Lemma \ref{lem:evenqsemi2}, we can reduce the case in which $G$ satisfies the
hypotheses of Lemma \ref{lem:evenqsemi2}.   By Corollary \ref{cor:quatcasecor},  $i_0    = 1$.  
For $i = 0, 1, 2$, let $H_i = \mathrm{Gal}(N/L_i)$ where $L_i$ is the
quadratic extension of $K$ described in Lemma~\ref{lem:di2}, so that $H = H_0$.  The first 
(and only) jump in the upper (and lower) ramification filtration on $G/H_i$ occurs at $d_i - 1$.  
By Lemma \ref{lem:evenqsemi2}, $d_0 = d_1 = d_2$.  Therefore if $\nu \in \mathbb{R}$
is a jump in the ramification filtration of $G/H_i$ for one $i$, it is a jump
in this filtration for all $i$. 
The image of the higher ramification group $G^\nu$
in $G/H_i$  is equal to the ramification group
$(G/H_i)^\nu$.   It follows that $G^\nu \cap H_i$ has order independent of $i \in \{0,1,2\}$.
Hence $G^\nu$ is either $G$, $\{e\}$ or the center $\C(G) = \{e,\tau^2\}$ of $G$.  By Lemma \ref{lem:evenqsemi2}
and the definition of $i_0   $ in Lemma  \ref{lem:zapper}, the jumps in the ramification
filtration of $H_0 = H = \mathrm{Gal}(N/L)$ occur at the integers $i_0    = 1$ and at
$2i_0    = 2$.  Hence by the Hasse-Arf Theorem (see Lemma \ref{lem:jumps}), the jumps in the lower numbering
of the ramification filtration of $H$ occur at $1$ and at $1 + 2 = 3$.  Since
each ramification group is either $G$, $\{e\}$ or $\C(G)$, we conclude
that the lower numbering of the ramification filtration of $G$ is
\begin{equation}
\label{eq:ohby}
G = G_0 = G_1 \supset \C(G) = G_2 = G_3 \supset G_4 = \{e\}.
\end{equation}

Suppose now that $G$ is the quaternion group of order $8$
and  $G_4 = \{e\}$.  
From $H_4 = \{e\}$ and Lemma \ref{lem:jumps}
we must have $H = H_0 = H_1 \supset H_2 = 2H = H_3 \supset H_4 = \{e\}$.
Since this holds true for each of the cyclic subgroups $H$ of index $2$ in
$G$, the lower ramification filtration of $G$ must be given by (\ref{eq:ohby}). 
Since $\# G = 8$, we have $n = 2$.  Lemmas \ref{lem:jumpform} and
\ref{lem:jumps} that $i_0 = i_1 = 1$, so that $i_{n-1} = i_1$ is odd.

We now check that if $G$ has order $8$ and ramification filtration
(\ref{eq:ohby}) then the Bertin obstruction does not vanish.  Let $T = \C(G)$.  The set $S(T)$ of cyclic subgroups
of $G$ which contain $T$ is $\{T,H_0,H_1,H_2\}$.  The constant
$b_T$ appearing in Theorem \ref{thm:nontrivcase}
is thus
\begin{eqnarray}
\label{eq:theformlong}
b_T &=& \frac{1}{[\N_G(T):T]} \left( -\delta(T,\{e\}) a_\phi(1) + \sum_{\Gamma \in S(T)} \mu([\Gamma:T]) \iota(\Gamma) \right ) 
= -\frac{1}{2}.
\end{eqnarray}
Proposition \ref{prop:special}   now shows that the Bertin obstruction associated
to the given action of $G$ on $N$ does not vanish.  This completes
the proof of part (i) of Proposition \ref{prop:evenqsemi}.

We now suppose that as in part (ii) of Proposition \ref{prop:evenqsemi},
$G$ is a generalized quaternion group and $i_{n-1}$ is even.  We claim that
not all of hypotheses (i), (ii) and (iii) Lemma \ref{lem:evenqsemi2} can hold.  Suppose
to the contrary that all of these hypotheses do hold.  Thus $n = 2$, $c(1) = 2i_0$, 
and by  Corollary \ref{cor:quatcasecor},   $i_0 = 1$.   Thus Lemma \ref{lem:jumpform}
gives $i_{n-1} = i_1 = c(1) - c(0) = c(1) - i_0 = i_0 = 1$.  This contradicts our assumption that
$i_{n-1}$ is even, so not all of Hypotheses (i),(ii)
and (iii) of Lemma \ref{lem:evenqsemi2} hold.  Therefore Lemma \ref{lem:boundquat} proves $i_{n-1} \ge d_0$. Hence  Corollaries
\ref{cor:closer}(c), \ref{cor:closertwo} and \ref{cor:closerthree} show that the KGB obstruction vanishes.

To prove the final statement (iii) in Proposition \ref{prop:evenqsemi},
we know by Lemma \ref{lem:evenqsemi2}  that $i_{n-1}$
is even if $G$ is semi-dihedral.  Since the $d_i$ are all even, we conclude from Corollary \ref{cor:closer}(d) that the Bertin
obstruction does not vanish if 
\begin{equation}
\label{eq:dumbcon}
\quad d_0 + d_1 + d_2 \equiv d_0 + d_1 - d_2 \not \equiv 0 \quad \mathrm{mod} \quad 4\mathbb{Z}. \quad \hbox{\ \ \ \ \ \ \ \ \ \ \ \ \ \ \ \ \ \ \ \ \ \ \ \ \ \ \ \ } \square 
\end{equation}


\section{The group $\mathrm{SL}_2(3)$ when $p = 2$.}
\label{s:sl23}
\setcounter{equation}{0}

\begin{prop}
 \label{prop:qual1}
 Suppose $p = 2$ and that $G$ is isomorphic to
 $\mathrm{SL}_2(3)$.     A $2$-Sylow subgroup $P$ of $G$ is normal and isomorphic to a quaternion
 group of order $8$.  The Bertin obstruction associated to an injection $\phi:G \to \mathrm{Aut}_k(k[[t]])$
 vanishes if and only if the Bertin obstruction associated to the restriction $\phi_P$ of $\phi=\phi_G$ from $G$ to $P$ vanishes.  These two
 equivalent conditions hold if and only if the KGB obstructions of both $\phi$ and $\phi_P$ vanish.
 \end{prop}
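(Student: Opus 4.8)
The structural facts about $G = \mathrm{SL}_2(3)$ are standard: the $2$-Sylow subgroup $P$ is the quaternion group $Q_8$, it is normal, and $G/P \cong C_3$; moreover $C_3$ acts on $P$ by cyclically permuting the three subgroups $\langle i\rangle,\langle j\rangle,\langle k\rangle$ of order $4$, fixing only the center $\C(P)$. I would begin by recording this and by enumerating a set $\mathcal C$ of representatives for conjugacy classes of cyclic subgroups of $G$: the trivial group, the center $Z = \C(P)$ (order $2$), one representative of the three conjugate cyclic subgroups of order $4$, a representative of the cyclic subgroups of order $3$, and a representative of the cyclic subgroups of order $6$ (note $G$ has elements of order $6$). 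For each such $T$ I would compute $\N_G(T)$ and $\C_G(T)$ inside $G$.

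\textbf{The prime-to-$p$ subgroups.} The first main step is to dispose of the cyclic subgroups $T$ that are not $2$-groups, using Proposition~\ref{prop:subtle} (equivalently Proposition~\ref{prop:infoprime}). Here condition (b) of Proposition~\ref{prop:infoprime} must be checked for $G = \mathrm{SL}_2(3)$: for $t$ of order $3$ one has $\C_G(t) = \C_P(t)\times\langle t\rangle$ with $\C_P(t) = \C_P(C_3) = Z$, which is cyclic of order $2$, so $\C_G(t)\cong C_6$ is cyclic, as required; and for $t$ of order $6$ the same holds since $\langle t\rangle$ itself is then a maximal cyclic subgroup. Thus Proposition~\ref{prop:infoprime}(a) applies and forces $b_{T} \in \{0,1\}$ for every cyclic $T$ that is not a $2$-group, regardless of $\phi$. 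Concretely, for $T$ of order $3$ one checks $\psi(\{e\},\C_G(T)/T) = \psi(\{e\},C_2) = 1 + \mu(2) = 0$, giving $b_T = 0$; for $T$ of order $6$ one has $\N_G(T) = T$, giving $b_T = 1$. So the non-$2$-group constants $b_T$ are automatically non-negative integers and impose no condition on $\phi$.

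\textbf{Reduction to $P$.} The heart of the argument is then Theorem~\ref{thm:reducetop} applied to $G = \mathrm{SL}_2(3)$ with $P = Q_8$ and $C = C_3$. I would verify that conditions (b), (c) and (d) of that theorem hold \emph{automatically} for this particular $G$, i.e.\ independently of $\phi$ beyond the hypothesis that $\phi_P$ has vanishing Bertin obstruction. Condition (b) is exactly the cyclicity of $\C_G(t)$ just checked. For conditions (c) and (d) the only non-trivial cyclic $2$-subgroups $T$ of $P$ are $T = Z$ (the center, normal in $G$) and $T$ of order $4$. For $T = Z$: $Z$ is central in $G$, so $\N_C(Z) = \C_C(Z) = C$ is non-trivial of order $3$; condition (c)(i) and (d)(i) then reduce to congruences for $b'_{Z,G}$ and $b''_{Z,G}$ mod small integers, and these are purely group-theoretic and can be computed from the subgroup lattice (every cyclic $\Gamma\supsetneq Z$ inside $P$, plus the prime-to-$p$ overgroups); condition (c)(ii) is a lower bound on $b_{Z,P}$ that I expect to hold whenever $b_{Z,P}\ge 0$ is an integer, possibly after checking the sign of $b'_{Z,G}$. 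For $T$ of order $4$: $\N_C(T)$ is trivial since $C_3$ moves $T$, so condition (d)(ii) asks that $\C_C(T)$ be trivial (true) and that $b'_{T,G}\equiv 0$ mod $1$ (vacuous), with the Teichmüller/faithfulness clause vacuous because $\N_C(T) = 1$; condition (c)(i) is $b'_{T,G}\equiv 0$ mod $[\N_P(T):T] = 2$, which is a fixed integer one computes once and for all, and (c)(ii) is again an inequality that holds for large enough wild ramification and in fact follows from $0\le b_{T,P}\in\mathbb Z$ together with the computed value of $b'_{T,G}$. Granting these computations, Theorem~\ref{thm:reducetop} yields: the Bertin obstruction of $\phi$ vanishes $\iff$ that of $\phi_P$ vanishes. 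For the KGB statement I would invoke Theorem~\ref{thm:functorial} (KGB vanishing for $\phi$ implies it for the quotient $\phi^{C_3}$, not quite what is needed) — instead, the cleanest route is: KGB vanishing $\Rightarrow$ Bertin vanishing always (Theorem~\ref{thm:KGBtwo}); and for $G = \mathrm{SL}_2(3)$ and for $P = Q_8$, Bertin vanishing $\Rightarrow$ KGB vanishing must be proved directly by constructing the required $G$-set $S$ with the generation-and-product condition of Theorem~\ref{thm:KGBtwo}(b), building on Corollary~\ref{cor:closertwo} for $P = Q_8$ and then extending the generating set to $G$ using a preimage in $G$ of a generator of $C_3$.

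\textbf{Main obstacle.} The main obstacle is the last point: showing that the vanishing of the Bertin obstruction of $\phi$ (equivalently $\phi_P$) actually forces the \emph{KGB} obstruction of $\phi$ to vanish, i.e.\ producing the generators $g_t$ whose product has the prescribed order $[G:G_1]$ and which generate $G$. For $P = Q_8$ this is Corollary~\ref{cor:closertwo}; lifting it to $G = \mathrm{SL}_2(3)$ requires arranging that the "extra" cyclic subgroups of order $3$ and $6$ contribute generators mapping onto $C_3 = G/P$ and adjusting the product by central elements, exactly as in the $p=2$ part of the proof of Corollary~\ref{cor:closertwo}. The congruence $b_H\equiv b_{D_1}\equiv b_{D_2}$ mod $2$ used there for $Q_8$ will again be the key numerical input, and I would check that the $C_3$-contribution ($b_T = 0$ for order $3$, $b_T = 1$ for order $6$) does not disturb the parity bookkeeping. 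Once that is in hand, the chain "$\phi$ KGB-vanishes $\Rightarrow$ $\phi$ Bertin-vanishes $\Leftrightarrow$ $\phi_P$ Bertin-vanishes $\Leftrightarrow$ $\phi_P$ KGB-vanishes (Cor.~\ref{cor:closertwo}) $\Rightarrow$ $\phi$ KGB-vanishes" closes up and proves all equivalences stated in Proposition~\ref{prop:qual1}.
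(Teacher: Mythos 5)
Your framework — deploy Theorem~\ref{thm:reducetop} and verify conditions (b), (c), (d) are automatic for $G=\mathrm{SL}_2(3)$ once the Bertin obstruction of $\phi_P$ vanishes — is exactly the paper's framework, and conditions (b), (c)(i), (d) are indeed purely group-theoretic and verified by the subgroup-lattice computations you describe ($b'_{\C(G),G}=-4$, $b'_{\Gamma(j),G}=0$, $b''_{\C(G),G}=-6$). But there is a genuine gap at condition (c)(ii) for $T=\C(G)$, and your phrasing ``a lower bound on $b_{Z,P}$ that I expect to hold whenever $b_{Z,P}\ge 0$ is an integer'' is where the proof breaks.

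Concretely, condition (c)(ii) for $T=\C(G)$ reads $4\,b_{\C(G),P}\ge -b'_{\C(G),G}=4$, i.e.\ $b_{\C(G),P}\ge 1$. This does \emph{not} follow from $b_{\C(G),P}\ge 0$. Writing $i_0,i_1$ for the jumps of a cyclic order-$4$ subgroup $H\subset P$ as in Lemma~\ref{lem:jumps}, one finds $4\,b_{\C(G),P}=2i_1-2i_0-2$, and the Bertin conditions on $\phi_P$ alone (Corollary~\ref{cor:closer}: $i_0$ odd, $i_1$ even, $i_1\ge i_0$) only give $i_1\ge i_0+1$, hence $4\,b_{\C(G),P}\ge 0$ — which is exactly why $Q_8$ itself is \emph{not} a Bertin group. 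The needed strengthening $i_1\ge i_0+3$ comes from the fact that $\phi$ is defined on all of $G$, not just $P$: the center $\C(G)$ lies in the cyclic order-$6$ subgroup $J(1)\subset G$, and comparing the lower jump of $\C(G)$ computed via $H\subset P$ (giving $i_0+2i_1$) with the one computed via $J(1)$ (giving $3j_1$ with $j_1\in\mathbb Z$ by Hasse--Arf, since $J(1)$ is abelian) forces $3\mid (i_0+2i_1)$, hence $i_1\equiv i_0+3\pmod 6$ and so $i_1=i_0+3+6h'$ with $h'\ge 0$. This congruence is the crux of the proposition — it is precisely what makes $\mathrm{SL}_2(3)$ behave better than its Sylow $Q_8$ — and your proposal does not supply it.

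A secondary but real omission: for the converse direction (Bertin of $\phi$ implies KGB of $\phi$), you plan to ``extend the generating set to $G$'' and ``check the parity bookkeeping,'' but the paper's actual argument hinges on a specific group-theoretic fact you don't invoke — that in $\mathrm{PSL}_2(3)\cong A_4$ the product of an element of order $4$ (meaning: of order $2$ downstairs, coming from an order-$4$ generator of $\Gamma(1)$) and an element of order $3$ has order $3$, which together with $b_{J(1),G}=1$ and $b_{\Gamma(1),G}=(i_0+1)/2>0$ lets one realize the product in Theorem~\ref{thm:KGBtwo}(b) with order $3=[G:G_1]$ after a central adjustment. Without that fact the ``parity bookkeeping'' does not close.
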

 
 \begin{proof} Because of Theorem \ref{thm:reducetop}, the Bertin obstruction of $\phi_P$ 
 vanishes if that of $\phi$ does, and we now prove the converse.
 We suppose for the rest of the proof that Bertin obstruction of $\phi_P$ vanishes.  To show
 that the Bertin obstruction of $\phi_G$ vanishes, it will
 be enough to show that conditions (b), (c) and (d) of Theorem \ref{thm:reducetop} hold.


Let $t$ be a non-trivial element of the cyclic group $C$ of order $3$.  Then $\C_G(t) = \C_P(t) \times C$ where $\C_P(t)$
is the center $\C(G)$ of $G$, which has order $2$.  This shows condition (b) of
Theorem \ref{thm:reducetop}.

The cyclic non-trivial subgroups $T$ of $P$ are $\C(G)$ together with the three
cyclic subgroups $\Gamma(1)$, $\Gamma(2)$ and $\Gamma(3)$ of order $4$ which
are conjugate under the action of $C$.  There are four conjugates $C(1) = C$, $C(2)$, $C(3)$
and $C(4)$ of $C$ in $G$.  Let $J(j) = \langle \C(G),C(j)\rangle$ be the cyclic group of
order $6$ generated by $C(j)$ and $\C(G)$.  One has
\begin{equation}
\label{eq:layout}
S_G(\C(G)) = \{\C(G),\Gamma(1),\Gamma(2),\Gamma(3),J(1),J(2),J(3),J(4)\}
\end{equation}
and
\begin{equation}
\label{eq:layout2}
S_G(\Gamma(j)) = \{\Gamma(j)\}.
\end{equation}

In the notation of Theorem \ref{thm:reducetop}, we have
\begin{equation}
\label{eq:oyvey}
b'_{\C(G),G} = \sum_{P \not \supset \Gamma \in S_G(\C(G))} \mu([\Gamma:\C(G)]) = \sum_{j =1}^4 \mu([J(j):\C(G)]) = -4.
\end{equation}
Since every $\Gamma \in S_G(\Gamma(j))$ is contained in $P$, we have
\begin{equation}
\label{eq:oyoyoy}
b'_{\Gamma(j),G} = \sum_{P \not \supset \Gamma \in S_G(\Gamma(j))} \mu([\Gamma:\Gamma(j)]) = 0
\end{equation}
Condition (c.i) of Theorem \ref{thm:reducetop} is that $b'_{T,G} \equiv 0 $ mod $[\N_P(T):T] \mathbb{Z}$, which we
see follows from (\ref{eq:oyvey}) and (\ref{eq:oyoyoy}) since $[\N_P(\C(P)):P] = 4$.

Since we have supposed that the Bertin obstruction of $\phi_P$ vanishes, we have $b_{T,P} \ge 0$
for $T = \C(G)$ and $T = \Gamma(j)$.  Condition (c.ii) of Theorem \ref{thm:reducetop}
 is that
\begin{equation}
\label{eq:youch}
[\N_G(P):T] b_{T,P} = \sum_{\Gamma \in S_P(T)} \mu([\Gamma:T]) \iota(T) \ge -b_{T,G}.
\end{equation}
When $T = \Gamma(j)$, this follows from $b_{\Gamma(j),P} \ge 0 = - b_{\Gamma(j),G}$.
We now assume that $T = \C(G)$, so that $-b_{T,G} = -b_{\C(G),G} = 4$. 
It remains to show prove the inequality (\ref{eq:youch}) in this case.

Let $H= \Gamma(1)$ be one of the three cyclic subgroups of order $4$
in $P$.  Let $i_0$ and $i_1$ be the integers associated to $\phi_P$ and 
to $H$ in Lemma \ref{lem:jumps}.  
  Let $\chi:H \to \mu_4$ be a faithful character of $H$.  We let $L = N^H$
be the fixed field of $H$ acting on $N$, where $N/F$ is the $G$ extension associated
to $\phi_G$.  By classfield theory, we can view $\chi$ as a character of $L^*$, after
reducing to the case of quasi-finite residue fields via Proposition \ref{prop:reducetoquasi}. 
By Lemma \ref{lem:jumpform}, $i_0$ is the largest integer such that $\chi(1 + \pi_L^{i_0} O_L) = \mu_4$, while $i_1$ is the largest integer
such that $\chi(1 + \pi_L^{i_0 + i_1} O_L) = \{\pm 1\}$.  Since $(1 + \pi_L^{i_0} O_L)^2 \subset
1 + \pi_L^{2 i_0} O_L$ we conclude that $i_0 + i_1 \ge 2i_0$, so $i_1 \ge i_0$.  
By Corollary \ref{cor:closer}, $i_0$ is odd and $i_1$ is even since the Bertin
obsrtruction to $\phi_P$ is trivial. Hence $i_1 \ge i_0$ implies $i_1 = i_0 + 1 + 2h$ for some $h \ge 0$. 
By the definition of $i_0$ and $i_1$, the jumps in the upper ramification filtration
of $H$ occur at $i_0$ and $i_0 + i_1$.  By Herbrand's theorem \cite[Chap.~IV.3, Lemma~5]{corps}, the jumps in the 
lower ramification filtration occur at $i_0$ and $i_0 + 2i_1 = i_0 + 2(i_0 + 1 + 2h) = 3i_0 + 2+4h$.  Now $\C(G)$
is the order $2$ subgroup of $H$, so the jumps in the lower and the upper
ramification filtation of $\C(G)$ both occur at $3i_0 + 2+4h$.  Recall that $J(1)$
is a cyclic group of order $6$ which contains $\C(G)$ (see (\ref{eq:layout})).  By the Hasse-Arf Theorem,
the jumps
in the upper ramification of $J(1)$ occur at integers $j_0 = 0$ and $j_1 \ge 0$ since
$J(1)$ is abelian and the wild ramification subgroup of $J(1)$ is $\C(G)$. Herbrand's theorem now
shows that the jumps in the lower ramification of $J(1)$ occur at $0$ and at
$3j_1$.  Therefore the (unique) jump in the lower ramification of
$\C(G)$ occurs at $3j_1 = 3i_0 + 2 + 4h$.  This and $h \ge 0$ force $h = 1 + 3h'$ for some $0 \le h' \in \mathbb{Z}$.
Thus
\begin{equation}
\label{eq:hbound}
i_1 = i_0 + 1 + 2h = i_0 + 1 + 2(1+3h') = i_0 + 3 + 6h'\quad \mathrm{with}\quad 0 \le h' \in \mathbb{Z}.
\end{equation}

Since $\Gamma(1)$ is
conjugate to $\Gamma_j$ for $j = 2, 3$, we have $\iota(\Gamma(1)) = \iota(\Gamma(j))$
for $j \in \{1,2,3\}$.  Considering that the jumps in the lower numbering of $H = \Gamma(1)$
occur at $i_0$ and $i_0 + 2i_1$, we conclude that 
\begin{equation}
\label{eq:putit}
\iota(\C(G)) = i_0 + 2i_1 + 1 = i_0 + 2(i_0 + 3 + 6h') + 1 = 3i_0 + 7 + 12h' 
\end{equation}
and
\begin{equation}
\label{eq:putit2}
 \iota(\Gamma(1))  = i_0 + 1.
\end{equation}
Now
\begin{eqnarray}
\label{eq:fin}
[\N_P(\C(G)):\C(G)] b_{\C(G),P} &=& \sum_{\C(G) \subset \Gamma \in S_P(\C(G))} \mu([\Gamma:\C(G)]) \iota(\Gamma)\nonumber\\
& = &
\iota(\C(G)) - 3\iota(\Gamma(1))\nonumber\\
& = &3i_0 + 7 + 12h' - 3(i_0 + 1)\nonumber \\
&=& 4 + 12h' \ge 4 = -b'_{\C(G),G}
\end{eqnarray}
because of (\ref{eq:putit}), (\ref{eq:putit2}) and (\ref{eq:oyvey}). This proves (\ref{eq:youch}) for $T  = \C(G)$, which completes the proof of condition (c.ii) of Theorem \ref{thm:reducetop}.

We finally consider condition (d) of Theorem \ref{thm:reducetop}.  If $T = \C(G)$ then 
$\C_{C}(T) = C = \N_{C}(T)$ and
$$b''_{\C(G),G} = \sum_{\Gamma \in S_G(\C(G))} \mu([\Gamma:\C(G)]) = 1 - 3 - 4 = -6 \equiv 0 \quad \mathrm{mod}\quad \# \N_{C}(T)\mathbb{Z}$$
since $\# \N_{C}(T) = 3$. Thus condition (d.ii) of Theorem \ref{thm:reducetop}
 holds for $T = \C(G)$.  The other non-trivial cyclic subgroups $T$ of $p$-power order
in $G$ are the $\Gamma(j)$ for $j = 1, 2, 3$.  We have $\N_{C}(\Gamma(j)) = \{e\} = \C_{C}(\Gamma(j))$,
so condition (d.ii) of Theorem \ref{thm:reducetop}
 holds trivially for $T = \Gamma(j)$.  This completes the proof
 that the Bertin obstruction of $\phi_G$ vanishes if and only if that of $\phi_P$ does.
 
It remains to prove the last assertion of the proposition. 
It follows from Proposition \ref{prop:evenqsemi} that the Bertin obstruction of $\phi_P$
vanishes if and only if the KGB obstruction of $\phi_P$ vanishes.   By Theorem \ref{thm:KGBtwo},
if the KGB obstruction of $\phi_G$ vanishes then the Bertin obstruction of $\phi_G$
does also.  So to finish the proof of
of Proposition \ref{prop:qual1}, it will suffice to show that if the Bertin obstructions
of $\phi_G$ and $\phi_P$ vanish then the KGB obstruction of $\phi_G$ vanishes.

We can choose a set $\mathcal{C}_G$ of representatives
for the conjugacy classes of non-trivial cyclic subgroups of $G$ 
in the following
way:
\begin{equation}
\label{eq:CGrep}
\mathcal{C}_G = \{\C(G), \Gamma(1),C(1),J(1)\}.
\end{equation}
We have
\begin{equation}
\label{eq:okeydokey}
b_{\Gamma(1),G} = \frac{1}{[\N_G(\Gamma(1)):\Gamma(1)]} \sum_{\Gamma \in S_G(\Gamma(1))} \mu([\Gamma:\Gamma(1)]) \iota_G(\Gamma) = \frac{\iota(\Gamma(1))}{2} = \frac{i_0+1}{2}.
\end{equation}
Thus $b_{\Gamma(1),G} > 0$, and this is integral  
by Proposition~\ref{prop:special}
because the Bertin obstruction
of $\phi_G$ vanishes.  One has $\N_G(C(1)) = J(1) \ne C(1)$ and $\N_G(J(1)) = J(1)$, so 
Proposition \ref{prop:subtle} gives
\begin{equation}
\label{eq:arrgh!}
b_{C(1),G} = 0 \quad \mathrm{and}\quad b_{J(1),G} = 1
\end{equation}

{From} Theorem \ref{thm:KGBtwo} and Proposition \ref{prop:special}, to show that
the KGB obstruction vanishes, it will be enough to construct for each $T \in \mathcal{C}_G$
an sequence $B_T $ of $ b_{T,B}$ elements of $G$ such that each $b \in B_T$ generates a conjugate of $T$
and $\prod_{T \in \mathcal{C}_G} \prod_{b \in B_T} b$ has order $[G:G_1] = \# C = 3$
after choosing some ordering for $\coprod_{T \in \mathcal{C}_G} B_T$.   Here 
$b_{\Gamma(1),G} > 0$, so since we can choose the elements of $B_{\Gamma(1)}$ to be
generators of any of the three (conjugate) order $4$ subgroups of $P$, we can arrange that
$\prod_{b \in B_{\Gamma(1)}} b $ has order $4$.  We know that $B_{C(1)} = \emptyset$
and that $B_{J(1)}$ has one element by (\ref{eq:arrgh!}).  Now the product of 
an element of order $4$ and and element of order $3$ in $\mathrm{PSL}_2(3) = \mathrm{SL}_2(3)/\C(G)$ is an element of order $3$. Thus for any choice of $B_{\C(G)}$, the resulting product
$$\prod_{T \in \mathcal{C}_G} \prod_{b \in B_T} b$$
has order $3$ image in $\mathrm{PSL}_2(3)$.  We can make this element have order
$3$ in $\mathrm{SL}_2(3) = G$ by adjusting one of the elements of $B_{\Gamma(1)}$
by multiplying it by either the trivial or the non-triivial element of $\C(G)$.  This completes
the proof of Proposition \ref{prop:qual1}.
\end{proof}

\section{Proof of Theorem \ref{thm:nonexs}.}
\label{s:yipes}
\setcounter{equation}{0}

By Corollary \ref{cor:summary}, the proof of Theorem \ref{thm:nonexs}
is reduced to showing the following results:
 \begin{enumerate}
 \item[a.]  The groups listed in items (1) - (4) of Theorem \ref{thm:nonexs} are KGB groups for $k$.
 \item[b.]  When $p = 2$, the quaternion group $Q_8$ and the group $\mathrm{SL}_2(3)$
 are not Bertin groups for $k$.
 \item[c.]  When $p = 2$, no semi-dihedral group of order at least $16$ is a Bertin group for $k$.
 \end{enumerate}
 
\begin{lemma}
\label{lem:cyclic}
For all $k$, every cyclic group $G$ is a KGB group for $k$.
\end{lemma}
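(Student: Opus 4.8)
The plan is to verify directly that the KGB obstruction vanishes for every injection $\phi : G \to \mathrm{Aut}_k(k[[t]])$ when $G$ is cyclic, using the criterion of Theorem \ref{thm:KGBtwo}. Since $G$ is cyclic, its only subgroup that can be the wild inertia $G_1$ is the (unique) $p$-Sylow subgroup $P$, and $C = G/P$ is the maximal tame quotient. The first step is to describe the constants $b_T = b_T(\phi)$ of Proposition \ref{prop:special}(i) in this setting: for a cyclic group the set $\mathcal{C}$ consists of the subgroups $G[d]$ of order $d$ for $d \mid \#G$, each is normal with $\N_G(T) = G$, and Proposition \ref{prop:subtle} together with Theorem \ref{thm:nontrivcase} gives that $b_T$ is nonnegative and integral for every nontrivial $T$. (Concretely, for nontrivial $T$ of order prime to $p$ one lands in case (a) or (b) of Proposition \ref{prop:subtle}, and for $T$ a nontrivial $p$-subgroup one computes $b_T$ from the ramification jumps as in Corollary \ref{cor:computeiota}-type formulas; in all cases $0 \le b_T \in \mathbb{Z}$.) This already shows the Bertin obstruction vanishes, so condition (a) of Theorem \ref{thm:KGBtwo} holds with $S = \coprod_{\{e\}\ne T\in\mathcal{C}}\coprod_{i=1}^{b_T} G/T$.

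The second and main step is to produce the data $\Omega$ and $\{g_t\}_{t\in\Omega}$ of condition (b) of Theorem \ref{thm:KGBtwo}: a set of orbit representatives, a generator $g_t$ of each (cyclic) stabilizer $G_t$, such that $\{g_t\}$ generates $G$ and $\prod_{t\in\Omega} g_t$ has order $[G:G_1] = \#C$. Here I would use that $b_{C}$, for $C$ the full prime-to-$p$ part (and more generally for the subgroup realizing the tame character $\theta$), is positive: by Proposition \ref{prop:subtle}(a), since $\N_G(C) = G$ but actually $C$ itself may have $\N_G(C)=C$ only when $P$ is trivial — instead I use that there is at least one nontrivial $T$ among the generators appearing, in fact $b_{G[d]}$ is positive for the largest relevant $d$, so some $g_t$ is a generator of a conjugate of a subgroup whose image in $G/P = C$ generates $C$. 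Since $G$ is cyclic and abelian, $G/P = C$ is itself cyclic, and the product $\prod g_t$ lands in a well-defined element of $G$; by replacing one chosen generator $g_{t_0}$ by $g_{t_0}\cdot z$ for a suitable $z \in P$ (which does not change the cyclic subgroup it generates, since $P$ is the unique $p$-Sylow and adjusting by an element of the $p$-part of the same cyclic subgroup preserves the subgroup), I can arrange that $\prod g_t$ has image in $G/P$ of order exactly $\#C$, i.e. has order $[G:G_1]$. The fact that $\{g_t\}$ generates $G$ follows because the subgroup they generate surjects onto $C = G/P$ (as some $g_t$ generates a subgroup mapping onto $C$) and, from $b_T>0$ for suitable $p$-subgroups $T$ forced by the ramification filtration, also contains $P$; since $P$ and $C$ generate $G$ this suffices.

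The final step is bookkeeping: assemble $S$ from the $b_T$'s, note that all stabilizers are nontrivial cyclic subgroups by construction, and invoke Theorem \ref{thm:KGBtwo} to conclude the KGB obstruction of $\phi$ vanishes; since $\phi$ was arbitrary, $G$ is a KGB group for $k$ by Definition \ref{dfn:termdef}.

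\textbf{Expected main obstacle.} The delicate point is step two: showing the generators $g_t$ can be chosen so that their product has the \emph{exact} order $[G:G_1]$ while still generating $G$. The genus/character condition (a) only pins down $S$ up to a multiple of the regular representation and says nothing about the monodromy product, so one genuinely needs the flexibility of adjusting a generator of one of the cyclic stabilizers by an element of $P$ without changing the subgroup it generates — this works precisely because $G$ is cyclic (so each cyclic subgroup has a canonical $p$-part), and it is exactly the cyclic case where this adjustment argument is cleanest. I expect this to mirror the argument used for dihedral groups in Corollary \ref{cor:closertwo}, specialized and simplified to the abelian cyclic situation.
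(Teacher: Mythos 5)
Your outline matches the paper's: verify $0 \le b_T \in \mathbb{Z}$ to handle condition (a) of Theorem~\ref{thm:KGBtwo}, then produce the data of condition (b). The gap is entirely in step two, and it is a real one, not just unfinished bookkeeping.

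First, the adjustment you propose is not a no-op on subgroups: if $g_{t_0}$ generates $T_0$ and $z$ lies in the $p$-part of $T_0$, the element $g_{t_0} z$ lies in $T_0$ but need not \emph{generate} $T_0$ (e.g., $g_{t_0}=1,\ z=1$ in $\mathbb{Z}/4$ gives $2$). So $z$ is constrained to a set that omits an entire coset of the Frattini subgroup of $T_0 \cap P$; you can only reach a restricted set of products with one such adjustment. Second, the phrase "has image in $G/P$ of order exactly $\#C$, i.e.\ has order $[G:G_1]$" conflates two different conditions: adjusting by $z \in P$ never changes the image in $G/P$, and an element whose image in $G/P$ has order $\#C$ need only have order a multiple of $\#C$, not exactly $\#C$. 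What you actually must arrange is that the $P$-component of $\prod_t g_t$ vanishes. Third, a \emph{single} adjustable generator is not enough for this. The $z$ that would cancel the $P$-component of the remaining factors is uniquely determined, and there is no reason it should avoid the forbidden coset; this is exactly why the paper's proof works with \emph{two} adjustable factors ($g_u$ and $g_v$, both generating $G$ when $G=H$, or generating $G$ and $H$ respectively when $G \ne H$) and solves jointly for $h_u,h_v$.

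Finally, and most significantly, you never touch the $p=2$ parity obstruction that makes this lemma non-trivial. In the $p=2$ case the surviving freedom after respecting the generator constraints only lets you hit the squares (or a coset thereof), so one must verify that the element to be canceled actually lies there. The paper gets this from the evenness of the discriminant of the quadratic subextension, which forces $i_0$ odd and hence $b_G = 1+i_0$ even (case $G=H$) or $b_H = i_0$ odd (case $G \ne H$); these parities are what make the count of generator-of-$H$ factors outside $2H$ come out right. Your proposal acknowledges that "the delicate point is step two" and expects it to mirror Corollary~\ref{cor:closertwo}, which is the right intuition, but without the parity computation the argument does not close, and as written the single-adjustment claim is false.
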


\begin{proof}Let  $G$ be a cyclic group with $p$-Sylow
group $H$ of order $p^n$.  The lower numbering of the ramification
groups of $G$ has $G_0 = G$ and $G_i = H_i$ if $i > 0.$  By Lemma \ref{lem:jumps}, there are positive integers $i_0,i_1,\ldots,i_{n-1}$ such 
such that the jumps in the upper numbering of the ramification filtration of $H$ occur
at $i_0, i_0+i_1,\ldots,i_0 + i_1 + \cdots + i_{n-1}$.  Write $\# G = m p^n$ for
some integer $m$ prime to $p$.  By the Hasse-Arf Theorem, the jumps in
the lower ramification filtration are at $q(-1) := 0$ if $m > 1$ together with the integers
$q(\ell) := m\sum_{j = 0}^\ell p^{j} i_j$ for $\ell = 0,\cdots,n-1$.
 We find by Corollary
\ref{cor:computeiota} that 
\begin{equation}
\label{eq:iotacal5}
\iota(p^\ell H) = 1 + q(\ell) \quad \mathrm{for}\quad 0 \le \ell \le n-1\quad;\quad \iota(\Gamma)= 1\quad \mathrm{if }\quad \Gamma \not \subset H.
\end{equation}
 
Suppose that $T$ is a non-trivial cyclic subgroup
of $G$.  By  Proposition \ref{prop:subtle}, 
\begin{equation}
\label{eq:easynon}
b_T = 0 \ \mathrm{if}\ G \ne T \not \subset H\quad \mathrm{and}\quad  b_G = 1 \ \mathrm{if}\ G \ne H.
\end{equation}
Otherwise, $T \subset H$ and we may write  
$T = p^\ell H$ for some $0 \le \ell \le n-1$.  By 
Theorem~\ref{thm:nontrivcase}, 
\begin{eqnarray}
\label{eq:theformagainduh}
b_T &=& \frac{1}{[\N_G(T):T]} \left(  \sum_{\Gamma \in S(T)} \mu([\Gamma:T]) \iota(\Gamma) \right ) \nonumber\\
&=& \frac{1}{mp^{\ell}} \left (\sum_{\Gamma \in S(T)} \mu([\Gamma:T]) + \sum_{\Gamma \in S(T), \Gamma \subset H} \mu([\Gamma:T]) (\iota(\Gamma) - 1)\right ) \label{eq:theformagain}\nonumber \\
&=& \frac{1}{mp^{\ell}} \left (\delta(p^{\ell}H,G) + m p^\ell i_\ell \right )
= \delta(p^\ell   H,G) + i_\ell
\end{eqnarray}
(In the second sum in (\ref{eq:theformagain}), only $\Gamma = H$ contributes if $\ell = 0$, while $\Gamma = p^\ell H$ and $\Gamma = p^{\ell -1}H$ contribute if $\ell > 0$.  The last equality  in (\ref{eq:theformagainduh}) is a consequence of the fact that $p^\ell H = G$ if and only if $m = 1$ and $\ell = 0$.)  This shows that $b_T$ is always a non-negative integer, so the Bertin obstruction of $G$ vanishes by
Proposition \ref{prop:special}.  

To show that
the KGB obstruction vanishes, we start
by picking an ordered set $\{g_t\}_{t \in \Omega}$ of elements of $G$ such
that each $g_t$ is non-trivial, and the number of $g_t$ which generate a  given non-trivial $T \in \mathcal{C}$ is $b_T$.  As in
Theorem \ref{thm:KGBtwo}(b), we have to show that
we can adjust these $g_t$ so that they collectively generate $G$ and so that
$\prod_{t \in \Omega} g_t$ has order $[G:G_1] = m$.

Suppose first that $G = H$, so that $m = 1$.  By (\ref{eq:theformagainduh}), $b_H = 1 + i_0 \ge 2$.  Hence there must at least
two distinct elements $u, v \in \Omega$ such that $g_u$ and $g_v$ generate $G = H$.  
Consider the product $g = \prod_{t \in \Omega - \{u,v\}} g_t$.  It will suffice to show that there
are generators $g'_{u}$ and $g'_{v}$ of $G$ such that $g'_u g'_v = g^{-1}$, since then we can replace
$g_u$ by $g'_u$ and $g_v$ by $g'_v$ to have a set with the required properties.  We claim
that for all primes $p$, the elements of a cyclic group $G = H$ of order $p^n$ which are the product of two 
generators are exactly the set of squares in $G$ (which equals $G$ unless $p = 2$).
This is clear for $n = 1$, and it follows by induction for all $n$.  Thus to construct
the required $g'_u$ and $g'_v$, it will suffice to show that $g$ above is a square if
$p = 2$.  So we now suppose $p = 2$.  Then  $1 + i_0$ is the valuation of the discriminant
of the quadratic extension $k((t))^{2G}$ of $k((t))^G$ inside $k((t))$, and this must be even.  Hence $i_0$ is odd so $b_G = 
 1 + i_0 > 0$ is even.  Every
$T \in \mathcal{C}$ except for $T = G$ is contained in 
$2G = 2H$.  Hence  the product $g =\prod_{t \in \Omega - \{u,v\}} g_t$
 lies in $2G = 2H$, and this completes the analysis of the case $G = H$.

We now suppose that 
$G \ne H$.  By
(\ref{eq:easynon}),
the unique $T \in \mathcal{C}$ which is not a $p$-group and
for which $b_T$ is not $0$ is $T = G$, and $b_G = 1$.  We can
therefore pick the first element $g_u$ of $\{g_t:t \in \Omega\}$ to
be a generator of $G$, and all the other elements will be in $H$.
We are therefore done if $H$ is trivial, so suppose from now
on that $H$ is non-trivial.  
Consider the product $\prod_{u \ne t \in \Omega} g_t \in H$.
If $p = 2$ then all terms of this product are in $2H$ except
for $b_H$ terms in which $g_t$ is a generator of $H$.  Here
$b_H = i_0 > 0$ by (\ref{eq:theformagainduh}), and this is odd if $p = 2$.
Thus we can pick an element $v \in \Omega - \{u\}$ such that $g_v$
is a generator of $H$, and the number of $v' \in \Omega - \{u,v\}$ for which 
$g_{v'}$ generates $H$ is even.  
Taking into account that $g_u$ is a generator of $G$, we find that
$g = \prod_{t \in \Omega} g_t$ lies in $2G$ if $p = 2$; and whether or not $p=2$,
this is the product of a generator $g_u$ of $G$ with an element of the
$p$-Sylow subgroup $H$ of $G$.  This implies $g$ has order divisible by $m$.
It will suffice to show that we can 
pick elements $h_u, h_v \in H$ such that $g'_u = g_u h_u$ is a generator of
$G$, $g'_v = g_v h_v$ is a generator of $H$, and
$$g'_u g'_v \prod_{t \in \Omega - \{u,v\}} g_t =  h_u h_v g$$
has order $m$, since then we can simply replace $g_u$ by $g'_u$
and $g_v$ by $g'_v$.  

We first observe that $h_u h_v g$ always has order divisible by $m$
since $h_u$ and $h_v$ are elements of $p$-power order and $g$
has order divisible by $m$.  Hence $h_u h_v g$ has order  $m$ if and only if $h_u^m h_v^m g^m = e$, where $g^m \in H$
and $g^m \in 2H$ if $p = 2$.  Since $h_u \in H$ and $g_u$ is a generator of $G$,
the element $g'_u = g_u h_u$ will be a generator of $G$ if and only if
$g_u^m h_u^m$ is a generator of the $p$-Sylow subgroup $H$ of $G$.
This will be the case if $h_u^m$ is not congruent mod $pH$ to $g_u^{-m} \in H$.
Similarly, $h_v g_v$ will be a generator of $H$ if and only if $h_v^m g_v^m$
is such a generator, and this will be so if and only if $h_v^m$ is not 
congruent mod $pH$ to $g_v^{-m}$.  We thus see that $h_u^m$ and $h_v^m$
are to be elements of $H$ which each avoid a particular congruence
class mod $pH$ which generates $H$ mod $pH$, and for which $h_u^m h_v^m$ is equal to $g^m$,
where $g^m \in H$ and $g^m \in 2H$ if $p = 2$.  Since $H$ is cyclic
of order $p^n$, one sees by induction on $n$ that such $h_u^m$
and $h_v^m$ always exist.  Since $m$ is prime to $p$, we can then
find $h_u$ and $h_v$ in $H$ with the required properties, which completes
the proof.
\end{proof}

\begin{prop}
\label{prop:trumps}  For all primes $p$ and integers $n \ge 1$,  the dihedral group $D_{2p^n}$ is a KGB group for $k$.  If $p = 2$, then the following hold:  
\begin{enumerate}
\item[i.] If $G$ is a generalized quaternion group of order $2^n \ge 16$, then $G$ is a KGB group for $k$.  
\item[ii.] If $G$ is the quaternion group of order $8$, $G$ is an almost KGB group for $k$.
\item[iii.] If $G$ is a semi-dihedral group then $G$ is not an
almost Bertin group for $k$.
\end{enumerate}

\end{prop}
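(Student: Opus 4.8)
\textbf{Proof proposal for Proposition \ref{prop:trumps}.}

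The plan is to treat the three families separately, in each case reducing the question about the KGB (or Bertin) obstruction to the numerical criteria already established in \S\ref{s:exts2}--\S\ref{s:cft}, namely Corollaries \ref{cor:closer}, \ref{cor:closertwo} and \ref{cor:closerthree} together with Proposition \ref{prop:evenqsemi}. Since the vanishing of the KGB obstruction is equivalent (Theorem \ref{thm:KGBtwo}) to conditions (a) and (b) there, and since by Corollary \ref{cor:closerthree} it suffices to work over quasi-finite residue fields, the strategy is: first verify condition (a) (the Bertin condition) via the explicit formulas for the constants $b_T$ from Corollaries \ref{cor:nicesubgroups}, \ref{cor:di}, \ref{cor:hardpart} and Lemma \ref{lem:di2}; then for the ``KGB'' groups produce the combinatorial data $\Omega$, $\{g_t\}$ of Theorem \ref{thm:KGBtwo}(b), which is already carried out in Corollary \ref{cor:closertwo} and its proof.

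First I would handle the dihedral case $G = D_{2p^n}$ for all primes $p$. Here the work is essentially done: Corollary \ref{cor:closer} shows that conditions (a)--(b) there hold for \emph{every} injection $\phi:G\to\mathrm{Aut}_k(k[[t]])$ when $G$ is dihedral --- the only constraints are that $i_0$ be odd and $i_j$ even for $j>0$, and these hold automatically by Corollary \ref{cor:firstjump} (for $p>2$ via the sign of the $\overline\sigma$-action on $\pi_L^{c(\ell)}k$, and for $p=2$ via the class field theory argument in the proof of Corollary \ref{cor:firstjump}). Then Corollary \ref{cor:closertwo} shows the KGB obstruction vanishes. By Corollary \ref{cor:closerthree} this passes from quasi-finite to algebraically closed $k$, so $D_{2p^n}$ is a KGB group for $k$.

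Next, for $p=2$ and $G$ generalized quaternion of order $2^{n+1}\ge 16$ (so $n\ge 3$): Proposition \ref{prop:evenqsemi}(i) shows $i_{n-1}$ is even (the exceptional odd case forces $n=2$, i.e.\ order $8$), and then Proposition \ref{prop:evenqsemi}(ii) directly gives that the KGB obstruction vanishes; again Corollary \ref{cor:closerthree} lets us pass to algebraically closed $k$. For $G = Q_8$: by Proposition \ref{prop:evenqsemi}(i)(b), when the ramification filtration is the exceptional one $G=G_0=G_1\ne G_2=G_3\ne G_4=\{e\}$ the Bertin obstruction does \emph{not} vanish, so $Q_8$ is not a Bertin (hence not KGB) group; but if $i_{n-1}=i_1$ is even then Proposition \ref{prop:evenqsemi}(ii) applies and the KGB obstruction vanishes. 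Since the exceptional filtration is avoided precisely when $\phi$ is sufficiently ramified (one checks, using $\iota(\C(G)) = i_0 + 2i_1 + 1$ and the bound on $-a_\phi$ on the center, that large wild ramification forces $i_1$ to be even, or at any rate forces $G_4\ne\{e\}$), $Q_8$ is an almost KGB group for $k$ --- the mild point to pin down is exactly which inequality on the first jump guarantees we are outside the exceptional case, which I would extract from Lemma \ref{lem:jumps} and the formula $\iota(\C(G)) = i_0+2i_1+1$. Finally, for $G$ semi-dihedral of order $\ge 16$: Proposition \ref{prop:evenqsemi}(iii) shows the Bertin obstruction fails whenever $d_0+d_1+d_2\not\equiv 0\pmod 4$, and the class field theory arguments of \S\ref{s:notalmostBertin} (as in the proof of Proposition \ref{prop:thepoint}) produce, for each $M$, an injection $\phi$ with first wild jump $\ge M$ realizing discriminant exponents $d_i$ with $d_0+d_1+d_2\not\equiv 0\pmod 4$ --- e.g.\ by choosing the cyclic quartic extension $L'/K$ so that the three quadratic subextensions have prescribed (even) conductors not summing to a multiple of $4$. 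Hence $G$ is not an almost Bertin group for $k$.

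The main obstacle will be the $Q_8$ sub-case: one must show both that the exceptional ramification filtration of Proposition \ref{prop:evenqsemi}(i)(a) is the \emph{only} obstruction for $Q_8$ (this is immediate from Proposition \ref{prop:evenqsemi}(i)--(ii)), and that this filtration cannot occur once the wild ramification is large enough, so that the defining inequality ``$t^{N}\mid\phi(\sigma)(t)-t$'' of an almost local Oort / almost KGB group can be met. Concretely I expect to need: if the lower ramification filtration is $G=G_1\ne G_2=G_3\ne G_4=\{e\}$ then $i_0=i_1=1$, so $\iota(\C(G))=4$ and $-a_\phi$ is bounded on the center; thus requiring $-a_\phi(\tau)\ge M$ for $M\ge 4$ on all $2$-torsion $\tau$ rules out this case, and then Proposition \ref{prop:evenqsemi}(ii) finishes. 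The semi-dihedral construction is routine local class field theory of the type already used repeatedly in \S\ref{s:notalmostBertin}, so I would leave its details to the reader as is done there.
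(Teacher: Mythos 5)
Your proposal follows essentially the same route as the paper: Corollaries \ref{cor:closer}, \ref{cor:closertwo}, \ref{cor:closerthree}, \ref{cor:firstjump} for the dihedral case, Proposition \ref{prop:evenqsemi}(i)--(ii) for generalized quaternion groups of order $\ge 16$, and the exceptional-filtration analysis for $Q_8$ (large ramification forces $G_4 \ne \{e\}$, avoiding the case $i_0 = i_1 = 1$, $\iota(\C(G)) = 4$), and finally Proposition \ref{prop:evenqsemi}(iii) for the semi-dihedral case after producing a biquadratic extension with $d_0 + d_1 + d_2 \not\equiv 0 \pmod 4$.

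Two small corrections are worth flagging. First, the extension $L'/K$ you want is the \emph{Klein four (biquadratic)} extension $L_0 L_1 L_2$, not a ``cyclic quartic'' extension; you clearly mean the former since you refer to ``the three quadratic subextensions,'' but a cyclic $\mathbb{Z}/4$-extension would only have one. Second, the step of realizing that biquadratic extension (with prescribed large $d_i$ and $d_0 + d_1 + d_2 \not\equiv 0 \pmod 4$) as a quotient of a full semi-dihedral $G$-extension whose first wild jump is $\ge M$ is not merely ``routine class field theory'' --- the paper expressly invokes Proposition \ref{prop:subquots}(i)--(ii) of Appendix~1, which solves the relevant embedding problem with control on the ramification of the kernel. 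That embedding result is doing real work there, so you should cite it explicitly rather than defer the details.
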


\begin{proof}By Corollaries \ref{cor:closer}, \ref{cor:closertwo}, \ref{cor:closerthree} and \ref{cor:firstjump},
$D_{2p^n}$ is a KGB group for $k$, for all $p$.  Now assume that $p=2$.  These corollaries
together with Proposition \ref{prop:evenqsemi}(iii) imply
statements (i) and (ii) concerning generalized quaternion groups.
Suppose now that $G$ is a semi-dihedral group.  We can 
construct a Klein four extension $L'/K$ such that the exponents
$d_0$, $d_1$ and $d_2$ of the discriminants of the three quadratic
subfields 
(as in Lemma~\ref{lem:di2}) 
are larger than a specified number and for which $d_0 + d_1 + d_2$ is not divisible
by $4$.  By parts (i) and (ii) of Proposition  
 \ref{prop:subquots} of Appendix 1, we can realize this $L'/K$ as a subfield
of a $G$-extension of $K$ such that the first jump in the
ramification filtration of $G$ occurs above a specified number.   
Proposition \ref{prop:evenqsemi}(iii) now shows $G$ is
not an almost Bertin group for $k$.
\end{proof}

\begin{cor}
\label{cor:enditall}
When $p = 2$, the group $\mathrm{SL}_2(3)$ is an almost KGB group for $k$.
\end{cor}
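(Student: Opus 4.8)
The plan is to combine Proposition~\ref{prop:qual1} with part~(ii) of Proposition~\ref{prop:trumps}. Recall that Proposition~\ref{prop:qual1} asserts that for $G \cong \mathrm{SL}_2(3)$ with (necessarily normal) quaternion Sylow $2$-subgroup $P \cong Q_8$, an injection $\phi_G:G \to \mathrm{Aut}_k(k[[t]])$ has vanishing Bertin obstruction if and only if its restriction $\phi_P$ to $P$ does, and moreover in that case \emph{both} KGB obstructions vanish. So the content of the corollary reduces to producing, for every integer $N \ge 0$, an injection $\phi_G:G \to \mathrm{Aut}_k(k[[t]])$ which is sufficiently ramified (in the sense that $t^N \mid \phi_G(\sigma)(t)-t$ for all $2$-power order $\sigma$, equivalently the first jump in the wild ramification filtration is $\ge N-1$) and whose restriction $\phi_P$ has vanishing Bertin obstruction. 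Given such a $\phi_G$, Proposition~\ref{prop:qual1} immediately gives that the KGB obstruction of $\phi_G$ vanishes, which is exactly what "almost KGB group" requires.

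First I would invoke Proposition~\ref{prop:trumps}(ii): $Q_8$ is an \emph{almost} KGB group for $k$, so there is an integer $N(Q_8,k)$ such that any sufficiently ramified injection $\psi:Q_8 \to \mathrm{Aut}_k(k[[t]])$ has vanishing KGB (hence Bertin) obstruction. Equivalently, one can directly use Corollary~\ref{cor:closer} together with Proposition~\ref{prop:evenqsemi}(ii): the Bertin obstruction of an injection of a generalized quaternion group of order $\ge 16$ vanishes whenever $i_{n-1}$ is even, and one can arrange this for $Q_8$ by making $i_0=i_1$ large; the precise statement is that the obstruction vanishes unless $G_4=\{e\}$ with the small ramification filtration in Proposition~\ref{prop:evenqsemi}(i.a), so any sufficiently ramified example works. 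Thus it suffices to realize sufficiently ramified injections $\phi_P$ of $Q_8$ as restrictions of injections $\phi_G$ of $\mathrm{SL}_2(3)$.

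Next I would use the embedding-problem construction of Appendix~1. The surjection $\mathrm{SL}_2(3) \to \mathrm{SL}_2(3)/\C(G) = A_4 = \mathrm{PSL}_2(3)$ is not the right one here; rather, since $P=Q_8$ is \emph{normal} in $G$ with $G/P \cong C_3$, I would instead start with an injection $\phi_P:Q_8 \to \mathrm{Aut}_k(k[[t]])$ that is sufficiently ramified (these exist by the class-field-theory arguments used throughout \S\ref{s:exts2}--\S\ref{s:cft}, e.g.\ as in Corollary~\ref{cor:closerthree}) and then extend the corresponding local $Q_8$-cover to a local $G$-cover. Concretely, one applies Proposition~\ref{prop:subquots}(i) of Appendix~1 in the direction of constructing a $G$-extension whose quotient by $C_3$... no: the correct use is that $G$ is a $C_3$-extension of $Q_8$ sitting the other way, so I would instead build $\phi_G$ directly as a solution to the embedding problem with $G$ mapping onto $C_3$ (the tame quotient) and $P=Q_8$ as the wild kernel, using Proposition~\ref{prop:subquots} of Appendix~1 to guarantee both that the induced map $\phi_P$ is the given sufficiently ramified injection of $Q_8$ and that $-a_{\phi_G}(\tau)$ is as large as we like on all $2$-torsion $\tau$. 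This is precisely the sort of statement that Appendix~1 is designed to provide, and it is the mechanism already used to prove, e.g., Proposition~\ref{prop:no4} and Proposition~\ref{prop:no9}.

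The main obstacle I expect is bookkeeping rather than substance: one must check that the $\phi_G$ produced by Appendix~1 genuinely has $\phi_P$ with the ramification data making Proposition~\ref{prop:evenqsemi}(ii) (equivalently Corollary~\ref{cor:closer}(c)) apply — i.e.\ that $i_0$ is odd and $i_{n-1}$ even and $i_{n-1}\ge d_0$ for the $Q_8$-cover. But since Proposition~\ref{prop:qual1} has \emph{already} been proved (its proof shows that whenever $\phi_P$ has vanishing Bertin obstruction so does $\phi_G$, and then both KGB obstructions vanish), and since Proposition~\ref{prop:trumps}(ii) already guarantees that all sufficiently ramified injections of $Q_8$ have vanishing KGB obstruction, no further ramification analysis is needed here. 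The only thing left to supply is the existence of sufficiently ramified $\phi_G$, which follows from Proposition~\ref{prop:subquots} of Appendix~1 exactly as in the proofs of Propositions~\ref{prop:no4} and \ref{prop:no9}. Assembling these three inputs — Appendix~1 for existence of sufficiently ramified $\phi_G$, Proposition~\ref{prop:trumps}(ii)/Proposition~\ref{prop:evenqsemi}(ii) to see $\phi_P$ has vanishing KGB (hence Bertin) obstruction, and Proposition~\ref{prop:qual1} to transfer vanishing of the KGB obstruction from $\phi_P$ to $\phi_G$ — completes the proof that $\mathrm{SL}_2(3)$ is an almost KGB group for $k$.
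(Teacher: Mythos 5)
You have misread what needs to be proved. The definition of ``almost KGB group'' is a \emph{universal} statement: there must exist an integer $N$ such that \emph{every} injection $\phi_G:G \to \mathrm{Aut}_k(k[[t]])$ with $t^N \mid \phi_G(\sigma)(t) - t$ for all $2$-power order $\sigma$ has vanishing KGB obstruction. Throughout your argument you treat it as an \emph{existence} statement (``producing, for every integer $N\ge 0$, an injection $\phi_G$ which is sufficiently ramified and whose restriction has vanishing Bertin obstruction''), and the long digression through Appendix~1 and Proposition~\ref{prop:subquots} serves only to construct such examples. That is not what is required, and it is also unnecessary.

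The correct argument is much shorter and requires no construction whatsoever. The condition ``sufficiently ramified'' is imposed only on the $p$-power order elements of $G$, and for $G = \mathrm{SL}_2(3)$ these are precisely the elements of $P \cong Q_8$. Hence if $\phi_G$ is sufficiently ramified, then the restriction $\phi_P$ is automatically sufficiently ramified with the same constant $N$ --- nothing need be built. By Proposition~\ref{prop:trumps}(ii), $Q_8$ is an almost KGB group for $k$, so taking $N$ at least $N(Q_8,k)$ makes the KGB obstruction (and hence the Bertin obstruction) of $\phi_P$ vanish for every such $\phi_G$. Proposition~\ref{prop:qual1} then says that the Bertin obstruction of $\phi_G$ vanishes if and only if that of $\phi_P$ does, and that when these vanish so do both KGB obstructions. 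Putting these together: every sufficiently ramified $\phi_G$ has vanishing KGB obstruction, which is the statement of the corollary. Your invocation of Appendix~1, Proposition~\ref{prop:evenqsemi}(ii), and class field theory can all be removed; the Appendix~1 machinery is what the paper uses to show groups are \emph{not} almost Bertin (by building bad examples), and it has no role in showing a group \emph{is} almost KGB.
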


\begin{proof}
This follows from Propositions \ref{prop:trumps}  and  \ref{prop:qual1}.
\end{proof}

\begin{lemma}
\label{lem:a4}
When $p = 2$, the alternating group $ A_4$ is a KGB group for $k$.
\end{lemma}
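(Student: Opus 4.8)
\textbf{Proof plan for Lemma \ref{lem:a4}.} The plan is to use the same strategy as in the dihedral and quaternion cases: verify that for every injection $\phi:A_4 \to \mathrm{Aut}_k(k[[t]])$ the Bertin constants $b_T = b_T(\phi)$ are non-negative integers, and then exhibit a set $S$ with the extra generation property of Theorem \ref{thm:KGBtwo}(b). When $p=2$, $A_4 = E.C_3$ with $E=(\mathbb{Z}/2)^2$ the normal Sylow $2$-subgroup and $C=C_3$ acting faithfully and transitively on the three order-$2$ subgroups of $E$. Thus a set of representatives $\mathcal{C}$ for the conjugacy classes of non-trivial cyclic subgroups consists of exactly two elements: one order-$2$ subgroup $T_2 \subset E$ (all three are conjugate under $C$) and the group $C$ itself. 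Here $\N_G(T_2) = E$ (index $2$ over $T_2$), $S_G(T_2) = \{T_2\}$ since no cyclic overgroup of $T_2$ can contain $C$-conjugates, while $\N_G(C)=C$ and $S_G(C)=\{C\}$.

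First I would compute $b_{T_2}$ and $b_C$ directly from Theorem \ref{thm:nontrivcase}. Since $C$ has order prime to $p$ and $\N_G(C)=C$, Proposition \ref{prop:subtle}(a) gives $b_C = 1$. For $T_2$, because $E$ is the unique Sylow $2$-subgroup and the only ramification jump occurs at some integer $j$ with $G_1 = E = G_j \ne G_{j+1}$, we get $\iota(T_2) = j+1$ (using that $E$ is elementary abelian, so $G_{j+1} \subsetneq E$ forces $G_{j+1}$ to be one of the three order-$2$ subgroups or trivial, and after replacing $T_2$ by a conjugate we may take $T_2 \not\subset G_{j+1}$). Theorem \ref{thm:nontrivcase} then yields $b_{T_2} = \iota(T_2)/[\N_G(T_2):T_2] = (j+1)/2$. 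The key arithmetic point is that $j$ is \emph{odd}: this follows because $j+1$ is the valuation of the discriminant of the quadratic extension $N^{2E}/N^{E}$ (where $N = k((t))$, $2E$ denoting the order-$2$ subgroup $T_2$), which must be even as $p=2$ — alternatively one can invoke Corollary \ref{cor:firstjump} applied to a cyclic quotient, or the Hasse–Arf argument already used for $D_4$ in Lemma \ref{lem:cyclic}. Hence $b_{T_2} = (j+1)/2$ is a positive integer, and by Proposition \ref{prop:special}(ii) the Bertin obstruction of $\phi$ vanishes.

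Next I would verify the generation condition of Theorem \ref{thm:KGBtwo}(b) to upgrade this to the vanishing of the KGB obstruction. By (\ref{eq:easystuff}) the $G$-set $S$ is $\coprod_{i=1}^{b_{T_2}} (G/T_2) \amalg (G/C)$. Choose generators: take $b_{T_2}$ elements $g_1,\dots,g_{b_{T_2}}$ each a generator of (a conjugate of) $T_2$, and one element $g_0$ generating a conjugate of $C$. Since $b_{T_2} = (j+1)/2 \ge 1$ and there are three distinct conjugates of $T_2$ generating $E$, I can arrange the product $g_1\cdots g_{b_{T_2}}$ to be any prescribed element of $E$ by adjusting the choices (any two distinct order-$2$ elements of $E$ multiply to the third non-trivial element, and products of three generate all of $E$; more carefully one checks the set of achievable products is all of $E$ once $b_{T_2} \ge 1$, possibly using $b_{T_2} \ge 2$ when $j \ge 3$, and handling $j=1$, $b_{T_2}=1$ separately by noting $[G:G_1] = 3 = \#C$ so the target is an order-$3$ element). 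Then $\{g_0,g_1,\dots,g_{b_{T_2}}\}$ generates $G$ (it contains $E$ and an order-$3$ element), and the product $g_0 g_1\cdots g_{b_{T_2}}$, being the product of an order-$3$ element with an element of $E$, can be made to have order exactly $3 = [G:G_1]$ in $A_4$ after a final adjustment of the $E$-part.

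\textbf{Main obstacle.} The principal difficulty is the last step: arranging simultaneously that the chosen generators generate $G$ \emph{and} that their product has order precisely $[G:G_1]=3$. The edge case $j=1$ (so $b_{T_2}=1$, a single $g_1$ generating $T_2$) is the tightest: then $\{g_0,g_1\}$ must already generate $A_4$, which it does since $g_1 \in E$ and $g_0$ has order $3$ acting non-trivially, and $g_0 g_1$ automatically has order $3$ in $A_4$ because it projects non-trivially to $C_3 = A_4/E$ and $3$ is prime. Once $b_{T_2} \ge 2$ the extra freedom makes the adjustment routine, paralleling the $D_4$ argument in the proof of Lemma \ref{lem:cyclic} and the $\mathrm{SL}_2(3)$ argument in Proposition \ref{prop:qual1}. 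I expect no serious complications beyond this bookkeeping, since $A_4$ is small and explicit.
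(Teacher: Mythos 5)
Your proposal is correct and essentially coincides with the paper's own argument: both compute $b_{C}=1$ via Proposition~\ref{prop:subtle}, both compute $b_{T_2}=\iota(T_2)/2=(j+1)/2$ and use that the unique jump of an order-$2$ cyclic group is odd to conclude integrality, and both then invoke Theorem~\ref{thm:KGBtwo}(b) by observing that the product of chosen stabilizer generators projects nontrivially to $A_4/E\cong C_3$ and hence has order~$3$. One small simplification you could make: $G_{j+1}$ is automatically trivial (it is a normal subgroup of $A_4$ properly contained in $E$, and $C_3$ acts without invariant order-$2$ subgroups), so no conjugation of $T_2$ is needed; and the final "arrange the product to have order $3$" step is automatic, since every element of $A_4$ outside $E$ already has order $3$, which is the paper's one-line observation.
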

\begin{proof}
We can take the set $\mathcal{C}$ to consist of a group $H_2$
of order $2$ and a group $H_3$ of order $3$.  Since
$\N_G(H_3) = H_3$, Proposition \ref{prop:subtle} shows
$b_{H_3} = 1$.  There is no cyclic subgroup of $A_4$ which
properly contains $H_2$, and $\N_{A_4}(H_2)$ has order $4$,
so 
$$b_{H_2} = \frac{1}{[\N_{A_4}(H_2):H_2]} \iota(H_2) = \frac{\iota(H_2)}{2}.$$
Since $H_2$ has order $p = 2$, the first (and only ) jump $i_0$ in the 
upper (and lower)
ramification filtration of $H_2$ occurs at an odd integer,
so $\iota(H_2) = 1 + i_0 \ge 2$ is even.  Thus $b_{H_2} \ge 1$
is an integer.  This shows that Bertin obstruction associated to local
$A_4$ covers in characteristic $2$ is trivial.  Since $b_{H_3} = 1$
and the $2$-Sylow of $A_4$ is normal, if we choose
any set of generators for the stabilizers appearing in the
product described in the KGB condition of Theorem \ref{thm:KGBtwo}(b),
this product will not lie in the $2$-Sylow of $A_4$.  Therefore
this product has to be an element of order $3 = [A_4:(A_4)_1]$,
so the KGB condition holds.
\end{proof}

In view of the remarks at the beginning of this section, the following
result completes the proof of Theorem \ref{thm:nonexs}.

\begin{lemma}
\label{lem:trumpsit}  Suppose $p = 2$. The quaternion group $Q_8$ of order $8$
and the group $\mathrm{SL}_2(3)$ are not Bertin groups for $k$. 
\end{lemma}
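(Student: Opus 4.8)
The goal is to show $Q_8$ and $\mathrm{SL}_2(3)$ are not Bertin groups for $k$ when $p=2$, i.e.\ that there exist injections $\phi$ for which the Bertin obstruction does \emph{not} vanish. By Proposition \ref{prop:qual1}, the Bertin obstruction of an injection $\phi:\mathrm{SL}_2(3)\to\mathrm{Aut}_k(k[[t]])$ vanishes if and only if that of its restriction to the normal quaternion $2$-Sylow $P\cong Q_8$ vanishes; and restriction of a local $\mathrm{SL}_2(3)$-cover to $P$ is a local $Q_8$-cover. So it suffices to produce one injection $\phi:Q_8\to\mathrm{Aut}_k(k[[t]])$ with non-vanishing Bertin obstruction, and then lift it (or the associated cover) to an $\mathrm{SL}_2(3)$-cover; actually it suffices just to find the bad $Q_8$-cover and invoke that restriction of any $\mathrm{SL}_2(3)$-cover to $P$ is a $Q_8$-cover together with the equivalence in Proposition \ref{prop:qual1}. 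Wait — the cleanest route: first prove the $Q_8$ statement, then deduce $\mathrm{SL}_2(3)$ from Proposition \ref{prop:qual1}, since if $\mathrm{SL}_2(3)$ were Bertin then by Proposition \ref{prop:qual1} every $\phi_P$ would have vanishing Bertin obstruction, contradicting the $Q_8$ statement.

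For $Q_8$, the plan is to invoke Proposition \ref{prop:evenqsemi}. First I would construct, via local class field theory, a Klein four extension $L'/K$ over a quasi-finite field $K=k((z))$ in which the three quadratic subextensions have discriminant exponents $d_0=d_1=d_2=2$, and realize it as the biquadratic subfield of a $Q_8$-extension $N/K$ with $\chi$ of quaternionic type (Lemma \ref{lem:thetypes}) and with first ramification jump $i_0=1$. By Lemma \ref{lem:zapper}, $i_0=d_1+d_2-d_0-1=2+2-2-1=1$, and then by Lemma \ref{lem:firststep} together with the analysis in Lemma \ref{lem:evenqsemi2} and Corollary \ref{cor:quatcasecor} one gets $i_{n-1}=i_1$ odd — indeed the construction forces exactly the filtration $G=G_0=G_1\supsetneq \C(G)=G_2=G_3\supsetneq G_4=\{e\}$ of Proposition \ref{prop:evenqsemi}(i.a). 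Then Proposition \ref{prop:evenqsemi}(i.b) directly asserts that the Bertin obstruction of such a $\phi$ does not vanish — concretely because $b_{\C(G)}=-1/2\notin\mathbb{Z}$, as computed in (\ref{eq:theformlong}). Via Corollary \ref{cor:quasidone} (reduction to quasi-finite residue fields) this bad $\phi$ base-changes to a bad injection over the algebraically closed $k$, so $Q_8$ is not a Bertin group for $k$.

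The main obstacle is the explicit class field theory construction of the $Q_8$-extension with the prescribed ramification data: one needs $L'/K$ with all three quadratic discriminant exponents equal to $2$ (the minimal possible for $p=2$, so each quadratic subextension is "almost unramified") and then a character $\chi:L^*\to\mu_4$ of quaternionic type — i.e.\ $\chi|_{K^*}$ equal to the quadratic character cutting out $L=L_0$ — whose conductor forces $i_0=1$. This is exactly the kind of construction carried out in the proofs of Proposition \ref{prop:thepoint} and Lemma \ref{lem:duhduh} (analyzing $L^*/(L^*)^2$ as a module over the group ring of $\mathrm{Gal}(L/K)$), and one can likewise appeal to Appendix 1 (Proposition \ref{prop:subquots}) to realize the biquadratic $L'/K$ inside a $Q_8$-extension while keeping the first jump where we want it; I would cite those rather than redo the computation. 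Once the filtration is pinned down, everything else is immediate from the cited results.

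Finally, for $\mathrm{SL}_2(3)$: let $\phi_G:\mathrm{SL}_2(3)\to\mathrm{Aut}_k(k[[t]])$ be any injection. Its restriction $\phi_P$ to the normal $2$-Sylow $P\cong Q_8$ is a local $Q_8$-action; if $\mathrm{SL}_2(3)$ were a Bertin group, every such $\phi_G$ would have vanishing Bertin obstruction, hence by Proposition \ref{prop:qual1} every $\phi_P$ arising this way would too — but every local $Q_8$-action extends to a local $\mathrm{SL}_2(3)$-action (e.g.\ by Proposition \ref{prop:subquots} of Appendix 1 applied to an embedding problem, or simply because the $Q_8$-cover just constructed can be taken as the restriction of an $\mathrm{SL}_2(3)$-cover), contradicting the $Q_8$ case. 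Therefore $\mathrm{SL}_2(3)$ is not a Bertin group for $k$ either, completing the proof.
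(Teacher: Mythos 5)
Your reduction to producing a single bad $Q_8$-cover and then ``lifting'' it to $\mathrm{SL}_2(3)$ runs exactly into the trap that Remark \ref{rem:subobs} warns about: a subgroup $H\subset G$ failing to be Bertin need not imply $G$ fails to be Bertin, because a bad local $H$-cover need not arise as the restriction of any local $G$-cover. Your inference relies on the assertion that ``every local $Q_8$-action extends to a local $\mathrm{SL}_2(3)$-action,'' but this is unjustified, and the reference you give (Proposition \ref{prop:subquots} of Appendix 1) cannot supply it. That proposition solves embedding problems along a \emph{surjection} $G\to H$ with kernel $J$, so it would let you build $\mathrm{SL}_2(3)$-covers above a prescribed $C_3$-cover, not extend a prescribed $Q_8$-cover to an overgroup; moreover it is engineered to make wild ramification \emph{large} (conditions (ii), (iv) force deep jumps), whereas you need the jumps to be as \emph{small} as possible (lower filtration $P=P_0=P_1$, $\C(P)=P_2=P_3$, $P_4=\{e\}$). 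So the machinery you cite works against you. To extend your particular $Q_8$-extension $N/K$ to $\mathrm{SL}_2(3)$ you would need $K$ itself to lie over a $C_3$-base $F$ with a compatible descent datum (the three quadratic subfields $L_0,L_1,L_2$ must be cyclically permuted over $F$), and you have not shown this can be arranged simultaneously with $d_0=d_1=d_2=2$ and $c(1)=2$.

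The paper avoids this entirely by constructing the $\mathrm{SL}_2(3)$-cover \emph{first}, geometrically: in characteristic $2$ there is an elliptic curve $E$ with $\mathrm{Aut}(E)\cong \mathrm{SL}_2(3)$ (\cite[Ex.\ A.1.b]{Silverman}), and the action of $G=\mathrm{Aut}(E)$ on $\hat{\mathcal O}_{E,\underline 0}$ is totally ramified. The Hurwitz formula applied to $E\to E/P$, combined with Lemma \ref{lem:jumps} applied to a cyclic order-$4$ subgroup of $P$, forces $g(E/P)=0$ and the filtration $P=P_0=P_1$, $\C(P)=P_2=P_3$, $P_4=\{e\}$; there is simply not enough room in $2g(E)-2=0$ for anything deeper. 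By Proposition \ref{prop:evenqsemi}(i.b) this $\phi_P$ has nonvanishing Bertin obstruction, and by Proposition \ref{prop:qual1} so does the ambient $\phi_G$. This gives both conclusions at once, with the $Q_8$-case free because $\phi_P$ is by construction the restriction of $\phi_G$. Your plan tackles the two groups in the opposite order, which is precisely what makes the argument break. Separately, even for $Q_8$ alone, your class-field-theory construction of the cover with $d_0=d_1=d_2=2$ and $c(1)=2$ is left as a citation to Proposition \ref{prop:thepoint} and Lemma \ref{lem:duhduh}, which do not produce this example; you would have to carry out that computation explicitly or use the elliptic curve.
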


\begin{proof}
By Proposition \ref{prop:qual1},  it will be enough
to construct an injection $\phi:G = \mathrm{SL}_2(3) \to \mathrm{Aut}_k(k[[t]])$
such that the restriction of $G$ to the (unique) $2$-Sylow subgroup
$P$ of $G$ has non-trivial Bertin obstruction, where $P$ is isomorphic
to $Q_8$.  By Proposition \ref{prop:evenqsemi}(ii), it will be enough to 
construct an example of this kind in which the lower ramification filtration
of $P$ has the form $P = P_0 = P_1$, $\C(P) = P_2 = P_3$ and $P_4 = \{e\}$.  By \cite[Ex. A.1.b]{Silverman},
there is an elliptic curve $E$ over $k$ whose automorphism
group $G = \mathrm{Aut}(E)$ is isomorphic to $\mathrm{SL}_2(3)$.
Every element of $G$ fixes the origin $\underline{0}$
of $E$, so $\underline{0}$ is totally ramified over its image $c$
in the quotient cover $E \to E/G$.  Let $P_{\underline{0},i}$ be the
$i^{th}$ lower ramification subgroup of $P$ acting on the completion of
the local ring of $\underline{0}$ on $E$.  Then $P_{\underline{0},0} = 
P_{\underline{0},1} = P$.  By applying Lemma \ref{lem:jumps}
to the action of a cyclic subgroup $H$ of order $4$ in $P$, we
see that $P_{\underline{0},2}$ and $P_{\underline{0},3}$ must be non-trivial.
The Hurwitz formula gives
\begin{eqnarray}
\label{eq:hurwitz}
0 &=& 2g(E) - 2 = 8\cdot (2g(E/P) - 2) + \sum_{i = 0}^{\infty} (\#P_{\underline{0},i} - 1) + r_{\ne 0} \nonumber\\
&=& 8 \cdot(2g(E/P) - 2) + 16 + \sum_{i=2}^3 (\#P_{\underline{0},i} - 2) + \sum_{i = 4}^{\infty} (\#P_{\underline{0},i} - 1)  + r_{\ne 0}
\end{eqnarray}
where $r_{\ne 0}$ is the contribution of ramification points of the cover $E \to E/P$
which are not equal to $\underline{0}$.  This implies $g(E/P) = 0$, $r_{\ne 0} = 0$  and that
the ramification filtration of $P = P_{\underline{0}}$ has the required form,
in the sense that $P = P_0 = P_1$, $\C(P) = P_2 = P_3$ and $P_4 = \{e\}$.
Hence the action of $G$ on the completion of the local ring of $E$ at $\underline{0}$ defines an injection $\phi:G = \mathrm{SL}_2(3) \to \mathrm{Aut}_k(k[[t]])$ for which the Bertin obstruction does not vanish.
\end{proof}

\section{Proof of Theorem \ref{thm:nonexsalmost}.}
\label{s:nonexall}
\setcounter{equation}{0}

By Corollary \ref{cor:subquots}, if a quotient of a group $G$ is not an almost Bertin
group then $G$ is not an almost Bertin group for $k$.  The  Bertin groups for $k$ have been determined
in Theorem \ref{thm:nonexs}, and each of these is a KGB group for $k$ and hence an almost KGB group for $k$.
Hence by Theorems~\ref{thm:oddcasered} and~\ref{thm:evenclass}, 
Theorem \ref{thm:nonexsalmost} follows from the following assertions, which have already been shown:
\begin{enumerate}
\item[i.] The groups listed in items (1) - (5) of Theorem \ref{thm:oddcasered} 
are not almost Bertin groups for $k$ if $p = \mathrm{char}\, k \ne 2$.  This follows from Corollary~\ref{cor:theanswerhere}, since each of the groups 1)-(5) in Theorem~\ref{thm:oddcasered} are cyclic-by-$p$.
\item[ii.] The groups listed in items (1) - (7) of Theorem \ref{thm:evenclass} 
are not almost Bertin groups for $k$ if $\mathrm{char}\, k = 2$. This follows from Corollary \ref{cor:twoanswers},
Proposition \ref{prop:no4} and Proposition \ref{prop:no9}.
\item[iii.]  The groups $H_8$ and $\mathrm{SL}_2(3)$ are almost KGB groups $k$ if $\mathrm{char}\, k = 2$.
This was shown in Proposition \ref{prop:trumps}   and Corollary \ref{cor:enditall}.
\item[iv.]  Semi-dihedral groups are not almost Bertin groups in characteristic $2$.  This was shown in Proposition~\ref{prop:trumps}.  
\end{enumerate}

\section{Appendix 1:  Constructing extensions with prescribed ramification.}
\label{s:constructions}
\setcounter{equation}{0}

In this section we suppose $G$ is a finite group which is the semi-direct
product of a normal $p$-group $P$ with a finite cyclic group $C$.

\begin{dfn}
\label{def:uncle}
Suppose that $G$ is a GM group for $k$ with respect to a faithful
character $\Theta:B \to \mathbb{Z}_p^*$ as in Definition \ref{def:groovy}.  Let $\Theta_{C}:C \to W(k)^*$
be an extension of $\Theta$ to a faithful character of $C$. An injection $\phi_G:G \to \mathrm{Aut}_k(k[[z]])$ will be said to be GM for $\Theta_{C}$ if 
\begin{equation}
\label{eq:groovtime}
\phi_G(c)(u) / u \equiv \overline{\Theta}_{C}(c)^{-1} \quad \mathrm{mod}\quad uk[[u]]
\end{equation} 
for some uniformizer $u$ in $k[[z]]^{\phi_G(P)}$, where $\overline{\Theta}_{C}:C \to k^*$
is the reduction of $\Theta_{C}$ mod $pW(k)$.  
\end{dfn}

\begin{lemma}
\label{def:dumdumdum}
Suppose $G = C$.  Then $G$ is GM with respect to 
any given faithful character $\Theta:B \to \mathbb{Z}_p^*$.  Let $\Theta_{C}:C \to W(k)^*$ be a faithful extension of $\Theta$.  There is an injection $\phi_G:G \to \mathrm{Aut}_k(k[[z]])$
which is GM with respect to $\Theta_{C}$.  
\end{lemma}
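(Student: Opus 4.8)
The plan is to construct $\phi_G$ concretely by exhibiting a cyclic totally ramified extension of $k((z))$ whose tame quotient realizes the desired character $\overline{\Theta}_C^{-1}$ on the inertia group. First I would reduce, as in Proposition \ref{prop:reducetoquasi}, to the case where $k$ is taken large enough (any algebraically closed field works, and in fact we only need a finite or quasi-finite base); the point is that $C$ is cyclic of order $m = \#C$ prime to $p$, so $m$ is invertible and $k$ contains a primitive $m$-th root of unity. Write $m = p^0 \cdot m$ with $m$ prime to $p$ — here $P = \{e\}$ and $C = G$, so the ``$p$-group part'' is trivial and $u = z$ may be taken as the uniformizer of $k[[z]]^{\phi_G(P)} = k[[z]]$ itself. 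Thus the GM condition \eqref{eq:groovtime} to be arranged is simply $\phi_G(c)(z)/z \equiv \overline{\Theta}_C(c)^{-1} \bmod zk[[z]]$ for all $c \in C$.

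The construction itself is the standard one for tame cyclic extensions: fix a generator $c_0$ of $C$, let $\zeta = \overline{\Theta}_C(c_0)^{-1} \in k^*$, which is a primitive $m$-th root of unity because $\Theta_C$ (hence $\overline{\Theta}_C$) is faithful. Define $\phi_G(c_0) \in \mathrm{Aut}_k(k[[z]])$ by $\phi_G(c_0)(z) = \zeta z$; since $\zeta$ has exact order $m$, this extends to an injective homomorphism $\phi_G : C \to \mathrm{Aut}_k(k[[z]])$ with $\phi_G(c_0^j)(z) = \zeta^j z$. The extension $k((z))/k((z))^{\phi_G(C)} = k((z))/k((z^m))$ is totally ramified of degree $m$, prime to $p$, hence tame, so $\phi_G$ is indeed a faithful continuous action with $G_0 = C$ and $G_1 = \{e\}$. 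By construction $\phi_G(c)(z)/z = \overline{\Theta}_C(c)^{-1}$ exactly (not just mod $zk[[z]]$), so $\phi_G$ is GM with respect to $\Theta_C$ in the sense of Definition \ref{def:uncle}.

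Finally one must check that $G = C$ really is a GM group with respect to every faithful $\Theta : B \to \mathbb{Z}_p^*$; but this is immediate from Definition \ref{def:groovy} as already noted in the text (``Clearly, cyclic groups and $p$-groups are GM-groups''): condition (a) holds because for $1 \ne c \in Y = C$ one has $\C_P(c) = \C_P(C) = \{e\}$, which is cyclic, and condition (b) is about cyclic subgroups $T$ of $P = \{e\}$, so it holds vacuously. The only mild subtlety — and the place where one has to be slightly careful — is the bookkeeping between $\overline{\Theta}_C$ and its Teichm\"uller lift and the role of the uniformizer $u$ of the $\phi_G(P)$-invariants: since $P$ is trivial here, $W(k)^* \to k^*$ reduction is the only arithmetic input, and there is no discrepancy of the kind handled by Remark \ref{rem:clarify} and Corollary \ref{cor:connectit}. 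So I expect no real obstacle; the statement is essentially a base case, and the work is just writing down the tame cyclic action and verifying it meets Definition \ref{def:uncle} on the nose.
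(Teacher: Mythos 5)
Your proposal is correct and takes essentially the same approach as the paper: both explicitly construct a tame cyclic action by scaling the uniformizer $z$ by an appropriate root of unity, with the first claim following trivially from Definition \ref{def:groovy}. The only differences are cosmetic — you choose $\zeta = \overline\Theta_C(c_0)^{-1}$ directly so that condition \eqref{eq:groovtime} holds on the nose, whereas the paper picks an arbitrary primitive $\#C$-th root of unity and then precomposes with an automorphism $\alpha \in \mathrm{Aut}(C)$ to match $\Theta_C$; and your opening remark about reducing to a quasi-finite base is unnecessary (indeed slightly out of place) since $k$ is algebraically closed throughout, hence already contains all the needed roots of unity.
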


\begin{proof}
The first statement is clear from Definition \ref{def:groovy}.  For the second statement, 
pick a root of unity $\zeta \in k^*$
of order $\# C$ and a generator $c$ of $C = G$.  Let
$\phi'_G:G \to \mathrm{Aut}_k(k[[z]])$ the injection for which $\phi'_G(c)(z) = \zeta c$.   
Since $\mathrm{Aut}(C)$ acts transitively on the faithful characters of $C$,
there will be a unique $\alpha \in \mathrm{Aut}(C)$ such that $\phi_G = \phi'_G \circ \alpha$
will be GM with respect to $\Theta_{C}$.
\end{proof}

The main result of this section is:

\begin{prop}  
\label{prop:subquots}  Suppose $H$ is a quotient group
of $G$, and let $H(p)$ be the $p$-Sylow subgroup of $H$.  Let $M$ be  a positive integer.  There is an integer $M' \ge 1$, with 
$M' = 1$ if $M = 1$, for which the following is true.  Suppose $\phi_H:H \to \mathrm{Aut}_k(k[[t]])$ is an injection such that that the lower ramification group $H_{M'-1}$ contains $H(p)$.  Let $J$ be the kernel of the surjection $\pi:G \to H$.  Then there is an injection $\phi_G:G \to \mathrm{Aut}_k(k[[z]])$ 
with the following properties:
\begin{enumerate}
\item[i.]  There is a $k$-isomorphism between $k[[z]]^J$ and $k[[t]]$
such that the induced action of $G/J = H$ on $k[[t]]$ is given by $\phi_H$.  
\item[ii.]  The lower ramification group $G_{M-1}$ contains $P$.  

\item[iii.]  Suppose $J \subset P$, $G$ is GM with respect
to $\Theta:B \to \mathbb{Z}_p^*$, and $\Theta_{C}:C \to W(k)^*$ is a faithful 
extension of $\Theta$ to $C$.  Suppose $\phi_H$
is GM with respect to $\Theta_{C}$.  Then $\phi_G$
is GM 
with respect to $\Theta_{C}$.
\item[iv.] Suppose $J \subset P$ and $T$ is a proper non-trivial cyclic subgroup
of $J$.  Then $\iota_G(T) \equiv 0$ mod $p^{M-1}$, where as before $\iota_G(T)$ is $i+1$
if $i$ is the largest integer such that $T$ lies in the ramification group $G_i$.  Let $\Gamma$ be cyclic subgroup of $P$ containing $T$ such that  $J \not \supset \Gamma$. Then $\iota_G(T) > \iota_G(\Gamma) + M$,  
\item[v.]Suppose $M > 1$, $J \subset P$ and $\iota_H(T') \equiv 0$ mod $p^{M'-1}$ for all non-trivial
cyclic $p$-subgroups $T'$ of $H$.  Then $\iota_G(T) \equiv 0$ mod $p^{M-1}$
for all non-trivial cyclic $p$-subgroups $T$ of $G$. 
\end{enumerate}
\end{prop}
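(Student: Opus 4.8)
\textbf{Proof proposal for Proposition \ref{prop:subquots}.}

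The plan is to construct $\phi_G$ by an explicit embedding-problem argument carried out one step at a time along a composition series for $G$ refining the surjection $\pi\colon G\to H$, and to keep track of the ramification filtration at each stage. First I would reduce to the case in which the kernel $J$ of $\pi$ is a minimal normal subgroup of $G$: if $J$ has a proper nontrivial $G$-stable subgroup $J_0$, one first solves the problem for $G/J_0 \to H$ (getting an injection $\phi_{G/J_0}$ with the desired ramification properties for a suitable intermediate constant $M_0$), and then for $G \to G/J_0$, composing the two solutions. Since $J \subset P$ by hypothesis in (iii)--(v), such a $J_0$ is central elementary abelian of order $p$ when $J$ is minimal, or (if $J\not\subset P$) a minimal normal subgroup of order prime to $p$; in the latter case $G$ is forced to be cyclic and the statement is Lemma \ref{def:dumdumdum} together with Lemma \ref{lem:cyclic}-type bookkeeping. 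So the essential case is $J = \mathbb{Z}/p$ central in $G$.

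In that essential case I would realize $\phi_H$ as a $G/J = H$-action on $N_0 = k((t))$ with $N_0^{H} = K$, and look for the $G$-extension $N/K$ as a $\mathbb{Z}/p$-extension of $N_0$ whose Galois closure over $K$ is $N$ with $\mathrm{Gal}(N/K) = G$. Using Artin--Schreier theory (the residue field is $k$, algebraically closed, so we may as well pass to a quasi-finite field by Proposition \ref{prop:reducetoquasi} and then use local class field theory, or work directly with $N_0^{*}/(N_0^{*})^{p}$ viewed as a module over the relevant group ring), the datum of the extension is an element $f$ of $N_0$, and the obstruction to extending the $H$-action is cohomological; since $J$ is central and $k$ is algebraically closed this obstruction can be killed, so an extension exists. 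The freedom in choosing $f$ — precisely, replacing $f$ by $f + $ an element of large $t$-adic valuation times a norm, as in the arguments of \S\ref{s:notalmostBertin} and Lemma \ref{lem:identity} — lets me push the first jump of the $\mathbb{Z}/p$-extension $N/N_0$ arbitrarily high, and in fact make it congruent to whatever I like modulo $p^{M}$ and modulo $\#C$ (using that $C$ has order prime to $p$ and acts on the relevant filtration quotients through its tame character $\theta$, exactly as in Lemma \ref{lem:urk} and Corollary \ref{cor:umph}). The GM condition (iii) is automatic from the induction because the tame character $\theta$, and hence the $C$-conjugation action on any cyclic $p$-subgroup, is unchanged under the extension by the central $J = \mathbb{Z}/p$, and we arranged $\phi_H$ to be GM for $\Theta_C$; so I only need to check $\phi_G(c)(u)/u \equiv \overline{\Theta}_C(c)^{-1}$ for a uniformizer $u$ of $k[[z]]^{\phi_G(P)}$, which follows from Remark \ref{rem:clarify} and Corollary \ref{cor:connectit}.

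The ramification statements (iv) and (v) are then obtained by combining the Hasse--Arf theorem and Herbrand's theorem (Lemma \ref{lem:jumps}, \cite[Ch.~IV]{corps}) with the fact that the new jump of $N/N_0$ can be taken as large and as divisible by $p^{M}$ as needed: for a proper nontrivial cyclic $T \subset J$ one has $T \subset G_{i}$ with $i+1 = \iota_G(T)$ controlled by the $N/N_0$-jump, which via the Hasse--Arf integrality and the translation between lower and upper numbering gives $\iota_G(T) \equiv 0 \bmod p^{M-1}$ and $\iota_G(T) > \iota_G(\Gamma) + M$ for any cyclic $\Gamma \supset T$ with $\Gamma \not\subset J$; and for a cyclic $p$-subgroup $T$ of $G$ not contained in $J$, its image in $H$ is a nontrivial cyclic $p$-subgroup, so $\iota_H(\bar T) \equiv 0 \bmod p^{M'-1}$ by hypothesis, and the compatibility of lower ramification with quotients (together with the already-established divisibility for $T\cap J$ when that is nontrivial) upgrades this to $\iota_G(T) \equiv 0 \bmod p^{M-1}$ after choosing $M'$ large enough in terms of $M$ and the exponent of $P$. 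The main obstacle I expect is the careful tracking of the constant $M'$ and the translation between upper and lower numbering through the successive central extensions — ensuring that the single parameter $M'$, depending only on $M$ and on $G$ (and equal to $1$ when $M = 1$), suffices simultaneously for conditions (ii), (iv) and (v) at every stage of the induction; this is a finite but delicate bookkeeping of jump numbers, and is the reason the statement is relegated to the appendix.
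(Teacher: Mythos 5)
Your reduction step contains a genuine gap. You pass to the case where $J$ is a minimal normal subgroup of $G$ and then assert that, when $J \subset P$, minimality forces $J$ to be central in $G$ and of order $p$. That is false. A minimal normal $p$-subgroup $J$ of $G = P.C$ is indeed centralized by $P$, hence a simple $\mathbb{Z}/p[C]$-module, but irreducible $\mathbb{F}_p$-representations of the cyclic prime-to-$p$ group $C$ can have dimension $d > 1$: with $G = A_4$ and $p = 2$, the Klein four group $J$ is a minimal normal subgroup (as $C_3$ acts irreducibly on it), and it is neither central in $G$ nor of order $p$. Your induction therefore has no base case there. This is precisely why the paper's Lemma \ref{lem:assume} keeps $J \cong \mathbb{F}_{p^d}^+$ with $C$ acting through a character $\chi$ of possibly nontrivial order, and why Lemma \ref{lem:okey} builds the auxiliary extension from the Artin--Schreier equation $y^{p^d} - y = w^{-n}$ (not a rank-one $y^p - y = f$), with $n$ chosen so that $\chi = (\chi')^{n}$ to get the $C$-equivariance. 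Your secondary claim that a prime-to-$p$ kernel $J$ forces $G$ to be cyclic is also incorrect, though this case is harmless since condition (iv) is vacuous for such $J$, which is how Corollary \ref{cor:easy} disposes of it.

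A second, softer issue is the step ``the freedom in choosing $f$ lets me push the first jump arbitrarily high and congruent to whatever I like modulo $p^{M}$ and modulo $\#C$.'' As stated this elides the coherence problem: you must vary the $J$-subextension while keeping the induced $H$-action on $k[[t]]$ equal to the fixed $\phi_H$, and you must simultaneously arrange $n \equiv -1 \bmod p^{M-1}$, $\gcd(n,p)=1$ (forced by Artin--Schreier theory), and the congruence on $n$ modulo $\#C$ needed for $C$-equivariance. The paper makes all of this precise in Lemma \ref{lem:anticip} by a fiber-product-and-diagonal-quotient twist: start from an arbitrary $\phi_G$ inducing $\phi_H$ (existence being \cite[Lemma 2.10]{CGH}), form the compositum $N$ of $k((z))$ with the explicit auxiliary $\tilde J.C$-extension $L$ over $F$, identify $\mathrm{Gal}(N/F) = (\tilde J \times P).C$ once $n$ is large enough to force linear disjointness, and pass to the quotient by the diagonal copy $\Delta$ of $J$ inside $\tilde J \times J$. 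This twist is what actually produces the prescribed ramification on $J$ without disturbing $\phi_H$, and it is the concrete device your phrase ``use the freedom in $f$'' needs to be replaced by.
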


The remainder of this section is devoted to proving Proposition \ref{prop:subquots}.
For a related result, see the work of Pries in \cite[Prop.~2.7]{Pries}.
Since $J$ is solvable we have the following result by induction on $\# J$.

\begin{lemma}
\label{lem:reduce}
To prove Proposition \ref{prop:subquots}, it will suffice to consider the case in which $J$ 
is abelian and the conjugation action
of $H$ on $J$ makes $J$ into a simple $\mathbb{Z}[H]$-module.   We will assume
$J$ to be such a module for the rest of this section.
\end{lemma}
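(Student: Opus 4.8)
The plan is a dévissage on $\#J$: assuming Proposition \ref{prop:subquots} is known whenever the kernel of $\pi\colon G\to H$ is abelian and a simple $\mathbb{Z}[H]$-module, I prove it for arbitrary $J$ by strong induction on $\#J$, the base case being exactly that hypothesis (the case $J=\{e\}$ being trivial). So suppose $J$ is neither abelian nor a simple $\mathbb{Z}[H]$-module. If $J$ is not abelian, put $N=[J,J]$; it is characteristic in $J$, hence normal in $G$, and is a proper nontrivial subgroup of $J$ since $J$ is solvable and nonabelian. If $J$ is abelian but not a simple $\mathbb{Z}[H]$-module, then, the $G$-conjugation action on $J$ factoring through $H$ because $J$ is abelian, any proper nontrivial $\mathbb{Z}[H]$-submodule $N$ of $J$ is normal in $G$. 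In either case we obtain $\{e\}\ne N\trianglelefteq G$ with $N\subsetneq J$, with $N\subseteq P$ whenever $J\subseteq P$, and $G/N$ is again a semidirect product of a normal $p$-subgroup by a cyclic group. Thus Proposition \ref{prop:subquots} applies to the two quotient maps $\rho\colon G\to G/N$ (kernel $N$) and $\bar\pi\colon G/N\to H$ (kernel $J/N$), each of which has kernel of order strictly smaller than $\#J$, hence is covered by the inductive hypothesis.

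\textbf{Chaining the two instances.} Given a positive integer $M$, apply the inductive hypothesis to $\rho$ with target parameter $M$ to get an auxiliary integer $M''\ge 1$ (with $M''=1$ if $M=1$), then apply it to $\bar\pi$ with target parameter $M''$ to get $M'\ge 1$ (with $M'=1$ if $M''=1$, hence if $M=1$); this $M'$ is the constant for $(\pi,M)$. Now let $\phi_H\colon H\to\mathrm{Aut}_k(k[[t]])$ satisfy $H_{M'-1}\supseteq H(p)$. The instance for $\bar\pi$ yields an injection $\psi\colon G/N\to\mathrm{Aut}_k(k[[w]])$ lifting $\phi_H$ through $\bar\pi$, with the image of $P$ contained in $(G/N)_{M''-1}$ and with properties (i)--(v); the instance for $\rho$, applied to $\psi$, then yields $\phi_G\colon G\to\mathrm{Aut}_k(k[[z]])$ lifting $\psi$ through $\rho$, with $G_{M-1}\supseteq P$ and with properties (i)--(v). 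Composing the identifications, $k[[z]]^J=(k[[z]]^N)^{J/N}=k[[w]]^{J/N}\cong k[[t]]$ with induced $H$-action $\phi_H$, so (i) and (ii) hold. For (iii)--(v), the GM normalization (\ref{eq:groovtime}) is preserved on passing first to $k[[w]]$ and then to $k[[z]]$ because it involves only the tame character; and for a proper nontrivial cyclic $T\subseteq J$, the congruence $\iota_G(T)\equiv 0\bmod p^{M-1}$ and the inequality $\iota_G(T)>\iota_G(\Gamma)+M$ follow from the corresponding statements for $\rho$ when $T\subseteq N$ and for $\bar\pi$ when $T\not\subseteq N$, combined with the compatibility of the lower ramification filtration with the tower $k[[z]]\supset k[[w]]\supset k[[t]]$.

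\textbf{Where the difficulty lies.} The substance of Proposition \ref{prop:subquots} is entirely in the base case, which is handled later in this appendix by the Witt-vector and class-field-theoretic constructions built on Lemma \ref{def:dumdumdum}; the reduction above is formal. The only point in the reduction needing attention is the bookkeeping of the auxiliary constants $M''$ and $M'$ and the verification that the ramification data and the GM condition pass correctly through the two-step tower. Because conclusions (iv)--(v) are stated only as congruences modulo $p^{M-1}$ and as coarse inequalities, this propagation is routine once one invokes the standard tower compatibilities (Herbrand's theorem and the multiplicativity of discriminants in towers) that are used elsewhere in the paper, and the injectivity of $\phi_G$ is inherited from that of $\psi$ and of the second lift.
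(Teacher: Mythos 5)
Your proof is correct and fills in exactly what the paper leaves implicit: the paper's entire proof of this lemma is the single sentence ``Since $J$ is solvable we have the following result by induction on $\#J$,'' and your d\'evissage (take $N=[J,J]$ or a proper nonzero $\mathbb{Z}[H]$-submodule, then chain the two smaller surjections $\rho$ and $\bar\pi$ and compose the constants $M\mapsto M''\mapsto M'$) is precisely the intended strong induction. One small imprecision worth flagging: in propagating parts (iv)--(v) you invoke ``compatibility of the lower ramification filtration with the tower,'' but the lower filtration is only functorial for passage to \emph{subgroups}; for the quotient step $G\twoheadrightarrow G/N$ the tool that actually carries the bounds $\iota_G(T)$ across is the averaging formula $i_{G/N}(\bar\sigma)=\frac{1}{\#N}\sum_{n\in N}i_G(\sigma n)$ of \cite[Chap.\ IV.1, Prop.\ 3]{corps}, which is exactly how the paper handles this in the proof of Lemma~\ref{lem:step1} (and that lemma also reduces proving all of (ii)--(v) to proving (i) and (iv), simplifying the bookkeeping you describe).
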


\begin{lemma}
\label{lem:step1}
Given
any $\phi_H:H \to \mathrm{Aut}_k(k[[t]])$ as in Proposition \ref{prop:subquots}, there is always an injection $\phi_G:G \to \mathrm{Aut}_k(k[[z]])$
for which condition (i) of the proposition holds.   To complete the 
proof of the proposition, it will suffice to show that there is a $\phi_G$
for which (i) and (iv) hold. 
\end{lemma}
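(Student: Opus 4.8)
The plan is to prove Lemma~\ref{lem:step1} by first disposing of condition (i), which is essentially automatic, and then reducing everything else to conditions (ii)--(v), and finally showing that (iv) alone implies (ii), (iii) and (v). For condition (i): since $\pi:G\to H$ is surjective with kernel $J$, and $\phi_H$ gives an action of $H$ on $k[[t]]$, we need to produce \emph{some} action of $G$ on a power series ring $k[[z]]$ whose fixed ring under $J$ recovers $(k[[t]],\phi_H)$. This is a standard embedding-problem construction: realize $\phi_H$ as the completion at a totally ramified point of a Katz--Gabber $H$-cover $Y\to Y/H=\mathbb{P}^1_k$, then solve the embedding problem $1\to J\to G\to H\to 1$ over $Y/H=\mathbb{P}^1_k$ by (for instance) adding controlled ramification away from the relevant point, using that $\mathrm{Gal}$ of a curve over an algebraically closed field of characteristic $p$ is projective for the prime-to-$p$ part and that $p$-group extensions are always solvable by Artin--Schreier--Witt theory with prescribed local behaviour; the completion of the resulting $G$-cover at the point above $\infty$ gives the required $\phi_G$. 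I would phrase this as: "Embed $\phi_H$ in a Katz--Gabber cover, solve the embedding problem, and take the completion at the wild point," citing the construction techniques already invoked elsewhere in the paper (Harbater, Pop, Katz--Gabber).

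The core of the lemma is the second sentence: granting that a $\phi_G$ satisfying (i) and (iv) can be found, the other conditions follow. Here is why. Condition (ii) asks that $G_{M-1}\supset P$, i.e. the first jump in the lower ramification filtration of $G$ is $\geq M-1$; but (iv) applied (say) to a minimal nontrivial $T\subset J$ together with a $\Gamma\subset P$ with $J\not\supset\Gamma$, or more directly the congruence $\iota_G(T)\equiv 0\bmod p^{M-1}$ combined with the inequality $\iota_G(T)>\iota_G(\Gamma)+M$, forces the ramification of $G$ to be large; one extracts from (iv) that $\iota_G(T)\geq p^{M-1}\geq M$ for every nontrivial cyclic $p$-subgroup, which is exactly $G_{M-1}\supset P$. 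Condition (v) is the analogue of (iv)'s congruence for \emph{all} cyclic $p$-subgroups, not just those in $J$: for $T\not\subset J$ one uses the hypothesis $\iota_H(\pi(T))\equiv 0\bmod p^{M'-1}$ together with the compatibility of the lower numbering under $G\to H=G/J$ (Herbrand's theorem, \cite[\S IV.3]{corps}) and the fact that $J\subset G_{M-1}$ from (ii), bootstrapping to $\iota_G(T)\equiv 0\bmod p^{M-1}$; for $T\subset J$ it is (iv) directly. Condition (iii), the GM compatibility, is about the tame character $\theta$ and hence only about $G/G_1$; since $J\subset P\subset G_1$ by (ii), the tame quotient of $\phi_G$ equals that of $\phi_H$ (via the $k$-isomorphism of (i)), so being GM with respect to $\Theta_C$ is inherited from $\phi_H$. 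So the reduction statement in the lemma is really the assertion that conditions (ii), (iii), (v) are formal consequences of (i)+(iv) once one also invokes the GM hypothesis on $\phi_H$.

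I expect the main obstacle to be condition (iv) itself --- but that is precisely what is \emph{not} being proved here: Lemma~\ref{lem:step1} only reduces the proof of Proposition~\ref{prop:subquots} to establishing (i) and (iv), postponing (iv) to the subsequent lemmas of the section. So within Lemma~\ref{lem:step1} the real work is organizing the implications $(\text{i})\wedge(\text{iv})\Rightarrow(\text{ii}),(\text{iii}),(\text{v})$ cleanly, and the one genuinely delicate point is the derivation of (v) for cyclic $p$-subgroups $T$ not contained in $J$: there the Herbrand function of the extension $G\to G/J$ must be controlled, and one needs $M'$ chosen large enough (depending on $M$, $\#J$, and the Herbrand transition) so that a jump at level $\equiv 0\bmod p^{M'-1}$ upstairs of $G/J$ pulls back to a jump at level $\equiv 0\bmod p^{M-1}$ in $G$. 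I would handle this by writing $\varphi_{G/J}$ piecewise-linearly with slopes that are reciprocals of ramification indices (powers of $p$), observing that $\psi_{G/J}$ maps the lattice $p^{M'-1}\mathbb{Z}$ into $p^{M-1}\mathbb{Z}$ once $M'$ is large relative to $M$ and the orders involved, and then combining with $J\subset G_{M-1}$. The rest is bookkeeping, and the reduction in Lemma~\ref{lem:reduce} to $J$ a simple $\mathbb{Z}[H]$-module makes the inductive structure of $M'$ manageable.
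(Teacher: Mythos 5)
The overall architecture of your argument matches the paper's: dispose of (i), observe that (iii) follows essentially for free once (i) holds (the paper actually needs only (i), not (ii), since the GM congruence (\ref{eq:groovtime}) lives on $k[[z]]^{\phi_G(P)}=k[[t]]^{\phi_H(P/J)}$ and $\phi_G$ induces $\phi_H$ there), and reduce (ii) and (v) to (i)+(iv). However, your derivation of (ii) from (iv) contains a genuine gap. You write that (iv) yields $\iota_G(T)\ge p^{M-1}\ge M$ ``for every nontrivial cyclic $p$-subgroup,'' but condition (iv) constrains only cyclic subgroups $T\subset J$. For a cyclic $\Gamma\subset P$ with $\Gamma\not\subset J$, the content of (iv) is the \emph{upper} bound $\iota_G(\Gamma)<\iota_G(T)-M$, which does nothing to establish $G_{M-1}\supset P$. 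The missing ingredient for elements $\sigma$ of $p$-power order not in $J$ is precisely the hypothesis that $H_{M'-1}\supset H(p)$, and one must transfer it from $H=G/J$ back to $G$.

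The paper's mechanism for this transfer is cleaner than the Herbrand-transition argument you sketch for (v). It uses Serre's identity $i_H(\sigma')=\frac{1}{\#J}\sum_{j\in J}i_G(\sigma j)$ \cite[Ch.~IV.1, Prop.~3]{corps} together with the inequality $i_G(\sigma j)\ge\min(i_G(\sigma),i_G(j))$: once (iv) forces $i_G(j)>i_G(\sigma)$ for all $e\ne j\in J$, this gives $i_G(\sigma j)=i_G(\sigma)$ for every $j\in J$, hence the exact equality $i_G(\sigma)=i_H(\sigma')$. With that equality in hand, taking $M'\ge M$ suffices, and both (ii) and the congruence in (v) for $T\not\subset J$ drop out immediately. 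Your proposed route --- controlling the Herbrand transition functions $\varphi_{G/J},\psi_{G/J}$ to push the lattice $p^{M'-1}\mathbb{Z}$ into $p^{M-1}\mathbb{Z}$ --- runs into the difficulty that the lower numbering is not compatible with quotients; one would have to control the Herbrand function of $G$ itself, which is exactly what is unknown before the construction, whereas the formula argument sidesteps this entirely by working element-wise. You should also handle the prime-to-$p$ case $J\not\subset P$ separately (the paper shows $P$ and $J$ commute there, making $i_G(\sigma j)=1$ for $e\ne j\in J$, so $i_G(\sigma)-i_H(\sigma')=(\#J-1)(i_H(\sigma')-1)\ge 0$); in that case (v) is vacuous but (ii) still requires an argument.
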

\begin{proof}
It is shown in \cite[Lemma 2.10]{CGH} that there is always a $\phi_G$ as in part (i). We  now show part (iii) of Proposition \ref{prop:subquots}.  
(See Lemma~\ref{def:dumdumdum} for the case $G=C$.)
Let
$J$, $G$, $\Theta_{C}$ and $\phi_H$ be as in part (iii), so that 
 $\phi_H$ is GM for $\Theta_{C}$.  
Then the identification of $k[[t]]$ with $k[[z]]^{\phi_G(J)}$ identifies 
$k[[z]]^{\phi_G(P)}$ with $k[[t]]^{\phi_H(P/J)} = k[[u]]$.  We have identified
$C$ as a subgroup of both $G$ and $H$, and the actions of $\phi_G(c)$
and $\phi_H(c)$ on $k[[u]]$ for $c \in C$ must be the same since
$\phi_G$ induces $\phi_H$.  Since $\phi_H$ is GM with
respect to $\Theta_{C}$,
$$\phi_H(c)(u) / u \equiv \overline{\Theta_{C}}(c)^{-1} \quad \mathrm{mod}\quad uk[[u]].$$
Thus this congruence holds when $\phi_H$ is replaced by $\phi_G$;
so $\phi_G$ is GM with respect to $\Theta_{C}$.

To complete the proof of Lemma \ref{lem:step1} now amounts to showing
that if we can always construct an $M'$ for which parts (i) and (iv) 
of Proposition \ref{prop:subquots} hold, then we can construct an $M'$
for which parts (ii) and (v) also hold.   By increasing $M'$, we can
assume $M' \ge M$.

Let $\sigma$ be a non-trivial element of $G$ of $p$-power order, and define $\sigma' = \pi(\sigma) \in H$. To show $G_{M-1}$ contains $P$ as in part (ii), it will suffice to show 
\begin{equation}
\label{eq:targetineq}
i_G(\sigma) \ge M.
\end{equation}
To show part (v), it will be enough to prove 
\begin{equation}
\label{eq:target2}
\iota_G(\langle \sigma \rangle) \equiv 0\quad \mathrm{ mod}\quad 
p^{M -1}\mathbb{Z}.
\end{equation}

Suppose first $J$ is a $p$-group.  If $\sigma \in J$, then
condition (iv) applied to the subgroup $\langle \sigma \rangle$ generated by $\sigma$ shows
$0 < i_G(\sigma) = \iota_G(\langle \sigma \rangle) \equiv 0$ mod $p^{M-1}$, which shows (\ref{eq:target2}).  We also have $i_G(\sigma) \ge p^{M-1} \ge M$ which proves (\ref{eq:targetineq}).  Suppose now that $\sigma \not \in J$, so that $\sigma' = \pi(\sigma) \in H$
is not trivial.  By \cite[Chap. IV.1, Prop. 3]{corps},
\begin{equation}
\label{eq:relateit}
i_H(\sigma') = \frac{1}{\# J} \sum_{\nu \in G, \pi(\nu) = t'} i_G(\nu) = 
\frac{1}{\# J} \sum_{j \in J} i_G(\sigma j)
\end{equation}
{From} \cite[Chap. IV.1]{corps} we have
$$i_G(\sigma j) \ge \mathrm{Inf}(i_G(\sigma),i_G(j)).$$
Since $\sigma \not \in J$ and we have supposed that (iv) holds, we have $i_G(j) > i_G(\sigma)$ for all
$j \in J$, where $i_G(e) = \infty$ by definition if $e$ is the identity element of $J$.
It follows that $i_G(\sigma j) \ge i_G(\sigma)$ for all $j \in J$, and similarly $i_G(\sigma) = i_G(\sigma jj^{-1}) \ge i_G(\sigma j^{-1})$, so $i_G(\sigma j) = i_G(\sigma)$ for $j \in J$.  Thus (\ref{eq:relateit})
becomes $i_H(\sigma') = i_G(\sigma)$, so $\iota_G(\langle \sigma \rangle ) = i_G(\sigma ) = i_H(\sigma') = \iota_H(\langle \sigma'\rangle )$.
Because we chose $M' \ge M$, part (iv) now gives $0 < i_G(\sigma ) = \iota_H(\langle \sigma'\rangle ) \equiv 0$ mod $p^{M-1}$, so $i_G(\sigma ) \ge p^{M-1} \ge M$ as above, which completes
the proof of (\ref{eq:targetineq}) and (\ref{eq:target2}) when $J$ is a $p$-group. 

  Suppose now that $J$ is not a $p$-group.  We only need to
  show (\ref{eq:targetineq}), since statement (v) of Proposition \ref{prop:subquots}
  holds vacuously.  Since $J$ is a simple $\mathbb{Z}[H]$-module, it has order prime to $p$.  Therefore $\sigma'= \pi(\sigma)$ has the same order as $\sigma$, and in particular is not trivial.   The group
$P.J$ generated by $P$ and $J$ has normal subgroups $P$ and $J$,
and these groups have coprime order and the product of their orders is $\# P.J$.
Hence $P.J$ is isomorphic to $P \times J$ and we conclude that $P$ and $J$
commute.  Thus if $e \ne j \in J$ then $tj$ is not of $p$-power order and
so $i_G(\sigma j) = 1$. In this way, (\ref{eq:relateit}) becomes
$$i_H(\sigma') = \frac{1}{\# J} \sum_{j \in J} i_G(\sigma j) = \frac{i_{G}(\sigma ) + \#J - 1}{\# J}.$$
This shows
$$i_G(\sigma)  - i_H(\sigma') = (\#J -1)(i_H(\sigma')  -1) \ge 0.$$
Thus $M' \ge M$ and the assumption that
$H_{M' - 1}$ contains $H(p)$ in  Proposition \ref{prop:subquots} implies
$$i_G(\sigma) \ge i_H(\sigma') \ge M' \ge M.$$
This establishes (\ref{eq:targetineq}) and completes the proof.
\end{proof}

The following corollary is now clear from Lemma \ref{lem:step1}
because condition (iv) of Proposition \ref{prop:subquots}
holds vacuously if $J$ has order prime to $p$. 

\begin{cor}
\label{cor:easy}
Suppose $J$ is abelian and is a simple $\mathbb{Z}[H]$-module of 
order prime to $p$. Then  Proposition \ref{prop:subquots} holds.
\end{cor}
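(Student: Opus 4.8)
The plan is to deduce Corollary~\ref{cor:easy} immediately from Lemma~\ref{lem:step1}, the point being that the hypotheses render condition (iv) of Proposition~\ref{prop:subquots} vacuous. First I would note that the situation of the corollary --- $J$ abelian and a simple $\mathbb{Z}[H]$-module --- is exactly the reduced case allowed by Lemma~\ref{lem:reduce}, so it suffices to verify Proposition~\ref{prop:subquots} for such $J$. By Lemma~\ref{lem:step1}, given any $\phi_H : H \to \mathrm{Aut}_k(k[[t]])$ there is always an injection $\phi_G : G \to \mathrm{Aut}_k(k[[z]])$ satisfying clause (i), and moreover to obtain the full conclusion of Proposition~\ref{prop:subquots} it is enough to produce a $\phi_G$ satisfying both (i) and (iv).

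Next I would observe that (iv) is automatically satisfied here. Clause (iv) only asserts anything when $J \subset P$; but $J$ has order prime to $p$ while $P$ is a $p$-group, so $J \subset P$ forces $J = \{e\}$, in which case $J$ has no proper nontrivial cyclic subgroup $T$ and there is nothing to prove. Hence any $\phi_G$ meeting condition (i) automatically meets (i) and (iv), and Lemma~\ref{lem:step1} then delivers all of Proposition~\ref{prop:subquots}. One may even take $M' = 1$: inspecting the proof of Lemma~\ref{lem:step1}, the passage from the pair ``(i) and (iv)'' to the pair ``(ii) and (v)'' uses only $M' \ge M$ together with the hypothesis that $H_{M'-1}$ contains $H(p)$, and in the present setting (v) itself is vacuous since $J$ is not a $p$-group.

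I do not expect any genuine obstacle. The entire substance has already been packaged into Lemmas~\ref{lem:reduce} and~\ref{lem:step1}; the only new ingredient is the elementary remark that a subgroup of prime-to-$p$ order inside a $p$-group is trivial. The single thing to be careful about is to invoke Lemma~\ref{lem:step1} with its precise reduction --- that it suffices to find $\phi_G$ satisfying (i) and (iv) --- rather than attempting to re-derive conditions (ii), (iii) and (v) directly.
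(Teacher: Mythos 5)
Your argument is essentially the paper's own one-line proof: Corollary~\ref{cor:easy} follows from Lemma~\ref{lem:step1} because condition~(iv) of Proposition~\ref{prop:subquots} is vacuous when $J$ has order prime to $p$ (the hypothesis $J\subset P$ in (iv) forces $J=\{e\}$, so there is no proper non-trivial cyclic subgroup $T$ to worry about, and if $J\neq\{e\}$ is prime to $p$ then $J\not\subset P$ anyway). That is the whole content, and you have it.

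One side remark of yours is not correct, however: you cannot in general take $M'=1$. The passage in the proof of Lemma~\ref{lem:step1} from the pair (i), (iv) to conclusions (ii) and (v) does genuinely use $M'\geq M$ --- as you yourself note. In the prime-to-$p$ case the Serre formula gives $i_G(\sigma)=\#J\cdot(i_H(\sigma')-1)+1$, and the hypothesis $H_{M'-1}\supset H(p)$ only yields $i_H(\sigma')\geq M'$; to deduce the conclusion of part (ii), namely $G_{M-1}\supset P$, i.e.\ $i_G(\sigma)\geq M$, one needs $M'\geq M$. Taking $M'=M$ is fine; taking $M'=1$ when $M>1$ is not (you would only get $i_G(\sigma)\geq 1$). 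This is exactly why Proposition~\ref{prop:subquots} is phrased with ``$M'=1$ if $M=1$'' rather than $M'=1$ outright. The overclaim is incidental and does not affect the validity of the corollary itself, but it should be dropped.
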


For the rest of this section we assume the hypotheses of the following
lemma.  

\begin{lemma}
\label{lem:assume} Suppose $J$ is abelian and is a simple $\mathbb{Z}[H]$-module of 
$p$-power order.  Let $c = \# C$ be the order of the prime to $p$-part
of $\# G$ (and of $\# H$). There is a divisor $c'$ of $c$ such that $J$ has the following description.
There is an isomorphism of abelian groups between $J$ and the additive
group $\mathbb{F}_{p^d}^+$ of the finite field $\mathbb{F}_{p^d}$ of order
$p^d$ such that the action of $H$ on $J$ is given by  the inflation to $H$
of a multiplicative character $\chi:C \to \mathbb{F}_{p^d}^*$ of order $c'$.
The field $\mathbb{F}_{p^d}$ is generated over $\mathbb{F}_p$ by a primitive $c'^{th}$
root of unity.  We can choose a uniformizer $w$ in $k[[t]]^{H(p)}$ in such a way that 
there is a faithful character $\chi':C \to k^*$ with the property that $\phi_H(\sigma)$
sends 
$w$ to $\chi'(\sigma) w$ for all $\sigma \in C$ under the natural identification
of $C = H/H(p)$ with $\mathrm{Gal}(k((w))/k((t))^H)$.
\end{lemma}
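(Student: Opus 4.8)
The plan is to prove Lemma \ref{lem:assume} by analyzing the structure of a simple $\mathbb{Z}[H]$-module of $p$-power order, using that $H$ is the semi-direct product of a normal $p$-group with a cyclic prime-to-$p$ group, and then by translating the resulting Galois-theoretic picture into a statement about uniformizers via local class field theory / ramification theory.

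\medbreak\noindent\textbf{Step 1: Structure of $J$ as a $\mathbb{Z}[H]$-module.} Since $J$ has $p$-power order and is simple over $\mathbb{Z}[H]$, it is an elementary abelian $p$-group, hence an irreducible $\mathbb{F}_p[H]$-module. Write $H = H(p).C$ with $H(p)$ the normal $p$-Sylow subgroup. The normal $p$-subgroup $H(p)$ acts on the $\mathbb{F}_p$-vector space $J$; the fixed subspace $J^{H(p)}$ is nonzero (a $p$-group acting on a nonzero $\mathbb{F}_p$-vector space has nonzero fixed points) and is $C$-stable since $C$ normalizes $H(p)$, hence is $H$-stable, hence all of $J$ by irreducibility. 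So $H(p)$ acts trivially and the action factors through $C = H/H(p)$. Thus $J$ is an irreducible $\mathbb{F}_p[C]$-module with $C$ cyclic of order prime to $p$; by the standard representation theory of cyclic groups over $\mathbb{F}_p$ (semisimplicity, with simple modules indexed by Galois orbits of characters), $J \cong \mathbb{F}_{p^d}^+$ for some $d$, with $C$ acting through a multiplicative character $\chi: C \to \mathbb{F}_{p^d}^*$, and $\mathbb{F}_{p^d}$ is generated over $\mathbb{F}_p$ by the image of $\chi$, i.e.\ by a primitive $c'^{\mathrm{th}}$ root of unity where $c' = \#\chi(C)$ is the order of $\chi$; $c'$ divides $c = \#C$. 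The action of $H$ is then the inflation of $\chi$ to $H$ via $H \twoheadrightarrow C$. This is essentially a citation-level argument.

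\medbreak\noindent\textbf{Step 2: The uniformizer $w$ and the character $\chi'$.} By hypothesis in Proposition \ref{prop:subquots}, $\phi_H: H \to \mathrm{Aut}_k(k[[t]])$ is given. The fixed ring $k[[t]]^{\phi_H(H(p))}$ is a complete DVR with residue field $k$, hence isomorphic to $k[[w]]$ for a uniformizer $w$; by \cite[Prop.~II.5]{corps} such a $w$ exists, and since $H(p)$ is normal in $H$, the quotient $C = H/H(p)$ acts on $k((w))$ as $\mathrm{Gal}(k((w))/k((t))^H)$. This extension is tame (degree $c$ prime to $p$) and totally ramified, so $C$ acts on $k((w))$ and fixes a uniformizer $t'$ of $k((t))^H$; then for $\sigma \in C$ we have $\phi_H(\sigma)(w)/w \equiv \chi'(\sigma) \bmod wk[[w]]$ for a character $\chi': C \to k^*$, and $\chi'$ is faithful precisely because the action of $C$ on the tamely totally ramified extension $k((w))/k((t))^H$ is faithful — this is exactly the reasoning of Lemma \ref{lem:urk} (Serre, \cite[IV.2, Prop.~9]{corps}) applied to the group $C = H/H(p)$. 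Moreover, after adjusting $w$ by a unit one can arrange $\phi_H(\sigma)(w) = \chi'(\sigma) w$ exactly, since the tame extension is Kummer and a Kummer generator transforms by an exact scalar; this is the standard normalization of a uniformizer in a tame extension.

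\medbreak\noindent\textbf{Main obstacle.} The substantive content is really Step 1 — pinning down that $J$ is $\mathbb{F}_{p^d}^+$ with $H(p)$ acting trivially and $C$ acting by a single multiplicative character whose image generates the field — and this is a routine application of the representation theory of cyclic groups of order prime to $p$ over $\mathbb{F}_p$ together with the $p$-group fixed-point lemma. Step 2 is bookkeeping: producing the uniformizer $w$ from \cite[Prop.~II.5]{corps}, invoking that the fixed field of the normal $p$-Sylow is a tamely ramified totally ramified $C$-extension, and extracting the faithful tame character $\chi'$ as in Lemma \ref{lem:urk}. I expect the only place requiring care is making the two ``choose a uniformizer'' steps compatible — i.e.\ that after identifying $k[[t]]^{H(p)} \cong k[[w]]$ one can further normalize $w$ so that $C$ acts by an exact scalar — but this is forced by the Kummer-theoretic description of the tame totally ramified extension $k((w))/k((t))^H$ and is standard.
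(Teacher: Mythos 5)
Your proof is correct and follows essentially the same route as the paper. The only cosmetic difference is in Step~1: you argue that $J^{H(p)}$ is a nonzero $H$-stable subspace (hence all of $J$) via the $p$-group fixed-point lemma, whereas the paper invokes the fact that the kernel of $(\mathbb{Z}/p)[H] \to (\mathbb{Z}/p)[C]$ is the Jacobson radical of $(\mathbb{Z}/p)[H]$; these are two standard formulations of the same fact that simple $\mathbb{F}_p[H]$-modules are inflated from $C=H/H(p)$, and your Step~2 (tame Kummer uniformizer) matches the paper's.
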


\begin{proof} Recall that since $J$ is a $p$-group, the surjection $G \to H$
is an isomorphism on $C$; so we can view $C$ as a subgroup of $H$.  Thus
$H$ is the semi-direct product $H(p).C$.  All simple $(\mathbb{Z}/p)[H]$-modules
must be inflated from simple $(\mathbb{Z}/p)[C]$-modules since the 
kernel of the natural surjection $(\mathbb{Z}/p)[H] \mapsto (\mathbb{Z}/p)[C]$
is the radical of $(\mathbb{Z}/p)[H]$. The description of $J$ is now a consequence
of the well-known description of the simple modules in characteristic $p$
for a cyclic group $C$ of order prime to $p$.  The action of $C$ on $k[[t]]^{H(p)}$
via $\phi_H$ makes $k((t))^{H(p)}$ into a tame Kummer extension of $k((t))^H$.
{From} this we get the existence of a uniformizer $w$ in $k[[t]]^{H(p)}$ and of a character $\chi'$ with the properties
stated in the lemma.
\end{proof}

\begin{lemma}
\label{lem:okey}
With the notations and assumptions of Lemma \ref{lem:assume}, let $q = p^d$.  There
are arbitrarily large integers $n > 0$ which are relatively prime to $p$ such that $\chi = \chi'^n$
as characters from $C$ to $\mathbb{F}_q$.  By Lemma \ref{lem:assume},
$J$ as a $(\mathbb{Z}/p)[H]$-module is inflated from a $(\mathbb{Z}/p)[C]$-module
$\tilde J$.   The polynomial  
\begin{equation}
\label{eq:ohyeah}
y^q - y - w^{-n}
\end{equation}
is irreducible in $k((w))[y]$.  Let $L$ be the splitting field of this polynomial over $k((w)) = k((t))^{H(p)}$.
Define $F = k((t))^H$.  When we identify $\tilde J$ with $\mathbb{F}_q^+$,
there is an isomorphism $\mathrm{Gal}(L/k((w))) \to \tilde J$
defined by $\sigma \mapsto \sigma(y) - y$.  The group $C$ embeds into $\mathrm{Aut}_k(L)$
via the map which sends $\tau \in C$ to the automorphism defined by $\tau(w) = \chi'(\tau) w$
and $\tau(y) = \chi'(\tau)^{-n} y$.  This extends to an action of the semi-direct product
$\tilde J.C$ on $L$ which fixes $F$, and in this way $\mathrm{Gal}(L/F) = \tilde J.C$.
The corresponding lower ramification group $\tilde J_{n}$ equals $\tilde J$ while
$\tilde J_{n+1} = \{e\}$.
\end{lemma}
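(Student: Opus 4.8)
The plan is to verify the claims about the Artin–Schreier extension defined by (\ref{eq:ohyeah}) by direct computation with the polynomial $y^q - y - w^{-n}$, where $n > 0$ is prime to $p$. First I would establish irreducibility: since $n$ is prime to $p = \mathrm{char}\, k$ and $\mathrm{ord}_w(w^{-n}) = -n < 0$ is prime to $p$, the element $w^{-n}$ is not of the form $z^p - z + (\text{element of } w^{-1}k[w^{-1}] \cap (\text{image of } \wp))$, so by Artin–Schreier theory the additive polynomial $\wp(y) - w^{-n} = y^p - y - w^{-n}$ already has no root in $k((w))$; iterating (or invoking that $\mathbb{F}_q$-linearity of $y \mapsto y^q - y$ means the roots form an $\mathbb{F}_q$-torsor differing by $\mathbb{F}_q$), the polynomial $y^q - y - w^{-n}$ is irreducible over $k((w))$ because its Galois group over $k((w))$ must be a nonzero $\mathbb{F}_q$-subspace of $\mathbb{F}_q$ stable under the relevant Frobenius, hence all of $\mathbb{F}_q$. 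This gives $[L : k((w))] = q$ and the isomorphism $\mathrm{Gal}(L/k((w))) \xrightarrow{\sim} \mathbb{F}_q^+$, $\sigma \mapsto \sigma(y) - y$, which is standard for Artin–Schreier–type extensions.

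Next I would check that the prescribed formulas for the $C$-action are well defined and define automorphisms of $L$. For $\tau \in C$ with $\tau(w) = \chi'(\tau) w$, applying $\tau$ to the defining equation $y^q - y = w^{-n}$ and using that $\tau(y) = \chi'(\tau)^{-n} y$ gives $\chi'(\tau)^{-nq} y^q - \chi'(\tau)^{-n} y = \chi'(\tau)^{-n}(y^q - y)$ — here one uses that $\chi'(\tau)^{q} = \chi'(\tau)$ since $\chi'(\tau) \in \mathbb{F}_q^*$ (the values of $\chi'$ lie in the finite field generated by a primitive $c'$-th root of unity, which sits inside $\mathbb{F}_q$ by Lemma \ref{lem:assume}), so $\chi'(\tau)^{-nq} = \chi'(\tau)^{-n}$ — and the right side equals $\chi'(\tau)^{-n} w^{-n} = (\chi'(\tau) w)^{-n} = \tau(w)^{-n} = \tau(w^{-n})$, confirming consistency. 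The two prescriptions for $\tilde J$ and for $C$ together respect the conjugation relation because for $j \in \tilde J$ and $\tau \in C$ one computes $(\tau j \tau^{-1})(y) - y = \chi'(\tau)^{-n}\cdot(j \text{-translation})$, i.e. $\tau$ acts on $\tilde J \cong \mathbb{F}_q^+$ by multiplication by $\chi'(\tau)^{-n}$; since $\chi = \chi'^n$ as characters into $\mathbb{F}_q^*$ and $\tilde J$ as a $C$-module has $\tau$ acting by $\chi(\tau)$, this matches after identifying $\tilde J$ with $\mathbb{F}_q^+$ via the appropriate sign convention, so the semidirect product $\tilde J.C$ acts on $L$ with fixed field $F = k((t))^H = k((w))^C$ and $[L:F] = q c = \#(\tilde J.C)$, giving $\mathrm{Gal}(L/F) = \tilde J.C$.

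Finally I would compute the ramification filtration of $\tilde J = \mathrm{Gal}(L/k((w)))$ in the lower numbering. A uniformizer of $L$ can be taken to be $w$ itself (the extension is totally ramified of degree $q = p^d$ but wildly ramified with the different computed from the Artin–Schreier data; more precisely one rescales: set $y' = w^{\lceil n/q \rceil} y$ or work directly). For a non-trivial $\sigma \in \tilde J$ with $\sigma(y) = y + a$, $a \in \mathbb{F}_q^*$, one has $i_{\tilde J}(\sigma) = v_L(\sigma(\pi_L) - \pi_L)$ for a uniformizer $\pi_L$ of $L$; using the relation $y^q - y = w^{-n}$ one finds $v_L(y) = -n$ (since $v_L(w^{-n}) = -nq$ and $v_L(y^q) = q v_L(y)$ forces $v_L(y) = -n$, as $n$ is prime to $p$ hence to $q$), and a short manipulation gives $v_L(\sigma(\pi_L)/\pi_L - 1) = n + 1$ uniformly in $\sigma \ne e$. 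Hence $\tilde J_i = \tilde J$ for $0 \le i \le n$ and $\tilde J_{n+1} = \{e\}$, which is exactly the asserted statement $\tilde J_n = \tilde J$, $\tilde J_{n+1} = \{e\}$. The main obstacle I anticipate is the bookkeeping in this last step: pinning down the precise valuation $v_L(\sigma(\pi_L) - \pi_L) = n+1$ requires choosing the right uniformizer of $L$ and carefully tracking how the Artin–Schreier generator $y$ contributes, since a naive choice of $\pi_L = w$ is not actually a uniformizer when the ramification is wild — one must instead use the standard fact (e.g. \cite[Chap.~IV]{corps}) that for the Artin–Schreier extension $y^p - y = w^{-m}$ with $\gcd(m,p)=1$ the unique jump in the lower numbering is at $m$, and then bootstrap from $p$ to $q = p^d$ via the tower, or argue directly that all of $\tilde J$ is ``concentrated'' at the single break $n$ because the defining equation has a single pole of order $n$.
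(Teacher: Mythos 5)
Your overall plan matches the paper's closely: irreducibility and the $\tilde J$- and $C$-actions are dispatched by standard Artin--Schreier theory, and the real content is the computation of the single lower ramification break. The paper says exactly this (``All of the assertions are clear from Artin--Schreier theory except for the fact that $\tilde J_n = \tilde J$ and $\tilde J_{n+1} = \{e\}$''), so you have correctly isolated the nontrivial step. Your preliminary computation $v_L(y) = -n$ and $v_L(w) = q$ from the equation $y^q - y = w^{-n}$ is also correct and is the right input.

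However, your resolution of the final step has a concrete gap. You propose $y' = w^{\lceil n/q\rceil} y$ as a candidate uniformizer, but $v_L(y') = q\lceil n/q\rceil - n$, which equals $1$ only when $n \equiv -1 \pmod q$; the lemma does not assume this. What is needed is the choice the paper makes: $\pi_L = w^a y^b$ with $aq - bn = 1$ (solvable because $\gcd(n,q)=1$). With this $\pi_L$, for $\sigma(y) = y + \zeta$, $\zeta \in \mathbb{F}_q^*$, one gets $\sigma(\pi_L)/\pi_L = (1 + \zeta y^{-1})^b$, and since $b$ is prime to $p$ the leading term $b\zeta y^{-1}$ does not vanish, so $v_L(\sigma(\pi_L)/\pi_L - 1) = v_L(y^{-1}) = n$ exactly, i.e. $i_G(\sigma) = v_L(\sigma(\pi_L)-\pi_L) = n+1$ for all $\sigma \ne e$. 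Notice also that you state $v_L(\sigma(\pi_L)/\pi_L - 1) = n+1$ in the middle of your argument and $v_L(\sigma(\pi_L) - \pi_L) = n+1$ at the end; these two quantities differ by $v_L(\pi_L) = 1$, so they cannot both equal $n+1$. The second is the correct one. Your tower-of-degree-$p$-extensions fallback would in principle also work and is genuinely different from the paper's one-line computation, but carrying it through would mean tracking upper-numbering breaks through a length-$d$ tower and is considerably more bookkeeping than the paper's uniformizer trick.
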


\begin{proof} All of the assertions are clear from Artin-Schreier theory except for the
fact that $\tilde J_n = \tilde J$ and $\tilde J_{n+1} = \{e\}$.  For this observe that
if  $a, b \in \mathbb{Z}$ are such 
that $aq - bn = 1$ then $w^a y^b$ is a uniformizer in $L$. If $e \ne \sigma \in \mathrm{Gal}(L/k((w)))$
then $0 \ne \sigma(y) - y = \zeta \in \mathbb{F}_q$, $b$ is prime to $p$ and 
\begin{equation}
\label{eq:doit}
\mathrm{ord}_L(\frac{\sigma(w^a y^b)}{w^a y^b} - 1) = \mathrm{ord}_L( \frac{(y+\zeta)^b}{y^b} - 1) = \mathrm{ord}_L((1 + \zeta y^{-1})^b - 1) = n.
\end{equation}
Thus $\sigma$ lies in the ramification group $\tilde J_n$ but not in $\tilde J_{n+1}$.
\end{proof}  

In view of Lemma \ref{lem:step1}, part (v) of the following lemma completes
the proof of Proposition \ref{prop:subquots}. 

\begin{lemma}
\label{lem:anticip}
Assume the hypothesis and notations of Lemmas \ref{lem:assume}  and
\ref{lem:okey}.  By Lemma \ref{lem:step1}, there is an injection $\phi_G:G \to \mathrm{Aut}_k(k[[z]])$
inducing $\phi_H$.   Let $F = k((t))^H = L^{\tilde J.C}$ as in Lemma \ref{lem:okey}, so that 
$k((z))/F$ is a Galois $G = P.C$-extension while $L/F$ is a Galois $\tilde J.C$ extension.  If $n$ in Lemma \ref{lem:okey} is
sufficiently large, then the following hold:
\medbreak
\begin{enumerate}
\item[i.] The fields $k((z))$ and $L$ are linearly disjoint over their common
subfield, $k((w)) = k((t))^{H(p)} = k((z))^{P} = L^{\tilde J}$.    
\medbreak
\item[ii.]  Let $N = L\cdot k((z))$
be the compositum of these two extensions of $k((w))$.  
Then
$$\mathrm{Gal}(N/F) = (\tilde J \times P).C$$
where the action of $C$ on the product group $\tilde J \times P$ is via
the conjugation action of $C$ on both factors. 
\medbreak
\item[iii.]  Fix identifications
of $\tilde J$ and $J \subset P$ with $\mathbb{F}_q^+$ as in
Lemmas \ref{lem:assume} and \ref{lem:okey}.  This gives an isomorphism
$\psi:\tilde J \to J$, with the property that 
\begin{equation}
\label{eq:urp22}
\Delta = \{(\tilde t,\psi(\tilde t)): \tilde t \in \tilde J\}
\end{equation}
is a subgroup of $\tilde J \times J \subset \tilde J \times P$ that is normal in 
$\mathrm{Gal}(N/F) = (\tilde J \times P).C$.  
\medbreak
\item[iv.] The fixed field $N^\Delta$ is Galois over $F$, with
$\mathrm{Gal}(N^\Delta/F)$ isomorphic to $G$.  We can choose a uniformizer
$z'$ in $N^\Delta$ and an injection $\phi'_G:G \to \mathrm{Aut}_k(k[[z']])$
having the following properties:  
\begin{enumerate}
\item[a.] There is an isomorphism of $k[[z']]^{\phi'_G(J)}$ with $k[[t]]$ such that $\phi'_G$
induces $\phi_H:H \to \mathrm{Aut}_k(k[[t]])$.
\item[b.] The ramification group $\phi'_G(G)_n$ equals $\phi'_G(J)$, while
$\phi'_G(G)_{n+1} = \{e\}$.   
\end{enumerate}
\medbreak
\item[v.]  Suppose $1 \le M \in \mathbb{Z}$.  We can choose $n$ to be arbitrarily large with $n \equiv -1$ mod $p^{M-1}$.  For such $n$,  the following will be true
for each cyclic subgroup $T$ of $J$:  
\begin{enumerate}
\item[a.] $\iota_G(T) = n+1 \equiv 0$ mod $p^{M-1}\mathbb{Z}$,s where $n$ is the largest
integer such that $\phi'_G(G)_n$ contains $T$ and where we compute $\iota_G$ using
$\phi'_G$. 
\item[b.] $\iota_G(T) > \iota_G(\Gamma) + M$ for all cyclic groups $\Gamma\subset P$ that properly contain $T$. 
\end{enumerate}
For such $n$, $\phi'_G$ will have properties (i) and (iv) in Proposition \ref{prop:subquots}. 
\end{enumerate}
\end{lemma}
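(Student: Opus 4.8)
\textbf{Proof plan for Lemma \ref{lem:anticip}.}

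The plan is to construct $\phi_G'$ by the standard fiber-product-and-descend-a-diagonal trick, and then control the ramification numerically. First I would establish parts (i) and (ii): the linear disjointness of $k((z))$ and $L$ over $k((w))$ holds once $n$ is large enough because $L/k((w))$ is totally ramified with its unique ramification jump at $n$ (by Lemma \ref{lem:okey}), while $k((z))/k((w))$ is the $P$-extension whose jumps are bounded by data coming from $\phi_H$ together with Lemma \ref{lem:step1}; choosing $n$ larger than all those jumps forces $\mathrm{Gal}(N/k((w))) = \tilde J \times P$, and the $C$-action on both factors (which is compatible because $\chi = \chi'^n$, again by Lemma \ref{lem:okey}, and by Lemma \ref{lem:assume}) gives $\mathrm{Gal}(N/F) = (\tilde J \times P).C$. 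For (iii), the isomorphism $\psi: \tilde J \to J$ is $C$-equivariant by the choice of identifications in Lemmas \ref{lem:assume} and \ref{lem:okey} (both identify the module with $\mathbb F_q^+$ carrying the character $\chi$), so the graph $\Delta$ is stable under conjugation by $C$ and clearly normal in $\tilde J \times P$ since $\tilde J$ is central in that product; hence $\Delta \trianglelefteq (\tilde J \times P).C$.

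For (iv): $\mathrm{Gal}(N^\Delta/F) = ((\tilde J \times P).C)/\Delta$. Since $(\tilde J \times P)/\Delta \cong P$ via the second projection (because $\Delta$ maps isomorphically onto $\tilde J$ under the first projection, so the quotient is $\cong P$, with the $C$-action transported to the standard one), this quotient is $P.C = G$. The subfield $k((z))^{\phi_G(J)}$ and $N^\Delta \cap k((z))$ coincide with $k((t))$ up to the identification already fixed, and the induced $H$-action is $\phi_H$ because $\phi_G$ was chosen (Lemma \ref{lem:step1}) to induce $\phi_H$; this gives (iv)(a). For (iv)(b), I would compute the ramification of $N^\Delta/F$. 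The point is that $N/F$ has its wild ramification concentrated so that the lower ramification group at level $n$ of $\tilde J \times P$ is exactly $\tilde J \times P_n$, and after killing $\Delta$ the image of the level-$n$ group is $\phi_G'(J)$ while the image of the level-$(n+1)$ group is trivial — here one uses that $P_{n+1} = \{e\}$ once $n$ exceeds all $P$-jumps, plus $\tilde J_{n+1} = \{e\}$ from Lemma \ref{lem:okey}, and the fact that $\Delta$ meets the level-$n$ group in the full diagonal of $\tilde J \times J_n = \tilde J \times J$. The computation of $i_{N^\Delta/F}$ on generators of $\phi_G'(J)$ uses Serre \cite[Chap.~IV]{corps} and the uniformizer $w^a y^b$ from Lemma \ref{lem:okey}; one finds the unique jump of the kernel $\phi_G'(J)$ occurs at $n$, giving (iv)(b).

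For (v): since $n$ ranges over all sufficiently large integers prime to $p$ with $\chi = \chi'^n$ (an open condition mod $c'$, the order of $\chi'$, which is prime to $p$), I can additionally impose $n \equiv -1 \bmod p^{M-1}$ by the Chinese Remainder Theorem, as $p^{M-1}$ is prime to $c'$. Then for $T$ a nontrivial cyclic subgroup of $J$ we have $\iota_G(T) = n+1$ by (iv)(b), so $\iota_G(T) \equiv 0 \bmod p^{M-1}$, giving (v)(a). For (v)(b), if $\Gamma \subset P$ properly contains $T$ then $\Gamma \not\subset J$ (since $J$ is a simple $\mathbb Z[H]$-module, in particular has no proper nontrivial $G$-stable subgroups, but more to the point any cyclic overgroup of $T$ inside $P$ that is not in $J$ has $\iota_G(\Gamma)$ bounded by the $P$-ramification data inherited from $\phi_H$), and that bound is a constant independent of $n$; so taking $n$ large forces $\iota_G(T) = n+1 > \iota_G(\Gamma) + M$. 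Finally, properties (i) and (iv) of Proposition \ref{prop:subquots} for $\phi_G'$ are exactly (iv)(a) and (v)(a)--(v)(b) just established. I expect the main obstacle to be (iv)(b): carefully tracking how the lower-numbering ramification filtration of $(\tilde J \times P).C$ descends to the quotient by $\Delta$ — this requires the compatibility of lower numbering with quotients (Herbrand, \cite[Chap.~IV]{corps}) together with the explicit uniformizer computation, and one must verify that no extra wild ramification is introduced at levels between the $P$-jumps and $n$.
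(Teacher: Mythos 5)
Your high-level plan agrees with the paper's: build $N$ as the compositum, show the Galois group is $(\tilde J \times P).C$, quotient by the graph $\Delta$ to obtain $G$ acting on $N^\Delta$, and then control $n$ by a congruence mod $p^{M-1}$ via CRT. Parts (i), (ii), (iii), (iv)(a) and (v) are in line with what the paper does, though two points deserve tightening. For (iii), the observation that ``$\tilde J$ is central in $\tilde J \times P$'' does not by itself make the graph $\Delta \subset \tilde J \times J$ normal; you also need $J \subset \C(P)$, which holds because $J$ is a minimal normal subgroup of $G$ contained in the $p$-group $P$ (its intersection with $Z(P)$ is a nontrivial $G$-normal subgroup of $J$, hence all of $J$). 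For (v)(b), your ``bound independent of $n$'' on $\iota_G(\Gamma)$ for $\Gamma \not\subset J$ is the right idea; the paper makes it precise by noting $\Gamma(H)^{c_0} = \{e\}$, hence $\Gamma^{c_0} \subset J$, hence $\Gamma_{\#G\,c_0} \subset J$ via the crude Herbrand bound $\psi(v) \le (\#G)\,v$, giving $\iota_G(\Gamma) \le \#G\,c_0$.

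The genuine gap is in (iv)(b). You invoke ``compatibility of lower numbering with quotients (Herbrand)'' and assert that the level-$n$ lower ramification subgroup of $\mathrm{Gal}(N/k((w))) = \tilde J \times P$ is $\tilde J \times P_n$ and that its image mod $\Delta$ is the lower ramification group of the quotient. Neither assertion is established: lower numbering is compatible with \emph{subgroups}, while it is \emph{upper} numbering that passes to quotients, and the lower filtration of a product is not in general the product of lower filtrations. The paper resolves this by working entirely in upper numbering: $\mathrm{Gal}(N/k((w)))^n$ maps onto $\mathrm{Gal}(L/k((w)))^n = \tilde J$ and onto $P^n = \{e\}$ (for $n$ beyond the $P$-jumps), hence is contained in $\tilde J \times \{e\}$ and equals it; similarly $\mathrm{Gal}(N/k((w)))^{n+\epsilon} = \{e\}$ for $\epsilon > 0$. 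One then applies the upper-numbering quotient compatibility to $(\tilde J \times P)/\Delta$, finds its upper filtration drops from $J$ to $\{e\}$ at $n$, and only then descends to lower numbering. Your sketch needs to be rewritten along those lines, replacing ``lower'' by ``upper'' in the quotient step and removing the unjustified identity $\mathrm{Gal}(N/k((w)))_n = \tilde J \times P_n$.
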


\begin{proof} Part (i) follows from the fact that if $n$ in Lemma \ref{lem:okey} is
sufficiently large, the valuation of the relative discriminant of every non-trivial extension 
of $k((w))$ inside $L$ is larger than that of every non-trivial extension
of $k((w))$ inside $k((z))$. 

Parts (ii), (iii) and (iv)(a) are straightforward from Galois theory.  

To prove (iv)(b), consider the $j^{th}$ upper ramification subgroup of the $p$-group 
\begin{equation}
\label{eq:alright}
\mathrm{Gal}(N/k((w))) = \tilde J \times P
\end{equation} This must surject onto $\mathrm{Gal}(L/k((w)))^j$ as well as onto
$\mathrm{Gal}(k((z))/k((w)))^j = P^j$.  By Lemma \ref{lem:okey}, $\mathrm{Gal}(L/k((w)))^n = \mathrm{Gal}(L/k((w)))_n = \mathrm{Gal}(L/k((w)))$, and this group is identified with
$$(\tilde J \times P)/ (1 \times P) \cong \tilde J.$$ 
Furthermore, $\mathrm{Gal}(L/k((w)))^{n+\epsilon} = \{e\}$ if $\epsilon > 0$. If we choose $n$ sufficiently large,
then $\mathrm{Gal}(k((z))/k((w)))^n = P^n$ will be the trivial group, where
$\mathrm{Gal}(k((z))/k((w)))$ is identified with the quotient group
$$(\tilde J \times P)/ (\tilde J \times 1) \cong \tilde P.$$  It follows that if
$n$ is sufficiently large, then $\mathrm{Gal}(N/k((w)))^n$ must lie in $\tilde J \times 1$
and surject onto $\tilde J$, so in fact $\mathrm{Gal}(N/k((w)))^n = \tilde J \times 1$
relative to the description of $\mathrm{Gal}(N/k((w))$ in (\ref{eq:alright}).    Hence
the image of $\mathrm{Gal}(N/k((w)))^n$ in 
$\mathrm{Gal}(N^\Delta/k((w))) = (\tilde J \times P)/\Delta$ is the image of $\tilde J \times 1$,
and this group is identified with $J$ when we identify $G $ with
$((\tilde J\times P).C)/\Delta$ as above.  Thus the action of $G$ on $k[[z']]$
specified by $\phi'_G$ leads to the ramification group $J^n$ being equal to $J$ and $J^{n+\epsilon}$ being $\{e\}$ if $\epsilon > 0$.
Since $J$ is a $p$-group, we conclude that $J_n = J$ and $J_{n+1} = \{e\}$, which completes the proof of part (iv).

For part (v), we observe that the condition on $n$ in Lemma \ref{lem:okey} is
that it be sufficiently large,  relatively prime to $p$, and satisfy $\chi = \chi'^n$
as characters from $C$ to $\mathbb{F}_q$.  The last condition is one
on $n$ mod $\# C$; so since $\# C$ is prime to $p$, we can always find
such $n$ for which $n \equiv -1$ mod $p^{M-1}$. Since $\iota_G(T) = n+1$
for $T$ a non-trivial cyclic subgroup of $J$ by part ($\mathrm{iv.b}$),
we conclude that $\iota_G(T) \equiv 0$ mod $p^{M-1}$ as in ($\mathrm{v.a}$).
Suppose now that $\Gamma$ is a cyclic subgroup of $P$ which contains
$T$ but is not contained in $J$.  Then $\Gamma(H) = \Gamma/(\Gamma \cap J)$
is identified with a non-trivial subgroup of $G/J = H = \mathrm{Gal}(k[[z']]^{\phi'_G(J)}/F)$, where $k[[z']]^{\phi'_G(J)}$ is identified with $k[[t]]$ as in part (iv.a).
This last identification shows that there is a integer $c_0 \ge 0$  independent
of the choice of $n$ such that the upper ramification group $\Gamma(H)^c$
equals $\{e\}$ if $c \ge c_0$.  Then $\Gamma^c \subset J$ for such $c$
since $\Gamma^c$ surjects onto $\Gamma(H)^c$.  We have $\Gamma_{\# G c} \subset \Gamma^c$ by the crudest estimates for the Herbrand function, 
so $\Gamma_{\#G c_0} \subset J$.  Thus $\Gamma_{\# G c_0} \ne \Gamma$, so $\iota_G(\Gamma) \le \#G c_0$.   We now choose $n$ so that $n \ge \# G c_0 + M$,
to have $\iota_G(T) = n+1 > \#G c_0 + M  \ge \iota_G(\Gamma) + M$
as required in part (v).  Finally, we note that having chosen $n$ so that
all of parts (i) - (v) hold, $\phi'_G$ will satisfy conditions (i) and (iv) of
Proposition \ref{prop:subquots}.  
\end{proof}

\section{Appendix 2:  Distinguishing the Bertin and KGB obstructions.}
\label{s:KGBversusBertin}
\setcounter{equation}{0}

In this section we show the KGB obstruction to lifting an injection $\phi:G \to \mathrm{Aut}_k(k[[t]])$
can be non-zero when  the Bertin obstruction vanishes, by proving the following result.

Recall that the first jump in the lower ramification filtration of
$G$ occurs at the largest integer $i_0$ such that $G = G_{i_0}$.  

\begin{prop}
\label{prop:theexam}  Suppose $G$ is isomorphic to $\mathbb{Z}/p \times \mathbb{Z}/p$.  
\begin{enumerate}
\item[a.]{\rm (Bertin)} The Bertin obstruction for lifting $\phi$ vanishes if and only if
$p|(i_0+1)$.  
\item[b.] When $p|(i_0+1)$, 
the KGB obstruction for lifting $\phi$ does not vanish if and only if $p = 3 = i_0+1$
and  $G_{i_0+1} = \{e\}$. 
\end{enumerate} 
\end{prop}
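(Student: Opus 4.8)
The plan is to work with the elementary abelian group $G \cong \mathbb{Z}/p \times \mathbb{Z}/p$ directly, using the explicit formula for $b_T$ from Theorem~\ref{thm:nontrivcase} together with the structure of the ramification filtration of $G$. First I would record the basic setup: $G$ has exactly $p+1$ subgroups $T_1,\dots,T_{p+1}$ of order $p$, each of which is normal with $\N_G(T_i) = G$, and $S(T_i) = \{T_i\}$ since $G$ is its own $p$-Sylow and contains no cyclic subgroup of order $p^2$. Because all nontrivial ramification groups $G_i$ are normal in $G$, they are either all of $G$, trivial, or equal to one of the $T_i$; moreover since $G$ is abelian the upper and lower numbering agree on each $G_i$ up to the Herbrand transform, but more importantly the filtration of $G$ restricted to each cyclic quotient $G/T_j$ is an Artin--Schreier filtration with a single jump. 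Part (a) is then immediate from Theorem~\ref{thm:nontrivcase}: one gets $b_{T_i} = (\iota(T_i) + c(T_i))/p$ for a constant $c(T_i)$ depending only on the group (in fact $c(T_i) = 0$ here since $S(T_i)=\{T_i\}$, wait — one must be careful: $-\delta(T,\{e\})a_\phi(e)$ drops out, and the sum over $S(T_i)$ is just $\mu(1)\iota(T_i) = \iota(T_i)$), so $b_{T_i} = \iota(T_i)/p$, and the first jump $i_0$ satisfies $\iota(T) = i_0 + 1$ for any $T \subseteq G_{i_0+1}$; integrality of all $b_{T_i}$ together with non-negativity (automatic) is then equivalent to $p \mid (i_0+1)$, which recovers Bertin's result.

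For part (b), I would invoke Theorem~\ref{thm:KGBtwo}: assuming $p \mid (i_0+1)$ so the Bertin obstruction vanishes, the KGB obstruction vanishes if and only if there is a $G$-set $S$ as in that theorem, i.e. with $\chi_S = m\cdot\mathrm{reg}_G - a_\phi$, non-trivial cyclic (here order-$p$) stabilizers, and — crucially — a choice of generators $g_t$ of the stabilizers, indexed by orbit representatives $\Omega$, generating $G$ and with $\prod_{t\in\Omega} g_t$ of order $[G:G_1]$. Since $G = G_1$ (as $i_0 \ge 1$), the condition is that the product of these generators equals $e$. By the $G$-set decomposition from Proposition~\ref{prop:special}(iii), $S = \coprod_{T_i} \coprod_{j=1}^{b_{T_i}} G/T_i$ with $b_{T_i} = (i_0+1)/p$ (the same value for every $i$, since $\iota(T_i) = i_0+1$ whenever $T_i \subseteq G_{i_0+1}$ — and every $T_i$ with $b_{T_i}>0$ is contained in $G_{i_0+1}$... here I need to check whether $G_{i_0+1}$ can be a proper $T_j$, which is exactly where the $p=3$, $G_{i_0+1}=\{e\}$ dichotomy will enter). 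So the KGB condition reduces to a purely combinatorial/group-theoretic question: can one choose, for each $i$, $b_{T_i}$ generators of $T_i$ (an element of $(\mathbb{Z}/p)^*$ for each), so that the total multiset of chosen elements of $G = (\mathbb{Z}/p)^2$ — viewing a generator of $T_i$ as a nonzero vector on the $i$-th line through the origin — sums to zero and the chosen vectors span $G$?

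The main obstacle is this combinatorial step, and distinguishing the two cases. When $G_{i_0+1} = \{e\}$: each $b_{T_i} = (i_0+1)/p$. If $(i_0+1)/p \ge 2$, or if there are enough lines, one has a lot of freedom and can arrange the sum to vanish with spanning (e.g. by pairing a vector with its negative on each line when $p$ is odd, plus a small spanning adjustment), so KGB vanishes. The genuinely constrained case is $(i_0+1)/p = 1$, i.e. $i_0 + 1 = p$: then we must pick exactly one nonzero vector on each of the $p+1$ lines, spanning $G$ (automatic for $p+1 \ge 2$), with zero sum. I would analyze the sum $\sum_i v_i$ where $v_i = \lambda_i u_i$ ranges over the $p+1$ lines, $\lambda_i \in (\mathbb{Z}/p)^*$: this is a question about whether $0$ is representable, and a short computation (choosing coordinates so the lines are $\langle(1,0)\rangle, \langle(0,1)\rangle, \langle(1,1)\rangle,\dots,\langle(1,p-1)\rangle$) shows the sum condition becomes two linear equations in the $\lambda_i$ over $\mathbb{Z}/p$ with the constraint $\lambda_i \ne 0$; for $p=2$ the lines are $(1,0),(0,1),(1,1)$ and the only choice sums to $(0,0)$ so KGB would vanish — but $p=2$ is excluded since then $i_0+1 = 2 = p$ gives $i_0 = 1$, and one must recheck against part (a)... actually for $p=2$, $p\mid(i_0+1)$ always holds for odd $i_0$; so the $i_0+1 = p = 2$ subcase has $i_0 = 1$, and the three generators $(1,0),(0,1),(1,1)$ sum to $0$, so KGB vanishes, consistent with the proposition which only excepts $p=3$. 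For $p=3$: lines $(1,0),(0,1),(1,1),(1,2)$, and we need $\lambda_1(1,0)+\lambda_2(0,1)+\lambda_3(1,1)+\lambda_4(1,2) = 0$ with all $\lambda_i \in \{1,2\}$; checking all $16$ choices (or arguing mod structure) shows none works, so the KGB obstruction does not vanish. For $p \ge 5$ with $i_0+1=p$: I expect a parameter count (two equations, $p+1 \ge 6$ unknowns each avoiding one forbidden value) shows a solution always exists. Assembling these cases — and the case $b_{T_i} \ge 2$, and the case $G_{i_0+1} \ne \{e\}$ which forces all but one $b_{T_j}$ to be larger and gives even more freedom, or is incompatible with the single-jump structure — yields exactly the stated criterion $p = 3 = i_0+1$ and $G_{i_0+1} = \{e\}$. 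I would also remark that Pagot's non-liftability result (cited in Example~\ref{ex:sep}) is the relevant sanity check for the $p=3$ case.
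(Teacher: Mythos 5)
Your approach is the same as the paper's: compute $b_T = \iota(T)/p$ from Theorem~\ref{thm:nontrivcase} (which gives part~(a), since $\iota(T) \equiv i_0+1 \pmod p$ for every order-$p$ subgroup $T$ — you wrote ``$T\subseteq G_{i_0+1}$'' where you meant $T\not\subseteq G_{i_0+1}$, but the congruence is what matters), and reduce the KGB question via Theorem~\ref{thm:KGBtwo} to whether one can pick $b_T$ generators of each $T$ whose combined product is the identity. The paper's case analysis is the same as yours: $(i_0+1)/p\ge 2$; $i_0+1=p$ with $G_{i_0+1}\ne\{e\}$; and $i_0+1=p$ with $G_{i_0+1}=\{e\}$.

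Two of your steps, however, need repair. First, the case $i_0+1 = p$ with $G_{i_0+1} \ne \{e\}$: your description (``forces all but one $b_{T_j}$ to be larger,'' or ``is incompatible with the single-jump structure'') is off — in fact exactly one of the $p+1$ order-$p$ subgroups, namely $G_{i_0+1}$ itself, has $\iota = i_0 + pi_1 + 1$ and hence $b>1$, while the remaining $p$ subgroups all have $b = 1$. Choosing coordinates so that $G_{i_0+1} = \{0\}\times\mathbb{Z}/p$ and the others have generators $(1,i)$ for $0\le i\le p-1$, the $p$ forced choices already multiply to $(0,p(p-1)/2)\in G_{i_0+1}$, and the $\ge 2$ free generators of $G_{i_0+1}$ can be used to cancel this. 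Second, and more substantively, for $i_0+1=p$, $G_{i_0+1}=\{e\}$, and $p\ge 5$ you cannot appeal to a parameter count: a naive count (``two linear equations in $p+1$ unknowns $c_i\in(\mathbb{Z}/p)^*$'') would equally ``predict'' a solution at $p=3$, where there is provably none, so the count is not decisive and a real argument is required. The paper gives the explicit solution $c_i=1$ for $0\le i\le p-3$, $c_{p-2}=-1$, $c_{p-1}=3$, $c_p=-2$, which lies in $(\mathbb{Z}/p)^*$ for all $p\ge 5$ and makes both coordinate sums vanish; at $p=3$ the entry $c_{p-1}=3\equiv 0$ degenerates, which is exactly why $p=3$ is the exceptional case. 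You would need to supply something of this kind rather than a heuristic count.
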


While this shows that the KGB obstruction is in general stronger than the Bertin
obstruction for particular $\phi$, our results in \S \ref{s:intro} show that every 
Bertin group for $k$ is a KGB group for $k$ and vice versa.

\begin{example}
\label{ex:explicit}
When $p = 3$, one obtains from statement (b) an explicit example of a 
$\phi$ with vanishing Bertin obstruction and non-vanishing KGB obstruction
in the following way.  Let $i_0 > 0$ be any integer such that $p|(i_0 + 1)$.  Let $u$ be an indeterminate, and let $N$ be the extension $k((u))[X]/(X^9 - u^{-i_0})$ of $k((u))$.  Then $G = \mathrm{Gal}(N/k((u)))$ is isomorphic to
the finite field $\mathbb{F}_9 \cong  \mathbb{Z}/p \times \mathbb{Z}/p$
via the map sending $\alpha \in \mathbb{F}_9$ to the automorphism $\sigma_\alpha$ for
which $\sigma_\alpha(X) = X + \alpha$.  On has $\mathrm{ord}_N(u) = 9$, $\mathrm{ord}_N(X) = 
-i_0$ and $\mathrm{ord}_N(t) = 1$ when $t = X^a u^b$ and $9b - ai_0 = 1$ for some integers $a$ and $b$.
Thus $t$ is a uniformizer in $N$, and $(\sigma_{\alpha}(t)/t) - 1 = (1 + \alpha X^{-1})^a  -1$
has valuation $\mathrm{ord}_N(a \alpha X^{-1}) = i_0$ for $0 \ne \alpha \in \mathbb{F}_9$.  It follows $G = G_{i_0} \supset G_{i_0+1} = \{e\}$.
\end{example}

\medbreak
\noindent {\bf Proof of Proposition \ref{prop:theexam}}
\medbreak
Statement (a) is a special case of Example 1 of \S 4 of \cite{Bertin}.
As noted there, Green and Matignon proved earlier in \cite{GM} for
$G = \mathbb{Z}/p \times \mathbb{Z}/p$ there is no lift of $\phi$ to characteristic
$0$ unless $p|(i_0+1)$. 

We now focus on statement (b).  The (unique) set $\mathcal{C}$ of representatives for the conjugacy classes of cyclic subgroups $T$ of $G$ consists of the trivial subgroup $\{e\}$ together with the $p+1$ subgroups of $G$ of order $p$.  By Proposition 
\ref{prop:special} and Theorem \ref{thm:nontrivcase}, the set  $S$ appearing in Theorem \ref{thm:KGBtwo} is the disjoint union
over the non-trivial $T \in \mathcal{C}$ of $b_T = \iota(T)/p$ copies of the left $G$-set $G/T$.  
There are always $T \in \mathcal{C}$ not contained in $G_{i_0+1}$, and for these $T$
one has $\iota(T) = i_0+1$.  By the Hasse-Arf Theorem, 
if $T \subset G_{i_0+1}$ then  $T = G_{i_0 + 1}$ and $\iota(T) = i_0+pi_1+1$, where $i_1 \ge 1$ is an integer and the second jump in
the upper numbering of the ramification groups of $G$ is at $i_0+i_1$.   
The KGB obstruction vanishes if and only
if for each non-trivial $T \in \mathcal{C}$ and each integer $j$ such that $1 \le j \le b_T$, we can choose a generator $g_{T,j}$ for $T$
such that
\begin{equation}
\label{eq:KGBcompute}
\prod_{\{e\} \ne T\in \mathcal{C}} \prod_{j = 1}^{b_T} g_{T,j} = e\quad \mathrm{in}\quad G.
\end{equation} 
Note that $b_T = \iota(T)/p > 0$ for all $\{e\} \ne T \in \mathcal{C}$ so $\{g_{T,j}\}_{T,j}$ generates $G$.

Suppose first that $i_0 + 1 > p$. Then $b_T \ge (i_0+1)/p > 1$ for all $\{e\} \ne T \in \mathcal{C}$.  For each such $T$, we can therefore
choose the generators $g_{T,j}$ for $1 \le j \le b_T$ so that $\prod_{j = 1}^{b_T} g_{T,j} = e$.
This makes (\ref{eq:KGBcompute}) hold, so the KGB obstruction vanishes.  

We
now suppose that $i_0 + 1 = p$, but that $G_{i_0+1} \ne \{e\}$.  Then $G_{i_0+1}$
has order $p$, and there are exactly $p$ order $p$ subgroups $T_0,\ldots,T_{p-1}$ of $G$
different from $G_{i_0+1}$.  We can choose the generators for $G = \mathbb{Z}/p \times \mathbb{Z}/p$
so that $G_{i_0+1}$ corresponds to the subgroup $\{0\} \times \mathbb{Z}/p$.  Then
$g_{T_i,1} = (1,i)$ is a generator for $T_i$ for $0 \le i \le p-1$.  We have
$$\prod_{i = 0}^{p-1} g_{T_i,1} = (0, (p-1)p/2)\quad \mathrm{in} \quad G = \mathbb{Z}/p \times \mathbb{Z}/p.$$
Thus this product is in the last cyclic order $p$ subgroup $G_{i_0 + 1}$, and
as noted above, $b_{G_{i_0 + 1}} = \frac{i_0 + p i_1 + 1}{p} > 1$. Hence we can choose the final 
generators $g_{G_{i_0+1},j}$ for $1 \le j \le b_{G_{i_0 + 1}}$ in such a way that
(\ref{eq:KGBcompute}) holds, which shows that the KGB obstruction vanishes.

We are thus reduced to showing
that if $p = i_0 + 1$ and $G_{i_0 + 1} = \{e\}$, then the KGB obstruction vanishes
if and only if $p \ne 3$.  Fix an isomorphism of $G$ with $\mathbb{Z}/p \times \mathbb{Z}/p$,
and define $h_i = (1,i)$ for $0 \le i \le p-1$ and $h_{p} = (0,1)$.  Any generator
for the subgroup $T_i$ generated by $h_i$ has the form $g_{i,1} = c_i \cdot h_i$
for some $c_i \in (\mathbb{Z}/p)^*$.  The question of whether we can choose
generators of these groups for which (\ref{eq:KGBcompute}) holds is that same
as asking where there are $c_i \in (\mathbb{Z}/p)^*$ such that
\begin{equation}
\label{eq:oomph}
\left (\sum_{i = 0}^{p-1} c_i\cdot (1,i)\right ) + c_p \cdot (0,1) = (0,0) \quad \mathrm{in}\quad G = \mathbb{Z}/p \times \mathbb{Z}/p
\end{equation}
Such $c_i$ exist if and only if the KGB obstruction vanishes. 

We will leave it to the reader to check the following facts. If $p = 2$ then $c_0 = c_1 = c_2 = 1$
is a solution of (\ref{eq:oomph}).  If $p = 3$ there is no solution with the $c_i \in (\mathbb{Z}/p)^*$.
Finally, if $p > 2$ then a solution is given by $c_i = 1$ if $0 \le i \le p-3$, $c_{p-2} = -1$,
$c_{p-1} = 3$ and $c_p = -2$.  This completes the proof.  $\square$

\end{document}